\newcommand{\mQ}{\mathbb Q}
\newcommand{\mZ}{\mathbb Z}
\newcommand{\mF}{\mathbb F}
\DeclareMathOperator{\at}{Aut}
\DeclareMathOperator{\gal}{Gal} 
 \DeclareMathOperator{\z}{{\mathbb
Z}} 
\DeclareMathOperator{\g}{\Gamma}
\DeclareMathOperator{\im}{im} 
 \DeclareMathOperator{\iso}{Isom}
\DeclareMathOperator{\vc}{\mathcal V\mathcal C}
\DeclareMathOperator{\fin}{\mathcal F\mathcal I\mathcal N}
\newcommand{\mbf}[1]{\mathbf{#1}}
\newcommand {\bn}[1]{B_n}
\newcommand{\base}{\coprod_{\langle \ell \rangle} \mathbb{R}^{2}_{\ell}}
\newcommand{\baset}{\coprod_{\langle \ell \rangle \in \mathcal{T}} \mathbb{R}^{2}_{\ell}}
\newcommand{\borelt}{\coprod_{\langle \ell \rangle \in \mathcal{T}} \Gamma \times_{\Gamma(\ell)} \mathbb{R}^{2}_{\ell}}
\newcommand{\ybase}{\coprod_{\langle \ell \rangle} Y \times \mathbb{R}^{2}_{\ell}}
\newcommand{\yborelt}{\coprod_{\langle \ell \rangle \in \mathcal{T}} \Gamma \times_{\Gamma(\ell)} ( Y \times \mathbb{R}^{2}_{\ell})}
\newcommand{\topspace}{\Gamma \times \baset}
\newtheorem{theorem}{Theorem}[section]
\newtheorem{lemma}[theorem]{Lemma}
\newtheorem{proposition}[theorem]{Proposition}
\newtheorem{corollary}[theorem]{Corollary}
\theoremstyle{definition}
\newtheorem{procedure}[theorem]{Procedure}
\newtheorem{remark}[theorem]{Remark}
\newtheorem{definition}[theorem]{Definition}
\newtheorem{example}[theorem]{Example}
\begin{document}
\title
[$K$-theory of Crystallographic Groups]
{The Lower Algebraic $K$-theory of Split Three-Dimensional Crystallographic Groups}

\author[D. Farley]{Daniel Farley}
      \address{Department of Mathematics\\
               Miami University \\
               Oxford, OH 45056\\
               }
      \email{farleyds@muohio.edu}
\author[I. J. Ortiz]{Ivonne J. Ortiz}
      \address{Department of Mathematics\\
               Miami University\\
               Oxford, OH 45056\\
               }
      \email{ortizi@muohio.edu}

\dedicatory{To the memory of Almir Alves (1965 -- 2009)}

\begin{abstract}
We compute $K_{-1}$, $\widetilde{K}_{0}$, and the Whitehead groups of all three-dimensional
crystallographic groups $\Gamma$ which fit into a \emph{split} extension 
$$ \mathbb{Z}^{3} \rightarrowtail \Gamma \twoheadrightarrow H,$$
where $H$ is a finite group acting effectively on $\mathbb{Z}^{3}$.  Such groups $\Gamma$ account for $73$ isomorphism
types of three-dimensional crystallographic groups, out of $219$ types in all. 

We also prove a general splitting formula for the lower algebraic $K$-theory of all three-dimensional crystallographic groups,
which generalizes the one for the Whitehead group obtained by Alves and Ontaneda.
\end{abstract}

\keywords{lower algebraic $K$-theory,  crystallographic groups}
                                                                                
\subjclass[2000]{19B28,19A31, 19D35, 20H15}

\maketitle

\section{Introduction}
The goal of this paper is to compute the lower algebraic $K$-theory 
of the \emph{split three-dimensional crystallographic groups}; i.e., the groups $\g \leq \mathrm{Isom}(\mathbb{R}^{3})$ that fit into a split short exact sequence
$$ 1 \rightarrow L \rightarrow \Gamma \rightarrow H \rightarrow 1,$$
where $L$ is a discrete cocompact additive subgroup of $\mathbb{R}^{3}$ (i.e., a \emph{lattice} in $\mathbb{R}^{3}$), 
$H$ is a finite subgroup of the orthogonal group $O(3)$, and the map $\Gamma \rightarrow H$ sends an isometry $\gamma \in \Gamma$
to its linear part. It is well known that 
every crystallographic group fits into a similar short exact sequence, although the sequence does not split in general. Note also that a group $\Gamma$ might admit
another type of splitting without being a split crystallographic group in our sense. One such example is the Klein bottle group, a two-dimensional crystallographic group that factors as $\mathbb{Z} \rtimes \mathbb{Z}$,
but admits no factorization of the above form since it is torsion-free. There are $73$ split three-dimensional crystallographic groups up to isomorphism, representing a third of the $219$ isomorphism classes of three-dimensional
crystallographic groups in all. The split crystallographic groups are also called \emph{splitting groups}, since every $n$-dimensional crystallographic group embeds in a split 
$n$-dimensional crystallographic group (its splitting group) as a subgroup of finite index (see \cite[pg. 312-313]{Ra94}). Thus, the $73$ split three-dimensional crystallographic groups contain 
the remaining three-dimensional crystallographic groups as subgroups of finite index.
 
Our computation of the $K$-groups uses the fundamental work of Farrell and Jones \cite{FJ93}, who established an isomorphism 
$$Wh_{n}(\g) \cong H_n^{\g}(E_{\vc}(\g); \mathbb{KZ}^{-\infty}), $$
where $E_{\vc}(\g)$ is a model for the classifying space of $\g$ with isotropy in the family of virtually cyclic subgroups, and $\Gamma$ is a cocompact discrete subgroup of a virtually connected Lie group.
In particular, their work applies to crystallographic groups of all dimensions. 
    
The first computations of the lower algebraic $K$-theory of crystallographic groups were made by Kimberly Pearson \cite{P98},
who completely handled the $2$-dimensional case. Alves and Ontaneda \cite{AO06} derived the following general formula for the 
Whitehead group of a three-dimensional crystallographic group: 
$$ Wh(\Gamma) \cong \bigoplus_{G \in \mathcal{I}} Wh(G),$$
where $\mathcal{I}$ is the set of conjugacy classes of maximal infinite virtually cyclic subgroups. They also prove that the above direct sum is finite.

 In this paper, we will extend the results of \cite{AO06} in two directions:
first, we will derive similar general formulas for $K_{-1}(\mathbb{Z}\Gamma)$, $\widetilde{K}_{0}(\mathbb{Z}\Gamma)$, and $Wh(\Gamma)$, where
$\Gamma$ is an arbitrary (not necessarily split) three-dimensional crystallographic group. 
These general results are presented in Section \ref{section:EVC}, 
which is largely self-contained. The main result is Theorem \ref{theorem:splitting2}, a splitting formula for the lower algebraic $K$-theory of three-dimensional crystallographic groups:
 \[
Wh_n(\Gamma)\cong H_{n}^{\g}(E_{\fin}(\g); \mathbb{KZ}^{-\infty}) \oplus  \bigoplus_{\widehat{\ell} \in \mathcal{T}''}  H_n^{\Gamma_{\widehat{\ell}}}(E_{\fin}(\Gamma_{\widehat{\ell}}) \rightarrow  \ast;\;  \mathbb{KZ}^{-\infty}).
 \]
 The indexing set $\mathcal{T}''$ consists of a selection of one line from each $\Gamma$-orbit of lines, and may be taken to be finite, since the groups in question are trivial in all but finitely many cases. 
 In the case $n=1$, the first summand in the above formula vanishes, and the second summand can be identified with the right
 side of the formula from \cite{AO06} (above). 
The second goal is 
to make explicit calculations of the lower algebraic $K$-groups of the split three-dimensional crystallographic groups; that is, to describe completely
the isomorphism types of $K_{-1}(\mathbb{Z}\g)$, $\widetilde{K}_{0}(\mathbb{Z}\g)$, and $Wh(\g)$ as abelian groups. In order to do this, it is necessary
to identify the split three-dimensional crystallographic groups as explicitly as possible. We do this by specifying all possible pairings $(L, H)$ (up to a certain type of 
equivalence), where $L$ and $H$ are as above, and the action of $H$ (a subgroup of $O(3)$) on $L$ is the obvious one. Our work in classifying these groups is contained 
in Sections \ref{section:3Dpg}, \ref{section:arithmetic}, and \ref{section:classification}; the results are summarized in Table \ref{splitcrystallographicgroups}.
 Sections \ref{section:fundamentaldomains}, \ref{section:contributionoffinites}, \ref{section:actionsonplanes}, and \ref{section:cokernels} contain parallel computations, for all $73$ of the split three-dimensional 
 crystallographic groups, of the first and second summands from
 the splitting formula; Sections \ref{section:fundamentaldomains} and \ref{section:contributionoffinites} describe the first summand, and Sections \ref{section:actionsonplanes} and \ref{section:cokernels}
 describe the second summand.  In Section \ref{section:summary}, we summarize the results of the calculations in Tables \ref{LoweralgebraicKtheoryofsplitthreedimensionalcrystallographicgroups1} and\ref{LoweralgebraicKtheoryofsplitthreedimensionalcrystallographicgroups2}, and give
 a pair of examples illustrating how to assemble the various pieces of the calculation from the preceding sections.
 
 Now let us give a more detailed, section-by-section description of this paper.
 
 Sections \ref{section:3Dpg}, \ref{section:arithmetic}, and \ref{section:classification} give a complete classification of the split three-dimensional crystallographic groups. The presentation is almost entirely
 self-contained,
 assumes no prior knowledge of crystallographic groups, and indeed involves little more than basic group theory and linear algebra. 
 
 Section \ref{section:3Dpg} describes the classification
 of \emph{three-dimensional point groups} (\emph{point groups} hereafter), 
 which are the subgroups of $O(3)$ that leave a lattice $L \leq \mathbb{R}^{3}$ invariant. Our treatment here is standard for the most part, and the basic elements 
 of our classification can be found in Chapter 4 of \cite{Sc80} (although we assume no prior familiarity with that source). One difference is that we need to find explicit groups of matrices for our computations
 in later sections, so we give a more detailed classification than any that we were able to find in the literature. We begin our analysis by proving that point groups are finite, and satisfy the \emph{crystallographic
 restriction} \cite[pg. 32]{Sc80}: every element of a point group has order $1$, $2$, $3$, $4$, or $6$. The next step is to classify the orientation-preserving point groups $H$. The classification heavily exploits the fact that every 
 $h \in H$ is a rotation about an axis (which we call a \emph{pole}, following \cite{Sc80}). Using a counting argument  (Proposition \ref{sums}; see also \cite[pg. 45]{Sc80}), one can enumerate all
 of the possible numbers of orbits of poles, and determine the possible orders of the elements $h$ that act on a given pole (Proposition \ref{OS}; also \cite[pgs. 46-49]{Sc80}). It turns out that the latter 
 numerical information
 completely determines the orientation-preserving point group $H$ up to conjugacy within $O(3)$; we carefully argue a particular case of this fact in our proof of Theorem \ref{class+}, which gives 
 simple descriptions of the orientation-preserving point groups. There are $11$ in all. It is then straightforward to classify the remaining point groups. If a point group $H$ contains the antipodal map $(-1)$, then
 it can be expressed as $\langle H^{+}, (-1) \rangle$, where $H^{+}$ is the orientation-preserving subgroup; thus there are also $11$ point groups that contain $(-1)$ (Theorem \ref{classinv}). The remaining point groups (which are not subgroups of $SO(3)$, but also do not contain the inversion $(-1)$)
 are all necessarily subgroups of index $2$ inside of the $11$ point groups containing the inversion, and may therefore be recovered as kernels of surjective homomorphisms $\phi: H \rightarrow \mathbb{Z}/2$, where $H$ contains
 the inversion $(-1)$. This scheme of classification is carried out in Subsection \ref{subsection:pointgroupremaining}; there are $10$ additional groups of this last type, making $32$ in all. The section
 concludes with an attempt at an intuitive description of the ``standard point groups", which are simply the preferred forms (up to conjugacy) of the point groups that are used throughout the
 rest of the paper. (We also describe a few non-standard point groups that arise naturally in our arguments.) The standard point groups are described by their generators in Tables \ref{orientationpreservingpointgroups} and \ref{otherpointgroups}.    
 
 Section \ref{section:arithmetic} contains a detailed classification of the possible arithmetic equivalence classes (Definition \ref{definition:arithmeticequivalence}, which is taken from {\cite[pg. 34]{Sc80}) 
 of pairs $(L, H)$, where $L$ is a lattice and $H$ is a point group such that $H \cdot L = L$.
 We will eventually show (Theorem \ref{theorem:arithmeticsplit}) that an arithmetic equivalence class uniquely determines a split crystallographic group up to isomorphism, so Section \ref{section:arithmetic}
 contains the heart of the classification of split crystallographic groups. Our general approach to the classification of the pairs $(L,H)$ is as follows. We begin with a standard point group $H$; for the sake
 of illustration, let us assume that $H = S_{4}^{+} \times (-1)$, the group of all signed permutation matrices. We can then deduce certain facts about the lattice $L$. For instance, $H$ 
 contains rotations about the coordinate axes, so an elementary argument (Lemma \ref{bigimp}(1)) shows that there are lattice points on each of the coordinate axes. Moreover, since $H$ acts transitively
 on the coordinate axes, the lattice points on the coordinate axes having minimal norm must all have the same norm, which we can assume is $1$ up to arithmetic equivalence (which permits rescaling of
 the lattice $L$). It follows directly that $\mathbb{Z}^{3} \leq L$, and that $c(1,0,0)$, $c(0,1,0)$, and $c(0,0,1)$ are not in $L$ if $0< c < 1$. (This conclusion is recorded in Proposition \ref{full}(3), but in 
 somewhat different language.) From here, it is straightforward to argue that there are only three possibilities for the lattice $L$ -- see Lemma \ref{2X} and Corollary \ref{LC}. It is then possible to show that
 all three of these possible lattices result in different arithmetic classes of pairs $(L, H)$ (Theorem \ref{noredundancy}). The arguments in all of Section \ref{section:arithmetic} follow the same pattern: 
 for a fixed standard point group $H$, we attempt to ``build" $L$ using properties of $H$, as in the above example. We are able to show that $L$ can always be chosen from a list of only $7$ lattices, which are
 described by generating sets in Corollaries \ref{LC} and \ref{LP}. The arguments from Section \ref{section:arithmetic}, while elementary, are significantly more detailed than what can be found 
 in \cite{Sc80}. We arrive at a total of $73$
 arithmetic classes, which agrees with the count from \cite[pg. 34]{Sc80}.
 
 Section \ref{section:classification} contains the crucial Theorem \ref{theorem:arithmeticsplit}, which shows that the arithmetic classes of pairs $(L,H)$ are in exact correspondence with 
 the isomorphism classes
 of split crystallographic groups. In the proof, we appeal to Ratcliffe \cite{Ra94}, which is the only place where the argument of Sections \ref{section:3Dpg}, \ref{section:arithmetic}, and \ref{section:classification}
 fails to be self-contained. The split three-dimensional crystallographic groups are classified up to isomorphism in Table \ref{splitcrystallographicgroups}. One interesting feature of our classification
 is that all $73$ groups are contained in $7$ basic maximal groups as subgroups of finite index. This fact will be heavily used in later sections.
  We denote these maximal groups $\Gamma_{i}$ ($i =1, \ldots, 7$). 
 
 Section \ref{section:EVC} contains a proof of the main splitting result, Theorem \ref{theorem:splitting2}. This section is independent of the previous sections, and the main results apply generally,
 to all three-dimensional crystallographic groups, not just to the split ones. We begin by giving an explicit description of a model for $E_{\mathcal{VC}}(\Gamma)$, for any crystallographic group 
 $\Gamma$ (our construction comes from \cite{Fa10}). Begin with a copy of $\mathbb{R}^{3}$, suitably cellulated to make it a $\Gamma$-CW complex. This is a model for $E_{\mathcal{FIN}}(\Gamma)$. 
 For each $\ell \in L \subseteq \mathbb{R}^{3}$ that generates a maximal cyclic subgroup of $L$, we define a ``space of lines" $\mathbb{R}^{2}_{\ell}$, consisting of the set of 
 all lines $\widehat{\ell} \subseteq \mathbb{R}^{3}$
 having $\ell$ as a tangent vector. Each space $\mathbb{R}^{2}_{\ell}$ is isometric to $\mathbb{R}^{2}$ with respect to a suitable metric (see Subsection \ref{subsection:constructEVC}). The space
 $\mathbb{R}^{3} \ast \coprod_{\langle \ell \rangle} \mathbb{R}^{2}_{\ell}$ is a model for $E_{\mathcal{VC}}(\Gamma)$ (Proposition \ref{prop:EVC}). The general form of this classifying space
 allows us to deduce a preliminary splitting result (Proposition \ref{proposition:splitting1}):
\[
\begin{split}
H_{\ast}^{\g}(&E_{\vc}(\g); \mathbb{KZ}^{-\infty}) \cong \\
&H_{\ast}^{\g}(E_{\fin}(\g); \mathbb{KZ}^{-\infty}) \oplus  \bigoplus_{\langle \ell \rangle \in \mathcal T} H_n^{\Gamma(\ell)}(E_{\fin}(\Gamma(\ell))\rightarrow
E_{\vc_{\langle \ell \rangle}}(\Gamma(\ell));  \mathbb{KZ}^{-\infty})).
\end{split}
\]
Here $\Gamma(\ell)$ is the (finite index) subgroup of $\Gamma$ that takes the line $\ell$ to a line parallel to $\ell$ (possibly reversing the direction), 
and $\mathcal{VC}_{\langle \ell \rangle}$ is the family of subgroups consisting of: i) finite subgroups of
$\Gamma(\ell)$ and ii) virtually cyclic subgroups of $\Gamma(\ell)$ that contain a translation $\widetilde{\ell} \in L$ parallel to $\ell$. The indexing set $\mathcal{T}$ consists of a single choice of
maximal cyclic subgroup $\langle \ell \rangle \leq L$ from each $H$-orbit. 
 
 The next step is to compute the sum of cokernels on the right side of the formula above. We are thus led to consider the classifying space $E_{\mathcal{VC}_{\langle \ell \rangle}}(\Gamma(\ell))$;
 we use the model $\mathbb{R}^{3} \ast \mathbb{R}^{2}_{\ell}$. 
 By making a detailed analysis of the possible 
 cell stabilizers in $E_{\mathcal{VC}_{\langle \ell \rangle}}(\Gamma(\ell))$ (Lemma \ref{lemma:negligiblesufficient} and Corollary \ref{corollary:cellulated}), we are able to conclude that the great majority of these stabilizers are ``negligible". Here the class of \emph{negligible} groups is carefully chosen
 so that $Wh_{n}(G) \cong 0$ when $G$ is negligible and $n \leq 1$, and so that the property of being negligible is closed under passage to subgroups. (See Definition \ref{definition:negligible1} and
 Lemma \ref{lemma:negligibleisomorphismtypes}.) As a result, we are able to argue (Propositions \ref{proposition:subcomplexfinvc} and \ref{proposition:borelfinvc}) that the only cells from the classifying space $E_{\mathcal{VC}_{\langle \ell \rangle}}(\Gamma(\ell))$ that make a contribution to
 the cokernels for $n \leq 1$ come from a subcomplex $E$ that is a disjoint union
 $$ E = \coprod_{\widehat{\ell}} E_{\mathcal{VC}}(\Gamma_{\widehat{\ell}}). $$
 The general splitting formula (Theorem \ref{theorem:splitting2}) now follows readily from Proposition \ref{proposition:splitting1}.
 
The remainder of the paper uses Theorem \ref{theorem:splitting2} and the Farrell-Jones isomorphism to compute the lower algebraic $K$-theory of the split crystallographic groups. It now suffices 
to make separate computations of 
 $$ H_{n}^{\g}(E_{\fin}(\g); \mathbb{KZ}^{-\infty}) \quad \text{and} \quad  \bigoplus_{\widehat{\ell} \in \mathcal{T}''}  H_n^{\Gamma_{\widehat{\ell}}}(E_{\fin}(\Gamma_{\widehat{\ell}}) \rightarrow  \ast;\;  \mathbb{KZ}^{-\infty}),$$
 where the latter sum is indexed over a choice from each $\Gamma$-orbit of a line $\widehat{\ell} \in \base$ with non-negligible stabilizer.  
 
 Our task in Section \ref{section:fundamentaldomains} is to describe explicit $\Gamma$-CW structures on $\mathbb{R}^{3}$, where $\Gamma$ ranges over the 
 groups $\Gamma_{i}$ ($i = 1, \ldots, 7$). This involves
 using Poincar\'{e}'s Fundamental Polyhedron Theorem (Theorem \ref{poincare}, adapted from \cite[pg. 711]{Ra94}) to produce a fundamental domain for the action of each $\Gamma_{i}$, which leads
  to the desired $\Gamma$-equivariant cellulation (Theorem \ref{cells}). We consider each group $\Gamma_{i}$ in great detail, producing the desired cellulation and recording the non-negligible cell stabilizers   
 (Theorems \ref{theorem:g1}, \ref{theorem:g2}, \ref{theorem:g3}, \ref{theorem:g4}, \ref{theorem:g5}, \ref{theorem:g6}, and \ref{theorem:g7}).
 
 In Section \ref{section:contributionoffinites}, we compute $H_{\ast}^{\g}(E_{\mathcal{FIN}}(\g); \mathbb{KZ}^{-\infty})$, for all $73$ split crystallographic groups $\Gamma$. 
 The section begins by summarizing the isomorphism types of the groups $Wh_{n}(G)$, where $n \leq 1$ and $G$ is a finite subgroup of a crystallographic group (see Table \ref{table:tableKtheoryEfin}). 
 Most of the groups $Wh_{n}(G)$ were known before (the reference \cite{LO09} collects the previously known results), but for three groups
 ($\mathbb{Z}/4 \times \mathbb{Z}/2$, $\mathbb{Z}/6 \times \mathbb{Z}/2$, and $A_{4} \times \mathbb{Z}/2$) we make original calculations of $Wh_{n}(G)$ ($n \leq 0$). These calculations
 are summarized in Theorems \ref{subsubsection:Z4Z2}, \ref{subsubsection:Z6Z2}, and \ref{subsubsection:A4Z2}, respectively. Once we have understood the above groups $Wh_{n}(G)$ ($n \leq 1$), 
 we are ready to use a spectral sequence due to Quinn \cite{Qu82} to compute the groups $H_{\ast}^{\g}(E_{\mathcal{FIN}}(\g); \mathbb{KZ}^{-\infty})$. The main additional ingredient that we will need
 is information about the cell stabilizers in a model for $E_{\mathcal{FIN}}(\Gamma)$, where $\Gamma$ is any of the $73$ split crystallographic groups. It is here that we use the fact that each 
 split crystallographic group $\Gamma$ is a finite index subgroup of some $\Gamma_{i}$, where $i \in \{ 1, \ldots, 7 \}$. The model for $E_{\mathcal{FIN}}(\Gamma_{i})$ (which was explicitly described
 in Section \ref{section:fundamentaldomains}) is also naturally a model for $E_{\mathcal{FIN}}(\Gamma)$; it is necessary only to recompute the cell stabilizer information for the action of the smaller group.
 This is done using a simple procedure (Procedure \ref{procedure:finitepart}) and the resulting cell stabilizer information is recorded in Tables \ref{finitesubgroupsingamma1},
 \ref{finitesubgroupsingamma2}, \ref{finitesubgroupsingamma34}, \ref{finitesubgroupsingamma67}, and \ref{finitesubgroupsingamma5}. From here, it is usually straightforward to compute
 $H_{\ast}^{\g}(E_{\mathcal{FIN}}(\g); \mathbb{KZ}^{-\infty})$: in almost all cases, the vertices make the only contribution to the calculation, so the group in question is a direct sum of
 $K$-groups of the vertex stabilizers. (A more precise statement is given in Lemma \ref{lemma:computational}.) We can therefore obtain a computation simply by referring to the relevant tables.
 There are five more difficult cases, in which there is a non-trivial contribution from the edges. We make detailed calculations of $H_{\ast}^{\g}(E_{\mathcal{FIN}}(\g); \mathbb{KZ}^{-\infty})$
 in each of these cases; the calculations are contained in Examples \ref{example:gamma5}, \ref{example:c6+-}, \ref{example:c6+}, \ref{example:d6+}, and \ref{example:d''6}. Examples
 \ref{example:c6+} and \ref{example:d''6} are especially notable, since they provide the first examples of infinite groups with torsion such that  $\widetilde{K}_{0}(\mathbb{Z}\Gamma)$ has elements of infinite order.  The calculations of Section \ref{section:contributionoffinites} are summarized in Table \ref{thefinitepart}. 
 
 The final step is to compute the second summand from Theorem \ref{theorem:splitting2}. This is done in Sections \ref{section:actionsonplanes} and \ref{section:cokernels}, which are organized like
 Sections \ref{section:fundamentaldomains} and \ref{section:contributionoffinites}. In Section \ref{section:actionsonplanes}, we must consider the action of each group $\Gamma_{i}$ ($i = 1, \ldots, 7$)
 on the associated space of lines $\base$, which (up to isometry) is a countably infinite disjoint union of planes. We are able to show that, for each $\Gamma_{i}$ ($i=1, \ldots, 5$), only two or three of these planes make a contribution to $K$-theory, and that no plane makes a contribution to $K$-theory when $i = 6$ or $7$
 (Proposition \ref{proposition:finiteT''}). It therefore becomes feasible to determine the required actions and their fundamental domains; this is done in Theorems \ref{theorem:gamma1T},
 \ref{theorem:gamma2T}, \ref{theorem:gamma3T}, \ref{theorem:gamma4T}, and \ref{theorem:gamma5T}, where we also compute (one possible choice of) the indexing set $\mathcal{T}''$ explicitly,
 for all of the groups $\Gamma_{i}$, $i=1, \ldots, 5$. We describe each of the lines $\widehat{\ell} \in \mathcal{T}''$ by an explicit parametrization, and compute the \emph{strict stabilizer group}
 $\overline{\Gamma}_{\widehat{\ell}} = \{ \gamma \in \Gamma \mid \gamma_{\mid \widehat{\ell}} = id_{\widehat{\ell}} \}$, where $\Gamma$ is any of the groups $\Gamma_{i}$, $i = 1, \ldots, 5$. 
 
 In the beginning of Section \ref{section:cokernels}, we describe how to compute the indexing sets $\mathcal{T}''$ and the strict stabilizers of lines $\widehat{\ell} \in \mathcal{T}''$ for all of the
 split crystallographic groups $\Gamma$. This involves a simple procedure (Procedure \ref{procedure:nilpart1}) that computes $\mathcal{T}''$ and the strict stabilizers of lines $\widehat{\ell} \in \mathcal{T}''$
 for a group $\Gamma'$, provided that the latter information is already known for a larger group $\Gamma$ such that $[\Gamma : \Gamma'] < \infty$. Procedure \ref{procedure:nilpart1} is directly analogous
 to Procedure \ref{procedure:finitepart}, and similarly exploits the fact that every split crystallographic group sits inside of one of the $\Gamma_{i}$, $i = 1, \ldots, 7$. The strict stabilizer information
 and $\mathcal{T}''$ is recorded in Tables \ref{table:ssgamma1}, \ref{table:ssgamma2}, \ref{table:ssgamma3}, \ref{table:ssgamma4}, and \ref{table:ssgamma5}. (In particular, it follows from our calculations
 that $\mathcal{T}''$ is always finite, as we claimed above.) We then describe a procedure (Procedure \ref{procedure:computethestabilizer}) that computes the stabilizer of a line, provided that its 
 strict stabilizer is given to us. Using this procedure, we compute the stabilizers of all of the lines in $\mathcal{T}''$ (for arbitrary $\Gamma$); the results are summarized in Tables \ref{table:vc1},
 \ref{table:vc2}, \ref{table:vc3}, \ref{table:vc4}, and \ref{table:vc5}.  It is then enough to determine the isomorphism types of the relevant cokernels from 
 Theorem \ref{theorem:splitting2}; these are summarized in Table \ref{table:cokernels}. (We note also that Subsection \ref{subsection:cokernels} contains original computations of a few of the cokernels from 
 Table \ref{table:cokernels}.) By the end of Section \ref{section:cokernels}, we have completely reduced the problem of computing the second summand from Theorem \ref{theorem:splitting2} to 
 a matter of consulting the relevant tables.
 
 Section \ref{section:summary} contains Tables \ref{LoweralgebraicKtheoryofsplitthreedimensionalcrystallographicgroups1} and \ref{LoweralgebraicKtheoryofsplitthreedimensionalcrystallographicgroups2}, 
 which complete summarize  the isomorphism types of the lower algebraic $K$-groups
 of the split crystallographic groups $\Gamma$. (A group is omitted from these table if all of its $K$-groups are trivial.) Section \ref{section:summary} also contains a few examples, which are intended to help
 the reader to assemble the calculations in this paper.
 
It is natural to ask if our arguments can be generalized to other classes of crystallographic groups. The first class to consider is that of the remaining three-dimensional crystallographic
groups -- there are 146 more up to isomorphism. We expect that all of the basic arguments of this paper will still be applicable. The splitting formula (Theorem \ref{theorem:splitting2})
applies to all three-dimensional crystallographic groups (as we have argued in Section \ref{section:EVC}). Moreover, each three-dimensional 
crystallographic group sits inside of one of the $\Gamma_{i}$ ($i=1, \ldots, 7$) as a subgroup of finite index, so some procedure for passing to subgroups of finite index should still work. It seems likely that
the classification itself, and the sheer number of cases, will be the biggest obstacles to a complete calculation of the lower algebraic $K$-theory for all $219$ three-dimensional crystallographic groups. 
The authors intend to complete such a calculation in later work.

The obstacles to extending our arguments to dimension four seem much more substantial. Aside from the greater difficulty of classifying four-dimensional crystallographic groups (and the large number
of such groups), it seems that some basic features of our approach are likely to fail. For instance, the splitting formula in Theorem \ref{theorem:splitting2} depends on having a large number of negligible cell
stabilizers, which in turn depends heavily on the low codimension of cells in $\mathbb{R}^{3}$ and $\mathbb{R}^{2}_{\ell}$ (see, especially, Lemma \ref{lemma:negligiblesufficient} and Corollary \ref{corollary:cellulated}). It is therefore not clear whether an analogous formula can be proved in
dimension four. In any event, higher-dimensional cells should make more contributions to the calculations in dimension four, which will make these calculations much more complicated.  
  
\vskip 20pt
\centerline{\bf Acknowledgments}

\vskip 10pt

The authors would like to thank Tom Farrell for originally suggesting this project to us, and for his invaluable encouragement and support
while it was being completed. We are also grateful to Jean-Fran\c{c}ois Lafont for his encouragement,
many helpful discussions, and for his comments on preliminary versions of this manuscript.
The graphics in
this paper were kindly produced by Dennis Burke.  

This work is dedicated to the memory of Almir Alves. Pedro Ontaneda has written a memorial essay describing Almir's remarkable life. 
It can be found (at the time
of this writing) at: 
\begin{center}
\emph{www.math.binghamton.edu/dept/alves/pedro/Almir2.html} 
\end{center}

Almir grew up in poverty, attending school for the first time when he was ten years old. He was a lifelong ``fighter", who flourished through
perseverance, and became a PhD mathematician
with a gift for explaining complicated ideas in a simple way. He is greatly missed by all of us.

This project was  supported in part by  the NSF, under  the second author's  grants DMS-0805605 and DMS-1207712. 

\tableofcontents

\section{Three-Dimensional Point Groups} \label{section:3Dpg}

In this section, we give a complete classification of the $32$ three-dimensional point groups. Our treatment is self-contained, but
the principles of the classification come from \cite[Chapter 4]{Sc80}. Our goal is to describe the point groups as explicit groups of
matrices.

\subsection{Preliminaries} \label{subsection:preliminary}

\begin{definition} \label{definition:first}
A \emph{lattice} in $\mathbb{R}^{n}$ is a discrete, cocompact subgroup of the additive group
$\mathbb{R}^{n}$. 

We say that $H \leq O(n)$ is a \emph{point group} (of dimension $n$) if there is
a lattice $L \leq \mathbb{R}^{n}$ such that $H$ leaves $L$ invariant, i.e.,
$H \cdot L = L$.  For the rest of the paper, we will be interested only in the case $n=3$, so
``point group" will refer to a three-dimensional point group, and ``lattice" will mean a lattice
in $\mathbb{R}^{3}$.
\end{definition}

Our first goal is to classify the point groups. We will prove some preliminary facts in this subsection, and complete the classification in 
Subsection \ref{subsection:pointgroupremaining}.

First, we set some conventions. 
If $\ell$ is a line defined by a system $\mathcal{S}$ of equations, we sometimes denote this line $\ell(\mathcal{S})$.
For instance, $\ell(y=z=0)$ denotes the $x$-axis.  Similarly, if a plane $P$ is defined by the equation $E$, we sometimes
denote this plane $P(E)$.  For instance, $P(z=0)$ denotes the $xy$-plane.

\begin{lemma} \label{basics} Let $H$ be a point group.
\begin{enumerate}
\item $|H| < \infty$;
\item Any $h \in H$ leaves some line $\ell$ invariant:  $h \cdot \ell = \ell$.  If $h \neq 1$
is orientation-preserving, then $h$ fixes a unique line $\ell$, and acts as a rotation about this line.
\item (The Crystallographic Restriction) If $h \in H$, then $|h| \in \{ 1, 2, 3, 4, 6 \}$. 
\item Let $\ell \subseteq \mathbb{R}^{3}$ be a one-dimensional subspace.  The group $H^{+}_{\ell} = \{ h \in H \cap SO(3) \mid
h_{\mid \ell} = id_{\ell} \}$ is a cyclic group of rotations about the axis $\ell$, of order $1$, $2$, $3$, $4$, or $6$.  
\end{enumerate}
\end{lemma}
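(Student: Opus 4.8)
The plan is to prove the four assertions in order, as each builds on its predecessors. For (1), fix a $\mathbb{Z}$-basis $e_1, e_2, e_3$ of the rank-three lattice $L$ and put $R = \max_i \|e_i\|$; since $L$ is discrete, the closed ball of radius $R$ contains only finitely many lattice points, say a set $S$. Each $h \in H$ is orthogonal and hence preserves norms, so $h(e_i) \in L$ has norm at most $R$ and thus lies in $S$. An orthogonal transformation is determined by its values on a basis, so $h \mapsto (h e_1, h e_2, h e_3)$ embeds $H$ into the finite set $S^3$, giving $|H| < \infty$. For (2), I would read off the structure of $h$ from its eigenvalues: the characteristic polynomial is a real cubic, so $h$ has a real eigenvalue, which orthogonality forces to be $\pm 1$, and the corresponding eigenline $\ell$ satisfies $h \cdot \ell = \ell$. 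If in addition $h \in SO(3)$ and $h \neq 1$, then $\det h = 1$ forces the real eigenvalue to be $+1$ with a one-dimensional eigenspace (a two-dimensional $+1$-eigenspace would make the third eigenvalue $1$ as well, hence $h = 1$), while $h$ restricts to a nontrivial rotation on $\ell^\perp$; thus the pointwise-fixed line is unique and is the rotation axis.

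For (3), the essential point is that, expressed in a $\mathbb{Z}$-basis of $L$, each $h$ is an integer matrix, so $\mathrm{tr}(h) \in \mathbb{Z}$ independently of the basis. When $h \in SO(3)$ its eigenvalues are $1, e^{\pm i\theta}$, so $\mathrm{tr}(h) = 1 + 2\cos\theta \in \mathbb{Z}$ forces $2\cos\theta \in \{-2,-1,0,1,2\}$, hence $\theta \in \{\pi, 2\pi/3, \pi/2, \pi/3, 0\}$ and $|h| \in \{2,3,4,6,1\}$. For orientation-reversing $h$ I would reduce to this case by observing that $-I$ preserves every lattice, so $g := -h = (-I)h$ is an orthogonal lattice-preserving map lying in $SO(3)$; the $SO(3)$ analysis then applies to $g$, and since $-I$ is central one has $h^k = (-1)^k g^k$, so a short finite check (using that $g^k \in SO(3)$ can never equal $-I$) pins $|h|$ down in terms of $|g|$ and lands it in $\{2,4,6\}$.

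Finally, for (4): each nontrivial element of $H^+_\ell$ fixes $\ell$ pointwise, so by (2) its unique axis is $\ell$, whence $H^+_\ell$ consists of rotations about the common axis $\ell$ and embeds into the circle group $SO(2)$ via the rotation angle. A finite subgroup of $SO(2)$ is cyclic, and $H^+_\ell$ is finite by (1), so $H^+_\ell$ is cyclic; a generator is an element of $H$, so its order, which equals $|H^+_\ell|$, lies in $\{1,2,3,4,6\}$ by (3). I expect the main obstacle to be part (3): the trace-integrality argument disposes of $SO(3)$ immediately, but determining the exact orders of the orientation-reversing elements (the rotoreflections) requires the $-I$ reduction and careful bookkeeping relating $|h|$ to $|-h|$.
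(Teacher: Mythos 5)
Your proof is correct. Parts (1), (2), and (4) follow essentially the paper's line: the paper proves finiteness via finite-index point stabilizers in $L$ where you embed $H$ into a finite set of triples of short lattice vectors, but both rest on the same two facts (discreteness of $L$ plus norm preservation), and your treatment of (2) and (4) is just a fleshed-out version of the paper's eigenvalue argument and its ``straightforward observation.'' The real divergence is in (3), the crystallographic restriction. The paper's proof is geometric: it produces a nonzero lattice vector $v_2$ of minimal norm in the plane perpendicular to the rotation axis, observes that the orbit $\{v_2, hv_2, \dots, h^{n-1}v_2\}$ consists of $n$ points on a circle that are pairwise at distance at least $\|v_2\|$, concludes $n \le 6$ from the circumference, and kills $n=5$ by computing $\|v_2 + h^2v_2\| = 2\cos(2\pi/5)\,\|v_2\| < \|v_2\|$. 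You instead use the classical trace argument: $h$ is an integer matrix in a $\mathbb{Z}$-basis of $L$, so $\operatorname{tr}(h) = 1 + 2\cos\theta \in \mathbb{Z}$, which pins $\theta$ down immediately. Your route is shorter and dispatches $n=5$ for free, at the cost of invoking the eigenvalue normal form; the paper's route is more elementary and self-contained (it needs only the minimality of $\|v_2\|$, which the paper has already set up via its Lemma on perpendicular lattice vectors). Both handle orientation-reversing elements by the same reduction through $-I$, and your bookkeeping relating $|h|$ to $|-h|$ (namely $|h| = |g|$ for $|g|$ even and $|h| = 2|g|$ for $|g|$ odd, using that $g^k \in SO(3)$ never equals $-I$) is the argument the paper leaves implicit. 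One cosmetic remark: in (2) your phrase ``forces the real eigenvalue to be $+1$'' is slightly loose, since a half-turn also has $-1$ as a real eigenvalue; what your argument actually shows, and all you need, is that $+1$ occurs as an eigenvalue and its eigenspace is one-dimensional when $h \ne 1$.
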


\begin{proof}
\begin{enumerate}
\item 
Choose an element $v \in L - \{ 0 \}$. Since $L$ is discrete, $H \leq O(3)$, and $H \cdot L = L$, there
can be at most finitely many translates of $v$ under the action of $H$. It follows that the stabilizer group
$H_{v}$ satisfies $[ H : H_{v} ] < \infty$. 

We next choose some  
$v_{1} \in L$ such that
$\{ v, v_{1} \}$ is a linearly independent set, and conclude by the same reasoning that
$[H_{v}, H_{\{ v, v_{1} \}}] < \infty$, where $H_{\{ v, v_{1} \}}$ is the subgroup of $H$ that fixes
both $v$ and $v_{1}$. 

It follows that $H_{\{v, v_{1} \}}$ has finite index in $H$. But clearly $H_{\{v, v_{1} \}}$ has order at most
$2$ (since  it acts orthogonally on $\mathbb{R}^{3}$ and fixes a plane), so $H$ itself must be finite.

\item Let $h \in SO(3)$.  It follows from basic linear algebra that $1$ is an eigenvalue of $h$, so $h$ fixes a non-trivial subspace.
  If $h$ fixed a subspace of dimension greater than or equal to $2$, then $h$ would necessarily be the identity
since $h \in SO(3)$.  The second statement follows, since $h$ must act in an orientation-preserving fashion on the plane
perpendicular to $\ell$, which means that it is a rotation in this plane.   

The first statement follows easily from the second one, since any $h \in O(3) - SO(3)$ can be expressed as $h = (-1)\hat{h}$
where $(-1) \in O(3)$ is the antipodal map and $\hat{h}$ is orientation-preserving.

\item It is sufficient to prove the statement for the case in which $h \in SO(3) - \{ 1 \}$, 
since any $\bar{h} \in O(3) - SO(3)$ can be expressed
as a product of an orientation-preserving element and the antipodal map $(-1)$.  
By (2), $h$ fixes a line, which we can assume is 
the $z$-axis.  

Let $L$ be a lattice that is invariant under the action of $H$ on $\mathbb{R}^{3}$.
We claim that the $xy$-plane contains a non-zero vector $v \in L$.  Let $v_{1} \in L - \ell(x=y=0)$.  The vector
$v_{1} - hv_{1}$ is non-zero and lies in the $xy$-plane, proving the claim.  
Now suppose that $v_{2}$ is the smallest vector
in $L \cap P(z=0)$; we can assume that $|| v_{2} || = 1$.  

We consider the set $\{ v_{2}, hv_{2}, \ldots , h^{n-1} v_{2} \}$, where $|h| = n$.  No two of the elements in this set
are equal, and any two $h^{i} \cdot v_{2}$, $h^{j} \cdot v_{2}$, $(i \neq j)$ must satisfy
$$||h^{i} \cdot v_{2} - h^{j} \cdot v_{2} || \geq 1,$$
by the minimality of the norm of $v_{2}$.  Moreover,
$$ \{ v_{2}, \ldots, h^{n-1}v_{2} \} \subseteq \{ (x,y,0) \mid x^{2} + y^{2} = 1 \}.$$
Thus, we have $n$ points arranged on the unit circle in such a way that no two are closer
than $1$ unit.  It follows easily from this that $n \leq 6$, since the circumference of the circle is $2\pi$.

We now rule out the case $|h| = 5$.  If $|h|=5$, then $h$ is rotation about the $z$-axis through $2\pi / 5$ radians.
One checks that
$$ || v_{2} + h^{2} \cdot v_{2}|| = 2 \cos \left( \frac{2\pi}{5} \right) < 1.$$
This is a contradiction.
\item This is a straightforward observation based on (2) and (3).
\end{enumerate}
\end{proof} 

\subsection{Classification of Orientation-Preserving Point Groups} \label{subsection:classifypointgroups}
Now we turn to the classification of point groups, based on \cite{Sc80}.
We would like to point out that Propositions \ref{sums} and \ref{OS}, and Definition \ref{def:pole}, are based on \cite[pgs. 45-48]{Sc80}.
Our contribution here is to describe the point groups explicitly in terms of generating sets.

\begin{definition}\label{def:pole}
Let $H \leq SO(3)$ be a non-trivial point group, and let $h \in H - \{ 1 \}$.  
If $\ell$ is a one-dimensional subspace of $\mathbb{R}^{3}$ that is fixed by $h$, then we say that
$\ell$ is a \emph{pole} of $h$.  We also say that $\ell$ is a pole of $H$.  
\end{definition}

Suppose $H \leq SO(3)$ is a non-trivial point group.  Lemma \ref{basics}(2) implies that each $\gamma \in H - \{ 1 \}$
has a unique pole.  
We let $\mathcal{L}$ be the set of all poles of $H$.  This set must be finite by Lemma \ref{basics}(1,2).
For each $\ell \in \mathcal{L}$, choose unit vectors $v_{\ell}^{+}, v_{\ell}^{-} \in
\ell$ ($ v_{\ell}^{+} \neq v_{\ell}^{-}$).  Let $\mathcal{L}^{+} = \{ v^{+}_{\ell} \mid \ell \in \mathcal{L} \}$
and $\mathcal{L}^{-} = \{ v_{\ell}^{-} \mid \ell \in \mathcal{L} \}$.  It is not difficult to see that $H$ acts on the set
$\mathcal{L}$ (and, thus, on the set $\mathcal{L}^{+} \cup \mathcal{L}^{-}$).  We will sometimes
call the elements of $\mathcal{L}^{+} \cup \mathcal{L}^{-}$ \emph{pole vectors}.
 We let $\mathcal{T}$ denote a choice of orbit
representatives of $\mathcal{L}^{+} \cup \mathcal{L}^{-}$ under this action.  Finally, we let $H_{v} = \{ h \in H \mid hv = v \}$,
and let $\mathcal{O}_{H}(v)$ be the orbit of $v$ under the action of $H$.

\begin{proposition} \label{sums}
Let $H \leq SO(3)$ be a non-trivial point group.     
$$ 2 - \frac{2}{|H|} = \sum_{v \in \mathcal{T}} \left( 1 - \frac{1}{|H_{v}|} \right).$$
\end{proposition}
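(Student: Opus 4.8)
The plan is to run the classical double-counting argument over the set of pole vectors $\mathcal{L}^{+} \cup \mathcal{L}^{-}$. Specifically, I would compute in two different ways the cardinality of the set
\[
S = \{ (h, v) \mid h \in H - \{ 1 \}, \ v \in \mathcal{L}^{+} \cup \mathcal{L}^{-}, \ hv = v \}.
\]
All the sets and sums involved are finite because $H$ is finite by Lemma \ref{basics}(1). For the first count, I would group the pairs by their first coordinate. By Lemma \ref{basics}(2), each nontrivial $h \in H$ acts as a rotation about a unique line, and therefore fixes exactly the two antipodal unit vectors lying on that line; both of these are pole vectors by definition. Hence each $h \in H - \{ 1 \}$ contributes exactly two elements to $S$, giving $|S| = 2(|H| - 1)$.

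For the second count, I would group the pairs by their second coordinate. For a fixed pole vector $v$, the number of $h \in H - \{ 1 \}$ fixing $v$ is $|H_{v}| - 1$, so $|S| = \sum_{v \in \mathcal{L}^{+} \cup \mathcal{L}^{-}} (|H_{v}| - 1)$. The next step is to collapse this sum onto orbit representatives. Since stabilizers of points in a common orbit are conjugate, $|H_{v}|$ is constant along each orbit, and by orbit--stabilizer the orbit of $v$ has cardinality $|\mathcal{O}_{H}(v)| = |H| / |H_{v}|$. Grouping the sum according to the orbits indexed by $\mathcal{T}$ then yields
\[
\sum_{v \in \mathcal{L}^{+} \cup \mathcal{L}^{-}} (|H_{v}| - 1) = \sum_{v \in \mathcal{T}} \frac{|H|}{|H_{v}|}\bigl(|H_{v}| - 1\bigr) = |H| \sum_{v \in \mathcal{T}} \left( 1 - \frac{1}{|H_{v}|} \right).
\]
Equating the two counts gives $2(|H| - 1) = |H| \sum_{v \in \mathcal{T}} (1 - 1/|H_{v}|)$, and dividing through by $|H|$ produces the claimed identity $2 - 2/|H| = \sum_{v \in \mathcal{T}} (1 - 1/|H_{v}|)$.

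The argument is essentially bookkeeping, so the only points that genuinely require care — and thus the main obstacle — are the two structural facts underlying the first count: that each nontrivial rotation fixes \emph{exactly} two pole vectors (no more, since an element of $SO(3)$ fixing a third independent unit vector would fix a plane and hence be the identity; no fewer, since both antipodal unit vectors on the axis are fixed), and that these fixed unit vectors are pole vectors in the precise sense of Definition \ref{def:pole}. Both of these follow directly from Lemma \ref{basics}(2), so once that lemma is invoked the remainder of the proof is routine.
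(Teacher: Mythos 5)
Your proof is correct and is essentially the same argument as the paper's: the paper counts $\sum_{v}|H_{v}|$ separately over $\mathcal{L}^{+}$ and $\mathcal{L}^{-}$ (each nontrivial rotation being counted once in each sum, the identity $|\mathcal{L}^{\pm}|$ times) and then passes to orbit representatives via orbit--stabilizer, which is precisely your double count of the incidence set $S$ merely split into two halves. Your single-count phrasing is a slightly tidier packaging of the identical bookkeeping, and your justification that each nontrivial $h$ fixes exactly two pole vectors matches the paper's use of Lemma \ref{basics}(2).
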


\begin{proof}
First, we note that
\begin{enumerate}
\item $\displaystyle |H| - 1 + |\mathcal{L}^{+}| = \sum_{v^{+} \in \mathcal{L}^{+}} |H_{v^{+}}|$
\item $\displaystyle |H| - 1 + |\mathcal{L}^{-}| = \sum_{v^{-} \in \mathcal{L}^{-}} |H_{v^{-}}|$
\end{enumerate}
The proofs that (1) and (2) hold are identical.  Formula (1) follows directly from the observations that
$H_{v_{1}^{+}} \cap H_{v_{2}^{+}} = \{1\}$ if $v_{1}^{+} \neq v_{2}^{+}$, and that
$H$ is the union of the $H_{v}$ as $v$ ranges over $\mathcal{L}^{+}$ (Lemma
\ref{basics}(2)).  
Thus, every element of $H$ is counted exactly once
on the right side of (1), except for the identity, which is counted $|\mathcal{L}^{+}|$ times.

It follows from (1) and (2) that
\begin{eqnarray*}
2|H| - 2 = \sum_{v \in \mathcal{L}^{+} \cup \mathcal{L}^{-}} \left( |H_{v}| - 1 \right)  & \Rightarrow &
2 - \frac{2}{|H|} = \sum_{v \in \mathcal{L}^{+} \cup \mathcal{L}^{-}} \left( \frac{|H_{v}|}{|H|} - \frac{1}{|H|} \right) \\
& \Rightarrow & 2 - \frac{2}{|H|} = \sum_{v \in \mathcal{T}} \left( 1 - \frac{|\mathcal{O}_{H}(v)|}{|H|} \right) \\
& \Rightarrow & 2 - \frac{2}{|H|} = \sum_{v \in \mathcal{T}} \left( 1 - \frac{1}{|H_{v}|} \right).
\end{eqnarray*}
\end{proof}
 
\begin{proposition} \label{OS}
Let $1 \neq H \leq SO(3)$ be a point group.
The number $|\mathcal{T}|$ of orbits under the action of $H$ on $\mathcal{L}^{+} \cup \mathcal{L}^{-}$
is either $2$ or $3$.  If $|\mathcal{T}| = 2$, then $H$ leaves a line invariant, and is therefore a cyclic
group of rotations.  Its order must be $2$, $3$, $4$, or $6$.  If $|\mathcal{T}| = 3$, then let
$v_{1}$, $v_{2}$, $v_{3}$ be orbit representatives  chosen so that $|H_{v_1}| \leq |H_{v_{2}}| \leq |H_{v_{3}}|$.
We let $\alpha = |H_{v_1}|$, $\beta = |H_{v_2}|$, and $\gamma = |H_{v_{3}}|$.  The only possibilities for the 
triple $( \alpha , \beta , \gamma )$ and the order of $H$ are as follows:
\begin{enumerate}
\item $(2,2,n)$ ($n \in \{ 2,3,4,6 \}$), and $|H| = 2n$;
\item $(2,3,3)$, and $|H| = 12$; 
\item $(2,3,4)$, and $|H| = 24$.
\end{enumerate}
\end{proposition}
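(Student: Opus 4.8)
The plan is to derive everything arithmetically from the pole-counting identity of Proposition \ref{sums}, using the crystallographic restriction (Lemma \ref{basics}(3)) to cut down the solution set. Write $N = |H|$ and $k = |\mathcal{T}|$. First I would record constraints on the two sides of
\[
2 - \frac{2}{N} = \sum_{v \in \mathcal{T}} \left( 1 - \frac{1}{|H_{v}|} \right).
\]
Each representative $v$ is a pole vector, hence lies on the axis of some nontrivial rotation $h \in H$; as $h$ fixes its axis pointwise, $h \in H_{v}$ and so $|H_{v}| \geq 2$. Since $H_{v} \leq H$ we also have $|H_{v}| \leq N$, and Lemma \ref{basics}(3) forces $|H_{v}| \in \{2,3,4,6\}$. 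Thus every summand lies in $[\tfrac{1}{2}, 1)$, while the left-hand side lies in $[1,2)$ because $N \geq 2$. Since each term is $< 1$ but the total is $\geq 1$, we cannot have $k = 1$; since each term is $\geq \tfrac{1}{2}$ but the total is $< 2$, we cannot have $k \geq 4$. Hence $k \in \{2,3\}$, which is the first assertion.

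For $k = 2$ the identity reads $\tfrac{2}{N} = \tfrac{1}{|H_{v_1}|} + \tfrac{1}{|H_{v_2}|}$. As $|H_{v_i}| \leq N$ gives $\tfrac{1}{|H_{v_i}|} \geq \tfrac{1}{N}$, the right-hand side is at least $\tfrac{2}{N}$, with equality forcing $|H_{v_1}| = |H_{v_2}| = N$. Then every element of $H$ fixes the common axis $\ell$ through $v_1$ and $v_2$, so by Lemma \ref{basics}(4) the group $H$ is cyclic of rotations about $\ell$, of order $2$, $3$, $4$, or $6$. (The two representatives are simply the unit vectors $v_{\ell}^{\pm}$ on the unique pole.)

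For $k = 3$ I would rewrite the identity as
\[
\frac{1}{\alpha} + \frac{1}{\beta} + \frac{1}{\gamma} = 1 + \frac{2}{N} > 1,
\]
with $2 \leq \alpha \leq \beta \leq \gamma$ and each of $\alpha, \beta, \gamma$ in $\{2,3,4,6\}$. A short monotonicity sweep then finishes the enumeration: $\tfrac{3}{\alpha}$ bounds the sum, so $\alpha = 2$; next $\tfrac{2}{\beta} > \tfrac{1}{2}$ forces $\beta \in \{2,3\}$. If $\beta = 2$ the strict inequality holds for every admissible $\gamma$, yielding the family $(2,2,n)$ with $n \in \{2,3,4,6\}$, and solving for $N$ gives $\tfrac{1}{\gamma} = \tfrac{2}{N}$, i.e. $|H| = 2n$. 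If $\beta = 3$, then $\tfrac{1}{\gamma} > \tfrac{1}{6}$ forces $\gamma \in \{3,4\}$, giving $(2,3,3)$ with $|H| = 12$ and $(2,3,4)$ with $|H| = 24$, where in each case $N$ is read off by solving the displayed equation.

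The step that carries the real content, and that I would flag as the crux rather than a genuine obstacle, is the appeal to the crystallographic restriction. The pure Diophantine inequality $\tfrac{1}{\alpha}+\tfrac{1}{\beta}+\tfrac{1}{\gamma} > 1$ (the classical input for finite subgroups of $SO(3)$) also admits $(2,3,5)$ with $N = 60$, together with $(2,2,n)$ for $n = 5$ and $n \geq 7$. All of these are eliminated precisely because Lemma \ref{basics}(3) confines each $|H_{v}|$ to $\{2,3,4,6\}$; in particular the exclusion of stabilizers of order $5$ is exactly what removes the icosahedral group from the list. The remaining arithmetic is routine.
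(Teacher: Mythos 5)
Your proof is correct and follows essentially the same route as the paper: both derive $|\mathcal{T}|\in\{2,3\}$ from the bounds $\tfrac12\le 1-\tfrac1{|H_v|}<1$ applied to the identity of Proposition \ref{sums}, handle $|\mathcal{T}|=2$ by forcing both orbits to be singletons (equivalently $|H_{v_i}|=|H|$) so that $H$ fixes a line, and handle $|\mathcal{T}|=3$ by the Diophantine inequality $\tfrac1\alpha+\tfrac1\beta+\tfrac1\gamma>1$ pruned by the crystallographic restriction, which is exactly how the paper discards $(2,3,5)$ and the bad $(2,2,n)$. The only step you gloss over is that in the $|\mathcal{T}|=2$ case one must observe that $v_1,v_2$ cannot be linearly independent (else $H=1$), so they lie on a single axis; this is implicit in your parenthetical and matches the paper's argument.
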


\begin{proof}
Suppose that $|\mathcal{T}| = 1$.
 Proposition \ref{sums} implies that
$$ 2 - \frac{2}{|H|} = 1 - \frac{1}{|H_{v}|}.$$
This has no solutions, since the left side will always be at least $1$, and the right side
is less than $1$. Thus, $|\mathcal{T}|$ is never equal to $1$.

Suppose that $|\mathcal{T}| = 2$.  Let $\mathcal{T} = \{ v_{1} , v_{2} \}$.
By the previous Proposition
\begin{eqnarray*} 
2 - \frac{2}{|H|} = 2 - \frac{1}{|H_{v_1}|} - \frac{1}{|H_{v_2}|} & \Rightarrow & 
\frac{|H|}{|H_{v_1}|} + \frac{|H|}{|H_{v_2}|} =  2 \\
& \Rightarrow & |\mathcal{O}(v_1)| + |\mathcal{O}(v_2)| = 2.
\end{eqnarray*}  
It follows that $|\mathcal{O}(v_1)| = |\mathcal{O}(v_2)| = 1$, so $v_{1}$ and $v_{2}$ are both fixed by all of $H$.
If $v_{1}$ and $v_{2}$ formed a linearly independent set, it would follow that $H = 1$ (since every element of $H$ would then
act as the identity on the plane spanned by $v_{1}$, $v_{2}$).  We have ruled this out by our hypothesis.  The only possibility
is that $v_{1}$ and $v_{2}$ are in the same $1$-dimensional subspace.  
The rest of the picture is now clear:  every element of $H$ is a rotation about the
axis determined by $v_1$ and $v_2$.  The conclusion follows.

Suppose that $|\mathcal{T}| = 3$.  Let $\mathcal{T} = \{ v_{1}, v_{2}, v_{3} \}$.  We suppose that $v_{1}$, $v_{2}$, $v_{3}$
are chosen as in the statement of the proposition.   We apply Proposition \ref{sums}:
\begin{eqnarray*}
2 - \frac{2}{|H|} = 3 - \frac{1}{\alpha} - \frac{1}{\beta} - \frac{1}{\gamma} & \Rightarrow & 
\frac{1}{\alpha} + \frac{1}{\beta} + \frac{1}{\gamma} = 1 + \frac{2}{|H|}.  
\end{eqnarray*}
We are therefore led to consider solutions $( \alpha, \beta , \gamma)$ of the inequality:
$$ \frac{1}{\alpha} + \frac{1}{\beta} + \frac{1}{\gamma} > 1,$$
where $1 < \alpha \leq \beta \leq \gamma$.  The only such solutions are:
i) $(2,2,n)$ ($n$ arbitrary), ii) $(2,3,3)$, iii) $(2,3,4)$, and iv) $(2,3,5)$. 
If we keep in mind that each integer in these $3$-tuples is $|H_{v}|$, for some $v \in \mathbb{R}^{3}$,
where $H \leq SO(3)$ is a point group, then we see that each represents the order of a cyclic subgroup of $H$.
It follows from the crystallographic restriction that we can ignore the solution $(2,3,5)$ and all solutions
$(2,2,n)$ where $n \not \in \{ 1,2,3,4,6 \}$.  This leads us to the solutions described in (1) - (3) in the 
statement of the proposition. 

Suppose that $|\mathcal{T}| \geq 4$.  We consider the equation from the statement of Proposition \ref{sums}.
Note that each term in the sum on the right is at least $1/2$, since $|H_{v}| \geq 2$.  It follows that
the right side is at least $2$.  The left side is clearly less than $2$, so it is impossible that
$|\mathcal{T}| \geq 4$.  This completes the proof.
\end{proof}
 
\begin{theorem} \label{class+} (Classification of Orientation-Preserving Point Groups) 
Suppose that $H \leq SO(3)$ acts on a lattice $L$ in $\mathbb{R}^{3}$.  The group $H$ is conjugate within $O(3)$ to
one of the groups in Table \ref{orientationpreservingpointgroups}.

\begin{table}
\vskip 5pt
\renewcommand{\arraystretch}{2.5}
\[
\begin{array} {| l | l |}  \hline   
C_{1}^{+} = \left \langle\left(\begin{smallmatrix} 1 & 0 & 0 \\ 0 & 1 & 0 \\ 0 & 0 & 1 \end{smallmatrix} \right)\right \rangle   &
D_{2}^{+} = \left\langle \left( \begin{smallmatrix} -1 & 0 & 0 \\ 0 & 1 & 0 \\ 0 & 0 & -1 \end{smallmatrix} \right), 
\left( \begin{smallmatrix} -1 & 0 & 0 \\ 0 & -1 & 0 \\ 0 & 0 & 1 \end{smallmatrix} \right) \right\rangle  \\ 
\hline
C_{2}^{+} = \left\langle \left( \begin{smallmatrix} -1 & 0 & 0 \\ 0 & -1 & 0 \\ 0 & 0 & 1 \end{smallmatrix} \right) \right\rangle  &
D_{3}^{+} = \left\langle \left( \begin{smallmatrix} 0 & 1 & 0 \\ 0 & 0 & 1 \\ 1 & 0 & 0 \end{smallmatrix} \right),   
 \left( \begin{smallmatrix} 0 & -1 & 0 \\ -1 & 0 & 0 \\ 0 & 0 & -1 \end{smallmatrix} \right) \right\rangle \\ 
 \hline
C_{3}^{+} = \left\langle \left( \begin{smallmatrix} 0 & 1 & 0 \\ 0 & 0 & 1 \\ 1 & 0 & 0 \end{smallmatrix} \right) \right\rangle &  
D_{4}^{+} = \left\langle \left( \begin{smallmatrix} 0 & 1 & 0 \\ -1 & 0 & 0 \\ 0 & 0 & 1  \end{smallmatrix} \right), 
\left( \begin{smallmatrix} 1 & 0 & 0 \\  0 & -1 & 0 \\ 0 & 0 & -1 \end{smallmatrix} \right) \right\rangle \\
\hline
C_{4}^{+} = \left\langle \left( \begin{smallmatrix} 0 & -1 & 0 \\ 1 & 0 & 0 \\ 0 & 0 & 1 \end{smallmatrix} \right) \right\rangle &
A_{4}^{+} = \left\langle \left( \begin{smallmatrix} 1 & 0 & 0 \\  0 & -1 & 0 \\ 0 & 0 & -1 \end{smallmatrix} \right), 
\left( \begin{smallmatrix} 0 & 1 & 0 \\ 0 & 0 & 1 \\ 1 & 0 & 0 \end{smallmatrix} \right) \right\rangle \\
\hline
C_{6}^{+} = \left\langle \displaystyle \frac{1}{3} \left( \begin{smallmatrix}   2 & 2 & -1 \\ -1 & 2 & 2 \\ 
2 & -1 & 2 \end{smallmatrix} \right) \right\rangle &      
S_{4}^{+} = \left\langle \left( \begin{smallmatrix} 1 & 0 & 0 \\  0 & 0 & 1 \\ 0 & -1 & 0 \end{smallmatrix} \right), 
\left( \begin{smallmatrix} 0 & 1 & 0 \\ 0 & 0 & 1 \\ 1 & 0 & 0 \end{smallmatrix} \right) \right\rangle \\ \hline     
 & D_{6}^{+} = \left\langle \displaystyle 
\frac{1}{3}\left( \begin{smallmatrix} 2 & 2 & -1 \\ -1 & 2 & 2 \\ 2 & -1 & 2  \end{smallmatrix} \right), 
\left( \begin{smallmatrix} 0 & -1 & 0 \\  -1 & 0 & 0 \\ 0 & 0 & -1 \end{smallmatrix} \right) \right\rangle \\ \hline  
\end{array}  
\]
\caption{Orientation-Preserving Point Groups}
\label{orientationpreservingpointgroups}
\end{table}

\end{theorem}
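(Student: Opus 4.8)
The plan is to turn the numerical restrictions of Proposition~\ref{OS} into a classification up to conjugacy, by carrying out a geometric rigidity argument separately in each surviving case. Proposition~\ref{OS} leaves only finitely many possibilities: if $|\mathcal{T}| = 2$ then $H$ is cyclic of order $n \in \{2,3,4,6\}$; if $|\mathcal{T}| = 3$ then the triple $(\alpha,\beta,\gamma)$ is $(2,2,n)$ with $n \in \{2,3,4,6\}$ and $|H| = 2n$, or $(2,3,3)$ with $|H| = 12$, or $(2,3,4)$ with $|H| = 24$. Adjoining the trivial group $C_1^{+}$, these are precisely the eleven entries of Table~\ref{orientationpreservingpointgroups}, so it remains only to show that each numerical type determines $H$ up to conjugacy in $O(3)$ and to match it with the stated representative.

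First I would treat the two families that reduce to an axis computation. When $|\mathcal{T}| = 2$, Proposition~\ref{OS} already tells us that $H$ fixes a line and consists of rotations about it, hence is cyclic of order $n$ by Lemma~\ref{basics}(4); carrying the axis and a generating rotation into standard position by an orthogonal map conjugates $H$ to $C_n^{+}$, since any two cyclic rotation groups of equal order are conjugate in $SO(3) \leq O(3)$. For the dihedral type $(2,2,n)$, the $n$-fold pole $v_3$ has orbit $\{v_3, -v_3\}$, so the line $\langle v_3\rangle$ is $H$-invariant and the cyclic subgroup $H_{v_3}$ of order $n$ has index $2$. Any $g \in H \setminus H_{v_3}$ satisfies $g v_3 = -v_3$, and a rotation with eigenvalue $-1$ along $v_3$ is forced to be a half-turn about an axis perpendicular to $v_3$; thus $H = \langle H_{v_3}, g\rangle$ is dihedral of order $2n$, and normalizing $\langle v_3\rangle$ to the $z$-axis exhibits $H$ as conjugate to $D_n^{+}$.

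The heart of the argument is the polyhedral pair $(2,3,3)$ and $(2,3,4)$, and this is the case I would argue in full. The crucial geometric input is the half-turn composition formula: the product of two half-turns whose axes meet at angle $\theta$ is a rotation through $2\theta$. In the $(2,3,3)$ case the three $2$-fold axes form a single $H$-orbit, so the corresponding three involutions, together with the identity, make up a normal Klein four-subgroup; the product of two of these involutions is the third, whence $2\theta = \pi$ and the three axes are mutually perpendicular. Placing them on the coordinate axes forces $D_2^{+} \leq H$, and the order-$3$ elements must then cyclically permute the coordinate axes, pinning $H$ down as the standard $A_4^{+}$. The case $(2,3,4)$ runs in parallel: the squares of the three $4$-fold rotations are half-turns about the three $4$-fold axes and again form a Klein four-subgroup, so the same computation makes these axes orthogonal; normalizing them onto the coordinate axes identifies $H$ with the group $S_4^{+}$ of orientation-preserving signed permutation matrices.

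I expect the genuine obstacle to be exactly this rigidity step in the polyhedral cases: converting the purely numerical pole data of Proposition~\ref{OS} into a rigid configuration of mutually perpendicular principal axes. The two ingredients that make it work are the identification of the relevant Klein four-subgroup of half-turns (from the count and conjugacy of the involutions) and the $2\theta$ half-turn composition formula, which together force orthogonality; once the principal axes are normalized onto the coordinate axes, the remaining generators are determined and the final identification with the table entries is a routine matrix check. The competing solution $(2,3,5)$, which would give the icosahedral group, has already been discarded in Proposition~\ref{OS} via the crystallographic restriction, so it requires no separate treatment here.
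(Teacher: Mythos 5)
Your overall strategy is sound and genuinely different from the paper's. The paper proves rigidity only for the triple $(2,3,4)$, and does so by an averaging argument: it shows $(1+h+h^{2}+h^{3})v_{4}=0$ (else $H$ would fix a nonzero vector and be cyclic), pairs this with the $H$-invariance of the dot product to deduce that the six-element orbit $\mathcal{O}_{H}(v_{3})$ is of the form $B\cup -B$ with $B$ orthonormal, and then reads off the matrices of $h$ and $h_{1}$ in the basis $B$. You instead extract orthogonality of the principal axes from the half-turn composition formula together with a distinguished Klein four-subgroup. Both routes are legitimate, and your treatment of the cyclic, dihedral, and $(2,3,3)$ cases is correct: in the $(2,3,3)$ case the element count ($1+3+8=12$, so no elements of order $4$ or $6$, and an order-$3$ product would force $\langle r_{1},r_{2}\rangle\cong S_{3}$ to be a normal subgroup of index $2$ containing all three involutions, incompatible with the eight elements of order $3$) really does force the product of two of the three half-turns to be the third, hence the axes to be mutually perpendicular.

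The gap is in the $(2,3,4)$ case, where you assert that the squares of the three $4$-fold rotations ``again form a Klein four-subgroup, so the same computation'' applies. The same computation does \emph{not} apply: your justification in the $(2,3,3)$ case rested on $H$ having exactly three involutions, whereas a group of type $(2,3,4)$ has nine (six half-turns about the $2$-fold axes from the orbit of size $12$, plus the three squares of $4$-fold rotations), so the product of two of your distinguished half-turns could a priori be a half-turn about a $2$-fold axis, or an element of order $3$ or $4$, and closure of $\{1,r_{1},r_{2},r_{3}\}$ is exactly what needs proof. The step is repairable in the spirit of your argument: the set of three $4$-fold axes is $H$-invariant, and a rotation of order $4$ preserves no line other than its own axis, so a $4$-fold rotation $s$ about $\ell_{3}$ must swap $\ell_{1}$ and $\ell_{2}$; then $s^{2}$, a half-turn about $\ell_{3}$, preserves $\ell_{1}$ and $\ell_{2}$, and a half-turn preserves a line distinct from its axis only if that line is perpendicular to the axis. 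This gives mutual orthogonality of the three $4$-fold axes directly (and the Klein four-subgroup as a byproduct), after which your normalization and the identification with $S_{4}^{+}$ go through.
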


\vskip 5pt
\begin{proof}
We will show that the triple $(2,3,4)$ from Proposition \ref{OS} uniquely determines the
group $S_{4}^{+}$ up to conjugacy within $O(3)$. 

Suppose that $H \leq SO(3)$ is a point group, $\mathcal{T} = \{ v_{1} , v_{2} , v_{3} \}$, $|H_{v_{1}}| =2$,
$|H_{v_{2}}| = 3$, and $|H_{v_{3}}| = 4$.  Proposition \ref{OS} says that $|H| = 24$.  It follows that
$|\mathcal{O}_{H}(v_{3})| = 6$.

By Lemma \ref{basics}(4), the groups $H_{v_{i}}$ are cyclic, for $i =1,2,3$.
Let $h \in H_{v_{3}}$ be a generator of $H_{v_{3}}$, i.e., $\langle h \rangle = H_{v_{3}}$, and $|h| = 4$.  We consider
the action of $h$ on the set $\mathcal{O}_{H}(v_{3})$.  Certainly $h$ fixes $v_{3}$ (by definition) and $h$ can fix at most
one other vector  in $\mathcal{L}^{+} \cup \mathcal{L}^{-}$, namely $-v_{3}$ (for, otherwise, $h$ would fix a $2$-element linearly independent set,
and would necessarily be the identity, since $h \in SO(3)$).  It follows that $h$ acts on $\mathcal{O}_{H}(v_{3}) - \{ v_{3} \}$
with at most one fixed point.  After considering the possible cycle types in 
the latter action, we easily conclude that $h$ must be
a $4$-cycle. 
Therefore $h$ must have
another fixed point in $\mathcal{O}_{H}(v_{3}) - \{ v_{3} \}$, and this fixed point must be $-v_{3}$. It follows, in particular, that $-v_{3} \in \mathcal{O}_{H}(v_{3})$.

Let $\mathcal{O}_{H}(v_{3}) = \{ v_{3}, -v_{3} , v_{4} , hv_{4} , h^{2}v_{4}, h^{3}v_{4} \}$ (without loss of generality).  Since
$\mathcal{O}_{H}(v_{3})$ is $H$-invariant, we have
$$v_{3} - v_{3} + v_{4} + hv_{4} + h^{2}v_{4} + h^{3}v_{4} = (1 + h + h^{2} + h^{3})v_{4} = 0,$$
for otherwise the sum on the left would be a non-zero vector that is held invariant by all of $H$, and
this would force $H$ to be cyclic, by Lemma \ref{basics}(4).  The group $H$ is, however, not cyclic 
by the crystallographic restriction (Lemma \ref{basics}(2)). Now
$$ 0 = v_{3} \cdot ( 1 + h + h^{2} + h^{3} ) v_{4} = v_{3} \cdot v_{4} + hv_{3} \cdot hv_{4} + h^{2}v_{3} \cdot h^{2}v_{4}
+ h^{3}v_{3} \cdot h^{3}v_{4} = 4 (v_{3} \cdot v_{4}).$$
It follows that $v_{3}$ is perpendicular to $v_{4} , hv_{4}, h^{2}v_{4}, h^{3}v_{4}$.

We can repeat the above argument for each element of $\mathcal{O}_{H}(v_{3})$ to conclude that for any 
$v \in \mathcal{O}_{H}(v_{3})$:
i) $-v \in \mathcal{O}_{H}(v_{3})$, and
ii) $v \perp \hat{v}$ for any $\hat{v} \in \mathcal{O}_{H}(v_{3}) - \{ v , -v \}$.

Now choose $h_{1} \in H_{v_{2}}$ such that $\langle h_{1} \rangle = H_{v_{2}}$ (and so $|h_{1}| = 3$).  We consider the action
of $h_{1}$ on $\mathcal{O}_{H}(v_{3})$.  We claim that the cycle type of $h_{1}$ as a permutation can only be $(\ast \ast \ast) (\ast \ast \ast )$.
Indeed, if any power $n$ of $h_{1}$ fixes $\widetilde{v} \in \mathcal{O}_{H}(v_{3})$, then the 
vectors $v_{2}$ and $\widetilde{v}$ are linearly independent, and therefore span a plane that is fixed by $h^{n}_{1}$. Since $h_{1}^{n}$
is also orientation-preserving, we must have $h_{1}^{n} = 1$. Thus the action of $\langle h_{1} \rangle$ on $\mathcal{O}_{H}(v_{3})$
is free, from which the claim easily follows. 

We consider the set 
$B = \{ v_{3} , h_{1} v_{3} , h_{1}^{2} v_{3} \}$.  It is easy to check that $B \cap -B = \emptyset$, since the assumption
$B \cap -B \neq \emptyset$ quickly forces $h_{1}$ to have the wrong cycle type.  
It follows that $B$ is an orthonormal basis for
$\mathbb{R}^{3}$.  The element $h_{1}$ has the matrix
$$ \left( \begin{smallmatrix} 0 & 0 & 1 \\ 1 & 0 & 0 \\ 0 & 1 & 0 \end{smallmatrix} \right) $$
with respect to the ordered basis $[v_{3} , h_{1}v_{3} , h_{1}^{2}v_{3}]$.

We return to the element $h$, which permutes $B \cup -B = \mathcal{O}_{H}(v_{3})$.  Since $|h| = 4$, and
$h v_{3} = v_{3}$, we must have
$$ h = \left( \begin{smallmatrix} 1 & 0 & 0 \\ 0 & 0 & \mp 1 \\ 0 & \pm 1 & 0 \end{smallmatrix} \right).$$
It is now clear that the change of basis matrix sending
$\mbf{x}$ to $v_{3}$, $\mbf{y}$ to $h_{1}v_{3}$, and $\mbf{z}$ to $h_{1}^{2} v_{3}$ is orthogonal
and conjugates $\langle h, h_{1} \rangle$ to $S_{4}^{+}$ (as described in Table \ref{orientationpreservingpointgroups}). 
Since $|H| = 24$, the equality $H = \langle h, h_{1} \rangle$ is forced.
 Moreover, it is not difficult
to argue that the matrices in the statement of the theorem generate a group isomorphic to $S_{4}$.  One possible approach is
to examine the action
of the group on the diagonals of the cube $[-1, 1]^{3}$.

We have now argued that the $3$-tuple $(2,3,4)$ from Proposition \ref{OS} determines the group $S^{+}_{4}$
uniquely up to conjugacy. One can argue that the cardinality of $\mathcal{T}$, the order of $H$, and
the $3$-tuple $(\alpha, \beta, \gamma)$ (if applicable) always determine the group $H$ in the remaining
cases as well. (There are eleven cases in all, including the one above 
and the case of the trivial group.) We omit the details,
but note that the triple $(2,2,n)$ (for $n = 2, 3,4,6$) determines $D_{n}^{+}$, and $(2,3,3)$
determines $A_{4}^{+}$, while the various cyclic groups of rotations from the statement
of Proposition \ref{OS} are accounted for by the groups $C_{n}^{+}$, for $n=1,2,3,4,6$.
\end{proof}


\subsection{Classification of Point Groups with Central Inversion} \label{subsection:pointgroupwithinversion}

Suppose $H \leq O(3)$ is a point group, and $H$ contains the central inversion $(-1)$ (i.e., the antipodal map).  Let $H^{+}$ be the orientation-preserving subgroup of $H$.  We note that $[H: H^{+}] = 2$,
and $H = \langle H^{+}, (-1) \rangle$. The following observation has an obvious proof:

\begin{proposition} \label{conj}
Let $H_{1}$, $H_{2}$ be point groups containing $(-1)$.  The groups $H_{1}$, $H_{2}$ are conjugate in $O(3)$ if and only if
$H_{1}^{+}$ and $H_{2}^{+}$ are conjugate in $O(3)$. \qed
\end{proposition}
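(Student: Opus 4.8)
The plan is to prove the two implications separately, exploiting two structural facts about $O(3)$: first, that conjugation by any element of $O(3)$ preserves the determinant, and hence carries $SO(3)$ to itself; and second, that the central inversion $(-1) = -I$ is central in $O(3)$. Throughout I would use the identification $H_i^+ = H_i \cap SO(3)$ together with the description (given just before the proposition) $H_i = \langle H_i^+, (-1) \rangle = H_i^+ \sqcup (-1) H_i^+$.

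For the forward direction, suppose $g H_1 g^{-1} = H_2$ for some $g \in O(3)$. First I would observe that conjugation by $g$ sends orientation-preserving elements to orientation-preserving elements, since $\det(g h g^{-1}) = \det(h)$ for every $h$. Intersecting with $SO(3)$ then gives $g H_1^+ g^{-1} = g(H_1 \cap SO(3)) g^{-1} = (g H_1 g^{-1}) \cap SO(3) = H_2 \cap SO(3) = H_2^+$, which is precisely the desired conjugacy of the orientation-preserving subgroups.

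For the converse, suppose $g H_1^+ g^{-1} = H_2^+$ for some $g \in O(3)$. Here I would conjugate the generating set: $g H_1 g^{-1} = g \langle H_1^+, (-1) \rangle g^{-1} = \langle g H_1^+ g^{-1}, g(-1)g^{-1} \rangle$. The key step is that $(-1) = -I$ commutes with every element of $O(3)$, so $g(-1)g^{-1} = (-1)$; combined with $g H_1^+ g^{-1} = H_2^+$, this yields $g H_1 g^{-1} = \langle H_2^+, (-1) \rangle = H_2$.

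I do not expect a genuine obstacle, which matches the text's remark that the proof is obvious; the only points requiring care are invoking the determinant-invariance of conjugation in the forward direction and the centrality of $(-1)$ in the converse. If anything, the subtlest aspect worth flagging is that the backward direction genuinely uses the standing hypothesis that \emph{both} $H_1$ and $H_2$ contain $(-1)$: without it, conjugating $H_1^+$ would recover only $H_2^+$ and would not pin down the full group $H_2$.
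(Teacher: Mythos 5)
Your proof is correct and is exactly the argument the paper has in mind when it calls the proposition obvious (the paper gives no proof, just \qed): determinant-invariance of conjugation handles the forward direction, and centrality of $(-1)$ together with $H_i = \langle H_i^+, (-1)\rangle$ handles the converse. Nothing to add.
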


\begin{theorem} \label{classinv}
Let $H$ be a point group containing the central inversion $(-1)$.  The group $H$ is conjugate to one of the eleven groups
$\langle H^{+} , (-1) \rangle$, where $H^{+}$ is one of the orientation-preserving point groups from Theorem \ref{class+}.
\end{theorem}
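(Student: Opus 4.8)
The plan is to leverage Theorem \ref{class+} together with the single key observation that the central inversion $(-1) = -I$ commutes with every element of $O(3)$, so that it is unaffected by conjugation. The statement is then essentially a corollary of the classification of orientation-preserving point groups.

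First I would verify that $H^{+} = H \cap SO(3)$ is genuinely an orientation-preserving point group in the sense of Theorem \ref{class+}. Since $H$ is a point group, it leaves some lattice $L \leq \mathbb{R}^{3}$ invariant; as $H^{+}$ is a subgroup of $H$, it leaves the same lattice $L$ invariant, and by construction $H^{+} \leq SO(3)$. Hence Theorem \ref{class+} applies to $H^{+}$, producing an element $g \in O(3)$ such that $g H^{+} g^{-1}$ is one of the eleven groups listed in Table \ref{orientationpreservingpointgroups}.

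Next, using the remark preceding the statement (that $H = \langle H^{+}, (-1) \rangle$ whenever $(-1) \in H$), I would conjugate the whole group by the same $g$. Because $(-1)$ is scalar multiplication by $-1$ on $\mathbb{R}^{3}$, it lies in the center of $O(3)$, so $g(-1)g^{-1} = (-1)$. Conjugation being an automorphism of $O(3)$, it carries generating sets to generating sets, and therefore
$$ gHg^{-1} = g \langle H^{+}, (-1) \rangle g^{-1} = \langle gH^{+}g^{-1},\, g(-1)g^{-1} \rangle = \langle gH^{+}g^{-1},\, (-1) \rangle. $$
Since $gH^{+}g^{-1}$ is one of the eleven standard orientation-preserving point groups, the right-hand side is exactly one of the eleven groups $\langle H^{+}, (-1) \rangle$ named in the statement, completing the argument. (Alternatively, one could phrase this via Proposition \ref{conj}, which reduces conjugacy of $H$ to conjugacy of $H^{+}$ and is then settled by Theorem \ref{class+}; the explicit computation above is essentially the content of Proposition \ref{conj} made visible.)

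There is no serious obstacle here. The only point requiring care is the centrality of $(-1)$ in $O(3)$ — it is precisely this that guarantees the conjugating element $g$ found for $H^{+}$ simultaneously normalizes the inversion, so that no separate adjustment is needed to handle the orientation-reversing coset $(-1)H^{+}$. I would also note in passing that distinct standard $H^{+}$ yield distinct (i.e.\ non-conjugate) groups $\langle H^{+}, (-1)\rangle$ by Proposition \ref{conj}, so the count of eleven is sharp.
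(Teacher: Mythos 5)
Your proposal is correct and is essentially the paper's argument: the paper proves this theorem by citing Proposition \ref{conj} (conjugacy of $H$ reduces to conjugacy of $H^{+}$, which rests on the centrality of $(-1)$) together with Theorem \ref{class+}, which is exactly the computation you spell out. No gaps.
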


\begin{proof}
This follows easily from Proposition \ref{conj} and Theorem \ref{class+}.
\end{proof}

\subsection{Classification of the Remaining Point Groups and Summary} \label{subsection:pointgroupremaining}

Let $H \leq O(3)$ be a point group such that:  i) $H \not \leq SO(3)$, and ii) $(-1) \not \in H$.  Consider the group
$\widehat{H} = \langle H,  (-1) \rangle$.  This group contains the central inversion $(-1)$, 
so it is conjugate to one of the eleven from the previous
subsection. (Note that $L$ is closed under additive inverses, so the group $\langle H, (-1) \rangle$
is still a point group.) We note that $[\widehat{H}:H] = 2$, so $H \unlhd \widehat{H}$, and therefore it
is the kernel of some surjective 
homomorphism $\phi: \langle \widehat{H}^{+}, (-1) \rangle \rightarrow \mathbb{Z}/2\mathbb{Z}$, where $\widehat{H}^{+}$
denotes the orientation-preserving subgroup of $\widehat{H}$. Thus, to find all possibilities for
$H$ up to conjugacy, we can examine the kernels of all such homomorphisms $\phi$ as 
$\langle \widehat{H}^{+}, (-1) \rangle$ 
ranges over all eleven possibilities.    
We need only consider homomorphisms
$\phi :  \langle \widehat{H}^{+}, (-1) \rangle \rightarrow \mathbb{Z} /2\mathbb{Z}$ such that: i)  $\phi ((-1)) = 1$, and
ii) $\phi (\widehat{H}^{+}) \not \leq \mathrm{Ker} \, \phi$. 

\begin{lemma} \label{a}
Let $G$ be a group generated by the set $S$, and let $(-1)$ be a central element of order two in $G$.  Let
$\phi : G \rightarrow \mathbb{Z}/ 2\mathbb{Z}$ be a homomorphism satisfying $\phi ((-1)) = 1$.  The kernel of
$\phi$ is generated by 
$$ \{ s \in S \mid \phi (s) = 0 \} \cup \{ (-1)s \in S \mid \phi (s) = 1 \}.$$
\end{lemma}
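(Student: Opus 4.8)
The plan is to prove the two inclusions $\langle T \rangle \subseteq \ker \phi$ and $\ker \phi \subseteq \langle T \rangle$ separately, where $T$ denotes the proposed generating set $\{ s \in S \mid \phi(s) = 0 \} \cup \{ (-1)s \mid s \in S,\ \phi(s) = 1 \}$.

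First I would dispose of the inclusion $\langle T \rangle \subseteq \ker \phi$, which is immediate: a generator $s$ with $\phi(s) = 0$ lies in $\ker \phi$ by definition, while for a generator of the form $(-1)s$ with $\phi(s)=1$ we compute $\phi((-1)s) = \phi(-1) + \phi(s) = 1 + 1 = 0$ in $\mathbb{Z}/2\mathbb{Z}$, so $(-1)s \in \ker\phi$ as well. Since $\ker\phi$ is a subgroup containing all of $T$, it contains $\langle T \rangle$.

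The substance is the reverse inclusion. The key observation is that each generator can be rewritten in terms of $T$ at the cost of a factor of $(-1)$: if $\phi(s)=0$ then $s \in T$, and if $\phi(s) = 1$ then, using $(-1)^2 = 1$, we have $s = (-1)\bigl((-1)s\bigr)$ with $(-1)s \in T$. Thus for every $s \in S$ I can write $s = (-1)^{\phi(s)} t_s$ with $t_s \in T$, interpreting $\phi(s) \in \{0,1\}$ as an exponent. Now take an arbitrary $g \in \ker\phi$ and express it as a word $g = s_1^{\epsilon_1}\cdots s_k^{\epsilon_k}$ in the generators, with $\epsilon_i \in \{\pm 1\}$. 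Because $(-1)$ is central and of order two, each factor satisfies $s_i^{\epsilon_i} = (-1)^{\phi(s_i)} t_{s_i}^{\epsilon_i}$, and I can slide every copy of $(-1)$ to the front to obtain $g = (-1)^{m}\, t_{s_1}^{\epsilon_1}\cdots t_{s_k}^{\epsilon_k}$, where $m = \sum_i \phi(s_i)$ is counted in $\mathbb{Z}/2\mathbb{Z}$. The product $t_{s_1}^{\epsilon_1}\cdots t_{s_k}^{\epsilon_k}$ manifestly lies in $\langle T \rangle$.

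It remains to show that the leading factor $(-1)^m$ is trivial, and this is the one place the hypothesis $g \in \ker\phi$ is used. Applying $\phi$ to the word for $g$ and using $\epsilon_i \phi(s_i) \equiv \phi(s_i) \pmod 2$ gives $0 = \phi(g) = \sum_i \phi(s_i) = m$ in $\mathbb{Z}/2\mathbb{Z}$, so $(-1)^m = 1$ and hence $g \in \langle T \rangle$. The only real care required is the bookkeeping in collecting the central factors and confirming that inverses behave correctly—for $\phi(s)=1$, centrality gives $s^{-1} = t_s^{-1}(-1)^{-1} = (-1)t_s^{-1}$—but there is no genuine obstacle, since the centrality of $(-1)$ makes the rearrangement completely routine.
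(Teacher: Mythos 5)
Your proof is correct and follows essentially the same route as the paper: write $g \in \ker\phi$ as a word in $S \cup S^{-1}$, replace each generator $s$ with $\phi(s)=1$ by $(-1)\bigl((-1)s\bigr)$, and use centrality of $(-1)$ together with the parity constraint $\phi(g)=0$ to see that the collected central factors cancel. The paper phrases this as ``elements of $S_1$ occur an even number of times, so the replacement does not change the product,'' which is exactly your $(-1)^m = 1$ computation; your version just makes the bookkeeping (including the handling of inverses) more explicit.
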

           
\begin{proof}
Let $g$ be in the kernel of $\phi$.  Since $S$ is a generating set for $G$, we must have
$g = s_{1} \ldots s_{k}$, for appropriate $s_{i} \in S \cup S^{-1}$.  Let $S_{0} = \{ s \in S \cup S^{-1} \mid \phi(s) = 0 \}$ and
$S_{1} = \{ s \in S \cup S^{-1} \mid \phi(s) = 1 \}$.  Since $g$ is in the kernel of $\phi$, it must be that 
elements of $S_{1}$ occur an even number of times in the string $s_{1} \ldots s_{k}$.  We can replace each such element $s_{i}$
with $(-1)s_{i}$ without changing the product of the string.    
\end{proof}

\begin{theorem} \label{class-}
Let $H \leq O(3) - SO(3)$ be a point group such that $(-1) \not \in H$.
The group $H$ is conjugate within $O(3)$ to one of the groups listed in Table \ref{otherpointgroups}.

\begin{table}[!h]
\vskip 5pt
\renewcommand{\arraystretch}{2.5}
\[
 \begin{array} {| l | l |} \hline    
C'_{2} = \left\langle \left( \begin{smallmatrix}1 & 0 & 0 \\ 0 & 1 & 0 \\ 0 & 0 & -1 \end{smallmatrix} \right) \right\rangle &
D'_{4} = \left\langle \left( \begin{smallmatrix} 0 & -1 & 0 \\ -1 & 0 & 0 \\ 0 & 0 & 1  \end{smallmatrix} \right), 
\left( \begin{smallmatrix} 1 & 0 & 0 \\  0 & -1 & 0 \\ 0 & 0 & -1 \end{smallmatrix} \right) \right\rangle \\ 
\hline
C'_{4} = \left\langle \left( \begin{smallmatrix} 0 & 1 & 0 \\ -1 & 0 & 0 \\ 0 & 0 & -1 \end{smallmatrix} \right) \right\rangle &
D''_{4} = \left\langle \left( \begin{smallmatrix} 0 & -1 & 0 \\ -1 & 0 & 0 \\ 0 & 0 & 1  \end{smallmatrix} \right), 
\left( \begin{smallmatrix} -1 & 0 & 0 \\  0 & 1 & 0 \\ 0 & 0 & 1 \end{smallmatrix} \right) \right\rangle \\ 
\hline
C'_{6} = 
\left\langle \displaystyle \frac{1}{3}\left( 
\begin{smallmatrix}  -2 & -2 & 1 \\ 1 & -2 & -2 \\ -2 & 1 & -2 \end{smallmatrix} \right) \right\rangle &
D'_{6} = \left\langle \displaystyle \frac{1}{3}\left( \begin{smallmatrix} 1 & -2 & -2 \\ -2 & -2 & 1 \\ -2 & 1 & -2  \end{smallmatrix} \right), 
\left( \begin{smallmatrix} 0 & 1 & 0 \\  1 & 0 & 0 \\ 0 & 0 & 1 \end{smallmatrix} \right) \right\rangle \\ 
\hline
D'_{2} = \left\langle \left( \begin{smallmatrix} -1 & 0 & 0 \\  0 & 1 & 0 \\ 0 & 0 & 1 \end{smallmatrix} \right), 
\left( \begin{smallmatrix} 1 & 0 & 0 \\ 0 & -1 & 0 \\ 0 & 0 & 1 \end{smallmatrix} \right) \right\rangle &      
D''_{6} = \left\langle \displaystyle \frac{1}{3}\left( \begin{smallmatrix} -1 & 2 & 2 \\ 2 & 2 & -1 \\ 2 & -1 & 2  \end{smallmatrix} \right), 
\left( \begin{smallmatrix} 0 & 1 & 0 \\  1 & 0 & 0 \\ 0 & 0 & 1 \end{smallmatrix} \right) \right\rangle \\ 
\hline
D'_{3} = \left\langle \left( \begin{smallmatrix} 0 & 1 & 0 \\  0 & 0 & 1 \\ 1 & 0 & 0 \end{smallmatrix} \right), 
\left( \begin{smallmatrix} 0 & 1 & 0 \\ 1 & 0 & 0 \\ 0 & 0 & 1 \end{smallmatrix} \right) \right\rangle &     
S'_{4}     = \left\langle \left( \begin{smallmatrix} 0 & 1 & 0 \\  0 & 0 & 1 \\ 1 & 0 & 0 \end{smallmatrix} \right), 
\left( \begin{smallmatrix} 0 & 0 & -1 \\ 0 & 1 & 0 \\ -1 & 0 & 0 \end{smallmatrix} \right) \right\rangle \\
\hline
\end{array}
\]
\caption{The Remaining Point Groups}
\label{otherpointgroups}
\end{table}
\end{theorem}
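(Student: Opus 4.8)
The plan is to carry out the classification scheme already set up in the paragraph preceding the statement: every such $H$ arises as the kernel of a surjective homomorphism $\phi \colon \widehat{H} \to \mathbb{Z}/2\mathbb{Z}$, where $\widehat{H} = \langle H, (-1)\rangle$ is, by Theorem \ref{classinv}, conjugate to one of the eleven groups $\langle \widehat H^+, (-1)\rangle$ with $\widehat H^+$ one of the orientation-preserving point groups of Theorem \ref{class+}, and $\phi$ satisfies the two conditions (i) $\phi((-1)) = 1$ and (ii) $\phi$ restricts nontrivially to $\widehat H^+$. So I would run through the eleven possibilities for $\widehat H^+$ one at a time, enumerate the admissible $\phi$, read off a generating set for $\mathrm{Ker}\,\phi$ using Lemma \ref{a}, and then sort the resulting groups into $O(3)$-conjugacy classes. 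Soundness and completeness of this enumeration are built in: the forward direction is the discussion before the theorem, and conversely each kernel, being a subgroup of the point group $\widehat H$, is itself a point group with the two required properties.

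Since $\phi$ factors through the abelianization and is constrained by (i) and (ii), the admissible $\phi$ are governed by the nontrivial homomorphisms $\widehat H^+ \to \mathbb{Z}/2\mathbb{Z}$, equivalently by the index-two subgroups of $\widehat H^+$. First I would dispose of the degenerate cases: when $\widehat H^+ \in \{C_1^+, C_3^+, A_4^+\}$ the abelianization of $\widehat H^+$ is trivial or $\mathbb{Z}/3\mathbb{Z}$, so condition (ii) cannot be met and no group results. For the cyclic groups $C_2^+, C_4^+, C_6^+$ there is exactly one admissible $\phi$ (the unique surjection onto $\mathbb{Z}/2\mathbb{Z}$ that is nontrivial on the rotation subgroup), and Lemma \ref{a} produces the single generator $(-1)c$ for a generator $c$ of $\widehat H^+$; these give $C'_2, C'_4, C'_6$. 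For $D_2^+$, $D_3^+$, and $S_4^+$ I expect a single conjugacy class each ($D'_2, D'_3, S'_4$): either the admissible $\phi$ is unique (as for $D_3^+$ and $S_4^+$, whose abelianizations are $\mathbb{Z}/2\mathbb{Z}$), or the several admissible $\phi$ are permuted by symmetries of $\widehat H^+$ realized by conjugation in $O(3)$ (for $D_2^+$, the three half-turns about the coordinate axes are cyclically permuted by a permutation matrix in $SO(3)$), so their kernels are conjugate.

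The genuinely interesting cases are $D_4^+$ and $D_6^+$, where the abelianization is $(\mathbb{Z}/2\mathbb{Z})^2$: there are three admissible $\phi$, corresponding to the three index-two subgroups of $D_n^+$, namely the cyclic rotation subgroup and two copies of $D_{n/2}^+$. The homomorphism killing the rotations and the ones not killing them give genuinely different groups, accounting for the pairs $D'_4, D''_4$ and $D'_6, D''_6$. The main obstacle, and the step I would treat most carefully, is to show that the two homomorphisms whose kernels contain a copy of $D_{n/2}^+$ yield $O(3)$-conjugate groups, so that we obtain exactly two classes rather than three. I would do this by exhibiting an explicit conjugating element --- the rotation through $\pi/n$ about the principal axis --- which interchanges the two classes of secondary (half-turn) axes of $D_n^+$ while commuting with the central $(-1)$, and hence carries one kernel onto the other. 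Finally, for each surviving kernel I would put the generating set supplied by Lemma \ref{a} into the normalized form recorded in Table \ref{otherpointgroups} by an orthogonal change of basis, and verify that the ten listed groups are pairwise non-conjugate by comparing their orders, the isomorphism types of their orientation-preserving subgroups, and the conjugacy-invariant data of Proposition \ref{OS} applied to those subgroups. This yields exactly the ten groups of the table.
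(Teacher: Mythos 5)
Your proposal is correct and follows essentially the same route as the paper: both enumerate the surjections $\phi\colon \langle \widehat H^{+},(-1)\rangle \to \mathbb{Z}/2\mathbb{Z}$ with $\phi((-1))=1$ and $\phi(\widehat H^{+})\neq 0$, extract generators of the kernels via Lemma \ref{a}, identify the conjugate kernels in the $D_4^{+}$ and $D_6^{+}$ cases by an explicit element of $O(3)$ swapping the two classes of secondary axes, and separate $D_n'$ from $D_n''$ by their orientation-preserving subgroups. Your organization by abelianization is a slightly cleaner way to see which cases are degenerate and how many $\phi$ occur, but the substance is identical to the paper's argument.
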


\begin{proof}
Most of the proof is a straightforward application of Lemma \ref{a}.  A few cases are worth some additional remarks.

Consider the case in which $[ \langle D_{4}^{+}, (-1) \rangle : H ] = 2$.  We use the equality 
$$ D_{4}^{+} = \left\langle \left( \begin{smallmatrix} 0 & 1 & 0 \\ 1 & 0 & 0 \\ 0 & 0 & -1 \end{smallmatrix} \right),
\left( \begin{smallmatrix} 1 & 0 & 0 \\ 0 & -1 & 0 \\ 0 & 0 & -1 \end{smallmatrix}\right) \right\rangle = \langle A, B \rangle,$$
where $A$ and $B$ (respectively) are the matrices in the term between the equal signs.  Note that $A$ is the $180$-degree
rotation about the line $\ell$ defined by the equations $x=y$ and $z=0$, 
and $B$ is the $180$-degree rotation about the $x$-axis.  It is easy to see that
these are generators, as claimed.  There are three homomorphisms $\phi_{1}$, 
$\phi_{2}$, $\phi_{3}$ to consider (all of which send $(-1)$ to $1$):  
i) $\phi_{1} (A) = 1$, $\phi_{1} (B)=1$; ii) $\phi_{2} (A) = 1$, $\phi_{2}(B) = 0$; iii) $\phi_{3} (A) = 0$, $\phi_{3}(B) = 1$.
We note that the kernels of $\phi_{2}$ and $\phi_{3}$ are conjugate in $O(3)$, since there is an element $\lambda \in O(3)$
conjugating $A$ to $B$ and $B$ to $A$.  The first two homomorphisms have the kernels $D''_{4}$ and $D'_{4}$, respectively, by
Lemma \ref{a}.  We note that $C_{4}^{+} \leq D''_{4}$ and $D_{2}^{+} \leq D'_{4}$, and this distinguishes the groups up to conjugacy
within $O(3)$. Thus, if $H$ is as in Theorem \ref{class-}, and 
$\langle H, (-1) \rangle$ is conjugate to 
$\langle D_{4}^{+}, (-1) \rangle$, then $H$ is conjugate either to $D_{4}'$ or to $D_{4}''$.

The case in which $[ \langle D_{6}^{+}, (-1) \rangle : H ] = 2$ is analogous to the previous one.  We use the generating
elements
$$ A = \frac{1}{3}\left( \begin{smallmatrix} 1 & -2 & -2 \\ -2 & -2 & 1 \\ -2 & 1 & -2 \end{smallmatrix} \right),
\quad \quad 
B = \left(\begin{smallmatrix} 0 & -1 & 0 \\ -1 & 0 & 0 \\ 0 & 0 & -1 \end{smallmatrix} \right)$$
  and proceed as before.

For the case in which $[ \langle S_{4}^{+}, (-1) \rangle : H ] = 2$ we use a generating set $T$
for $S_{4}^{+}$ different from the one appearing
in Theorem \ref{class+}; here 
$$ T = \left\{ \left( \begin{smallmatrix} 0 & 1 & 0 \\  0 & 0 & 1 \\ 1 & 0 & 0 
\end{smallmatrix} \right), 
\left( \begin{smallmatrix} 0 & 0 & 1 \\ 0 & -1 & 0 \\ 1 & 0 & 0 
\end{smallmatrix} \right) \right\}.$$
We note that the first matrix has order $3$, and must therefore be sent to the identity in any homomorphism
$\phi: \langle S^{+}_{4}, (-1) \rangle \rightarrow \mathbb{Z}/2\mathbb{Z}$. It follows that there is just one homomorphism such
that $\phi(-1) = 1$ and $S^{+}_{4} \not \leq \mathrm{Ker} \phi$; the kernel of this homomorphism is $S'_{4}$ by Lemma \ref{a}.   
\end{proof}

\begin{definition} \label{standard}
We say that a point group $H \leq O(3)$ is \emph{standard} if
it is one of the $32$ described in Theorems \ref{class+}, \ref{classinv}, and \ref{class-}.
\end{definition}

\subsection{Descriptions of selected point groups}

In this subsection, we will attempt to give simple descriptions of the $32$ standard point
groups. Our goal is to help the reader develop a working knowledge of these groups, which
will be essential in subsequent sections. We will also introduce certain non-standard
point groups that will arise naturally later.

\subsubsection{The orientation-preserving standard point groups} \label{subsubsection:description1}
There are $11$ of these in all, as listed in Table \ref{orientationpreservingpointgroups}.

\begin{itemize}
\item Five are cyclic: $C_{i}^{+}$ ($i= 1,2,3,4,6$). If $i \in \{ 1,2,4 \}$, then $C_{i}^{+}$ 
is generated by a rotation of order $i$ about the $z$-axis. If $i \in \{ 3, 6 \}$, then $C_{i}^{+}$
is generated by a rotation of order $i$ about the line $x=y=z$. 

\item The dihedral group $D_{2}^{+}$ consists of all of the $180$-degree rotations about the coordinate
axes, and the identity. We can also describe $D_{2}^{+}$ algebraically: it is the group
of $3 \times 3$ diagonal matrices 
with an even number $-1$s down the diagonal, where all other entries on the 
diagonal are $1$.

\item The dihedral group $D_{4}^{+}$ is generated by $180$-degree rotations about the lines
$\ell(y=z=0)$ and $\ell(x=y; z=0)$. (Note that the group elements in question are 
not the generators listed in Table \ref{orientationpreservingpointgroups}.) The
 $xy$-plane is invariant, and $D_{4}^{+}$ acts as the group of symmetries of the square
$[-1,1]^{2}$ in that plane.
We can also describe $D_{4}^{+}$ algebraically: $D_{4}^{+}$ consists of all matrices
having determinant $1$ and the form $SP$, where $S$ is a \emph{sign matrix} (i.e., a matrix whose 
off-diagonal entries are $0$, and whose diagonal entries are $\pm 1$) and $P$ is either
the identity matrix or the permutation matrix that interchanges the first two columns.

\item The dihedral group $D_{3}^{+}$ is generated by $180$-degree rotations about the lines
$\ell(x+y=0; z=0)$ and $\ell(x=0; y+z=0)$. The plane $P(x+y+z=0)$ is invariant; the elements
of order $3$ are rotations through $120$ degrees about the axis $\ell(x=y=z)$. Algebraically,
$D_{3}^{+}$ is the group of matrices having the form $SP$, where $P$
is any permutation matrix and $S$ is either the identity matrix (if $\mathrm{det}(P)=1$)
or the antipodal map (if $\mathrm{det}(P)=-1$).

\item The dihedral group $D_{6}^{+}$ is generated by $180$-degree rotations about the lines
$\ell(x+y+z=0; 2x+y =0)$ and $\ell(x+y=0; z=0)$. It leaves the plane $P(x+y+z=0)$
invariant. The additive group $\langle (1,-1,0), (0,-1,1) \rangle$ is a lattice in $P(x+y+z=0)$.
The six lattice points of smallest norm describe a regular hexagon. The group $D_{6}^{+}$
acts as the group of symmetries of this hexagon.

\item The group $A_{4}^{+}$ consists of all matrices of the form $SP$, where $P$ is 
a permutation matrix that permutes the coordinate axes cyclically and $S$ is a signed
matrix with an even number of $-1$s on the diagonal. 

\item The group $S_{4}^{+}$ is the group of signed permutation matrices with determinant 
equal to $1$.
\end{itemize}

\subsubsection{The standard point groups with inversion} 
\label{subsubsection:pointgroupsinversion}
The $11$ standard point groups $H$ that contain the inversion $(-1)$ all have the form $\langle H^{+}, (-1) \rangle$,
where $H^{+}$ is one of the $11$ orientation-preserving standard point groups. Thus, the groups $H$ have descriptions
similar to the ones that were given in \ref{subsubsection:description1}; we briefly give details for the $H = \langle
H^{+}, (-1) \rangle$ when $H^{+}$ is neither cyclic nor $D_{6}^{+}$.

\begin{itemize}
\item If $H^{+} = D_{2}^{+}$, then $H$ is the group of sign matrices.

\item If $H^{+} = D_{4}^{+}$, then $H$ is the set of all matrices expressible in the form $SP$, where $S$ is 
an arbitrary sign matrix and $P$ is either the identity or the permutation matrix which interchanges
the first two coordinates.

\item If $H^{+} = D_{3}^{+}$, then $H$ is the set of all matrices of the form $SP$, where $S$ is either the identity
or $(-1)$, and $P$ is an arbitrary permutation matrix.

\item If $H^{+} = A_{4}^{+}$, then $H$ is the set of all matrices of the form $SP$, where $S$ is an arbitrary sign
matrix, and $P$ is a cyclic permutation matrix.

\item If $H^{+} = S_{4}^{+}$, then $H$ is the full group of signed permutation matrices.
\end{itemize}

\subsubsection{The remaining standard point groups} \label{subsubsection:pointgroupsmixed}
We now briefly describe the remaining point groups (as listed in Theorem \ref{class-}).
\begin{itemize}
\item The group $C'_{2}$ is generated by reflection
across the $xy$-plane. The group $C'_{4}$ is generated by a rotation about the $z$-axis of $90$ degrees, followed by reflection in the
$xy$-plane. 

The group $C'_{6}$ is generated by a rotation through $120$ degrees about the line $x=y=z$, followed by reflection
across the plane $P(x+y+z=0)$ (the complementary subspace to $\ell(x=y=z)$). 

\item The group $D'_{2}$ consists of the sign matrices having a $1$ in the lower right corner.

\item The group $D'_{3}$ is simply the group of permutation matrices.

\item The group $D'_{4}$ (like $D_{4}^{+}$) leaves the $xy$-plane invariant, and acts as the group of symmetries
of the square $[-1,1]^{2}$ in that plane. However, the elements that behave like reflections in the coordinate axes
of $P(z=0)$ (when we consider the restriction of the action to $P(z=0)$) are actually rotations in the ambient $\mathbb{R}^{3}$. (In other words, $D_{2}^{+} \leq D'_{4}$.)
The elements that behave like reflections in the lines $\ell(x=y; z=0)$ and $\ell(x=-y; z=0)$ are reflections in the
planes $P(x=y)$ and $P(x=-y)$, respectively.

We can also give a simple algebraic description of $D'_{4}$: it is the group of all matrices having the form
$SP$, where $S$ is a sign matrix with an even number of negative (i.e., $-1$) entries, and $P$ is either the
identity matrix or the permutation matrix that interchanges the first two coordinates.

We also note that $C'_{4} \leq D'_{4}$.

\item The group $D''_{4}$ (like $D_{4}^{+}$ and $D'_{4}$)  leaves the $xy$-plane invariant, and acts as the 
group of symmetries of the square $[-1,1]^{2}$. Each element of $D''_{4}$ that acts as a reflection in the (restricted)
action of $D''_{4}$ on the $xy$-plane is also a reflection of the ambient $\mathbb{R}^{3}$ across a plane. 

The algebraic description of $D''_{4}$ is also easy: it is the group of all signed permutation matrices having a $1$
in the lower right corner. 

It is clear that $D'_{2} \leq D''_{4}$ and $C_{4}^{+} \leq D''_{4}$.

\item The group $D'_{6}$ can be generated in the following way:
$$D'_{6} \cong 
 D'_{3} \times \left\langle \frac{1}{3} \left( \begin{smallmatrix} 1 & -2 & -2 \\ -2 & 1 & -2 \\ -2 & -2 & 1 \end{smallmatrix} \right)
\right\rangle.$$
The latter matrix is reflection across the plane $P(x+y+z=0)$. In fact, we can factor $\mathbb{R}^{3}$ orthogonally as 
$P(x+y+z=0) \times \ell(x=y=z)$. With respect to this factorization, $D'_{3}$ acts on the first factor (leaving it invariant),
and acts trivially on the second factor; the above reflection acts trivially on the first factor and as inversion on the 
second factor. We note also that $C'_{6} \leq D'_{6}$.

\item The group $D''_{6}$ is analogous to $D''_{4}$. In the orthogonal 
factorization $\mathbb{R}^{3} = P(x+y+z=0) \times 
\ell(x=y=z)$, $D''_{6}$ acts as the full group of symmetries of a regular hexagon in the first factor, and trivially in the
second. We note also that $C^{+}_{6} \leq D''_{6}$.

\item The group $S'_{4}$ consists of all matrices of the form $SP$, where $P$ is an arbitrary permutation matrix
and $S$ is a sign matrix with an even number
of negative entries. In particular, we have the inclusion $D'_{4} \leq S'_{4}$.
\end{itemize}

\subsubsection{Some non-standard point groups} \label{subsubsection:nonstandard}
The property of being a point group is (clearly) inherited under passage to subgroups,
but the property of being a \emph{standard} point group is not. The arguments of subsequent 
sections will frequently involve passing to subgroups, which will mean that we cannot always
consider only the standard point groups. Here we describe a few of the non-standard
point groups that will arise in practice.

\begin{itemize}
\item First, let
$$ \widehat{D}'_{4} = \left\langle \left( \begin{smallmatrix} 0 & 1 & 0 \\ 1 & 0 & 0 \\ 0 & 0 & -1 \end{smallmatrix} \right), \left( \begin{smallmatrix} -1 & 0 & 0 \\ 0 & 1 & 0 \\ 0 & 0 & 1 \end{smallmatrix} \right) \right\rangle.$$
This group is conjugate within $O(3)$ to the standard point group $D'_{4}$, and its
action on $\mathbb{R}^{3}$ is similar to that of $D'_{4}$ in most respects: the group
 $\widehat{D}'_{4}$  leaves the $xy$-plane invariant, and acts as the group of symmetries
of the square $[-1,1]^{2}$ in that plane. The elements that behave like reflections in the coordinate axes
of $P(z=0)$ (when we consider the restriction of the action to $P(z=0)$) are also
reflections in the ambient $\mathbb{R}^{3}$ (i.e., they are the reflections in the $xz$- and
$yz$-planes, respectively).
The elements that behave like reflections in the lines $\ell(x=y; z=0)$ and $\ell(x=-y; z=0)$ 
(the diagonals of the square) are
actually rotations through $180$ degrees about those lines.
\item The group 
$$\widehat{D}'_{6}  =  \left\langle \frac{1}{3}\left( \begin{smallmatrix} -1 & 2 & 2 \\ 2 & 2 & -1 \\ 2 & -1 & 2 \end{smallmatrix} \right),
\left( \begin{smallmatrix} 0 & -1 & 0 \\ -1 & 0 & 0 \\ 0 & 0 & -1 \end{smallmatrix} \right) \right\rangle$$
is conjugate within $O(3)$ to $D'_{6}$. 
 We can
factor $\widehat{D}'_{6}$ as follows:
$$ \widehat{D}'_{6} \cong D_{3}^{+} \times \left\langle \frac{1}{3}\left( \begin{smallmatrix} 1 & -2 & -2 \\ -2 & 1 & -2
\\ -2 & -2 & 1 \end{smallmatrix} \right) \right\rangle. $$
Each part of the factorization leaves the subspaces $P(x+y+z=0)$ and $\ell(x=y=z)$ invariant. (As noted in the
description of $D'_{6}$, the latter matrix is reflection in the plane $P(x+y+z=0)$, and so acts trivially on the first factor.)
We note that $C'_{6} \leq \widehat{D}'_{6}$.
\item Finally, we describe some non-standard variations of point groups that are isomorphic
to $D_{2}$ or $D_{4}$. Each standard point group of this type (with the exception of 
$D^{+}_{2}$) has a distinguished coordinate axis, namely the $z$-axis, which is invariant under the action of the group. For instance, 
each of the standard point groups $D^{+}_{4}$, $D'_{4}$, and $D''_{4}$ leaves the $z$-axis
invariant. We will denote the non-standard point group with a different distinguished
axis by a subscript of $1$ or $2$, where $1$ indicates that the $x$-axis is distinguished, and
$2$ indicates that the $y$-axis is distinguished. For instance, $D^{+}_{4_{1}}$ denotes 
the group of all matrices having the form $SP$, where $\mathrm{det}(SP) = 1$, 
$S$ is a sign matrix, and $P$ is either the identity or the permutation matrix that interchanges
the $y$- and $z$-coordinates. The group $D_{4_{2}}^{+}$ has the same description, except
that the matrix $P$ may be either the identity or the permutation matrix interchanging the 
$x$- and $z$-coordinates. The groups $D'_{4_{i}}$ and $D''_{4_{i}}$ for $i=1,2$ have 
analogous descriptions.

Similarly, for $i=1,2$, we let $D'_{2_{i}}$ denote the group of sign matrices having 
a $1$ in the $i$th position on the diagonal. 
\end{itemize}

\section{Arithmetic Classification of Pairs $(L, H)$} \label{section:arithmetic}

Let $L$ be a lattice in $\mathbb{R}^{3}$, and let $H \leq O(3)$
be a point group such that $H \cdot L = L$.
In this section, we classify pairs $(L,H)$ up to arithmetic equivalence
(defined below).  The equivalence classes of pairs $(L,H)$ are in one-to-one
correspondence with isomorphism classes of split crystallographic
groups (see Section \ref{section:classification}).

\subsection{Definition of arithmetic equivalence and a lemma}

Now we introduce one of the central tools in the classification of split three-dimensional crystallographic groups. The definition below
is due to Schwarzenberger \cite[pg. 34]{Sc80}.

\begin{definition} \label{definition:arithmeticequivalence}
Let $H \leq O(3)$ be a point group, and let $L$ be a lattice
in $\mathbb{R}^{3}$ satisfying $H \cdot L = L$.  We say that
two pairs $(L', H')$, $(L, H)$ are \emph{arithmetically equivalent},
and write $(L', H') \sim ( L, H)$,
if there is an invertible linear transformation $\lambda \in GL_{3}(\mathbb{R})$
such that:
\begin{enumerate}
\item $\lambda L' = L$, and 
\item $\lambda H' \lambda^{-1} = H$.
\end{enumerate}
\end{definition}

The following lemma will be used heavily in our classification of pairs $(L,H)$ up to arithmetic equivalence.

\begin{lemma} \label{bigimp}
Let $H$ be a point group acting on the lattice $L$.
\begin{enumerate}
\item If $h \in H \cap SO(3)$, $h \neq 1$, and $\ell$ is the (unique) line fixed by 
$h$, then $\ell$ contains a non-zero element of $L$.
\item If $h \in H \cap SO(3)$, $h \neq 1$, and $\ell$ is the unique line
 fixed by $h$, then $P = \{ v \in \mathbb{R}^{3} \mid v \perp \ell \}$
contains a non-zero element of $L$.
\item If $h \in H$ is reflection in the plane $P$, then $L \cap P$ is free abelian of rank two.
\end{enumerate}
\end{lemma}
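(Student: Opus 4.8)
The plan is to prove each of the three parts by producing an explicit non-zero lattice vector of the required type, exploiting the interplay between the group action and the discreteness/cocompactness of $L$. I would set up coordinates so that the fixed line $\ell$ is the $z$-axis, which is harmless since arithmetic equivalence (Definition \ref{definition:arithmeticequivalence}) permits conjugation of $H$ and a corresponding linear change of the lattice. The basic recurring trick is the one already used in the proof of Lemma \ref{basics}(3): if $h$ fixes $\ell$ and acts nontrivially off of $\ell$, then for a suitable lattice vector $v$ the combinations $v - hv$ or $v + hv + \cdots + h^{|h|-1}v$ are non-zero and lie in a prescribed subspace.

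For part (1), I would start with any $v \in L$ not lying in the plane $P$ perpendicular to $\ell$; such a $v$ exists because $L$ spans $\mathbb{R}^3$ (it is a lattice). Then I would consider the averaged vector $w = \sum_{i=0}^{n-1} h^i v$, where $n = |h|$. Since $h$ acts on the plane $P$ as a nontrivial rotation, the components of $v$ perpendicular to $\ell$ cancel in the sum, so $w$ lies on $\ell$; and because $v$ has a nonzero component along $\ell$ (which $h$ fixes), $w = n \cdot (\text{component of } v \text{ along } \ell) \neq 0$. As $L$ is $h$-invariant, $w \in L$, giving a non-zero lattice point on $\ell$. For part (2), I would dualize this: pick any $v \in L$ not lying on $\ell$ (again possible since $L$ spans), and set $w = v - hv$. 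This $w$ is non-zero because $v \notin \ell = \fix(h)$, it lies in $L$, and it lies in $P$ since $h$ acts as the identity on $\ell$-components (so the $\ell$-component of $w$ vanishes); this is exactly the computation already carried out for $v_1 - hv_1$ inside the proof of Lemma \ref{basics}(3).

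For part (3), the reflection $h$ in a plane $P$ is an orientation-reversing involution whose fixed-point set is the plane $P$ itself, and I want to show $L \cap P$ is free abelian of rank exactly two. Freeness and the rank bound of at most three are automatic since $L \cap P$ is a subgroup of the free abelian group $L \cong \mathbb{Z}^3$, and the rank is at most two because $L \cap P$ lies in the two-dimensional subspace $P$. The substance is the lower bound: I must exhibit two linearly independent lattice vectors in $P$. I would produce them by applying the symmetrization $v + hv$ (which lands in the $(+1)$-eigenspace of $h$, namely $P$) to two vectors $v$ whose images are independent. Concretely, after choosing coordinates so $P$ is the $xy$-plane and $h = \mathrm{diag}(1,1,-1)$, I can take two lattice vectors $v, v'$ with linearly independent projections onto $P$ (available since $L$ spans $\mathbb{R}^3$ and hence projects onto a rank-two subgroup of $P$), and observe $v + hv, v' + hv' \in L \cap P$ are independent. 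I expect the main obstacle to be the last point — ensuring the two produced vectors are genuinely independent rather than collinear — which I would handle by arguing that the projection $L \to P$ along $\ell$ has image of rank two (it cannot drop to rank one, else $L$ would fail to span), so one may select the two source vectors accordingly.
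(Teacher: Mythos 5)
Your proof is correct and follows essentially the same route as the paper: part (1) via the averaging operator $1+h+\cdots+h^{n-1}$ (which kills the component perpendicular to $\ell$ and scales the component along $\ell$), part (2) via $v-hv$, and part (3) via $v+hv$ together with the observation that the image of $L$ under $1+h$ spans $P$ and hence has rank two. The only cosmetic slips are the unnecessary appeal to arithmetic equivalence at the outset and the reference to "projection along $\ell$" in part (3), where the relevant line is the normal to $P$; neither affects the argument.
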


\begin{proof}  
\begin{enumerate}
\item Let $v \in L$ be such that $v \not \perp \ell$ and $v \notin \ell$. Thus, we can write $v = v_{1} + v_{2}$, where $v_{1} \perp \ell$,
$v_{2} \in \ell$, and neither $v_{1}$ nor $v_{2}$ is $0$. Assume that $|h| =n$; we can write
$$(1+h+h^{2}+ \ldots + h^{n-1}) v = nv_{2} + (1+h+\ldots+h^{n-1})v_{1},$$
since $v_{2}$ is fixed by $h$. We note that $nv_{2}$ and $(1+h+ \ldots  + h^{n-1})v_{1}$
are perpendicular, essentially by orthogonality of $h$, and that $(1+h+h^{2}+ \ldots + h^{n-1})v_{1}$
is $h$-invariant. It follows that $(1+h+h^{2}+\ldots + h^{n-1})v_{1} \in \ell$, and therefore can only be $0$ (since it is perpendicular to 
$v_{2} \in \ell - \{ 0 \}$). Now clearly $(1+h+h^{2}+ \ldots +h^{n-1})v = nv_{2} \in \ell - \{ 0 \}$ and $(1+h+h^{2}+ \ldots +h^{n-1})v \in L$. 



\item Let $x \in L - \ell$.  It follows that $x - hx \neq 0$.  Now we show that $x - hx \in P$.  Let
$v \in \ell - \{ 0 \}$.
$$ v \cdot x = hv \cdot hx = v \cdot hx,$$
so $v \cdot (x - hx) = 0$, and $x - hx \in P$.

\item Our assumptions imply that we can choose an ordered basis for $\mathbb{R}^{3}$ in such a way
that $h$ is represented by the matrix
$$\left( \begin{smallmatrix} 1 & 0 & 0 \\ 0 & 1 & 0 \\ 0 & 0 &-1 \end{smallmatrix} \right)$$
over that basis. It is then clear that the transformation $1+h$ has a $1$-dimensional null space. 

Let $v_{1}$, $v_{2}$, $v_{3}$ be a linearly independent subset of $L$. It follows that the 
vectors $(1+h)v_{1}$, $(1+h)v_{2}$, and $(1+h)v_{3}$ span a $2$-dimensional subspace of $\mathbb{R}^{3}$. 
It follows, in particular, that the group $G = \langle (1+h)v_{1}, (1+h)v_{2}, (1+h)v_{3} \rangle$ has rank $2$. (The rank can be no larger,
since $G$ is a discrete subgroup of a real subspace of dimension $2$; the rank can be no smaller, since the generators span a real vector subspace
of dimension $2$.) Since $h$ fixes each element of the generating set, $G \subseteq P$.

Finally, we note that $G \leq P \cap L$, so the latter group must have rank at least $2$. It cannot have rank more than $2$ since
it is a discrete additive subgroup of a $2$-dimensional real vector space. 

\end{enumerate}
\end{proof}
  
\subsection{Full Sublattices in Pairs $(L, H)$, where $H$ contains $(-1)$}

In this subsection, we take our first steps toward classifying the pairs $(L,H)$ up to arithmetic equivalence
(i.e., classifying the split crystallographic groups  -- see Theorem \ref{theorem:arithmeticsplit}).
Our strategy is to build the lattice $L$ around the point group $H$.  The lattice $L$ (for any choice of $H$)
will be made to contain one of two specific lattices in a certain way (related to the definition of ``fullness" below). 
Our technical results here will be important in the subsequent classification of pairs $(L,H)$. 
   
\begin{definition} \label{lattice}
We let
$$ \mbf{x} = \left( \begin{smallmatrix} 1 \\ 0 \\ 0 \end{smallmatrix} \right),  \, \, 
\mbf{y} = \left( \begin{smallmatrix} 0 \\ 1 \\ 0 \end{smallmatrix} \right), \, \,  
\mbf{z} = \left( \begin{smallmatrix} 0 \\ 0 \\ 1 \end{smallmatrix} \right), \, \,  
\mbf{v}_{1} = \left( \begin{smallmatrix} 1 \\ 1 \\ 1 \end{smallmatrix} \right), \, \,  
\mbf{v}_{2} = \left( \begin{smallmatrix} 1 \\ -1 \\ 0 \end{smallmatrix} \right), \, \,  
\mbf{v}_{3} = \left( \begin{smallmatrix} 0 \\ -1 \\ 1 \end{smallmatrix} \right).$$ 
The lattices $\langle \mbf{x} , \mbf{y} , \mbf{z} \rangle$
and $\langle \mbf{v}_{1} , \mbf{v}_{2} , \mbf{v}_{3} \rangle$ 
are, respectively, the \emph{cubic lattice} $L_{\mathcal{C}}$ and the \emph{prismatic lattice} $L_{\mathcal{P}}$.   
\end{definition}

\begin{definition} \label{definition:full}
If $L$ is any lattice, then a subgroup $\widehat{L}\leq L$ is \emph{full} in $L$ if $\widehat{L}$ is the maximal subgroup of $L$
that is contained in the span of $\widehat{L}$ as a vector space.
\end{definition}

\begin{proposition} \label{full}
Let $H$ be a standard point group acting on the lattice $L$; suppose $(-1) \in H$.  
We let $H^{+}$ denote the orientation-preserving subgroup of $H$.
\begin{enumerate}
\item If $H^{+} = C_{1}^{+}$, then $(L, H) \sim   
( L_{\mathcal{C}}, H)$.
\item If $H^{+} =  C_{2}^{+}, C_{4}^{+},$ or $D_{4}^{+}$, then $(L, H) \sim 
(L', H)$, where $L_{\mathcal{C}} \leq L'$ and each of the subgroups
$\langle \mbf{x} , \mbf{y} \rangle$, $\langle \mbf{z} \rangle$
is full in $L'$.
\item If $H^{+} = D_{2}^{+}, A_{4}^{+},$ or $S_{4}^{+}$, then $(L, H) \sim  
( L' , H)$ where $L_{\mathcal{C}} \leq L'$ and each of the subgroups
$\langle \mbf{x} \rangle$, $\langle \mbf{y} \rangle$, $\langle \mbf{z} \rangle$ 
is full in $L'$.
\item If $H^{+} = C_{3}^{+}, C_{6}^{+},$ or $D_{6}^{+}$, then $(L, H) \sim  
(L', H)$, where $L_{\mathcal{P}} \leq L'$ and each of the subgroups
$\langle \mbf{v}_{2} , \mbf{v}_{3} \rangle$, $\langle \mbf{v}_{1} \rangle$
is full in $L'$.
\item If $H^{+} = D_{3}^{+}$, then $(L, H) \sim  
( L' , H)$ where $L_{\mathcal{P}} \leq L'$ and each of the subgroups
$\langle \mbf{v}_{1} \rangle$, $\langle \mbf{v}_{2} \rangle$, $\langle \mbf{v}_{3} \rangle$
is full in $L'$.
\end{enumerate}
\end{proposition}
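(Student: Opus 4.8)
The plan is to treat the five cases by grouping them according to the configuration of invariant subspaces that $H$ forces on $\mathbb{R}^3$, and in each case to locate, by means of Lemma \ref{bigimp}, enough lattice vectors to exhibit a sublattice of $L$ that is a scaled copy of $L_{\mathcal{C}}$ or $L_{\mathcal{P}}$. An arithmetic equivalence realized by a linear map normalizing $H$ then puts matters in standard form, and the fullness assertions are preserved because such a map carries $L \cap \ell$ and $L \cap P$ to their images. Case (1) is immediate: when $H^{+} = C_{1}^{+}$ we have $H = \langle (-1) \rangle$, and since $(-1)$ is central in $GL_{3}(\mathbb{R})$, any $\lambda$ carrying $L$ to $L_{\mathcal{C}}$ satisfies $\lambda H \lambda^{-1} = H$ automatically, giving $(L,H) \sim (L_{\mathcal{C}}, H)$.

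The remaining cases split into a \emph{single-axis} group, (2) and (4), and a \emph{multi-axis} group, (3) and (5). In the single-axis cases $H^{+}$ has one distinguished rotation axis $\ell$ (the $z$-axis in (2); the line $\ell(x=y=z)$ in (4)) with perpendicular plane $P$. First I would use Lemma \ref{bigimp}(1) to produce a shortest nonzero lattice vector $w$ on $\ell$; by minimality $\langle w \rangle = L \cap \ell$ is full. For the plane I would note that whenever $H^{+}$ contains the $180^{\circ}$-rotation about $\ell$ (true for $C_{2}^{+}, C_{4}^{+}, D_{4}^{+}$ and for $C_{6}^{+}, D_{6}^{+}$), its product with $(-1) \in H$ is exactly the reflection across $P$, so Lemma \ref{bigimp}(3) gives that $L \cap P$ is free abelian of rank $2$ and is the maximal subgroup of $L$ lying in $P$. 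The sole exception is $C_{3}^{+}$, which contains no such rotation; there I would instead take one nonzero $u \in L \cap P$ from Lemma \ref{bigimp}(2) and observe that the $3$-fold rotation sends $u$ to a vector independent of it, so $L \cap P$ again has rank $2$. Invariance of $L \cap P$ under the restriction of $H^{+}$ forces its shape: arbitrary for $C_{2}^{+}$, square for $C_{4}^{+}$ and $D_{4}^{+}$, hexagonal for the $3$- or $6$-fold rotations of (4). A suitable invertible map of $P$ (a general element of $GL(P)$ for $C_{2}^{+}$, a similarity otherwise) carries $L \cap P$ to $\langle \mathbf{x}, \mathbf{y} \rangle$, respectively $\langle \mathbf{v}_{2}, \mathbf{v}_{3} \rangle$, while an independent scaling along $\ell$ carries $w$ to $\mathbf{z}$, respectively $\mathbf{v}_{1}$. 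These maps preserve the decomposition $\mathbb{R}^{3} = P \oplus \ell$ and normalize $H$, so their combination is the desired arithmetic equivalence.

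In the multi-axis cases $H^{+}$ fixes each of several one-dimensional axes by a nontrivial rotation: the three coordinate axes for $D_{2}^{+}, A_{4}^{+}, S_{4}^{+}$, and for $D_{3}^{+}$ the $3$-fold axis $\ell(x=y=z)$ together with the three $2$-fold axes through $\mathbf{v}_{2}$, $\mathbf{v}_{3}$, and $-\mathbf{v}_{2} + \mathbf{v}_{3}$. Here I would apply Lemma \ref{bigimp}(1) to each axis to obtain a shortest lattice vector, full by minimality. When $H^{+}$ permutes a family of axes transitively (the groups $A_{4}^{+}$ and $S_{4}^{+}$ on the coordinate axes, and the $3$-fold rotation of $D_{3}^{+}$ on its three $2$-fold axes) these shortest vectors share a common norm and are interchanged by $H^{+}$, so a single uniform scaling normalizes; when the axes are individually fixed and not permuted (as for $D_{2}^{+}$) a diagonal scaling normalizes and treats them independently. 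After scaling, the shortest axis vectors become $\mathbf{x}, \mathbf{y}, \mathbf{z}$ in case (3) and $\mathbf{v}_{1}, \mathbf{v}_{2}, \mathbf{v}_{3}$ in case (5), giving $L_{\mathcal{C}} \leq L'$ (resp. $L_{\mathcal{P}} \leq L'$) with the stated axes full. Note that in case (5) I deliberately do not claim $\langle \mathbf{v}_{2}, \mathbf{v}_{3} \rangle$ is full, since $D_{3}^{+}$ contains no reflection across $P(x+y+z=0)$ and the full plane lattice may be a finer (centered) hexagonal lattice; the $2$-fold rotations constrain only the individual directions.

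The step I expect to be the main obstacle is verifying that the aligning map used to match the located sublattice with the standard one genuinely normalizes $H$. This is routine when $H^{+}$ is cyclic, but for the dihedral groups $D_{4}^{+}$, $D_{6}^{+}$, and $D_{3}^{+}$ the aligning map on the relevant plane may have to include a rotation through half of the dihedral angle (e.g.\ $45^{\circ}$ for $D_{4}^{+}$), and one must check that such a rotation preserves the set of reflection and rotation axes and hence lies in the normalizer $N(H)$ in $GL_{3}(\mathbb{R})$. The difficulty is sharpest in the prismatic cases (4) and (5): there the coordinates are not orthonormal and the governing symmetries act by $120^{\circ}$- or $60^{\circ}$-rotations rather than by sign changes, so confirming both that $N(H)$ realizes every invariant hexagonal plane lattice together with an arbitrary axis scaling, and that the located vectors assemble into a copy of $L_{\mathcal{P}}$ rather than some other rank-$3$ lattice, requires the threefold (and sixfold) geometry to be handled with care.
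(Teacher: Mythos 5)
Your proposal is correct and follows the same overall architecture as the paper's proof: use Lemma \ref{bigimp} to locate lattice vectors on the $H$-invariant axes and planes, and then apply block-diagonal elements of the normalizer of $H$ to put them in standard position, handling case (1) trivially, case (3) by independent or uniform scaling of the coordinate axes, and case (5) by transporting a full axis vector around the three $2$-fold axes via the $3$-fold rotation. The one place where you genuinely diverge is in the plane-lattice alignment for $C_{4}^{+}$, $D_{4}^{+}$, $C_{6}^{+}$, and $D_{6}^{+}$: the paper pins down a minimal vector of $L \cap P$ by direct integrality computations (showing $2\alpha, 2\beta \in \mathbb{Z}$ in the $D_{4}^{+}$ case, and $3\alpha, 3\beta \in \mathbb{Z}$ together with $m^{2}+mn+n^{2} \leq 9$ in the $D_{6}^{+}$ case) to conclude that it lies on a rotation axis, whereas you invoke the classification of rank-two lattices invariant under a rotation of order $3$, $4$, or $6$ (square, respectively hexagonal) and the observation that the minimal vectors of such a lattice sit on the dihedral axes, so that a rotation through half the dihedral angle --- which you correctly note normalizes $H$ --- completes the alignment. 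Your route is shorter and more conceptual but leans on the standard two-dimensional classification; the paper's is longer but entirely self-contained, which matters for its stated goal of assuming no prior familiarity with crystallographic groups. Your treatment of the rank-two statement itself is also slightly cleaner and more uniform than the paper's (using the reflection across $P$ and Lemma \ref{bigimp}(3) whenever the $180^{\circ}$ rotation about the axis is present, with $C_{3}^{+}$ as the lone exception handled via Lemma \ref{bigimp}(2)), and your remark that fullness of $\langle \mathbf{v}_{2}, \mathbf{v}_{3} \rangle$ should not be claimed in case (5) correctly matches the weaker conclusion stated there.
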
  

\begin{proof}
\begin{enumerate}
\item This is easy.  If $H^{+} = C_{1}^{+} = 1$, then $H = \langle (-1) \rangle$.  There is 
$\lambda \in GL_{3}(\mathbb{R})$ such that $\lambda L = L_{\mathcal{C}}$.  It is obvious that
$\lambda (-1) \lambda^{-1} = (-1)$, so $(L, H) \sim (\lambda L, \lambda H \lambda^{-1}) = ( L_{\mathcal{C}}, H)$.

\item Suppose $H^{+} = C_{2}^{+}$.  The group $\langle C_{2}^{+} , (-1) \rangle$ contains the reflection across
the $xy$-plane, so $L \cap P(z=0)$ is free abelian of rank two, by Lemma \ref{bigimp}(3).  It follows that there is some
$$ \lambda = \left( \begin{smallmatrix} \ast & \ast & 0 \\ \ast & \ast & 0 \\ 0 & 0 & \ast \end{smallmatrix} \right) 
\in GL_{3}(\mathbb{R})$$
such that $\lambda ( L \cap P (z=0) ) = L_{\mathcal{C}} \cap P(z=0) = \langle \mbf{x}, \mbf{y} \rangle$.  Any such $\lambda$ commutes with
$\langle C_{2}^{+} , (-1) \rangle$.  It follows that $(L, H)$ is arithmetically equivalent to a pair $(\widehat{L}, H)$
such that $\langle \mbf{x}, \mbf{y} \rangle$ is a full subgroup of $\widehat{L}$.  By Lemma \ref{bigimp}(1), there is some
$\alpha \mathbf{z} \in \widehat{L}$, where $\alpha \neq 0$ and $|\alpha|$ is minimal.  We multiply $\widehat{L}$
by
$$ \lambda' = \left( \begin{smallmatrix} 1 & 0 & 0 \\ 0 & 1 & 0 \\ 0 & 0 & \alpha^{-1} \end{smallmatrix} \right).$$
The matrix $\lambda'$ commutes with $H$ and $\lambda' \widehat{L} = L'$ has the desired properties.

Suppose that $H^{+} = C_{4}^{+}$.  By Lemma \ref{bigimp}(2), there is some non-zero $v \in L \cap P(z=0)$, since the generator
of $C_{4}^{+}$ acts as a rotation about the $z$-axis.  We choose $v$ to have
the minimal norm of all non-zero vectors in $L \cap P(z=0)$. After multiplying by a scalar matrix, we can assume
that $||v||=1$. After applying a suitable rotation $\lambda$ 
(which, as in the case
of $C_{2}^{+}$, is a block matrix with a $2$ by $2$ block in the upper left corner), we can assume $v = \mathbf{x}$, i.e.,
$\mathbf{x} \in \lambda L = \widehat{L}$, $\mathbf{x}$ is the non-zero vector of minimal norm in $\widehat{L} \cap P(z=0)$,
and $\lambda H \lambda^{-1} = H$ (since any two rotations in the $xy-$plane commute).  We conclude that $\mathbf{y} \in \widehat{L}$ since $H^{+}$ contains a rotation
through $90$ degrees about the $z$-axis.  It easily follows from the minimality of the norm of
$\mbf{x}$ in $\widehat{L} \cap P(z=0)$ that $\langle \mbf{x}, \mbf{y} \rangle$ is a full subgroup of 
$\widehat{L}$.  We can then continue as before (that is, rescale along the $z$-axis while leaving the
$xy-$plane alone) to get a new lattice $L'$ having the additional property that
$\langle \mathbf{z} \rangle$ is a full subgroup of $L'$,    
and $(L, H) \sim (L' , H)$. 

Suppose that $H^{+} = D_{4}^{+}$.  The group $D_{4}^{+}$ contains $180$-degree rotations
about the axes $\ell(y=z=0)$, $\ell(x=z=0)$, $\ell(z=0=x-y)$, and
$\ell (z=0=x+y)$.  We claim that a smallest non-zero vector in $L \cap P(z=0)$ lies on one of these
axes.  (A non-zero vector in $L \cap P(z=0)$ exists by Lemma \ref{bigimp}(2).)  
We can assume first that $\mathbf{x}$ is the smallest non-zero vector in $L \cap \ell (y=z=0)$ (after multiplying
by a suitable scalar matrix if necessary).  (Such a non-zero vector in $L \cap \ell(y=z=0)$ exists,
by Lemma \ref{bigimp}(1).)  It follows that $\mathbf{y}$ is likewise the smallest non-zero vector in
$L \cap \ell(x=z=0)$, since there is a rotation in $D_{4}^{+}$ taking the $x$-axis to the $y$-axis.  Now let $v = \alpha \mathbf{x} + \beta \mathbf{y}$ be the smallest non-zero vector in
$L \cap P(z=0)$.  Apply the $180$-degree rotation about the $x$-axis:
$$ \left( \begin{smallmatrix} 1 & 0 & 0 \\ 0 & -1 & 0 \\ 0 & 0 & -1 \end{smallmatrix} \right) 
\left( \begin{smallmatrix} \alpha \\ \beta \\ 0  \end{smallmatrix} \right) = 
\left( \begin{smallmatrix} \alpha \\ -\beta \\ 0 \end{smallmatrix} \right) \Rightarrow 
\left( \begin{smallmatrix} \alpha \\ \beta \\ 0 \end{smallmatrix} \right)
+ \left( \begin{smallmatrix} \alpha \\ -\beta \\ 0 \end{smallmatrix} \right)  = 
\left( \begin{smallmatrix} 2\alpha \\ 0 \\ 0 \end{smallmatrix} \right) \in L.$$
It follows that $2\alpha \in \mathbb{Z}$, since $\langle \mathbf{x} \rangle$ is a full subgroup of $L$.  By similar
reasoning (applying the rotation about the $y$-axis), $2\beta \in \mathbb{Z}$.  This means that either:  i) $\mathbf{x}$ is a smallest non-zero vector
in $L \cap P(z=0)$, or ii) $\frac{1}{2} ( \mathbf{x} + \mathbf{y} )$ is a smallest non-zero vector in 
$L \cap P(z=0)$.  This proves the claim, since both of these lattice points lie on axes of rotation in $D_{4}^{+}$.

Thus a smallest vector in $L \cap P(z=0)$ is either $\mathbf{x}$ or we can assume that this smallest vector in 
$L \cap P(z=0)$ is $\mathbf{x}$ after multiplying by a scalar and rotating $45$ degrees about the $z$-axis.  (The latter
rotation normalizes $H$.)  It now follows directly that $(L, H) \sim ( \widehat{L}, H)$, where
$\langle \mathbf{x}, \mathbf{y} \rangle$ is a full subgroup of $\widehat{L}$.  One then easily produces $L'$ in which
$\mathbf{z}$ is full as well (as in previous cases), and $(\widehat{L}, H) \sim (L', H)$.

\item Suppose $H^{+} = D_{2}^{+}$.  Since $D_{2}^{+}$ contains rotations about each of the coordinate axes, Lemma
\ref{bigimp}(1) implies that there
are vectors $\alpha \mathbf{x}$, $\beta \mathbf{y}$, and $\gamma \mathbf{z}$ ($\alpha, \beta, \gamma \neq 0$) such
that each generates a full subgroup.  We can scale each vector independently to arrive at the desired $L'$, and the suggested
matrix commutes with $H$, so $(L, H) \sim (L', H)$.

If $H^{+} = A_{4}^{+}$ or $S_{4}^{+}$, then $D_{2}^{+} \leq H^{+}$ and there are again vectors
$\alpha \mathbf{x}$, $\beta \mathbf{y}$, and $\gamma \mathbf{z}$  ($\alpha, \beta, \gamma > 0$), each
generating a full subgroup of $L$.  This time $\alpha = \beta = \gamma$ since $H^{+}$ permutes the coordinate axes.  After multiplying
by $\alpha^{-1}$, we get the desired $L'$.

\item Suppose $H^{+} = C_{3}^{+}$ or $C_{6}^{+}$.  Let $v$ be a smallest non-zero vector in $L \cap P(x+y+z=0)$.  Such
a vector exists by Lemma \ref{bigimp}(2), since $P(x+y+z=0)$ is perpendicular to the axis of rotation $\ell (x=y=z)$.  
After applying an appropriate
rotation about $\ell (x=y=z)$ and multiplying in the plane $P(x+y+z=0)$ by an appropriate scalar, 
we can assume that $v = \mathbf{x} - \mathbf{y} \in L$.
(All of the suggested matrices commute with $H$.) 
It follows easily that $- \mathbf{y} + \mathbf{z} \in L$, as well, since $H$ contains a rotation through
$120$ degrees about the line $\ell (x=y=z)$.  Since $\mathbf{x} - \mathbf{y}$ has the minimal 
possible norm of
all non-zero vectors in $L \cap P(x+y+z=0)$, $ \langle \mathbf{x} - \mathbf{y} , - \mathbf{y} + \mathbf{z} \rangle$ must 
be full. (Here the lattice points $\langle \mbf{x} - \mbf{y}, -\mbf{y} + \mbf{z} \rangle$ describe a grid
in the plane $P(x+y+z=0)$ made up of equilateral triangles. If $\langle \mbf{x} - \mbf{y}, -\mbf{y} + \mbf{z} \rangle
\subsetneq L \cap P(x+y+z=0)$, then we could choose a point $v' \in L \cap P(x+y+z=0)$ outside of the grid,
and then the difference between $v'$ and the nearest member of $\langle \mbf{x} - \mbf{y}, -\mbf{y} + \mbf{z}
\rangle$ would violate the minimality of $|| \mbf{x} - \mbf{y} ||$.)

Since $L \cap \ell(x=y=z) \neq 0$ by Lemma \ref{bigimp}(1), there is $\alpha \neq 0$ such that 
$\langle \alpha (\mathbf{x} + \mathbf{y} + \mathbf{z}) \rangle$ is a full subgroup of $L'$.  We
scale this vector to obtain the desired conclusion.  This scaling can be done while leaving the perpendicular
plane $P(x+y+z=0)$ fixed.

Now we suppose that $H^{+} = D_{6}^{+}$.  We claim that a non-zero vector of minimal norm in $L \cap P(x+y+z=0)$
must lie on one of the axes $\ell \subseteq P(x+y+z=0)$ of rotation for $D_{6}^{+}$.
(There are $6$ in all, and each makes a $30$-degree angle
with the axes closest to it.)  One of the axes is the line $\ell (x+y=0=z)$, and it follows from
Lemma \ref{bigimp}(1) that
we can assume
that $\mathbf{x} - \mathbf{y} \in L$ after scaling (if necessary), and that $\mathbf{x} - \mathbf{y}$ generates a full
subgroup of $L$.  It then follows quickly that $\langle -\mathbf{y} + \mathbf{z} \rangle$ is also a full subgroup
of $L$, since there is an element of $D_{6}^{+}$ that carries the subspace 
$\langle \mbf{x} - \mbf{y} \rangle$ to $\langle -\mbf{y} + \mbf{z} \rangle$.  

Let $v$ be a smallest non-zero vector in $L \cap P(x+y+z=0)$.  Suppose $v = \alpha \mathbf{x} + \beta \mathbf{y}
+ (-\alpha - \beta) \mathbf{z}$.  We can assume that $\alpha, \beta \geq 0$. (Indeed, either two or more of the numbers
$\alpha$, $\beta$, $-\alpha - \beta$ are nonnegative, or two or more are nonpositive. If two or more are nonnegative, then we
can apply a suitable cyclic permutation matrix to arrange that the first two entries are nonnegative, yielding the desired result.
It two or more entries of $v$ are nonpositive, then we apply a suitable cyclic permutation matrix to arrange that the first two entries
are nonpositive, and then we apply the antipodal map. All of the matrices in question lie in $\langle D_{6}^{+}, (-1) \rangle$.)
\begin{eqnarray*}
\left( \begin{smallmatrix} 0 & -1 & 0 \\ -1 & 0 & 0 \\ 0 & 0 & -1 \end{smallmatrix} \right)  
\left( \begin{smallmatrix} \alpha \\ \beta \\ -\alpha - \beta \end{smallmatrix} \right) & = &
\left( \begin{smallmatrix} -\beta \\ -\alpha \\ \alpha + \beta \end{smallmatrix} \right). 
\end{eqnarray*} 
Adding the two column vectors in the above expression, we arrive at $(\alpha - \beta)(\mathbf{x} - \mathbf{y})$,
which is in $L$ since each of the above column vectors is in $L$.
It follows that $\alpha - \beta \in \mathbb{Z}$, since $\langle \mathbf{x} - \mathbf{y} \rangle$ is a full subgroup
of $L$.   
$$ \left( \begin{smallmatrix} -1 & 0 & 0 \\ 0 & 0 & -1 \\ 0 & -1 & 0  \end{smallmatrix} \right)  
\left( \begin{smallmatrix} \alpha \\ \beta \\ -\alpha - \beta \end{smallmatrix} \right) = 
\left( \begin{smallmatrix} -\alpha \\ \alpha + \beta \\ -\beta \end{smallmatrix} \right).$$
Adding the two column vectors in the above expression, we arrive at $(-\alpha - 2\beta)(-\mathbf{y} + \mathbf{z})$,
which is in $L$ since each of the above column vectors is in $L$.
It follows that $-\alpha - 2\beta \in \mathbb{Z}$, since $\langle -\mathbf{y} + \mathbf{z} \rangle$ is a full subgroup
of $L$.  This now implies that $3\alpha, 3\beta \in \mathbb{Z}$. Let's suppose that $\alpha = m/3$ and $\beta = n/3$, where
$m$ and $n$ are non-negative integers.  The minimality of the norm of $v$ in $L \cap P(x+y+z=0)$ implies that
$$ m^{2} + mn + n^{2} \leq 9,$$
and $m$ and $n$ are congruent modulo $3$ by the condition $\alpha - \beta \in \mathbb{Z}$.  
It is routine to check that either:  i) $m$ and $n$ are both divisible by $3$ (and so $\alpha$, $\beta$ are integers, one
$0$ and the other $1$),
or ii) $m=n=1$.  In the first case, it is clear that $v$ lies on one of the axes 
$\ell(y+z=0=x)$, or $\ell(x+z=0=y)$, and these are axes for rotations
in $D_{6}^{+}$.  In the second case, we have 
$v = \alpha (\mathbf{x} + \mathbf{y} -2\mathbf{z})$.  Since $v$ makes an angle of $30$ degrees with
the vector $\mathbf{x} - \mathbf{z}$ (which lies on an axis of rotation), it follows that $v$ itself lies on an axis
of rotation.  This proves the claim.

We summarize.  We can assume that a smallest non-zero vector in $L \cap P(x+y+z=0)$ is $\mathbf{x} - \mathbf{y}$ after:
i) scaling in the plane $P(x+y+z=0)$, 
if a smallest vector in $L \cap P(x+y+z=0)$ lies on $\ell (x+y = 0; z=0)$ (or on one of its orbits under the
action of $D_{6}^{+}$), or  ii) rotating by $30$ degrees about the axis $\ell (x=y=z)$ otherwise. 

The remainder of the argument is easy, and follows the lines of the cases $H = C_{3}^{+}$ and $H = C_{6}^{+}$.

\item By Lemma \ref{bigimp}(1), there is some $v \in L \cap \ell(x+y=0=z)$, where $v \neq 0$. 
After multiplying by a suitable scalar matrix, we can assume that $\langle \mbf{x} - \mbf{y} \rangle$ is a full
subgroup of $L$. Since there are elements in $D^{+}_{3}$ that move the subspace 
$\langle \mbf{x} - \mbf{y} \rangle$ to the subspace $\langle -\mbf{y} + \mbf{z} \rangle$, it follows that
$\langle -\mbf{y} + \mbf{z} \rangle$ is also full in $L$. Thus, $\langle \mbf{v}_{i} \rangle$ is full in $L$, for $i=2,3$.
We can rescale along the line $\ell(x=y=z)$ while holding the plane $P(x+y+z=0)$ fixed; the suggested
transformation $\lambda$ commutes with $D_{3}^{+}$ for any scaling factor. There is some non-zero
$v \in \ell(x=y=z) \cap L$. If we assume that $v$ is chosen so that  $||v||$ is minimal, then scaling along
$\ell(x=y=z)$ by a factor of $1/||v||$ yields the desired lattice $L'$.   
\end{enumerate}
\end{proof}

\subsection{Description of Possible Lattices $L$} 
 In this subsection, we will show that each pair $(L,H)$ is equivalent to a pair
$(L',H)$, where $L'$ is one of seven lattices.

\begin{lemma} \label{2X}
Suppose that $H$ is a standard point group which stabilizes 
the lattice $L$, suppose $L_{\mathcal{C}} \leq L$, and each of 
$\langle \mathbf{x} \rangle$, $\langle \mathbf{y} \rangle$, $\langle \mathbf{z} \rangle$
is a full subgroup of $L$.  
\begin{enumerate}
\item If $H$ contains the $180$-degree rotation 
through the $x$-, $y$-, or $z$-axis, then, for any $v \in L$,
$v = \alpha \mathbf{x} + \beta \mathbf{y} + \gamma \mathbf{z}$, we have
that $2\alpha$, $2\beta$, or $2\gamma \in \mathbb{Z}$ (respectively).
\item If $H$ contains the reflection across the plane $P(z=0)$, $\langle \mathbf{x} , \mathbf{y} \rangle$
is a full subgroup of $L$, and $v = \alpha \mathbf{x} + \beta \mathbf{y} + \gamma \mathbf{z} \in L$, then
$2\alpha, 2\beta, 2\gamma \in \mathbb{Z}$.
\end{enumerate}
\end{lemma}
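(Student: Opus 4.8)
The plan is to exploit the fullness hypotheses by symmetrizing $v$ against the relevant element of $H$, so that the sum or difference $v \pm gv$ lands on a single coordinate axis (or in the plane $P(z=0)$), where fullness pins it down to the integer lattice. Since $L$ is an $H$-invariant subgroup of $(\mathbb{R}^{3},+)$, both $gv$ and $v \pm gv$ lie in $L$ whenever $g \in H$ and $v \in L$, and this is the only closure property the argument requires. Throughout, recall that by Definition \ref{definition:full} the fullness of, say, $\langle \mathbf{z} \rangle$ means that $\langle \mathbf{z} \rangle$ is the maximal subgroup of $L$ contained in its span $\ell(x=y=0)$; since $L \cap \ell(x=y=0)$ is itself a subgroup of $L$ contained in that line, this forces the equality $L \cap \ell(x=y=0) = \langle \mathbf{z} \rangle = \mathbb{Z}\mathbf{z}$ (and similarly for the other axes and for $\langle \mathbf{x},\mathbf{y}\rangle$ in $P(z=0)$).

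For part (1), I would treat the case where $H$ contains the $180$-degree rotation about the $z$-axis, namely $R = \mathrm{diag}(-1,-1,1)$ (the generator of $C_{2}^{+}$); the other two cases are identical after relabeling. Writing $v = \alpha \mathbf{x} + \beta \mathbf{y} + \gamma \mathbf{z} \in L$, I compute
\[
v + Rv = 2\gamma \mathbf{z}.
\]
This vector lies in $L$ and on the $z$-axis $\ell(x=y=0)$, so fullness of $\langle \mathbf{z}\rangle$ gives $2\gamma\mathbf{z}\in \mathbb{Z}\mathbf{z}$, i.e.\ $2\gamma \in \mathbb{Z}$. For the rotation $\mathrm{diag}(1,-1,-1)$ about the $x$-axis one instead forms $v + Rv = 2\alpha\mathbf{x}$ and invokes fullness of $\langle \mathbf{x}\rangle$ to obtain $2\alpha \in \mathbb{Z}$, and symmetrically for the $y$-axis, matching the ``respectively'' in the statement.

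For part (2), let $M = \mathrm{diag}(1,1,-1)$ be the reflection across $P(z=0)$, and write $v = \alpha \mathbf{x} + \beta \mathbf{y} + \gamma \mathbf{z} \in L$. Here I would separate the tangential and normal parts at once:
\[
v + Mv = 2\alpha \mathbf{x} + 2\beta \mathbf{y}, \qquad v - Mv = 2\gamma \mathbf{z}.
\]
The first vector lies in $L \cap P(z=0) = \langle \mathbf{x},\mathbf{y}\rangle$ by the extra fullness hypothesis on $\langle \mathbf{x},\mathbf{y}\rangle$, so $2\alpha, 2\beta \in \mathbb{Z}$; the second lies on the $z$-axis and is controlled by fullness of $\langle \mathbf{z}\rangle$ exactly as in part (1), giving $2\gamma \in \mathbb{Z}$.

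I do not expect any genuine obstacle: the proof is entirely formal once the symmetrization $v \mapsto v \pm gv$ is written down. The one point deserving a word of care is the precise reading of fullness, namely that it yields the \emph{equality} $L\cap(\text{span}) = \langle\cdot\rangle$ rather than a mere inclusion, as noted above. I would also remark in passing that the fullness hypotheses already force $\mathbf{x},\mathbf{y},\mathbf{z}\in L$, so the stated inclusion $L_{\mathcal{C}}\leq L$ is automatic and is included only for emphasis.
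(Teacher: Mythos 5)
Your proof is correct and follows essentially the same route as the paper: symmetrize $v$ against the rotation or reflection so that $v \pm gv$ lands on the relevant axis or in $P(z=0)$, then invoke fullness. The only cosmetic difference is that you illustrate part (1) with the $z$-axis rotation while the paper uses the $x$-axis, which changes nothing.
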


\begin{proof}
\begin{enumerate}
\item Let $v \in L$ have the form indicated in the lemma.  
We suppose that $H$ contains the rotation $\lambda$ through $180$ degrees about the $x$-axis.  We have
$$ \lambda \cdot v = \left( \begin{smallmatrix} 1 & 0 & 0 \\ 0 & -1 & 0 \\ 0 & 0 & -1 \end{smallmatrix} \right) 
\left( \begin{smallmatrix} \alpha \\ \beta \\ \gamma \end{smallmatrix} \right) = 
\left( \begin{smallmatrix} \alpha \\ -\beta \\ -\gamma \end{smallmatrix} \right) \in L.$$
We conclude that $2\alpha \mathbf{x} \in L$, so $2\alpha \in \mathbb{Z}$ by the fullness
of the subgroup $\langle \mathbf{x} \rangle$. 
\item Let $v \in L$ once again have the form indicated in the lemma.  We get
$$ \lambda \cdot v = \left( \begin{smallmatrix} 1 & 0 & 0 \\ 0 & 1 & 0 \\ 0 & 0 & -1 \end{smallmatrix} \right) 
\left( \begin{smallmatrix} \alpha \\ \beta \\ \gamma \end{smallmatrix} \right) = 
\left( \begin{smallmatrix} \alpha \\ \beta \\ -\gamma \end{smallmatrix} \right) \in L.$$
It follows directly that both $2\alpha \mathbf{x}  + 2\beta \mathbf{y}$ and $2\gamma \mathbf{z}$ are in $L$.
But now it follows that $2\alpha , 2\beta \in \mathbb{Z}$ and $2\gamma \in \mathbb{Z}$, by the fullness of the
subgroups $\langle \mathbf{x} , \mathbf{y} \rangle$ and $\langle \mathbf{z} \rangle$, respectively.
\end{enumerate}
\end{proof}
 
\begin{corollary} \label{LC}
Suppose that the standard point group $H$ contains the involution $(-1)$, and suppose that the 
orientation-preserving subgroup $H^{+}$ of $H$ is one of  
$C_{1}^{+}, C_{2}^{+}, D_{2}^{+}, C_{4}^{+}, D_{4}^{+}, A_{4}^{+}$, or $S_{4}^{+}$.  
Suppose that $L$ is a lattice such that $H \cdot L = L$.
The pair $(L,H)$ is arithmetically equivalent to $(L',H)$, where
$L'$ is equal to one of the following lattices (or the image of one of these under a permutation of coordinate
axes):
$$\left\langle \mathbf{x}, \mathbf{y}, \mathbf{z} \right\rangle;
\left\langle \mathbf{x}, \mathbf{y}, \frac{\mathbf{x} + \mathbf{y} + \mathbf{z}}{2}  \right\rangle;
\left\langle \mathbf{x} , \mathbf{y} , \frac{\mathbf{x} + \mathbf{z}}{2} \right\rangle;
\left\langle \frac{\mathbf{x} + \mathbf{y}}{2}, \frac{\mathbf{x} + \mathbf{z}}{2} , \frac{\mathbf{y} + \mathbf{z}}{2}
\right\rangle.$$
Moreover, in case $H^{+} = C_{2}^{+}$, $C_{4}^{+}$, or $D_{4}^{+}$, 
we can arrange that $L'$ contains $\langle \mathbf{x}, \mathbf{y} \rangle$ as a full subgroup.
In particular, for $H^{+} = C_{2}^{+}$, $C_{4}^{+}$, or $D_{4}^{+}$, $L'$ is not the last of these four lattices.
\end{corollary}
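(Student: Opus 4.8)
The plan is to combine Proposition \ref{full}, which seats the cubic lattice inside $L$ with the appropriate fullness, with Lemma \ref{2X}, which bounds $L$ from above by $\frac{1}{2}L_{\mathcal{C}}$, and then to carry out a short finite enumeration of the intermediate lattices. I would first dispose of the trivial case $H^{+} = C_{1}^{+}$, where Proposition \ref{full}(1) already gives $(L,H) \sim (L_{\mathcal{C}}, H)$ and $L_{\mathcal{C}} = \langle \mathbf{x}, \mathbf{y}, \mathbf{z} \rangle$ is the first listed lattice. In every remaining case, Proposition \ref{full}(2) and (3) let me replace $(L,H)$ by an arithmetically equivalent pair in which $L_{\mathcal{C}} \leq L$ and the relevant coordinate subgroups are full; I then keep this normalized $L$ for the rest of the argument. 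In particular, when $H^{+} \in \{ C_{2}^{+}, C_{4}^{+}, D_{4}^{+} \}$ the subgroup $\langle \mathbf{x}, \mathbf{y} \rangle$ is full, which is exactly the ``Moreover'' clause of the statement.

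The next step is to verify the hypotheses of Lemma \ref{2X} and apply it. When $H^{+} \in \{ C_{2}^{+}, C_{4}^{+}, D_{4}^{+} \}$, note that $C_{2}^{+} \leq H^{+}$ and that $(-1) \cdot C_{2}^{+}$ is precisely the reflection across $P(z=0)$, so Lemma \ref{2X}(2) applies; when $H^{+} \in \{ D_{2}^{+}, A_{4}^{+}, S_{4}^{+} \}$ we have $D_{2}^{+} \leq H^{+}$, which supplies the $180$-degree rotations about all three coordinate axes, so Lemma \ref{2X}(1) applies to each axis. Either way, every $v = \alpha \mathbf{x} + \beta \mathbf{y} + \gamma \mathbf{z} \in L$ satisfies $2\alpha, 2\beta, 2\gamma \in \mathbb{Z}$, so $L_{\mathcal{C}} \leq L \leq \frac{1}{2} L_{\mathcal{C}}$ and the quotient $L / L_{\mathcal{C}}$ is a subgroup of $\frac{1}{2} L_{\mathcal{C}} / L_{\mathcal{C}} \cong (\mathbb{Z}/2)^{3}$, whose seven nonzero cosets are represented by the half-sums $\frac{1}{2}(a\mathbf{x} + b\mathbf{y} + c\mathbf{z})$ with $(a,b,c) \in \{0,1\}^{3} \setminus \{0\}$.

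I would finish by a finite enumeration in which fullness forbids certain cosets. For $H^{+} \in \{ C_{2}^{+}, C_{4}^{+}, D_{4}^{+} \}$, fullness of $\langle \mathbf{x}, \mathbf{y} \rangle$ forbids the three planar cosets of $\frac{1}{2}\mathbf{x}, \frac{1}{2}\mathbf{y}, \frac{1}{2}(\mathbf{x}+\mathbf{y})$, while fullness of $\langle \mathbf{z} \rangle$ forbids $\frac{1}{2}\mathbf{z}$; the only admissible nonzero cosets are therefore those of $\frac{1}{2}(\mathbf{x}+\mathbf{z})$, $\frac{1}{2}(\mathbf{y}+\mathbf{z})$, and $\frac{1}{2}(\mathbf{x}+\mathbf{y}+\mathbf{z})$, and any two of these sum to a forbidden coset. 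Hence $L/L_{\mathcal{C}}$ has order at most $2$, yielding exactly the first lattice, the second lattice $\langle \mathbf{x}, \mathbf{y}, \frac{\mathbf{x}+\mathbf{y}+\mathbf{z}}{2} \rangle$, or the third lattice $\langle \mathbf{x}, \mathbf{y}, \frac{\mathbf{x}+\mathbf{z}}{2} \rangle$ (or its image under the swap $\mathbf{x} \leftrightarrow \mathbf{y}$); since the fourth lattice contains $\frac{1}{2}(\mathbf{x}+\mathbf{y})$, it cannot occur, which is the final assertion.

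For $H^{+} \in \{ D_{2}^{+}, A_{4}^{+}, S_{4}^{+} \}$, fullness of each of $\langle \mathbf{x} \rangle, \langle \mathbf{y} \rangle, \langle \mathbf{z} \rangle$ forbids exactly the cosets of $\frac{1}{2}\mathbf{x}, \frac{1}{2}\mathbf{y}, \frac{1}{2}\mathbf{z}$, leaving the four even-weight cosets as the admissible ones. The subgroups of $(\mathbb{Z}/2)^{3}$ supported on these are $\{0\}$, the three order-two groups generated by a single weight-two coset, the order-two group generated by $(1,1,1)$, and the even-weight subgroup of order four; one checks directly that no weight-two coset together with $(1,1,1)$ generates an admissible group, so this list is complete. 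Matching generators, these correspond respectively to the first lattice, the third lattice (up to a permutation of coordinates), the second lattice, and the fourth lattice $\langle \frac{\mathbf{x}+\mathbf{y}}{2}, \frac{\mathbf{x}+\mathbf{z}}{2}, \frac{\mathbf{y}+\mathbf{z}}{2} \rangle$. The main obstacle is purely bookkeeping: correctly translating each admissible subgroup of $(\mathbb{Z}/2)^{3}$ into a generating set for $L$ and identifying it, up to a coordinate permutation, with one of the four named lattices, together with the routine check that no larger subgroup survives the fullness constraints.
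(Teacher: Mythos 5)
Your proof is correct and follows essentially the same route as the paper: normalize via Proposition \ref{full}, use Lemma \ref{2X} to get $L_{\mathcal{C}} \leq L' \leq \frac{1}{2}L_{\mathcal{C}}$, and finish with a finite enumeration. The only difference is cosmetic — you enumerate admissible subgroups of $\frac{1}{2}L_{\mathcal{C}}/L_{\mathcal{C}} \cong (\mathbb{Z}/2)^{3}$ where the paper enumerates possible lattice points of $L' \cap [0,1]^{3}$, and your phrasing makes the exclusion of the ``exactly two face-centers plus the body diagonal'' configurations slightly more transparent (modulo the harmless slip of calling the weight-three coset ``even-weight'').
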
 

\begin{proof}
If $H^{+} = C_{1}^{+}$, then $(L,H) \sim (L_{\mathcal{C}}, H)$ by Proposition \ref{full}(1).  
 
We now show that any pair $(L,H)$ with $H^{+} \neq 1$ is equivalent to $(L',H)$, where $2L' \leq L_{\mathcal{C}}$.
If $H^{+} = C_{2}^{+}$, $C_{4}^{+}$,
or $D_{4}^{+}$, then, by Proposition \ref{full}(2), $(L,H) \sim (L',H)$, where $L_{\mathcal{C}} \leq L'$ and 
$\langle \mathbf{x}, \mathbf{y} \rangle$, $\langle \mathbf{z} \rangle \subseteq L'$
are full subgroups.
Since, in each case, $H$ contains the reflection across the plane $P(z=0)$, we get the desired conclusion
from Lemma \ref{2X}(2).  
If $H^{+}$ is any of the remaining groups, then $(L,H) \sim (L',H)$, where $L_{\mathcal{C}} \leq L'$ and each of 
$\langle \mathbf{x} \rangle$, $\langle \mathbf{y} \rangle$, and $\langle \mathbf{z} \rangle$ are full subgroups
of $L'$, by Proposition \ref{full}(3).
In these cases, $H$ (indeed, $H^{+}$) contains
the rotations through $180$ degrees about each of the coordinate axes, and the first part of Lemma \ref{2X} directly implies the 
desired conclusion.

Note that $L' \cap [0,1]^{3}$ generates $L'$ (since $L_{\mathcal{C}} \leq L'$).  The first
part of the corollary (and fullness of the subgroups $\langle \mathbf{x} \rangle$, $\langle \mathbf{y} \rangle$, and
$\langle \mathbf{z} \rangle$) implies $L' \cap [0,1]^{3}$ consists of the corners of the cube $[0,1]^{3}$, and possibly
some subcollection of seven other points:  the center of the cube $\frac{1}{2}(\mathbf{x} + \mathbf{y} + \mathbf{z})$,
and the centers of the two-dimensional faces. 

Now we run through the cases.  Of course, if $L' \cap [0,1]^{3}$ consists only of the set of the corners of $[0,1]^{3}$,
then $L' = \langle \mathbf{x} , \mathbf{y}, \mathbf{z} \rangle$.  If $L' \cap [0,1]^{3}$ contains the center
of the cube, then no other points can be in $L' \cap [0,1]^{3}$ (other than the corners) without violating the fullness
of one of the subgroups $\langle \mathbf{x} \rangle$, $\langle \mathbf{y} \rangle$, or $\langle \mathbf{z} \rangle$.  This case
thus yields the second lattice mentioned in the conclusion.  The only cases left to consider are those in which any
additional lattice points occur in the middle of two-dimensional faces of $[0,1]^{3}$.  It is obvious that these lattice
points must appear in pairs (on opposing pairs of faces), and it is an elementary exercise to show that it is impossible
for exactly $4$ of these centers to be lattice points.  It follows that either $2$ opposing center points are in $L'$,
or all $6$ are in $L'$.  These yield (respectively) the last two lattices mentioned in the conclusion.

The final statement is a consequence of the fact that the $L'$ constructed in this proof 
has $\langle \mathbf{x}, \mathbf{y} \rangle$ as a full
subgroup in the specified cases.

We note finally that all of the lattices in the corollary 
are invariant under the permutation of coordinate axes, with the exception 
of the second-to-last.
\end{proof}
  
\begin{lemma} \label{3X} Suppose that $H$ is a  standard point group which stabilizes the lattice $L$, suppose
$L_{\mathcal{P}} \leq L$, and each of $\langle \mathbf{v}_{1} \rangle$,
$\langle \mbf{v}_{2} \rangle$, $\langle \mbf{v}_{3} \rangle$ is a full subgroup of $L$.
\begin{enumerate}
\item If $H$ contains the $180$-degree rotation through the vector space spanned by $\mbf{v}_{2}$, and
a rotation through $120$ degrees about the line $\ell(x=y=z)$, 
then, for any $\mbf{v} \in L$, $\mbf{v} = \alpha \mbf{v}_{1} + \beta \mbf{v}_{2} + \gamma \mbf{v}_{3}$, we
have $3 \beta$, $3 \gamma \in \mathbb{Z}$ and $\beta - \gamma \in \mathbb{Z}$.
\item If $H$ contains 
a rotation through $120$ degrees about the line $\ell(x=y=z)$ and $\langle \mbf{v}_{2}, \mbf{v}_{3} \rangle$
is a full subgroup of $L$,  
then, for any $\mbf{v} \in L$, $\mbf{v} = \alpha \mbf{v}_{1} + \beta \mbf{v}_{2} + \gamma \mbf{v}_{3}$, we
have $3 \beta$, $3 \gamma \in \mathbb{Z}$ and $\beta - \gamma \in \mathbb{Z}$.
\item If $H$ contains 
a rotation through $120$ degrees about the line $\ell(x=y=z)$ 
then, for any $\mbf{v} \in L$, $\mbf{v} = \alpha \mbf{v}_{1} + \beta \mbf{v}_{2} + \gamma \mbf{v}_{3}$, we
have $3 \alpha \in \mathbb{Z}$. 
\item If $H$ contains both the reflection across the plane $P(x+y+z=0)$ and 
a rotation through $120$ degrees about the line $\ell (x=y=z)$, and $\langle \mbf{v}_{2} , \mbf{v}_{3} \rangle$
is a full subgroup of $L$, then
$L = \langle \mbf{v}_{1} , \mbf{v}_{2}, \mbf{v}_{3} \rangle$.  
\end{enumerate}
\end{lemma}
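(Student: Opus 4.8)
The plan is to handle all four parts by a single mechanism: translate the relevant elements of $H$ into their action on the (non-orthogonal) prismatic basis $\mathbf{v}_1, \mathbf{v}_2, \mathbf{v}_3$, and then use the fullness hypotheses to force divisibility conditions on the coordinates $\alpha, \beta, \gamma$ of an arbitrary $\mathbf{v} = \alpha\mathbf{v}_1 + \beta\mathbf{v}_2 + \gamma\mathbf{v}_3 \in L$. The one preliminary computation I need is the matrix of the $120$-degree rotation $r$ about $\ell(x=y=z)$ in this basis. Since $r$ is the cyclic coordinate permutation, a direct check gives $r\mathbf{v}_1 = \mathbf{v}_1$, $r\mathbf{v}_2 = -\mathbf{v}_2 + \mathbf{v}_3$, and $r\mathbf{v}_3 = -\mathbf{v}_2$; thus $r$ fixes the axis $\langle \mathbf{v}_1 \rangle$ and acts as an order-$3$ rotation on the plane $\langle \mathbf{v}_2, \mathbf{v}_3 \rangle = P(x+y+z=0)$. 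I record once the two complementary averaging operators: $1 + r + r^2$ kills the plane and sends $\mathbf{v}$ to $3\alpha\mathbf{v}_1$, while $1 - r$ kills the axis and sends $\mathbf{v}$ into the plane.

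I would prove (3) first, as it is logically the weakest. Since $r\mathbf{v}, r^2\mathbf{v} \in L$, we get $(1 + r + r^2)\mathbf{v} = 3\alpha\mathbf{v}_1 \in L$, and fullness of $\langle \mathbf{v}_1 \rangle$ yields $3\alpha \in \mathbb{Z}$. For (2), I apply the complementary operator: $(1 - r)\mathbf{v} = (2\beta + \gamma)\mathbf{v}_2 + (\gamma - \beta)\mathbf{v}_3$ lies in $L \cap P(x+y+z=0)$, so fullness of $\langle \mathbf{v}_2, \mathbf{v}_3 \rangle$ forces $2\beta + \gamma \in \mathbb{Z}$ and $\gamma - \beta \in \mathbb{Z}$. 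Subtracting gives $\beta - \gamma \in \mathbb{Z}$, and then $3\beta = (2\beta + \gamma) + (\beta - \gamma) \in \mathbb{Z}$ and $3\gamma = 3\beta - 3(\beta - \gamma) \in \mathbb{Z}$, which are the claimed conclusions of (2).

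Part (1) has the same conclusion but replaces the rank-two fullness by the presence of the $180$-degree rotation $s$ about $\langle \mathbf{v}_2 \rangle$ together with fullness of $\langle \mathbf{v}_2 \rangle$ alone. Computing $s$ in the basis ($s\mathbf{v}_1 = -\mathbf{v}_1$, $s\mathbf{v}_2 = \mathbf{v}_2$, $s\mathbf{v}_3 = \mathbf{v}_2 - \mathbf{v}_3$), I find $\mathbf{v} + s\mathbf{v} = (2\beta + \gamma)\mathbf{v}_2$, so fullness of $\langle \mathbf{v}_2 \rangle$ gives $2\beta + \gamma \in \mathbb{Z}$. Applying the identical computation to $r\mathbf{v} \in L$ (whose coordinates are $\alpha, -\beta - \gamma, \beta$) produces the second congruence $\beta + 2\gamma \in \mathbb{Z}$; adding and subtracting the two recovers $\beta - \gamma \in \mathbb{Z}$ and $3\beta, 3\gamma \in \mathbb{Z}$.

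Finally, for (4) I combine a reflection argument with the divisibility already obtained in (2) and (3). The reflection $\sigma$ across $P(x+y+z=0)$ fixes $\mathbf{v}_2, \mathbf{v}_3$ and negates $\mathbf{v}_1$, so $\mathbf{v} - \sigma\mathbf{v} = 2\alpha\mathbf{v}_1$ and $\mathbf{v} + \sigma\mathbf{v} = 2\beta\mathbf{v}_2 + 2\gamma\mathbf{v}_3$; fullness of $\langle \mathbf{v}_1 \rangle$ and of $\langle \mathbf{v}_2, \mathbf{v}_3 \rangle$ give $2\alpha, 2\beta, 2\gamma \in \mathbb{Z}$. Since the hypotheses of (4) include those of (2) and (3), I also have $3\alpha, 3\beta, 3\gamma \in \mathbb{Z}$, and combining $2t \in \mathbb{Z}$ with $3t \in \mathbb{Z}$ via $3t - 2t = t$ forces $\alpha, \beta, \gamma \in \mathbb{Z}$; hence every $\mathbf{v} \in L$ lies in $\langle \mathbf{v}_1, \mathbf{v}_2, \mathbf{v}_3 \rangle$, and since $L_{\mathcal{P}} \leq L$ we conclude $L = L_{\mathcal{P}}$. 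The only real obstacle is the opening bookkeeping: because $\mathbf{v}_1, \mathbf{v}_2, \mathbf{v}_3$ is not orthonormal, the matrices of $r$, $s$, and $\sigma$ in this basis must be computed directly rather than read off from the orthogonal picture. Once those matrices and the coordinate formulas for $1 + r + r^2$, $1 - r$, $1 + s$, and $1 \pm \sigma$ are in hand, each part is a one- or two-line consequence of a fullness hypothesis, and the passage from denominator-$3$ and denominator-$2$ control to full integrality in (4) is immediate.
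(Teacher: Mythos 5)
Your proof is correct and takes essentially the same route as the paper's: apply the relevant elements of $H$ to $\mathbf{v}$, form the combinations $1+r+r^{2}$, $1-r$, $1+s$, and $1\pm\sigma$ to land in $\langle \mathbf{v}_{1}\rangle$ or in the span of $\mathbf{v}_{2},\mathbf{v}_{3}$, and then invoke the fullness hypotheses; your matrices for $r$, $s$, $\sigma$ in the prismatic basis and the resulting coordinate identities all check out. The only (harmless) deviations are that in (1) you apply $1+s$ a second time to $r\mathbf{v}$, using only fullness of $\langle \mathbf{v}_{2}\rangle$, where the paper instead extracts $(\gamma-\beta)\mathbf{v}_{3}\in L$ and invokes fullness of $\langle \mathbf{v}_{3}\rangle$, and in (4) you combine $2\beta,2\gamma\in\mathbb{Z}$ with the conclusion of (2), where the paper first gets $\alpha\in\mathbb{Z}$ and then reads off $\beta,\gamma\in\mathbb{Z}$ directly from fullness of $\langle \mathbf{v}_{2},\mathbf{v}_{3}\rangle$.
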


\begin{proof}
\begin{enumerate}
\item Let $\mbf{v}$ be as in the statement of the lemma.  We first apply the rotation $R_{1}$ through $180$ degrees
about $\ell(x+y=0 ; z=0)$:
$$ \left( \begin{smallmatrix} 0 & -1 & 0 \\ -1 & 0 & 0 \\ 0 & 0 & -1 \end{smallmatrix} \right) 
\left( \begin{smallmatrix} \alpha + \beta \\ \alpha - \beta - \gamma \\ \alpha + \gamma \end{smallmatrix} \right)
= \left( \begin{smallmatrix} -\alpha + \beta + \gamma \\ -\alpha - \beta \\ -\alpha - \gamma \end{smallmatrix} \right) \in L.$$
It follows that $\mbf{v} + R_{1} \mbf{v} = (2 \beta + \gamma) \mbf{v}_{2} \in L$.  Since $\langle \mbf{v}_{2} \rangle$
is a full subgroup of $L$, it follows that $2 \beta + \gamma \in \mathbb{Z}$.  Now apply the rotation $R_{2}$ through $120$
degrees about $\ell(x=y=z)$:
$$ \left( \begin{smallmatrix} 0 & 1 & 0 \\  0 & 0 & 1 \\ 1 & 0 & 0  \end{smallmatrix} \right) 
\left( \begin{smallmatrix} \alpha + \beta \\ \alpha - \beta - \gamma \\ \alpha + \gamma \end{smallmatrix} \right)
= \left( \begin{smallmatrix} \alpha - \beta - \gamma \\  \alpha + \gamma \\ \alpha + \beta \end{smallmatrix} \right) \in L.$$
It follows that 
$$ \mbf{v} - R_{2}\mbf{v} = \left( 2 \beta + \gamma \right) \mbf{v}_{2} + \left( \gamma - \beta \right) \mbf{v}_{3} \in L.$$
This means that $( \gamma - \beta ) \mbf{v}_{3} \in L$, since $(2\beta + \gamma) \mbf{v}_{2} \in L$ by the previous calculation.
Since $\langle \mbf{v}_{3} \rangle$ is full in $L$, it must be that $\gamma - \beta \in \mathbb{Z}$.  The desired conclusions
follow easily.
\item This is easier than the proof of (1).  We need consider only 
the last displayed equation, and the desired conclusions follow
from the fullness of $\langle \mbf{v}_{2}, \mbf{v}_{3} \rangle$ in $L$.  
\item We note that the rotation $R_{2}$ (from (1)) simply permutes the coordinates of any vector 
$$\mbf{v} = \alpha \mbf{v}_{1} + \beta \mbf{v}_{2} + \gamma \mbf{v}_{3} = ( \alpha + \beta ) \mbf{x} + 
( \alpha - \beta - \gamma ) \mbf{y} + ( \alpha + \gamma ) \mbf{z}$$ 
cyclically.  It is not difficult to see that
$$ \left( 1 + R_{2} + R_{2}^{2} \right) \cdot \mbf{v} = 3 \alpha ( \mbf{x} + \mbf{y} + \mbf{z} ) = 3 \alpha \mbf{v}_{1}.$$
Since $\langle \mbf{v}_{1} \rangle$ is full in $L$, we have $3 \alpha \in \mathbb{Z}$.
\item Let $R_{3}$ denote the reflection in question.  Let 
$\mbf{v} = \alpha \mbf{v}_{1} + \beta \mbf{v}_{2} + \gamma \mbf{v}_{3} \in L$
be arbitrary.  It follows that
$$ (1 - R_{3} ) \mbf{v} = 
\left( \alpha \mbf{v}_{1} + \beta \mbf{v}_{2} + \gamma \mbf{v}_{3} \right)  
- \left( - \alpha \mbf{v}_{1} + \beta \mbf{v}_{2} + \gamma \mbf{v}_{3} \right) = 2 \alpha \mbf{v}_{1} \in L.$$
The fullness of $\langle \mbf{v}_{1} \rangle$ in $L$ implies that $2\alpha \in \mathbb{Z}$.  Since $2\alpha \in \mathbb{Z}$ 
and $3\alpha \in \mathbb{Z}$ (by (3)), $\alpha \in \mathbb{Z}$.  It follows that
$\beta \mbf{v}_{2} + \gamma \mbf{v}_{3} \in L$.  Fullness of $\langle \mbf{v}_{2}, \mbf{v}_{3} \rangle$ in $L$ implies
that $\beta, \gamma \in \mathbb{Z}$.
\end{enumerate}
\end{proof}

\begin{corollary} \label{LP}
Suppose that $H$ contains the involution $(-1)$, and the orientation-preserving subgroup
$H^{+}$ of $H$ is one of  
$C_{3}^{+}, D_{3}^{+}, C_{6}^{+}$, or $D_{6}^{+}$.
Let $L$ be a lattice such that $H \cdot L = L$.  
The pair $(L, H)$ is equivalent to a pair $(L' , H)$ where
$L'$ is one of the following:
$$ \left\langle \mbf{v}_{1}, \mbf{v}_{2} , \mbf{v}_{3} \right\rangle,     
\left\langle \frac{1}{3} \left( \mbf{v}_{1} + \mbf{v}_{2} + \mbf{v}_{3} \right), \mbf{v}_{2} , \mbf{v}_{3} \right\rangle,     
\left\langle \mbf{v}_{1}, \frac{1}{3}\left( \mbf{v}_{2} + \mbf{v}_{3} \right) , \mbf{v}_{3} \right\rangle.$$
Indeed, we can assume $L'$ is the first lattice if $H^{+} = C_{6}^{+}$ or $D_{6}^{+}$, 
or that it is one of the first two lattices if $H^{+} = C_{3}^{+}$.  
\end{corollary}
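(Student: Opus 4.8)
The plan is to imitate the proof of Corollary \ref{LC}: reduce via Proposition \ref{full} to a pair $(L',H)$ with $L_{\mathcal P}\leq L'$ and prescribed fullness, then use Lemma \ref{3X} to confine $L'/L_{\mathcal P}$ to a small subgroup of $(\mathbb Z/3)^2$, and finally discard redundant cases by an explicit arithmetic equivalence. In every case $H^+$ contains a $120$-degree rotation $\rho$ about $\ell(x=y=z)$, so Lemma \ref{3X}(3) applies throughout. For $H^+=C_6^+$ or $D_6^+$ I would use Proposition \ref{full}(4) to assume $L_{\mathcal P}\leq L'$ with $\langle\mbf v_2,\mbf v_3\rangle$ full. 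Here $H^+$ contains the order-$6$ rotation $\rho$, hence the $120$-degree rotation $\rho^2$ and the $180$-degree rotation $\rho^3$; since $(-1)\in H$, the product $(-1)\rho^3$ is precisely the reflection across $P(x+y+z=0)$, so that reflection lies in $H$. Lemma \ref{3X}(4) then forces $L'=\langle\mbf v_1,\mbf v_2,\mbf v_3\rangle$, the first lattice.

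For $H^+=C_3^+$ (resp.\ $D_3^+$) I would invoke Proposition \ref{full}(4) (resp.\ (5)) to assume $L_{\mathcal P}\leq L'$ with $\langle\mbf v_1\rangle,\langle\mbf v_2,\mbf v_3\rangle$ full (resp.\ each of $\langle\mbf v_1\rangle,\langle\mbf v_2\rangle,\langle\mbf v_3\rangle$ full); either way the blanket fullness hypotheses of Lemma \ref{3X} hold, since joint fullness of $\langle\mbf v_2,\mbf v_3\rangle$ implies fullness of each factor. Lemma \ref{3X}(3) gives $3\alpha\in\mathbb Z$, while Lemma \ref{3X}(2) for $C_3^+$, or \ref{3X}(1) for $D_3^+$ (using the $180$-degree rotation about $\langle\mbf v_2\rangle$ present in $D_3^+$), gives $3\beta,3\gamma\in\mathbb Z$ and $\beta-\gamma\in\mathbb Z$ for every $\mbf v=\alpha\mbf v_1+\beta\mbf v_2+\gamma\mbf v_3\in L'$. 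Thus $L'\leq\tfrac13 L_{\mathcal P}$, and reducing mod $L_{\mathcal P}$ the quotient $L'/L_{\mathcal P}$ becomes a subgroup of the group of classes $(\alpha,\beta,\beta)$ with $\alpha,\beta\in\{0,\tfrac13,\tfrac23\}$, a copy of $(\mathbb Z/3)^2$. Fullness of $\langle\mbf v_1\rangle$ forbids any nonzero class $\tfrac{k}{3}\mbf v_1$; for $C_3^+$, fullness of $\langle\mbf v_2,\mbf v_3\rangle$ also forbids the nonzero classes $\tfrac{k}{3}(\mbf v_2+\mbf v_3)$. Running through the four order-$3$ subgroups of $(\mathbb Z/3)^2$ together with the trivial and full subgroups, this leaves, for $C_3^+$, only $L_{\mathcal P}$, the subgroup generated by $\tfrac13(\mbf v_1+\mbf v_2+\mbf v_3)$, or the one generated by $\tfrac13(\mbf v_1+2\mbf v_2+2\mbf v_3)$; for $D_3^+$ the option generated by $\tfrac13(\mbf v_2+\mbf v_3)$ also survives.

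Finally I would remove the stray case $\langle\tfrac13(\mbf v_1+2\mbf v_2+2\mbf v_3)\rangle$ using the reflection $R$ across $P(x+y+z=0)$, which sends $\mbf v_1\mapsto-\mbf v_1$ and fixes $\mbf v_2,\mbf v_3$. This $R$ commutes with $\rho$ and normalizes the $180$-degree rotations of $D_3^+$ (each has its axis either in the fixed plane or along the reversed line $\ell(x=y=z)$), so it normalizes $H$ and gives an arithmetic equivalence; it carries $\tfrac13(\mbf v_1+2\mbf v_2+2\mbf v_3)$ to $-\tfrac13\mbf v_1+\tfrac23(\mbf v_2+\mbf v_3)\equiv\tfrac23(\mbf v_1+\mbf v_2+\mbf v_3)\pmod{L_{\mathcal P}}$, i.e.\ to the lattice generated by $\tfrac13(\mbf v_1+\mbf v_2+\mbf v_3)$. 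This collapses the redundant case onto the second lattice, leaving exactly the first two lattices for $C_3^+$ and all three for $D_3^+$, as claimed.

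The main obstacle I anticipate is the bookkeeping rather than any hard estimate: extracting the relation $\beta\equiv\gamma\pmod{\mathbb Z}$ so that the relevant quotient is $(\mathbb Z/3)^2$ and not $(\mathbb Z/3)^3$, correctly matching each surviving subgroup to one of the three named lattices, and verifying that $R$ really normalizes $H$ in every case so the final identification is a genuine arithmetic equivalence. One should also note that the third lattice is excluded for $C_3^+$ precisely because of the \emph{joint} fullness of $\langle\mbf v_2,\mbf v_3\rangle$, a condition unavailable for $D_3^+$.
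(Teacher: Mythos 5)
Your proof is correct and follows essentially the same route as the paper: Proposition \ref{full}(4)/(5) plus Lemma \ref{3X} to confine $L'$ to $\tfrac{1}{3}L_{\mathcal{P}}$ with the stated congruences, then a normalizing isometry to collapse the one redundant index-$3$ sublattice. Your $(\mathbb{Z}/3)^{2}$ bookkeeping and your reflection $R$ across $P(x+y+z=0)$ are only cosmetic variants of the paper's ``routine'' case check and its $180$-degree rotation about $\ell(x=y=z)$ (the two isometries differ by the factor $(-1)\in H$, which already preserves everything in sight).
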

     
\begin{proof}
Suppose first that $H^{+} = C_{6}^{+}$ or $D_{6}^{+}$.  We can apply Proposition \ref{full}(4), and conclude
that $(L,H) \sim (L',H)$, where $L_{\mathcal{P}} \leq L'$ and each of the subgroups
$\langle \mathbf{v}_{2}, \mathbf{v}_{3} \rangle$, $\langle \mathbf{v}_{1} \rangle$ is full in $L'$.
Since $H$ contains both the reflection across
the plane $P(x+y+z=0)$ and the rotation through $120$ degrees about $\ell(x=y=z)$, Lemma \ref{3X}(4) shows
that $L' = \langle \mbf{v}_{1}, \mbf{v}_{2} , \mbf{v}_{3} \rangle$.

Next suppose $H^{+} = C_{3}^{+}$.  We can again apply Proposition \ref{full}(4), and conclude
that $(L,H) \sim (L',H)$, where $L_{\mathcal{P}} \leq L'$ and each of the subgroups
$\langle \mathbf{v}_{2}, \mathbf{v}_{3} \rangle$, $\langle \mathbf{v}_{1} \rangle$ is full in $L'$.
One possibility is that $L' = \langle \mbf{v}_{1}, \mbf{v}_{2}, \mbf{v}_{3} \rangle$;
we suppose otherwise.  Let us consider a typical $\mbf{v} = \alpha \mbf{v}_{1} + \beta \mbf{v}_{2} + \gamma \mbf{v}_{3} \in L'$.  By
Lemma \ref{3X}((2) and (3)) and Proposition \ref{full}(4), we have that $3\alpha, 3\beta, 3\gamma, \gamma - \beta \in \mathbb{Z}$.  Since 
$L' \neq \langle \mbf{v}_{1} , \mbf{v}_{2} , \mbf{v}_{3} \rangle$, we can find $\mbf{v}$ so that not all of
$\alpha , \beta, \gamma$ are integers.  Indeed, fullness of $\langle \mbf{v}_{1} \rangle$ and 
$\langle \mbf{v}_{2}, \mbf{v}_{3} \rangle$ in $L'$ (and the inclusions $3\alpha, 3\beta, 3\gamma, \gamma - \beta \in \mathbb{Z}$) 
imply that none of $\alpha, \beta, \gamma$ are integers if one is not.
It is now routine to show that one of the vectors   
$$ \frac{1}{3} \left( \mbf{v}_{1} - \mbf{v}_{2} - \mbf{v}_{3} \right) \quad \mathrm{or} \quad 
\frac{1}{3} \left( \mbf{v}_{1} + \mbf{v}_{2} + \mbf{v}_{3} \right)$$
is in $L'$.  After applying the $180$ degree rotation about $\ell(x=y=z)$, we can assume that $\frac{1}{3} \left( \mbf{v}_{1} + \mbf{v}_{2} + \mbf{v}_{3} \right)
\in L'$, since the rotation in question preserves $H$.  

We claim that $L' = \langle \frac{1}{3}\left( \mbf{v}_{1} + \mbf{v}_{2} + \mbf{v}_{3} \right) , \mbf{v}_{2} , \mbf{v}_{3} \rangle$.
Suppose $\mbf{v} = \alpha \mbf{v}_{1} + \beta \mbf{v}_{2} + \gamma \mbf{v}_{3} \in L'$.  We can conclude, as before,
that $3\alpha , 3\beta , 3\gamma , \gamma - \beta \in \mathbb{Z}$.
$$ \mbf{v} = \alpha \mbf{v}_{1} + \beta \mbf{v}_{2} + \gamma \mbf{v}_{3}
= (\alpha - \beta) \mbf{v}_{1} + (3\beta) \frac{1}{3} \left( \mbf{v}_{1} + \mbf{v}_{2} + \mbf{v}_{3} \right) + 
( \gamma - \beta) \mbf{v}_{3}.$$
Since $\mbf{v}$ and every other term of the right-most sum is in $L'$, so is $( \alpha - \beta ) \mbf{v}_{1}$.  It follows
that $\alpha - \beta \in \mathbb{Z}$.  The equation above displays $\mbf{v}$ as an integral combination of elements
in $\langle \frac{1}{3} \left( \mbf{v}_{1} + \mbf{v}_{2} + \mbf{v}_{3} \right) , \mbf{v}_{2} , \mbf{v}_{3} \rangle$.
This completes the proof in the case of $H^{+} = C^{+}_{3}$.

Suppose that $H^{+} = D_{3}^{+}$.  We apply Proposition \ref{full}(5):  $(L,H) \sim (L',H)$, where
$L_{\mathcal{P}} \leq L'$ and each of the subgroups $\langle \mathbf{v}_{1} \rangle$, $\langle \mathbf{v}_{2} \rangle$
and $\langle \mathbf{v}_{3} \rangle$ is full in $L'$.  The lattice $L'$ could be  
$\langle \mbf{v}_{1} , \mbf{v}_{2} , \mbf{v}_{3} \rangle$
or $\langle \frac{1}{3} \left( \mbf{v}_{1} + \mbf{v}_{2} + \mbf{v}_{3} \right), \mbf{v}_{2} , \mbf{v}_{3} \rangle$.  Indeed,
an argument essentially identical to the one for the case $H^{+} = C_{3}^{+}$ shows that these are the only possibilities
if $\langle \mbf{v}_{2} , \mbf{v}_{3} \rangle$ is full in $L'$.  Suppose that $\langle \mbf{v}_{2} , \mbf{v}_{3} \rangle$ is
not full in $L'$.  Let $\beta \mbf{v}_{2} + \gamma \mbf{v}_{3} \in L'$, where, by Lemma \ref{3X}(1),
$3\beta , 3\gamma, \gamma - \beta \in \mathbb{Z}$.  We assume that one of $\beta, \gamma$ (equivalently, both of $\beta, \gamma$)
are not integers.  It follows quickly that $\frac{1}{3} ( \mbf{v}_{2} + \mbf{v}_{3}) \in L'$.

We claim that $L' = \langle \mbf{v}_{1} , \frac{1}{3} ( \mbf{v}_{2} + \mbf{v}_{3} ), \mbf{v}_{3} \rangle$.  
Let $\mbf{v} = \alpha \mbf{v}_{1} + \beta \mbf{v}_{2} + \gamma \mbf{v}_{3} \in L'$.
By (1) and (3) of
Lemma \ref{3X}, we  have $3 \alpha , 3\beta, 3\gamma, \gamma - \beta \in \mathbb{Z}$.
$$ \mbf{v} = \alpha \mbf{v}_{1} + (3 \beta) \left( \frac{1}{3} \mbf{v}_{2} + \frac{1}{3} \mbf{v}_{3} \right)
+ \left( \gamma - \beta \right) \mbf{v}_{3} \in L'.$$
It follows that $\alpha \mbf{v}_{1} \in L'$, which implies that $\alpha \in \mathbb{Z}$, by fullness of $\langle \mbf{v}_{1} \rangle$
in $L'$.  This completes the proof.
\end{proof} 

\subsection{Classification of $(L, H)$ up to arithmetic equivalence, where $(-1) \in H$}

We now sort the pairs $(L,H)$ up to arithmetic equivalence, assuming that $(-1) \in H$. The following theorem lists $24$ such pairs as distinct
possibilities; the theorem leaves open the possibility that some of these pairs will be equivalent. We will see in Theorem \ref{noredundancy} 
that indeed all of them are different.

\begin{theorem} \label{apair(-1)}
Let $L \leq \mathbb{R}^{3}$ be a lattice, and let $H \leq O(3)$ be a standard point group acting on $L$, such that
$(-1) \in H$.  The pair
$(L, H)$ is equivalent to one of the $24$ on the following list.
\begin{enumerate}
\item If $H^{+} = A_{4}^{+},$ or $S_{4}^{+}$, then $(L, H) \sim (L', H)$, where
$$ L' = L_{\mathcal{C}}, 
\left\langle \frac{1}{2} \left( \mbf{x} + \mbf{y} + \mbf{z} \right),
\mbf{y} , \mbf{z} \right\rangle, \, \mathrm{or} \, \left\langle \frac{1}{2} \left( \mbf{x} + \mbf{y} \right),
\frac{1}{2} \left( \mbf{x} + \mbf{z} \right) , \frac{1}{2} \left( \mbf{y} + \mbf{z} \right) \right\rangle.$$
(There are six possibilities in all.)
\item If $H^{+} = D_{2}^{+}$, then $(L, H) \sim (L', H)$, where $L'$ is any of the lattices mentioned in Corollary \ref{LC}.
(There are four possibilities.)
\item If $H^{+} = C_{2}^{+}$, $C_{4}^{+}$, or $D_{4}^{+}$, then $(L, H) \sim (L', H)$, where
$$ L' = \left\langle \mbf{x} , \mbf{y} , \mbf{z} \right\rangle \, \mathrm{or} \, 
\left\langle \mbf{x}, \mbf{y}, \frac{1}{2} \left( \mbf{x} +
\mbf{y} + \mbf{z} \right) \right\rangle.$$
(There are six possibilities.)
\item If $H^{+} = C_{1}^{+}$, then $(L, H) \sim (L', H)$, where $L' = L_{\mathcal{C}}$.
(There is only one possibility.)
\item If $H^{+} = C_{6}^{+}$, or $D_{6}^{+}$, then $(L, H) \sim (L', H)$, where
$L' = L_{\mathcal{P}}$.
(There are two possibilities.)
\item If $H^{+} = C_{3}^{+}$, then $(L, H) \sim (L', H)$, where
$$ L' = L_{\mathcal{P}} \, \mathrm{or} \, \left\langle \frac{1}{3}  \left( \mbf{v}_{1} + \mbf{v}_{2} + 
\mbf{v}_{3}\right),
\mbf{v}_{2} , \mbf{v}_{3} \right\rangle.$$
(There are two possibilities.)
\item If $H^{+} = D_{3}^{+}$, then $(L, H) \sim (L', H)$, where $L'$ can be any of the lattices listed in Corollary \ref{LP}.
(There are three possibilities.)
\end{enumerate}
\end{theorem}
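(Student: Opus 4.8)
The plan is to treat Theorem \ref{apair(-1)} as a bookkeeping consequence of the two structural results already in hand, namely Corollary \ref{LC} (the ``cubic'' lattices) and Corollary \ref{LP} (the ``prismatic'' lattices), refined case by case using the extra symmetry carried by each individual $H^{+}$. Items (1)--(4) fall under Corollary \ref{LC} and items (5)--(7) under Corollary \ref{LP}: in each case the corollary supplies a short list of candidate lattices $L'$, and the work is to show that the symmetries of the specific group $H$ either forbid certain candidates outright or identify two of them as arithmetically equivalent. I do not claim here that the resulting $24$ pairs are pairwise inequivalent; that is deferred to Theorem \ref{noredundancy}.

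For the cubic family I would proceed as follows. Item (4) is immediate: for $H^{+} = C_{1}^{+}$ Proposition \ref{full}(1) already gives $L' = L_{\mathcal{C}}$. For item (2), $H^{+} = D_{2}^{+}$: all four lattices of Corollary \ref{LC} are invariant under the full group of sign matrices $\langle D_{2}^{+}, (-1)\rangle$, and (in contrast to the $C_{2}^{+}$ case discussed below) none collapses onto another, so all four persist. For item (1), with $H^{+} = A_{4}^{+}$ or $S_{4}^{+}$, the group $H$ contains a cyclic permutation of the coordinate axes; the base-centered lattice $\langle \mbf{x}, \mbf{y}, \tfrac{1}{2}(\mbf{x}+\mbf{z})\rangle$ is precisely the one lattice of Corollary \ref{LC} that is \emph{not} invariant under permuting coordinates, so it is not $H$-invariant and must be discarded, leaving $L_{\mathcal{C}}$, the body-centered lattice, and the face-centered lattice.

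Item (3) is the delicate one. For $H^{+} = C_{2}^{+}, C_{4}^{+}, D_{4}^{+}$ Corollary \ref{LC} already removes the face-centered lattice and guarantees $\langle \mbf{x}, \mbf{y}\rangle$ is full, so the candidates are $L_{\mathcal{C}}$, the body-centered lattice, and the base-centered lattice $\langle \mbf{x}, \mbf{y}, \tfrac{1}{2}(\mbf{x}+\mbf{z})\rangle$; the target list retains only the first two. For $H^{+} = C_{4}^{+}$ or $D_{4}^{+}$ the base-centered option is eliminated by invariance: the $90^{\circ}$ rotation about the $z$-axis carries $\tfrac{1}{2}(\mbf{x}+\mbf{z})$ to $\tfrac{1}{2}(\mbf{y}+\mbf{z})$, whose sum is $\tfrac{1}{2}(\mbf{x}+\mbf{y}) + \mbf{z}$, forcing $\tfrac{1}{2}(\mbf{x}+\mbf{y}) \in L'$ and contradicting the fullness of $\langle \mbf{x}, \mbf{y}\rangle$. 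The genuinely subtle point, which I expect to be the main obstacle, is $H^{+} = C_{2}^{+}$: here the base-centered lattice \emph{is} $H$-invariant, so it cannot be ruled out and must instead be shown arithmetically equivalent to the body-centered lattice. For this I would exhibit the shear
\[
\lambda = \left( \begin{smallmatrix} 1 & 0 & 0 \\ 1 & 1 & 0 \\ 0 & 0 & 1 \end{smallmatrix} \right),
\]
which sends $\mbf{x} \mapsto \mbf{x}+\mbf{y}$, fixes $\mbf{y}$ and $\mbf{z}$, and hence maps $\langle \mbf{x}, \mbf{y}, \tfrac{1}{2}(\mbf{x}+\mbf{z})\rangle$ onto $\langle \mbf{x}, \mbf{y}, \tfrac{1}{2}(\mbf{x}+\mbf{y}+\mbf{z})\rangle$. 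The essential check is that $\lambda$ normalizes $H = \langle C_{2}^{+}, (-1)\rangle$: since the planar part of $H$ is only $\{\pm I_{2}\}$ (central in $GL_{2}(\mathbb{R})$) and $\lambda$ fixes the $z$-axis, $\lambda$ in fact centralizes every element of $H$. It is exactly this freedom that is absent for $D_{2}^{+}$, whose planar part is the full $2$-dimensional sign group; there $\lambda$ fails to normalize $H$, which is why the base-centered class persists for $D_{2}^{+}$ but collapses onto the body-centered class for $C_{2}^{+}$.

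Finally, the prismatic cases (5)--(7) are read directly off Corollary \ref{LP}: its closing sentence gives $L' = L_{\mathcal{P}}$ for $H^{+} = C_{6}^{+}$ or $D_{6}^{+}$ (item (5)), one of the first two listed lattices for $H^{+} = C_{3}^{+}$ (item (6)), and any of the three listed lattices for $H^{+} = D_{3}^{+}$ (item (7)). Summing the counts $6+4+6+1+2+2+3$ yields the claimed total of $24$ pairs. The only non-routine step in this whole scheme is the $C_{2}^{+}$ shear above; every other reduction is either an invariance check against a rotation or a coordinate permutation, or a direct quotation of the refinements already recorded in Corollaries \ref{LC} and \ref{LP}.
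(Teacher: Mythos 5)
Your proposal is correct and follows essentially the same route as the paper: reduce to the candidate lists of Corollaries \ref{LC} and \ref{LP}, discard non-invariant lattices (or those violating fullness of $\langle \mbf{x}, \mbf{y} \rangle$) for each $H^{+}$, and collapse the base-centered lattices onto the body-centered one in the $C_{2}^{+}$ case via a shear normalizing $H$ --- your $\lambda$ is the paper's $\lambda_{1}$ up to a sign. The only (routine) points you gloss over are that each listed $L'$ is genuinely $H$-invariant, so the stated counts are attained, and that the second permuted base-centered lattice $\langle \mbf{x}, \mbf{y}, \tfrac{1}{2}(\mbf{y}+\mbf{z})\rangle$ needs the analogous shear in the $y$-coordinate.
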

 
\begin{proof}
We note first that all of the pairs $(L',H)$ mentioned as possibilities above truly occur
(i.e., $H \cdot L' =L'$). 
\begin{enumerate}
\item Let $H^{+} = A_{4}^{+}$ or $S_{4}^{+}$.   Corollary \ref{LC} gives us four possibilities.  We rule out the
third possibility, $\langle \mbf{x}, \mbf{y}, \frac{1}{2}\left( \mbf{x} + \mbf{z} \right) \rangle$ (and its images
under permutations of the coordinate axes), since it fails to be invariant under the action of $H$.
\item Let $H^{+} = D_{2}^{+}$.  In this case, all four lattices from Corollary \ref{LC} are possible.  We note
that $H$ is normalized by any permutation of the coordinate axes, so the lattices
$$ \langle \mbf{x}, \mbf{y}, \frac{1}{2}\left( \mbf{x} + \mbf{z} \right) \rangle, 
\langle \mbf{x}, \frac{1}{2} \left( \mbf{x} + \mbf{y} \right), \mbf{z} \rangle, \, \mathrm{and} \,
\langle \mbf{x}, \mbf{y}, \frac{1}{2}\left( \mbf{y} + \mbf{z} \right) \rangle$$
all lead to arithmetically equivalent pairs.
\item Let $H^{+} = C_{4}^{+}$ or $D_{4}^{+}$.  Corollary \ref{LC} gives us five possibilities.
The third lattice mentioned in Corollary \ref{LC} is really a collection of three distinct lattices.  
We can dispose with these three lattices, either because:  i) they are not invariant under the action of $H$
(and so cannot occur as part of the pair $(L,H)$), or ii) they do not contain $\langle \mathbf{x}, \mathbf{y} \rangle$
as a full subgroup (which we can arrange by Corollary \ref{LC}).  It follows that the lattices in the statement 
of the theorem are the only possibilities up to arithmetic equivalence.  

Now suppose $H^{+} = C_{2}^{+}$.  We conclude, exactly as in the previous paragraph, that there are
five possibilities for $L'$, by Corollary \ref{LC}.  We can rule out 
$$\left\langle \mathbf{x}, \mathbf{z}, \frac{\mathbf{x} + \mathbf{y}}{2} \right\rangle$$
since we can assume that $\langle \mathbf{x}, \mathbf{y} \rangle$ is full in $L$.
Now we note that the lattices
$$ \left\langle \mathbf{x}, \mathbf{y}, \frac{\mathbf{x} + \mathbf{z}}{2} \right\rangle,
\left\langle \mathbf{x}, \mathbf{y}, \frac{\mathbf{y} + \mathbf{z}}{2} \right\rangle$$
are both equivalent to the second lattice from Corollary \ref{LC}, 
by the
matrices
$$ \lambda_{1} = \left( \begin{smallmatrix} 1 & 0 & 0 \\ -1 & 1 & 0 \\ 0 & 0 & 1 \end{smallmatrix} \right)
\quad \mathrm{and} \quad 
\lambda_{2} = \left( \begin{smallmatrix} 1 & -1 & 0 \\ 0 & 1 & 0 \\ 0 & 0 & 1 \end{smallmatrix} \right),$$
respectively. 
\item is clear.
\item follows immediately from Corollary \ref{LP}, as do (6) and (7).
\end{enumerate}
\end{proof} 
The following lemma uses only basic group theory
and linear algebra. The proof is left to the reader.

\begin{lemma} \label{silly}
Let $\lambda \in GL_{3}(\mathbb{R})$ normalize the point group $H$.
\begin{enumerate}
\item If $\ell \subseteq \mathbb{R}^{3}$ is a pole such that the stabilizer
group $H^{+}_{\ell} = \{ h \in H^{+} \mid h_{\mid \ell} = id_{\ell} \}$ has order $n$,
then $\lambda \ell$ is also a pole of $H^{+}$, and $H^{+}_{\lambda \cdot \ell}$ has order $n$.
\item If two vectors $v_{1}, v_{2} \in \mathcal{L}^{+} \cup \mathcal{L}^{-}$ are in
the same orbit under the action of $H$, then $|| \lambda v_{1} || = || \lambda v_{2} ||$, and
$\lambda v_{1}$, $\lambda v_{2}$ are in the same orbit under the action of $H$.
\item If $V \subseteq \mathbb{R}^{3}$ is an $H$-invariant vector subspace,
then $\lambda V$ is also an $H$-invariant vector subspace.
\end{enumerate}
\qed
\end{lemma}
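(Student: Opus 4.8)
The unifying observation I would build everything on is that, since $\lambda$ normalizes $H$, conjugation $c_{\lambda}\colon h \mapsto \lambda h \lambda^{-1}$ is an automorphism of $H$, with inverse $c_{\lambda^{-1}}$. The single computational identity I would use throughout is the intertwining relation $\lambda h = c_{\lambda}(h)\,\lambda$ (equivalently $h\lambda = \lambda\, c_{\lambda^{-1}}(h)$), valid for every $h \in H$. The point to keep in mind is that although $\lambda$ itself need not be orthogonal, each conjugate $c_{\lambda}(h)$ lands back inside $H \leq O(3)$, and conjugation preserves determinants; hence $c_{\lambda}(h)$ is orthogonal and lies in $H^{+}$ precisely when $h$ does. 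All three parts then reduce to transporting the relevant datum through $c_{\lambda}$ and reading off the conclusion from this identity.

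For part (1), I would take a nontrivial $h \in H^{+}_{\ell}$, so that $h$ fixes $\ell$ pointwise. The intertwining identity shows that $c_{\lambda}(h)$ fixes $\lambda\ell$ pointwise; it is nontrivial (since $c_{\lambda}$ is injective) and has determinant $1$, so it lies in $H^{+}$ and exhibits $\lambda\ell$ as a pole of $H^{+}$. To get equality of orders I would then check that $c_{\lambda}$ restricts to a bijection $H^{+}_{\ell} \to H^{+}_{\lambda\ell}$: it is well defined by the computation just made, injective as the restriction of an automorphism, and surjective because $c_{\lambda^{-1}}$ carries any $g \in H^{+}_{\lambda\ell}$ back into $H^{+}_{\ell}$. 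Part (3) is handled by the same identity: for $h \in H$ one writes $h(\lambda V) = \lambda\, c_{\lambda^{-1}}(h)\,V = \lambda V$, using $c_{\lambda^{-1}}(h) \in H$ and the $H$-invariance of $V$; and $\lambda V$ is a subspace because $\lambda$ is linear.

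For part (2), suppose $h v_{1} = v_{2}$ with $h \in H$. Applying $\lambda$ and invoking the identity gives $c_{\lambda}(h)\,\lambda v_{1} = \lambda v_{2}$ with $c_{\lambda}(h) \in H$, so $\lambda v_{1}$ and $\lambda v_{2}$ lie in a common $H$-orbit; the norm equality $\|\lambda v_{1}\| = \|\lambda v_{2}\|$ then follows because $c_{\lambda}(h) \in H \leq O(3)$ acts orthogonally. The only place where any care is required — and the nearest thing to an obstacle in an otherwise routine argument — is precisely this last point: since $\lambda \in GL_{3}(\mathbb{R})$ is not assumed orthogonal, one cannot reason about norms, orientations, or orthogonality through $\lambda$ directly, but must route every such claim through the orthogonal, determinant-preserving element $c_{\lambda}(h)$ (or $c_{\lambda^{-1}}(h)$) that the normalizing hypothesis supplies.
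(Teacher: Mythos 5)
Your proof is correct; the paper itself omits the argument entirely ("The proof is left to the reader"), and your conjugation/intertwining argument via $\lambda h = (\lambda h \lambda^{-1})\lambda$ is exactly the routine one intended. The one subtlety you flag — that norms and determinants must be read off the conjugate $\lambda h \lambda^{-1} \in H \leq O(3)$ rather than $\lambda$ itself — is indeed the only point of care, and you handle it properly.
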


\begin{theorem} \label{noredundancy}
No two of the $24$ arithmetic classes of pairs $(L, H)$ from Theorem \ref{apair(-1)} are the same.
\end{theorem}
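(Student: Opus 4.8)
The plan is to show that if some $\lambda \in GL_3(\mathbb{R})$ realizes an arithmetic equivalence $(L,H) \sim (L',H')$ between two entries of Theorem \ref{apair(-1)}, then the two entries coincide. I would split the argument into two stages: first pin down the point group, then separate the lattices attached to a fixed point group by means of invariants that are manifestly preserved by any $\lambda$ normalizing $H$. For the first stage, observe that $\lambda H\lambda^{-1}=H'$ forces $H\cong H'$, and since the determinant is a conjugacy invariant, $\lambda$ carries $H^{+}=H\cap SO(3)$ onto $(H')^{+}$, so $H^{+}\cong (H')^{+}$. Each of the eleven orientation-preserving groups that can occur when $(-1)\in H$ has a distinct pair (order, isomorphism type), and $\langle H^{+},(-1)\rangle\cong H^{+}\times\mathbb{Z}/2$ recovers $H^{+}$; hence equivalent pairs must lie in the same item of Theorem \ref{apair(-1)}, and in fact share the same standard point group $H$ (items (1), (3), (5) split further because the groups listed there, e.g. $C_{2}^{+}$, $C_{4}^{+}$, $D_{4}^{+}$, are pairwise non-isomorphic). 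This reduces the problem to the statement: for a fixed standard $H$, no $\lambda\in N_{GL_3(\mathbb{R})}(H)$ carries one of the listed lattices to another.

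For the second stage I would attach to each pair $(L,H)$ a family of arithmetic invariants, all preserved by a normalizing $\lambda$ via Lemma \ref{silly}. For any $\lambda$-stable collection $\mathcal{F}$ of invariant subspaces, let $L_{0}(\mathcal{F})$ be the sublattice of $L$ generated by the intersections $L\cap V$ with $V\in\mathcal{F}$; then $\lambda L_{0}(\mathcal{F})=L'_{0}(\mathcal{F})$, so the isomorphism type of the quotient $L/L_{0}(\mathcal{F})$ is an equivalence invariant. Because $\lambda$ permutes poles preserving the order of the rotational stabilizer $H^{+}_{\ell}$ (Lemma \ref{silly}(1,2)) and permutes the $H$-invariant planes among themselves (Lemma \ref{silly}(3)), the following collections are all $\lambda$-stable and may be used: (a) the poles $\ell$ with $|H^{+}_{\ell}|=n$ for a fixed $n$; (b) all poles; and (c) all poles together with all invariant planes. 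The key point is that one is free to choose $\mathcal{F}$ case by case so as to separate the lattices.

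Running down the list: for $C_{2}^{+}$, $C_{4}^{+}$, $D_{4}^{+}$ the two lattices $L_{\mathcal{C}}$ and $\langle \mathbf{x},\mathbf{y},\tfrac12(\mathbf{x}+\mathbf{y}+\mathbf{z})\rangle$ are separated by the family (c), giving $L/L_{0}\cong 0$ versus $\mathbb{Z}/2$; for $C_{3}^{+}$ the family (c) gives $0$ versus $\mathbb{Z}/3$. For $A_{4}^{+}$ and $S_{4}^{+}$ the three lattices (simple, body-, and face-centred cubic) are separated by family (a) with $n=2$, i.e. using only the coordinate-axis poles, yielding $0$, $\mathbb{Z}/2$, $(\mathbb{Z}/2)^{2}$; here it is essential to exclude the three-fold body-diagonal poles, since including them would make the body-centred quotient collapse to $0$ and conflate it with the simple cubic lattice. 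The genuinely delicate cases are $D_{2}^{+}$ and $D_{3}^{+}$, where a single family does not suffice: the all-poles quotient (b) conflates the body-centred with the single-face-centred lattice of $D_{2}^{+}$ (both $\mathbb{Z}/2$) and the two index-three prismatic lattices of $D_{3}^{+}$ (both $\mathbb{Z}/3$). Adding the invariant planes (family (c)) resolves this: in $D_{2}^{+}$ the pairs of quotients under (b) and (c) are $(0,0)$, $(\mathbb{Z}/2,\mathbb{Z}/2)$, $(\mathbb{Z}/2,0)$, $((\mathbb{Z}/2)^{2},0)$ for the four lattices, all distinct; in $D_{3}^{+}$, with $P=P(x+y+z=0)$ the unique invariant plane, they are $(0,0)$, $(\mathbb{Z}/3,\mathbb{Z}/3)$, $(\mathbb{Z}/3,0)$ for $L_{\mathcal{P}}$ and the two index-three lattices, again all distinct. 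The groups $C_{1}^{+}$, $C_{6}^{+}$, $D_{6}^{+}$ carry a single lattice and require no argument.

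The main obstacle is precisely this calibration in the second stage: one must choose invariants fine enough to separate the body-centred from the face-(pair-)centred lattice of $D_{2}^{+}$ and the two index-three lattices of $D_{3}^{+}$, yet defined intrinsically enough — purely through poles stratified by stabilizer order and through invariant planes — that Lemma \ref{silly} guarantees their invariance under an \emph{arbitrary} normalizing $\lambda\in GL_3(\mathbb{R})$, not merely under $O(3)$. Getting the stratification right (restricting to two-fold poles in the cubic groups, and adjoining the invariant plane only in $D_{2}^{+}$ and $D_{3}^{+}$) is the heart of the verification; the underlying lattice computations are then routine.
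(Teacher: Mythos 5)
Your strategy is genuinely different from the paper's: where the paper pins down the explicit block form of a normalizing $\lambda$ and then forces its entries to be integral (so that $\lambda L_1$ lands inside $L_{\mathcal{C}}$ or $L_{\mathcal{P}}$, a contradiction), you extract from each pair the isomorphism type of $L/L_0(\mathcal{F})$ for a $\lambda$-stable family $\mathcal{F}$ of subspaces. The invariance of this quotient is correct, and I checked that your computations separate the lattices in the $D_2^+$, $A_4^+$, $S_4^+$, $C_3^+$, $D_3^+$, $C_4^+$, and $D_4^+$ cases. (One labelling slip: for $S_4^+$ the coordinate axes are the poles with $|H^+_\ell|=4$, not $2$ — the $2$-fold poles are the face diagonals. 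Either stratum happens to work, the $4$-fold one giving your quotients $0$, $\mathbb{Z}/2$, $(\mathbb{Z}/2)^2$ and the $2$-fold one giving $\mathbb{Z}/2$, $\mathbb{Z}/4$, $0$, so this is cosmetic.)

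There is, however, a genuine gap in the $C_2^+$ case. For $H=\langle C_2^+,(-1)\rangle$ the set of \emph{all} $H$-invariant planes is infinite: the non-trivial rotation is $\mathrm{diag}(-1,-1,1)$, so every plane spanned by the $z$-axis and a line in the $xy$-plane is invariant, in addition to the $xy$-plane itself. Consequently your family (c) gives $L_0(\mathcal{F})=L$ for \emph{both} lattices (every lattice point lies on some invariant plane through the $z$-axis), so the quotient is $0$ versus $0$, not $0$ versus $\mathbb{Z}/2$; and the single pole alone gives $\mathbb{Z}^2$ versus $\mathbb{Z}^2$. So neither (b) nor (c) distinguishes $L_{\mathcal{C}}$ from $\langle\mathbf{x},\mathbf{y},\tfrac12(\mathbf{x}+\mathbf{y}+\mathbf{z})\rangle$ when $H^+=C_2^+$; your claimed computation silently assumes the $xy$-plane is the \emph{unique} invariant plane, which is true for $C_4^+$ and $D_4^+$ but false for $C_2^+$. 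The repair requires an input beyond Lemma \ref{silly}: note that $\mathrm{diag}(-1,-1,1)$ is the unique element of $H$ with determinant $1$ and trace $-1$, hence is fixed by conjugation by $\lambda$, so $\lambda$ commutes with it and preserves its $(-1)$-eigenspace, the $xy$-plane. Taking $\mathcal{F}=\{\ell(x=y=0),\,P(z=0)\}$ then yields $0$ versus $\mathbb{Z}/2$ and closes the case — but this commuting/eigenspace step is exactly the extra argument the paper makes in its item (3), and it must be added to your proof.
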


\begin{proof}
We first note that if two pairs $(L_{1}, H_{1})$, $(L_{2}, H_{2})$ are arithmetically equivalent, then $H_1$ and $H_{2}$ 
are isomorphic groups (in fact, conjugate in $GL_{3}(\mathbb{R})$). It follows that if two pairs from the list
in Theorem \ref{apair(-1)} are the same, then their point groups must be the same (since different point groups of the
given type have different isomorphism types). Thus, we can assume that $H_{1} = H_{2}$ in the following arguments. 
\begin{enumerate}
\item Suppose $H^{+} = A_{4}^{+}$ or $S_{4}^{+}$.  Let $(L_{1}, H)$, $(L_{2}, H)$ be distinct pairs, where
$L_{1}$ and $L_{2}$ are as described in Theorem \ref{apair(-1)}(1).  Suppose $(L_{1}, H) \sim (L_{2}, H)$.  
This means that there is $\lambda$ such that
$\lambda L_{1} = L_{2}$ and $\lambda H \lambda^{-1} = H$.  By Lemma \ref{silly}(1), $\lambda$ must permute
the coordinate axes.  Since $\langle \mbf{x} \rangle$, $\langle \mbf{y} \rangle$, $\langle \mbf{z} \rangle$ are
full subgroups of both $L_{1}$ and $L_{2}$, it must be that $\lambda$ is a signed permutation matrix.  Such
a matrix fixes each of the lattices from Theorem \ref{apair(-1)}(1), which is a contradiction.
\item follows the exact pattern of (1).
\item Suppose $H^{+} = C_{2}^{+}$, $C_{4}^{+}$, or $D_{4}^{+}$.  Let $(L_{1}, H)$, $(L_{2}, H)$ be distinct pairs,
where $L_{1}$ and $L_{2}$ are chosen from the possibilities in Theorem \ref{apair(-1)}(3). 
We can assume, without loss of generality, that $L_{1} = \langle \mbf{x}, \mbf{y}, \mbf{z} \rangle$ and
$L_{2} = \langle \mbf{x}, \mbf{y}, \frac{1}{2}(\mbf{x} + \mbf{y} + \mbf{z} ) \rangle$.  Suppose 
$(L_{1}, H) \sim (L_{2}, H)$; let $\lambda \in GL_{3}(\mathbb{R})$ satisfy $\lambda L_{1} = L_{2}$
and $\lambda H \lambda^{-1} = H$.  We claim that $\lambda$ has the form
$$\lambda = \left( \begin{smallmatrix} \ast & \ast & 0 \\ \ast & \ast & 0 \\ 0 & 0 & \ast \end{smallmatrix} \right).$$
Moreover, if we assume that $\lambda$ has the latter form, then $\lambda$ must have integral entries by the fullness of $\langle \mbf{x}, \mbf{y} \rangle$ and 
$\langle \mbf{z} \rangle$ in both $L_{1}$ and $L_{2}$.  It will then follow that $\lambda L_{1}$ is a sublattice of
$L_{\mathcal{C}}$, a contradiction.

We turn to a proof of the claim. First, assume that $H^{+} = C_{2}^{+}$ or $ C_{4}^{+}$. Since $\lambda$ normalizes $H$, it must be that $\lambda$ actually
commutes with the generator of $C_{2}^{+}$, which is the unique element of $H$ having positive determinant and order $2$. It now follows from a straightforward calculation
that $\lambda$ has the required block form. If $H^{+} = D_{4}^{+}$,  then we appeal to parts (3) and (1) (respectively) 
of Lemma \ref{silly}: since the $xy$-plane is the unique $2$-dimensional
$H$-invariant subspace, it must be preserved by $\lambda$; since the $z$-axis is the unique $1$-dimensional subspace to be an axis of rotation for an element of order $4$ in
$H^{+}$, it must be preserved. It follows directly that $\lambda$ has the required form in this case as well. This proves the claim.
\item is trivial.
\item is also trivial.
\item Suppose 
$(L_{\mathcal{P}}, \langle C_{3}^{+}, (-1) \rangle) \sim ( \frac{1}{3}(\mbf{v}_{1} + \mbf{v}_{2} + \mbf{v}_{3}), 
\mbf{v}_{2}, \mbf{v}_{3} \rangle, \langle C_{3}^{+}, (-1) \rangle )$.  
Let $\lambda \in GL_{3}(\mathbb{R})$
satisfy
$\lambda L_{\mathcal{P}} = \langle \frac{1}{3}(\mbf{v}_{1} + \mbf{v}_{2} + \mbf{v}_{3}), 
\mbf{v}_{2}, \mbf{v}_{3} \rangle$   
and $\lambda H \lambda^{-1} = H$.
We claim that $\lambda$ has the form
$$ \left( \begin{smallmatrix} \ast & 0 & 0 \\ 0 & \ast & \ast \\ 0 & \ast & \ast \end{smallmatrix} \right)$$
as a matrix over the ordered basis $(\mbf{v}_{1}, \mbf{v}_{2}, \mbf{v}_{3})$. Indeed, if this is the case, then all of the entries must
be integers by the fullness of $\langle \mbf{v}_{1} \rangle$ and $\langle \mbf{v}_{2}, \mbf{v}_{3} \rangle$ in both
lattices.
It will then follow that $\lambda L_{\mathcal{P}} \leq L_{\mathcal{C}}$, a contradiction. 

We prove the claim. Note that $\ell( x= y=z)$ is the unique $1$-dimensional $H$-invariant subspace, and so must be invariant under
$\lambda$ by Lemma \ref{silly}(3). Similarly, the plane $P(x+y+z=0)$ is the unique $2$-dimensional $H$-invariant subspace, so it
is also invariant under $\lambda$. The claim now follows directly, since $\{ \mbf{v}_{2}, \mbf{v}_{3} \}$ spans $P(x+y+z=0)$
and $\{ \mbf{v}_{1} \}$ spans $\ell(x=y=z)$.

\item Let $H = \langle D_{3}^{+}, (-1) \rangle$, $L_{1} = L_{\mathcal{P}}$, 
$L_{2} = \langle \frac{1}{3}(\mbf{v}_{1} + \mbf{v}_{2} + \mbf{v}_{3} ), \mbf{v}_{2}, \mbf{v}_{3} \rangle$, and
$L_{3} = \langle \mbf{v}_{1}, \frac{1}{3}(\mbf{v}_{2} + \mbf{v}_{3}), \mbf{v}_{3} \rangle$.  We can conclude that
$(L_{1}, H) \not \sim (L_{2}, H)$ exactly as in (6).  If $(L_{2}, H) \sim (L_{3}, H)$, where 
$\lambda L_{2} = L_{3}$ and $\lambda H \lambda^{-1} = H$, then we conclude as in (6) that $\lambda$ has
the same block form (as a matrix over the same ordered basis), although, in the current case, we can conclude only that
the upper left entry is $\pm 1$, by fullness of $\langle \mbf{v}_{1} \rangle$ in both $L_{2}$ and $L_{3}$.  This leads to 
a contradiction, since $\lambda ( \frac{1}{3} ( \mbf{v}_{1} + \mbf{v}_{2} + \mbf{v}_{3} )) \not \in L_{3}$.

Finally, suppose $(L_{1}, H) \sim (L_{3}, H)$.  Let $\lambda \in GL_{3}(\mathbb{R})$ satisfy $\lambda L_{1} = L_{3}$ and
$\lambda H \lambda^{-1} = H$.  By Lemma \ref{silly}(1), the vector subspaces
$\langle \mbf{v}_{2} \rangle$, $\langle \mbf{v}_{3} \rangle$, and $\langle \mbf{v}_{2} - \mbf{v}_{3} \rangle$ must be permuted
by $\lambda$.  Since $H$ acts transitively on these lines, by Lemma \ref{silly}(2),
 $|| \lambda \mbf{v}_{2} || = || \lambda \mbf{v}_{3} || = \alpha \sqrt{2}$, say.  By Lemma \ref{silly}(1), the vector subspace
spanned by $\mathbf{v}_{1}$ must be $\lambda$-invariant as well, since it is the unique $1$-dimensional vector subspace that
is the axis for an element of order $3$ in $H^{+}$. 
It follows that $\lambda$ has the block form from (6) over the ordered basis 
$( \mbf{v}_{1}, \mbf{v}_{2}, \mbf{v}_{3})$.  In addition, we know that
the upper left entry is an integer, by fullness of $\langle \mbf{v}_{1} \rangle$ in $L_{1}$ and $L_{3}$.  The final two
column vectors of
$\lambda$ must be linearly independent elements of the set
$$ \{ \pm \alpha \mbf{v}_{2}, \pm \alpha \mbf{v}_{3}, \pm \alpha (\mbf{v}_{2} - \mbf{v}_{3}) \}.$$
It follows that at least one has the form $\pm \alpha \mbf{v}_{i}$ (for $i \in \{ 2,3 \}$).  Since $\langle \mbf{v}_{2} \rangle$
and $\langle \mbf{v}_{3} \rangle$ are full subgroups of both $L_{1}$ and $L_{3}$, it follows that $\alpha$ must be an integer.
Thus $\lambda$ must have integral entries.  It follows that $\lambda L_{1} \subseteq L_{\mathcal{C}}$, a contradiction.   
\end{enumerate}
\end{proof} 
 
\subsection{The classification of the remaining pairs $(L, H)$}

In this subsection, we conclude the classification of pairs $(L,H)$ up to arithmetic
equivalence. Our approach is to reduce the problem of classifying the remaining pairs
to the (previously solved)
 problem of classifying pairs in which the group $H$ contains the inversion.

\begin{theorem} \label{remainingpairsclassification}
Let $L \leq \mathbb{R}^{3}$ be a lattice, and let $H \leq O(3)$ be a standard point group acting on $L$; 
suppose $(-1) \not \in H$.
The pair $(L, H)$ is equivalent to exactly one on the following list: 
\begin{enumerate}
\item If $H \leq SO(3)$, then the classification of pairs $(L, H)$ is exactly the same as that for the
group $\langle H, (-1) \rangle$, as described in Theorem \ref{apair(-1)}.  (This case accounts for $24$ different
possibilities.)
\item If $H = C'_{2}$, $C'_{4}$, $C'_{6}$, $D'_{3}$, $D''_{4}$, $D''_{6}$, or $S'_{4}$, then any pair
$(L, H)$ is equivalent to one of the $(L', H)$, where $L'$ is one of the lattices listed in Theorem \ref{apair(-1)}
for $\langle H, (-1) \rangle$.  Moreover, any two of the resulting pairs are distinct.  (There
are a total of $14$ possibilities.) 
\item Suppose $H = D'_{2}$, $D'_{4}$, or $D'_{6}$.  
\begin{enumerate}
\item If $H = D'_{4}$, then $(L, H) \sim (L', H')$ where $L' = L_{\mathcal{C}}$ or
$\langle \mbf{x}, \mbf{y}, \frac{1}{2}( \mbf{x} + \mbf{y} + \mbf{z} ) \rangle$ and
$H' = D'_{4}$ or $\widehat{D}'_{4}$. 
\item If $H = D'_{6}$, then $(L, H) \sim (L', H')$ where
$L' = L_{\mathcal{P}}$ and $H' = D'_{6}$ or $\widehat{D}'_{6}$.  
\item If $H = D'_{2}$, then $(L,H) \sim
(L', H)$, where $L'$ is any of the lattices mentioned in Corollary \ref{LC},
or $(L,H) \sim (L',D'_{2_{2}})$, where 
$$ L' = \left\langle \mathbf{x}, \mathbf{y}, \frac{\mathbf{x}+\mathbf{z}}{2} \right\rangle.$$
\end{enumerate}
(There are $11$ possibilities.)
\end{enumerate}
\end{theorem}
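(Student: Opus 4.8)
The plan is to reduce everything to Theorem \ref{apair(-1)}, exploiting the fact that a lattice is automatically invariant under the antipodal map. First I would record the fundamental observation: for any lattice $L$ one has $(-1)L = L$, so $H \cdot L = L$ if and only if $\langle H, (-1)\rangle \cdot L = L$. Consequently the lattices attached to $H$ are exactly those attached to $\langle H, (-1)\rangle$, and these are already enumerated by Theorem \ref{apair(-1)}. Moreover, if $(L_1, H) \sim (L_2, H)$ via some $\lambda$, then $\lambda$ commutes with $(-1)$, hence $\lambda \langle H,(-1)\rangle \lambda^{-1} = \langle H,(-1)\rangle$ and $(L_1, \langle H,(-1)\rangle) \sim (L_2, \langle H,(-1)\rangle)$. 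Thus $H$-equivalence refines $\langle H,(-1)\rangle$-equivalence; in particular, arithmetic classes that are distinct for $\langle H,(-1)\rangle$ (genuinely so by Theorem \ref{noredundancy}) automatically remain distinct for $H$, so distinctness of pairs with a \emph{fixed} point group comes for free. The only real content is how finely each $\langle H,(-1)\rangle$-class splits upon passing to $H$, together with distinguishing pairs whose point groups are $O(3)$-conjugate but not equal.

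For Case (1) I would show the two equivalence relations literally coincide. Here $\langle H, (-1)\rangle = H \times \langle (-1)\rangle$ and $H$ is precisely the set of determinant-$1$ elements of $\langle H,(-1)\rangle$. Since conjugation preserves determinant, any $\lambda$ normalizing $\langle H,(-1)\rangle$ satisfies $\lambda H \lambda^{-1} = H$; hence the classification is identical to that of $\langle H,(-1)\rangle$, giving the $24$ classes of Theorem \ref{apair(-1)}. For Case (2) I would again argue there is no splitting, by re-examining the reductions in Proposition \ref{full} and Corollaries \ref{LC}, \ref{LP}. For each of the seven groups the transformations used to put $L$ into standard form are scalings along, or rotations about, the $H$-invariant axes and planes (or scalar matrices), and one checks directly that each normalizes $H$ itself, not merely $\langle H,(-1)\rangle$; this is immediate because $(-1)$ is central and the invariant subspaces of $H$ and $\langle H,(-1)\rangle$ coincide. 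Hence every $(L,H)$ reduces to $(L',H)$ with $L'$ a standard lattice, and combined with the free distinctness this yields exactly the $14$ listed pairs.

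Case (3) is where the real work lies, and it is the main obstacle. For $H \in \{D'_2, D'_4, D'_6\}$ the normalizer of $\langle H,(-1)\rangle$ contains elements conjugating $H$ to a \emph{distinct} standard subgroup: a $45^\circ$ rotation about the $z$-axis interchanges $D'_4$ and $\widehat{D}'_4$, an analogous rotation about $\ell(x=y=z)$ interchanges $D'_6$ and $\widehat{D}'_6$, and the coordinate swap $y \leftrightarrow z$ carries $D'_2$ to $D'_{2_2}$. Because such a map need not preserve a given standard lattice representative, a single $\langle H,(-1)\rangle$-class can split into several $H$-classes, distinguished by which standard form of the point group is compatible with the lattice. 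I would treat each subcase by enumerating the $\langle H,(-1)\rangle$-lattice classes from Theorem \ref{apair(-1)}, determining for each the standard point-group form obtainable after standardizing $L$ (yielding $D'_4$ or $\widehat{D}'_4$, etc.), and then proving pairwise inequivalence.

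The inequivalence arguments are the delicate part, since the competing point groups are $O(3)$-conjugate and so the group alone cannot separate the pairs; one must use the lattice, via Lemma \ref{silly} together with fullness and integrality. Concretely, a hypothetical $\lambda$ realizing an equivalence is forced by Lemma \ref{silly} to preserve the common invariant subspaces and to permute the distinguished axes appropriately, whereupon preservation of the lattice forces $\lambda$ to have integral entries in the relevant basis — contradicting, for instance, that it would have to send a primitive norm-$\sqrt{2}$ vector to a primitive norm-$1$ vector (the $D'_4$ case), or that it would have to turn a half-integer centering transverse to the rotation axis into one parallel to it (the $D'_2$ case). Carrying this out gives the $4$ classes for $D'_4$, the $2$ for $D'_6$, and the $5$ for $D'_2$; here the single lattice $\langle \mathbf{x}, \mathbf{y}, \frac{\mathbf{x}+\mathbf{z}}{2}\rangle$ occurs once with $D'_2$ and once with $D'_{2_2}$, and these two pairs must be separated by precisely such an integrality argument, for $11$ in all.
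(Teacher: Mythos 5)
Your overall architecture is the same as the paper's: reduce each case to the classification of $(L,\langle H,(-1)\rangle)$ from Theorem \ref{apair(-1)}, observe that an $H$-equivalence is automatically a $\langle H,(-1)\rangle$-equivalence so that distinctness of pairs with a fixed point group comes for free, and isolate the real work in case (3), where a $\lambda$ normalizing $\langle H,(-1)\rangle$ may carry $H$ to a conjugate standard form ($\widehat{D}'_4$, $\widehat{D}'_6$, $D'_{2_2}$), with the resulting pairs separated by Lemma \ref{silly} plus fullness/integrality. Your case (1) argument (conjugation preserves determinant, so the orientation-preserving subgroup is preserved) and your case (3) obstructions (poles of $D'_4$ carry primitive norm-$1$ lattice vectors while poles of $\widehat{D}'_4$ carry norm-$\sqrt2$ ones; the half-integer centering sits transverse to the $D'_2$-axis in one lattice and not the other) are the arguments the paper actually uses.

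The gap is in case (2). You assert that the standardizing transformations normalize $H$ itself and that "this is immediate because $(-1)$ is central and the invariant subspaces of $H$ and $\langle H,(-1)\rangle$ coincide." That inference is false: both premises hold for $H=D'_4$, whose invariant subspaces agree with those of $\langle D_4^+,(-1)\rangle$, yet the $45^\circ$ rotation about the $z$-axis used in standardizing the lattice normalizes $\langle D_4^+,(-1)\rangle$ while conjugating $D'_4$ to $\widehat{D}'_4$. So the stated reason cannot distinguish case (2) from case (3), which is precisely the point at issue. The correct mechanism (and the one the paper uses) is group-theoretic rather than geometric: for the seven groups in case (2), $H$ is the \emph{unique} subgroup $K\le\langle H,(-1)\rangle$ of index two with $(-1)\notin K$, $K^{+}\cong H^{+}$, and $K\cong H$ (seen by enumerating the homomorphisms $\langle H,(-1)\rangle\to\mathbb{Z}/2$ killing $H^{+}$... or not, and sending $(-1)$ to $1$); hence \emph{any} $\lambda$ normalizing $\langle H,(-1)\rangle$ satisfies $\lambda H\lambda^{-1}=H$, with no need to track the particular transformations used in Proposition \ref{full}. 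For $D'_2$, $D'_4$, $D'_6$ this uniqueness fails — there are two (or three) such subgroups — and that is exactly why those classes split. You should either adopt this uniqueness argument or replace your "immediate" claim with an honest case-by-case verification for the seven groups; as written, the justification would "prove" too much.
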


\begin{proof}
We note first that it is impossible for a pair $(L,H)$ to be counted twice in the different cases (1), (2), and (3), since the point groups
in question are distinguished either by isomorphism type or by their orientation-preserving subgroups. (Note, in particular, that the
groups $D_{n}'$ and $D_{n}''$  ($n=4,6$) cannot be conjugate even in $GL_{3}(\mathbb{R})$ by the descriptions in \ref{subsubsection:pointgroupsmixed}.) It is therefore enough
to consider each of the cases (1), (2), and (3) individually.
\begin{enumerate}
\item Let $H$ be an orientation-preserving standard point group, and let $L$ be a lattice satisfying $H \cdot L = L$. There is some pair $(L', \langle H, (-1) \rangle)$ from the statement of Theorem \ref{apair(-1)} such that
$(L, \langle H, (-1) \rangle) \sim (L', \langle H, (-1) \rangle)$; that is, 
we can find $\lambda \in GL_{3}(\mathbb{R})$ such that $\lambda \langle H, (-1) \rangle \lambda^{-1} 
= \langle H, (-1) \rangle$, and $\lambda L = L'$. We note that $\lambda H \lambda^{-1} = H$ since the latter groups
are the orientation-preserving subgroups of $\lambda \langle H, (-1) \rangle \lambda^{-1}$ and $\langle H, (-1) \rangle$, respectively.
It follows that $(L,H) \sim (L',H)$. Thus, any pair $(L,H)$ corresponds to one of the $24$ listed in Theorem \ref{apair(-1)}.

We now need to show that there are no repetitions on the given list of $24$ pairs $(L,H)$. Suppose that $(L_{1}, H_{1}) \sim
(L_{2}, H_{2})$, where each of $H_{1}$, $H_{2}$ is an orientation-preserving standard point group, and $L_{1}$, $L_{2}$
are lattices chosen from the statement of Theorem \ref{apair(-1)}. We first note that $H_{1}$ and $H_{2}$ must be isomorphic by 
the definition of arithmetic equivalence, and therefore equal since no two groups from the list in Table \ref{orientationpreservingpointgroups}
are isomorphic.

Thus, we assume that $(L_{1}, H) \sim (L_{2}, H)$, where $L_{1}$, $L_{2}$, and $H = H_{1} = H_{2}$ are all still as above. 
Let $\lambda \in GL_{3}(\mathbb{R})$ be such that $\lambda L_{1} = L_{2}$ and $\lambda H \lambda^{-1} = H$. This $\lambda$ shows that
$(L_{1}, \langle H, (-1) \rangle) \sim (L_{2}, \langle H, (-1) \rangle)$. It follows that $L_{1} = L_{2}$, by Theorem \ref{apair(-1)},
completing the proof.

\item  Let $H$ be one of the point groups from (2), and let $L$ be a lattice satisfying $H \cdot L = L$. There is some pair $(L', \langle H, (-1) \rangle)$ from the statement of Theorem \ref{apair(-1)} such that
$(L, \langle H, (-1) \rangle) \sim (L', \langle H, (-1) \rangle)$; that is, 
we can find $\lambda \in GL_{3}(\mathbb{R})$ such that $\lambda \langle H, (-1) \rangle \lambda^{-1} 
= \langle H, (-1) \rangle$, and $\lambda L = L'$. 

 The group $H$ is unique in the following sense. If $K \leq \langle H, (-1) \rangle$ satisfies:
i) $[\langle H, (-1) \rangle : K] = 2$; ii) $K$ does not contain the inversion; iii) $K^{+} \cong H^{+}$
(where $H^{+}$ and $K^{+}$ denote the orientation-preserving subgroups), and iv) $K \cong H$, then $K = H$.
(This can be proved by enumerating the homomorphisms $\phi: \langle H, (-1) \rangle \rightarrow
\mathbb{Z}/2\mathbb{Z}$ such that $\phi (-1) = 1$ and $\phi(H) = \mathbb{Z}/2\mathbb{Z}$. The 
group $K$ must occur as the kernel of some such $\phi$, and the given conditions force $K =H$.
Note that we have already seen this method of argument in the proof of Theorem \ref{class-}.)

Now note that $\lambda H \lambda^{-1}$ is a subgroup of $\langle H, (-1) \rangle$ satisfying i)-iv). It follows
that $\lambda H \lambda^{-1} = H$. We have now shown that $(L,H) \sim (L',H)$, where $L'$ is one
of the lattices that is paired with $\langle H, (-1) \rangle$ in Theorem \ref{apair(-1)}. 

We should next show that there are no repetitions in our list, but the proof of the latter
 fact follows the pattern from the 
final paragraph of the proof of (1).

\item Suppose that $H = D'_{4}$.  We consider the pair $(L, H)$.  Add the element $(-1)$ to the point group
to get $(L, \langle H, (-1) \rangle )$.  By the arithmetic classification of pairs with central inversion 
(Theorem \ref{apair(-1)}(3)), we know that $(L, \langle H, (-1) \rangle ) \sim (L', \langle H, (-1) \rangle)$, where
$$ L' = \langle \mbf{x}, \mbf{y}, \mbf{z} \rangle \quad \mathrm{or} \quad \left\langle \mbf{x}, \mbf{y}, 
\frac{1}{2}(\mbf{x} + \mbf{y} + \mbf{z}) \right\rangle.$$
Let us suppose that $\lambda L = L'$ and $\lambda \langle H, (-1) \rangle \lambda^{-1} = \langle H, (-1) \rangle$.
There are two possibilities for $\lambda H \lambda^{-1}$:  $D'_{4}$ and $\widehat{D}'_{4}$.  (One again sees this by enumerating the homomorphisms from  $\langle D_{4}^{+}, (-1) \rangle$
to $\mathbb{Z}/2\mathbb{Z}$.)  This leads to four possibilities for 
$(L', H')$, where $\lambda H \lambda^{-1} = H'$; we shall see that all are different.

Completely analogous reasoning shows that if $H = D'_{6}$ then there are two possibilities:
$( L_{\mathcal{P}}, D'_{6} )$ and $( L_{\mathcal{P}}, \widehat{D}'_{6})$.

Suppose $H = D'_{2}$.  Consider the pair $(L, H)$.  We add the element $(-1)$ to the point group
to get the pair $(L, \langle H, (-1) \rangle)$.  By Theorem \ref{apair(-1)}(2), $(L, \langle H, (-1) \rangle) \sim
(L', \langle H, (-1) \rangle)$, where $L'$ is any of the lattices listed in Corollary \ref{LC}.  
Let $\lambda \in GL_{3}(\mathbb{R})$ satisfy:  i) $\lambda L = L'$, and ii) $\lambda \langle H, (-1) \rangle \lambda^{-1}
= \langle H, (-1) \rangle$.  It is not difficult to check that $\lambda H \lambda^{-1} = H'$ is one of the following groups:
$$ \langle R_{xy}, R_{yz} \rangle, \quad \langle R_{xy}, R_{xz} \rangle, \quad \langle R_{yz}, R_{xz} \rangle$$
where $R_{xy}$ (for instance) is the reflection across the $xy$-plane.  We note that $\langle R_{yz}, R_{xz} \rangle = D'_{2}$. 
Thus, to summarize: we've shown that $(L, D'_{2}) \sim (L', H')$, where $L'$ is one of the standard
lattices from Corollary \ref{LC} and $H'$ is one of the groups above. 
All three of the latter groups are clearly conjugate to $D'_{2}$, by an element $\hat{\lambda} \in GL_{3}(\mathbb{R})$ that
simply permutes the coordinate axes (i.e., $\hat{\lambda} H' \hat{\lambda}^{-1} = D'_{2}$). We conclude
that $(L, D'_{2}) \sim (\hat{\lambda} L', D'_{2})$, where $\hat{\lambda}$ is a permutation matrix
and $L'$ is as above.

If $L' = \langle \mbf{x}, \mbf{y}, \mbf{z} \rangle$, $\langle \mbf{x}, \mbf{y}, \frac{1}{2}(\mbf{x} + \mbf{y} + \mbf{z}) \rangle$,
or $\langle \frac{1}{2}(\mbf{x} + \mbf{y}), \frac{1}{2}(\mbf{x} + \mbf{z}), \frac{1}{2}(\mbf{y} + \mbf{z}) \rangle$, then
$\hat{\lambda}L' = L'$.  It follows that these three possibilities give rise to three arithmetic classes of the form
$(L', D'_{2})$ (and all three are different, as we'll see).  If $L' = \langle \mbf{x}, \mbf{y}, \frac{1}{2}(\mbf{x} + \mbf{z}) \rangle$,
then $L'$ is not necessarily invariant under $\hat{\lambda}$ and there are two essentially different pairs of the form
$( \hat{\lambda} L', D'_{2} )$:
$$ \left\langle \mbf{x}, \mbf{z}, \frac{1}{2}(\mbf{x} + \mbf{y}) \right\rangle \quad \mathrm{and} \quad 
\left\langle \mbf{x}, \mbf{y}, \frac{1}{2}(\mbf{x} + \mbf{z}) \right\rangle.$$
(The case in which $L' = \langle \mbf{x}, \mbf{y}, \frac{1}{2}(\mbf{y} + \mbf{z}) \rangle$ is identical with that in which  
$L' = \langle \mbf{x}, \mbf{y}, \frac{1}{2}(\mbf{x} + \mbf{z}) \rangle$ up to arithmetic equivalence, since
the transposition that flips the $x$- and $y$- coordinates normalizes $D'_{2}$.)

Now we need to show that all $11$ of the above pairs are arithmetically distinct. As always, it is enough
to consider the subcases (a), (b), and (c) separately.

Consider first the case in which $H = D'_{2}$.  There are five such pairs; the only two that might be equal are
$( \langle \mbf{x}, \mbf{z}, \frac{1}{2}(\mbf{x} + \mbf{y}) \rangle, D'_{2})$ and 
$( \langle \mbf{x}, \mbf{y}, \frac{1}{2}(\mbf{x} + \mbf{z}) \rangle, D'_{2})$.  (Any other choice of
pairs is distinct by Theorem \ref{apair(-1)}(2): here we consider the usual reduction to point groups containing inversion.)  
If $\lambda$ normalizes $D'_{2}$ and sends
one lattice to the other, then Lemma \ref{silly}(1) and fullness of the subgroups $\langle \mbf{x} \rangle$, $\langle \mbf{y} \rangle$,
$\langle \mbf{z} \rangle$ in both lattices  imply that  $\lambda$ factors as $AB$, where $A$ is a diagonal matrix with $1$s and $-1$s on the
diagonal, and $B$ is a permutation
matrix which leaves the vector subspace $\langle \mbf{z} \rangle$ invariant.  No such matrix can send the first lattice
to the second one.  It follows that all five pairs with point group $D'_{2}$ are distinct.  We apply a transposition
$\bar{\lambda} \in GL_{3}(\mathbb{R})$ of the $y$- and $z$-coordinates to the first of these pairs,
$( \langle \mbf{x}, \mbf{z}, \frac{1}{2}(\mbf{x} + \mbf{y}) \rangle, D'_{2})$, to arrive at the pair
$(L', D'_{2_{2}})$ from the statement of the theorem.

Now we consider the case in which $H= D'_{4}$.  The possible arithmetic classes are represented by four pairs
$(L_{i}, D'_{4})$, $(L_{i}, \widehat{D}'_{4})$, where $L_{i}$ $(i \in \{ 1, 2 \})$ is one of two lattices.  We first
note that two such pairs $(L', H')$, $(L'', H'')$ will represent different classes    
if $L' \neq L''$ by Theorem \ref{apair(-1)}(3).

Thus, suppose $\widehat{L}$ is one of the two possible lattices from (3).  Suppose $( \widehat{L}, D'_{4} ) \sim ( \widehat{L},
\widehat{D}'_{4} )$; suppose $\lambda \in GL_{3}(\mathbb{R})$ satisfies $\lambda \widehat{L} = \widehat{L}$ and
$\lambda D'_{4} \lambda^{-1} = \widehat{D}'_{4}$.  The condition $\lambda D'_{4} \lambda^{-1} = \widehat{D}'_{4}$ implies that
$\lambda$ leaves the $xy$-plane invariant (here we can apply Lemma \ref{silly}(3) with $H = \langle
D_{4}^{+}, (-1) \rangle$).  Now $\lambda$ must send the poles of $D'_{4}$ to those of $\widehat{D}'_{4}$.
It follows that $\lambda$ sends the groups $\langle \mbf{x} \rangle$, $\langle \mbf{y} \rangle$ to $\langle \mbf{x} + \mbf{y} \rangle$,
$\langle \mbf{x} - \mbf{y} \rangle$ (not necessarily in that order), and all groups in question are full in $\widehat{L}$.  It follows
that $\lambda$ restricts to a similarity on $P(z=0)$.  This leads to a contradiction, in the following way.  In the pair $(\widehat{L}, D'_{4})$,
the smallest non-zero lattice point in $P(z=0)$ lies on a pole, but in the pair $(\widehat{L}, \widehat{D}'_{4})$ the smallest non-zero
lattice point in $P(z=0)$ does not (instead it lies on a plane of reflection for $\widehat{D}'_{4}$, either $P(x=0)$ or $P(y=0)$).  The fact that $\lambda$ maps
$P(z=0)$ to itself by a similarity implies that a smallest lattice point in $\widehat{L} \cap P(z=0)$
must be sent to another such.  This is the
contradiction.

The proof for the case $H = D'_{6}$ is similar.
\end{enumerate}
\end{proof}

\vspace{.5cm}
\section{Classification of Split Three-Dimensional Crystallographic Groups} \label{section:classification} 

\begin{definition}
An \emph{$n$-dimensional crystallographic group} $\Gamma$ is a discrete, cocompact subgroup of the
group of isometries of Euclidean $n$-space.  Each $\gamma \in \Gamma$ can be written in the
form $v_{\gamma} + A_{\gamma}$, where $v_{\gamma} \in R^{n}$ is a translation and $A_{\gamma} \in
O(n)$.  There is a natural map $\pi : \Gamma \rightarrow O(n)$ sending $v_{\gamma} + A_{\gamma}$
to $A_{\gamma}$, and this map is easily seen to be a homomorphism.  We get a short exact sequence
as follows:
$$ L \rightarrowtail \Gamma \twoheadrightarrow H,$$
where $H = \pi(\Gamma) \leq O(n)$ and $L$ is the kernel.  (by \cite[Theorem 7.4.2]{Ra94}, $L$ is a lattice 
in $\mathbb{R}^{n}$, and so necessarily
isomorphic to $\mathbb{Z}^{n}$.)  We note that $H$ acts naturally on $L$, which makes $H$ a point group in the sense of 
Definition \ref{definition:first}. We say that $H$ is the \emph{point group} of $\Gamma$. 
The group $\Gamma$ is a \emph{split $n$-dimensional crystallographic
group} if the above sequence splits, i.e., if there is a homomorphism $s: H \rightarrow \Gamma$ such that
$\pi s = id_{H}$.

In the remainder of the paper, all of our crystallographic groups will be $3$-dimensional.
\end{definition}
 
\begin{definition} 
Suppose that $L$ is a lattice in $\mathbb{R}^{3}$ and $H \leq O(3)$ satisfies $H \cdot L = L$.  We
let $\Gamma(L,H)$ denote the group $\langle L, H \rangle$.
\end{definition}

\begin{remark}
It is straightforward to verify that every $\Gamma(L,H)$ is a split crystallographic group.
\end{remark}

\begin{theorem} \label{theorem:arithmeticsplit}
Any split crystallographic group $\widehat{\Gamma}$ is isomorphic to $\Gamma(L,H)$, for some
lattice $L \leq \mathbb{R}^{3}$ and $H \leq O(3)$ satisfying $H \cdot L = L$.  The groups
$\Gamma(L,H)$ and $\Gamma(L',H')$ are isomorphic if and only if the pairs $(L,H)$ and $(L',H')$
are arithmetically equivalent.
\end{theorem}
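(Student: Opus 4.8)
The plan is to prove the two assertions separately, with the only non-elementary input being Bieberbach's rigidity theorem as recorded in Ratcliffe \cite{Ra94}: two crystallographic groups are abstractly isomorphic if and only if they are conjugate inside the affine group $\mathbb{R}^{3} \rtimes GL_{3}(\mathbb{R})$.

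First I would establish the realization statement. Given a split crystallographic group $\widehat{\Gamma}$ with a section $s : H \rightarrow \widehat{\Gamma}$ of the linear-part sequence $L \rightarrowtail \widehat{\Gamma} \twoheadrightarrow H$, the subgroup $s(H)$ is a finite group of Euclidean isometries (finite because $H$ is), so it fixes a point of $\mathbb{R}^{3}$ — for instance, the centroid of any $s(H)$-orbit. Conjugating $\widehat{\Gamma}$ inside $\mathrm{Isom}(\mathbb{R}^{3})$ by the translation carrying that fixed point to the origin produces an isomorphic subgroup in which the image of $s(H)$ fixes the origin, hence lies in $O(3)$ and equals its own group of linear parts, namely $H$. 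Conjugation by a translation fixes the translation lattice $L$, and $H \cdot L = L$ because $s(H)$ normalizes $L$ and this conjugation action agrees with the linear action of $H$ on lattice vectors. Since exactness and splitting give $\widehat{\Gamma} = \langle L, s(H) \rangle$, the conjugated group is $\langle L, H \rangle = \Gamma(L,H)$, so $\widehat{\Gamma} \cong \Gamma(L,H)$.

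Next I would treat the equivalence. For the ``if'' direction, assume $(L,H) \sim (L',H')$, so some $\lambda \in GL_{3}(\mathbb{R})$ satisfies $\lambda L = L'$ and $\lambda H \lambda^{-1} = H'$. Viewing $\lambda$ as an affine transformation fixing the origin, conjugation by $\lambda$ is an abstract automorphism of $\mathrm{Aff}(\mathbb{R}^{3})$ sending a translation $\tau_{v}$ to $\tau_{\lambda v}$ and each $h \in H$ to $\lambda h \lambda^{-1}$; hence it carries $\Gamma(L,H) = \langle L, H \rangle$ isomorphically onto $\langle \lambda L, \lambda H \lambda^{-1} \rangle = \langle L', H' \rangle = \Gamma(L',H')$. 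Note that $\lambda$ need not be orthogonal, which is exactly why arithmetic equivalence, rather than $O(3)$-conjugacy, is the appropriate notion here. For the ``only if'' direction, assume $\Gamma(L,H) \cong \Gamma(L',H')$ abstractly. By the cited form of Bieberbach's theorem, the isomorphism is realized by conjugation by an affine map $\alpha = (t, \lambda)$ with $\lambda \in GL_{3}(\mathbb{R})$ and $\alpha\, \Gamma(L,H)\, \alpha^{-1} = \Gamma(L',H')$. The translation lattice is characteristic (it is the unique maximal normal abelian subgroup of finite index), so $\alpha$ carries the translations of $\Gamma(L,H)$ onto those of $\Gamma(L',H')$; since a direct computation gives $\alpha \tau_{v} \alpha^{-1} = \tau_{\lambda v}$ independently of $t$, this forces $\lambda L = L'$. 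Comparing linear parts, the linear part of $\alpha \gamma \alpha^{-1}$ is $\lambda \pi(\gamma) \lambda^{-1}$, so applying $\pi$ to the conjugated group yields $\lambda H \lambda^{-1} = H'$. Thus $(L,H) \sim (L',H')$.

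I expect the ``only if'' direction to be the main obstacle, and specifically the appeal to Bieberbach rigidity: one must know that an arbitrary \emph{abstract} isomorphism between these groups is geometric, i.e., induced by an affine conjugation. This is precisely the single step where the argument is not self-contained, and it is imported from \cite{Ra94}. Everything else is bookkeeping — recognizing the translation lattice as a characteristic subgroup so that $\lambda L = L'$ falls out, and tracking linear parts to recover $\lambda H \lambda^{-1} = H'$.
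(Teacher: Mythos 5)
Your proposal is correct and follows essentially the same route as the paper: conjugate the finite section to fix the origin for the realization statement, conjugate by $\lambda$ directly for the ``if'' direction, and invoke Ratcliffe's affine-rigidity theorem (Theorem 7.4.4 of \cite{Ra94}) for the ``only if'' direction, then read off $\lambda L = L'$ and $\lambda H \lambda^{-1} = H'$ from the kernel and image of the linear-part projection. The only cosmetic difference is that you justify $\lambda L = L'$ by noting the lattice is characteristic, whereas the paper simply compares kernels of $\pi$ for the two equal subgroups of the affine group; both amount to the same computation.
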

\begin{proof}
We prove the first statement.  Let $\widehat{\Gamma}$ be a split crystallographic group, $\widehat{L}$ denote the lattice of $\widehat{\Gamma}$, and 
$\widehat{H}$ denote the point group of $\widehat{\Gamma}$.  Since $\widehat{\Gamma}$ is split,
it follows that there is a finite subgroup $J$ of $\widehat{\Gamma}$ such that $\pi: \widehat{\Gamma} \rightarrow
\widehat{H}$ satisfies $\pi(J) = \widehat{H}$. It is routine to check that $\pi_{\mid J} : J \rightarrow \widehat{H}$ must also be injective.
Since $J$ is a finite group of isometries of $\mathbb{R}^{3}$, it must be that the entire group $J$
fixes a point $v \in \mathbb{R}^{3}$. We consider the isometry $T_{v} \in \mathrm{Isom}(\mathbb{R}^{3})$, which is simply translation by the
vector $v$. It follows that $T_{v}^{-1} J T_{v}$ fixes the origin, so the map $\pi: T_{v}^{-1} J T_{v} \rightarrow \widehat{H}$ is the identity. We can therefore
write
$$ 1 \rightarrow \widehat{L} \rightarrow T_{v}^{-1} \widehat{\Gamma}T_{v} \rightarrow \widehat{H} \rightarrow 1,$$
where $\widehat{H} \leq T_{v}^{-1} \widehat{\Gamma}T_{v}$. It follows directly that $\widehat{\Gamma} \cong T_{v}^{-1} \widehat{\Gamma} T_{v} = \langle \widehat{L}, \widehat{H} \rangle$,
proving the first statement.


Now we prove the second statement.  Assume that $\Gamma(L,H)$ and $\Gamma(L',H')$ are isomorphic.
Ratcliffe \cite[Theorem 7.4.4]{Ra94} says that there is an affine bijection $\alpha$ of $\mathbb{R}^{3}$
such that $\alpha \Gamma(L,H) \alpha^{-1} = \Gamma(L',H')$.  We write $\alpha = v_{\alpha} + A_{\alpha}$,
where $v_{\alpha} \in \mathbb{R}^{3}$ and $A_{\alpha} \in GL_{3}(\mathbb{R})$.  We note:
\begin{eqnarray*}
\alpha L \alpha^{-1} & = & A_{\alpha} \cdot L; \\
\alpha H \alpha^{-1} & = & \left( v_{\alpha} - A_{\alpha} H A^{-1}_{\alpha}(v_{\alpha})\right) + 
A_{\alpha}HA^{-1}_{\alpha}.  
\end{eqnarray*}  
Now $\alpha \Gamma(L,H) \alpha^{-1}$ and $\Gamma(L',H')$ must have the same kernel and image under the
canonical projection $\pi: \mathrm{Isom}(\mathbb{R}^{3}) \rightarrow O(3)$, so $A_{\alpha} \cdot L = L'$
and $A_{\alpha}HA^{-1}_{\alpha} = H'$.  (These last two equations are between kernels and images, respectively.)
It follows that $(L,H)$ and $(L',H')$ are arithmetically equivalent.  

If two pairs $(L,H)$ and $(L',H')$ are arithmetically equivalent, then there is $\lambda \in GL_{3}(\mathbb{R})$
such that $\lambda L = L'$ and $\lambda H \lambda^{-1} = H'$.  It follows easily that 
$\lambda \Gamma(L,H) \lambda^{-1} =\Gamma(L',H')$, so $\Gamma(L,H)$ and $\Gamma(L',H')$ are 
isomorphic.
\end{proof}

\begin{theorem}[List of Split Three-Dimensional Crystallographic Groups]  \label{biglist}
Let $\mathbf{x}$, $\mathbf{y}$, and $\mathbf{z}$ denote the standard coordinate vectors, and let
$$ \mathbf{v}_{1} = \left  (\begin{smallmatrix} 1 \\  1 \\ 1 \end{smallmatrix} \right), \quad
\mathbf{v}_{2} = \left(\begin{smallmatrix} 1 \\ -1 \\ 0 \end{smallmatrix} \right), \quad  
\mathbf{v}_{3} = \left( \begin{smallmatrix} 0 \\ -1 \\ 1 \end{smallmatrix} \right).$$
A complete list of the split three-dimensional crystallographic groups $\langle L, H \rangle$
appears in Table \ref{splitcrystallographicgroups}.

\begin{table} [!h]
\renewcommand{\arraystretch}{1.3}
\begin{equation*}
\begin{array}{ | c | c | c | c | c | c | c |} \hline
L & \multicolumn{6}{c |}{H} \\ \hline
 &  S^{+}_{4} \times (-1) & S^{+}_{4} & S'_{4} & A^{+}_{4} \times (-1)  & A^{+}_{4} & D''_{4} \\  \cline{2-7}
\langle \mathbf{x}, \mathbf{y}, \mathbf{z} \rangle &  D^{+}_{4} \times (-1) &  D^{+}_{4} & C'_{2} &  D^{+}_{2} \times (-1) &  D^{+}_{2} & C'_{4} \\  \cline{2-7}
 & C^{+}_{4} \times (-1) & C^{+}_{4}  & D'_{2} & C^{+}_{2} \times (-1) & C^{+}_{2}  & D'_{4} \\ \cline{2-7}
 & C^{+}_{1} \times (-1) & C^{+}_{1} &  \widehat{D}'_{4} & & & \\ \hline
  &  S^{+}_{4} \times (-1) & S^{+}_{4} & S'_{4} & A^{+}_{4} \times (-1)  & A^{+}_{4} & D''_{4} \\  \cline{2-7}
\langle \frac{1}{2} \left( \mathbf{x} + \mathbf{y} + \mathbf{z} \right), \mathbf{y}, \mathbf{z} \rangle &  D^{+}_{4} \times (-1) &  D^{+}_{4} & C'_{2} &  D^{+}_{2} \times (-1) &  D^{+}_{2} & C'_{4} \\  \cline{2-7}
 & C^{+}_{4} \times (-1) & C^{+}_{4}  & D'_{2} & C^{+}_{2} \times (-1) & C^{+}_{2}  & D'_{4} \\ \cline{2-7}
 & \widehat{D}'_{4}   &  & & & & \\ \hline
 & S^{+}_{4} \times (-1) & S^{+}_{4} & S'_{4} & A^{+}_{4} \times (-1)  & A^{+}_{4} & D'_{2} \\  \cline{2-7}
\frac{1}{2}\langle( \mathbf{x} + \mathbf{y} ), (\mathbf{x} + \mathbf{z}), (\mathbf{y} + \mathbf{z}) \rangle & 
D^{+}_{2} \times (-1) & D^{+}_{2} & & & & \\ \hline
 \langle \frac{1}{2}( \mathbf{x} + \mathbf{z} ), \mathbf{y}, \mathbf{z} \rangle & D^{+}_{2} \times (-1) & D^{+}_{2} & D'_{2} & D'_{2_{2}} & & \\ \hline
&  D^{+}_{6} \times (-1) & D^{+}_{6} &  C'_{6}  & C^{+}_{6} \times (-1) & D'_6 & C^{+}_{6} \\ \cline{2-7}
\langle \mathbf{v}_{1}, \mathbf{v}_{2}, \mathbf{v}_{3} \rangle &    D^{+}_{3} \times (-1) & \widehat{D}'_{6}  & C^{+}_{3}   &C^{+}_{3} \times (-1)&  D'_{3} & D^{+}_{3}  \\ \cline{2-7}
&D''_6 & & & & & \\ \hline
 \langle \frac{1}{3}( \mathbf{v}_{1} + \mathbf{v}_{2} + \mathbf{v}_{3} ), \mathbf{v}_{2}, \mathbf{v}_{3} \rangle & D^{+}_{3} \times (-1) & D^{+}_{3} & D'_{3} & C^{+}_{3} \times (-1) & C^{+}_{3} &  \\ \hline
\langle \mathbf{v}_{1}, \frac{1}{3}(\mathbf{v}_{2} + \mathbf{v}_{3}), \mathbf{v}_{3} \rangle & D^{+}_{3} \times (-1) & D^{+}_{3} & D'_{3} & & &  \\ \hline
\end{array}
 \end{equation*}
 
 \caption{The Split Three-Dimensional Crystallographic Groups} 
\label{splitcrystallographicgroups} 
\end{table}
 
\end{theorem}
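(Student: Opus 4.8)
The plan is to derive the statement almost entirely from the arithmetic classification already established, so that the real content of the proof is the verification that Table \ref{splitcrystallographicgroups} transcribes that classification correctly. By Theorem \ref{theorem:arithmeticsplit}, every split three-dimensional crystallographic group is isomorphic to $\Gamma(L,H) = \langle L, H \rangle$ for a pair $(L,H)$ with $H \cdot L = L$, and $\Gamma(L,H) \cong \Gamma(L',H')$ precisely when $(L,H) \sim (L',H')$. Hence it suffices to list exactly one representative pair from each arithmetic equivalence class; the table will then contain one group per isomorphism type.

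First I would partition the pairs according to whether or not $(-1) \in H$. These two cases are disjoint and exhaustive, and no pair in one case can be arithmetically equivalent to a pair in the other, since any $\lambda \in GL_{3}(\mathbb{R})$ satisfies $\lambda (-1) \lambda^{-1} = (-1)$, so the presence of the inversion is an arithmetic invariant. When $(-1) \in H$, Theorem \ref{apair(-1)} produces $24$ pairs that exhaust all classes, and Theorem \ref{noredundancy} shows these $24$ are pairwise distinct. When $(-1) \notin H$, Theorem \ref{remainingpairsclassification} produces a further $24 + 14 + 11 = 49$ pairs, complete and irredundant within that case. Together these give exactly $24 + 49 = 73$ arithmetic classes, with no overlap.

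The remaining step is the bookkeeping: to regroup these $73$ representatives by their lattice and confirm that the result is precisely Table \ref{splitcrystallographicgroups}. For each of the seven standard lattices from Corollaries \ref{LC} and \ref{LP}, I would read off from Theorems \ref{apair(-1)} and \ref{remainingpairsclassification} the point groups paired with it. For example, the cubic lattice $L_{\mathcal{C}} = \langle \mathbf{x}, \mathbf{y}, \mathbf{z} \rangle$ is paired with the seven groups $\langle H^{+}, (-1) \rangle$ for $H^{+} \in \{ C_1^{+}, C_2^{+}, D_2^{+}, C_4^{+}, D_4^{+}, A_4^{+}, S_4^{+} \}$ (Theorem \ref{apair(-1)}), with the seven orientation-preserving groups $H^{+}$ themselves (Theorem \ref{remainingpairsclassification}(1)), and with the seven groups $C_2', C_4', S_4', D_4'', D_2', D_4', \widehat{D}_4'$ (Theorem \ref{remainingpairsclassification}(2) and (3)), for a total of $21$, which is exactly the first block of the table. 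Carrying out the same reading for the other six lattices recovers the remaining $52$ entries.

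The hard part will be precisely this matching, because several point groups appear in the table in a conjugate or relabelled form that is forced only when one passes to an arithmetic representative, rather than in the standard form one begins with. The clearest instances are the non-standard groups $\widehat{D}_4'$ and $\widehat{D}_6'$ of \ref{subsubsection:nonstandard} and the group $D_{2_2}'$, all of which enter through Theorem \ref{remainingpairsclassification}(3), where choosing a standard lattice representative changes the distinguished axis or the conjugacy class of the point group. I would therefore track, case by case, which lattice each such representative is attached to, so as to verify that the tabulated pair is genuinely the arithmetic representative and that no class is omitted or counted twice. With this verification in place, the theorem follows.
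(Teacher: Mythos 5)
Your proposal is correct and follows essentially the same route as the paper: reduce via Theorem \ref{theorem:arithmeticsplit} to arithmetic classes, invoke Theorems \ref{apair(-1)}, \ref{noredundancy}, and \ref{remainingpairsclassification} for completeness and irredundancy (using that containing the inversion is an arithmetic invariant to separate the two cases), and then verify the tabulation. The only difference is that you make the final bookkeeping step more explicit than the paper does, which is harmless.
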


\begin{proof}
Table \ref{splitcrystallographicgroups} lists all pairings of lattices and point groups from Theorems \ref{apair(-1)} and \ref{remainingpairsclassification}.
We have already shown that no two of the pairs from Theorem \ref{apair(-1)} determine the same arithmetic equivalence class (Theorem \ref{noredundancy}).
Also, no two of the pairs from Theorem \ref{remainingpairsclassification} determine the same arithmetic class. If the pair $(L_{1},H_{1})$ is chosen
from the pairs listed in Theorem \ref{apair(-1)}, and $(L_{2},H_{2})$ is chosen from the pairs listed in Theorem \ref{remainingpairsclassification}, then
$(L_{1}, H_{1}) \not \sim (L_{2}, H_{2})$, since $H_{1}$ contains the inversion $(-1)$ and $H_{2}$ does not, and containing the inversion will
be preserved by arithmetic equivalence. Thus, all $73$ pairs in Table \ref{splitcrystallographicgroups} represent distinct equivalence classes. 

It is clear from Theorems \ref{apair(-1)} and \ref{remainingpairsclassification} that any pair $(L,H)$ is equivalent to one on the list, since $H$ must be conjugate to one of the standard point groups. It follows that
there are exactly $73$ classes of such pairs.

The theorem now follows from Theorem \ref{theorem:arithmeticsplit}.
\end{proof}

\begin{remark} \label{remark:labelconvention}
Let $\widehat{\Gamma}$ be a point group. For the sake of brevity, we will sometimes let $\widehat{\Gamma}_{i}$ denote the 
split crystallographic group $\langle L_{i}, \widehat{\Gamma} \rangle$, where
$L_{i}$ denotes the $i$th lattice (in the order that they are listed in Table \ref{splitcrystallographicgroups}).
Thus, $(D_{2}^{+})_{1}$ denotes the split crystallographic group generated by the point group 
$D_{2}^{+}$ and the standard cubical lattice.

We will let $\Gamma_{i}$ denote the $i$th maximal split crystallographic group; i.e., the pairing of the $i$th lattice with the largest point group
from Table \ref{splitcrystallographicgroups}. Thus, for instance, $\Gamma_{1}$ denotes the
group $\langle \mathbf{x}, \mathbf{y}, \mathbf{z} \rangle \rtimes (S_{4}^{+} \times (-1))$.
\end{remark}

\section{A model for $E_{\vc}(\Gamma)$ and a formula for the algebraic $K$-theory} \label{section:EVC}

 Let $\Gamma$ be a three-dimensional crystallographic group with lattice $L$ and point group $H$. (We do not assume that $\Gamma$
is a split crystallographic group.)  
In this section, we describe a simple construction of $E_{\mathcal{VC}}(\Gamma)$ and derive a splitting formula for the lower algebraic $K$-theory
of any three-dimensional crystallographic group.

\subsection{A construction of $E_{\mathcal{FIN}}(\Gamma)$ for crystallographic groups}
We will need to have a specific model of $E_{\mathcal{FIN}}(\Gamma)$ for our crystallographic groups $\Gamma$.

\begin{proposition} \label{proposition:efin}
If $\Gamma$ is a three-dimensional crystallographic group, then there is an equivariant cell structure on 
$\mathbb{R}^{3}$ making it a model for $E_{\mathcal{FIN}}(\Gamma)$.
\end{proposition}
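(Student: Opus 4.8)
The plan is to verify the standard characterization of $E_{\fin}(\Gamma)$ (due to tom Dieck; see also L\"uck's survey on classifying spaces for families): a $\Gamma$-CW complex $X$ is a model for $E_{\fin}(\Gamma)$ if and only if the fixed-point set $X^{G}$ is contractible whenever $G \leq \Gamma$ is finite, and empty whenever $G$ is infinite. I would take $X = \mathbb{R}^{3}$ with its natural isometric $\Gamma$-action, check these two fixed-point conditions, and then separately equip $\mathbb{R}^{3}$ with a compatible $\Gamma$-CW structure. The first thing to record is that, since $\Gamma$ is a discrete cocompact subgroup of $\mathrm{Isom}(\mathbb{R}^{3})$, its action on $\mathbb{R}^{3}$ is proper; in particular the point stabilizer $\Gamma_{p}$ is finite for every $p \in \mathbb{R}^{3}$.

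For the fixed-point conditions, first let $G \leq \Gamma$ be finite. Each $\gamma \in \Gamma$ acts as an affine isometry $x \mapsto A_{\gamma}x + v_{\gamma}$, so its fixed locus is either empty or an affine subspace. Choosing any $x_{0}$ and averaging over the finite orbit, the barycenter $\frac{1}{|G|}\sum_{g \in G} g x_{0}$ is fixed by $G$, since affine maps preserve barycentric combinations; hence $(\mathbb{R}^{3})^{G} = \bigcap_{g \in G}(\mathbb{R}^{3})^{g}$ is a nonempty intersection of affine subspaces, so it is itself a nonempty affine subspace, which is convex and therefore contractible. If instead $G$ is infinite and fixes some point $p$, then $G \leq \Gamma_{p}$, contradicting the finiteness of $\Gamma_{p}$; thus $(\mathbb{R}^{3})^{G} = \emptyset$. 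This disposes of the homotopy-theoretic content, and it is entirely routine.

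The main obstacle, and the only step requiring real work, is placing a genuine $\Gamma$-CW structure on $\mathbb{R}^{3}$. I would invoke Illman's equivariant triangulation theorem for proper smooth actions of discrete groups on smooth manifolds: the $\Gamma$-action here is smooth (indeed by affine isometries) and proper, so $\mathbb{R}^{3}$ admits a $\Gamma$-invariant smooth triangulation; after a barycentric subdivision, which guarantees that any simplex fixed setwise by a group element is fixed pointwise, this triangulation becomes a $\Gamma$-CW complex. Alternatively, and this is essentially the route taken in the later sections, one can build the structure by hand: choose a fundamental domain for the $\Gamma$-action (a Dirichlet--Voronoi polyhedron, or one produced via Poincar\'{e}'s Fundamental Polyhedron Theorem as in Section \ref{section:fundamentaldomains}) and cellulate $\mathbb{R}^{3}$ by its $\Gamma$-translates, subdividing along the fixed subspaces of the finite subgroups. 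The delicate point in either approach is exactly to ensure the defining $\Gamma$-CW property near those fixed subspaces, where the stabilizers are nontrivial; a sufficiently fine (barycentric) subdivision resolves this, and combined with the fixed-point calculation above it shows that the resulting cellulated $\mathbb{R}^{3}$ is a model for $E_{\fin}(\Gamma)$.
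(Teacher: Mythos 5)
Your argument is correct, but it proceeds along a genuinely different route from the paper's. You verify the defining fixed-point conditions directly -- the barycenter argument showing $(\mathbb{R}^{3})^{G}$ is a nonempty affine (hence contractible) subspace for finite $G$, and the properness of the action forcing $(\mathbb{R}^{3})^{G}=\emptyset$ for infinite $G$ -- and then appeal to Illman's equivariant triangulation theorem (or a fundamental-domain construction) to supply the $\Gamma$-CW structure. The paper instead takes the fixed-point content for granted and concentrates entirely on producing the cell structure by a reduction: every three-dimensional crystallographic group embeds with finite index in its splitting group (Ratcliffe, pp.\ 312--313), which is split in the paper's sense; every split group sits inside one of the seven maximal groups $\Gamma_{i}$ of Table \ref{splitcrystallographicgroups}; and for each $\Gamma_{i}$ an explicit exact convex compact fundamental polyhedron is built in Section \ref{section:fundamentaldomains} via Poincar\'{e}'s theorem, whose standard cellulation extends to an equivariant CW structure by Theorem \ref{cells} (a model for $E_{\fin}(\Gamma')$ restricts to one for any finite-index subgroup $\Gamma\leq\Gamma'$). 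Your version is shorter, more self-contained on the homotopy-theoretic side, and works in any dimension, at the cost of citing Illman as a black box; the paper's version is longer but produces the explicit cell structures and stabilizer data that are indispensable for the Quinn spectral sequence computations later on, so the extra work is not wasted. Either proof is acceptable; if you use Illman, you should be careful to cite the version for proper actions of discrete (non-compact) groups, and note, as you do, that a subdivision is needed so that setwise-invariant cells are pointwise fixed.
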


\begin{proof}
For every crystallographic group $\Gamma$, there is a crystallographic group $\Gamma'$ of the same dimension, called
the \emph{splitting group} of $\Gamma$ (\cite[pgs. 312-313]{Ra94}), and an embedding $\phi: \Gamma \rightarrow \Gamma'$. The group $\Gamma'$
is a split crystallographic group in our sense, by Lemma 7 on page 313 of \cite{Ra94}. It is therefore sufficient to prove the proposition for every 
split three-dimensional crystallographic group. Table \ref{splitcrystallographicgroups} shows that all of the split crystallographic groups are
subgroups of seven maximal ones (consider the pairing of the maximal point group with each of the seven lattices). We will show
in Section \ref{section:fundamentaldomains} (without circularity) that
each of these maximal groups has the required model. The proposition now follows easily.
\end{proof}

\subsection{A construction of $E_{\mathcal{VC}}(\Gamma)$ for crystallographic groups} \label{subsection:constructEVC}

Let $\Gamma$ be a three-dimensional crystallographic group.
We begin with a copy of $E_{\mathcal{FIN}}(\Gamma)$, which we can identify with a suitably cellulated copy of $\mathbb{R}^{3}$ by Proposition \ref{proposition:efin}. 
 For each $\ell \in L$ such that $\ell$ 
generates a 
maximal cyclic subgroup of $L$, we define
$$ \mathbb{R}_{\ell}^{2} = \{ \widehat{\ell} \subseteq \mathbb{R}^{3} \mid \widehat{\ell} 
\text{ is a line parallel to } \langle \ell \rangle \},$$
where $\langle \ell \rangle$ denotes the $1$-dimensional vector subspace spanned by $\ell$.
Consider $\base$, where the disjoint union is over all maximal cyclic subgroups $\langle \ell \rangle$ of $L$. We define
a metric on $\base$ as follows. If $\ell_{1}, \ell_{2} \in \base$, we set $d(\ell_{1}, \ell_{2}) = \infty$ if $\ell_{1}$ and $\ell_{2}$ are not
parallel, and $d(\ell_{1}, \ell_{2}) = K$ if $\ell_{1}$ is parallel to $\ell_{2}$ and $K = \mathrm{min} \{ d_{\mathbb{R}^{3}}(x,y) \mid x \in \ell_{1},
y \in \ell_{2} \}$. One readily checks that $d$ is a metric on $\base$, and that each $\mathbb{R}^{2}_{\ell}$ is isometric to $\mathbb{R}^{2}$. We will therefore freely refer to the $\mathbb{R}^{2}_{\ell}$ as ``planes" in what follows.
Moreover, $\Gamma$ acts by isometries on $\base$.

Next we would like to introduce an equivariant cell structure on $\base$. Choose a plane $\mathbb{R}^{2}_{\ell}$.

\begin{definition}
Let $\pi: \Gamma \rightarrow H$ be the usual projection into the point group. We let 
$H_{\langle \ell \rangle} = \{ h \in H \mid h \cdot \langle \ell \rangle = \langle \ell \rangle \}$ and 
$\Gamma(\ell) = \pi^{-1}(H_{\langle \ell \rangle})$.
\end{definition}

 It is straightforward to check that $\Gamma(\ell)$ acts on $\mathbb{R}^{2}_{\ell}$.
Since $\langle \ell \rangle$ is a maximal cyclic subgroup of $L$, we can choose a basis $\{ \ell_{1}, \ell_{2}, \ell_{3} \}$ of $L$, with $\ell_{3} = \ell$. Each of the
$\ell_{i}$ can be written $\ell_{i} = \alpha_{i} \ell + \widehat{\ell}_{i}$, where
$\alpha_{i} \in \mathbb{R}$ and $\widehat{\ell}_{i}$ is perpendicular to $\ell$. Since $\ell_{1}$, $\ell_{2}$, and $\ell_{3}$ are linearly
independent over $\mathbb{R}$, the same must be true of $\widehat{\ell}_{1}$ and $\widehat{\ell}_{2}$. The translation $\ell$ acts trivially on 
$\mathbb{R}^{2}_{\ell}$, so the action of $L$ on $\mathbb{R}^{2}_{\ell}$ is the same as the action of $\langle \widehat{\ell}_{1}, \widehat{\ell}_{2} \rangle$.
In particular, the action of $L$ has discrete orbits, from which it follows readily that the action of $\Gamma(\ell)$ on $\mathbb{R}^{2}_{\ell}$
has discrete orbits. We can therefore find a $\Gamma(\ell)$-equivariant cell structure on $\mathbb{R}^{2}_{\ell}$ making it
a $\Gamma(\ell)$-CW complex.

Now we choose a (finite) left transversal $T \subseteq H$ of $\Gamma(\ell)$ in $\Gamma$. For each $t \in T$, we cellulate $\mathbb{R}^{2}_{t \cdot \ell}$ using
the equality $\mathbb{R}^{2}_{\ell} = t \cdot \mathbb{R}^{2}_{\ell}$ (that is, for each cell $\sigma \subseteq \mathbb{R}^{2}_{\ell}$, we let
$t \cdot \sigma$ be a cell in the cellulation of $\mathbb{R}^{2}_{t \cdot \ell}$). The result is an equivariant cellulation of all of 
$\Gamma \cdot \mathbb{R}^{2}_{\ell}$, which is a disjoint union of finitely many planes. We can continue in the same way, choosing a new plane
$\mathbb{R}^{2}_{\ell'}$ and applying the same procedure, until we have cellulated all of $\base$. The space $\base$ is a $\Gamma$-CW complex with respect to the resulting cellulation.

\begin{proposition} \label{prop:EVC}
Let $Y = E_{\mathcal{FIN}}(\Gamma)$ and $Z = \base$.
The space $X = Y \ast Z$ is a model for $E_{\mathcal{VC}}(\Gamma)$.
\end{proposition}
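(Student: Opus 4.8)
The plan is to verify the standard fixed-point characterization of classifying spaces for a family: a $\Gamma$-CW complex $X$ is a model for $E_{\mathcal{VC}}(\Gamma)$ if and only if, for every subgroup $K \leq \Gamma$, the fixed set $X^{K}$ is contractible when $K$ is virtually cyclic and empty otherwise (this applies since $\mathcal{VC}$ is closed under conjugation and subgroups). First I would record the tools. The space $Z = \base$ carries the $\Gamma$-CW structure constructed just above, $Y = \mathbb{R}^{3}$ is a $\Gamma$-CW model for $E_{\mathcal{FIN}}(\Gamma)$ by Proposition \ref{proposition:efin}, and the join $X = Y \ast Z$ inherits a natural $\Gamma$-CW structure. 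The two computational facts I need are: (i) the join commutes with fixed points, $(Y \ast Z)^{K} = Y^{K} \ast Z^{K}$, which follows from the diagonal action on $Y \times Z \times [0,1]$; and (ii) a join $A \ast B$ with $A$ nonempty and contractible is itself contractible (by the connectivity estimate $\operatorname{conn}(A \ast B) \geq \operatorname{conn}(A) + \operatorname{conn}(B) + 2$ together with Whitehead's theorem), while $A \ast \emptyset = A$.

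With these in hand I would split into cases according to $K$. If $K$ is finite, then $Y^{K}$ is nonempty and contractible because $Y = E_{\mathcal{FIN}}(\Gamma)$, so $X^{K} = Y^{K} \ast Z^{K}$ is contractible; finite groups are virtually cyclic, as required. If $K$ is infinite, then $Y^{K} = \emptyset$, so $X^{K} = \emptyset \ast Z^{K} = Z^{K}$, and everything reduces to the $\Gamma$-action on $Z$. The key geometric observation is that $K$ fixes the point $\widehat{m} \in Z$ exactly when $K$ preserves the line $\widehat{m} \subseteq \mathbb{R}^{3}$ setwise.

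For the non-virtually-cyclic case I would show $Z^{K} = \emptyset$ by proving that the setwise stabilizer $\Gamma_{\widehat{m}}$ of any line $\widehat{m}$ is virtually cyclic: a translation in $\Gamma$ preserving $\widehat{m}$ must be parallel to it, so $\Gamma_{\widehat{m}} \cap L$ is the intersection of $L$ with a one-dimensional subspace and hence cyclic, while $\Gamma_{\widehat{m}}/(\Gamma_{\widehat{m}} \cap L) \cong \pi(\Gamma_{\widehat{m}})$ embeds in the finite group $H$. Thus any $K$ fixing a point of $Z$ lies in a virtually cyclic subgroup and is itself virtually cyclic; contrapositively, a non-virtually-cyclic $K$ has $Z^{K} = \emptyset$ and so $X^{K} = \emptyset$. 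For the infinite virtually cyclic case I would prove $Z^{K}$ is contractible: the translation subgroup $K \cap L$ is infinite cyclic, generated by some $\widetilde{\ell}$ pointing in a lattice direction with primitive generator $\ell_{0}$, and any line fixed by $\widetilde{\ell}$ must be parallel to $\ell_{0}$, whence $Z^{K} = (\mathbb{R}^{2}_{\ell_{0}})^{K}$. Since $\widetilde{\ell}$ acts trivially on $\mathbb{R}^{2}_{\ell_{0}}$, the $K$-action on this plane factors through the finite quotient $K/\langle \widetilde{\ell} \rangle$, and a finite group of isometries of $\mathbb{R}^{2}$ fixes a nonempty affine subspace (average an orbit to produce a fixed point). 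An affine subspace is convex, hence contractible, so $Z^{K} = X^{K}$ is contractible.

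The main obstacle is the analysis of $Z^{K}$ in the two infinite cases. The crux is pinning down the $K$-fixed lines, which rests on the facts that a translation fixes a line only when it is parallel to that line and that the setwise line-stabilizers in a crystallographic group are precisely its virtually cyclic pieces. Once the fixed lines are confined to a single plane $\mathbb{R}^{2}_{\ell_{0}}$ and the residual action is seen to be finite, both contractibility (infinite virtually cyclic $K$) and emptiness (non-virtually-cyclic $K$) follow from elementary Euclidean geometry.
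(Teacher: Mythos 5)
Your proposal is correct and follows essentially the same route as the paper: the fixed-point criterion for classifying spaces, the identity $\mathrm{Fix}_{X}(K) = \mathrm{Fix}_{Y}(K) \ast \mathrm{Fix}_{Z}(K)$, and the same three-case analysis, with the infinite virtually cyclic case reduced to a finite isometric action on a single plane $\mathbb{R}^{2}_{\ell_{0}}$ and the non-virtually-cyclic case handled by noting that line stabilizers are virtually cyclic (the paper phrases this via $\operatorname{rk}(K \cap L) \geq 2$, but it is the same observation). No gaps.
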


\begin{proof}
Since $Y$ and $Z$ are $\Gamma$-CW complexes, the join $X$ inherits a natural $\Gamma$-CW complex structure.
For $G \leq \Gamma$ and $W \in \{ X, Y, Z \}$, we let $\mathrm{Fix}_{W}(G) = \{ w \in W \mid g \cdot w = w \text{ for all } g \in G \}$.
We note that $\mathrm{Fix}_{W}(G)$ is a subcomplex of $W$, and $\mathrm{Fix}_{X}(G) = \mathrm{Fix}_{Y}(G) \ast \mathrm{Fix}_{Z}(G)$, for all
$G \leq \Gamma$.

Let $G \in \mathcal{VC}(\Gamma)$. There are two cases. Assume first that $G$ is finite. In this case,
$\mathrm{Fix}_{X}(G) = \mathrm{Fix}_{Y}(G) \ast \mathrm{Fix}_{Z}(G)$, where $\mathrm{Fix}_{Y}(G)$ is contractible by our assumptions. It follows
that $\mathrm{Fix}_{X}(G)$ is contractible.

If $G$ is infinite and virtually cyclic, then there is a cyclic subgroup $\langle g \rangle \leq L$ having finite index in $G$ such that 
$\langle g \rangle \unlhd G$. This group $\langle g \rangle$ is contained in a maximal cyclic subgroup $\langle \ell \rangle$ of $L$. The kernel of  the
action of $G$ on $\mathbb{R}^{2}_{\ell}$ therefore contains $\langle g \rangle$. It follows that the fixed set of the action of $G$ on $\mathbb{R}^{2}_{\ell}$
is the same as the fixed set of the action of $G/\langle g \rangle$ on $\mathbb{R}^{2}_{\ell}$. The latter group is a finite group acting by isometries, so
the fixed set is contractible. In particular, $\mathrm{Fix}_{X}(G) \cap \mathbb{R}^{2}_{\ell}$ is contractible. Now we claim that 
$\mathrm{Fix}_{X}(G) = \mathrm{Fix}_{X}(G) \cap \mathbb{R}^{2}_{\ell}$ (i.e., that $\mathrm{Fix}_{X}(G) \subseteq \mathbb{R}^{2}_{\ell}$). Indeed, it is 
enough to check that $\mathrm{Fix}_{X}(G) \cap \mathbb{R}^{2}_{\ell'} = \emptyset$ for $\langle \ell' \rangle \neq \langle \ell \rangle$ and
$\mathrm{Fix}_{X}(G) \cap Y = \emptyset$. The latter equality follows directly from the definition of $Y$.
If $\langle \ell' \rangle \neq \langle \ell \rangle$, then $g$ acts as $\widehat{g}$ on $\mathbb{R}^{2}_{\ell'}$, where $\widehat{g}$ is the component
of $g$ perpendicular to $\ell'$. The claim follows directly.

Now suppose that $G \notin \mathcal{VC}(\Gamma)$. It follows that $rk( G \cap L) \geq 2$. One easily sees that $G \cap L$ cannot have any global fixed point in
any $\mathbb{R}^{2}_{\ell}$ and $\mathrm{Fix}_{Y}(G \cap L) = \emptyset$ by definition. It follows that $\mathrm{Fix}_{X}(G) = \emptyset$, as required.
\end{proof}

\subsection{A splitting formula for the lower algebraic $K$-theory}\label{subsection:splitting formula}
In the next sections, we will use the following theorem to compute the lower algebraic $K$-theory of the integral group ring of all $73$ split three-dimensional crystallographic groups.
Our goal in this subsection is to provide a proof.

\begin{theorem} \label{theorem:splitting2}
Let $\g$ be a three-dimensional crystallographic group. We have a splitting
\[
\begin{split}
H_{\ast}^{\g}(&E_{\vc}(\g); \mathbb{KZ}^{-\infty}) \cong \\
&H_{\ast}^{\g}(E_{\fin}(\g); \mathbb{KZ}^{-\infty}) \oplus  \bigoplus_{ \widehat{\ell}  \in \mathcal{T}''}  H_n^{\Gamma_{\widehat{\ell}}}(E_{\fin}(\Gamma_{\widehat{\ell}}) \rightarrow  \ast;\;  \mathbb{KZ}^{-\infty}),
\end{split}
\]
The indexing set $\mathcal{T}''$ consists of a selection of one vertex
$v \in \coprod_{ \langle \ell \rangle} \mathbb{R}^{2}_{\ell}$ from each $\Gamma$-orbit
of such non-negligible vertices.
\end{theorem}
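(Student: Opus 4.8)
The plan is to deduce Theorem~\ref{theorem:splitting2} from the preliminary splitting in Proposition~\ref{proposition:splitting1} by analyzing the ``cokernel'' summands
\[
H_n^{\Gamma(\ell)}(E_{\fin}(\Gamma(\ell)) \rightarrow E_{\vc_{\langle \ell \rangle}}(\Gamma(\ell));\; \mathbb{KZ}^{-\infty})
\]
one $H$-orbit of lines at a time, and showing that each such term decomposes as a direct sum indexed by those lines $\widehat{\ell}$ whose stabilizer is non-negligible. First I would invoke Proposition~\ref{proposition:splitting1}, which already isolates the first summand $H_\ast^{\g}(E_{\fin}(\g); \mathbb{KZ}^{-\infty})$ and reduces everything else to the relative terms above, indexed over $\langle \ell \rangle \in \mathcal{T}$. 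The work is then entirely local to a single plane $\mathbb{R}^{2}_{\ell}$ and its stabilizer $\Gamma(\ell)$, using the model $\mathbb{R}^{3} \ast \mathbb{R}^{2}_{\ell}$ for $E_{\vc_{\langle \ell \rangle}}(\Gamma(\ell))$.

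Next I would feed in the negligibility machinery. By Lemma~\ref{lemma:negligiblesufficient} and Corollary~\ref{corollary:cellulated}, the great majority of cell stabilizers in this model are negligible, and by Definition~\ref{definition:negligible1} together with Lemma~\ref{lemma:negligibleisomorphismtypes} a negligible group $G$ has $Wh_n(G) \cong 0$ for $n \leq 1$, with negligibility inherited by subgroups. Using Propositions~\ref{proposition:subcomplexfinvc} and~\ref{proposition:borelfinvc}, the only cells contributing to the cokernel are those in the subcomplex
\[
E = \coprod_{\widehat{\ell}} E_{\mathcal{VC}}(\Gamma_{\widehat{\ell}}),
\]
so the relative homology collapses onto contributions supported at the non-negligible vertices $\widehat{\ell} \in \mathbb{R}^{2}_{\ell}$. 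For each such vertex, the relative term $H_n^{\Gamma(\ell)}(\ldots)$ localized at $\widehat{\ell}$ is identified with $H_n^{\Gamma_{\widehat{\ell}}}(E_{\fin}(\Gamma_{\widehat{\ell}}) \rightarrow \ast;\; \mathbb{KZ}^{-\infty})$, the target of the map being a point since $E_{\vc_{\langle \ell \rangle}}(\Gamma_{\widehat{\ell}})$ is contractible (the whole plane is fixed up to the relevant finite quotient). I would then reassemble these per-vertex contributions across all $H$-orbits of lines, re-indexing the union of the resulting vertices by $\Gamma$-orbits of non-negligible vertices $v \in \coprod_{\langle \ell \rangle} \mathbb{R}^{2}_{\ell}$, which is exactly the indexing set $\mathcal{T}''$.

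The main obstacle I expect is the bookkeeping in the identification of the localized relative term with $H_n^{\Gamma_{\widehat{\ell}}}(E_{\fin}(\Gamma_{\widehat{\ell}}) \rightarrow \ast;\; \mathbb{KZ}^{-\infty})$: one must check that passing from the $\Gamma(\ell)$-equivariant relative homology to a sum over vertex stabilizers is compatible with the induction/restriction built into the equivariant homology theory, so that the contribution of the $\Gamma(\ell)$-orbit of a vertex $\widehat{\ell}$ is genuinely computed by the smaller group $\Gamma_{\widehat{\ell}}$ via the induction isomorphism. Verifying that the relative target reduces to $\ast$ (rather than to a larger $E_{\vc_{\langle \ell \rangle}}(\Gamma_{\widehat{\ell}})$ whose $K$-theory might not vanish) requires the observation that a line $\widehat{\ell}$ with non-negligible stabilizer has $\Gamma_{\widehat{\ell}}$ acting so that $E_{\vc_{\langle \ell \rangle}}(\Gamma_{\widehat{\ell}})$ is equivariantly contractible, which is where the earlier cell-stabilizer analysis does the heavy lifting. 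Once these identifications are in place, the passage from $\mathcal{T}$-indexed cokernels to the $\mathcal{T}''$-indexed sum is routine, and finiteness of $\mathcal{T}''$ follows because all but finitely many vertices have negligible (hence trivially contributing) stabilizers.
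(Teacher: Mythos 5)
Your proposal is correct and follows essentially the same route as the paper: start from Proposition \ref{proposition:splitting1}, use the negligibility analysis (Lemma \ref{lemma:negligiblesufficient}, Corollary \ref{corollary:cellulated}, Lemma \ref{lemma:negligibleisomorphismtypes}) together with Propositions \ref{proposition:subcomplexfinvc} and \ref{proposition:borelfinvc} to collapse each cokernel onto the non-negligible vertices via the induction isomorphism, and then observe that the target reduces to a point because $\Gamma_{\widehat{\ell}}$ is itself virtually cyclic, so $E_{\mathcal{VC}}(\Gamma_{\widehat{\ell}}) = \{\ast\}$. The one identification you flag as the ``main obstacle'' is precisely what Proposition \ref{proposition:borelfinvc} supplies, so no new argument is needed there.
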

\noindent We refer the reader to Definition \ref{definition:negligible1} for a definition of 
negligible. We will eventually see that $\mathcal{T}''$ is finite. Note that Theorem \ref{theorem:splitting2} 
used the following definition.

\begin{definition}
Let $G$ be a group acting on a set $X$. If $A \subseteq X$, we let 
$G_{A} = \{g \in G \mid g \cdot A = A \}$.
\end{definition}

For each maximal cyclic subgroup $\langle \ell \rangle \leq L$,  we set
$$\mathcal{VC}_{\langle \ell \rangle} = \mathcal{VC} \cap \{ G \leq \Gamma(\ell) \mid |G \cap \langle \ell \rangle| = \infty \text{ if }
|G| = \infty \}.$$
In words, $\mathcal{VC}_{\langle \ell \rangle}$ is the collection consisting of finite subgroups of $\Gamma(\ell)$ 
and the infinite virtually cyclic subgroups of $\Gamma(\ell)$ that contain some non-zero multiple of the translation
$\ell$.
It is easy to check that $\mathcal{VC}_{\langle \ell \rangle}$ is a family of subgroups in $\Gamma(\ell)$.

The point group $H$ acts on the set of maximal cyclic subgroups  in $L$ by the rule $h \cdot \langle \ell \rangle
= \langle h(\ell) \rangle$. We choose a single maximal cyclic subgroup from each orbit and call the resulting collection $\mathcal{T}$. 

\begin{proposition} \label{prop:Ghomeo}
Assume that $\Gamma$ is given the discrete topology. We continue to write $Y = E_{\mathcal{FIN}}(\Gamma)$
and $Z = \base$ when convenient.
\begin{enumerate}
\item $\base$ is homeomorphic to $\borelt$ by a homeomorphism that is compatible with the $\Gamma$-action. (Here
the action on the latter space is given by the rule $\gamma \cdot (\gamma', \ell') = (\gamma \gamma', \ell')$.)
Each $\mathbb{R}^{2}_{\ell}$
is a model for $E_{\mathcal{VC}_{\langle \ell \rangle}}( \Gamma(\ell))$.
\item $\ybase$ is homeomorphic to $\yborelt$ by a homeomorphism that is compatible with the $\Gamma$-action. Each $Y \times \mathbb{R}^{2}_{\ell}$
is a model for $E_{\mathcal{FIN}}(\Gamma(\ell))$.
\end{enumerate}
The space $\ybase$ can be identified with $Y \times Z \times \{ 1/2 \} \subseteq Y \ast Z$ and $\base$ can be 
identified with the bottom of the join $Y \ast Z$.
\end{proposition}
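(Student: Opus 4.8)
The plan is to treat the two equivariant homeomorphisms in (1) and (2) as instances of the standard decomposition of a $\Gamma$-space into induced pieces, and then to verify the two ``model'' claims by a fixed-point analysis parallel to the proof of Proposition \ref{prop:EVC}. First I would establish both homeomorphisms simultaneously. The point group $H$ acts on the set of maximal cyclic subgroups $\langle \ell \rangle \leq L$ via $h \cdot \langle \ell \rangle = \langle h \ell \rangle$, and any $\gamma \in \Gamma$ with linear part $\pi(\gamma) = h$ carries $\mathbb{R}^{2}_{\ell}$ homeomorphically onto $\mathbb{R}^{2}_{h\ell}$ (and $Y \times \mathbb{R}^{2}_{\ell}$ onto $Y \times \mathbb{R}^{2}_{h\ell}$, since $\gamma$ preserves $Y$). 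Hence the setwise stabilizer of $\mathbb{R}^{2}_{\ell}$, and of $Y \times \mathbb{R}^{2}_{\ell}$, is exactly $\{\gamma : \pi(\gamma) \in H_{\langle \ell \rangle}\} = \Gamma(\ell)$. Choosing $\mathcal{T}$ to be one $\langle \ell \rangle$ per $H$-orbit, the $\Gamma$-orbit of $\mathbb{R}^{2}_{\ell}$ is the disjoint union of the planes $\mathbb{R}^{2}_{\ell'}$ with $\langle \ell' \rangle \in H \cdot \langle \ell \rangle$, and the orbit--stabilizer principle for actions on spaces identifies this orbit $\Gamma$-equivariantly with $\Gamma \times_{\Gamma(\ell)} \mathbb{R}^{2}_{\ell}$. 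Summing over $\mathcal{T}$ gives $\base \cong \borelt$, and the same argument with $Y \times \mathbb{R}^{2}_{\ell}$ gives $\ybase \cong \yborelt$.

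Next I would verify that $\mathbb{R}^{2}_{\ell}$ is a model for $E_{\mathcal{VC}_{\langle \ell \rangle}}(\Gamma(\ell))$. Since $\mathbb{R}^{2}_{\ell}$ is already a $\Gamma(\ell)$-CW complex by the construction in Subsection \ref{subsection:constructEVC}, it suffices to analyze $\mathrm{Fix}_{\mathbb{R}^{2}_{\ell}}(G)$ for $G \leq \Gamma(\ell)$. The key observation is that any translation by a multiple of $\ell$ acts trivially on $\mathbb{R}^{2}_{\ell}$, so the $\Gamma(\ell)$-action factors through projection onto the plane perpendicular to $\ell$. If $G \in \mathcal{VC}_{\langle \ell \rangle}$ is finite, then $G$ acts on $\mathbb{R}^{2}_{\ell} \cong \mathbb{R}^{2}$ by Euclidean isometries, so it fixes a centroid and its fixed set is a nonempty affine subspace, hence contractible; if $G$ is infinite, it contains a nonzero multiple $\widetilde{\ell}$ of $\ell$, so it acts through the finite quotient $G / (G \cap \langle \ell \rangle)$ and the same conclusion holds. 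Conversely, if $G \notin \mathcal{VC}_{\langle \ell \rangle}$, then either $G$ is not virtually cyclic, forcing $\mathrm{rk}(G \cap L) \geq 2$, or $G$ is infinite virtually cyclic with $G \cap \langle \ell \rangle$ trivial; in both cases $G \cap L$ contains a translation with nonzero component perpendicular to $\ell$ (using the maximality of $\langle \ell \rangle$ in $L$), which acts as a fixed-point-free translation, so $\mathrm{Fix}_{\mathbb{R}^{2}_{\ell}}(G) = \emptyset$.

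For (2) I would compute $\mathrm{Fix}_{Y \times \mathbb{R}^{2}_{\ell}}(G) = \mathrm{Fix}_{Y}(G) \times \mathrm{Fix}_{\mathbb{R}^{2}_{\ell}}(G)$ for the diagonal action. When $G$ is finite, both factors are contractible (the first because $Y = E_{\mathcal{FIN}}(\Gamma)$, the second by the previous paragraph), so the product is contractible; when $G$ is infinite, $\mathrm{Fix}_{Y}(G) = \emptyset$, so the product is empty. Thus $Y \times \mathbb{R}^{2}_{\ell}$ is a model for $E_{\mathcal{FIN}}(\Gamma(\ell))$. The closing identifications are then definitional: writing the join as $Y \ast Z = (Y \times Z \times [0,1])/\!\sim$ with the two ends collapsed, the slice at $t = 1/2$ is $Y \times Z = \ybase$ and the end $t = 1$ is $Z = \base$, both $\Gamma$-equivariantly.

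I expect the main obstacle to be the emptiness half of the $E_{\mathcal{VC}_{\langle \ell \rangle}}$ computation: ruling out fixed points both for virtually cyclic $G$ whose generating translation is transverse to $\ell$ and for non-virtually-cyclic $G$. This requires the same careful use of the lattice structure and the maximality of $\langle \ell \rangle$ that drives the argument of Proposition \ref{prop:EVC}, now localized to a single plane $\mathbb{R}^{2}_{\ell}$; everything else is either the routine orbit decomposition or a direct product computation of fixed sets.
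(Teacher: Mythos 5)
Your proposal is correct and follows essentially the same route as the paper: the equivariant homeomorphisms are the standard decomposition of $\base$ (resp.\ $\ybase$) into pieces induced from the setwise stabilizers $\Gamma(\ell)$ over the orbit representatives in $\mathcal{T}$, which the paper verifies by exhibiting the two quotient maps from $\topspace$ and checking they are constant on each other's point inverses, while you invoke the general orbit--stabilizer identification directly. The fixed-point verifications that $\mathbb{R}^{2}_{\ell}$ models $E_{\mathcal{VC}_{\langle \ell \rangle}}(\Gamma(\ell))$ and $Y \times \mathbb{R}^{2}_{\ell}$ models $E_{\mathcal{FIN}}(\Gamma(\ell))$ are exactly the argument of Proposition \ref{prop:EVC} localized to one plane, which the paper leaves as ``straightforward to check''; your write-up of them is sound.
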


\begin{proof}
We prove (1), the proof of (2) being similar. Consider the $\Gamma$-space $\topspace$, where $\Gamma$ acts by left multiplication on the first coordinate and trivially on the second coordinate.
We let $\borelt$ be  the usual Borel construction (so that $\Gamma$ acts only on the first coordinate). We regard $\base$ as a $\Gamma$-space with respect
to its usual action.

Define maps $\pi_{1}: \topspace \rightarrow \borelt$ and $\pi_{2}: \topspace \rightarrow \base$ by the rules
$\pi_{1}( \gamma, x) = (\gamma, x)$ and $\pi_{2}( \gamma, x) = \gamma \cdot x$. Both of these are quotient maps and commute with the $\Gamma$-action.

We claim that $\pi_{1}$ is constant on point inverses of $\pi_{2}$ and $\pi_{2}$ is constant on point inverses of $\pi_{1}$. It will then follow from a
well-known principle (see \cite[Theorem 22.2]{Mu00}) that there is a $\Gamma$-homeomorphism $f: \borelt \rightarrow \base$ such that $f \circ \pi_{1} = \pi_{2}$. 
\[
\xymatrix{
\topspace \ar[d]^{\pi_1}   \ar[dr]^{\pi_2} &\\ 
\borelt  \ar[r]^{\hskip 15pt f}  & \base}
\]      

Let $\gamma \in \Gamma$ and $x \in \mathbb{R}^{2}_{\ell}$ for some $\langle \ell \rangle \in \mathcal{T}$. One easily checks that
$$ \pi_{1}^{-1}( \gamma, x) = \{ ( \gamma \widetilde{\gamma}, \widetilde{\gamma}^{-1} \cdot x) \mid \widetilde{\gamma} \in \Gamma(\ell) \}.$$
It follows directly that $\pi_{2}(\pi_{1}^{-1}(\gamma, x)) = \{ \gamma \cdot x \}$ (a singleton), as required.

If $x \in \base$, then 
$$\pi_{2}^{-1}(x) = \{ ( \widehat{\gamma}, \widehat{x}) \mid \widehat{\gamma} \cdot \widehat{x} = x \}.$$
Now we suppose that $(\gamma_{1}, x_{1})$ and $(\gamma_{2}, x_{2})$ are in $\pi_{2}^{-1}(x)$. It follows that $\gamma_{1} \cdot x_{1} = \gamma_{2} \cdot x_{2}$,
so $\gamma_{2}^{-1} \gamma_{1} \cdot x_{1} = x_{2}$. Since $x_{1}$ and $x_{2}$ are in the same $\Gamma$-orbit, it must be that both are in $\mathbb{R}^{2}_{\ell}$,
for some $\langle \ell \rangle \in \mathcal{T}$. It then follows that $\gamma_{2}^{-1}\gamma_{1} \in \Gamma(\ell)$. Now we apply $\pi_{1}$:
\begin{align*}
\pi_{1}(\gamma_{1}, x_{1}) &= (\gamma_{1}, x_{1}) \\
&= (\gamma_{2} (\gamma_{2}^{-1} \gamma_{1}), x_{1}) \\
&\sim (\gamma_{2}, \gamma_{2}^{-1}\gamma_{1} \cdot x_{1}) \\
&= (\gamma_{2}, x_{2}) \\
&= \pi_{1}(\gamma_{2}, x_{2})
\end{align*}
It follows that $\pi_{1}(\pi_{2}^{-1}(x))$ is a singleton, as required.

We have now demonstrated the existence of $f$. The remaining statements are straightforward to check.
\end{proof}  

\begin{proposition} \label{proposition:splitting1}
Let $\g$ be a three-dimensional crystallographic group. We have a splitting

\[
\begin{split}
H_{\ast}^{\g}(&E_{\vc}(\g); \mathbb{KZ}^{-\infty}) \cong \\
&H_{\ast}^{\g}(E_{\fin}(\g); \mathbb{KZ}^{-\infty}) \oplus  \bigoplus_{\langle \ell \rangle \in \mathcal T} H_n^{\Gamma(\ell)}(E_{\fin}(\Gamma(\ell))\rightarrow
E_{\vc_{\langle \ell \rangle}}(\Gamma(\ell));  \mathbb{KZ}^{-\infty})),
\end{split}
\]
where   $H_n^{\Gamma(\ell)}(E_{\fin}(\Gamma(\ell))\rightarrow
E_{\vc_{\langle \ell \rangle}}(\Gamma(\ell));  \mathbb{KZ}^{-\infty})$ denotes the cokernel of the relative assembly map 
 \[
 H_n^{\Gamma(\ell)}(E_{\fin}(\Gamma(\ell));  \mathbb{KZ}^{-\infty}) \longrightarrow
 H_n^{\Gamma(\ell)}(E_{\vc_{\langle \ell \rangle}}(\Gamma(\ell));  \mathbb{KZ}^{-\infty}).
\]
\end{proposition}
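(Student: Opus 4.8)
The plan is to extract the statement from the homotopy pushout that is hidden inside the join model $X = Y \ast Z$ of Proposition \ref{prop:EVC}, and then to degenerate the resulting Mayer--Vietoris sequence by invoking split injectivity of the relative assembly map. First I would record that the join $Y \ast Z$ is, $\Gamma$-equivariantly, the double mapping cylinder of the two projections $Y \xleftarrow{\,p_Y\,} Y \times Z \xrightarrow{\,p_Z\,} Z$: realize $Y \ast Z$ as $Y \times Z \times [0,1]$ with $Y \times Z \times \{0\}$ collapsed onto $Y$ and $Y \times Z \times \{1\}$ collapsed onto $Z$, where $\Gamma$ acts diagonally and trivially on $[0,1]$. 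Covering $Y\ast Z$ by the two halves (which $\Gamma$-deformation retract onto $Y$ and $Z$ respectively, meeting in a copy of $Y\times Z$) and using that $H^\Gamma_\ast(-;\mathbb{KZ}^{-\infty})$ is a $\Gamma$-equivariant homology theory, I obtain, for each $n$, a Mayer--Vietoris sequence
\[ \cdots \to H_n^\Gamma(Y\times Z) \xrightarrow{\ \phi\ } H_n^\Gamma(Y)\oplus H_n^\Gamma(Z) \to H_n^\Gamma(Y\ast Z) \xrightarrow{\ \partial\ } H_{n-1}^\Gamma(Y\times Z)\to\cdots, \]
where $\phi = \big((p_Y)_\ast,\,-(p_Z)_\ast\big)$.

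Next I would identify the three outer terms. By definition $H_n^\Gamma(Y) = H_n^\Gamma(E_{\fin}(\Gamma);\mathbb{KZ}^{-\infty})$. For the other two I would apply Proposition \ref{prop:Ghomeo}, additivity of $H^\Gamma_\ast$ over disjoint unions, and the induction isomorphism $H_n^\Gamma(\Gamma\times_{\Gamma(\ell)}W)\cong H_n^{\Gamma(\ell)}(W)$ of the equivariant homology theory. This gives $H_n^\Gamma(Z) \cong \bigoplus_{\langle\ell\rangle\in\mathcal{T}} H_n^{\Gamma(\ell)}(E_{\mathcal{VC}_{\langle\ell\rangle}}(\Gamma(\ell)))$ and $H_n^\Gamma(Y\times Z)\cong\bigoplus_{\langle\ell\rangle\in\mathcal{T}} H_n^{\Gamma(\ell)}(E_{\fin}(\Gamma(\ell)))$. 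Under these identifications $(p_Z)_\ast$ is, on the summand indexed by $\langle\ell\rangle$, induced by the $\Gamma(\ell)$-projection $Y\times\mathbb{R}^2_\ell\to\mathbb{R}^2_\ell$, that is, by the canonical $\Gamma(\ell)$-map $E_{\fin}(\Gamma(\ell))\to E_{\mathcal{VC}_{\langle\ell\rangle}}(\Gamma(\ell))$ classifying the inclusion of families. Hence $(p_Z)_\ast$ is exactly the direct sum of the relative assembly maps appearing in the statement.

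The crux is to show this relative assembly map is split injective for each $\langle\ell\rangle$. I would obtain this from the factorization of families $\fin\subseteq\mathcal{VC}_{\langle\ell\rangle}\subseteq\vc$ in $\Gamma(\ell)$: the composite $H_n^{\Gamma(\ell)}(E_{\fin})\to H_n^{\Gamma(\ell)}(E_{\mathcal{VC}_{\langle\ell\rangle}})\to H_n^{\Gamma(\ell)}(E_{\vc})$ is the full relative assembly map $\fin\to\vc$, which is split injective in algebraic $K$-theory by Bartels' theorem; any left inverse of the composite restricts to a left inverse of the first factor. I expect this to be the main obstacle, since it rests on a deep splitting result rather than on the geometry of the model. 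Granting it, each summand of $(p_Z)_\ast$, and hence $(p_Z)_\ast$ itself, is split injective.

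Finally I would degenerate and compute. Since $(p_Z)_\ast$ is injective, $\phi$ is injective, so every connecting map $\partial$ vanishes and the Mayer--Vietoris sequence breaks into short exact sequences $0\to H_n^\Gamma(Y\times Z)\xrightarrow{\phi} H_n^\Gamma(Y)\oplus H_n^\Gamma(Z)\to H_n^\Gamma(Y\ast Z)\to 0$, whence $H_n^\Gamma(E_{\vc}(\Gamma))\cong\operatorname{coker}\phi$. Writing $s$ for a splitting of $(p_Z)_\ast$, the shear automorphism $(b,c)\mapsto\big(b+(p_Y)_\ast s(c),\,c\big)$ of $H_n^\Gamma(Y)\oplus H_n^\Gamma(Z)$ carries $\operatorname{im}\phi$ onto $0\oplus\operatorname{im}(p_Z)_\ast$, so $\operatorname{coker}\phi\cong H_n^\Gamma(Y)\oplus\operatorname{coker}(p_Z)_\ast$. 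As $\operatorname{coker}(p_Z)_\ast=\bigoplus_{\langle\ell\rangle\in\mathcal{T}}\operatorname{coker}\big(H_n^{\Gamma(\ell)}(E_{\fin}(\Gamma(\ell)))\to H_n^{\Gamma(\ell)}(E_{\mathcal{VC}_{\langle\ell\rangle}}(\Gamma(\ell)))\big)$, this is precisely the asserted splitting, the last summand being the cokernel of the relative assembly map as required.
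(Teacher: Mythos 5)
Your proof is correct and follows essentially the same route as the paper: the join is cut into two $\Gamma$-invariant pieces retracting onto $E_{\fin}(\Gamma)$ and $\coprod_{\langle\ell\rangle}\Gamma\times_{\Gamma(\ell)}E_{\vc_{\langle\ell\rangle}}(\Gamma(\ell))$ with intersection $\coprod_{\langle\ell\rangle}\Gamma\times_{\Gamma(\ell)}E_{\fin}(\Gamma(\ell))$, the terms are evaluated by induction and additivity, and split injectivity of each relative assembly map is deduced from Bartels via the factorization $\fin\subseteq\vc_{\langle\ell\rangle}\subseteq\vc$. Your only deviation is the final bookkeeping, where you use a shear automorphism built from a splitting of $(p_Z)_\ast$ instead of invoking split injectivity of the full assembly map for $\Gamma$; this is a harmless (arguably slightly cleaner) variant of the same conclusion.
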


The proof of this proposition resembles others that have appeared in \cite{J-PL06}  and \cite{LO09}.

\begin{proof}
Let us work with the explicit model $X$ for $E_{\vc}(\g)$ constructed in Propositions \ref{prop:EVC} and 
\ref{prop:Ghomeo}.  Since $X$ is obtained as
a join, there exists an obvious map $\rho:
X\rightarrow [0,1]$, which further has the property that every point
pre-image is $\g$-invariant.  In particular, corresponding to the
splitting of $[0,1]$ into $[0,2/3)\cup (1/3,1]$, we get a
$\g$-invariant splitting of $X$.  If we let $A=\rho^{-1}[0,2/3)$,
$B=\rho^{-1}(1/3,1]$, then from the Mayer-Vietoris sequence in
equivariant homology (and omitting coefficients in order to simplify our
notation), we have that:

\[
\ldots \rightarrow H_n^{\g}(A\cap B)\rightarrow H_n^{\g}(A)\oplus
H_n^{\g}(B) \rightarrow H_n^{\g}(X)\rightarrow \ldots
\]

Next, observe that we have obvious $\g$-equivariant homotopy equivalences:

\begin{itemize}
\item $A=\rho^{-1}[0,2/3)\simeq \rho^{-1}(0) =E_{\fin}(\g)$

\item $B=\rho^{-1}(1/3,1] \simeq \rho^{-1}(1) =  \coprod_{\langle \ell \rangle  \in \mathcal T} \g \times_{\Gamma(\ell)} E_{\vc_{\langle \ell \rangle}}( \Gamma(\ell))$

\item $A\cap B = \rho^{-1}(1/3,2/3) \simeq  \rho^{-1}(1/2)=
\coprod_{\langle \ell \rangle  \in \mathcal T} \g \times_{\Gamma(\ell)}  E_{\fin}(\Gamma(\ell))$
\end{itemize}

Now, using the induction structure and the fact that  our equivariant generalized homology
theory turns disjoint unions into direct sums, we can evaluate the terms in the
Mayer-Vietoris sequence as follows:

\[
\begin{split}
\ldots \rightarrow \bigoplus_{\langle \ell \rangle  \in \mathcal T} H_n^{\Gamma(\ell)}(E_{\fin}(\Gamma(\ell)))
\rightarrow H_n^{\g}(E_{\fin}(\g)) \oplus  &\bigoplus _{\langle \ell \rangle  \in \mathcal T}
H_n^{\Gamma(\ell)}(E_{\vc_{\langle \ell \rangle}}(\Gamma(\ell)) \rightarrow \\
&\rightarrow H_n^{\g}(E_{\vc}(\g))\rightarrow \ldots
\end{split}
\]

Next, we study the relative assembly  map $$\varPhi_{\langle \ell \rangle}: H_n^{\Gamma(\ell)}(E_{\fin}(\Gamma(\ell))) \rightarrow H_n^{\Gamma(\ell)}(E_{\vc_{\langle \ell \rangle}}(\Gamma(\ell)).$$ We claim  
$\varPhi_{\langle \ell \rangle}$ is split  injective.  This can be seen as follows. Consider the following commutative diagram:
\[
\xymatrix{
H_n^{\Gamma(\ell)}(E_{\fin}(\Gamma(\ell)))  \ar[ddr]_{\beta}  \ar[r]^{\varPhi_{\langle \ell \rangle}}  &  H_n^{\Gamma(\ell)}(E_{\vc_{\langle \ell \rangle}}(\Gamma(\ell)) 
\ar[dd]^{\alpha} \\ 
&\\
  &   H_n^{\Gamma(\ell)}(E_{\vc}(\Gamma(\ell))}
\]
where $\alpha$ and $\beta$ are the relative assembly  maps induced by the inclusions  $\vc_{\langle \ell \rangle} \subset \vc$ and $\fin \subset \vc$.
Recall that Bartels \cite{Bar03} has established that for \emph{any} group $G$, the relative assembly map:
$$H_*^{G}(E_{\fin}(G);\mathbb{KZ}^{-\infty})
\rightarrow H_*^{G}(E_{\vc}(G);\mathbb{KZ}^{-\infty})$$
is split injective.  Using this result from Bartels,  it follows that 
$$\beta: H_n^{\Gamma(\ell)}(E_{\fin}(\Gamma(\ell))) \longrightarrow H_n^{\Gamma(\ell)}(E_{\vc}(\Gamma(\ell))$$
is split injective. Therefore  $\varPhi_{\langle \ell \rangle}$ is also split injective.

Now, for each integer $n$, the above
portion of the Mayer-Vietoris long exact sequence breaks off as a
short exact sequence (since the initial term injects). Since the map
from the $H_n^{\g}(E_{\fin}\g) \rightarrow H_n^{\g}(E_{\vc}\g)$ is
also split injective (from the Bartels result), we obtain an
identification of the cokernel of the latter  map with the cokernel of the
map

\begin{equation}
\bigoplus_{\langle \ell \rangle  \in \mathcal T} H_n^{\Gamma(\ell)}(E_{\fin}(\Gamma(\ell))) \longrightarrow \bigoplus _{\langle \ell \rangle  \in \mathcal T}
H_n^{\Gamma(\ell)}(E_{\vc_{\langle \ell \rangle}}(\Gamma(\ell))
\end{equation}

Next, since the  inclusion map 

\[\coprod_{\langle \ell \rangle  \in \mathcal T} \g \times_{\Gamma(\ell)}  E_{\fin}(\Gamma(\ell)) \longrightarrow \coprod_{\langle \ell \rangle  \in \mathcal T} \g \times_{\Gamma(\ell)} E_{\vc_{\langle \ell \rangle}}( \Gamma(\ell))
\]
 is the disjoint union of cellular  $\Gamma(\ell)$-maps (for all $\langle \ell \rangle \in \mathcal T$), we see that
the  maps given in (1)  split as a direct sum (over   $\langle \ell \rangle \in \mathcal T$) of the
relative assembly maps  $H_n^{\Gamma(\ell)}(E_{\fin}(\Gamma(\ell))) \rightarrow H_n^{\Gamma(\ell)}(E_{\vc_{\langle \ell \rangle}}(\Gamma(\ell))).$  This immediately yields a corresponding splitting of the cokernel, completing the
proof of the proposition.
\end{proof}

The next step is to analyze the summands
$$ H_n^{\Gamma(\ell)}(E_{\fin}(\Gamma(\ell))\rightarrow
E_{\vc_{\langle \ell \rangle}}(\Gamma(\ell));  \mathbb{KZ}^{-\infty})$$
from Proposition \ref{proposition:splitting1}.

We fix a maximal cyclic subgroup $\langle \ell \rangle \leq L$ for the remainder of this section. We note that the 
space $\mathbb{R}^{3} \ast \mathbb{R}^{2}_{\ell}$ is a model for $E_{\mathcal{VC}_{\langle \ell \rangle}}(\Gamma(\ell))$, where 
both factors are given $\Gamma(\ell)$-equivariant cell structures and the action of $\Gamma(\ell)$ is the usual one.

Next we will need to describe the class of negligible groups.

\begin{remark} \label{remark:VCgroups}
If $G \in \mathcal{VC}$, then $G$ has one of three possible forms:
\begin{enumerate}
\item $G$ is finite, or
\item $G$ is \emph{infinite virtually cyclic of type I}; that is, $G$ admits a surjective homomorphism onto 
$\mathbb{Z}$ with finite kernel. Such a group will necessarily have the form $G \cong F \rtimes \mathbb{Z}$,
where $F$ is the kernel of the surjection onto $\mathbb{Z}$, or
\item $G$ is \emph{infinite virtually cyclic of type II}; that is, $G$ admits a surjective homomorphism onto 
$D_{\infty}$ with finite kernel. In this case, $G \cong A \ast_{F} B$, where $A$, $B$, and $F$ are finite groups,
and $F$ has index two in both $A$ and $B$.
\end{enumerate}
\end{remark}

\begin{definition} \label{definition:negligible1}
A group $G \in \mathcal{VC}$ is \emph{negligible} if
\begin{enumerate}
\item for each finite subgroup $H \leq G$, $H$ is isomorphic
to a subgroup of $S_{4}$ (the symmetric
group on four symbols), and 
\item if $G \in \mathcal{VC}_{\infty}$, then the finite group $F$ from Remark \ref{remark:VCgroups} has square-free
order. 
\end{enumerate}

(Thus, a finite group $G$ is negligible if it is isomorphic to a subgroup of $S_{4}$. An infinite virtually cyclic group
of type I is negligible if $F$ is of square-free order and isomorphic to a subgroup of $S_{4}$. An infinite virtually
cyclic group of type II is negligible if the factors $A$ and $B$ are isomorphic to subgroups of $S_{4}$, and
$F$ has square-free order.)

We will also say that
a cell $\sigma$ is negligible if its stabilizer group is negligible.
\end{definition}

\begin{remark}
This is the first of two different definitions of ``negligible" that we will
use in this paper.  We will need a different definition in Sections \ref{section:actionsonplanes} 
and \ref{section:cokernels}.

Definition \ref{definition:negligible1} allows us to describe classes of cells that make no contribution to $K$-theory (see Lemma \ref{lemma:negligibleisomorphismtypes} below),
which will let us ignore them in our work.
\end{remark}

\begin{lemma} \label{lemma:negligibleisomorphismtypes}
Let $G$ be a negligible group. The groups $Wh_{q}(G)$ are trivial for $q \leq 1$, and the same is true for all subgroups 
of $G$.
\end{lemma}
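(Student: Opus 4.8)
The plan is to deduce the statement about subgroups from the main vanishing statement, so I would first verify that negligibility is inherited by subgroups, and then prove $Wh_q(G)=0$ for $q\le 1$ whenever $G$ is negligible. For the closure under subgroups, let $G$ be negligible and $H\le G$. Condition (1) of Definition \ref{definition:negligible1} passes to $H$ verbatim, since every finite subgroup of $H$ is a finite subgroup of $G$. If $H$ is infinite it is again virtually cyclic; writing $F_0$ for the maximal finite normal subgroup of $G$ (this is the group $F$ of Remark \ref{remark:VCgroups}, of square-free order, when $G$ is infinite), I would note that $H\cap F_0$ is finite and normal in $H$, while the image of $H$ in $G/F_0\in\{\mathbb{Z},D_\infty\}$ is infinite and hence has no nontrivial finite normal subgroup. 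It follows that the maximal finite normal subgroup of $H$ equals $H\cap F_0\le F_0$, which has square-free order, so $H$ is negligible. This reduces the whole lemma to the case $H=G$.

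If $G$ is finite and negligible, then $G$ embeds in $S_4$. For every subgroup of $S_4$ the groups $K_{-1}(\mathbb{Z}G)$, $\widetilde{K}_0(\mathbb{Z}G)$ and $Wh(G)$ are known to vanish (these computations are assembled in \cite{LO09}), and Carter's vanishing theorem gives $K_q(\mathbb{Z}G)=0$ for $q\le -2$. Hence $Wh_q(G)=0$ for all $q\le 1$, which settles the finite case.

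Now suppose $G$ is infinite virtually cyclic. I would work in the Farrell--Jones picture: since $G\in\mathcal{VC}$ the space $E_{\vc}(G)$ is a point and $Wh_q(G)\cong H_q^{G}(E_{\vc}(G);\mathbb{KZ}^{-\infty})$, while by Bartels' splitting \cite{Bar03} the relative assembly map $H_q^{G}(E_{\fin}(G))\to H_q^{G}(E_{\vc}(G))$ is split injective. Its source $H_q^{G}(E_{\fin}(G);\mathbb{KZ}^{-\infty})$ is computed by Quinn's spectral sequence \cite{Qu82} from the groups $Wh_{q'}(S)$, for $q'\le 1$, of the finite cell stabilizers $S$; every such $S$ is a finite subgroup of $G$ and so embeds in $S_4$, so by the finite case these groups all vanish and the source is $0$. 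Therefore $Wh_q(G)$ is isomorphic to the cokernel of the relative assembly map, i.e. to the Nil contribution. Classically this cokernel is identified, via the twisted Bass--Heller--Swan decomposition when $G\cong F\rtimes_\alpha\mathbb{Z}$ (type I) and via Waldhausen's formula for amalgams when $G\cong A\ast_F B$ (type II), with Farrell (respectively Waldhausen) Nil-groups built on $\mathbb{Z}F$, the remaining non-Nil summands again being assembled from $Wh_{q'}$ of $F$, $A$, $B$ and therefore vanishing.

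The heart of the argument, and the step I expect to be the main obstacle, is the vanishing of these Nil-groups, for which the square-free hypothesis on $F$ in Definition \ref{definition:negligible1}(2) is exactly tailored: by Harmon's vanishing theorem for the lower Nil-groups of integral group rings of square-free order, together with its twisted and amalgamated-product extensions, the relevant $NK_q(\mathbb{Z}F;\alpha^{\pm1})$ and Waldhausen Nil-groups are trivial in degrees $q\le 1$. Granting this, the cokernel above vanishes, so $Wh_q(G)=0$ for $q\le 1$, which combined with the reduction in the first paragraph completes the proof.
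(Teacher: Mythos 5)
Your proposal is correct and follows essentially the same route as the paper's proof: the finite case is disposed of by the known lower $K$-theory of subgroups of $S_4$, and the infinite case reduces, via the Farrell--Hsiang and Waldhausen decompositions, to the vanishing of the Bass/Farrell/Waldhausen Nil-groups over $\mathbb{Z}F$ with $|F|$ square-free (Harmon, Juan-Pineda--Ramos, Lafont--Ortiz). Your detour through $E_{\fin}$, Bartels' splitting, and Quinn's spectral sequence is harmless but inessential --- for a virtually cyclic group it collapses to exactly the algebraic decompositions the paper quotes directly --- while your verification that negligibility passes to subgroups via the maximal finite normal subgroup supplies a detail the paper asserts without proof.
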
 

\begin{proof}
We first note that subgroups of negligible groups are negligible.

If $G$ is finite and negligible, then $G$ is isomorphic to a subgroup of $S_{4}$; i.e., $G \cong \{1\}$, $\mathbb{Z}/2$, $\mathbb{Z}/3$, $\mathbb{Z}/4$, $D_{2}$, $D_{3}$, $D_{4}$,
$A_{4}$, or $S_{4}$. The lemma then follows from Table \ref{table:tableKtheoryEfin} and the accompanying discussion. (See also \cite{LO09}.)

Now we assume that $G$ is negligible and infinite virtually cyclic of type I. Therefore, $G \cong F \rtimes_{\alpha} \mathbb{Z}$, where $F$ is a subgroup of $S_{4}$ with square-free order. By results of
Farrell and Hsiang \cite{FH68} and Farrell and Jones \cite{FJ95},
$$ Wh_{q}(F \rtimes_{\alpha} \mathbb{Z}) \cong C \oplus NK_{q}(\mathbb{Z}F, \alpha) \oplus NK_{q}(\mathbb{Z}F, \alpha^{-1}),$$
 where $C$ is a suitable quotient of the group $Wh_{q-1}(F) \oplus Wh_{q}(F)$ and $q \leq 1$. Since $F$ is finite and negligible, $C$ is trivial. Therefore,
$$ Wh_{q}(F \rtimes_{\alpha} \mathbb{Z}) \cong  2NK_{q}(\mathbb{Z}F, \alpha),$$  
since Farrell and Hsiang also show that $NK_{q}(\mathbb{Z}F, \alpha) \cong NK_{q}(\mathbb{Z}F, \alpha^{-1})$. Since $F$ has square-free order, $NK_{q}(\mathbb{Z}F, \alpha)$ is trivial for $q \leq 1$ by results
of \cite{Ha87} and \cite{J-PR09}. (The case in which $\alpha = \mathrm{id}$ was established by Harmon \cite{Ha87}, and the general case is due to \cite{J-PR09}.) 
This proves the lemma in the case that $G$ is negligible and infinite virtually cyclic of type I.

Finally, we assume that $G$ is negligible and infinite virtually cyclic of type II. Therefore, we can write $G \cong G_{1} \ast_{F} G_{2}$, where $F$ has square-free order and index two in both factors, and both
$G_{1}$ and $G_{2}$ are isomorphic to subgroups of $S_{4}$. By results of \cite{Wal78} (see also \cite{CP02}), 
$$ Wh_{q}(G) \cong X \oplus NK_{q}(\mathbb{Z}F; \mathbb{Z}[G_{1} - F], \mathbb{Z}[G_{2} - F]),$$
for all $q \leq 1$, where $X$ is a suitable quotient of $Wh_{q}(G_{1}) \oplus Wh_{q}(G_{2})$. Since $G_{1}$ and $G_{2}$ are negligible, both factors in the latter direct sum are trivial, so $X$ is trivial. It follows that
$$ Wh_{q}(G) \cong NK_{q}(\mathbb{Z}F; \mathbb{Z}[G_{1} - F], \mathbb{Z}[G_{2} - F]).$$
Let $F \rtimes_{\alpha} \mathbb{Z}$ be the canonical index two subgroup of $G$. Since $NK_{q}(\mathbb{Z}F, \alpha)$ is trivial for $q \leq 1$ by the previous case, 
it follows that $NK_{q}(\mathbb{Z}F; \mathbb{Z}[G_{1} - F], \mathbb{Z}[G_{2} - F])$
is also trivial for $q \leq 1$, by results of Lafont and Ortiz \cite{LO08} (see also  \cite{DQR11} and \cite{DKR11}). It follows that $Wh_{q}(G)$ is trivial for $q \leq 1$ in this case as well.
\end{proof}

\begin{lemma} \label{lemma:negligiblesufficient}
Let $G \leq \Gamma(\ell)$.
\begin{enumerate}
\item If there is a line $\widehat{\ell}$ and a point $p \notin \widehat{\ell}$ such that $G$ fixes $p$ and leaves $\widehat{\ell}$
invariant, then $G$ is negligible.
\item If $G$ leaves a line $\widehat{\ell}$ invariant and $\widehat{\ell} \cap c \neq \emptyset$ for some open $2$-cell $c \subseteq \mathbb{R}^{3}$,
then $G$ is negligible.
\item If $G$ fixes two points $p_{1}, p_{2} \in \mathbb{R}^{3}$ and the line $\overleftrightarrow{p_{1}p_{2}}$ is not parallel to the line $\ell$, then
$G$ is negligible.
\item If $G$ leaves a strip $\widehat{\ell} \times [0, K]$ invariant and acts trivially on the second factor, then $G$ is negligible.
\end{enumerate}
\end{lemma}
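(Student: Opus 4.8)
The plan is to handle the four cases through two geometric normal forms: one for the fixed-point hypotheses (1) and (3), and one for the line-preserving hypotheses (2) and (4). In every case the goal is to embed the relevant finite groups into $\mathbb{Z}/2\times\mathbb{Z}/2\leq S_4$ and to bound the finite part $F$, so that the criteria of Definition \ref{definition:negligible1} become immediate. Throughout I use that $G\leq\Gamma(\ell)\leq\Gamma$ acts on $\mathbb{R}^3$ by isometries, that $\pi(G)\leq H_{\langle\ell\rangle}$ is a finite point group obeying the crystallographic restriction (Lemma \ref{basics}(3)), and that $G\cap L$ is the translation subgroup of $G$.

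For (1) and (3) I would first produce a line fixed pointwise by $G$. In (3) this is $\overleftrightarrow{p_1p_2}$. In (1), after translating $p$ to the origin so that $G\leq O(3)$, the foot $q$ of the perpendicular from $0$ to $\widehat{\ell}$ is the unique nearest point of $\widehat{\ell}$ to $0$, so $G$ fixes $q$; since $p\notin\widehat{\ell}$ we have $q\neq 0$, and $G$ fixes the axis $\mathbb{R}q$ pointwise. In both cases $G$ is finite and acts on the plane perpendicular to the fixed axis as a subgroup of $O(2)$, hence as a cyclic or dihedral group. I then feed in the extra hypothesis: in (1) the direction of $\widehat{\ell}$ lies in this perpendicular plane (being orthogonal to $q$) and is $G$-invariant; in (3) the assumption $\overleftrightarrow{p_1p_2}\not\parallel\ell$ guarantees that the $\ell$-direction has nonzero perpendicular component, which $G$ must preserve. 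Since a subgroup of $O(2)$ preserving a line is contained in $\mathbb{Z}/2\times\mathbb{Z}/2$, we get $G\hookrightarrow\mathbb{Z}/2\times\mathbb{Z}/2\leq S_4$, so $G$ is negligible. (The excluded case $\overleftrightarrow{p_1p_2}\parallel\ell$ is exactly the one permitting a large axial group such as $C_6$ or $D_6$.)

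For (2) and (4) I would analyze an isometry $g$ preserving a line $\widehat{\ell}$ through the splitting $\mathbb{R}^3=\widehat{\ell}\times\widehat{\ell}^{\perp}$: writing $g(t,w)=(\epsilon_g t+a_g,\,A_g w)$ with $\epsilon_g\in\{\pm1\}$, $a_g\in\mathbb{R}$, and $A_g\in O(2)$ gives an injective homomorphism $G\hookrightarrow\mathrm{Isom}(\mathbb{R})\times O(2)$. Because $\pi(G)$ is finite and the translations of $G$ along $\widehat{\ell}$ lie in the cyclic group $G\cap L$, the image of $G$ in $\mathrm{Isom}(\mathbb{R})$ is discrete, hence finite, $\mathbb{Z}$, or $D_\infty$; this shows $G\in\mathcal{VC}$, and when $G$ is infinite it identifies the finite group $F$ of Remark \ref{remark:VCgroups} with the kernel $N=\ker(G\to\mathrm{Isom}(\mathbb{R}))$, whose elements satisfy $\epsilon_g=1$ and $a_g=0$. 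In case (4) the hypothesis that $G$ fixes the $[0,K]$-coordinate forces each $A_g$ to be the identity or the reflection across the strip's plane, so $N\hookrightarrow\mathbb{Z}/2$ at once.

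The crux of (2)---and the step I expect to be the main obstacle---is the claim that \emph{no} $g\in G$ is a nontrivial rotation about $\widehat{\ell}$. Such a $g$ would satisfy $\mathrm{Fix}(g)=\widehat{\ell}$; since our model for $E_{\fin}(\Gamma)$ (Proposition \ref{proposition:efin}, constructed explicitly in Section \ref{section:fundamentaldomains}) is an admissible $\Gamma$-CW complex, $\mathrm{Fix}(g)$ is a subcomplex, and being one-dimensional it lies in the $1$-skeleton and is therefore disjoint from every open $2$-cell, contradicting $\widehat{\ell}\cap c\neq\emptyset$. Once this is established, any finite-order element with $\epsilon_g=1$ has $a_g=0$ and $A_g$ of finite order in $O(2)$ that is not a nontrivial rotation, hence $A_g$ is the identity or a reflection; thus both $N$ and $G_+=\{g\in G:\epsilon_g=1\}$ inject into a subgroup of $O(2)$ consisting only of the identity and reflections, which has order at most $2$. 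Consequently every finite subgroup of $G$ has order at most $4$ (so embeds in $S_4$) and $F=N$ has order at most $2$ (so is square-free); by Definition \ref{definition:negligible1} and Lemma \ref{lemma:negligibleisomorphismtypes}, $G$ is negligible in both (2) and (4). The only genuinely delicate points are the admissibility/subcomplex property invoked for (2) and the discreteness of the image in $\mathrm{Isom}(\mathbb{R})$; the remainder reduces to the line-preserving normal form and elementary facts about finite subgroups of $O(2)$.
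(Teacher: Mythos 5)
Your proof is correct and, at its core, is the same argument as the paper's: parts (1) and (3) come down to the observation that a finite group fixing an axis pointwise and preserving a transverse direction embeds in $(\mathbb{Z}/2)^2$, and parts (2) and (4) come down to showing that the kernel of the action of $G$ on the invariant line has order at most $2$. The only differences are in the routing: the paper proves (3) by reducing it to (1) (via the line through $p_2$ parallel to $\ell$) and proves (4) by reducing it to (2) (noting some line $\widehat{\ell}\times\{k\}$ must cross an open $2$-cell), whereas you argue both directly; and in (2) the paper bounds the kernel by observing it sits inside the stabilizer of the open $2$-cell $c$ (which has order at most $2$ since cell stabilizers act trivially on cells), while you exclude nontrivial rotations about $\widehat{\ell}$ by the equivalent fact that fixed sets are subcomplexes and hence cannot be one-dimensional yet meet an open $2$-cell. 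Your direct treatment of (4) is a mild simplification in that it never invokes the cell structure; otherwise the two proofs are interchangeable.
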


\begin{proof}
\begin{enumerate}
\item Let $\widehat{p}$ be the point on $\widehat{\ell}$ that is closest to $p$. We let $v_{1}$ be the vector originating at $\widehat{p}$
and terminating at $p$. Let $v_{2}$ be a tangent vector to $\widehat{\ell}$ at $\widehat{p}$, and let $v_{3}$ be a vector that is perpendicular
to $v_{1}$ and $v_{2}$. We note that the vectors $v_{1}$, $v_{2}$, and $v_{3}$ are pairwise orthogonal.

The point $\widehat{p}$ must be fixed by $G$, and so $G$ must act by orthogonal matrices with respect to the 
ordered basis $[v_{1}, v_{2}, v_{3}]$.
By our assumptions, $G$ fixes $v_{1}$. The inclusion $G \cdot v_{2} \subseteq \{ v_{2}, -v_{2} \}$ holds, since
$\widehat{\ell}$ is $G$-invariant. It follows from orthogonality that $G \cdot v_{3} \subseteq \{ v_{3}, -v_{3} \}$ as well. We conclude that
$G$ is isomorphic to a subgroup of $( \mathbb{Z}/2)^{2}$, and therefore negligible.

\item Suppose that $G$ leaves $\widehat{\ell}$ invariant, and $\widehat{\ell} \cap c \neq \emptyset$ for some open $2$-cell $c \subseteq \mathbb{R}^{3}$.
We consider the restriction homomorphism $r: G \rightarrow \mathrm{Isom}(\widehat{\ell})$. The kernel of this map is a subgroup of the stabilizer group
of $c$. It follows that $| \mathrm{Ker} \, r | = 1$ or $2$. Thus, $G$ maps into $\mathbb{Z}$ or $D_{\infty}$ with $1$ or $\mathbb{Z}/2$ as
kernel, so $G$ is negligible.

\item Let $p_{1}$, $p_{2}$ be fixed by $G$. We consider the line $\widehat{\ell}$ through $p_{2}$ that is parallel to $\ell$. 
Since $\overleftrightarrow{p_{1}p_{2}}$ is not parallel to $\ell$, $p_{1} \notin \widehat{\ell}$. Since $G \subseteq \Gamma(\ell)$, $G$ leaves 
$\widehat{\ell}$ invariant. As a result, $G$ is negligible by (1).

\item If $G$ leaves a strip $\widehat{\ell} \times [0,K]$ invariant and acts trivially on the second coordinate, then $G$ acts
on each line $\widehat{\ell} \times \{ k \} \subseteq \widehat{\ell} \times [0,K]$. At least one of these lines must meet an open $2$-cell
in $\mathbb{R}^{3}$, so $G$ is negligible by (2).
\end{enumerate}
\end{proof}

\begin{corollary} \label{corollary:cellulated}
Let $\widehat{\ell} \in \mathbb{R}^{2}_{\ell}$ have a non-negligible stabilizer group. The point $\widehat{\ell} \in \mathbb{R}^{2}_{\ell}$ must be a
vertex, and the line $\widehat{\ell} \subseteq \mathbb{R}^{3}$ occurs as a cellulated subcomplex in $E_{\mathcal{FIN}}(\Gamma(\ell))$.
\end{corollary}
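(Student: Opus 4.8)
The plan is to prove the contrapositive of each of the two assertions. Write $G := \Gamma(\ell)_{\widehat{\ell}}$ for the stabilizer of the point $\widehat{\ell} \in \mathbb{R}^2_\ell$. Since a point of $\mathbb{R}^2_\ell$ is by definition a line of $\mathbb{R}^3$ parallel to $\ell$, the group $G$ is exactly the subgroup of $\Gamma(\ell)$ that leaves the line $\widehat{\ell} \subseteq \mathbb{R}^3$ invariant. It contains translation by $\ell$ (which acts trivially on $\mathbb{R}^2_\ell$), so $G$ is infinite; and as a point-stabilizer in the model $\mathbb{R}^2_\ell$ for $E_{\mathcal{VC}_{\langle \ell \rangle}}(\Gamma(\ell))$ it lies in $\mathcal{VC}_{\langle \ell \rangle}$, hence is infinite virtually cyclic of type I or II (Remark \ref{remark:VCgroups}). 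I will show that if either conclusion fails then $G$ is negligible, contradicting the hypothesis. Throughout I use that both $\mathbb{R}^3$ and $\mathbb{R}^2_\ell$ carry \emph{rigid} $\Gamma(\ell)$-CW structures, so that the stabilizer of any cell fixes that cell pointwise.

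For the first assertion, suppose $\widehat{\ell}$ is not a vertex of $\mathbb{R}^2_\ell$, so it lies in the interior of a cell $\sigma$ of dimension $1$ or $2$. Then $G$ stabilizes $\sigma$, and by rigidity fixes $\sigma$ pointwise. A cell of $\mathbb{R}^2_\ell$ is a family of lines parallel to $\ell$ that sweeps out a planar strip $\widehat{\ell} \times [0,K]$ in $\mathbb{R}^3$ (when $\dim \sigma = 2$, restrict attention to a segment inside $\sigma$); fixing $\sigma$ pointwise in $\mathbb{R}^2_\ell$ means precisely that $G$ carries each of these lines to itself, i.e. $G$ leaves the strip invariant and acts trivially on the transverse $[0,K]$-factor. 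Lemma \ref{lemma:negligiblesufficient}(4) then forces $G$ to be negligible, the desired contradiction; hence $\widehat{\ell}$ is a vertex.

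For the second assertion, Lemma \ref{lemma:negligiblesufficient}(2) already shows that the line $\widehat{\ell} \subseteq \mathbb{R}^3$ meets no open $2$-cell. The crux is to rule out open $3$-cells. Let $F_0 = \ker\bigl(G \to \mathrm{Isom}(\widehat{\ell})\bigr)$ be the subgroup fixing the line pointwise; this is exactly the finite group $F$ in the virtually-cyclic description of $G$ (the image in $\mathrm{Isom}(\mathbb{R})$ being the infinite $\mathbb{Z}$ or $D_\infty$). If the line met the interior of a $3$-cell $\tau$ at a point $q$, then every element of $F_0$ would fix $q \in \mathrm{int}(\tau)$, hence stabilize $\tau$ and, by rigidity, fix $\tau$ pointwise; since no nontrivial isometry fixes an open subset of $\mathbb{R}^3$ pointwise, we would get $F_0 = 1$. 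But then $G$ injects into $\mathrm{Isom}(\mathbb{R})$ and is isomorphic to $\mathbb{Z}$ or $D_\infty$, both of which are negligible by Definition \ref{definition:negligible1} (the finite part $F$ is trivial). This contradiction shows $\widehat{\ell}$ meets no open $3$-cell, so it lies in the $1$-skeleton. Because the cellulation is polyhedral — its cells are the convex polytopes furnished by Poincar\'e's Fundamental Polyhedron Theorem — each edge is a straight segment, and a straight line lying in such a graph must coincide with the edges it meets along their full length; thus $\widehat{\ell}$ is a union of edges and intervening vertices, i.e. a subcomplex with the induced cellulation.

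I expect the $3$-cell step to be the main obstacle. The relevant incidence geometry does not by itself forbid a straight line from piercing the interior of a convex $3$-cell while touching its boundary only along the $1$-skeleton (think of a main diagonal), so pure position arguments are insufficient. The decisive point is to identify the pointwise line-stabilizer $F_0$ with the finite part $F$ of the virtually cyclic group and then use $\Gamma(\ell)$-CW rigidity to collapse $F_0$ to the trivial group, at which stage non-negligibility — rather than geometry — delivers the contradiction.
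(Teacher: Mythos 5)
Your proof is correct and follows the paper's route on both assertions: the first is exactly the paper's argument (a non-vertex point of $\mathbb{R}^{2}_{\ell}$ has a stabilizer preserving a strip and acting trivially on the transverse factor, so Lemma \ref{lemma:negligiblesufficient}(4) applies), and for the second you begin, as the paper does, by excluding open $2$-cells via Lemma \ref{lemma:negligiblesufficient}(2). Where you go beyond the paper is the $3$-cell step, and you are right that it is the crux. The paper's entire proof of the second assertion is the single sentence that $\widehat{\ell} \subseteq (\mathbb{R}^{3})^{1}$ ``by Lemma \ref{lemma:negligiblesufficient}(2)''; that lemma only forbids the line from meeting an open $2$-cell, and, as your main-diagonal example shows, a straight line can in principle pierce the interior of a convex $3$-cell while touching its boundary only along the $1$-skeleton, so the cited lemma does not by itself place the line in the $1$-skeleton. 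Your repair --- identify the pointwise stabilizer of the line with the finite normal subgroup $F$ of the virtually cyclic stabilizer, observe that an element of $F$ fixing a point in the interior of a $3$-cell stabilizes that cell and hence, by $\Gamma(\ell)$-CW rigidity, is trivial, and conclude that the stabilizer is $\mathbb{Z}$ or $D_{\infty}$ and therefore negligible --- is exactly the right argument, and it is the same kernel-of-restriction device the paper itself uses inside the proof of Lemma \ref{lemma:negligiblesufficient}(2), just applied one dimension up where it yields an even stronger conclusion (trivial kernel). Your closing observation that a straight line contained in a locally finite polyhedral $1$-skeleton must be a union of closed edges, and hence a subcomplex, is also a detail the paper asserts without comment. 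In short: same strategy, with the one genuinely delicate step made explicit and correct.
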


\begin{proof}
If $\widehat{\ell}$ is not a vertex of $\mathbb{R}^{2}_{\ell}$, then the stabilizer group of $\widehat{\ell}$ leaves a strip invariant in $\mathbb{R}^{3}$
and 
acts trivially on the bounded factor, so the stabilizer group of $\widehat{\ell}$ is negligible by Lemma \ref{lemma:negligiblesufficient}(4).

The second statement follows from the fact that $\widehat{\ell} \subseteq (\mathbb{R}^{3})^{1}$,
by Lemma \ref{lemma:negligiblesufficient}(2).
\end{proof}

\begin{definition}
For each vertex $\widehat{\ell} \in \mathbb{R}^{2}_{\ell}$ with non-negligible stabilizer, set
$$F(\widehat{\ell}) = \widehat{\ell} \subseteq \mathbb{R}^{3}.$$
Note that $\widehat{\ell}$ is a cellulated line in $\mathbb{R}^{3}$ by Corollary \ref{corollary:cellulated}, 
so $F(\widehat{\ell})$ is a subcomplex of our model for 
$E_{\mathcal{FIN}}(\Gamma(\ell))$.

Under the same assumptions on $\widehat{\ell}$, we also set
$$E(\widehat{\ell}) = F(\widehat{\ell}) \ast \widehat{\ell}.$$
We note that $E(\widehat{\ell})$ is a subcomplex of $E_{\mathcal{VC}_{\langle \ell \rangle}}(\Gamma(\ell)) = \mathbb{R}^{3} \ast 
\mathbb{R}^{2}_{\ell}$. 
\end{definition}

\begin{proposition} \label{proposition:subcomplexfinvc}
The subcomplexes
$$ F=\coprod_{ \widehat{\ell} } F(\widehat{\ell}),  \quad \text{and} 
\quad E=\coprod_{ \widehat{\ell} } E(\widehat{\ell})$$

of $E_{\mathcal{FIN}}(\Gamma(\ell))$ and $E_{\mathcal{VC}_{\langle \ell \rangle}}(\Gamma(\ell))$ (respectively) are
$\Gamma(\ell)$-equivariant. (The disjoint unions are indexed over all lines $\widehat{\ell} \in \mathbb{R}^{2}_{\ell}$
with non-negligible stabilizers.)

These subcomplexes also contain the only non-negligible cells in $E_{\mathcal{FIN}}(\Gamma(\ell))$ and 
$E_{\mathcal{VC}_{\langle \ell \rangle}}(\Gamma(\ell))$ (respectively). In particular, the natural inclusions
induce  isomorphisms
$$H_n^{\Gamma(\ell)}(F; \mathbb{KZ}^{-\infty}) \cong H_n^{\Gamma(\ell)}(E_{\fin}(\Gamma(\ell)); \mathbb{KZ}^{-\infty}),$$

$$H_n^{\Gamma(\ell)}(E; \mathbb{KZ}^{-\infty}) \cong H_n^{\Gamma(\ell)}(E_{\vc_{\langle \ell \rangle}}(\Gamma(\ell));  \mathbb{KZ}^{-\infty}).$$\end{proposition}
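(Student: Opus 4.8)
The plan is to establish the two structural claims first—$\Gamma(\ell)$-equivariance of the subcomplexes and the identification of all non-negligible cells—and then to deduce the homology isomorphisms from Quinn's spectral sequence together with Lemma \ref{lemma:negligibleisomorphismtypes}. For equivariance, I would use that negligibility is defined purely in terms of the isomorphism types of a group and its finite subgroups (Definition \ref{definition:negligible1}), hence is a conjugacy invariant: for $\gamma \in \Gamma(\ell)$ and a line $\widehat{\ell} \in \mathbb{R}^{2}_{\ell}$, the stabilizer $\Gamma(\ell)_{\gamma \cdot \widehat{\ell}} = \gamma\, \Gamma(\ell)_{\widehat{\ell}}\, \gamma^{-1}$ is negligible exactly when $\Gamma(\ell)_{\widehat{\ell}}$ is. So the set of non-negligible lines is $\Gamma(\ell)$-invariant, and since $\gamma$ carries $F(\widehat{\ell}) = \widehat{\ell}$ to $F(\gamma \widehat{\ell})$ and $E(\widehat{\ell}) = \widehat{\ell} \ast v$ to $E(\gamma \widehat{\ell})$, both $F$ and $E$ are $\Gamma(\ell)$-subcomplexes. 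I would also note that the lines $\widehat{\ell}$ (distinct parallel lines) and the joins $\widehat{\ell} \ast v$ (distinct apices $v$) are pairwise disjoint, so these are genuine disjoint unions.

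Next I would classify the non-negligible cells of $\mathbb{R}^{3} \ast \mathbb{R}^{2}_{\ell}$ by the three cell types of a join. For a top cell $\tau \subseteq \mathbb{R}^{2}_{\ell}$, Corollary \ref{corollary:cellulated} says a non-negligible $\tau$ is a vertex $v = \widehat{\ell}$ whose associated line is a cellulated subcomplex, so $v \in E(\widehat{\ell}) \subseteq E$. For a bottom cell $\sigma \subseteq \mathbb{R}^{3}$ with non-negligible stabilizer $G = \Gamma(\ell)_{\sigma}$, let $p$ be its barycenter and $\widehat{\ell}$ the line through $p$ parallel to $\ell$, which is $G$-invariant because $G \leq \Gamma(\ell)$. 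If $\widehat{\ell}$ were not contained in the $1$-skeleton it would meet an open $2$-cell, so Lemma \ref{lemma:negligiblesufficient}(2) would force $G$ negligible; hence $\widehat{\ell}$ is a cellulated line, and since $p \in \widehat{\ell}$ this forces $\dim \sigma \leq 1$ and in fact $\sigma \subseteq \widehat{\ell}$. As $G \leq \Gamma(\ell)_{\widehat{\ell}}$ is non-negligible and negligibility passes to subgroups (Lemma \ref{lemma:negligibleisomorphismtypes}), the line $\widehat{\ell}$ is non-negligible, so $\sigma \subseteq F(\widehat{\ell}) \subseteq F \subseteq E$. Finally, a join cell $\sigma \ast \tau$ has stabilizer contained in both $\Gamma(\ell)_{\sigma}$ and $\Gamma(\ell)_{\tau}$; by subgroup-closure of negligibility, a non-negligible $\sigma \ast \tau$ forces both factors to be non-negligible, so by the previous cases $\tau = v = \widehat{\ell}$ and $\sigma$ lies on a non-negligible line $\widehat{\ell}'$. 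If $\widehat{\ell}' \neq \widehat{\ell}$, their common stabilizer fixes the barycenter of $\sigma$ (a point of $\widehat{\ell}'$, hence off $\widehat{\ell}$) while leaving $\widehat{\ell}$ invariant, so Lemma \ref{lemma:negligiblesufficient}(1) makes it negligible, a contradiction; thus $\widehat{\ell}' = \widehat{\ell}$ and $\sigma \ast \tau \subseteq \widehat{\ell} \ast v = E(\widehat{\ell}) \subseteq E$. This shows $E$ (resp. $F$) contains every non-negligible cell of $E_{\vc_{\langle \ell \rangle}}(\Gamma(\ell))$ (resp. $E_{\fin}(\Gamma(\ell))$).

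For the homology isomorphisms, I would feed this cell-stabilizer information into Quinn's spectral sequence \cite{Qu82} computing the relative equivariant homology of the pairs $(E_{\fin}(\Gamma(\ell)), F)$ and $(E_{\vc_{\langle \ell \rangle}}(\Gamma(\ell)), E)$ with coefficients in $\mathbb{KZ}^{-\infty}$, whose $E^{2}$-page is assembled from the groups $Wh_{q}(\Gamma(\ell)_{\sigma})$ as $\sigma$ runs over orbit representatives of cells not in the subcomplex. By the second step each such $\sigma$ is negligible, so $Wh_{q}(\Gamma(\ell)_{\sigma}) = 0$ for $q \leq 1$ by Lemma \ref{lemma:negligibleisomorphismtypes}. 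Since every term $E^{2}_{p,q}$ with $p + q \leq 1$ has $q \leq 1$, the relative groups vanish in total degree $n \leq 1$, and the long exact sequence of each pair yields the stated isomorphisms in the range relevant to the lower algebraic $K$-theory.

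The hard part will be the cell-by-cell verification in the second step, and in particular the join cells: ruling out the ``diagonal'' cells $\sigma \ast \tau$ whose two factors sit over different lines requires combining the subgroup-closure of negligibility with Lemma \ref{lemma:negligiblesufficient}(1), and it is exactly this reduction to a single line $\widehat{\ell}$ that makes the clean decomposition $E = \coprod_{\widehat{\ell}} E(\widehat{\ell})$ possible.
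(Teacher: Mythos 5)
Your proof is correct and follows essentially the same route as the paper: the same three-way split into cells of $\mathbb{R}^{3}$, cells of $\mathbb{R}^{2}_{\ell}$, and join cells, with Corollary \ref{corollary:cellulated} and Lemma \ref{lemma:negligiblesufficient} ruling out non-negligible cells outside $E$, and Lemma \ref{lemma:negligibleisomorphismtypes} giving the homology isomorphisms. The only cosmetic differences are that you handle the $\mathbb{R}^{3}$-cells uniformly via the barycenter (where the paper treats dimensions $0$ and $1$ separately using Lemma \ref{lemma:negligiblesufficient}(3)) and that you spell out the spectral-sequence mechanism for the final isomorphisms, which the paper leaves implicit.
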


\begin{proof}
The statement that the given subcomplexes are $\Gamma(\ell)$-equivariant follows from the $\Gamma(\ell)$-equivariance
of the indexing sets.

Now we would like to show that the given subcomplexes contain the only non-negligible cells. It is good enough
to do this for the subcomplex $E = \coprod_{\widehat{\ell}} E(\widehat{\ell})$, since
$$F= \coprod_{\widehat{\ell}} F(\widehat{\ell}) =  E_{\mathcal{FIN}}(\Gamma(\ell))
\cap \coprod_{ \widehat{\ell}} E(\widehat{\ell}).$$
We will consider cells in the top of the join $\mathbb{R}^{3} \ast \mathbb{R}^{2}_{\ell}$ (i.e., in $\mathbb{R}^{3}$), then
cells in $\mathbb{R}^{2}_{\ell}$, and finally the cells that can be described as joins of cells from $\mathbb{R}^{3}$ and $\mathbb{R}^{2}_{\ell}$.

We first describe the collection of all cells $c \subseteq \mathbb{R}^{3}$ with non-negligible stabilizers. It is clear that
all such cells are $0$- or $1$-cells, since the stabilizer of a $2$-dimensional cell $\sigma \subseteq \mathbb{R}^{3}$ has order at most $2$, and
the stabilizer of a $3$-dimensional cell is necessarily trivial.

Suppose $c$ is a $0$-cell in $\mathbb{R}^{3}$ and $c$ has non-negligible stabilizer. 
We consider the line $\widehat{\ell}$ that 
is parallel to $\ell$ and passes through $c$. Thus $\widehat{\ell} \in \mathbb{R}^{2}_{\ell}$ has a non-negligible
stabilizer (since the stabilizer of $c$ is contained in the stabilizer of $\widehat{\ell}$), so it must be a vertex. 
It now follows that
$c \in E(\widehat{\ell})$.

Now suppose that $c \subseteq \mathbb{R}^{3}$ is an open $1$-cell with non-negligible stabilizer. We choose two points $p_{1}, p_{2} \in c$. The stabilizer group of $c$ fixes
both $p_{1}$ and $p_{2}$. Since the latter group is non-negligible, it must be that $\overleftrightarrow{p_{1}p_{2}} = \widehat{\ell}$
is parallel to $\ell$ by Lemma \ref{lemma:negligiblesufficient}(3), so $\widehat{\ell} \in \mathbb{R}^{2}_{\ell}$. The stabilizer group of
$\widehat{\ell}$ is non-negligible since it contains the stabilizer group of $c$. Thus, $\widehat{\ell}$ is a vertex in $\mathbb{R}^{2}_{\ell}$, 
and $c \subseteq E(\widehat{\ell})$. This concludes our analysis of cells in $\mathbb{R}^{3}$; we have shown that all non-negligible cells in $\mathbb{R}^{3}$
are contained in $E$.

We next consider the cells of $\mathbb{R}^{2}_{\ell}$.
Corollary \ref{corollary:cellulated} shows that each open cell $c \subseteq \mathbb{R}^{2}_{\ell}$ of dimension greater 
than $0$ has negligible stabilizer. 
Thus, if $c \subseteq \mathbb{R}^{2}_{\ell}$ has non-negligible stabilizer, then $c$ is a vertex, so $c \in E(c)$.

Finally, we consider the cells
$c = c_{1} \ast c_{2}$ having non-negligible stabilizer, where $c_{1} \subseteq \mathbb{R}^{3}$, $c_{2} \subseteq \mathbb{R}^{2}_{\ell}$
are open cells. Since $G_{c} = G_{c_{1}} \cap G_{c_{2}}$, both $c_{1}$ and $c_{2}$ have non-negligible stabilizer groups. It follows
from Corollary \ref{corollary:cellulated}  that
$c_{2}$ is a vertex (in $\mathbb{R}^{2}_{\ell})$ and a line in $\mathbb{R}^{3}$. Since $G_{c}$ is non-negligible, we must have $c_{1} \subseteq c_{2}$
by Lemma \ref{lemma:negligiblesufficient}(1). Thus, $c \subseteq E(c_{2})$. We have now shown that all of the non-negligible cells are in $E$.

The final statement now follows from Lemma \ref{lemma:negligibleisomorphismtypes}. Indeed, consider the inclusion of $F$ into $E_{\fin}(\Gamma(\ell))$. 
The only cells to have non-zero $K$-groups are contained in the image (by the above argument and Lemma \ref{lemma:negligibleisomorphismtypes}), proving
that the inclusion induces an isomorphism. The other case is similar. 
 \end{proof}
 
\begin{proposition} \label{proposition:borelfinvc}
We have a $\Gamma(\ell)$-homeomorphism 
$$h: \coprod_{ \widehat{\ell}  \in \mathcal{T}'}
\Gamma(\ell) \times_{\Gamma(\ell)_{\widehat{\ell}}} X(\widehat{\ell})
\rightarrow 
\coprod_{ \widehat{\ell} } X(\widehat{\ell}) 
,$$
where $X \in \{ E, F \}$ and $\mathcal{T}'$ is a selection of one vertex $ \widehat{\ell}$ from each $\Gamma(\ell)$-orbit of
non-negligible vertices in $\mathbb{R}^{2}_{\ell}$. The domain is a $\Gamma(\ell)$-space
relative to the action that is trivial on the second coordinate, and left multiplication on the first.

Moreover, each $F(\widehat{\ell})$ is a model for $E_{\mathcal{FIN}}(\Gamma_{\widehat{\ell}})$, and
each
$E(\widehat{\ell})$ is a model for $E_{\mathcal{VC}}(\Gamma_{\widehat{\ell}})$.
\end{proposition}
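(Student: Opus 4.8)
The plan is to treat the two assertions separately, after first recording a structural fact about $\Gamma_{\widehat{\ell}}$ that reconciles the notation of the two halves of the statement. First I would observe that, since $\widehat{\ell}$ is a line parallel to $\langle \ell \rangle$, any $\gamma \in \Gamma$ with $\gamma \widehat{\ell} = \widehat{\ell}$ must preserve the direction $\langle \ell \rangle$ and hence lie in $\Gamma(\ell)$; thus $\Gamma_{\widehat{\ell}} = \Gamma(\ell)_{\widehat{\ell}}$, which identifies the two pieces of notation in the statement. Moreover the translation subgroup $\Gamma_{\widehat{\ell}} \cap L$ consists exactly of lattice vectors parallel to $\widehat{\ell}$, i.e. of $\langle \ell \rangle \cong \mathbb{Z}$ (using the maximality of $\langle \ell \rangle$), and $\Gamma_{\widehat{\ell}}/(\Gamma_{\widehat{\ell}} \cap L)$ embeds in the finite group $H$. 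Hence $\Gamma_{\widehat{\ell}}$ is virtually cyclic, with $\langle \ell \rangle$ of finite index — a fact I will use repeatedly below.

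For the homeomorphism $h$, I would argue by the standard orbit decomposition of a disjoint union. The assignment $\widehat{\ell} \mapsto X(\widehat{\ell})$ is $\Gamma(\ell)$-equivariant: since the action on the join $\mathbb{R}^{3} \ast \mathbb{R}^{2}_{\ell}$ is coordinatewise, one has $\gamma \cdot F(\widehat{\ell}) = F(\gamma \widehat{\ell})$ and $\gamma \cdot E(\widehat{\ell}) = F(\gamma\widehat{\ell}) \ast (\gamma \widehat{\ell}) = E(\gamma \widehat{\ell})$. Because distinct vertices yield distinct subcomplexes, the stabilizer of $X(\widehat{\ell})$ equals the stabilizer of the vertex $\widehat{\ell}$, namely $\Gamma(\ell)_{\widehat{\ell}} = \Gamma_{\widehat{\ell}}$. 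Writing $\coprod_{\widehat{\ell}} X(\widehat{\ell})$ as a union over the $\Gamma(\ell)$-orbits of non-negligible vertices (which form a $\Gamma(\ell)$-set by Proposition \ref{proposition:subcomplexfinvc}), the orbit of each representative $\widehat{\ell} \in \mathcal{T}'$ is then carried $\Gamma(\ell)$-equivariantly onto $\Gamma(\ell) \times_{\Gamma_{\widehat{\ell}}} X(\widehat{\ell})$ by $(\gamma, x) \mapsto \gamma \cdot x$; summing over $\mathcal{T}'$ produces $h$. The only things to check are that this map is well defined on the balanced product and a homeomorphism onto the (open-and-closed) orbit, both routine for the disjoint-union topology.

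For the model statements I would verify the defining fixed-point properties directly. Both $F(\widehat{\ell}) \cong \mathbb{R}$ (a cellulated line, by Corollary \ref{corollary:cellulated}) and $E(\widehat{\ell})$ are $\Gamma_{\widehat{\ell}}$-CW complexes as subcomplexes of the ambient models. For $F(\widehat{\ell}) = E_{\fin}(\Gamma_{\widehat{\ell}})$: a finite subgroup $K \leq \Gamma_{\widehat{\ell}}$ acts on $\widehat{\ell} \cong \mathbb{R}$ by isometries, hence fixes the centroid of an orbit, and its fixed set (a point or all of $\mathbb{R}$) is contractible; an infinite subgroup $K$ meets the finite-index subgroup $\langle \ell \rangle$ in an infinite, hence nontrivial, group of translations of $\widehat{\ell}$, so $\mathrm{Fix}_{\widehat{\ell}}(K) = \emptyset$. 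For $E(\widehat{\ell}) = E_{\vc}(\Gamma_{\widehat{\ell}})$: since $\Gamma_{\widehat{\ell}}$ is itself virtually cyclic, every subgroup $K$ lies in $\mathcal{VC}$, so it suffices to show $\mathrm{Fix}_{E(\widehat{\ell})}(K)$ is contractible for all $K$. Using $\mathrm{Fix}_{A \ast B}(K) = \mathrm{Fix}_{A}(K) \ast \mathrm{Fix}_{B}(K)$ (as in the proof of Proposition \ref{prop:EVC}) together with the fact that the bottom vertex $\widehat{\ell} \in \mathbb{R}^{2}_{\ell}$ is fixed by all of $\Gamma_{\widehat{\ell}}$, one obtains $\mathrm{Fix}_{E(\widehat{\ell})}(K) = \mathrm{Fix}_{F(\widehat{\ell})}(K) \ast \{\widehat{\ell}\}$, a cone, which is contractible.

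The main obstacle, such as it is, lies in the model verifications rather than in the homeomorphism: one must pin down the group-theoretic structure of $\Gamma_{\widehat{\ell}}$ (virtually cyclic, with translation part exactly $\langle \ell \rangle$) and then analyze correctly how finite versus infinite subgroups act on the line $\widehat{\ell}$. Once that structure is in hand, the $E_{\vc}$ claim collapses to the observation that the join of a space with a fixed point is a cone, and everything else is bookkeeping.
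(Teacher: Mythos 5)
Your proposal is correct and takes essentially the same route as the paper: the homeomorphism $h$ is obtained by identifying each $\Gamma(\ell)$-orbit of subcomplexes with the corresponding balanced product via $(\gamma, x) \mapsto \gamma \cdot x$, which is exactly what the paper's pair-of-quotient-maps argument (as in Proposition \ref{prop:Ghomeo}) formalizes. The only difference is one of detail: the paper dismisses the model statements as ``well-known'' since $F(\widehat{\ell})$ is a cellulated line and $E(\widehat{\ell})$ is its join with a point, whereas you verify the fixed-point conditions explicitly — a correct and harmless elaboration.
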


\begin{proof}
The argument is similar to the proof of Proposition \ref{prop:Ghomeo}. We will prove the proposition
in the case $X =E$, the other case being similar. Consider the commutative
diagram:

\[
\xymatrix{
\Gamma(\ell) \times \coprod_{ \widehat{\ell} \in \mathcal{T}'} E(\widehat{\ell}) 
 \ar[d]^{\pi_1}   \ar[dr]^{\pi_2} &\\ 
\coprod_{ \widehat{\ell} \in \mathcal{T}'}
\Gamma(\ell) \times_{\Gamma(\ell)_{\widehat{\ell}}} E(\widehat{\ell})
\ar[r]^{\hspace{28pt} h}  &
\coprod_{ \widehat{\ell}} E(\widehat{\ell})}
\] 
The space on top is a $\Gamma(\ell)$-space relative to the action $\gamma' \cdot (\gamma, x)
= (\gamma' \gamma, x)$.
We set $\pi_{1}(\gamma, x) = (\gamma, x)$ and $\pi_{2}(\gamma, x) = \gamma \cdot x$.     
As in the proof of Proposition \ref{prop:Ghomeo}, we will show that $\pi_1$ is constant on point inverses
of $\pi_2$, and $\pi_{2}$ is constant on point inverses of $\pi_{1}$. This will establish the existence of
the desired $\Gamma(\ell)$-homeomorphism $h$, since both $\pi_{1}$ and $\pi_{2}$ are quotient maps
that commute with the $\Gamma(\ell)$-action.

Choose $(\gamma, x) \in \coprod_{ \widehat{\ell} \in \mathcal{T}'}
\Gamma(\ell) \times_{\Gamma(\ell)_{\widehat{\ell}}} E(\widehat{\ell})$. We have
the equality 
$$\pi_{1}^{-1}(\gamma, x) = \{ (\gamma \gamma_{1}, \gamma_{1}^{-1} x) \mid \gamma_{1} \in 
\Gamma(\ell)_{\widehat{\ell}} \}.$$
It follows directly that $\pi_{2}(\pi_{1}^{-1}(\gamma, x)) = \{ \gamma \cdot x \}$, as required.

Now we choose an arbitrary $x \in \coprod_{ \widehat{\ell}} E(\widehat{\ell})$. 
We have
$$ \pi_{2}^{-1}(x) = \{ (\gamma, z) \mid \gamma \cdot z = x\}.$$
We choose two elements of the latter set, $(\gamma_{1}, z_{1})$ and $(\gamma_{2}, z_{2})$. It follows
directly that $\gamma_{2}^{-1} \gamma_{1} \cdot z_{1} = z_{2}$, so $z_{1}$ and $z_{2}$ are in
the same $\Gamma(\ell)$-orbit. Given the nature of the indexing set $\mathcal{T}'$, it must be
that both $z_{1}$ and $z_{2}$ are in $E(\widehat{\ell})$, for some non-negligible vertex $\widehat{\ell}
\in \mathbb{R}^{2}_{\ell}$. It now follows that 
$\gamma_{2}^{-1} \gamma_{1} \in \Gamma(\ell)_{\widehat{\ell}}$. Thus,
\begin{align*}
\pi_{1}(\gamma_{1}, z_{1}) &= (\gamma_{2} \gamma_{2}^{-1} \gamma_{1}, z_{1}) \\
&\sim (\gamma_{2}, \gamma_{2}^{-1} \gamma_{1} z_{1}) \\
&= (\gamma_{2}, z_{2}) \\
&= \pi_{1} (\gamma_{2}, z_{2})
\end{align*}
It follows that $\pi_{1}$ is constant on point inverses of $\pi_{2}$, as required. The existence of
the homeomorphism $h$ follows directly.

Finally, we note that $\Gamma(\ell)_{\widehat{\ell}} = \Gamma_{\widehat{\ell}}$ is an
infinite virtually cyclic group. Since $F(\widehat{\ell})$
is simply a cellulated line, and $E(\widehat{\ell})$ is the join of $F(\widehat{\ell})$ with a point, both
are well-known models for $E_{\mathcal{FIN}}(\Gamma_{\widehat{\ell}})$ and
$E_{\mathcal{VC}}(\Gamma_{\widehat{\ell}})$, respectively.

\end{proof}

\begin{remark}
We note that $\Gamma_{\widehat{\ell}}$ denotes the same subgroup of $\Gamma$
no matter whether we view $\widehat{\ell}$ as a vertex in $\mathbb{R}^{2}_{\ell}$
or as a line in $\mathbb{R}^{3}$. 
\end{remark}

\begin{proof}[Proof of Theorem \ref{theorem:splitting2}] Combining Proposition \ref{proposition:subcomplexfinvc}, Proposition \ref{proposition:borelfinvc}, and the fact  
that our equivariant generalized homology theory turns disjoint unions into direct sums, we obtain the following isomorphisms
\[
H_n^{\Gamma(\ell)}(E_{\fin}(\Gamma(\ell)); \mathbb{KZ}^{-\infty}) \cong \bigoplus _{\widehat{\ell} \in \mathcal{T}'} H_n^{\Gamma_{\widehat{\ell}}}(E_{\mathcal{FIN}}(\Gamma_{\widehat{\ell}});  \mathbb{KZ}^{-\infty}), \;\; \text{and}
\]
  \[
H_n^{\Gamma(\ell)}(E_{\vc_{\langle \ell \rangle}}(\Gamma(\ell));  \mathbb{KZ}^{-\infty}) \cong \bigoplus _{ \widehat{\ell}  \in \mathcal{T}'} H_n^{\Gamma_{\widehat{\ell}}}(E_{\mathcal{VC}}(\Gamma_{\widehat{\ell}});  \mathbb{KZ}^{-\infty}).
\]
We immediately get an identification of the cokernel of the relative assembly map $H_n^{\Gamma(\ell)}(E_{\fin}(\Gamma(\ell)); \mathbb{KZ}^{-\infty}) \rightarrow H_n^{\Gamma(\ell)}(E_{\vc_{\langle \ell \rangle}}(\Gamma(\ell));  \mathbb{KZ}^{-\infty})$ with the direct sum of the cokernels of the relative assembly maps 

$$H_n^{\Gamma_{\widehat{\ell}}}(E_{\mathcal{FIN}}(\Gamma_{\widehat{\ell}});  \mathbb{KZ}^{-\infty}) \rightarrow H_n^{\Gamma_{\widehat{\ell}}}(E_{\mathcal{VC}}(\Gamma_{\widehat{\ell}});  \mathbb{KZ}^{-\infty}).$$  
Since $\Gamma_{\widehat{\ell}} \in \vc$, then $E_{\mathcal{VC}}(\Gamma_{\widehat{\ell}})= \{\ast\}$, and the summands in Proposition \ref{proposition:splitting1} are
\[
H_n^{\Gamma(\ell)}(E_{\fin}(\Gamma(\ell))\rightarrow
E_{\vc_{\langle \ell \rangle}}(\Gamma(\ell))) \cong \bigoplus _{ \widehat{\ell} \in \mathcal{T}'} H_n^{\Gamma_{\widehat{\ell}}}(E_{\mathcal{FIN}}(\Gamma_{\widehat{\ell}}) \rightarrow \ast).
\]

Finally, combining these observations with Proposition \ref{proposition:splitting1} 
completes the proof.
\end{proof}

\vskip 20 pt
\section{Fundamental Domains for the Maximal Split Crystallographic Groups} \label{section:fundamentaldomains}

The classification of split three-dimensional crystallographic groups from Theorem \ref{biglist}
shows that seven of the groups contain all of the others as subgroups. For $i=1, \ldots, 7$, we let
$\Gamma_{i} = \langle L_{i}, H_{i} \rangle$, where $L_{i}$ is the $i$th lattice (in the order that the lattices are listed
in Table \ref{splitcrystallographicgroups}) and $H_{i}$ is the maximal point group to be paired with $L_{i}$. For instance,
$$ \Gamma_{4} = \left\langle \left\langle \frac{1}{2}(\mbf{x} + \mbf{z}), \mbf{y}, \mbf{z} 
\right\rangle, D_{2}^{+} \times (-1) \right\rangle.$$ 

For the computations of $K$-groups in subsequent sections, we 
will need to find fundamental polyhedra for  the groups $\Gamma_{i}$.  
We review a case of Poincare's fundamental polyhedron theorem in Subsection \ref{specialcase}, describe equivariant
cell structures and cell stabilizers in Subsection \ref{cell}, 
and then describe the seven fundamental domains in the remaining subsections.

\subsection{Special Case of Poincare's Fundamental Polyhedron Theorem for $\mathbb{R}^{n}$}\label{specialcase}

In this subsection, we collect a number of results from \cite{Ra94}.  We state each result
only for $\mathbb{R}^{n}$ and only for convex compact polyhedra, although usually  
the corresponding theorem (or definition) in \cite{Ra94}
is more general.  All page citations in the current subsection are from \cite{Ra94}, unless otherwise noted.

\begin{definition}
If $P \subseteq \mathbb{R}^{n}$ is the set of solutions to a system of finitely many linear inequalities, and $P$ is
compact, then we say that $P$ is a \emph{convex compact polyhedron}.  Suppose that $P$ is $m$-dimensional.  
We let $\partial P$ denote the topological
boundary of $P$ in the unique $m$-plane $\langle P \rangle$ containing $P$. The
\emph{interior} of $P$ is $P - \partial P$.  A \emph{side}
of a convex compact polyhedron is a non-empty maximal convex subset of $\partial P$.  If $P$ has dimension $m>0$, then
each side of $P$ is a convex compact polyhedron of dimension $m-1$.  A \emph{ridge} of $P$ is a side of a side of $P$.       
 
(Notes:  our definition of convex compact polyhedra combines Ratcliffe's definition of convex polyhedra (pg. 205) with
his characterization of convex compact polyhedra (Theorem 6.3.7, pg. 209).  The definitions of $\partial P$, side, and ridge
occur on pages 199, 202, and 207, respectively.)
\end{definition}

\begin{definition}
Let $P$ be a convex compact $n$-dimensional 
polyhedron in $\mathbb{R}^{n}$.  A \emph{side-pairing} (cf. $G$-side-pairing, pg. 694) for
$P$ is a set
$$ \Phi = \{ \phi_{S} \in \iso(\mathbb{R}^{n}) \mid S \in \mathcal{S} \}$$
indexed by the collection $\mathcal{S}$ of all sides of $P$ such that, for each $S \in \mathcal{S}$,
\begin{enumerate}
\item there is a side $S'$ of $P$ such that $\phi_{S}(S') = S$;
\item if $\phi_{S}(S') = S$, then the isometries $\phi_{S}$ and $\phi_{S'}$
satisfy $\phi_{S'} = \phi_{S}^{-1}$;
\item the polyhedra $P$ and $\phi_{S}(P)$ satisfy $P \cap \phi_{S}(P) = S$.
\end{enumerate}
The pairing of side points by elements of $\Phi$ generates an equivalence relation on $\partial P$.
The equivalence classes are called \emph{cycles} of $\Phi$ (pg. 694).  We let $[x]$ denote the cycle
containing $x$.  We say that the cycle $[x]$ is a \emph{ridge cycle} (pg. 694) if some (equivalently, any)
representative of $[x]$ lies in the interior of a ridge of $P$.  

Let $[x] = \{ x_{1}, \ldots, x_{m} \}$ be a finite ridge cycle of $\Phi$.  
Each $x_{i}$ is contained in exactly two sides of $P$ (Theorem 6.3.6, pg. 207), so
$x_{i}$ is paired to at most two other points of $[x]$ for each $i$.  Therefore,
we can reindex the set $\{ x_{1}, \ldots, x_{m} \}$ such that
$$ x_{1} \simeq x_{2} \ldots \simeq x_{m},$$
where, for $i=1, \ldots, m-1$, $x_{i} \simeq x_{i+1}$ if and only if some element $\phi (x_{i}) = x_{i+1}$ for
some $\phi \in \Phi$.
The ridge cycle $[x]$ is said to be \emph{dihedral} (pg. 695) if the  sides $S_{1}$ and $S_{m}$
 of $P$ are 
such that $x_{i} \in S_{i}$ and $\phi_{S_{i}}$ is the reflection of $\mathbb{R}^{n}$ in
$\langle S_{i} \rangle$ for $i=1,m$. (Note that if $\phi_{S_{i}}$ is a reflection
in $S_{i}$, then $x_{i}$ is paired with at most one other point by the relation
$\simeq$, so such points $x_{i}$ can only appear at the beginning or
end of the sequence $x_{1}, \ldots, x_{m}$.)
Otherwise, $[x]$ is said to be \emph{cyclic} (pg. 695). 

If $S_{1}$ and $S_{2}$ are two sides of a convex polyhedron $P$, and the vectors
$N_{1}$ and $N_{2}$ are the outward-pointing unit normal vectors, then the \emph{dihedral angle}
between $S_{1}$ and $S_{2}$ is
$$ \theta( S_{1}, S_{2} ) = \pi - \arccos (N_{1} \cdot N_{2}).$$
 The \emph{dihedral angle sum} (pg. 695) of the ridge cycle $[x]$
is 
$$ \theta[x] = \theta_{1} + \ldots + \theta_{m},$$
where $\theta_{i}$ is the dihedral angle between the two sides containing $x_{i}$.

The side-pairing $\Phi$ is \emph{subproper} (pg. 695) if and only if each cycle of $\Phi$ is finite, each
dihedral ridge cycle of $\Phi$ has dihedral angle sum an integral submultiple of $\pi$, and each
cyclic ridge cycle of $\Phi$ has dihedral angle sum an integral submultiple of $2\pi$.
\end{definition}

\begin{definition} \label{definition:fundamentaldomain}
A subset $R$ of $\mathbb{R}^{n}$ is a \emph{fundamental domain} (pg. 234) for a group $\Gamma \leq Isom(\mathbb{R}^{n})$
if and only if
\begin{enumerate}
\item the set $R$ is open in $\mathbb{R}^{n}$;
\item the members of $\{ gR \mid g \in \Gamma \}$ are pairwise disjoint;
\item $\mathbb{R}^{n} = \bigcup \{ g \overline{R} \mid g \in \Gamma \}$, and
\item $R$ is connected.
\end{enumerate}
We say that $R$ is \emph{locally finite} (pg. 237) if the collection $\{ g \overline{R} \mid g \in \Gamma \}$
is locally finite, i.e., if, for every $x \in \mathbb{R}^{n}$, there is an open ball around $x$ meeting
only finitely many members of $\{ g \overline{R} \mid g \in \Gamma \}$.

A \emph{convex compact fundamental polyhedron} (pg. 247) for a discrete group $\Gamma \leq \iso(\mathbb{R}^{n})$ is
a convex compact polyhedron $P$ such that the interior of $P$ is a locally finite fundamental domain for $\Gamma$.
We say that such a $P$ is \emph{exact} (pg. 250) if and only if for each side $S$ of $P$ there is an element $g \in \Gamma$
such that $S = P \cap gP$.  (If $P$ is exact, then the element $g$ is unique (Theorem 6.6.3, pg. 251).)   
\end{definition}

We can now state the relevant special case of Poincar\'{e}'s fundamental polyhedron theorem.

\begin{theorem}\label{poincare}
(\cite[pg. 711]{Ra94})
If $\Phi$ is a subproper side-pairing of a convex compact polyhedron $P \subseteq \mathbb{R}^{n}$, then the group $\Gamma$
generated by $\Phi$ is discrete, and $P$ is an exact convex compact fundamental polyhedron for $\Gamma$.
\end{theorem}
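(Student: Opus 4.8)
The theorem is quoted verbatim from Ratcliffe \cite{Ra94}, so in principle one may simply cite it; but here is how I would prove it in this Euclidean special case, following the developing-map strategy that underlies Poincar\'{e}'s theorem. The plan is to build an abstract space by gluing copies of $P$ according to $\Phi$, to endow it with a flat metric, and then to show that the resulting developing map into $\mathbb{R}^{n}$ is an isometry; discreteness and the fundamental-domain property are then read off from the induced tiling.

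First I would form the abstract group $\widehat{\Gamma}$ presented by generators $\{\phi_{S} \mid S \in \mathcal{S}\}$ subject to the relations $\phi_{S'}=\phi_{S}^{-1}$ (axiom (2) of a side-pairing) together with one relation for each ridge cycle, encoding that traversing the cycle the prescribed number of times returns to the start. Using $\widehat{\Gamma}$ as an index set, I would glue the copies $\{g\}\times P$ (for $g\in\widehat{\Gamma}$) by identifying the side $S$ of $\{g\}\times P$ with the side $S'=\phi_{S}^{-1}(S)$ of $\{g\phi_{S}\}\times P$ via the isometry $\phi_{S}$. Call the quotient $M$, equipped with the path (length) metric induced from the Euclidean metric on each copy of $P$.

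The heart of the argument is to check that $M$ is a flat manifold, i.e.\ locally isometric to $\mathbb{R}^{n}$. Interior points of cells and interior points of paired sides are immediate from axioms (1)--(3). The essential case is the codimension-two ridges: around the interior of a ridge the glued copies of $P$ close up, and the subproper hypothesis---that each dihedral ridge cycle has dihedral angle sum an integral submultiple of $\pi$ and each cyclic ridge cycle an integral submultiple of $2\pi$---is exactly what forces these finitely many copies to tile a full Euclidean neighborhood of the ridge with neither angular excess nor defect. This is the step I expect to be the main obstacle, both because one must verify that the cycle relations imposed in $\widehat{\Gamma}$ correspond precisely to the geometric closing-up, and because the lower-dimensional faces (codimension $\geq 3$) then require an inductive argument: one shows the link of each such face is a sphere, using that all of its sub-ridges already contribute the correct total angle.

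Finally I would define the developing map $D\colon M \to \mathbb{R}^{n}$ by sending the base copy $\{1\}\times P$ by the identity and continuing across each glued side $S$ by the isometry $\phi_{S}$; the defining relations of $\widehat{\Gamma}$ guarantee $D$ is well defined, and by construction it is a local isometry. Since each copy of $P$ is compact, $M$ is complete, so $D$ is a covering onto the complete, simply connected space $\mathbb{R}^{n}$, hence an isometry. Transporting the tiling of $M$ by copies of $P$ through $D$ yields a $\widehat{\Gamma}$-tiling of $\mathbb{R}^{n}$ whose action is by the isometries generated by $\Phi$; this shows that the natural map $\widehat{\Gamma}\to\langle\Phi\rangle=\Gamma$ is an isomorphism, that $\Gamma$ acts properly discontinuously (hence is discrete) with $\mathrm{int}(P)$ a locally finite fundamental domain, and that $P$ is a convex compact fundamental polyhedron. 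Exactness is then immediate: for each side $S$, axiom (3) gives $P\cap\phi_{S}(P)=S$, so $S=P\cap gP$ with $g=\phi_{S}\in\Gamma$.
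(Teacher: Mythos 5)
Your outline is a correct sketch of the standard developing-map proof of Poincar\'{e}'s theorem, but it is a genuinely different route from the one the paper takes. The paper does not re-prove the theorem at all: it observes that Ratcliffe's general statement carries one extra hypothesis --- that the $(\mathbb{R}^{n}, \iso(\mathbb{R}^{n}))$-orbifold $M$ obtained by gluing the sides of $P$ via $\Phi$ is complete --- and then disposes of that hypothesis by citing Ratcliffe's Theorem 13.4.2, which says completeness is automatic for convex compact Euclidean polyhedra. So the entire content of the paper's proof is the reduction to, and discharge of, the completeness condition. Your approach instead rebuilds the machinery from scratch: the abstractly presented group $\widehat{\Gamma}$, the glued space $M$, flatness at ridges via the angle-sum condition, and the developing map. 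What your route buys is self-containedness and a clear view of where subproperness enters (closing up around codimension-two ridges); what it costs is that the hardest steps are left as assertions --- in particular the inductive treatment of links of codimension $\geq 3$ faces, and the claim that $M$ is complete ``since each copy of $P$ is compact.'' That last claim is exactly the nontrivial point the paper isolates: completeness does not follow from compactness of the individual tiles alone (one needs, e.g., a uniform lower bound on how far a rectifiable path must travel inside each tile it crosses, which is the substance of Ratcliffe's Theorem 13.4.2). If you intend your sketch as a proof rather than a pointer to the literature, that is the step you would need to flesh out.
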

\begin{proof}
The general statement in \cite{Ra94} has an additional condition in the hypothesis --
namely, that the $(\mathbb{R}^{n}, \iso(\mathbb{R}^{n}))-$orbifold $M$ obtained from $P$ by gluing
together the sides of $P$ by $\Phi$ is complete.  For convex compact Euclidean polyhedra $P$, this condition
is automatically satisfied under the given hypothesis (Theorem 13.4.2, pg. 704).
\end{proof}

\begin{corollary} \label{coxeter}
If $P$ is a convex compact $n$-dimensional polyhedron in $\mathbb{R}^{n}$, and the dihedral angle between any pair
of adjacent sides of $P$ is a submultiple of $\pi$, then the group $\Gamma$ generated by the reflections
in all of the sides of $P$ is discrete, and $P$ is an exact convex compact fundamental polyhedron
for $\Gamma$.
\end{corollary}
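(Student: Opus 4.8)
The plan is to exhibit an explicit side-pairing $\Phi$ of $P$ whose associated group is exactly the reflection group $\Gamma$, to verify that $\Phi$ is subproper, and then to quote Theorem \ref{poincare}. For each side $S$ of $P$ let $\phi_S \in \iso(\mathbb{R}^n)$ be the reflection in the hyperplane $\langle S\rangle$ spanned by $S$, and set $\Phi = \{\phi_S \mid S \in \mathcal{S}\}$, where $\mathcal{S}$ is the collection of all sides of $P$. The group generated by $\Phi$ is visibly the group generated by the reflections in all of the sides of $P$, so once the hypotheses of Theorem \ref{poincare} are checked, its conclusion is precisely the assertion of the corollary.

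First I would check that $\Phi$ is a side-pairing. Since $\phi_S$ fixes $\langle S\rangle$ pointwise, it fixes $S$, so $\phi_S(S) = S$ and condition (1) holds with $S' = S$; condition (2) holds because each $\phi_S$ is an involution, so $\phi_{S'} = \phi_S = \phi_S^{-1}$. For condition (3) I would use convexity: because $S$ is a side, $P$ lies in one of the two closed half-spaces bounded by $\langle S\rangle$ and meets $\langle S\rangle$ in exactly $S$; then $\phi_S(P)$ lies in the opposite half-space and meets $\langle S\rangle$ in $\phi_S(S) = S$, whence $P \cap \phi_S(P) \subseteq \langle S\rangle$ and the intersection is exactly $S$.

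The key step is the analysis of the cycles of $\Phi$. Because each $\phi_S$ fixes $S$ pointwise, and the equivalence relation is generated by relating a point $y$ of a paired side $S' = S$ to $\phi_S(y) = y$, every boundary point is related only to itself; hence each cycle of $\Phi$ is a single point, and in particular every cycle is finite. For a ridge $R = S_1 \cap S_2$ and a point $x$ in its interior (which lies in exactly the two sides $S_1, S_2$ by Theorem 6.3.6 of \cite{Ra94}), both of these sides carry reflection pairings, so the associated ridge cycle is dihedral and of length one; its dihedral angle sum is just the dihedral angle $\theta(S_1, S_2)$ between the two adjacent sides, which is a submultiple of $\pi$ by hypothesis. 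Since every pairing map is a reflection, no ridge cycle can be cyclic, so that condition is vacuous. Therefore $\Phi$ is subproper.

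Finally I would apply Theorem \ref{poincare} to conclude that $\Gamma = \langle \Phi\rangle$ is discrete and that $P$ is an exact convex compact fundamental polyhedron for $\Gamma$, as claimed. I expect the main obstacle to be the bookkeeping around the ridge cycles: specifically, confirming against Ratcliffe's indexing conventions that in the all-reflection case each ridge cycle is genuinely dihedral with dihedral angle sum equal to the single dihedral angle at the ridge, and that no cyclic ridge cycle can occur. Everything else follows directly from the convexity of $P$ and the fact that reflections are involutions fixing their sides pointwise.
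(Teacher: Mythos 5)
Your proposal is correct and follows essentially the same route as the paper: take $\Phi$ to be the set of reflections in the sides, observe that every cycle is a singleton so each ridge cycle is dihedral with angle sum the dihedral angle at that ridge (a submultiple of $\pi$ by hypothesis), and apply Theorem \ref{poincare}. The paper's proof is just a terser version of the same argument, leaving the side-pairing verification and the cycle analysis to the reader.
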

\begin{proof}
One easily checks that the collection of reflections $\Phi$ is a side-pairing.  Each cycle in this side-pairing
consists of a single point $x \in P$, so the dihedral angle sum of any ridge cycle is a submultiple of
$\pi$ by the assumption about $P$.  It follows that $\Phi$ is a subproper side-pairing, and one can apply
Theorem \ref{poincare}.
\end{proof}

\subsection{Cell Structures and Stabilizers} \label{cell}

\subsubsection{Standard Cellulations and Equivariant Cell Structures}

\begin{definition} \label{cellulation}
An \emph{open cell of dimension $n$} is a
space homeomorphic to $(0,1)^{n}$, or, if $n=0$, to a singleton set.
If $X$ is a Hausdorff space, then a \emph{CW structure} on $X$
\cite[Proposition A.2]{Hatcher}
is a collection $\mathcal{C}$ of open cells in $X$ such that
\begin{enumerate}
\item the elements of $\mathcal{C}$ are pairwise disjoint, and their union is $X$;
\item the boundary $\partial (e) = \bar{e} - e$ of an element $e \in \mathcal{C}$
is contained in the union of elements from $\mathcal{C}$ of lower dimension, and
\item a subset $U$ of $X$ is closed if and only if $U \cap e$ is closed as a subset of $e$,
for all $e \in \mathcal{C}$.
\end{enumerate}
\end{definition}   

\begin{definition} \label{subdivision}
Let $P$ be a convex compact three-dimensional polyhedron in $\mathbb{R}^{3}$, and let $\Phi$ be a subproper
side-pairing of $P$.  If all cycles $[x]$ of $\Phi$ meet the interiors of ridges and sides of $P$ in at most one point, 
then the
\emph{standard cellulation} of $P$ is the set whose members are vertices of $P$, interiors of ridges of $P$,
interiors of sides of $P$, and the interior of $P$ itself.

If a cycle $[x]$ meets the interior of a ridge or a side in two points, then we call this ridge or side \emph{bad}. (Note that, under the current hypotheses, 
it is impossible for a cycle $[x]$ to meet the interior of a ridge or side in more than two points.) 
If $P$ has bad ridges or bad sides, then we divide each bad ridge or side exactly in half to arrive at the standard
cellulation of $P$.  The operation of subdivision is self-explanatory in the case of bad ridges.  If a side $S$
is bad, then we choose two points $x, y \in S$ such that both are in the same
cycle, and both lie in the interior of $S$. We let $\ell$ denote the perpendicular 
bisector of $[x,y]$ in $\langle S \rangle$. The isometry 
$\phi_{S} \in \mathrm{Isom}(\mathbb{R}^{3})$ maps the side $S$
to itself, and interchanges $x$ and $y$. It follows that $\phi_{S}(\ell) = \ell$.
Now either $\phi_{S}$ fixes $\ell$ pointwise
or $\phi_{S}$ reverses the orientation of $\ell$. In the former case,
we can simply subdivide the side $S$ along $\ell$. In the latter case,
we must subdivide $S$ along $\ell$ and
also introduce a vertex at the midpoint of $\ell \cap S$.
\end{definition}

\begin{theorem} \label{cells}
(Equivariant Cell Structures) 
If $\Phi$ is a subproper side-pairing of a convex compact three-dimensional polyhedron $P \subseteq \mathbb{R}^{3}$, and 
$\Gamma = \langle \Phi \rangle$, then the standard cellulation $\mathcal{C}$ of $P$ extends to a
$\Gamma$-equivariant CW structure $\widehat{\mathcal{C}}$ on all of $\mathbb{R}^{3}$.  

If $g \in \Gamma$ leaves a cell $e \in \widehat{\mathcal{C}}$ invariant, then $g$ fixes $e$ pointwise.   
\end{theorem}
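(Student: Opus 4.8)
The plan is to build $\widehat{\mathcal{C}}$ as the set of all $\Gamma$-translates $\{g\cdot e : g\in\Gamma,\ e\in\mathcal{C}\}$ of the standard cells of $P$, and then to verify the three axioms of Definition \ref{cellulation} together with the pointwise-fixing assertion. First I would invoke Theorem \ref{poincare}: since $\Phi$ is subproper, $\Gamma=\langle\Phi\rangle$ is discrete and $P$ is an exact convex compact fundamental polyhedron. Hence the translates $\{g\overline{P}\}$ cover $\mathbb{R}^{3}$, have pairwise disjoint interiors, form a locally finite family, and $\mathrm{Stab}_{\Gamma}(\mathrm{int}\,P)=\{1\}$ (distinct tiles have disjoint interiors). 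Since each $g\cdot\mathcal{C}$ is a cell decomposition of $g\overline{P}$ and $\Gamma$ permutes the translated cells, $\widehat{\mathcal{C}}$ is $\Gamma$-invariant and restricts to $\mathcal{C}$ on $\overline{P}$; covering of $\mathbb{R}^{3}$ is then immediate, and the local finiteness of $\{g\overline{P}\}$ supplies both the closure-finiteness and the weak topology required by condition (3).

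The heart of the matter, and what I expect to be the main obstacle, is the second assertion, which I would prove as a lemma: if $g\cdot e=e$ then $g|_{e}=\mathrm{id}$. By $\Gamma$-equivariance I may assume $e\in\mathcal{C}$, and I would argue by $\dim e$. If $\dim e=3$ then $e=\mathrm{int}\,P$ and $g\cdot\mathrm{int}\,P=\mathrm{int}\,P$ forces $g=1$. If $\dim e\le 2$, then $g$ restricts to an isometry of the affine hull $\langle\overline{e}\rangle$ carrying the compact convex cell $\overline{e}$ to itself; suppose $g|_{e}\neq\mathrm{id}$, so there is $p\in e$ with $q:=g(p)\in e$ and $q\neq p$. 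When $\dim e=2$, the cell lies in the plane $\langle S\rangle$ of a side $S$; since $g(\langle S\rangle)$ meets $\langle S\rangle$ in a two-dimensional set the two planes coincide, so $g$ preserves $\langle S\rangle$. If $g$ preserved the half-space containing $P$ it would fix $\mathrm{int}\,P$ near $\overline{e}$ and hence lie in $\mathrm{Stab}(\mathrm{int}\,P)=\{1\}$; otherwise $g$ swaps the half-spaces and, by exactness, must be the side-pairing element carrying $P$ to the unique tile across $\langle S\rangle$, forcing $S=S'$ and making $\phi_{S}$ a nontrivial involution of $S$. In that case the cycle of $p$ meets $\mathrm{int}\,S$ in the two points $p\neq\phi_{S}(p)$, so $S$ is \emph{bad} in the sense of Definition \ref{subdivision}; but then $e$ is one of the pieces cut from $S$ along the fixed locus $\ell$ of $\phi_{S}|_{S}$ (with an extra vertex inserted when $\phi_{S}$ reverses $\ell$), and by construction $\phi_{S}$ either interchanges the two two-dimensional pieces or fixes pointwise the cells lying in $\ell$ --- contradicting $g\cdot e=e$ with $g|_{e}\neq\mathrm{id}$. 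The case $\dim e=1$ is analogous: a nontrivial $g$ must reverse the ridge $R$, again identifying two interior points of $R$ and so making $R$ bad, and halving $R$ guarantees that no surviving $1$-cell is reversed. The genuinely delicate point, which I would treat with care, is excluding an element that swaps an original vertex with a subdivision midpoint across a half-ridge; I would dispose of it using the cycle and dihedral-angle bookkeeping underlying subproperness, which shows that a single subdivision already separates all identified interior points of each ridge and side.

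Granting this lemma, I would complete the CW axioms by showing that distinct open cells of $\widehat{\mathcal{C}}$ are disjoint and that each closed cell is a union of lower cells. For disjointness it suffices to prove that $e\cap(k\cdot e')\neq\varnothing$ with $e,e'\in\mathcal{C}$ and $k\in\Gamma$ implies $e=k\cdot e'$; such an intersection forces $\overline{P}\cap k\overline{P}\neq\varnothing$, and I would use Ratcliffe's description of the overlap of adjacent exact tiles to see that the two cellulations present on $\overline{P}\cap k\overline{P}$ --- the one inherited from $\overline{P}$ and the image under $k$ of the cellulation of the matching face of $\overline{P}$ --- in fact coincide. This coincidence is exactly what the bad-ridge/bad-side subdivision is engineered to secure: a side that is not self-paired carries its standard cellulation isometrically under the relevant element of $\Phi$, while the subdivisions of bad faces are defined $\Gamma$-equivariantly (midpoints map to midpoints), and the pointwise-fixing lemma rules out any cell being folded onto itself. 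Two meeting cells of one and the same cellulation of the overlap are equal, giving $e=k\cdot e'$. Finally, the closure of $g\cdot e$ is the $g$-image of $\overline{e}$, whose topological boundary consists of lower-dimensional faces of $P$ together with their subdivisions, each again a cell of $\widehat{\mathcal{C}}$; this verifies condition (2) and shows $\widehat{\mathcal{C}}$ is a $\Gamma$-equivariant CW structure on $\mathbb{R}^{3}$.
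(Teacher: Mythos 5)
Your overall architecture matches the paper's: invoke Theorem \ref{poincare} to get an exact tessellation $\{gP\}$, take $\widehat{\mathcal{C}}=\Gamma\cdot\mathcal{C}$, and reduce everything to a dimension-by-dimension analysis of how a translate $g\overline{P}$ can meet a cell of $\mathcal{C}$, using exactness for sides and the structure of the tessellation around a ridge for $1$-cells. The only structural difference is the order: you prove the pointwise-fixing statement first and derive disjointness from it, whereas the paper proves pairwise disjointness of the translated cells first and obtains pointwise fixing as a one-line corollary of Ratcliffe's identity $[x]=P\cap\Gamma\cdot x$ together with the fact that each cycle meets each cell of $\mathcal{C}$ at most once. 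That inversion is harmless, but it concentrates all the difficulty in exactly the two places you leave unresolved.

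The gap is real and you have correctly located it, but the tool you propose for closing it will not work. Dihedral-angle sums and subproperness tell you nothing about whether an element of $\Gamma$ can reverse a half-ridge (swapping an original vertex with the subdivision midpoint), nor about whether the subdivision points of a bad ridge $R_{1}$ of $P$ are carried to the subdivision points of $R=g\cdot R_{1}$; both of these are statements about the combinatorics of the tessellation near a ridge, not about angle sums. What actually closes them is the pair of facts the paper cites from Ratcliffe: $[x]=P\cap\Gamma\cdot x$ (Theorem 6.7.5) and the ridge-cycle theorem (Theorem 6.7.6), which says the polyhedra of an exact tessellation containing a ridge $R$ form a cycle having $R$ as a common ridge. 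From the latter, any $g\in\Gamma$ whose tile meets $\mathrm{int}(R)$ has $R$ as a ridge, so any $g$ preserving a half of $R$ preserves all of $R$ and restricts to an isometry of the segment $R$; such an isometry carrying one half to itself must be the identity on $R$ (a reversal would swap the halves, and it cannot carry an endpoint of $R$ to the midpoint). The same two facts show that the involution identifying the two cycle points in the interior of a bad ridge is induced by a single element reversing $R$ about its midpoint, so that "badness" and the midpoint are preserved under the identifications $g\cdot R_{1}=R$ --- this is precisely the step the paper proves explicitly and that you assert without justification when you say "midpoints map to midpoints." With those two substitutions your argument goes through; as written, the proof is not complete.
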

 
\begin{proof}
Theorem \ref{poincare} shows that $P$ is an exact convex compact fundamental polyhedron for the action
of $\Gamma$ on $\mathbb{R}^{n}$, and that $\Gamma$ is discrete. Ratcliffe
 \cite[Theorem 6.7.1]{Ra94} implies that
$$ \mathcal{P} = \{ gP \mid g \in \Gamma \}$$
is an exact tessellation of $\mathbb{R}^{n}$.  This means \cite[page 251]{Ra94}
that $\mathcal{P}$ satisfies the following conditions:
\begin{enumerate}
\item if $g_{1}, g_{2} \in \Gamma$, and $g_{1} \neq g_{2}$, then the interiors of
$g_{1}P$ and $g_{2}P$ are disjoint;
\item the union of the polyhedra in $\mathcal{P}$ is $\mathbb{R}^{n}$;
\item the collection $\mathcal{P}$ is locally finite, and
\item each side of a polyhedron in $\mathcal{P}$ is a side of exactly
two polyhedra $P$ and $Q$ in $\mathcal{P}$.  (This last condition is the definition
of exactness.)
\end{enumerate}  

It is not difficult to see that $\Gamma \cdot \mathcal{C}$ will be an equivariant CW complex structure
on all of $\mathbb{R}^{n}$ if and only if the elements of $\Gamma \cdot \mathcal{C}$ are pairwise
disjoint.  We therefore suppose that two cells $g_{1} \cdot e_{1}$, $g_{2} \cdot e_{2}$ have a point in common, for
some $e_{1}, e_{2} \in \mathcal{C}$.  We wish to show that $g_{1} \cdot e_{1} = g_{2} \cdot e_{2}$.  
We can clearly assume, without loss of generality, that $g_{2} = 1$ and that $\mathrm{dim} \, e_{1} \leq \mathrm{dim} \, e_{2}$.  

If $e_{2}$ is a three-dimensional open cell (i.e., the interior of $P$), 
then $g_{1} \cdot e_{1}$ is contained in the closure
of $g_{1} \cdot e_{2}$.  It follows that $g_{1} \cdot e_{2}$ and $e_{2}$ must have a point in common, so
$g_{1} = 1$ by property (1) of tessellations.  This implies that $e_{1}$ and $e_{2}$ have a point in common,
which can only mean that $e_{1} = e_{2}$, since $\mathcal{C}$ is a CW structure.

If $e_{2}$ is a two-dimensional open cell or a one-dimensional open cell contained in the interior of a bad side,
then there is a unique side $S$ of $P$ containing $e_{2}$.  We have
$e_{2} \subseteq \mathrm{int} \left( P \cup \phi_{S}(P)\right)$.  Since distinct translates of $P$ can meet only
in their boundaries, and $(g_{1} P) \cap \, \mathrm{int}(S) \neq \emptyset$ by our assumption, 
we must have $g_{1} = \phi_{S}$ or $g_{1} = 1$, and we can assume that $g_{1} = \phi_{S}$. The
isometry $\phi_{S}$ restricts to a bijection from $S'$ to $S$, where $S'$ is a side of $P$, and we allow the possibility
that $S' = S$.  Since, by the construction of $\mathcal{C}$,
the map $\phi_{S}: S' \rightarrow S$ is a bijection mapping cells of $\mathcal{C}$ to cells
of $\mathcal{C}$, the only possibility is that $e_{1} = \phi_{S}^{-1}(e_{2})$, so $g_{1} \cdot e_{1} = e_{2}$.

Now suppose $e_{2}$ is a one-dimensional open cell contained in a unique ridge $R$ of $P$. 
Ratcliffe \cite[Theorem 6.7.6]{Ra94} proves 
that if $R$ is a ridge of a polyhedron $P$ in an exact tessellation $\mathcal{P}$ 
of $\mathbb{R}^{3}$, then the set of all polyhedra in $\mathcal{P}$ containing $R$ forms a cycle whose intersection
is $R$.  We avoid reproducing the definition of a cycle of polyhedra here (see
\cite[pg. 256]{Ra94}, but this theorem 
implies that
$R$ is a ridge of each of the polyhedra in the cycle, and that the interior of $R$ lies in the interior of the union
of the polyhedra in the cycle.  It follows that there is some ridge $R_{1}$ of $P$ such that $e_{1} \subseteq R_{1}$ and
$g_{1} \cdot R_{1} = R$.

We can conclude that $g_{1} \cdot e_{1} = e_{2}$ provided that $R_{1}$ is a bad ridge if and only if $R$ is bad as well.
Suppose that $R_{1}$ is a bad ridge.  This means that there is $x \in R_{1}$ such that $[x]$ meets the interior of $R_{1}$
in two points, say $x$ and $x'$.  It follows that $g_{1} \cdot [x]$ meets the interior of $R$ in two points, $g_{1} \cdot x$
and $g_{1} \cdot x'$.  Theorem 6.7.5 from \cite{Ra94} says that $[x] = P \cap \Gamma x$, so 
$g_{1} \cdot x, g_{1} \cdot x' \in [x]$.  It follows that $R$ is bad.  The converse is proved in the same way.  It follows
that $g_{1} \cdot e_{1} = e_{2}$.

Finally, there is nothing to prove if $e_{2}$ is $0$-dimensional.

The final statement is an easy consequence of the fact that each cycle $[x]$ meets a given cell of $\mathcal{C}$ in at most
one point, and the fact that $[x] = P \cap \Gamma \cdot x$.
\end{proof}

\subsubsection{Computation of Cell Stabilizers and Negligible Groups}

We will soon want to compute the stabilizer groups $\Gamma_{v}$, where
$\Gamma$ is a split crystallographic group and $v \in \mathbb{R}^{3}$. The 
following lemma affords an effective procedure for computing such groups. The
proof is elementary and will be omitted.

\begin{lemma} \label{pointstab}
Assume that $\Gamma$ is a split crystallographic group: 
$\Gamma = \langle L, H \rangle$.  Let
$\pi: \Gamma \rightarrow H$ be the natural projection into the point group $H$.
\begin{enumerate}
\item $\pi ( \Gamma_{v} ) = \{ h \in H \mid v - hv \in L \}$, and
\item the map $\pi: \Gamma_{v} \rightarrow H$ is injective.  Thus, we can uniquely
recover $\Gamma_{v}$ from $\pi (\Gamma_{v})$.
\end{enumerate} \qed
\end{lemma}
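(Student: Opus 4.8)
The plan is to use the semidirect-product description $\Gamma = \langle L, H \rangle = L \rtimes H$ furnished by the splitting, under which every $\gamma \in \Gamma$ has a unique normal form as the affine isometry $\gamma(x) = hx + w$, where $h = \pi(\gamma) \in H$ is the linear part and $w \in L$ is a lattice translation. First I would record two elementary consequences of this normal form: (i) applying $\gamma$ sends $v$ to $hv + w$, so the fixed-point condition $\gamma v = v$ is simply $w = v - hv$; and (ii) the kernel of $\pi \colon \Gamma \to H$ is exactly the subgroup of pure translations, namely $L$ itself, since $\pi(\gamma) = 1$ forces $h = 1$ and hence $\gamma$ is translation by some $w \in L$.

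For part (1) I would argue by double inclusion. If $\gamma \in \Gamma_{v}$, write $\gamma(x) = hx + w$ with $h = \pi(\gamma)$; then $v = \gamma v = hv + w$ gives $w = v - hv$, and since $w \in L$ this shows $v - hv \in L$, proving $\pi(\Gamma_{v}) \subseteq \{ h \in H \mid v - hv \in L \}$. Conversely, given $h \in H$ with $v - hv \in L$, set $w = v - hv$ and consider the isometry $\gamma$ defined by $\gamma(x) = hx + w$; it lies in $\Gamma$ because $L, H \leq \Gamma$, it satisfies $\gamma v = hv + (v - hv) = v$, so $\gamma \in \Gamma_{v}$, and $\pi(\gamma) = h$. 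This gives the reverse inclusion, and hence the claimed equality.

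For part (2), injectivity is immediate from observation (ii): an element of $\Gamma_{v} \cap \ker \pi$ is a translation fixing the point $v$, and the only such translation is the identity, so $\ker(\pi_{\mid \Gamma_{v}}) = \{ 1 \}$. Therefore $\pi$ restricts to a bijection of $\Gamma_{v}$ onto the explicit set in (1), and the recovery statement follows: each $h \in \pi(\Gamma_{v})$ is the linear part of the unique element $x \mapsto hx + (v - hv)$ of $\Gamma_{v}$. The argument is entirely routine; the only point that needs care — not really an obstacle — is knowing that the splitting genuinely realizes $H$ as a subgroup of $\Gamma$ fixing the origin, so that $\pi$ has kernel precisely $L$ and the normal form is well defined. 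This is exactly what the proof of Theorem \ref{theorem:arithmeticsplit} supplies, by conjugating a finite complement so that it fixes a common point.
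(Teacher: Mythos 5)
Your proof is correct, and it is exactly the elementary argument the paper has in mind — the lemma is stated with its proof omitted ("The proof is elementary and will be omitted"), and the normal form $\gamma(x) = hx + w$ with $w \in L$, $h \in H$ is immediate here since $\Gamma = \langle L, H \rangle$ with $H \leq O(3)$ literally fixing the origin. Both inclusions in (1) and the kernel computation in (2) are exactly as they should be.
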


Let $\sigma \subseteq \mathbb{R}^{3}$ be a cell. If the stabilizer
group $\Gamma_{\sigma}$ and all of its subgroups $K$ satisfy
$Wh_{q}(K) = 0$ for $q \leq 1$, then $\sigma$ makes no contribution
in the calculation of $Wh_{q}(\Gamma)$ ($q \leq 1$). The following definition
will give us a systematic way to ignore such cells, based on the isomorphism types
of their stabilizer groups.

\begin{definition} \label{definition:negligible}
A group is \emph{negligible} if it is isomorphic to a subgroup of
$S_{4}$. We will also say that a cell is negligible if its stabilizer group is negligible in the above sense.
\end{definition}

\begin{remark} \label{remark:newneg}
We note that Definition \ref{definition:negligible} is equivalent to Definition \ref{definition:negligible1} for
finite groups $G$.
\end{remark}



\begin{proposition} \label{negproposition}
If $\Gamma = \langle L, H \rangle$, where $H \leq S_{4}^{+} \times (-1)$,
and $v \in \mathbb{R}^{3}$ satisfies $2v \not \in L$, then the stabilizer group $\Gamma_{v}$ is negligible.
\end{proposition}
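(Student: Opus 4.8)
The plan is to use the two projection facts from Lemma \ref{pointstab}. By part (2), the map $\pi$ restricts to an injection on $\Gamma_{v}$, so $\Gamma_{v} \cong \pi(\Gamma_{v})$, and by part (1) we have $\pi(\Gamma_{v}) = \{ h \in H \mid v - hv \in L \}$. Thus it suffices to show that the subgroup $K := \pi(\Gamma_{v}) \leq H$ is isomorphic to a subgroup of $S_{4}$, since then $\Gamma_{v}$ is isomorphic to a subgroup of $S_{4}$ and hence negligible in the sense of Definition \ref{definition:negligible} (equivalently Definition \ref{definition:negligible1}, by Remark \ref{remark:newneg}).

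The heart of the argument is to exploit the hypothesis $2v \notin L$ to eliminate the central inversion from $K$. First I would observe that $H \leq S_{4}^{+} \times (-1)$, so every element of $H$ is a signed permutation matrix, and the orientation-reversing elements are exactly those of the form $(-1) \cdot h^{+}$ with $h^{+} \in S_{4}^{+}$. The key claim is that $K$ contains no element whose linear part is $(-1)$ (the antipodal map) itself. Indeed, if $(-1) \in K$, then $v - (-1)v = 2v \in L$, contradicting the hypothesis. So $(-1) \notin K$. I would then promote this to the statement that $K$ embeds into $S_{4}^{+} \times (-1) / \langle (-1) \rangle$, or more directly into $S_{4}^{+}$: since $(-1)$ is central of order two in $S_{4}^{+} \times (-1)$ and $(-1) \notin K$, the composite $K \hookrightarrow S_{4}^{+} \times (-1) \twoheadrightarrow (S_{4}^{+} \times (-1))/\langle (-1)\rangle \cong S_{4}^{+}$ is injective on $K$. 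Hence $K$ is isomorphic to a subgroup of $S_{4}^{+} \cong S_{4}$.

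Assembling the pieces: $\Gamma_{v} \cong K \hookrightarrow S_{4}$, so $\Gamma_{v}$ is isomorphic to a subgroup of $S_{4}$, i.e.\ $\Gamma_{v}$ is negligible, as claimed.

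The main obstacle — really the only nontrivial point — is verifying that the quotient map $S_{4}^{+} \times (-1) \to S_{4}^{+}$ is injective when restricted to a subgroup avoiding $(-1)$. This is a standard fact, but it requires knowing that $(-1)$ is genuinely a direct factor generator (central of order two with the orientation-preserving part as a complement), so that the kernel of the projection is precisely $\langle (-1) \rangle$; a subgroup $K$ with $K \cap \langle (-1)\rangle = \{1\}$ then maps injectively. The condition $K \cap \langle(-1)\rangle = \{1\}$ is exactly what $2v \notin L$ buys us via Lemma \ref{pointstab}(1). One should take a little care that $H$ itself need not contain $(-1)$, but this only shrinks the ambient group and causes no difficulty: the embedding $H \leq S_{4}^{+}\times(-1)$ still lets us apply the projection argument to $K \leq H \leq S_{4}^{+} \times (-1)$.
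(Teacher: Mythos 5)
Your proposal is correct and follows essentially the same route as the paper: both use Lemma \ref{pointstab} to identify $\Gamma_{v}$ with $\pi(\Gamma_{v})$, exclude $(-1)$ from $\pi(\Gamma_{v})$ via $v-(-1)v = 2v \notin L$, and conclude that a subgroup of $S_{4}^{+}\times(-1)$ avoiding $(-1)$ embeds in $S_{4}$. The only difference is that you spell out the projection argument for that last step, which the paper leaves implicit.
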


\begin{proof}
Note that $v - (-1)v = 2v \notin L$, so $(-1) \notin \pi(\Gamma_{v})$ by Lemma \ref{pointstab}(1).
The homomorphism $\pi: \Gamma_{v} \rightarrow H$ is injective, so $\Gamma_{v}$
is isomorphic to a subgroup of $S_{4}^{+} \times (-1)$ that does not contain
$(-1)$. All such groups are isomorphic to subgroups of $S_{4}$, so $\Gamma_{v}$
is negligible.
\end{proof}

\subsection{A Fundamental Polyhedron for $\Gamma_{1}$} \label{symlc}

Recall that $\Gamma_{1} = \langle \mathbf{x}, \mathbf{y}, \mathbf{z} \rangle
\rtimes (S_{4}^{+} \times (-1))$.
We consider the convex compact polyhedron 
$$ P = \left\{ (x,y,z) \in \mathbb{R}^{3} \mid 0 \leq z \leq y \leq x \leq \frac{1}{2} \right\}.$$
The set $P$ is the tetrahedron pictured in Figure \ref{pictureofgamma1}. (Note
that the shape of the given tetrahedron is not intended to be accurate.)

\begin{figure}[!h]
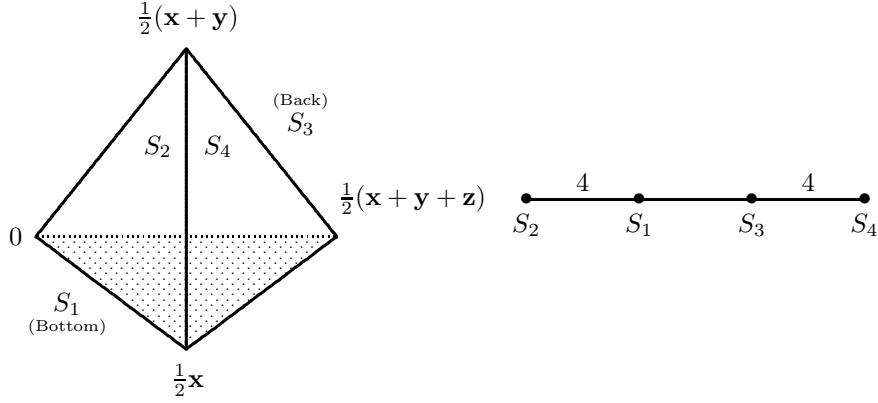
   
    
\begin{center}
\hbox{
\vbox{\beginpicture
\setcoordinatesystem units <1cm,1cm> point at -.2 3
\setplotarea x from -.5 to 9.2, y from -2 to 3
\linethickness=.7pt
%
\def\myarrow{\arrow <4pt> [.2, 1]} 
\setplotsymbol ({\circle*{.4}})
\plotsymbolspacing=.3pt        
\plot 0 0  2 2.5  4 0  2 -1.5  0 0 /
\plot 2 2.5  2 -1.5 /
\setdots <2pt>  
\putrule from 0 0 to 4 0
\put {$0$} [r] at -.2 0
\put {${\frac{1}{2}({\mathbf x}+{\mathbf y})}$} [b] at 2 2.7
\put {${\frac{1}{2}({\mathbf x}+{\mathbf y}+{\mathbf z})}$} [bl] at 4 .3
\put {${\frac{1}{2}{\mathbf x}}$} [t] at 2 -1.7
\put {$S_1$} at .4 -.9
\put {\tiny (Bottom)} at .4 -1.2
\put {$S_2$} at 1.6 1.2 
\put {$S_3$} at 3.5 1.5 
\put {\tiny (Back)} at 3.5 1.8
\put {$S_4$} at 2.4 1.2 
\setshadegrid span <2pt>
\hshade -1.5 2 2  0 0 4 /       

\setsolid
\putrule from 6.5 .5 to 11 .5
\put {$\bullet$} at  6.5 .5
\put {$S_2$} [t] at  6.5 .3
\put {$4$} [b]   at  7.25 .6
\put {$\bullet$} at  8 .5
\put {$S_1$} [t] at  8 .3
\put {$\bullet$} at  9.5 .5
\put {$S_3$} [t] at  9.5 .3
\put {$\bullet$} at  11 .5 
\put {$S_4$} [t] at  11 .3
\put {$4$} [b]   at  10.25 .6
\endpicture}
}
\end{center}

\hfill

\caption{The tetrahedron on the left is a fundamental domain for the action
  of $\Gamma_{1}$ on ${\mathbb R}^3$. The group
  $\Gamma_{1}$ is a Coxeter group.  Its Coxeter diagram
  appears on the right. }
\label{pictureofgamma1}
\end{figure}

The sides $\{ S_{1}, S_{2}, S_{3}, S_{4}, \}$ are contained in the planes $y=z$, $z=0$, $x=y$, and $x= 1/2$, respectively.
The outward unit normal vectors $N_{1}, N_{2}, N_{3}, N_{4}$ are, respectively,
$$ \frac{1}{\sqrt{2}} \left( -\mathbf{y} + \mathbf{z} \right), \quad -\mathbf{z}, \quad 
\frac{1}{\sqrt{2}} \left( -\mathbf{x} + \mathbf{y} \right), \quad \mathbf{x}.$$ 
The dihedral angles are easy to compute:
\begin{eqnarray*}
\theta ( S_{1}, S_{2}) = \pi /4, & \quad & \theta ( S_{1}, S_{3}) = \pi /3, \\
\theta ( S_{1}, S_{4}) = \pi /2, & \quad & \theta ( S_{2}, S_{3}) = \pi /2, \\
\theta ( S_{2}, S_{4}) = \pi /2, & \quad & \theta( S_{3}, S_{4}) = \pi /4.
\end{eqnarray*}
Since all of these are submultiples of $\pi$, it follows from Corollary \ref{coxeter}
that the group $\Gamma_{P}$ generated by reflections in the sides of $P$
is discrete, and that $P$ is an exact, convex compact fundamental polyhedron for $\Gamma_{P}$.  Furthermore,
$$\Gamma_{P}= \left\langle 
\left( \begin{smallmatrix} 1 & 0 & 0 \\ 0 & 0 & 1 \\ 0 & 1 & 0 \\ \end{smallmatrix} \right),
\left( \begin{smallmatrix} 1 & 0 & 0 \\ 0 & 1 & 0 \\ 0 & 0 & -1 \\ \end{smallmatrix} \right), 
\left( \begin{smallmatrix} 0 & 1 & 0 \\ 1 & 0 & 0 \\ 0 & 0 & 1 \\ \end{smallmatrix} \right),
\left( \begin{smallmatrix} 1 \\ 0 \\ 0 \end{smallmatrix} \right) + 
\left( \begin{smallmatrix} -1 & 0 & 0 \\ 0 & 1 & 0 \\ 0 & 0 & 1 \\ \end{smallmatrix} \right) \right\rangle,$$
where the isometries listed between the brackets are the reflections in $S_{1}$, $S_{2}$, $S_{3}$, and $S_{4}$,
respectively.

\vspace{.2cm}
It is fairly easy to check that $\Gamma_{P} = \Gamma_{1}$, so $P$ is an exact convex compact fundamental
polyhedron for the action of $\Gamma_{1}$ on $\mathbb{R}^{3}$.

\begin{theorem} \label{theorem:g1} 
Let $\widehat{\mathcal{C}}$ denote the $\Gamma_{1}$-equivariant cell structure on $\mathbb{R}^{3}$ determined by the standard
cellulation $\mathcal{C}$ of $P$.  The quotient $\Gamma_{1} \backslash \mathbb{R}^{3}$ is $P$ itself (see Figure \ref{pictureofgamma1}), endowed with the standard
cellulation $\mathcal{C}$.  The vertex stabilizer groups are determined by the following equalities (we write $\Gamma$ in place of $\Gamma_{1}$):
\begin{eqnarray*}
\pi(\Gamma_{(0,0,0)}) & = & S_{4}^{+} \times (-1). \\
\pi(\Gamma_{(1/2,0,0)}) & = & D_{4_{1}}^{+} \times (-1). \\
\pi(\Gamma_{(1/2,1/2,0)}) & = & D_{4}^{+} \times (-1). \\
\pi(\Gamma_{(1/2,1/2,1/2)}) & = & S_{4}^{+} \times (-1).
\end{eqnarray*}
The stabilizer groups of all other cells are negligible.
\end{theorem}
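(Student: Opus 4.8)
The plan is to verify the four displayed vertex stabilizer computations directly using Lemma \ref{pointstab}, and then to dispatch all remaining cells by a combination of the negligibility criteria (Proposition \ref{negproposition} and Definition \ref{definition:negligible}). For each of the four vertices $v \in \{(0,0,0), (1/2,0,0), (1/2,1/2,0), (1/2,1/2,1/2)\}$, Lemma \ref{pointstab}(1) reduces the problem to computing $\pi(\Gamma_v) = \{ h \in H \mid v - hv \in L \}$, where $H = S_4^+ \times (-1)$ and $L = \langle \mathbf{x}, \mathbf{y}, \mathbf{z} \rangle = L_{\mathcal{C}}$. First I would observe that $(0,0,0)$ and $(1/2,1/2,1/2)$ are both central points: $2 \cdot (0,0,0) = 0 \in L$ and every signed permutation matrix fixes the origin, so $\pi(\Gamma_{(0,0,0)}) = H$; for $(1/2,1/2,1/2)$, any signed permutation $h$ sends $v = \frac12(\mathbf{x}+\mathbf{y}+\mathbf{z})$ to a vector of the form $\frac12(\pm\mathbf{x} \pm \mathbf{y} \pm \mathbf{z})$, and $v - hv$ has half-integer entries that are each either $0$ or $1$, hence lies in $L_{\mathcal{C}}$ — so again $\pi(\Gamma_v) = H$. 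For the two intermediate vertices I would compute explicitly which signed permutations $h$ satisfy $v - hv \in L_{\mathcal{C}}$: at $(1/2,0,0)$ the condition $\frac12\mathbf{x} - h(\frac12\mathbf{x}) \in L_{\mathcal{C}}$ forces $h$ to either fix or negate the $\mathbf{x}$-coordinate and to permute/sign $\mathbf{y},\mathbf{z}$ freely, which I would identify with $D_{4_1}^+ \times (-1)$ (using the non-standard point-group notation from \ref{subsubsection:nonstandard}, where the subscript $1$ marks the distinguished $x$-axis); the computation at $(1/2,1/2,0)$ is entirely analogous and yields $D_4^+ \times (-1)$.

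For the remaining cells, the key structural fact is Proposition \ref{negproposition}: since $H_1 = S_4^+ \times (-1)$, any point $v$ with $2v \notin L_{\mathcal{C}}$ has negligible stabilizer. I would therefore argue that every cell of the standard cellulation $\mathcal{C}$ other than the four named vertices has the property that each of its points $v$ satisfies $2v \notin L_{\mathcal{C}}$, so its stabilizer (which fixes the whole cell pointwise by the last sentence of Theorem \ref{cells}) is negligible. Concretely, by Theorem \ref{cells} a group element fixing a cell fixes it pointwise, so the stabilizer of a positive-dimensional cell is contained in the stabilizer of any interior point; it thus suffices to exhibit, for each edge, each two-dimensional side, and the interior of $P$, a point whose double lies outside $L_{\mathcal{C}}$. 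This is immediate: the only points of the closed tetrahedron $P$ whose coordinates are all in $\frac12\mathbb{Z}$ are its four vertices (the vertices are precisely the half-integer lattice points in $P$), so any point of $P$ lying in a cell of dimension $\geq 1$ has at least one coordinate strictly between consecutive multiples of $\frac12$, forcing $2v \notin L_{\mathcal{C}}$.

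The one subtlety I would address carefully is that the four edges incident to the named vertices contain those vertices in their closures, but the \emph{open} cells do not, and Theorem \ref{cells} guarantees a stabilizer is determined by a point in the open cell; so choosing the interior point of each edge avoids the vertices and lands at a point with a non-half-integer coordinate. Thus the main obstacle is simply the bookkeeping of confirming that no additional half-integer lattice point of $L_{\mathcal{C}}$ lies in $P$ beyond the four vertices, which is an elementary check: a point $(x,y,z)$ of $P$ with $2x, 2y, 2z \in \mathbb{Z}$ and $0 \le z \le y \le x \le \frac12$ must have each coordinate in $\{0, \frac12\}$, and the chain of inequalities then restricts the possibilities to exactly the four listed vertices. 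Once this is established, Proposition \ref{negproposition} finishes every non-vertex cell at once, and the four explicit computations complete the proof. I would conclude by noting that the identifications of $\pi(\Gamma_v)$ with the named point groups use the explicit generating-set descriptions from Tables \ref{orientationpreservingpointgroups} and the non-standard variants of \ref{subsubsection:nonstandard}, together with the injectivity of $\pi$ on $\Gamma_v$ (Lemma \ref{pointstab}(2)), which lets us recover $\Gamma_v$ itself from its image.
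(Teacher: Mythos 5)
Your computation of the four vertex stabilizers and your use of Proposition \ref{negproposition} to dispatch all higher-dimensional cells is exactly the paper's argument (the paper simply applies the proposition to edge midpoints, each of which has a coordinate equal to $1/4$; your observation that the only points of $P$ with all coordinates in $\tfrac{1}{2}\mathbb{Z}$ are the four vertices is a slightly more systematic way of saying the same thing). The one piece of the theorem you do not address is the first claim, that the quotient $\Gamma_{1}\backslash\mathbb{R}^{3}$ is $P$ itself with the standard cellulation $\mathcal{C}$; the paper disposes of this by noting that $\Gamma_{1}$ is a Coxeter group generated by the reflections in the sides of $P$, so every cycle $[x]$ of the side-pairing is a singleton, and since $[x]=P\cap(\Gamma\cdot x)$ no two points of $P$ are identified in the quotient. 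You should add a sentence to that effect; everything else is complete and matches the paper's proof.
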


\begin{proof}
The first statement follows from the fact that each cycle $[x]$ is a singleton, and $[x] = P \cap (\Gamma \cdot x)$.

We now describe the vertex stabilizer groups.  The first equality is clear.  By Lemma \ref{pointstab}(1), an 
element $h \in S_{4}^{+} \times (-1)$
is in $\pi(\Gamma_{(1/2,0,0)})$ if and only if $(1/2,0,0) - h \cdot (1/2,0,0) \in L$.  It is clear that the latter condition
is satisfied exactly when the upper left corner of the matrix $h$ is $\pm 1$.  This proves the second equality.  One proves
the third equality in a similar way:  Lemma \ref{pointstab}(1) implies that $h \in S_{4}^{+} \times (-1)$ is in
$\pi(\Gamma_{(1/2,1/2,0)})$ if and only if the bottom right entry in $h$ is $\pm 1$.  The fourth equality is straightforward:
if $h \in S_{4}^{+} \times (-1)$, then $h \cdot (1/2,1/2,1/2) = (\pm 1/2, \pm 1/2, \pm 1/2)$, where the signs may be chosen
independently.  It is clear then that $(1/2,1/2,1/2) - h \cdot (1/2,1/2,1/2)$ has integral entries, regardless of the choice
of $h$.             

Proposition \ref{negproposition} shows that each edge stabilizer is negligible.  Indeed, the stabilizer of an edge is the same as
the stabilizer of its midpoint, and each of these midpoints has at least one entry which is $1/4$.  Since the edge stabilizers
are negligible, the stabilizers of all higher-dimensional cells are negligible as well.
\end{proof}
 
\subsection{A Fundamental Polyhedron for $\Gamma_{2}$}
We recall that 
$$\Gamma_{2} = \left\langle \frac{1}{2}( \mathbf{x} + \mathbf{y} + \mathbf{z}), \mathbf{y}, 
\mathbf{z} \right\rangle \rtimes (S_{4}^{+} \times (-1)).$$


\begin{figure}[!h]
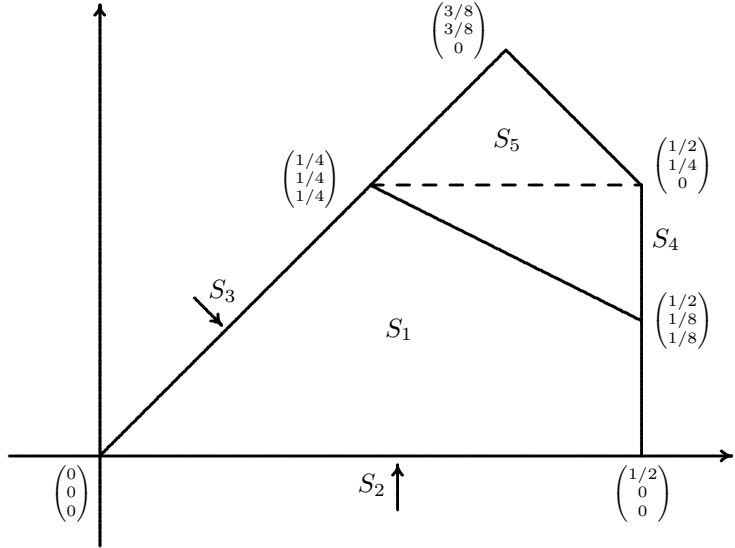

    
\begin{center}
\hbox{
\vbox{\beginpicture
\setcoordinatesystem units <1.2cm,1.2cm> point at -.2 4
\setplotarea x from -2 to 7, y from -.6 to 5.5
\linethickness=.7pt
%

 \def\myarrow{\arrow <4pt> [.2, 1]} 
%
\setplotsymbol ({\circle*{.4}})
\plotsymbolspacing=.3pt        
\myarrow from -1 0 to 7 0 
\myarrow from   0 -1  to  0 5 
\plot 0 0  3 3   4.5 4.5  6 3  6 0 /
\plot 3 3   6 1.5  /
\setplotsymbol ({\circle*{.2}})
\setdots <2pt>
\setdashes
\plot 3 3   6 3 /
\setsolid 
\put {\tiny $\left(\begin{matrix} 0 \\ 0 \\ 0\end{matrix}\right)$} [tr] at   -.1 -.1
\put {\tiny $\left(\begin{matrix}1/2 \\ 0 \\ 0\end{matrix}\right)$} [t] at 6 -.1
\put {\tiny $\left(\begin{matrix}1/4 \\ 1/4 \\ 1/4 \end{matrix}\right)$} [br] at 2.65 2.75
\put {\tiny $\left(\begin{matrix}3/8 \\ 3/8 \\ 0 \end{matrix}\right)$} [br]
at 4.3 4.4
\put {\tiny $\left(\begin{matrix} 1/2 \\ 1/4 \\ 0\end{matrix}\right)$} [bl] at 6.1 2.9
\put {\tiny $\left(\begin{matrix} 1/2 \\ 1/8 \\ 1/8\end{matrix}\right)$} [l]
at 6.1 1.5

\put {$S_2$} [t] at 3 -.2
\myarrow from  3.3 -.6 to 3.3 -.1

\put {$S_1$} at 3.3  1.4

\put {$S_3$} [br] at 1.5 1.7  
\myarrow from  1.05 1.75  to 1.35 1.45

\put {$S_4$} [l] at 6.1  2.4

\put {$S_5$} at 4.5  3.5

\endpicture}
}
\end{center}

\hfill

\caption{This polyhedron (viewed from the
  positive $z$-direction) is a fundamental domain for $\Gamma_{2}$. The dashed segment represents an axis of rotation,
  not a division between faces. The sides $S_{3}$ and $S_{4}$ are triangular and 
perpendicular to the $xy$-plane. }
\label{pictureofgamma2}
\end{figure}

Throughout this subsection, we will write $\Gamma$ in place of $\Gamma_{2}$.  Consider the convex compact polyhedron
$$ P = \left\{ (x,y,z) \in \mathbb{R}^{3} \mid 0 \leq z \leq y \leq x \leq 1/2; \, \, x+y+z \leq 3/4 \right\}.$$
It is possible to check that $P$ is the five-sided polyhedron depicted in Figure \ref{pictureofgamma2}.
The sides $S_{1}$, $S_{2}$, $S_{3}$, $S_{4}$, and $S_{5}$ are contained in the planes $y=z$, $z=0$, $x=y$,
$x=1/2$, and $x+y+z=3/4$, respectively.  (Indeed, the sides $S_{i}$, for $i \in \{1,2,3,4\}$, are contained in
the same planes as the corresponding sides from Subsection \ref{symlc}.)

The outward unit normal vectors $N_{1}$, $N_{2}$, $N_{3}$, $N_{4}$, and $N_{5}$ are (respectively),
$$ \frac{1}{\sqrt{2}} \left( -\mathbf{y} + \mathbf{z} \right), \quad   -\mathbf{z}, \quad
\frac{1}{\sqrt{2}} \left( -\mathbf{x} + \mathbf{y} \right), \quad \mathbf{x}, \quad \mathrm{and}
\quad \frac{1}{\sqrt{3}} \left( \mathbf{x} + \mathbf{y} + \mathbf{z} \right).$$
The dihedral angles are as follows:
\begin{eqnarray*}
\theta ( S_{1}, S_{2}) = \pi /4, & \quad & \theta ( S_{1}, S_{3}) = \pi /3, \\
\theta ( S_{1}, S_{4}) = \pi /2, & \quad & \theta ( S_{2}, S_{3}) = \pi /2, \\
\theta ( S_{2}, S_{4}) = \pi /2, & \quad & \theta( S_{3}, S_{4}) = \pi /4, \\
\theta( S_{1}, S_{5}) = \pi/2, & \quad & \theta( S_{3}, S_{5}) = \pi/2, \\
\theta( S_{2}, S_{5}) & = & \arccos \left( 1/ \sqrt{3}\right), \\
\theta( S_{4}, S_{5}) & = & \arccos \left(-1/\sqrt{3} \right) 
\end{eqnarray*}
We consider the collection $\Phi = \{ \phi_{S_{1}}, \phi_{S_{2}}, \phi_{S_{3}}, \phi_{S_{4}}, \phi_{S_{5}} \}$, 
where $\phi_{S_{i}}$ is the reflection 
in the side $S_{i}$, for $i \in \{1, 2, 3, 4 \}$, and $\phi_{S_{5}}$ is the rotation $180$ degrees
about the line through $(1/4,1/4,1/4)$ and $(1/2,1/4,0)$ (which is dashed in Figure \ref{pictureofgamma2}).  
It is rather clear that $\Phi$ is a side-pairing.  We must show that $\Phi$ is a subproper side-pairing.

If $x$ is a point in the interior of some ridge that is not a face of $S_{5}$, 
then it is easy to check that the dihedral
angle sum of $[x]$ is a submultiple of $\pi$.
There are two more kinds of ridge cycles to consider.  Each has the form $\{ x, \phi_{S_{5}}(x) \}$,
where $\phi_{S_{5}} \in \Phi$ is the rotation about the dashed line in Figure \ref{pictureofgamma2},
and $x$ is either on the ridge between the sides $S_{1}$ and $S_{5}$, or on the
ridge between $S_{4}$ and $S_{5}$.  The dihedral angle sum of $[x]$ is either
$$ \theta(S_{1},S_{5}) + \theta(S_{3},S_{5}) \quad \mathrm{or} \quad
\theta(S_{4},S_{5}) + \theta(S_{2},S_{5}),$$
both of which are equal to $\pi$.  It follows that $\Phi$ is a subproper side-pairing,
so Theorem \ref{poincare} applies.  The polyhedron $P$ is therefore an exact convex
compact fundamental polyhedron for the action of $\langle \Phi \rangle$ 
on $\mathbb{R}^{3}$. 

We need to show that $\Gamma = \langle \Phi \rangle$.  We note that $\{ \phi_{S_{1}}, \phi_{S_{2}},
\phi_{S_{3}}, \phi_{S_{4}} \}$ is a set that  generates $\Gamma_{1}$ by the
argument of Subsection \ref{symlc}.  Furthermore, we have
$$ \phi_{S_{5}} = \left(\begin{smallmatrix} 1/2 \\ 1/2 \\ 1/2 \end{smallmatrix} \right) + 
\left( \begin{smallmatrix} 0 & 0 & -1 \\ 0 & -1 & 0 \\ -1 & 0 & 0 \end{smallmatrix} \right).$$
It easily follows that $\langle \Phi \rangle = \Gamma$, so $P$ is an exact convex compact fundamental
polyhedron for the action of $\Gamma$ on $\mathbb{R}^{3}$.  
        
\begin{theorem} \label{theorem:g2}
Let $\widehat{\mathcal{C}}$ denote the $\Gamma$-equivariant cell structure on $\mathbb{R}^{3}$
determined by the standard cellulation $\mathcal{C}$ of $P$.  The quotient $\Gamma \backslash \mathbb{R}^{3}$
is obtained from $P$ by identifying the two halves of the side $S_{5}$ from Figure \ref{pictureofgamma2}.  The
set consisting of the vertices $(0,0,0)$, $(1/2,0,0)$, and $(1/4,1/4,1/4)$ maps injectively into the
quotient.  The stabilizers of these vertices are determined by the following equalities:
\begin{eqnarray*}
\pi(\Gamma_{(0,0,0)}) & = & S_{4}^{+} \times (-1). \\
\pi(\Gamma_{(1/2,0,0)}) & = & D_{4_{1}}^{+} \times (-1). \\
\pi(\Gamma_{(1/4,1/4,1/4)}) & = & D_{3}^{+} \times (-1).
\end{eqnarray*}
All of the other stabilizers of cells in the quotient are negligible.
\end{theorem}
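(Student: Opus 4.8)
The plan is to follow exactly the template established in the proof of Theorem \ref{theorem:g1}. First I would verify the statement about the quotient $\Gamma \backslash \mathbb{R}^{3}$. By Theorem \ref{poincare} and the construction above, $P$ is an exact convex compact fundamental polyhedron, so the quotient is obtained from $P$ by the identifications coming from the side-pairing $\Phi$. The only non-reflection element of $\Phi$ is $\phi_{S_{5}}$, which is the $180$-degree rotation carrying one half of the bad side $S_{5}$ to the other. Thus the quotient is $P$ with the two halves of $S_{5}$ glued, as claimed. To see that the three listed vertices inject into the quotient, I would invoke the fact (from the proof of Theorem \ref{cells}) that each cycle $[x]$ satisfies $[x] = P \cap (\Gamma \cdot x)$ and meets each cell in at most one point; since the three vertices $(0,0,0)$, $(1/2,0,0)$, and $(1/4,1/4,1/4)$ are each fixed by their stabilizers and lie on distinct $\Gamma$-orbits, their cycles are singletons.

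Next I would compute the three vertex stabilizers using Lemma \ref{pointstab}(1), which reduces the problem to determining $\{ h \in S_{4}^{+} \times (-1) \mid v - hv \in L \}$ for each vertex $v$, where now $L = \langle \frac{1}{2}(\mathbf{x} + \mathbf{y} + \mathbf{z}), \mathbf{y}, \mathbf{z} \rangle$. For $v = (0,0,0)$ the condition $v - hv = 0 \in L$ holds for every $h$, giving the full point group $S_{4}^{+} \times (-1)$. For $v = (1/2,0,0)$, I would observe that $h(1/2,0,0)$ is $\pm \frac{1}{2}$ times a standard basis vector, and check which of these differences $(1/2,0,0) - h(1/2,0,0)$ lie in $L$; the answer picks out precisely the signed permutation matrices fixing the $x$-axis as a set, i.e. $D_{4_{1}}^{+} \times (-1)$. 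For $v = (1/4,1/4,1/4)$, I would note that $(1/4,1/4,1/4) - h(1/4,1/4,1/4) \in L$ exactly when $h$ preserves the line $\ell(x=y=z)$ (equivalently, permutes the coordinates and flips signs in the appropriate way), which yields $D_{3}^{+} \times (-1)$. In each case Lemma \ref{pointstab}(2) guarantees that $\pi$ recovers $\Gamma_{v}$ faithfully from $\pi(\Gamma_{v})$.

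Finally, for the negligibility of all other cell stabilizers, the key tool is Proposition \ref{negproposition}: since $H_{2} = S_{4}^{+} \times (-1) \leq S_{4}^{+} \times (-1)$, any $v$ with $2v \notin L$ has negligible stabilizer. I would therefore check that the midpoints of all edges, and hence all higher-dimensional cells, have at least one coordinate whose doubled value is not an element of $L$; it suffices to verify this for representatives of the edges of $P$ not handled above, whose midpoints (such as the points near $(3/8,3/8,0)$, $(1/2,1/4,0)$, $(1/2,1/8,1/8)$ visible in Figure \ref{pictureofgamma2}) have coordinates with denominators forcing $2v \notin L$. Once the edge stabilizers are negligible, the stabilizers of the faces and the $3$-cell are negligible as subgroups.

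The main obstacle I anticipate is the vertex $(1/4,1/4,1/4)$. Because the lattice $L_{2}$ is the non-cubic lattice $\langle \frac{1}{2}(\mathbf{x}+\mathbf{y}+\mathbf{z}), \mathbf{y}, \mathbf{z}\rangle$, the membership test $v - hv \in L$ is genuinely more delicate than in the cubic case of Theorem \ref{theorem:g1}: a vector like $(1/2,-1/2,0)$ must be recognized as lying in $L$ (since $\frac{1}{2}(\mathbf{x}+\mathbf{y}+\mathbf{z}) - \mathbf{y} - \mathbf{z} = \frac{1}{2}(\mathbf{x}-\mathbf{y}-\mathbf{z})$ generates such elements), so one must carefully express each candidate difference in terms of the given generators of $L$ rather than relying on integrality of all coordinates. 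Verifying that exactly the elements preserving $\ell(x=y=z)$ survive — and that no additional signed permutations sneak in through the half-integer generator — is the computation that requires the most care, though it remains elementary.
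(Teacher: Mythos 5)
Your proposal is correct and follows essentially the same route as the paper: the quotient and injectivity claims via the cycle structure $[x] = P \cap (\Gamma \cdot x)$, the three vertex stabilizers via Lemma \ref{pointstab}(1), and negligibility of everything else via Proposition \ref{negproposition} applied to the remaining vertices and to the edge midpoints. One small slip in your closing remarks: $(1/2,-1/2,0)$ is \emph{not} an element of $L_{2} = \langle \frac{1}{2}(\mathbf{x}+\mathbf{y}+\mathbf{z}), \mathbf{y}, \mathbf{z}\rangle$ --- the vector you actually compute, $\frac{1}{2}(\mathbf{x}-\mathbf{y}-\mathbf{z}) = (1/2,-1/2,-1/2)$, is --- but this does not affect any of the stabilizer computations.
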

\begin{proof}
The first statement follows easily from a description of the cycles $[x]$, and from the fact that
$[x] = P \cap (\Gamma \cdot x)$.  The second statement, that
the given vertices map injectively into the quotient, follows from the fact that the cycle generated by
each vertex is a singleton.  

We turn now to a consideration of the cell stabilizers.  First, note that each of the vertices
$$ (3/8, 3/8, 0), \quad (1/2, 1/4, 0), \quad (1/2, 1/8, 1/8)$$
has a negligible stabilizer group, by Proposition \ref{negproposition}.  This directly implies that all of the edges
and faces incident with these vertices must also have negligible stabilizer groups.  This leaves three vertices
and two edges to consider.  It is clear that the first equality in the Theorem holds.  The second equality
holds for reasons similar to those used in establishing the second equality in Theorem \ref{theorem:g1}.  The
third equality follows from Lemma \ref{pointstab}(1) and the 
fact that $h \in S_{4}^{+} \times (-1)$ satisfies the condition
$(1/4,1/4,1/4) - h \cdot (1/4,1/4,1/4)$ if and only if $h \cdot (1/4,1/4,1/4) = \pm (1/4,1/4,1/4)$.

Finally, we note that the remaining edges are negligible by Proposition \ref{negproposition}.
\end{proof}

\subsection{A Fundamental Polyhedron for 
$\Gamma_{3}$}
Note that
$$\Gamma_{3} = \left\langle 
\frac{1}{2}( \mathbf{x} + \mathbf{y}), \frac{1}{2}( \mathbf{y} + \mathbf{z}),
\frac{1}{2}( \mathbf{x} + \mathbf{z}) \right\rangle \rtimes (S_{4}^{+} \times (-1)).
$$
We set $\Gamma = \Gamma_{3}$ in this subsection.
Consider the convex compact polyedron 
$$P = \left\{ (x,y,z) \in \mathbb{R}^{3} 
\mid 0 \leq z \leq y \leq x, \, \,  x+y \leq 1/2 \right\}.$$ 
 A straightforward check shows that $P$ is the tetrahedron depicted in Figure \ref{pictureofgamma3}.  The
sides $S_{1}$, $S_{2}$, $S_{3}$, and $S_{4}$ are contained in the planes $y=z$, $z=0$, $x=y$,
and $x+y=1/2$, respectively.  (The sides $S_{i}$ for $i \in \{ 1,2,3 \}$ are contained
in the same planes as the corresponding sides from Subsection \ref{symlc}.)

\begin{figure}[h]
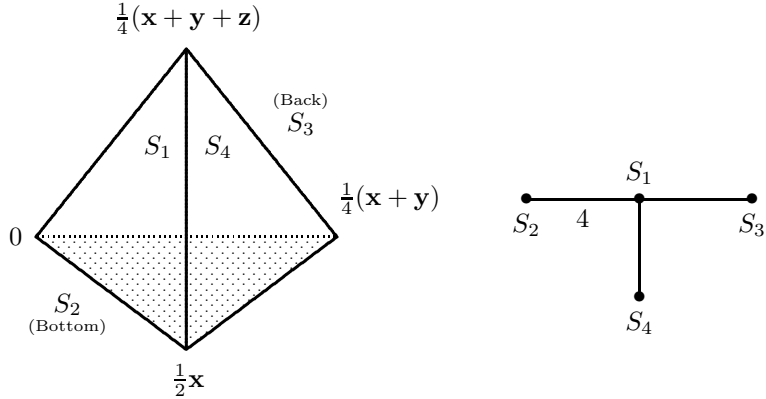
   
    
\begin{center}
\hbox{
\vbox{\beginpicture
\setcoordinatesystem units <1cm,1cm> point at -.2 3
\setplotarea x from -1.3 to 9.2, y from -2 to 3
\linethickness=.7pt
%
\def\myarrow{\arrow <4pt> [.2, 1]} 
\setplotsymbol ({\circle*{.4}})
\plotsymbolspacing=.3pt        
\plot 0 0  2 2.5  4 0  2 -1.5  0 0 /
\plot 2 2.5  2 -1.5 /
\setdots <2pt>  
\putrule from 0 0 to 4 0
\put {$0$} [r] at -.2 0
\put {${\frac{1}{4}({\mathbf x}+{\mathbf y}+{\mathbf z})}$} [b] at 2 2.7
\put {${\frac{1}{4}({\mathbf x}+{\mathbf y})}$} [bl] at 4 .3
\put {${\frac{1}{2}{\mathbf x}}$} [t] at 2 -1.7
\put {$S_2$} at .4 -.9
\put {\tiny (Bottom)} at .4 -1.2
\put {$S_1$} at 1.6 1.2 
\put {$S_3$} at 3.5 1.5 
\put {\tiny (Back)} at 3.5 1.8
\put {$S_4$} at 2.4 1.2 
\setshadegrid span <2pt>
\hshade -1.5 2 2  0 0 4 /       

\setsolid
\putrule from 6.5 .5 to 9.5 .5
\putrule from 8 .5 to 8 -.8
\put {$\bullet$} at  6.5 .5
\put {$S_2$} [t] at  6.5 .3
\put {$4$} [t]   at  7.25 .36
\put {$\bullet$} at  9.5 .5
\put {$S_3$} [t] at  9.5 .3
\put {$\bullet$} at  8 .5
\put {$S_1$} [b] at  8 .7
\put {$\bullet$} at  8 -.8 
\put {$S_4$} [t] at  8 -1
\endpicture}
}
\end{center}

\hfill

\caption{On the left, we have a fundamental domain for the action of
  $\Gamma_{3}$ on ${\mathbb R}^3$. The group $\Gamma_{3}$ is a Coxeter group, and
  its Coxeter diagram appears on the right. }
\label{pictureofgamma3}
\end{figure}

The outward unit normal vectors $N_{1}$, $N_{2}$, $N_{3}$, and $N_{4}$ are (respectively)
$$ \frac{1}{\sqrt{2}} \left( -\mathbf{y} + \mathbf{z} \right), \quad -\mathbf{z}, \quad 
\frac{1}{\sqrt{2}} \left( -\mathbf{x} + \mathbf{y} \right), \quad \mathrm{and} \quad 
\frac{1}{\sqrt{2}}\left( \mathbf{x} + \mathbf{y}\right).$$ 
A routine check shows that the dihedral angles are 
\begin{eqnarray*}
\theta(S_{1}, S_{2}) = \pi/4, & \quad & \theta(S_{1}, S_{3}) = \pi/3, \\ 
\theta(S_{1}, S_{4}) = \pi /3, & \quad & \theta(S_{2}, S_{3}) = \pi/2, \\
\theta(S_{2}, S_{4}) = \pi/2, & \quad & \theta(S_{3}, S_{4}) = \pi/2.  
\end{eqnarray*}
It follows from Corollary \ref{coxeter} that $\Gamma_{P}$, the group generated by the reflections in the sides of the tetrahedron
from Figure \ref{pictureofgamma3}, is a discrete group, and that $P$ is an exact convex compact fundamental polyhedron
for the action of $\Gamma_{P}$ on $\mathbb{R}^{3}$.  We have 
$$\Gamma_{P} = \left\langle 
\left( \begin{smallmatrix} 1 & 0 & 0 \\ 0 & 0 & 1 \\ 0 & 1 & 0 \\ \end{smallmatrix} \right),
\left( \begin{smallmatrix} 1 & 0 & 0 \\ 0 & 1 & 0 \\ 0 & 0 & -1 \\ \end{smallmatrix} \right), 
\left( \begin{smallmatrix} 0 & 1 & 0 \\ 1 & 0 & 0 \\ 0 & 0 & 1 \\ \end{smallmatrix} \right),
\left( \begin{smallmatrix} \frac{1}{2} \\ \frac{1}{2} \\ 0 \end{smallmatrix} \right) + \left( \begin{smallmatrix} 0 & -1 & 0 \\
-1 & 0 & 0 \\ 0 & 0 & 1 \end{smallmatrix} \right) \right\rangle,$$
where the generators are reflections in the sides $S_{1}$, $S_{2}$, $S_{3}$, and $S_{4}$, respectively.  It
is easy to check that $\Gamma_{P} = \Gamma$.

It follows that $P$ is an exact convex compact fundamental polyhedron for the action of $\Gamma$ on $\mathbb{R}^{3}$.

\begin{theorem} \label{theorem:g3}
Let $\widehat{\mathcal{C}}$ denote the $\Gamma$-equivariant cell structure on $\mathbb{R}^{3}$ determined by the standard
cellulation $\mathcal{C}$ of $P$.  The quotient $\Gamma \backslash \mathbb{R}^{3}$ is $P$ itself, endowed with the standard
cellulation $\mathcal{C}$.  The only non-negligible cell stabilizer groups are as follows:
$$ \pi(\Gamma_{(0,0,0)}) = \pi(\Gamma_{(1/2,0,0)}) = S_{4}^{+} \times (-1),  \quad  \pi(\Gamma_{(1/4,1/4,0)}) \cong D_2 \times \mathbb Z/2.$$
\end{theorem}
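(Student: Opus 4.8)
The plan is to follow the template already established in the proof of Theorem \ref{theorem:g1}: invoke Theorem \ref{cells} to pass from the side-pairing to the $\Gamma$-equivariant cellulation, use Lemma \ref{pointstab} to compute the individual vertex stabilizers, and use Proposition \ref{negproposition} to dispatch all the remaining cells. Throughout I write $L = \langle \frac{1}{2}(\mathbf{x}+\mathbf{y}), \frac{1}{2}(\mathbf{y}+\mathbf{z}), \frac{1}{2}(\mathbf{x}+\mathbf{z}) \rangle$ for the (face-centered cubic) lattice of $\Gamma$, and I record the membership criterion that does all the arithmetic work: a point $(x,y,z)$ lies in $L$ if and only if $2x, 2y, 2z \in \mathbb{Z}$ and $2x+2y+2z$ is even (equivalently, an even number of coordinates are half-integers).

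First I would establish that the quotient is $P$ with its standard cellulation. Since $\Gamma = \Gamma_P$ is generated by the reflections in the four sides of $P$, every side-pairing isometry fixes its side pointwise; hence for any $x \in \partial P$ the cycle $[x]$ consists of $x$ alone. Because $[x] = P \cap (\Gamma \cdot x)$, distinct points of $P$ lie in distinct $\Gamma$-orbits, so the restriction of the quotient map to $P$ is a bijection onto $\Gamma \backslash \mathbb{R}^3$ carrying $\mathcal{C}$ to the stated cellulation, exactly as in Theorem \ref{theorem:g1}.

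Then I would compute the three listed stabilizers with Lemma \ref{pointstab}(1), which gives $\pi(\Gamma_v) = \{ h \in H \mid v - hv \in L \}$. For $v = (0,0,0)$ this is immediate. For $v = (1/2,0,0)$ one checks case by case that every signed permutation $h$ sends $v$ to one of $(\pm 1/2, 0, 0)$, $(0, \pm 1/2, 0)$, $(0, 0, \pm 1/2)$, and that in each case $v - hv$ satisfies the membership criterion; thus $\pi(\Gamma_{(1/2,0,0)}) = S_4^+ \times (-1)$ (note this differs from the $\Gamma_1$ computation precisely because $L$ is finer). The delicate case, and the \emph{main obstacle}, is $v = (1/4,1/4,0)$. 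Here $hv$ ranges over signed permutations of $(1/4,1/4,0)$, and $v - hv$ avoids quarter-integer entries only when $h$ fixes the third coordinate axis up to sign, which already forces a block form. But the parity condition $2x+2y+2z \in 2\mathbb{Z}$ is the crucial extra constraint: it eliminates those $h$ whose $2\times 2$ block sends $(1/4,1/4)$ to $(\pm 1/4, \mp 1/4)$ (for which $v-hv$ is $(1/2,0,0)$ or $(0,1/2,0)$, failing parity), leaving only the blocks that fix or negate $(1/4,1/4)$ together with an arbitrary sign on $\mathbf{z}$. This is exactly the point where a naive count overshoots; retaining the parity condition yields the eight matrices forming $\{I_2, \mathrm{Swap}, -I_2, -\mathrm{Swap}\} \times \{\pm 1\}$, which is $D_2 \times \mathbb{Z}/2$.

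Finally I would show all other cells are negligible. By Theorem \ref{cells} any group element preserving a cell fixes it pointwise, so each edge stabilizer equals the stabilizer of the edge's midpoint; computing the six midpoints and checking that $2v \notin L$ in every case, Proposition \ref{negproposition} makes each edge negligible (and likewise the vertex $(1/4,1/4,1/4)$, whose double $(1/2,1/2,1/2)$ fails the parity condition). Since negligibility passes to subgroups and the stabilizer of any $2$- or $3$-cell is contained in the stabilizer of a bounding edge, all higher-dimensional cells are negligible as well, completing the proof.
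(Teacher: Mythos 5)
Your proof is correct and follows essentially the same route as the paper: singleton cycles give the quotient, Lemma \ref{pointstab}(1) gives the vertex stabilizers, and Proposition \ref{negproposition} applied to edge midpoints (and to $(1/4,1/4,1/4)$) disposes of everything else. The only cosmetic difference is at the vertex $(1/4,1/4,0)$, where the paper counts the $12$-point orbit to get order $8$ and then exhibits generators, while you enumerate the admissible blocks directly via the explicit membership criterion for the face-centered lattice; both computations are valid and land on $D_2 \times \mathbb{Z}/2$.
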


\begin{proof}
The statement about the quotient follows directly from the fact that each cycle $[x]$ is a singleton.  

The vertex $(1/4,1/4,1/4)$ has a negligible stabilizer group by Proposition \ref{negproposition}.  To find the vertex stabilizer of 
$(1/4,1/4,0)$ note that the group $S_{4}^{+} \times (-1)$ translates
$(1/4,1/4,0)$ to a total of $12$ different points in $\mathbb{R}^{3}$.  (It is easy to describe these points explicitly
using the fact that $S_{4}^{+} \times (-1)$ is the group of $3 \times 3$ signed permutation matrices.)  By 
Lemma \ref{pointstab}(1), $h \in S_{4}^{+} \times (-1)$ is an element of $\pi(\Gamma_{(1/4,1/4,0)})$ if and only
if $h \cdot (1/4,1/4,0) = \pm (1/4,1/4,0)$.  It follows from this that $\pi(\Gamma_{(1/4,1/4,0)})$ has index $6$
in $S_{4}^{+} \times (-1)$, i.e., $\pi(\Gamma_{(1/4,1/4,0)})$ has order $8$.  It is easy to check that the following
set is contained in $\pi(\Gamma_{(1/4,1/4,0)})$, and generates a group of order $8$:
$$ \left\{ \left( \begin{smallmatrix} 0 & 1 & 0 \\ 1 & 0 & 0 \\ 0 & 0 & 1 \end{smallmatrix} \right),
\left( \begin{smallmatrix} -1 & 0 & 0 \\ 0 & -1 & 0 \\ 0 & 0 & 1 \end{smallmatrix} \right),
\left( \begin{smallmatrix} -1 & 0 & 0 \\ 0 & -1 & 0 \\ 0 & 0 & -1 \end{smallmatrix} \right) \right\}.$$
Therefore, $\pi(\Gamma_{(1/4,1/4,0)})$ is generated by the above matrices.  One can easily show from this that
$\pi(\Gamma_{(1/4,1/4,0)}) \cong D_{2} \times \mathbb Z/2$.

 A routine check using Lemma \ref{pointstab}(1)
shows that $\pi( \Gamma_{(0,0,0)}) = \pi( \Gamma_{(1/2,0,0)}) = S_{4}^{+} \times (-1)$.  
One proves that the edge stabilizer groups are negligible by applying Proposition \ref{negproposition} to the midpoints of the edges.
\end{proof}

\subsection{A Fundamental Polyhedron for $\Gamma_{4}$}
Note that
$$\Gamma_{4}= \left\langle \frac{1}{2}(\mathbf{x} + \mathbf{z}), \mathbf{y}, \mathbf{z}
\right\rangle \rtimes (D_{2}^{+} \times (-1)).$$
We consider the convex compact polyhedron  $P$ given in Figure \ref{pictureofgamma4}. It is easy to see that
\[
P = \{ (x,y,z) \in \mathbb{R}^{3} \mid 0 \leq x , y , z \leq 1/2,  \, \, \, x + z \leq1/2 \}.
\]


\begin{figure}[h]
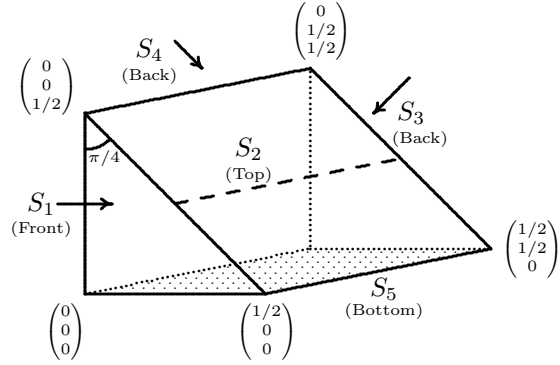

    
\begin{center}
\hbox{
\vbox{\beginpicture
\setcoordinatesystem units <1.2cm,1.2cm> point at 0 2
\setplotarea x from 0 to 6, y from -.5 to 3
\linethickness=.7pt
%
 \def\myarrow{\arrow <4pt> [.2, 1]} 
%
\setplotsymbol ({\circle*{.4}})
\plotsymbolspacing=.3pt        
\plot 2 0  2 2  4.5 2.5  /
\plot 2 0  4 0  2 2 /
\plot 4 0   6.5 .5  /
\plot 6.5 .5   4.5 2.5 /
\setplotsymbol ({\circle*{.2}})
\setdots <2pt> 
\plot 2 0  4.5 .5   4.5 2.5 /
\plot 4.5 .5   6.5 .5  /
\setdashes
\plot 3 1   5.5 1.5 /
\setsolid  
\circulararc 44 degrees from 2 1.6 center at 2 2
\put {\tiny$\pi/4$} at 2.2 1.5
\put {\tiny $\left(\begin{matrix}0 \\ 0 \\ 0\end{matrix}\right)$} [tr] at  2 -.1
\put {\tiny $\left(\begin{matrix}0 \\ 0 \\ 1/2\end{matrix}\right)$} [r] at
1.9 2.3
\put {\tiny $\left(\begin{matrix}0 \\ 1/2 \\ 1/2\end{matrix}\right)$} [b] at
4.6 2.6

\put {\tiny $\left(\begin{matrix}1/2 \\ 1/2 \\ 0\end{matrix}\right)$} [l] at
6.6 .5
\put {\tiny $\left(\begin{matrix}1/2 \\ 0 \\ 0\end{matrix}\right)$} [t] at
4 -.1

\put {$S_1$} at 1.5 1
\put {\tiny (Front)} [t] at 1.5 .8
\myarrow from 1.7 1 to   2.3 1

\put {$S_2$} at 3.8  1.6
\put {\tiny (Top)} [t] at 3.8 1.4
\put {$S_3$} at 5.6 2 
\put {\tiny (Back)} [t] at 5.7 1.8
\myarrow from  5.6 2.4  to  5.2 2
\put {$S_4$} at 2.7 2.7
\put {\tiny (Back)} [t] at 2.7 2.5
\myarrow from  3 2.8  to  3.3 2.5
\put {$S_5$} at 5.3 .05
\put {\tiny (Bottom)} [t] at 5.3  -.1
%
\setshadegrid span <2pt>
\hshade 0 2 4   .5 4.5 6.5 /  
\endpicture}
}
\end{center}

\hfill

\caption{This is a fundamental domain for the action of $\Gamma_{4}$ on ${\mathbb R}^3$. The dashed line
  indicates an axis of rotation.}
\label{pictureofgamma4}
\end{figure}


The sides $\{ S_{1}, S_{2}, S_{3}, S_{4}, S_5 \}$ are contained in the planes $y=0$, $x+z=1/2$, $y=1/2$, $x=0$,  and $z=0$ respectively.  The outward unit normal vectors $N_{1}, N_{2}, N_{3}, N_{4}$ $N_5$ are, respectively,
$$ -\mathbf{y}, \quad \frac{1}{\sqrt{2}} \left( \mathbf{x} + \mathbf{z} \right), \quad  \mathbf{y}, \quad -\mathbf{x}, \quad - \mathbf{z}.$$ 
The dihedral angles are as follows:
\begin{eqnarray*}
\theta ( S_{1}, S_{2}) = \pi /2, & \quad  \theta ( S_{1}, S_{4}) = \pi /2, & \quad  \theta ( S_{1}, S_{5}) = \pi /2,\\
\theta ( S_{2}, S_{3}) = \pi /2, & \quad  \theta ( S_{2}, S_{4}) = \pi /4, & \quad  \theta( S_{2}, S_{5}) = \pi /4,\\
\theta ( S_{3}, S_{4}) = \pi /2, & \quad  \theta( S_{3}, S_{5}) = \pi /2, & \quad  \theta ( S_{4}, S_{5}) = \pi /2.
\end{eqnarray*}

We consider the collection $\Phi = \{ \phi_{S_{1}}, \phi_{S_{2}}, \phi_{S_{3}}, \phi_{S_{4}}, \phi_{S_{5}} \}$, 
where $\phi_{S_{i}}$ is the reflection 
in the side $S_{i}$, for $i \in \{1, 3, 4, 5 \}$, and $\phi_{S_{2}}$ is the rotation $180$ degrees
about the line through $(1/4, 0,1/4)$ and $(1/4, 1/2, 1/4)$ (which is  the line dashed in Figure \ref{pictureofgamma4}).  
It is rather clear that $\Phi$ is a side-pairing.  We must show that $\Phi$ is a subproper side-pairing.

If $x$ is a point in the interior of some ridge that is not a face of $S_{2}$,  then it is easy to check that the dihedral
angle sum of $[x]$ is a submultiple of $\pi$. If $x$ lies on the axis of
rotation, then $[x]$ is again a singleton, and it follows easily that the dihedral
angle sum is a submultiple of $2\pi$. There are 3 more kinds of ridge cycles to consider.  Each has the form $\{ x, \phi_{S_{2}}(x) \}$, where $\phi_{S_{2}} \in \Phi$ is the rotation about the dashed line in Figure \ref{pictureofgamma4}, 
and $x$ is either on the ridge between the sides $S_{1}$ and $S_{2}$,  $S_{3}$ and $S_{2}$,  or $S_{4}$ and $S_{2}$.  The dihedral angle sum of $[x]$ is
$$2 \theta(S_{1},S_{2}), \quad  2\theta(S_{3},S_{2}), \quad \mathrm{or} \quad
\theta(S_{4},S_{2}) + \theta(S_{5},S_{2}),$$
respectively. All of the latter sums are $\pi$ or $\pi/2$.  It follows that $\Phi$ is a subproper side-pairing,
so Theorem \ref{poincare} applies.  The polyhedron $P$ is therefore an exact convex
compact fundamental polyhedron for the action of $\langle \Phi \rangle$ 
on $\mathbb{R}^{3}$.  Furthermore, we have
$$\Gamma_{P}=
\left\langle \left( \begin{smallmatrix} 1 & 0 & 0 \\ 0 & -1 & 0 \\ 0 & 0 & 1 \end{smallmatrix} \right),
\left( \begin{smallmatrix} 1/2 \\ 0 \\1/2 \end{smallmatrix}\right) + 
\left( \begin{smallmatrix} -1 & 0 & 0 \\ 0 & 1 & 0 \\ 0 & 0 & -1 \end{smallmatrix}\right),
\left( \begin{smallmatrix} 0 \\ 1 \\ 0 \end{smallmatrix}\right) + 
\left( \begin{smallmatrix}  1 & 0 & 0 \\ 0 & -1 & 0 \\ 0 & 0 & 1 \end{smallmatrix}\right),
\left( \begin{smallmatrix}  -1 & 0 & 0 \\ 0 & 1 & 0 \\ 0 & 0 & 1 \end{smallmatrix}\right),
\left( \begin{smallmatrix}  1 & 0 & 0 \\ 0 & 1 & 0 \\ 0 & 0 & -1 \end{smallmatrix}\right)\right\rangle,$$
where the isometries in question are $\phi_{S_{i}}$, for $i=1, \ldots, 5$, respectively.
It is straightforward to verify that $\Gamma_{P} = \Gamma$.

\begin{theorem} \label{theorem:g4}
Let $\widehat{\mathcal{C}}$ denote the $\Gamma$-equivariant cell structure on $\mathbb{R}^{3}$
determined by the standard cellulation $\mathcal{C}$ of $P$.  The quotient $\Gamma 
\backslash \mathbb{R}^{3}$
is obtained from $P$ by identifying the two halves of the side $S_{2}$ from Figure \ref{pictureofgamma4}.  The
set consisting of the vertices $(0,0,0)$, $(0,0, 1/2)$, $(0,1/2,1/2)$  and $(0, 1/2, 0)$ maps injectively into the
quotient.  The stabilizers of these vertices are determined by the following equalities:
\[
\pi(\Gamma_{(0,0,0)})=  \pi(\Gamma_{(0,1/2,0)}) = \pi(\Gamma_{(0,0,1/2)})= \pi(\Gamma_{(0,1/2,1/2)})= D_{2}^{+} \times (-1). \\
\]
All of the other stabilizers of cells in the quotient are negligible.
\end{theorem}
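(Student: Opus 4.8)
The plan is to follow the template established in the proofs of Theorems \ref{theorem:g1}--\ref{theorem:g3}. The discussion preceding the statement has already exhibited $P$ as an exact convex compact fundamental polyhedron for $\Gamma=\Gamma_4$ via the subproper side-pairing $\Phi$, so Theorem \ref{cells} immediately supplies the $\Gamma$-equivariant cell structure $\widehat{\mathcal{C}}$ on $\mathbb{R}^3$ extending the standard cellulation $\mathcal{C}$ of $P$, together with the fact that any $g$ leaving a cell invariant fixes it pointwise. To describe the quotient I would analyze the cycles $[x]=P\cap(\Gamma\cdot x)$: the four reflections $\phi_{S_1},\phi_{S_3},\phi_{S_4},\phi_{S_5}$ fix their faces pointwise and so induce no identifications inside $P$, whereas $\phi_{S_2}$ is the $180^\circ$ rotation about the axis $A=\{x=z=1/4\}$ lying in the plane $\langle S_2\rangle$; it folds $S_2$ onto itself, interchanging its two halves. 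Hence $\Gamma\backslash\mathbb{R}^3$ is $P$ with the two halves of $S_2$ glued, as claimed.

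Next I would record the subdivision forced by the bad side $S_2$. Since a point in the interior of $S_2$ pairs with a second interior point under $\phi_{S_2}$, the side $S_2$ is bad in the sense of Definition \ref{subdivision}; because $\phi_{S_2}$ fixes the segment $A\cap S_2$ pointwise, the standard cellulation subdivides $S_2$ along this segment, introducing the new vertices $(1/4,0,1/4)$ and $(1/4,1/2,1/4)$, a new edge joining them, and bisecting the two hypotenuse edges of the prism. This bookkeeping is essential, since it pins down the full list of cells whose stabilizers must be examined.

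Then I would compute stabilizers using Lemma \ref{pointstab}. Writing $L=L_4=\{(x,y,z):2x,\,y,\,z-x\in\mathbb{Z}\}$ and recalling that $D_2^+\times(-1)$ is the full group of sign matrices, for each of the four listed vertices $v$ a direct check shows $v-hv\in L$ for every sign matrix $h$, giving $\pi(\Gamma_v)=D_2^+\times(-1)$. Their injectivity into the quotient follows because they represent the four distinct vertex cycles $\{(0,0,0)\}$, $\{(0,1/2,0)\}$, $\{(0,0,1/2),(1/2,0,0)\}$, and $\{(0,1/2,1/2),(1/2,1/2,0)\}$. For the remaining cells I would invoke Proposition \ref{negproposition}: the midpoint $m$ of each remaining edge satisfies $2m\notin L$, so those edge stabilizers are negligible, and every open $2$- and $3$-cell has stabilizer of order at most two (fixing an open planar set pointwise) and is therefore negligible as well.

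The main obstacle is the pair of new axis vertices $(1/4,0,1/4)$ and $(1/4,1/2,1/4)$. For these $2v\in L$, so Proposition \ref{negproposition} does \emph{not} apply, and I must instead compute $\pi(\Gamma_v)$ directly from Lemma \ref{pointstab}(1): the condition $v-hv\in L$ forces the $x$- and $z$-signs of the sign matrix $h$ to coincide while leaving the $y$-sign free, so $\pi(\Gamma_v)$ has order four and is isomorphic to $D_2$, which embeds in $S_4$ and is hence negligible by Definition \ref{definition:negligible}. Verifying that these axis vertices---the only cells escaping the uniform negligibility criterion---genuinely have negligible order-four stabilizers is the one non-routine point of the argument.
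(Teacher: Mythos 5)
Your proof is correct and follows essentially the same route as the paper's: identify the eight vertices of the standard cellulation (including the two axis vertices created by subdividing the bad side $S_2$), verify the four stated stabilizer equalities via Lemma \ref{pointstab}, and dispose of all remaining cells by negligibility (Proposition \ref{negproposition} at edge midpoints, or incidence with negligible vertices). The only cosmetic difference is at the axis vertices $(1/4,0,1/4)$ and $(1/4,1/2,1/4)$, where you compute $\pi(\Gamma_v)$ exactly as the order-four group of sign matrices with equal $x$- and $z$-signs, whereas the paper merely observes that $v-\phi_{S_4}(v)\notin L$ and hence $|\Gamma_v|\leq 4$; both immediately give negligibility.
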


\begin{proof}
First, we note that there are eight vertices in the cellulation $\mathcal{C}$: the four listed
in the statement of the theorem, and
$$ (1/2,0,0), (1/2,1/2,0), (1/4,0,1/4), \text{and} \, \,  (1/4,1/2,1/4).$$
We note that the first two of these last vertices occupy the same orbits
as vertices in the statement of the theorem. The final two vertices $v$ are negligible
by Lemma \ref{pointstab}(1)
since $v - \phi_{S_{4}}(v) \notin L$, and so $|\Gamma_{v}| \leq 4$.

It is easy to check the equalities in the theorem using Lemma \ref{pointstab}(1).
The edges (and, therefore all higher-dimensional cells) have negligible stabilizers,
either because they are incident with vertices having negligible stabilizers, or because
of Proposition \ref{negproposition} (applied to the midpoints of the edges).

To verify the description of the quotient is straightforward.
\end{proof}

\subsection{A Fundamental Polyhedron for $\Gamma_{5}$}
Recall that $\Gamma_{5} = \langle \mathbf{v}_{1}, \mathbf{v}_{2}, \mathbf{v}_{3} \rangle
\rtimes (D_{6}^{+} \times (-1))$. We write $\Gamma$ in place of $\Gamma_{5}$.
Consider the convex polyhedron $P$ depicted in Figure \ref{pictureofgamma5}.  
One checks that 
$$P = \{ (x,y,z) \in \mathbb{R}^{3} \mid 0 \leq x + y + z \leq 3/2, -2x + y + z \leq 0, x \leq z \leq y+1 \}.$$
The sides $S_{1}$, $S_{2}$, $S_{3}$, $S_{4}$, and $S_{5}$ are convex subsets of the planes
$x+y+z=3/2$, $z=y+1$, $x+y+z=0$, $-2x+y+z=0$, and $x=z$, respectively.  The outward unit normal vectors $N_{1}$, $N_{2}$,
$N_{3}$, $N_{4}$, and $N_{5}$ are 
$$ \frac{1}{\sqrt{3}} \mathbf{v}_{1}, \quad \frac{1}{\sqrt{2}} \mathbf{v}_{3}, \quad \frac{-1}{\sqrt{3}} \mathbf{v}_{1},
\quad \frac{1}{\sqrt{6}} \left( -2\mathbf{v}_{2} + \mathbf{v}_{3} \right), \quad \mathrm{and} \quad 
\frac{1}{\sqrt{2}} \left( \mathbf{v}_{2} - \mathbf{v}_{3} \right),$$
respectively.

\vspace{-1.9cm}
\begin{figure}[!h]
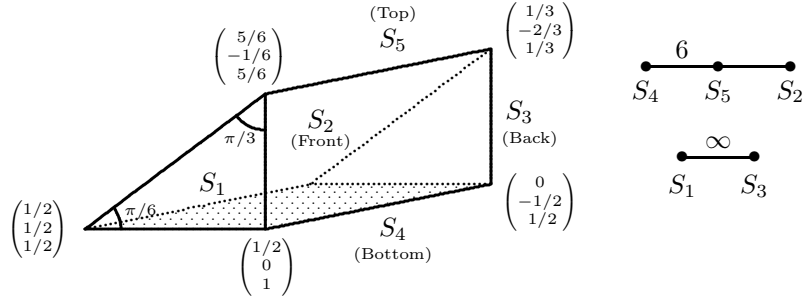

\begin{center}
\hbox{
\vbox{\beginpicture
\setcoordinatesystem units <1.2cm,1.2cm> point at -.2 4
\setplotarea x from -1.3 to 9.2, y from -.5 to 4
\linethickness=.7pt
%
%
\setplotsymbol ({\circle*{.4}})
\plotsymbolspacing=.3pt        
\plot 0 0  2 0   4.5 .5   4.5 2  2 1.5  0 0 /
\plot 2 1.5  2 0 /
\setplotsymbol ({\circle*{.2}})
\setdots <2pt>
\plot 0 0  2.5 .5   4.5 2 /
\plot 2.5 .5   4.5 .5 /
\setsolid  
\circulararc 37 degrees from .4 0 center at 0 0
\circulararc -53 degrees from 2 1.1 center at 2 1.5
\put {\tiny $\left(\begin{matrix}1/2 \\ 1/2 \\ 1/2\end{matrix}\right)$} [r] at -.2 0
\put {\tiny $\left(\begin{matrix}1/2 \\ 0 \\ 1\end{matrix}\right)$} [t] at 2 -.1
\put {\tiny $\left(\begin{matrix}0 \\ -1/2 \\ \;1/2\end{matrix}\right)$} [l] at 4.6 .3
\put {\tiny $\left(\begin{matrix}1/3 \\ -2/3 \\ 1/3\end{matrix}\right)$} [l] at 4.6 2.2
\put {\tiny $\left(\begin{matrix}\;5/6 \\ -1/6 \\ \;5/6\end{matrix}\right)$} [b] at 1.8 1.6
\put {\tiny$\pi/6$} at .6 .2
\put {\tiny$\pi/3$} at 1.7 1
\put {$S_1$} at 1.4 .5
\put {$S_2$} at 2.6  1.2
\put {\tiny (Front)} [t] at 2.6 1.05
\put {$S_3$} at 4.8 1.3 
\put {\tiny (Back)} [l] at 4.6 1
\put {$S_4$} at 3.4 0
\put {\tiny (Bottom)} [t] at 3.4 -.2
\put {$S_5$} at 3.4  2.1
\put {\tiny (Top)} [b] at 3.4  2.3
\setshadegrid span <2pt>
\hshade 0 0 2  .5 2.5 4.5 /       

\setsolid
\putrule from 6.2 1.8 to 7.8 1.8
\put {$\bullet$} at  6.2 1.8
\put {$S_4$} [t] at  6.2 1.65
\put {$6$} [b]   at  6.6 1.9
\put {$\bullet$} at  7 1.8
\put {$S_5$} [t] at  7 1.65
\put {$\bullet$} at  7.8 1.8
\put {$S_2$} [t] at  7.8 1.65
\putrule from 6.6 .8 to 7.4 .8
\put {$\bullet$} at  6.6 .8
\put {$\bullet$} at  7.4 .8
\put {$S_1$} [t] at  6.6 .6
\put {$S_3$} [t] at  7.4 .6
\put {$\infty$} [b] at  7 .9
\endpicture}
}
\end{center}

\hfill

\caption{On the left is a fundamental domain for the action of
 $\Gamma_{5}$ on ${\mathbb R}^3$. The unlabelled vertex at the intersection of
 three dotted lines is the origin. On the right is the Coxeter diagram for
 $\Gamma_{5}$.}
\label{pictureofgamma5}
\end{figure}

It is straightforward to check that 
\begin{eqnarray*}
\theta(S_{1}, S_{2}) = \pi / 2, & \quad & \theta(S_{1}, S_{4}) = \pi / 2, \\
\theta(S_{1}, S_{5}) = \pi/2, & \quad & \theta(S_{2}, S_{3}) = \pi / 2, \\
\theta(S_{2}, S_{4}) = \pi/2, & \quad & \theta(S_{2}, S_{5}) = \pi/3, \\ 
\theta(S_{3}, S_{4}) = \pi / 2, & \quad & \theta(S_{3}, S_{5}) = \pi/2, \\
\theta(S_{4}, S_{5}) = \pi/6.
\end{eqnarray*}
Since all of these dihedral angles are submultiples of $\pi$, by Corollary \ref{coxeter}
the group $\Gamma_{P}$ generated by reflections
in the sides of $P$ is discrete, and $P$ is an exact convex compact fundamental polyhedron for the action
of $\Gamma_{P}$ on $\mathbb{R}^{3}$.  We have $\Gamma_{P}=$
$$ \left\langle \left( \begin{smallmatrix} 1 \\ 1 \\ 1 \end{smallmatrix} \right) + 
\frac{1}{3}\left( \begin{smallmatrix} 1 & -2 & -2 \\ -2 & 1 & -2 \\ -2 & -2 & 1 \end{smallmatrix}\right),
\left( \begin{smallmatrix}  0 \\ -1 \\ 1 \end{smallmatrix} \right) + 
\left( \begin{smallmatrix}  1 & 0 & 0 \\ 0 & 0 & 1 \\ 0 & 1 & 0 \end{smallmatrix}\right),
 \frac{1}{3}\left( \begin{smallmatrix}  1 & -2 & -2 \\ -2 & 1 & -2 \\ -2 & -2 & 1 \end{smallmatrix}\right),
\frac{1}{3}\left( \begin{smallmatrix}  -1 & 2 & 2 \\ 2 & 2 & -1 \\ 2 & -1 & 2 \end{smallmatrix}\right),
\left( \begin{smallmatrix} 0 & 0 & 1 \\ 0 & 1 & 0 \\ 1 & 0 & 0 \end{smallmatrix}\right) \right\rangle.$$
It is not difficult to check that $\Gamma_{P} = \Gamma$, so $P$ is an exact convex compact
fundamental polyhedron for the action of $\Gamma$ on $\mathbb{R}^{3}$.

\begin{theorem} \label{theorem:g5}
Let $\widehat{\mathcal{C}}$ denote the $\Gamma$-equivariant cell structure on $\mathbb{R}^{3}$ determined by the standard
cellulation $\mathcal{C}$ of $P$.  The quotient $\Gamma \backslash \mathbb{R}^{3}$ is $P$ itself, endowed with the standard
cellulation $\mathcal{C}$.  The non-negligible stabilizer groups are determined by the following equalities:
$$\pi(\Gamma_{(0,0,0)}) =  \pi(\Gamma_{(1/2,1/2,1/2)}) = D_{6}^{+} \times (-1);$$
$$\pi(\Gamma_{(1/2,0,1)}) =  \pi(\Gamma_{(0,-1/2,1/2)}) \cong D_2 \times \mathbb Z/2;$$
$$\pi(\Gamma_{(1/3,-2/3,1/3)})  =  \pi(\Gamma_{(5/6,-1/6,5/6)})  =  D_{6}';$$
$$\pi(\Gamma_{(1/4,1/4,1/4)}) = D_{6}''.$$
(Note that $\Gamma_{(1/4,1/4,1/4)}$ is the stabilizer group of the edge connecting $(0,0,0)$ to
$(1/2,1/2,1/2)$.)
The stabilizer groups of all other cells are negligible.
\end{theorem}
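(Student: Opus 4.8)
The plan is to verify each of the claimed stabilizer groups directly using Lemma \ref{pointstab}(1), which reduces the computation of $\Gamma_{v}$ to determining the set $\{h \in D_{6}^{+} \times (-1) \mid v - hv \in L\}$, where $L = \langle \mathbf{v}_{1}, \mathbf{v}_{2}, \mathbf{v}_{3} \rangle$. First I would dispose of the easy cases. For the origin $(0,0,0)$, every $h$ fixes $v$, so $\pi(\Gamma_{(0,0,0)})$ is all of $D_{6}^{+} \times (-1)$; the point $(1/2,1/2,1/2) = \frac{1}{2}\mathbf{v}_{1}$ lies on the axis $\ell(x=y=z)$, and since $D_{6}^{+} \times (-1)$ acts on the orthogonal factor $P(x+y+z=0)$ while fixing $\ell(x=y=z)$ up to sign, one checks $v - hv$ is always an integral multiple of $\mathbf{v}_{1}$, giving the full point group again.

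Next I would handle the remaining vertices. For each of $(1/2,0,1)$, $(0,-1/2,1/2)$, $(1/3,-2/3,1/3)$, $(5/6,-1/6,5/6)$, and the edge midpoint $(1/4,1/4,1/4)$, the strategy is identical: enumerate the orbit $(D_{6}^{+}\times(-1))\cdot v$, then determine which $h$ satisfy $v - hv \in L$ by expressing $v - hv$ in the basis $\{\mathbf{v}_{1},\mathbf{v}_{2},\mathbf{v}_{3}\}$ and checking integrality. A useful bookkeeping device is the index formula $[\,D_{6}^{+}\times(-1) : \pi(\Gamma_{v})\,] = |\mathcal{O}(v)|$, analogous to the counting argument used in Theorem \ref{theorem:g3} for the $D_2 \times \mathbb{Z}/2$ computation. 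This pins down the order of each stabilizer, after which I would exhibit an explicit generating set of matrices of that order contained in $\pi(\Gamma_{v})$ to identify the isomorphism type. For the two vertices with stabilizer $D_{6}'$ and the edge midpoint with stabilizer $D_{6}''$, I would use the structural descriptions of these groups from Subsection \ref{subsubsection:pointgroupsmixed} (namely their action on the orthogonal factorization $\mathbb{R}^{3} = P(x+y+z=0)\times \ell(x=y=z)$) to confirm the identification, verifying that the reflection/rotation behavior matches.

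For the negligibility claims, I would invoke Proposition \ref{negproposition}: since $H = D_{6}^{+}\times(-1) \leq S_{4}^{+}\times(-1)$ does not literally hold, I would instead argue directly that any cell not listed has a stabilizer embedding into $D_{6}^{+}$ (an orientation-preserving point group of order at most $12$) or a small extension thereof, hence isomorphic to a subgroup of $S_4$, by checking that the relevant midpoints $v$ satisfy $2v \notin L$ and then verifying the stabilizer avoids the inversion. The cleanest route is to apply Proposition \ref{negproposition} after noting that the edge and face midpoints all have a coordinate combination that is a proper fraction relative to the prismatic lattice $L_{\mathcal{P}}$, so that $v - (-1)v = 2v \notin L$, forcing the stabilizer into an orientation-preserving (hence negligible) group.

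The main obstacle will be the $D_{6}'$ and $D_{6}''$ identifications, since these require care in the non-orthonormal basis $\{\mathbf{v}_1,\mathbf{v}_2,\mathbf{v}_3\}$: the integrality conditions $v - hv \in L$ are most naturally computed in this basis, but the geometric identification of the stabilizer as $D_{6}'$ versus $D_{6}''$ depends on whether the fixing elements act as reflections or rotations in the ambient $\mathbb{R}^{3}$, which is a question about the orthogonal (standard) coordinates. Reconciling these two coordinate systems, and in particular confirming that $\Gamma_{(1/4,1/4,1/4)}$ really does stabilize the \emph{edge} (not merely the midpoint) and realizes $D_{6}''$ rather than an abstractly isomorphic but geometrically distinct group, is where I expect the bulk of the genuine verification to lie; everything else is routine application of Lemma \ref{pointstab} and Proposition \ref{negproposition}.
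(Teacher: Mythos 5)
Your vertex computations follow essentially the paper's route: Lemma \ref{pointstab}(1), an order bound obtained by excluding the inversion, and then exhibiting an explicit standard point group ($D_6'$ or $D_6''$) inside the stabilizer to force equality. Two remarks there. First, your bookkeeping formula $[\,H : \pi(\Gamma_{v})\,] = |\mathcal{O}(v)|$ is off: the index equals the number of distinct classes $hv + L$ in $\mathbb{R}^{3}/L$, not the size of the orbit in $\mathbb{R}^{3}$ (in Theorem \ref{theorem:g3} the orbit of $(1/4,1/4,0)$ has $12$ points but the index is $6$, because $h v \equiv \pm v \bmod L$). Second, the paper gets the two $D_{2}\times\mathbb{Z}/2$ vertices for free as parabolic subgroups $\langle \phi_{S_1},\phi_{S_2},\phi_{S_4}\rangle$ and $\langle \phi_{S_2},\phi_{S_3},\phi_{S_4}\rangle$ of the Coxeter group rather than by a lattice computation; your direct route works but is heavier.

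The genuine gap is in your negligibility argument. You propose that for the unlisted cells, $2v \notin L$ forces $(-1) \notin \pi(\Gamma_{v})$, hence the stabilizer is (close to) orientation-preserving, hence isomorphic to a subgroup of $S_{4}$. Neither implication survives for $H = D_{6}^{+}\times(-1)$: a subgroup avoiding $(-1)$ need not be orientation-preserving ($D_6'$ and $D_6''$ are not), and, more importantly, the subgroups $C_{6}^{+}$, $D_{6}^{+}$, $D_6'$, $D_6''$ all avoid the inversion yet are \emph{not} isomorphic to subgroups of $S_{4}$ (which has no element of order $6$ and no dihedral subgroup of order $12$) --- these are exactly the non-negligible stabilizers the theorem is cataloguing. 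Proposition \ref{negproposition} is stated only for $H \leq S_{4}^{+}\times(-1)$ precisely because of this. Concretely, your criterion would declare the vertices $(1/3,-2/3,1/3)$, $(5/6,-1/6,5/6)$ (where $2v \notin L$ but the stabilizer is $D_6'$) and the edge with midpoint $(1/4,1/4,1/4)$ negligible, contradicting the statement you are proving. To dispose of the remaining edges you must actually show their stabilizers contain no element of order $6$ and are not $(\mathbb{Z}/2)^{3}$; the paper does this by identifying each remaining edge stabilizer with the parabolic subgroup generated by a pair of the side-pairing reflections, which is $D_{2}$ or $D_{3}$ since every dihedral angle other than $\theta(S_4,S_5)=\pi/6$ is $\pi/2$ or $\pi/3$. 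Some direct computation of this kind is unavoidable; the inversion test alone cannot carry the claim.
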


\begin{proof}
The statement about the quotient follows because each cycle $[x]$ is a singleton.

We now consider vertex stabilizers.  The equality $\pi(\Gamma_{(0,0,0)}) = D_{6}^{+} \times (-1)$ is trivial.
The equality $\pi(\Gamma_{(1/2,1/2,1/2)}) = D_{6}^{+} \times (-1)$ follows from Lemma \ref{pointstab}(1) and
the fact that $h \cdot (1/2,1/2,1/2) = \pm (1/2,1/2,1/2)$, for any $h \in D_{6}^{+} \times (-1)$.  Since 
$2v \not \in \langle \mathbf{v}_{1}, \mathbf{v}_{2}, \mathbf{v}_{3} \rangle$, for 
$v \in \{ (1/3,-2/3,1/3), (5/6,-1/6,5/6) \}$, it follows from Lemma \ref{pointstab}(1) that $(-1)$ (the antipodal map)
is in neither $\pi(\Gamma_{(1/3,-2/3,1/3)})$ nor $\pi(\Gamma_{(5/6,-1/6,5/6)})$.  This means that each of these groups
has order at most $12$.  It is routine to check, using Lemma \ref{pointstab}(1), that $D_{6}'$ is a 
subgroup of each of these two groups.  This shows that $\pi(\Gamma_{(1/3,-2/3,1/3)}) = \pi(\Gamma_{(5/6,-1/6,5/6)}) =
D_{6}'$.  Finally, we consider $(1/2,0,1)$ and $(0,-1/2,1/2)$.  Using the theory of Coxeter groups, we note that these vertices are stabilized by the groups $\langle \phi_{S_{1}}, \phi_{S_{2}},   \phi_{S_{4}} \rangle$, and $\langle \phi_{S_{2}}, \phi_{S_{3}},   \phi_{S_{4}} \rangle$, both of which are isomorphic to $D_2 \times \mathbb Z/2$.

Next,  consider the stabilizer of the edge connecting $(0,0,0)$ to $(1/2,1/2,1/2)$, which is the same as $\Gamma_{(1/4,1/4,1/4)}$.  We note
that, by Lemma \ref{pointstab}(1), 
$(-1) \not \in \pi(\Gamma_{(1/4,1/4,1/4)})$ since $(1/2,1/2,1/2) \not \in \langle \mathbf{v}_{1},
\mathbf{v}_{2}, \mathbf{v}_{3} \rangle$.  It is not difficult to see that
$D_{6}'' \leq \pi(\Gamma_{(1/4,1/4,1/4)})$, and this inclusion must be an equality by order considerations.

We need to show that the remaining edges have negligible stabilizers.  Once again we use the theory of Coxeter groups
to simplify our work.  We  consider the subdiagrams of the Coxeter diagram that are determined by pairs of distinct vertices other than $\{ S_{1}, S_{3} \}$ (which does not determine an edge) and $\{ S_{4}, S_{5} \}$ (which is accounted for above).
These subdiagrams determine subgroups that are isomorphic
to $D_{2}$ or $D_{3}$, so all are negligible.

It follows easily that all remaining cells have negligible stabilizer groups.
\end{proof}  

\subsection{A Fundamental Polyhedron for $\Gamma_{6}$}

We note that 
$$\Gamma_{6} = \left\langle \frac{1}{3}(\mbf{v}_{1} + \mbf{v}_{2} + \mbf{v}_{3}), \mbf{v}_{2}, \mbf{v}_{3} 
\right\rangle \rtimes (D_{3}^{+} \times (-1)).$$ We will write $\Gamma$ in place of $\Gamma_{6}$ throughout this subsection.

  Let $P$ denote the polyhedron in
Figure \ref{pictureofgamma6}.  The sides $S_{1}$, $S_{2}$, $S_{3}$, $S_{4}$, and $S_{5}$ are contained in the
planes $x=z$, $5x+2y+5z = 3$, $x-y=1$, $y=z$, and $x+y+z=0$, respectively.  The outward-pointing
unit normal vectors $N_{1}$, $N_{2}$, $N_{3}$, $N_{4}$, and $N_{5}$ are, respectively,
$$ \frac{1}{\sqrt{2}}\left( -\mathbf{x} + \mathbf{z} \right),
 \frac{1}{3\sqrt{6}} \left( 5 \mathbf{x} + 2\mathbf{y} + 5 \mathbf{z} \right), 
\frac{1}{\sqrt{2}} \left( \mathbf{x} - \mathbf{y} \right),
\frac{1}{\sqrt{2}} \left( \mathbf{y} - \mathbf{z} \right),
\frac{-1}{\sqrt{3}} \left( \mathbf{x} + \mathbf{y} + \mathbf{z} \right).$$


\begin{figure}[h]
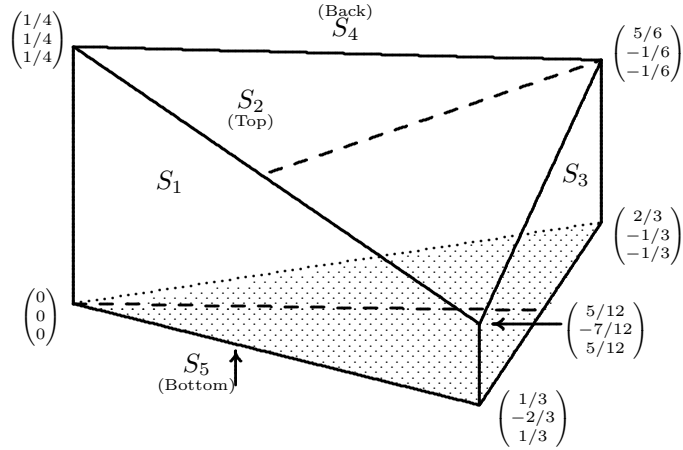
 
    
\begin{center}\hfil
\hbox{
\vbox{
  \beginpicture
\setcoordinatesystem units <.9cm,.9cm> point at -.2  4.2
\setplotarea x from -.5 to 8.2, y from -2 to 4.2
\linethickness=.7pt
%
 \def\myarrow{\arrow <4pt> [.2, 1]} 
%
\setplotsymbol ({\circle*{.4}})
\plotsymbolspacing=.3pt        
\plot 0 0  6 -1.5   7.8 1.2   7.8 3.6   0 3.8  0 0 /
\plot 0 3.8  6 -.3  7.8 3.6 /
\plot 6 -1.5  6 -.3 /
\setplotsymbol ({\circle*{.2}})
\setdashes
\plot 2.9 1.94   7.8 3.6 /
\plot 0 0  6.9 -.1 /
\setdots <3pt>   
\plot 0 0 7.8 1.2 /
\put {\tiny $\left(\begin{matrix}0 \\ 0 \\ 0\end{matrix}\right)$} [r] at -.2 -.2
\put {\tiny $\left(\begin{matrix}5/12\\-7/12\\5/12\end{matrix}\right)$} 
[l] at 7.2 -.4
\put {\tiny $\left(\begin{matrix}1/3 \\ -2/3 \\ 1/3\end{matrix}\right)$} [l]
at 6.2 -1.7
\put {\tiny $\left(\begin{matrix}2/3 \\ -1/3 \\ -1/3\end{matrix}\right)$}
[l] at 7.9 1
\put {\tiny $\left(\begin{matrix}5/6 \\ -1/6 \\ -1/6\end{matrix}\right)$}
[l] at 7.9 3.7
\put {\tiny $\left(\begin{matrix}1/4 \\ 1/4 \\ 1/4\end{matrix}\right)$} [r]
at -.1 3.9
\put {$S_1$} at 1.4 1.8
\put {$S_2$} at 2.6 3
\put {\tiny (Top)} [t] at 2.6 2.8
\put {$S_3$} at 7.4 1.9 
\put {$S_4$} [b] at 4 3.9
\put {\tiny (Back)} [b] at 4 4.2
\put {$S_5$} at 1.8 -.9
\put {\tiny (Bottom)} [t] at 1.8 -1.1
\setshadegrid span <2pt>
\hshade -1.5 6 6   0 0 7 /  
\hshade -.1 0 7   1.2 7.8 7.8 /  

\setsolid
\myarrow from  7.2 -.3 to 6.2 -.3 
\myarrow from 2.4 -1.2 to 2.4 -.7
\endpicture}
}
\hfil

\end{center}


\caption{The polyhedron pictured here is an exact convex compact fundamental
  polyhedron for the action of $\Gamma_{6}$ on ${\mathbb R}^3$. The dashed lines
  represent axes of rotation (through 180 degrees) for certain elements of
  $\Gamma_{6}$.  Note that the base of the figure is an equilateral triangle,
  but the top is isosceles.}
\label{pictureofgamma6}
\end{figure}

The dihedral angles between faces are:
\begin{eqnarray*}
\theta(S_{1},S_{2}) = \pi/2, &  & \theta(S_{1},S_{3}) = \pi/3, \\
\theta(S_{1},S_{4}) = \pi/3,&  & \theta(S_{1},S_{5}) = \pi/2, \\
\theta(S_{2},S_{3}) & = & \arccos(\frac{-1}{\sqrt{12}}), \\
\theta(S_{2},S_{4}) & = & \arccos(\frac{1}{\sqrt{12}}), \\
\theta(S_{3},S_{4}) = \pi/3, &  & \theta(S_{3},S_{5}) = \pi/2,\\
\theta(S_{4},S_{5}) = \pi/2.
\end{eqnarray*}

We define $\Phi = \{ \phi_{S_{1}}, \phi_{S_{2}}, \phi_{S_{3}}, \phi_{S_{4}}, \phi_{S_{5}} \}$ as follows.
Each of $\phi_{S_{1}}$, $\phi_{S_{3}}$, and $\phi_{S_{4}}$ is reflection in the corresponding face of $P$.
In particular, we note that the isometries 
$$ \left( \begin{smallmatrix} 0 & 0 & 1 \\ 0 & 1 & 0 \\ 1 & 0 & 0 \end{smallmatrix} \right),
\left( \begin{smallmatrix} 1 \\ -1 \\ 0 \end{smallmatrix} \right) + \left( \begin{smallmatrix} 0 & 1 & 0 \\
1 & 0 & 0 \\ 0 & 0 & 1 \end{smallmatrix} \right), \left( \begin{smallmatrix} 1 & 0 & 0 \\ 0 & 0 & 1 \\ 0 & 1 & 0
\end{smallmatrix} \right),$$
are $\phi_{S_{1}}$, $\phi_{S_{3}}$, and $\phi_{S_{4}}$, respectively.  The isometries $\phi_{S_{2}}$ and
$\phi_{S_{5}}$ are rotations about the dashed lines in the faces $S_{2}$ and $S_{5}$ (respectively).  
The isometries 
$$ \left( \begin{smallmatrix} 2/3 \\ -1/3 \\ 2/3 \end{smallmatrix} \right) + \left( \begin{smallmatrix} 0 & 0 & -1 \\
0 & -1 & 0 \\ -1 & 0 & 0 \end{smallmatrix} \right), 
\left( \begin{smallmatrix} 0 & -1 & 0 \\ -1 & 0 & 0 \\ 0 & 0 & -1 \end{smallmatrix} \right) $$
are $\phi_{S_{2}}$ and $\phi_{S_{5}}$ (respectively).

The set $\Phi$ is a subproper side-pairing of $P$.  We leave the checking to the reader, but 
note that no ridge cycle has more than two elements and the dihedral angle sum of a ridge cycle
is always a submultiple of $\pi$.
It follows from Theorem \ref{poincare} that $P$ is an exact convex compact fundamental polyhedron for the action of
$\langle \Phi \rangle$ on $\mathbb{R}^{3}$.

Finally, we briefly argue that $\Gamma = \langle \Phi \rangle$.  We notice
that $\langle \phi_{S_{1}}, \phi_{S_{4}}, \phi_{S_{5}} \rangle = D_{3}^{+} \times (-1)$.
From this, it follows easily that
$\langle \phi_{S_{1}}, \phi_{S_{3}}, \phi_{S_{4}}, \phi_{S_{5}} \rangle$ contains all
of $\langle \mathbf{v}_{2}, \mathbf{v}_{3} \rangle$, as well.  It is easy to see that 
the group $\langle \Phi \rangle$ must contain the translation $(2/3, -1/3, 2/3)$
(since $D_{3}^{+} \times (-1) \leq \langle \Phi \rangle$, and $\phi_{S_{2}} \in \langle \Phi \rangle$),
so $\Gamma \leq \langle \Phi \rangle$.  The reverse inclusion is clear.

It follows that $P$ is an exact convex compact fundamental polyhedron for the action of $\Gamma$
on $\mathrm{R}^{3}$.  

\begin{theorem} \label{theorem:g6}
Let $\widehat{\mathcal{C}}$ denote the $\Gamma$-equivariant cell structure on $\mathbb{R}^{3}$
determined by the standard cellulation $\mathcal{C}$ of $P$.  The quotient $\Gamma \backslash \mathbb{R}^{3}$
is obtained from $P$ by identifying the two halves of each of the sides $S_{2}$ and $S_{5}$ from Figure 6.  
The set consisting of the vertices $(0,0,0)$ and $(5/6,-1/6,-1/6)$ maps injectively into the
quotient.  The stabilizers of these vertices are determined by the following equalities:
$$ \pi(\Gamma_{(0,0,0)}) = \pi(\Gamma_{(5/6,-1/6,-1/6)}) =  D_{3}^{+} \times (-1).$$
All of the other stabilizers of cells in the quotient are negligible.
\end{theorem}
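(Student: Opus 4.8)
The plan is to follow exactly the template established in the proofs of Theorems \ref{theorem:g1} through \ref{theorem:g5}, since all the structural work (verifying that $P$ is an exact fundamental polyhedron, identifying $\Gamma = \langle \Phi \rangle$) has already been carried out above. First I would dispose of the quotient description: since $\phi_{S_2}$ and $\phi_{S_5}$ are rotations rather than reflections, the sides $S_2$ and $S_5$ are each \emph{bad} in the sense of Definition \ref{subdivision}, so each is folded in half by its pairing isometry. The remaining cycles $[x]$ are singletons (as they arise from genuine reflections), and since $[x] = P \cap (\Gamma \cdot x)$ by the theory developed in Theorem \ref{cells}, the quotient is precisely $P$ with the two halves of $S_2$ and $S_5$ glued. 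The injectivity of the two listed vertices into the quotient follows because the cycle generated by each is a singleton.

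Next I would compute the two non-negligible vertex stabilizers using Lemma \ref{pointstab}. For $(0,0,0)$ the equality $\pi(\Gamma_{(0,0,0)}) = D_3^+ \times (-1)$ is immediate, since every point group element fixes the origin. For $(5/6,-1/6,-1/6)$, I would apply Lemma \ref{pointstab}(1): an element $h \in D_3^+ \times (-1)$ lies in $\pi(\Gamma_{(5/6,-1/6,-1/6)})$ iff $(5/6,-1/6,-1/6) - h \cdot (5/6,-1/6,-1/6) \in L$, where $L = \langle \frac{1}{3}(\mathbf{v}_1 + \mathbf{v}_2 + \mathbf{v}_3), \mathbf{v}_2, \mathbf{v}_3 \rangle$. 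One checks directly that every $h \in D_3^+ \times (-1)$ sends this vector into $L$, giving the full point group; here I expect to use the explicit description of $D_3^+ \times (-1)$ as the matrices of the form $SP$ with $S \in \{I, (-1)\}$ and $P$ a permutation matrix (from \ref{subsubsection:pointgroupsinversion}), which makes the membership check a short computation modulo $L$.

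Finally I would argue that all other cells are negligible. The key tool is Proposition \ref{negproposition}: since $\Gamma = \langle L, H \rangle$ with $H = D_3^+ \times (-1) \leq S_4^+ \times (-1)$, any vertex $v$ with $2v \notin L$ has a negligible stabilizer. I would check that the remaining vertices $(5/12,-7/12,5/12)$, $(1/3,-2/3,1/3)$, $(2/3,-1/3,-1/3)$, and $(1/4,1/4,1/4)$ all satisfy $2v \notin L$, hence are negligible; for the edges I would apply the same proposition to their midpoints. Since the edge and vertex stabilizers are negligible, the stabilizers of all higher-dimensional cells—being subgroups of these—are negligible as well, and negligibility is closed under passage to subgroups by Lemma \ref{lemma:negligibleisomorphismtypes}.

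The main obstacle will be the lattice-membership computation for the stabilizer of $(5/6,-1/6,-1/6)$: unlike the cubic-lattice cases where $L = \mathbb{Z}^3$, here $L$ is the prismatic-type lattice $\langle \frac{1}{3}(\mathbf{v}_1 + \mathbf{v}_2 + \mathbf{v}_3), \mathbf{v}_2, \mathbf{v}_3 \rangle$, so expressing $(5/6,-1/6,-1/6) - h \cdot (5/6,-1/6,-1/6)$ in terms of the $\mathbf{v}_i$ basis and checking integrality of the coefficients (and the appropriate congruence conditions coming from the $\frac{1}{3}(\mathbf{v}_1+\mathbf{v}_2+\mathbf{v}_3)$ generator) requires careful bookkeeping. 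This is routine but is where a sign or coordinate error would be easiest to make, so I would verify it element-by-element over a generating set for $D_3^+ \times (-1)$.
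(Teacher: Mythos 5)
Your proposal follows the paper's template for the quotient description and for the two non-negligible vertices, but there is a genuine gap in the negligibility argument: your list of ``remaining vertices'' is incomplete. By your own first paragraph, the sides $S_{2}$ and $S_{5}$ are bad, and the standard cellulation (Definition \ref{subdivision}) therefore introduces additional vertices along the fixed lines of the rotations $\phi_{S_{2}}$ and $\phi_{S_{5}}$ — in the paper these are the endpoints $(1/3,-1/6,1/3)$ and $(1/2,-1/2,0)$ of the dashed lines in Figure \ref{pictureofgamma6}, bringing the vertex count to eight, not six. These two vertices are precisely where your chosen tool fails: $2\cdot(1/3,-1/6,1/3)=\frac{1}{3}(\mathbf{v}_{1}+\mathbf{v}_{2}+\mathbf{v}_{3})$ and $2\cdot(1/2,-1/2,0)=\mathbf{v}_{2}$ are both lattice generators, so $2v\in L$ and Proposition \ref{negproposition} gives no information. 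An argument that only invokes Proposition \ref{negproposition} therefore cannot close the case.

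The paper handles these two vertices differently: using Lemma \ref{pointstab}(1) one checks that the order-three element
$\left(\begin{smallmatrix} 0 & 1 & 0 \\ 0 & 0 & 1 \\ 1 & 0 & 0 \end{smallmatrix}\right)$
does not stabilize either point (e.g.\ $(1/2,-1/2,0)-h\cdot(1/2,-1/2,0)=(1,-1/2,-1/2)\notin L$), so each stabilizer avoids all elements of order $3$ in $D_{3}^{+}\times(-1)$ and hence has order $1$, $2$, or $4$, which makes it negligible. With that supplement your remaining checks go through: the four corner vertices you list do satisfy $2v\notin L$, and once all vertices other than $(0,0,0)$ and $(5/6,-1/6,-1/6)$ are known to be negligible, every higher-dimensional cell has stabilizer contained in a negligible vertex stabilizer (the two non-negligible vertices are not joined by an edge), which is how the paper finishes.
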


\begin{proof}
The statement about the quotient follows easily from a straightforward description of the cycles $[x]$, and
from the fact that $[x] = P \cap (\Gamma \cdot x)$.

We turn to a description of the vertex stabilizers.  Note that there are a total of $8$ vertices to consider.
We must describe the stabilizers of $(1/3,-1/6,1/3)$ and $(1/2,-1/2,0)$ since these points are endpoints of dashed
lines from Figure \ref{pictureofgamma6}, and will therefore be vertices in the standard cellulation of $P$.  

First, we note the stabilizers of the vertices  $(1/4,1/4,1/4)$, $(5/12,-7/12,5/12)$, $(1/3,-2/3,1/3)$, and $(2/3,-1/3,-1/3)$
are all negligible by Proposition \ref{negproposition}.  Second, we note that $\pi(\Gamma_{(1/3,-1/6,1/3)})$ and 
$\pi(\Gamma_{(1/2,-1/2,0)})$ do not contain the matrix
$$ \left( \begin{smallmatrix} 0 & 1 & 0 \\ 0 & 0 & 1 \\ 1 & 0 & 0 \end{smallmatrix} \right),$$
by Lemma \ref{pointstab}(1).  It follows that both of these groups have order $1$, $2$, or $4$; this forces both of these groups to be negligible. Third, we leave the easy verification of the equalities from the statement of the theorem 
to the reader.

It follows easily that all of the remaining cells must have negligible stabilizers, since each such stabilizer
is a subgroup of a negligible group.
\end{proof}

\subsection{A Fundamental Polyhedron for $\Gamma_{7}$}

We note that 
$$ \Gamma_{7} = 
\left\langle \mbf{v}_{1}, \frac{1}{3}\left( \mbf{v}_{2} + \mbf{v}_{3} \right), \mbf{v}_{3} \right\rangle 
\rtimes (D_{3}^{+} \times (-1)).$$

We write $\Gamma$ in place of $\Gamma_{7}$. Let $P$ denote the polyhedron
in Figure \ref{pictureofgamma7}.  The sides $S_{1}$, $S_{2}$, $S_{3}$, $S_{4}$, and $S_{5}$ are contained in the planes
$x-2y+z=1$, $x+y+z=3/2$, $x=z$, $-2x+y+z=0$, and $x+y+z=-3/2$, respectively.  The outward-pointing unit
normal vectors $N_{1}$, $N_{2}$, $N_{3}$, $N_{4}$, and $N_{5}$ are
$$ \frac{1}{\sqrt{6}} \left( \mathbf{x} - 2\mathbf{y} + \mathbf{z} \right),
\frac{1}{\sqrt{3}} \left( \mathbf{x} + \mathbf{y} + \mathbf{z} \right),
\frac{1}{\sqrt{2}} \left( \mathbf{x} - \mathbf{z} \right),
\frac{1}{\sqrt{6}} \left(-2\mathbf{x} + \mathbf{y} + \mathbf{z} \right),
\frac{-1}{\sqrt{3}} \left( \mathbf{x} + \mathbf{y} + \mathbf{z} \right),$$
respectively.


\begin{figure}[!h]
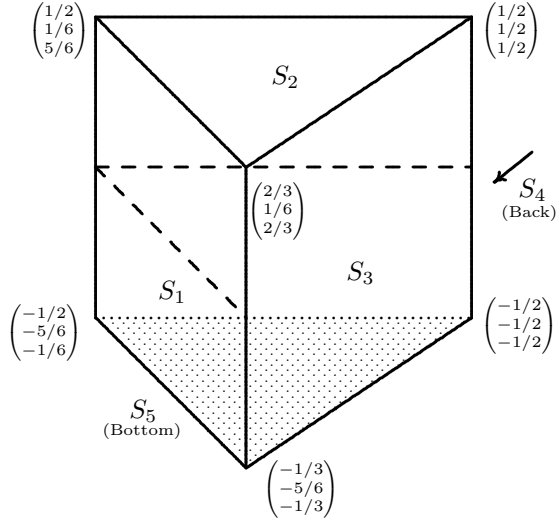
 
    
\begin{center}\hfil
\hbox{
\vbox{
  \beginpicture
\setcoordinatesystem units <1cm,1cm> point at -1  4.2
\setplotarea x from -.5 to 7, y from -2.5 to 4.5
\linethickness=.7pt
%
 \def\myarrow{\arrow <4pt> [.2, 1]} 
%
\setplotsymbol ({\circle*{.4}})
\plotsymbolspacing=.3pt        
\plot 0 0   0 4   5 4   5 0   2 -2  0 0 /
\plot 0 4   2 2  5 4 /
\plot 2 2   2 -2 /
\setdots
\setdots <3pt>   
\plot 0 0   5 0 /

\setdashes
\plot 0 2  5 2 /
\plot 0 2  2 0 /
\setsolid
\put {\tiny $\left(\begin{matrix}-1/2 \\ -5/6 \\ -1/6\end{matrix}\right)$} [r] at -.2 -.2
\put {\tiny $\left(\begin{matrix}-1/3\\-5/6\\-1/3\end{matrix}\right)$} 
[tl] at 2.2 -1.9
\put {\tiny $\left(\begin{matrix}-1/2 \\ -1/2 \\ -1/2\end{matrix}\right)$} [l]
at 5.1 -.1
\put {\tiny $\left(\begin{matrix}1/2 \\ 1/6 \\ 5/6\end{matrix}\right)$}
[tr] at -.1 4.2
\put {\tiny $\left(\begin{matrix}2/3 \\ 1/6 \\ 2/3\end{matrix}\right)$}
[t] at 2.4 1.8
\put {\tiny $\left(\begin{matrix}1/2 \\ 1/2 \\ 1/2\end{matrix}\right)$} [tl]
at 5.1 4.2

\put {$S_1$} [b] at 1 .2

\put {$S_2$} at 2.5 3.2

\put {$S_3$} at 3.5 .6

\put {$S_4$} [l] at 5.6 1.7
\myarrow from  5.8 2.2  to  5.3 1.8 
\put {\tiny (Back)} [t] at 5.75 1.5

\put {$S_5$} [tr] at .8 -1.1
\put {\tiny (Bottom)} [t] at .6 -1.4
\setshadegrid span <2pt>
\hshade -2 2 2   0 0 5 /  
\setsolid
\endpicture}
}
\hfil

\end{center}


\caption{This is a fundamental domain for the action of $\Gamma_{7}$ on ${\mathbb R}^3$. The dashed line
  indicate axes of rotation.  Note that the top and bottom of $P$ are
  triangles having angles measuring $\pi/3, \pi/2$, and $\pi/6$, where the
  right angles are in the foreground.}
\label{pictureofgamma7}
\end{figure}

The dihedral angles between vertical sides are
$$ \theta(S_{1},S_{4}) = \pi/3, \quad  \theta(S_{3},S_{4}) = \pi/6, \quad \theta(S_{1},S_{3}) = \pi/2,$$
and all other angles between sides have measure $\pi/2$.

We consider the set $\Phi = \{ \phi_{S_{1}}, \phi_{S_{2}}, \phi_{S_{3}}, \phi_{S_{4}}, \phi_{S_{5}} \}$,
defined respectively as follows:
$$ \left( \begin{smallmatrix} 1/3 \\ -2/3 \\ 1/3 \end{smallmatrix} \right) 
+ \left( \begin{smallmatrix} 0 & 0 & -1 \\ 0 & -1 & 0 \\ -1 & 0 & 0 \end{smallmatrix} \right), 
\left(\begin{smallmatrix} 1 \\ 1 \\ 1 \end{smallmatrix} \right),
\left( \begin{smallmatrix} 0 & 0 & 1 \\
0 & 1 & 0 \\ 1 & 0 & 0 \end{smallmatrix} \right), \left( \begin{smallmatrix} -1 & 0 & 0 \\ 0 & 0 & -1 \\
0 & -1 & 0 \end{smallmatrix} \right),
\left(\begin{smallmatrix} -1 \\ -1 \\ -1 \end{smallmatrix} \right).
$$
It is not difficult to check that $\Phi$ is a subproper side-pairing.  We leave the checking to the reader.
Note that every ridge cycle
has one or two elements.  All of the dihedral angle sums of the ridge cycles will be submultiplies of $\pi$,
with the exception of a cycle $[x]$, where $x$ is a point (other than the midpoint)
 on the ridge between $S_{1}$ and $S_{4}$.  
For such a cycle, the dihedral angle sum will be $2\pi/3$.  This causes no problem,
since the cycle $[x]$ is cyclic.

It follows from Theorem \ref{poincare}
that $P$ is an exact convex compact fundamental polyhedron for the action of $\langle \Phi \rangle$
on $\mathbb{R}^{3}$.  We claim that $\Gamma \subseteq \langle \Phi \rangle$,
the reverse inclusion being clear.  We sketch the proof.  First, 
$D_{3}^{+} \times (-1) = \langle \phi_{S_{3}}, \phi_{S_{4}} \rangle$.  The group $\langle \phi_{S_{1}},
\phi_{S_{3}}, \phi_{S_{4}} \rangle$ is the group generated by $D_{3}^{+} \times (-1)$ and 
$\frac{1}{3}(\mathbf{v}_{2} + \mathbf{v}_{3}) = \frac{1}{3}( \mathbf{x} - 2\mathbf{y} + \mathbf{z} )$.  It is an exercise to show that the latter
group contains $\mathbf{v}_{3} = -\mathbf{y} + \mathbf{z}$ as well.  The equality $\Gamma = \langle \Phi \rangle$ is now
clear (since $\phi_{S_{2}} = \mathbf{v}_{1}$), 
so that $P$ is an exact convex compact fundamental polyhedron for the action of $\Gamma$ on 
$\mathbb{R}^{3}$.
  
\begin{theorem} \label{theorem:g7}
Let $\widehat{\mathcal{C}}$ denote the $\Gamma$-equivariant cell structure on $\mathbb{R}^{3}$
determined by the standard cellulation $\mathcal{C}$ of $P$.  The quotient $\Gamma \backslash \mathbb{R}^{3}$
is obtained from $P$ by identifying the two halves of each of the sides $S_{1}$ and $S_{4}$, and 
by identifying $S_{2}$ with $S_{5}$ in the obvious way (see Figure \ref{pictureofgamma7}).  The
set consisting of the vertices $(0,0,0)$ and $(1/2,1/2,1/2)$ maps injectively into the
quotient.  The stabilizers of these vertices are determined by the following equalities:
\begin{eqnarray*}
\pi(\Gamma_{(0,0,0)}) & = & D_{3}^{+} \times (-1). \\
\pi(\Gamma_{(1/2,1/2,1/2)}) & = & D_{3}^{+} \times (-1).
\end{eqnarray*}
All of the other stabilizers of cells in the quotient are negligible.
\end{theorem}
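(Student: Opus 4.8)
The plan is to follow the same template used in the proofs of Theorems \ref{theorem:g2}, \ref{theorem:g4}, and \ref{theorem:g6}, since $\Gamma_{7}$ has a genuine side-pairing (rather than being a pure reflection group), and its fundamental domain $P$ has sides $S_{1}$, $S_{4}$ paired to themselves by rotations and $S_{2}$ paired to $S_{5}$. First I would establish the statement about the quotient: each ridge cycle $[x]$ was already shown to be finite during the verification that $\Phi$ is subproper, and by Theorem \ref{cells} (via the identity $[x] = P \cap (\Gamma \cdot x)$), the quotient $\Gamma \backslash \mathbb{R}^{3}$ is obtained from $P$ by performing exactly the gluings encoded in $\Phi$. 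Thus $S_{2}$ is identified with $S_{5}$ by the translation $\phi_{S_{2}} = \mathbf{v}_{1}$, and the sides $S_{1}$ and $S_{4}$ are each folded onto themselves by their respective rotations $\phi_{S_{1}}$ and $\phi_{S_{4}}$. Checking that $(0,0,0)$ and $(1/2,1/2,1/2)$ each generate a singleton cycle then shows they inject into the quotient.

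Next I would compute the two claimed vertex stabilizers using Lemma \ref{pointstab}(1). For $v = (0,0,0)$ the equality $\pi(\Gamma_{v}) = D_{3}^{+} \times (-1)$ is immediate, since $v - hv = 0 \in L$ for every $h \in H_{7} = D_{3}^{+} \times (-1)$. For $v = (1/2,1/2,1/2)$ I would observe that any $h \in D_{3}^{+} \times (-1)$ sends $v$ to $\pm(1/2,1/2,1/2)$ (as $D_{3}^{+}$ acts by permutation matrices and $(-1)$ by negation), so $v - hv$ lies in $\langle \mathbf{v}_{1} \rangle \subseteq L$ in every case, giving the full stabilizer again. This mirrors the corresponding computations in Theorems \ref{theorem:g5} and \ref{theorem:g6}.

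The remaining task is to show that every other cell stabilizer is negligible. For the vertices other than $(0,0,0)$ and $(1/2,1/2,1/2)$ — namely $(-1/2,-5/6,-1/6)$, $(-1/3,-5/6,-1/3)$, $(-1/2,-1/2,-1/2)$, $(1/2,1/6,5/6)$, $(2/3,1/6,2/3)$, and any extra vertices introduced by subdividing the self-paired sides — I would apply Proposition \ref{negproposition}: each such $v$ satisfies $2v \notin L$, so its stabilizer embeds in $S_{4}$ and is negligible. (I should double-check the arithmetic $2v \notin L$ for each vertex against the lattice $\langle \mathbf{v}_{1}, \frac{1}{3}(\mathbf{v}_{2} + \mathbf{v}_{3}), \mathbf{v}_{3} \rangle$, treating any midpoint vertices created by the subdivision of $S_{1}$ and $S_{4}$ separately.) Once the edge stabilizers are handled — again by applying Proposition \ref{negproposition} to edge midpoints, or by noting that an edge incident to a negligible vertex must itself be negligible — it follows that all higher-dimensional cells have negligible stabilizers, since the stabilizer of a higher cell is a subgroup of the stabilizer of any incident lower cell, and subgroups of negligible groups are negligible.

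The main obstacle I expect is bookkeeping rather than conceptual difficulty: I must enumerate all vertices of the standard cellulation of $P$, including those forced by the subdivision of the bad sides $S_{1}$ and $S_{4}$ (which are folded by rotations, so a midpoint vertex may be introduced as in Definition \ref{subdivision}), and verify $2v \notin L$ for each relative to the particular lattice $L_{7}$. Since $L_{7}$ involves the denominator-$3$ vector $\frac{1}{3}(\mathbf{v}_{2} + \mathbf{v}_{3})$, the condition $2v \notin L$ is slightly more delicate to check than in the cubic-lattice cases, so I would record the coordinates of each vertex in the $\{\mathbf{v}_{1}, \frac{1}{3}(\mathbf{v}_{2} + \mathbf{v}_{3}), \mathbf{v}_{3}\}$ basis before applying Proposition \ref{negproposition}.
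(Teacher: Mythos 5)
Your overall template is the right one and matches the paper's: the description of the quotient via cycles, the two non-negligible vertex stabilizers via Lemma \ref{pointstab}(1) together with the observation that $h\cdot(1/2,1/2,1/2)=\pm(1/2,1/2,1/2)$ for all $h\in D_{3}^{+}\times(-1)$, and the reduction of edges and higher-dimensional cells to vertex and midpoint computations all go through as you describe. But there is a genuine gap in the step where you dispose of the remaining vertices: you propose to apply Proposition \ref{negproposition} (i.e., to check $2v\notin L$) to \emph{every} vertex other than $(0,0,0)$ and $(1/2,1/2,1/2)$, and this check fails for two of them. For $v=(2/3,1/6,2/3)$ one has $2v=(4/3,1/3,4/3)=\mathbf{v}_{1}+\tfrac{1}{3}(\mathbf{v}_{2}+\mathbf{v}_{3})\in L$, and for $v=(1/6,-1/3,1/6)$ (an endpoint of a dashed rotation axis, hence a vertex of the standard cellulation) one has $2v=\tfrac{1}{3}(\mathbf{v}_{2}+\mathbf{v}_{3})\in L$. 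So Proposition \ref{negproposition} gives you nothing for these two vertices, and your argument stalls exactly where the denominator-$3$ generator of the lattice that you flagged as ``delicate'' actually bites.

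The paper's proof closes this gap with a different, cheap argument for those two vertices: it checks via Lemma \ref{pointstab}(1) that the order-three element $h=\left(\begin{smallmatrix} 0&1&0\\0&0&1\\1&0&0\end{smallmatrix}\right)$ does not lie in $\pi(\Gamma_{v})$ (for instance, $v-hv=\tfrac{1}{2}\mathbf{v}_{2}\notin L$ when $v=(2/3,1/6,2/3)$), so $\pi(\Gamma_{v})$ has index at least $3$ in $D_{3}^{+}\times(-1)$ and hence order at most $4$, which forces negligibility. You would need to add some such supplementary argument. With it, the rest of your outline does complete the proof as in the paper: the remaining vertices $(1/2,1/6,5/6)$ and $(0,-1/3,1/3)$ do satisfy $2v\notin L$, and the one potentially non-negligible edge (from $(0,0,0)$ to $(1/2,1/2,1/2)$, its mirror image being identified with it in the quotient) is killed by applying Proposition \ref{negproposition} to its midpoint $(1/4,1/4,1/4)$.
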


\begin{proof}
The description of the quotient follows from the same argument that we have used in the other subsections.

We note that there are $9$ vertices in the standard cellulation of $P$, including the endpoints of the
dashed lines from Figure \ref{pictureofgamma7}.  These endpoints are: 
$$ (0,0,0), \quad (0, -1/3, 1/3), \quad (1/6, -1/3, 1/6).$$
In the quotient, there are only $6$ vertices, however, and we may restrict our attention to the vertices from
the top half of Figure \ref{pictureofgamma7}.

We note that the stabilizers of the vertices $(1/2,1/6,5/6)$ and $(0,-1/3,1/3)$ are negligible by 
Proposition \ref{negproposition}.
The matrix
$$ \left( \begin{smallmatrix} 0 & 1 & 0 \\ 0 & 0 & 1 \\ 1 & 0 & 0 \end{smallmatrix} \right) $$
is in neither $\pi(\Gamma_{(2/3,1/6,2/3)})$ nor $\pi(\Gamma_{(1/6,-1/3,1/6)})$.  As a result, the latter groups
have index $3$ (at least) in $D_{3}^{+} \times (-1)$, so they must be negligible.    
It is clear that $\pi(\Gamma_{(0,0,0)}) = D_{3}^{+} \times (-1)$, and the equality $\pi(\Gamma_{(1/2,1/2,1/2)}) =
D_{3}^{+} \times (-1)$ follows easily from Lemma \ref{pointstab}(1) and the fact that $h \cdot (1/2,1/2,1/2) =
\pm (1/2,1/2,1/2)$, for all $h \in D_{3}^{+} \times (-1)$.  (Note that the vertex $(-1/2,-1/2,-1/2)$ will be identified
with $(1/2,1/2,1/2)$ in the quotient.)  

Now we go on to consider edge stabilizers.  Only two of these might fail to be negligible:  the stabilizers of edges connecting
$(0,0,0)$ to $(1/2,1/2,1/2)$ and $(-1/2,-1/2,-1/2)$, respectively.  Both of the latter edges are identified in the quotient,
so we need only consider the edge between $(0,0,0)$ and $(1/2,1/2,1/2)$.  The stabilizer of this edge is negligible,
since the stabilizer of the point $(1/4,1/4,1/4)$ is negligible by Proposition \ref{negproposition}.
\end{proof}

\section{The homology groups $H_{\ast}^{\g}(E_{\fin}(\g); \mathbb{KZ}^{-\infty})$} \label{section:contributionoffinites}

In this section, we compute the homology groups  $H_{\ast}^{\g}(E_{\fin}(\g); \mathbb{KZ}^{-\infty})$, for all $73$ split three-dimensional crystallographic groups. 
In order to do this, we recall that Quinn
\cite{Qu82} established the existence of a spectral sequence that
converges to this homology group, with $E^2$-terms given by:
$$E^2_{p,q}=H_p(\g \backslash E_{\fin}(\g)\; ;\{Wh_q(\g_{\sigma})\})
\Longrightarrow H_{p+q}^{\Gamma}(E_{\fin}(\Gamma);\mathbb{KZ}^{-\infty}).$$ The chain complex that gives the homology
of $\g \backslash E_{\fin}(\g)$ with local coefficients $\{Wh_q(
\g_{\sigma})\}$ has the form
\[0 \rightarrow
 \bigoplus_{{\sigma}^{3}}^{}Wh_{q}(
\g_{{\sigma}^{3}}) \rightarrow  \bigoplus_{{\sigma}^{2}}^{}Wh_{q}(
\g_{{\sigma}^{2}}) \rightarrow \bigoplus_{{\sigma}^1}^{}Wh_q(\g_{{\sigma}^1}) \rightarrow
\bigoplus_{{\sigma}^0}^{}Wh_q( \g_{{\sigma}^0}) \rightarrow 0,
\]
where ${\sigma}^i$ denotes the cells in dimension $i$, and the sum
is over all $i$-dimensional cells in $\g \backslash E_{\fin}(\g)$. The $p^{th}$
homology group of this complex will give us the entries for the
$E^2_{p,q}$-term of the spectral sequence. Let us recall that
\[
Wh_q(F)=
\begin{cases}
Wh(F), & q=1 \\
\tilde {K}_0(\mathbb Z F), & q=0 \\
K_q(\mathbb Z F), & q \leq -1.
\end{cases}
\]
Observe that for the groups we are interested in it is particularly
easy to obtain a model for $E_{\fin}(\Gamma)$: indeed, it is
well known that, for a lattice in $\iso(\mathbb R^n)$, the
$\Gamma$-space $\mathbb R^n$ is a model for $E_{\fin}$.  In our specific
situation, we obtain models for $E_{\fin}(\Gamma)$ having very
explicit fundamental domains, namely the fundamental polyhedra given in Section 6.

\begin{remark} \label{remark:computefromgraph}
In our models for $E_{\mathcal{FIN}}(\Gamma)$, 
the $3$-dimensional cells will have trivial stabilizers, and there will be a number of
orbits of $2$-dimensional cells, each of which will have stabilizer $1$ or $\z /2$.  Note that since $Wh_q(1)$ and $Wh_q(\z /2)$ vanish for all $q\leq1$, this in particular implies that \emph{there will never be any contribution to the $E^2$-terms from the $3$-dimensional and
$2$-dimensional cells}.  In other words, $E^2_{p,q}=0$ except possibly for $p=0,1$. 
\end{remark}

\subsection{The algebraic $K$-theory of cell stabilizers in  $E_{\fin}(\Gamma)$} \label{subsection:Ktheoryofstabilizers}

In this section, we want to find the algebraic $K$-theory of the finite subgroups that occur as cell stabilizers for the $\Gamma$-action on $\mathbb R^3$, where $\Gamma$ ranges over all $73$ split crystallographic groups.  Recall from the discussion in Section \ref{section:3Dpg} that these cell stabilizers  are (up to conjugacy) precisely the point groups listed in Tables \ref{orientationpreservingpointgroups} and \ref{otherpointgroups}, and in Theorem 
\ref{classinv}. Up to isomorphism these groups are:  1, $\mathbb Z/n$, $D_n$, $\mathbb{Z}/n \times \mathbb{Z}/2$,
$D_n \times \mathbb Z/2$, $A_4$, $S_4$, $A_4 \times \mathbb Z/2$, and $S_4 \times \mathbb Z/2$, where  $n=2,3,4,6$.

\begin{table}[!h]
\renewcommand{\arraystretch}{1.6}
\begin{equation*}
\begin{array}{|c|c|c|c|}
\hline F \in \fin & Wh_q \neq 0, \;q\leq-1 & \tilde{K}_0 \neq 0 & Wh \neq 0 \\ \hline \hline 
\mathbb Z/6 & K_{-1} \cong \mathbb Z & &  \\ \hline 
\mathbb Z/4 \times \mathbb Z/2&  &   \mathbb Z/2 & \\ \hline
\mathbb Z/6 \times \mathbb Z/2& K_{-1} \cong \mathbb Z^3 &   (\mathbb Z/2)^2 & \\ \hline
D_2 \times \mathbb Z/2 & & \mathbb Z/2 & \\ \hline
D_6 & K_{-1} \cong \mathbb Z & & \\ \hline 
D_4 \times \mathbb Z/2 & & \mathbb Z/4 & \\ \hline 
D_6 \times \mathbb Z/2 & K_{-1} \cong \mathbb Z^3 & (\mathbb Z/2)^2& \\ \hline
A_4 \times \mathbb Z/2 &  K_{-1} \cong \mathbb Z &  \mathbb Z/2&\\ \hline
S_4 \times \mathbb Z/2 &  K_{-1} \cong \mathbb Z & \mathbb Z/4 & \\ \hline
\end{array}
\end{equation*}

\caption{Lower algebraic $K$-theory
of cell stabilizers in $E_{\fin}$.}
\label{table:tableKtheoryEfin}
\end{table}

In the spectral sequence computing the homology
$H_*^\Gamma(E_{\fin}(\Gamma);\mathbb{KZ}^{-\infty})$, the
$E^2$-term is computed from the algebraic $K$-groups of the various
cell stabilizers, so we need the algebraic $K$-groups for all of the
finite groups appearing in the list above.  For the convenience of
the reader, we provide in Table \ref{table:tableKtheoryEfin} the results of the
computations of these groups.  Table \ref{table:tableKtheoryEfin} provides a list of all the
\emph{non-trivial} $K$-groups that occur amongst the finite groups we
are considering.

We will now justify the results summarized in Table \ref{table:tableKtheoryEfin}. It is well known 
that if $G$ is a finite group, then $K_{q}(\mathbb ZG)$ is trivial for all $q \leq -2$  (see \cite{C80a}), so we will focus only on the $K$-groups  $K_{-1}$,  $\tilde{K}_0$, and $Wh$. For all but \emph{three} of the finite groups in our list,  the lower algebraic $K$-theory is well known; the reference \cite{LO09} collects references for most of the known results. Bass \cite[pg. 695]{Bas68}
computed $K_{-1}(\mathbb{Z}[\mathbb{Z}/6])$, Reiner and Ullom \cite[Theorem 2.2]{RU74} computed $\widetilde{K}_{0}(\mathbb{Z}[\mathbb{Z}/6])$, and Cohen \cite{Co73} computed $Wh(\mathbb{Z}/6)$.

We compute the lower algebraic $K$-theory of the groups $\mathbb Z/4 \times \mathbb Z/2$, $\mathbb Z/6 \times \mathbb Z/2$, and $A_4 \times \mathbb Z/2$ in Theorems \ref{subsubsection:Z4Z2},
\ref{subsubsection:Z6Z2}, and \ref{subsubsection:A4Z2}, respectively.


\begin{theorem}  \label{subsubsection:Z4Z2}
\[
Wh_q(\mathbb{Z}/4 \times \mathbb{Z}/2) \cong
\begin{cases}
0 & q=1 \\
\mathbb{Z}/2  & q=0 \\
0 & q \leq -1
\end{cases}
\]
\end{theorem}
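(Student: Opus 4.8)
The plan is to compute the three lower $K$-groups of $G = \mathbb{Z}/4 \times \mathbb{Z}/2$ separately, using the standard structural tools for finite abelian groups. The key observation is that $G$ is abelian of order $8$, so its integral group ring decomposes via its rational representation theory; the relevant Wedderburn components of $\mathbb{Q}G$ are all commutative (indeed, they are fields $\mathbb{Q}$ and $\mathbb{Q}(i)$, together with their multiplicities coming from the two-dimensional piece). First I would settle the Whitehead group: since $G$ is abelian of order a power of $2$ with no nontrivial units of infinite order arising (all irreducible $\mathbb{Q}$-representations of $G$ are of real or complex type in a way that produces only $\pm$ roots of unity as units), one expects $Wh(G) = 0$. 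Concretely, I would invoke the result that $Wh(G)$ is torsion-free of rank $r - q$ (where $r$ is the number of irreducible $\mathbb{R}$-representations and $q$ the number of irreducible $\mathbb{Q}$-representations), and check that this rank vanishes for $G$, together with the fact that $SK_1(\mathbb{Z}G)$ vanishes for abelian groups of this type (by results of Bass--Milnor--Serre / Oliver). This gives $Wh(G) = 0$.

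For $K_{-1}$, I would use the Bass--Carter formula for $K_{-1}(\mathbb{Z}G)$ of a finite group, which expresses $K_{-1}$ in terms of the ranks coming from the maximal order and contributions from the difference between the genus of $\mathbb{Z}G$ and its maximal order. The cleanest route is Carter's explicit formula: $K_{-1}(\mathbb{Z}G) \cong \mathbb{Z}^{r} \oplus (\mathbb{Z}/2)^{s}$, where $r$ is determined by counting Wedderburn components and primes, and $s$ counts certain ramification data. Since all simple components of $\mathbb{Q}G$ are fields (commutative) with trivial contribution, I expect the rank $r$ and the torsion $s$ both to vanish, yielding $K_{-1}(\mathbb{Z}G) = 0$. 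This matches the entry in Table \ref{table:tableKtheoryEfin}, where $\mathbb{Z}/4 \times \mathbb{Z}/2$ contributes only to $\widetilde{K}_0$.

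The main content, and the main obstacle, is the computation $\widetilde{K}_0(\mathbb{Z}G) \cong \mathbb{Z}/2$. Here I would work with the class group via the standard arithmetic-square (Milnor/Mayer--Vietoris) approach: the maximal order $\Lambda$ in $\mathbb{Q}G$ has $\widetilde{K}_0(\Lambda)$ computed from the ideal class groups of its components (here the class numbers of $\mathbb{Q}$ and $\mathbb{Q}(i)$ are both $1$, so $\widetilde{K}_0(\Lambda) = 0$), and then I would analyze the conductor square relating $\mathbb{Z}G$, $\Lambda$, and the finite quotient rings to pick up the extra $\mathbb{Z}/2$ from the failure of $\mathbb{Z}G$ to be maximal at the prime $2$. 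The relevant tool is the exact sequence $K_1(\Lambda) \oplus K_1(\mathbb{Z}G/\mathfrak{c}) \to K_1(\Lambda/\mathfrak{c}) \to \widetilde{K}_0(\mathbb{Z}G) \to \widetilde{K}_0(\Lambda) \oplus \widetilde{K}_0(\mathbb{Z}G/\mathfrak{c})$ associated to the conductor $\mathfrak{c}$. Computing the cokernel of the unit map $K_1(\Lambda/\mathfrak{c}) / (\text{images})$ is where the real work lies, since one must identify the finite rings and their unit groups explicitly.

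An alternative, and likely cleaner, route that I would pursue in parallel is to use known computations for products: the lower $K$-theory of $\mathbb{Z}/4 \times \mathbb{Z}/2$ can be assembled from the behavior of $\widetilde{K}_0$ under the relevant induction/restriction or via direct appeal to the tables of Oliver's book \emph{Whitehead Groups of Finite Groups}, where class groups of $2$-groups are tabulated. I expect the $\mathbb{Z}/2$ in $\widetilde{K}_0$ to coincide with the standard fact that $\widetilde{K}_0(\mathbb{Z}[\mathbb{Z}/4 \times \mathbb{Z}/2])$ has order $2$, generated by a nonfree projective detected by the $2$-adic completion. The hardest step will be verifying that the $\widetilde{K}_0$ contribution is exactly $\mathbb{Z}/2$ and not larger, which requires controlling the local contribution at $2$ precisely; I would confirm this either by the explicit conductor-square computation above or by citing Oliver's tabulation directly, noting that $Wh = 0$ and $K_{-1} = 0$ then follow from the rank and ramification counts having no nonzero terms.
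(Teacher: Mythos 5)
Your overall strategy is sound and your treatment of $Wh$ and $K_{-1}$ is essentially equivalent to the paper's: the paper simply cites Alves--Ontaneda for $Wh(\mathbb{Z}/4\times\mathbb{Z}/2)=0$ and Bass's theorem that $K_{-1}(\mathbb{Z}G)$ vanishes for finite abelian groups of prime power order, while you would rederive these from the rank formula $r-q$ together with vanishing of $SK_1$, and from Carter's formula; both routes work, though note that Bass--Milnor--Serre only covers cyclic groups, so the $SK_1$ vanishing you need really does come from Oliver.

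Where you diverge is on the decisive point, $\widetilde{K}_0 \cong \mathbb{Z}/2$, and here your proposal stops short of a proof. You set up the conductor square relating $\mathbb{Z}G$ to a maximal order $\Lambda$, correctly observe that $\widetilde{K}_0(\Lambda)=0$ since the class numbers of $\mathbb{Q}$ and $\mathbb{Q}(i)$ are $1$, and then acknowledge that identifying the cokernel of the unit map is ``where the real work lies'' --- but you do not do that work, offering instead to fall back on Oliver's tables. The paper avoids the conductor square entirely and instead writes $\mathbb{Z}G = \mathbb{Z}[\mathbb{Z}/4][\mathbb{Z}/2]$ and uses the Cartesian square with both vertical corners equal to $\mathbb{Z}[\mathbb{Z}/4]$ (via $t\mapsto \pm 1$) and lower-right corner $\mathbb{F}_2[\mathbb{Z}/4]$. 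After checking the Eichler condition so that Reiner--Ullom's Theorem 1.9 applies, and using $\widetilde{K}_0(\mathbb{Z}[\mathbb{Z}/4])=0$, this reduces the whole computation to the quotient $u(\mathbb{F}_2[\mathbb{Z}/4])/u^{*}(\mathbb{Z}[\mathbb{Z}/4])$. That quotient is then computed by hand: $u(\mathbb{F}_2[\mathbb{Z}/4]) = \langle\sigma\rangle\times\langle\sigma+\sigma^2+\sigma^3\rangle \cong \mathbb{Z}/4\times\mathbb{Z}/2$, while the image of $u(\mathbb{Z}[\mathbb{Z}/4])$ is just $\langle\sigma\rangle$ because $Wh(\mathbb{Z}/4)=0$ forces $\mathbb{Z}[\mathbb{Z}/4]$ to have only trivial units. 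The advantage of this square over yours is precisely that it makes the unit computation a two-line finite calculation rather than an analysis of $\Lambda/\mathfrak{c}$ and $\mathbb{Z}G/\mathfrak{c}$ for a non-cyclic group. If you pursue your version, you must actually identify the conductor and the finite quotient rings and carry out the unit-group comparison; as written, the $\mathbb{Z}/2$ answer is asserted (or outsourced to a table) rather than proved.
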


\begin{proof}
Bass \cite[Theorem 10.6, pg. 695]{Bas68} proves that  $K_{-1}(\mathbb ZG)$ vanishes  if $G$ is a finite abelian group of prime power order, so $K_{-1}(\mathbb Z[\mathbb Z/4 \times \mathbb Z/2]) \cong 0$, and Alves and Ontaneda \cite[Section 5.1]{AO06} show that $Wh(\mathbb Z/4 \times \mathbb Z/2) \cong 0.$
To compute $\tilde{K}_0(\mathbb Z[\mathbb Z/4 \times \mathbb
Z/2])$, consider the following Cartesian square
\[
\xymatrix@C=20pt@R=30pt{
\mathbb Z[\mathbb Z/2][\mathbb Z/4] \ar[d] \ar[r] & \mathbb Z[\mathbb Z/4]
     \ar[d]^{\varphi_1} \\
\mathbb Z[\mathbb Z/4] \ar[r]_{\varphi_2} & \mathbb F_2[\mathbb Z/4]}
\]
where $\varphi_{i}$ is reduction mod 2 for $i=1,2$.  We let $u(\,)$ denote the group of units, and  set
\[
u^{*}(\mathbb Z[\mathbb Z/4])= \varphi_i\{u(\mathbb Z[\mathbb Z/4])\} \subset u(\mathbb F_2[\mathbb Z/4]), \hspace{1cm} i=1,2.
\] 

Let $G=\mathbb Z/4 \times \mathbb Z/2$.  Since $\mathbb QG \cong \mathbb Q[\mathbb Z/4] \otimes_{\mathbb Q} \mathbb Q[\mathbb Z/2]$, and each simple component of  $\mathbb Q[\mathbb Z/4]$, and of  $\mathbb Q[\mathbb Z/2]$, is a full matriz algebra over $\mathbb Q$, then the same is true for $\mathbb QG$, and consequently $\mathbb QG$ satisfies the Eichler condition (i.e., no simple component of $\mathbb QG$ is a totally definite quaternion algebra).   By  \cite[Theorem 1.9]{RU74}, there is an exact sequence
\[ 
1 \rightarrow u^{*}(\mathbb Z[\mathbb Z/4]) \rightarrow u(\mathbb F_2[\mathbb Z/4]) \rightarrow \tilde{K}_0(\mathbb
Z[\mathbb Z/2][\mathbb Z/4])  \rightarrow \tilde{K}_0(\mathbb
Z[\mathbb Z/4]) \oplus \tilde{K}_0(\mathbb Z[\mathbb Z/4]) \rightarrow 0.
\]
Since  $\tilde{K}_0(\mathbb Z[\mathbb Z/4])=0$ (\cite[Theorem 2.8]{RU74}),
\[
1 \rightarrow u^{*}(\mathbb Z[\mathbb Z/4]) \rightarrow u(\mathbb F_2[\mathbb Z/4]) \rightarrow \tilde{K}_0(\mathbb
Z[\mathbb Z/2][\mathbb Z/4])  \rightarrow 0.
\]
Consequently,
$$\tilde{K}_0(\mathbb
Z[\mathbb Z/2][\mathbb Z/4]) \cong  u(\mathbb F_2[\mathbb Z/4])/ u^{*}(\mathbb Z[\mathbb Z/4]).$$  

Next, let $\mathbb Z/4 = \langle \sigma \rangle$. A direct calculation shows that  $u(\mathbb F_2[\mathbb Z/4]) = \langle \sigma \rangle \times \langle \sigma + \sigma^2 + \sigma^3 \rangle \cong \mathbb Z/4 \times \mathbb Z/2$. The equality $u^{*}(\mathbb Z[\mathbb Z/4])= \langle \sigma \rangle$ follows from the fact that 
$\mathbb{Z}[\mathbb{Z}/4]$ has only trivial units (since $Wh(\mathbb{Z}/4) = 0$). Therefore, $\tilde{K}_0(\mathbb Z[\mathbb Z/4 \times \mathbb
Z/2]) =\tilde{K}_0(\mathbb Z[\mathbb Z/2][\mathbb Z/4])=  \langle \sigma + \sigma^2 + \sigma^3 \rangle \cong \mathbb Z/2$. 
\end{proof}


\begin{theorem} \label{subsubsection:Z6Z2}
\[
Wh_q(\mathbb{Z}/6 \times \mathbb{Z}/2) \cong
\begin{cases}
0 & q=1 \\
(\mathbb{Z}/2)^{2}  & q=0 \\
\mathbb{Z}^{3} & q = -1 \\
0 & q < -1
\end{cases}
\]
\end{theorem}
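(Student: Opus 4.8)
The plan is to compute the three nontrivial lower $K$-groups of $G = \mathbb{Z}/6 \times \mathbb{Z}/2$ by exploiting the decomposition $\mathbb{Z}/6 \cong \mathbb{Z}/2 \times \mathbb{Z}/3$, so that $G \cong (\mathbb{Z}/2)^2 \times \mathbb{Z}/3$, and then reducing to group rings whose $K$-theory is either classical or accessible by the same fiber-square technique used in Theorem \ref{subsubsection:Z4Z2}. First I would dispose of the Whitehead group: since $G$ is abelian with all simple components of $\mathbb{Q}G$ being matrix algebras over cyclotomic fields $\mathbb{Q}$, $\mathbb{Q}(\zeta_3)$, the rank of $Wh(G)$ is $r - q$ where $r$ is the number of irreducible rational representations and $q$ the number of rational conjugacy classes; for an abelian group these coincide, and one checks the torsion vanishes, giving $Wh(G) \cong 0$. (Alternatively one cites the fact that $Wh$ of a finite abelian group of exponent dividing $6$ is trivial, as recorded in \cite{LO09}.)

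Next I would handle $K_{-1}(\mathbb{Z}G)$ using the general formula of Carter for the rank of $K_{-1}$ of a finite group ring, together with the vanishing of the torsion subgroup of $K_{-1}$ when the relevant primes are well-behaved. Carter's formula expresses $\rank K_{-1}(\mathbb{Z}G)$ as $1 - r_{\mathbb{Q}} + \sum_p (r_{\mathbb{Q}_p} - r_{\mathbb{F}_p})$, summed over primes $p$ dividing $|G|$, where the $r$'s count simple components of the respective group algebras. For $G \cong \mathbb{Z}/6 \times \mathbb{Z}/2$ the only relevant primes are $2$ and $3$; I would tabulate the Wedderburn decompositions of $\mathbb{Q}G$, $\mathbb{Q}_2 G$, $\mathbb{Q}_3 G$, $\mathbb{F}_2 G$, and $\mathbb{F}_3 G$ and count components. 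The arithmetic should yield rank $3$, matching $\mathbb{Z}^3$, and one must separately confirm the torsion of $K_{-1}$ vanishes (which holds here since $G$ has no irreducible components forcing torsion; see \cite{C80a}).

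For $\widetilde{K}_0(\mathbb{Z}G)$ I would imitate the proof of Theorem \ref{subsubsection:Z4Z2} directly. Writing $\mathbb{Z}G = \mathbb{Z}[\mathbb{Z}/2][\mathbb{Z}/6]$, I would set up the Cartesian (Milnor) square with vertices $\mathbb{Z}[\mathbb{Z}/2][\mathbb{Z}/6]$, two copies of $\mathbb{Z}[\mathbb{Z}/6]$, and $\mathbb{F}_2[\mathbb{Z}/6]$, the maps being reduction mod $2$. Since $\mathbb{Q}G$ satisfies the Eichler condition (all simple components are matrix algebras over fields, being abelian), the Reiner--Ullom exact sequence \cite[Theorem 1.9]{RU74} applies, and using $\widetilde{K}_0(\mathbb{Z}[\mathbb{Z}/6]) \cong 0$ (Reiner--Ullom \cite[Theorem 2.2]{RU74}) the sequence collapses to $\widetilde{K}_0(\mathbb{Z}G) \cong u(\mathbb{F}_2[\mathbb{Z}/6]) / u^*(\mathbb{Z}[\mathbb{Z}/6])$. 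The remaining task is the explicit computation of the unit group $u(\mathbb{F}_2[\mathbb{Z}/6])$ and the image $u^*$ of the units of $\mathbb{Z}[\mathbb{Z}/6]$ under reduction; since $\mathbb{Z}[\mathbb{Z}/6]$ has only trivial units ($Wh(\mathbb{Z}/6) \cong 0$), $u^*$ is just $\pm\langle \text{generator}\rangle$, and the quotient should come out to $(\mathbb{Z}/2)^2$.

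The main obstacle I anticipate is the concrete determination of $u(\mathbb{F}_2[\mathbb{Z}/6])$ and its quotient by the trivial-unit image. The ring $\mathbb{F}_2[\mathbb{Z}/6]$ is not semisimple (characteristic $2$ divides the order), so I would decompose it via $\mathbb{F}_2[\mathbb{Z}/6] \cong \mathbb{F}_2[\mathbb{Z}/2] \otimes_{\mathbb{F}_2} \mathbb{F}_2[\mathbb{Z}/3]$, where $\mathbb{F}_2[\mathbb{Z}/2] \cong \mathbb{F}_2[t]/(t^2)$ is local and $\mathbb{F}_2[\mathbb{Z}/3] \cong \mathbb{F}_2 \times \mathbb{F}_4$ is semisimple, so that $\mathbb{F}_2[\mathbb{Z}/6] \cong \mathbb{F}_2[t]/(t^2) \times \mathbb{F}_4[t]/(t^2)$. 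Computing the unit groups of these two local rings and carefully tracking which units lift to $\mathbb{Z}[\mathbb{Z}/6]$ is the delicate bookkeeping step, and getting the count to land exactly at $(\mathbb{Z}/2)^2$ rather than a larger or smaller group is where I would need to be most careful.
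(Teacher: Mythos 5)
Your proposal is correct and follows essentially the same route as the paper: Carter's rank formula for $K_{-1}$, the Reiner--Ullom fiber square over $\mathbb{F}_2[\mathbb{Z}/6]$ for $\widetilde{K}_0$, and a standard vanishing fact for $Wh$. The only real difference is a sub-step: you compute $u(\mathbb{F}_2[\mathbb{Z}/6])$ via the ring decomposition $\mathbb{F}_2[t]/(t^2) \times \mathbb{F}_4[t]/(t^2)$, whereas the paper identifies it with $K_1(\mathbb{F}_2[\mathbb{Z}/6])$ and applies Magurn's theorem; both give $(\mathbb{Z}/2)^3 \times \mathbb{Z}/3$, and the quotient by the trivial units $\langle \sigma \rangle \cong \mathbb{Z}/6$ is $(\mathbb{Z}/2)^2$ either way.
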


\begin{proof}
Alves and Ontaneda \cite[Section 5.1]{AO06} show that $Wh(\mathbb Z/6 \times \mathbb Z/2)=0.$
To compute $\tilde{K}_0(\mathbb Z[\mathbb Z/6 \times \mathbb Z/2])$, consider as before the following Cartesian square:
\[
\xymatrix@C=20pt@R=30pt{
\mathbb Z[\mathbb Z/2][\mathbb Z/6] \ar[d] \ar[r] & \mathbb Z[\mathbb Z/6]
     \ar[d]^{\varphi_1} \\
\mathbb Z[\mathbb Z/6] \ar[r]_{\varphi_2} & \mathbb F_2[\mathbb Z/6]}
\]
where $\varphi_{i}$ is reduction mod 2 for $i=1,2$.  Denote by $u(\,)$ the group of units and  set
\[
u^*(\mathbb Z[\mathbb Z/6])= \varphi_i\{u(\mathbb Z[\mathbb Z/6])\} \subset u(\mathbb F_2[\mathbb Z/6]), \hspace{1cm} i=1,2.
\]

Let $G=\mathbb Z/6 \times \mathbb Z/2$.  Since the algebra $\mathbb QG$ is commutative, no simple component of $\mathbb QG$ is a totally definite quaternion algebra, and therefore $\mathbb QG$ satisfies the Eichler condition (see the proof of Theorem \ref{subsubsection:Z4Z2}) and, as before, by  
\cite[Theorem 1.9]{RU74}, there is an exact sequence
\[
1 \rightarrow u^{*}(\mathbb Z[\mathbb Z/6]) \rightarrow u(\mathbb F_2[\mathbb Z/6]) \rightarrow \tilde{K}_0(\mathbb
Z[\mathbb Z/2][\mathbb Z/6])  \rightarrow 2\tilde{K}_0(\mathbb Z[\mathbb Z/6]) \rightarrow 0.
\]
Reiner and Ullom  \cite[Theorem 2.2]{RU74} prove that  $\tilde{K}_0(\mathbb Z[\mathbb Z/6])=0$. Therefore
the exact sequence above yields the exact sequence
\[
 1 \rightarrow u^{*}(\mathbb Z[\mathbb Z/6]) \rightarrow u(\mathbb F_2[\mathbb Z/6]) \rightarrow \tilde{K}_0(\mathbb
Z[\mathbb Z/2][\mathbb Z/6])   \rightarrow 0.
\]
  Consequently,
$$\tilde{K}_0(\mathbb
Z[\mathbb Z/2][\mathbb Z/6]) \cong  u(\mathbb F_2[\mathbb Z/6])/ u^{*}(\mathbb Z[\mathbb Z/6]).$$  

To compute  $u(\mathbb F_2[\mathbb Z/6])$, let $R=\mathbb F_2[\mathbb Z/6]$. Since $R$ is a finite ring, the canonical group homomorphism $u(R)=GL_{1}(R) \rightarrow K_1(R)$ is surjective (see \cite[Theorem 4.2(b)]{Bas64} with kernel $V(R)$ generated by the set of all  $V(x, y)= (1+xy)(1+yx)^{-1}$, with $x, y \in R$ and $1 +xy$ invertible (see \cite[Theorem 3.6(b)]{Va70}). Since   
$R$ is a finite commutative ring, $V(R)=\{1\}$, and so $K_{1}(R) \cong u(R)$. Now using \cite[Theorem 4]{Ma06}, we have that $K_1(R)=K_{1}(\mathbb F_2[\mathbb Z/6])\cong K_{1}(\mathbb F_2[\mathbb Z/2 \times \mathbb Z/3]) \cong (\mathbb Z/2)^c \times K_1(\mathbb F_2[\mathbb Z/3])$, where $c$ is the number of conjugacy classes in $\mathbb Z/3$.  A direct calculation shows  that $K_1(\mathbb F_2[\mathbb Z/3]) \cong \mathbb Z/3$ and $c=3$; then it follows $K_{1}(\mathbb F_2[\mathbb Z/6])\cong (\mathbb Z/2)^3 \times \mathbb Z/3$.
(One can also show by a direct calculation that
$u(\mathbb{F}_{2}[\mathbb{Z}/6]) = \langle \sigma^{3}, 1+\sigma+\sigma^{4},
 \sigma^{2} + \sigma^{3} + \sigma^{5} \rangle
\times \langle \sigma^{2} \rangle \cong (\mathbb{Z}/2)^{3} \times \mathbb{Z}/3$, where
$|\sigma|=6$.) 

Next, to compute  $u(\mathbb Z[\mathbb Z/6])$, let $R=\mathbb Z[\mathbb Z/6]$.  Since $R$ is a commutative ring, $K_1(R) \cong u(R) \oplus SK_1(R)$ (see \cite[page 27]{Mi71}).  Bass, Milnor, and Serre in \cite[Proposition 4.14]{BMS67} show that $SK_1(\mathbb ZH)=1$ if $H$ is cyclic, so it follows that $K_1(R) \cong u(R)$.  For $H=\mathbb Z/6$,  it is known that $K_1(\mathbb ZH) \cong \mathbb Z/2 \times H^{ab} \cong \mathbb Z/2 \times \mathbb Z/6 $ (see \cite{O89}), therefore  $u(\mathbb Z[\mathbb Z/6]) \cong \mathbb Z/2 \times \mathbb Z/6$. This implies that $u(\mathbb Z[\mathbb Z/6])$ 
consists of only the trivial units. Therefore  $u^{*}(\mathbb Z[\mathbb Z/6]) \cong \mathbb Z/6.$ Consequently
$$\tilde{K}_0(\mathbb
Z[\mathbb Z/2][\mathbb Z/6]) \cong  u(\mathbb F_2[\mathbb Z/6])/ u^{*}(\mathbb Z[\mathbb Z/6]) \cong (\mathbb Z/2)^2.$$

Next, we  show $K_{-1}(\mathbb Z[\mathbb Z/2][\mathbb Z/6]) \cong \mathbb Z^3$. Carter \cite{C80a} proved
\[
K_{-1}(\mathbb ZG) \cong \mathbb Z^r \oplus (\mZ/2)^s
\]
where
\begin{equation}
r=1-r_{\mathbb Q} + \sum_{p\,|\, |G|} (r_{\mathbb Q_{p}} -r_{\mathbb F_{p}})
\end{equation}
where $p$ is prime and $s$ is the number of simple components $A$ of $\mathbb QG$ with even Schur index but 
with $A_P$ of odd Schur index for each prime ideal $P$ of the center of $A$ that divides $|G|$ (see \cite{LMO10}).

We first recall that the group algebra $\mQ[\mathbb Z/6]$ decomposes into simple components as follows:
$$\mQ[\mathbb Z/6] \cong \mQ ^2 \oplus \mQ(\zeta_6)^2.$$

Since $\mQ [\mathbb Z/6\times \mZ/2] \cong \mQ[\mathbb Z/6] \oplus \mQ[\mathbb Z/6]$, we see that the
Schur indices of all the simple components in the Wedderburn decomposition of 
$\mQ [\mathbb Z/6 \times \mZ/2]$ are equal to 1, so $s=0$. Carter's formula (above) now tells us that $K_{-1}(\mZ [\mathbb Z/6\times \mZ/2])$ is torsion-free, 
and, from equation (2), the rank is given by
\begin{equation}
r=1-r_{\mathbb Q} + (r_{\mathbb Q_{2}} -r_{\mathbb F_{2}}) + (r_{\mathbb Q_{3}} -r_{\mathbb F_{3}}).
\end{equation}

We now proceed to compute the various terms appearing in the above expression.

Recall that for $F$ a field of characteristic $0$, $r_F$ just counts the number of 
simple components in the Wedderburn decomposition of the group algebra $F[\mathbb Z/6\times \mZ/2]$. 
From the discussion in the previous paragraph, we have that
$$\mQ [\mathbb Z/6 \times \mZ/2] \cong   \mQ ^4 \oplus \mQ(\zeta_6)^4.$$
yielding $r_{\mQ}=8$. Now by tensoring the above splitting with $\mQ _p$ with $p=2$ and 3, we obtain:
$$\mQ_p [\mathbb Z/6\times \mZ/2] \cong \mQ_p ^4 \oplus (\mQ_p \otimes _{\mQ} \mQ)(\zeta_6)^4 \cong  \mQ_p ^4 \oplus  \mQ_p(\zeta_6)^4,$$
consequently for each of the  primes $p=2,3$, we obtain that $r_{\mQ_2}=r_{\mQ_3}=8$. 

Next, we consider the situation over the finite fields $\mF_2, \mF_3$. We first recall that the 
integer $r_{\mF _p}$ counts the number of $\mF_p$-conjugacy classes
of $p$-regular elements  in $G$ (an element $x \in G$ is called \emph{$p$-regular} if $p$ does not divide the order of $x$). The $\mF _p$-conjugacy class
of an element $x \in G$ is the union of ordinary conjugacy classes of certain specific powers of $x$, where
the powers (of $x$) are calculated from the Galois  extension $\mF_p(\zeta _{m})$ where $m$ is the least common multiple of the orders  of $p$-regular elements. Note that since the fields $\mathbb F_p$, $\mathbb F_p(\zeta_m)$ are finite, and $\at(\mathbb F_p(\zeta_m)/\mathbb F_p)$ is  cyclic, 
generated by the $p$-power map (since $|\mF_p|=p$),  then 
$\gal(\mF_p(\zeta _{m})/\mF_p)=T_m =\langle \bar{p} \rangle \leq (\mathbb{Z}/m)^{\times}$ (viewed as elements of $(\mathbb{Z}/m)^{\times}$).  
We refer the reader to \cite[Section 3.1]{LMO10} for a more
complete discussion of these points.

For $p=2$, we note that an element in $\mathbb Z/6 \times \mZ/2 \cong \langle t \rangle \times \langle \sigma \rangle$ is $2$-regular precisely if it has
order $1$ or $3$.
There is a single conjugacy class of elements of order one (consisting of the identity element), and the elements of
order $3$ form \emph{two} conjugacy classes inside $\mathbb Z/6\times \mathbb Z/2$; representatives for these two conjugacy
classes are given by $x=(t^2, 1)$, and by $x^2=(t^4, 1)$. Note that there will be either one or two $\mF_2$-conjugacy
classes of elements of order $3$.  To  determine the specific powers of $x$, recall that  the powers of $x$ are given by considering the Galois group of the extension 
$\mF_2(\zeta _{3})$, viewed as elements of $(\mZ /3)^{\times}$. Since the Galois
group is generated by the 2-power map (i.e by squaring), we see that the Galois group is cyclic of order 2, given by the
residue classes $\{\bar 1, \bar 2\} \subset (\mZ /3)^{\times}$. In particular, since $\bar 2$
lies in the Galois group, we see that $x$ and $x^2$ lie in the same $\mF _2$-conjugacy class, 
implying that there is a \emph{unique} $\mF_2$-conjugacy class of elements of order 3. We conclude
that there are two $\mF_2$-conjugacy classes of $2$-regular elements, giving $r_{\mF_2}=2$.

For $p=3$, the $3$-regular elements in $\mathbb Z/6 \times \mathbb Z/2$ have order either $1$ or $2$. The elements of
order $2$ form \emph{three} conjugacy classes inside $\mathbb Z/6\times \mathbb Z/2$; representatives for these three conjugacy
classes are given by $(t^3, 1)$,  $(t^3, \sigma)$ and by $(1, \sigma)$. To determined the specific powers of these elements, recall that the powers are given by considering the Galois group of the extension $\gal\big(\mF_3(\zeta _{2})/\mF_3\big)$, viewed as elements of $(\mZ /2)^{\times}$. Since  the Galois group is generated by the 3-power map, we see that $\gal\big(\mF_3(\zeta _{2})/\mF_3\big)=T_2 =\langle \bar{3} \rangle = \{\bar 1 \} \subset
 (\mathbb{Z}/2)^{\times}$.  We  conclude that for 3-regular elements of order 2, we clearly have \emph{three} distinct (ordinary) conjugacy classes of elements of order 2; each of these ordinary conjugacy classes is also an $\mF _3$-conjugacy class. Also we clearly have a unique $\mF _3$-conjugacy class of elements of order one. We conclude that overall there are four $\mF _3$-conjugacy classes of $3$-regular elements, giving $r_{\mF_3}=4$.

We  end by substituting our calculations into the expression given in equation (3) for the rank
of $K_{-1}(\mZ [\mathbb Z/6 \times \mathbb Z/2])$, obtaining: $$r= 1 - 8 + (8 - 2) + (8 - 4)= 3.$$ Therefore $K_{-1}(\mZ [\mathbb Z/6 \times \mathbb Z/2])\cong \mathbb Z^3$ as claimed.
\end{proof}



\begin{theorem} \label{subsubsection:A4Z2}
\[
Wh_q(A_{4} \times \mathbb{Z}/2) \cong
\begin{cases}
0 & q=1 \\
\mathbb{Z}/2  & q=0 \\
\mathbb{Z} & q = -1 \\
0 & q < -1
\end{cases}
\]
\end{theorem}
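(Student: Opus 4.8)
The plan is to treat the four ranges of $q$ separately, following the pattern of Theorems \ref{subsubsection:Z4Z2} and \ref{subsubsection:Z6Z2}. For $q \leq -2$ the group $K_q(\mathbb{Z}[A_4 \times \mathbb{Z}/2])$ vanishes by Carter \cite{C80a}. For $q = 1$ I would cite Alves--Ontaneda \cite{AO06} for the vanishing of $Wh(A_4 \times \mathbb{Z}/2)$; as a consistency check, the torsion-free rank of $Wh$ equals $r_{\mathbb{R}} - r_{\mathbb{Q}}$, and since $\mathbb{R}[A_4 \times \mathbb{Z}/2] \cong (\mathbb{R} \oplus \mathbb{C} \oplus M_3(\mathbb{R}))^2$ while $\mathbb{Q}[A_4 \times \mathbb{Z}/2] \cong (\mathbb{Q} \oplus \mathbb{Q}(\zeta_3) \oplus M_3(\mathbb{Q}))^2$, this rank is $6 - 6 = 0$. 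Throughout I would use the known vanishing $K_{-1}(\mathbb{Z}A_4) = \tilde{K}_0(\mathbb{Z}A_4) = Wh(A_4) = 0$ (collected in \cite{LO09}), which is also reflected by the absence of $A_4$ from Table \ref{table:tableKtheoryEfin}.

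For $q = -1$ I would apply Carter's formula $K_{-1}(\mathbb{Z}G) \cong \mathbb{Z}^r \oplus (\mathbb{Z}/2)^s$ exactly as in the proof of Theorem \ref{subsubsection:Z6Z2}. Starting from the decomposition $\mathbb{Q}[A_4 \times \mathbb{Z}/2] \cong \mathbb{Q}^2 \oplus \mathbb{Q}(\zeta_3)^2 \oplus M_3(\mathbb{Q})^2$, every simple component has Schur index $1$, so $s = 0$ and $K_{-1}$ is torsion-free. It remains to compute $r = 1 - r_{\mathbb{Q}} + (r_{\mathbb{Q}_2} - r_{\mathbb{F}_2}) + (r_{\mathbb{Q}_3} - r_{\mathbb{F}_3})$. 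Here $r_{\mathbb{Q}} = 6$, and since $\mathbb{Q}_2(\zeta_3)$ and $\mathbb{Q}_3(\zeta_3)$ both remain fields one finds $r_{\mathbb{Q}_2} = r_{\mathbb{Q}_3} = 6$. The finite-field counts are the delicate part: the $2$-regular elements are the identity and the $3$-cycles, whose two ordinary $A_4$-classes are fused by the Frobenius $x \mapsto x^2$ into a single $\mathbb{F}_2$-class, giving $r_{\mathbb{F}_2} = 2$; the $3$-regular elements have order $1$ or $2$ and fall into four ordinary classes that are not fused (since $\mathbb{F}_3(\zeta_2) = \mathbb{F}_3$), giving $r_{\mathbb{F}_3} = 4$. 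Substituting yields $r = 1 - 6 + (6-2) + (6-4) = 1$, so $K_{-1}(\mathbb{Z}[A_4 \times \mathbb{Z}/2]) \cong \mathbb{Z}$.

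For $q = 0$ I would reuse the Milnor square of Theorem \ref{subsubsection:Z4Z2} with $A_4$ in place of the cyclic factor:
\[
\xymatrix@C=20pt@R=30pt{
\mathbb Z[\mathbb Z/2][A_4] \ar[d] \ar[r] & \mathbb Z[A_4] \ar[d]^{\varphi_1} \\
\mathbb Z[A_4] \ar[r]_{\varphi_2} & \mathbb F_2[A_4]}
\]
Since all simple components of $\mathbb{Q}[A_4 \times \mathbb{Z}/2]$ are matrix algebras over $\mathbb{Q}$ or $\mathbb{Q}(\zeta_3)$, the Eichler condition holds, so Reiner--Ullom \cite[Theorem 1.9]{RU74} gives an exact sequence whose last nonzero term is $2\tilde{K}_0(\mathbb{Z}A_4) = 0$; hence $\tilde{K}_0(\mathbb{Z}[A_4 \times \mathbb{Z}/2])$ is identified with the cokernel of the map $K_1(\mathbb{Z}[A_4]) \to K_1(\mathbb{F}_2[A_4])$, i.e. with $u(\mathbb{F}_2[A_4])$ modulo the image $u^{*}(\mathbb{Z}[A_4])$ of the integral units, after accounting for the non-commutative correction term $V(\mathbb{F}_2[A_4])$ that was trivial in the abelian cases.

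The main obstacle is precisely this last computation, which is genuinely harder than in Theorems \ref{subsubsection:Z4Z2} and \ref{subsubsection:Z6Z2} because $\mathbb{F}_2[A_4]$ is non-commutative and non-semisimple. I would exploit the normal Sylow $2$-subgroup $V_4 \trianglelefteq A_4$: its augmentation ideal generates the Jacobson radical $J$, with $\mathbb{F}_2[A_4]/J \cong \mathbb{F}_2[A_4/V_4] \cong \mathbb{F}_2 \times \mathbb{F}_4$, so that $u(\mathbb{F}_2[A_4])$ is an extension of $u(\mathbb{F}_4) \cong \mathbb{Z}/3$ by the $2$-group $1 + J$. The heart of the argument is to pin down the quotient of $1 + J$ surviving in $K_1$, using the radical filtration of $\mathbb{F}_2[A_4]$ and its skew group ring structure $\mathbb{F}_2[V_4] \rtimes \mathbb{Z}/3$, and then to show that the integral units of $\mathbb{Z}[A_4]$ (trivial units, together with the image of $Wh(A_4) = 0$) account for everything except a single surviving class of order $2$, yielding $\tilde{K}_0(\mathbb{Z}[A_4 \times \mathbb{Z}/2]) \cong \mathbb{Z}/2$. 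Carrying out this unit-group analysis carefully is where the real work lies.
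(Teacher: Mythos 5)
Your treatment of $q=1$, $q=-1$, and $q\le -2$ matches the paper's proof essentially step for step: Alves--Ontaneda for the vanishing of the Whitehead group, Carter's formula with $r_{\mathbb{Q}}=r_{\mathbb{Q}_2}=r_{\mathbb{Q}_3}=6$, $r_{\mathbb{F}_2}=2$, $r_{\mathbb{F}_3}=4$ giving $K_{-1}\cong\mathbb{Z}$, and the standard vanishing below degree $-1$. The genuine gap is in the $q=0$ case. Your Milnor square and the Reiner--Ullom exact sequence are set up correctly (the Eichler condition does hold), and they reduce $\tilde{K}_0(\mathbb{Z}[A_4\times\mathbb{Z}/2])$ to the cokernel of $K_1(\mathbb{Z}A_4)\oplus K_1(\mathbb{Z}A_4)\to K_1(\mathbb{F}_2[A_4])$. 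But you then only announce a strategy for computing this cokernel -- the radical filtration of $\mathbb{F}_2[A_4]$, the skew group ring structure, the claim that exactly one class of order $2$ survives -- and explicitly defer the computation. That deferred step is the entire content of the theorem at $q=0$, and it is not routine: $\mathbb{F}_2[A_4]$ is non-commutative and non-semisimple, so one must compute $K_1(\mathbb{F}_2[A_4])=u(\mathbb{F}_2[A_4])/V(\mathbb{F}_2[A_4])$ (the subgroup $V$ is no longer automatically trivial, unlike in Theorems \ref{subsubsection:Z4Z2} and \ref{subsubsection:Z6Z2}) and then determine the image of the integral unit group. As written, $\tilde{K}_0\cong\mathbb{Z}/2$ is asserted rather than proved.

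The paper sidesteps this computation entirely with a two-sided squeeze. For the upper bound it invokes Reiner--Ullom's hyperelementary induction theorem \cite[Thm.\ 3.1]{RU74}: $\tilde{K}_0(\mathbb{Z}G)$ injects into $\prod_{H\in\mathcal{H}(G)}\tilde{K}_0(\mathbb{Z}H)$. The hyperelementary subgroups of $A_4\times\mathbb{Z}/2$ are, up to isomorphism, $\mathbb{Z}/2$, $\mathbb{Z}/3$, $D_2$, $\mathbb{Z}/3\times\mathbb{Z}/2$, and $D_2\times\mathbb{Z}/2$; by Table \ref{table:tableKtheoryEfin} all of these have trivial $\tilde{K}_0$ except $D_2\times\mathbb{Z}/2$, which contributes a single $\mathbb{Z}/2$ (it occurs as the unique Sylow $2$-subgroup, hence in one conjugacy class). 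This gives an injection $\tilde{K}_0(\mathbb{Z}[A_4\times\mathbb{Z}/2])\hookrightarrow\mathbb{Z}/2$. For the lower bound the paper cites Endo--Hironaka \cite{EH79} for the known non-triviality of $\tilde{K}_0(\mathbb{Z}[A_4\times\mathbb{Z}/2])$, forcing the isomorphism with $\mathbb{Z}/2$. To repair your argument, either carry out the unit-group calculation in $\mathbb{F}_2[A_4]$ in full, or replace that step with the induction-plus-nontriviality argument above.
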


\begin{proof}
Alves and Ontaneda in \cite[Lemma 5.4]{AO06} show that $Wh(A_4 \times \mathbb Z/2)=0.$

First, we show $\tilde{K}_0(\mathbb Z[A_4 \times \mathbb Z/2]) \cong \mathbb Z/2$.
To see this, let $H$ be a subgroup of  a group $G$. For any
locally free $\mathbb ZG$-module $M$ its restriction to $H$ (denoted
by $M_H$) is a locally free $\mathbb ZH$-module. The mapping defined
by $\lbrack M \rbrack \to \lbrack M_H \rbrack$ gives a homomorphism
of $\tilde{K}_0(\mathbb ZG) \to \tilde{K}_0(\mathbb ZH)$.

A group $H$ is {\it hyper-elementary} if $H$ is a semidirect product
$N \rtimes P$ of a cyclic normal subgroup $N$ and a subgroup $P$ of
prime power order, where $(|N|, |P|)=1$. Let $\mathcal H(G)$ consist of one
representative from each conjugacy class of 
hyper-elementary subgroups of $G$. We shall
need the following result  presented by  Reiner and Ullom in  \cite[Thm. 3.1]{RU74}: for every 
finite group $G$, the map
\begin{equation}
\tilde{K}_0(\mathbb ZG) \longrightarrow \prod_{H \in \mathcal H(G)} \tilde{K}_0(\mathbb ZH)
\end{equation} is a monomorphism. 
Observe that  all the proper subgroups of the alternating group $A_4$ are hyper-elementary, therefore $\mathcal H(A_4) =\{\,\mathbb Z/2,  \mathbb{Z}/3, D_2\,\}$.  Also, note that the hyper-elementary
subgroups of $G \times \mathbb Z/2$ are of the form $H$ or $H \times
\mathbb Z/2$ for $H \in \mathcal H(G)$.  In particular, the
hyper-elementary subgroups of $A_4 \times \mathbb Z/2$
are all isomorphic to one of:  $\mathbb Z/2$, $\mathbb Z/3$, $D_2$, $\mathbb Z/3 \times \mathbb Z/2$ and $D_2 \times \mathbb Z/2$. By the results given in Table \ref{table:tableKtheoryEfin} we have $\tilde{K}_0(\mathbb ZH)=0$, for all $H \in \mathcal H(A_4 \times
\mathbb Z/2)$ except for $H=D_2 \times \mathbb Z/2$, where $\tilde{K}_0(\mathbb ZH)\cong \mathbb Z/2$.   
This implies that the target of the map given in (3) is isomorphic to $\mathbb Z/2$, and injectivity
of the map now gives us an injection $\tilde{K}_0(\mathbb Z[A_4 \times\mathbb Z/2]) \hookrightarrow \mathbb Z/2$. Since it is known that $\tilde{K}_0(\mathbb Z[A_4 \times\mathbb Z/2])$ is non trivial (see \cite[Thm., pg. 161]{EH79}), it follows that $\tilde{K}_0(\mathbb Z[A_4 \times\mathbb Z/2]) \cong \mathbb Z/2$, as claimed.

Next, we show that  $K_{-1}(\mathbb Z[A_4 \times \mathbb Z/2]) \cong \mathbb Z.$  Here once again, we use Carter's formula, given in equation (2).  We start by first recalling (see \cite[pg. 93]{Se77}) that the group algebra $\mQ A_4$ decomposes into simple components as follows:

$$\mQ A_4 \cong \mQ \oplus \mQ(\zeta_3) \oplus M_3(\mQ).$$

Since $\mQ [A_4\times \mZ/2] \cong \mQ A_4 \oplus \mQ A_4$, we see that the
Schur indices of all the simple components in the Wedderburn decomposition of 
$\mQ [A_4\times \mZ/2]$ are equal to 1. Carter's result 
\cite{C80a} now tells us that $K_{-1}(\mZ [A_4\times \mZ/2])$ is torsion-free, 
and from equation (2), the rank is given by
\[
r=1-r_{\mathbb Q} + (r_{\mathbb Q_{2}} -r_{\mathbb F_{2}}) + (r_{\mathbb Q_{3}} -r_{\mathbb F_{3}}).
\]
We now proceed to compute the various terms appearing in the above expression.

For $F$ a field of characteristic $0$, $r_F$ just counts the number of 
simple components in the Wedderburn decomposition of the group algebra $F[A_4\times \mZ/2]$. 
From the discussion in the above paragraph, we have that
$$\mQ [A_4\times \mZ/2] \cong  \mQ ^2  \oplus \mQ(\zeta_3)^2 \oplus  M_3\big(\mQ)^2.$$
yielding $r_{\mQ}=6$. Now by tensoring the above splitting with $\mQ _p$, $p=2$ or $3$, we obtain:
$$\mQ_p [A_4\times \mZ/2] \cong \mQ_p^2  \oplus \mQ_p(\zeta_3) \oplus  M_3\big(\mQ_p)^2 .$$
Therefore for each of the  primes $p=2,3$, we obtain that $r_{\mQ_2}=r_{\mQ_3}=6$.

Next let us consider the situation over the finite fields $\mF_2, \mF_3$. 

For $p=2$, we note that elements in $A_4 \times \mZ /2$ are $2$-regular precisely if they have order $1$ or $3$.
There is a single conjugacy class of elements of order one (the identity element). The elements of
order $3$ form a single conjugacy class inside $A_4\times \mZ/2$.   We conclude
that there are two $\mF_2$-conjugacy classes of $2$-regular elements, giving $r_{\mF_2}=2$.

For $p=3$, the elements in $A_4 \times \mZ /2$ which are $3$-regular have order $1$ or $2$. Here we look at the Galois group 
associated to the field extension $\mF_3 (\zeta _{2})$. Elements in the Galois group are generated
by the third power, giving us that $\gal(\mF_3(\zeta _{2})/\mF_3) = \{\bar 1\} \subset (\mZ /2^{\times})$.  Now we clearly have
a unique $\mF _3$-conjugacy class of elements of order one. For elements of order $2$, there are
{\sl three} distinct (ordinary) conjugacy classes of elements of order two; each of these ordinary conjugacy
classes is also an $\mF _3$-conjugacy class. We conclude that
overall there are four  $\mF _3$-conjugacy classes of $3$-regular elements, giving $r_{\mF_3}=4$.

To conclude, we substitute our calculations into the expression in equation (3) for the rank
of $K_{-1}(\mZ [A_4\times \mZ/2])$, giving us: $$r= 1 - 6 + (6 - 2) + (6 - 4)= 1.$$  Therefore $K_{-1}(\mZ [A_4 \times \mathbb Z/2])\cong \mathbb Z$ as claimed.
\end{proof}

\subsection{The homology of $E_{\fin}(\Gamma)$}

In the remainder of this section, we will compute the generalized homology groups $H_{\ast}^{\Gamma} (E_{\fin}(\Gamma); \mathbb{KZ}^{-\infty})$ for the
$73$ split crystallographic groups $\Gamma$.



\begin{lemma} \label{lemma:computational}
Let $\Gamma$ be a split three-dimensional crystallographic group. Let $\mathcal{V}_{\Gamma}$ consist of a selection
of one non-negligible vertex from each $\Gamma$-orbit of non-negligible vertices in $E_{\fin}(\Gamma)$.  
We have the isomorphisms:
\begin{align*}
H_{-1}^{\Gamma}(E_{\fin}(\Gamma); \mathbb{KZ}^{-\infty}) &\cong E^{2}_{0,-1},\\
H_{0}^{\Gamma}(E_{\fin}(\Gamma); \mathbb{KZ}^{-\infty}) &\cong  \bigoplus_{\sigma^0 \in \mathcal{V}_{\Gamma}}^{}\widetilde{K}_{0}( \mathbb ZF_{\sigma^0}) \oplus E^{2}_{1,-1}, \\
H_{1}^{\Gamma}(E_{\fin}(\Gamma); \mathbb{KZ}^{-\infty}) &\cong 0.
 \end{align*} 
Moreover, if all of the edges in $E_{\fin}(\Gamma)$ have negligible stabilizer groups, then $E^{2}_{1,-1} = 0$ and
$$ E^{2}_{0,-1} \cong \bigoplus_{\sigma^0 \in \mathcal{V}_{\Gamma}}^{}K_{-1}( \mathbb ZF_{\sigma^0}).$$
\end{lemma}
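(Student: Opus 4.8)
The plan is to run the Quinn spectral sequence from the beginning of Section \ref{section:contributionoffinites} and read off the low-degree terms, using Remark \ref{remark:computefromgraph} to collapse the chain complex. Since the $3$- and $2$-dimensional cells have stabilizers isomorphic to $1$ or $\mathbb{Z}/2$, their $K$-groups $Wh_q$ vanish for $q \leq 1$, so $E^2_{p,q} = 0$ unless $p \in \{0,1\}$. Combined with the vanishing of $Wh_q(F)$ for $q \geq 2$ (for finite $F$, the relevant groups are concentrated in $q \leq 1$) and the fact that $K_q(\mathbb{Z}F) = 0$ for $q \leq -2$, the spectral sequence is supported in a small range: the only potentially nonzero $E^2$-terms contributing to $H^\Gamma_n$ for $n \leq 1$ live at $(p,q)$ with $p \in \{0,1\}$ and $q \in \{-1,0,1\}$.

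First I would record that $E^2_{p,q} = 0$ for $p \geq 2$ (there are no cells of dimension $\geq 2$ with non-negligible stabilizer, by the fundamental-domain analysis of Section \ref{section:fundamentaldomains}, and even the negligible ones contribute nothing), and for $q \leq -2$ (by \cite{C80a}). This immediately forces the spectral sequence to degenerate at $E^2$ in the relevant range: the only possible differentials $d_2 \colon E^2_{p,q} \to E^2_{p-2,q+1}$ have target or source zero whenever $p \in \{0,1\}$. Hence $H^\Gamma_n(E_{\fin}(\Gamma); \mathbb{KZ}^{-\infty})$ has a two-step filtration with associated graded $E^2_{0,n} \oplus E^2_{1,n-1}$. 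For $n = -1$ this gives $H^\Gamma_{-1} \cong E^2_{0,-1}$ (since $E^2_{1,-2} = 0$); for $n = 0$ it gives an extension of $E^2_{1,-1}$ by $E^2_{0,0}$; for $n = 1$ it gives an extension of $E^2_{1,0}$ by $E^2_{0,1}$. The identification $E^2_{0,0} \cong \bigoplus_{\sigma^0 \in \mathcal{V}_\Gamma} \widetilde{K}_0(\mathbb{Z}F_{\sigma^0})$ follows because the $0$th homology of the coefficient chain complex in the variable $q=0$ is the cokernel of the edge-to-vertex differential, but $Wh_0$ of every edge stabilizer is $\widetilde{K}_0$ of a subgroup of $S_4$, hence vanishes, so $E^2_{0,0}$ is just the direct sum over vertex orbits. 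Similarly $E^2_{0,1}$ and $E^2_{1,0}$, $E^2_{1,1}$ vanish because $Wh(F) = 0$ for all the finite vertex stabilizers $F$ appearing (see Table \ref{table:tableKtheoryEfin}, where the $Wh \neq 0$ column is empty). This yields the three displayed isomorphisms, with the extension for $n=0$ splitting trivially since $E^2_{0,0}$ is the only summand once one checks the filtration quotients, and $H^\Gamma_1 \cong 0$ since both its filtration quotients $E^2_{0,1}$ and $E^2_{1,0}$ vanish.

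For the final statement, I would simply specialize to the case where every edge stabilizer is negligible. Then the coefficient chain complex in the variable $q = -1$ reduces to $\bigoplus_{\sigma^0} K_{-1}(\mathbb{Z}F_{\sigma^0})$ sitting in homological degree $0$ with zero incoming differential (the edge contributions $K_{-1}(\mathbb{Z}F_{\sigma^1})$ all vanish by Lemma \ref{lemma:negligibleisomorphismtypes} applied at $q = -1$). Consequently $E^2_{1,-1} = 0$ (its chain group, a sum of $K_{-1}$ over edges, is zero) and $E^2_{0,-1} = H_0$ of a complex whose only nonzero term is the vertex sum, giving $E^2_{0,-1} \cong \bigoplus_{\sigma^0 \in \mathcal{V}_\Gamma} K_{-1}(\mathbb{Z}F_{\sigma^0})$.

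The main obstacle, and the only place where genuine care is needed, is justifying that the differentials out of and into the relevant $E^2$-terms vanish and that the two-step filtration of $H^\Gamma_0$ actually splits (or at least that $E^2_{1,-1}$ contributes only as claimed). The vanishing of $d_2$ is automatic from the support of the spectral sequence, so the real subtlety is purely bookkeeping: confirming via Remark \ref{remark:computefromgraph} and Lemma \ref{lemma:negligibleisomorphismtypes} that the edge stabilizers contribute nothing to $E^2_{0,q}$ for $q \in \{0,-1\}$ in the general case, and nothing at all when they are negligible. I expect no difficulty beyond carefully tracking which cells contribute to which coefficient complex, since all the hard input (the $K$-theory of the finite stabilizers, and the negligibility machinery) is already established in Table \ref{table:tableKtheoryEfin} and Lemma \ref{lemma:negligibleisomorphismtypes}.
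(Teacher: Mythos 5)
Your overall route is the same as the paper's: run the Quinn spectral sequence, observe via Remark \ref{remark:computefromgraph} that $E^2_{p,q}=0$ for $p\geq 2$, note degeneration at $E^2$ in the relevant range, and kill $E^2_{0,1}$ and $E^2_{1,0}$ using Table \ref{table:tableKtheoryEfin} and the vanishing of $\widetilde{K}_0$ for the edge stabilizers. All of that is correct and matches the paper.

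The one genuine gap is your treatment of the extension problem for $n=0$. The spectral sequence only gives a short exact sequence
\[
0 \rightarrow E^{2}_{0,0} \rightarrow H_{0}^{\Gamma}(E_{\fin}(\Gamma); \mathbb{KZ}^{-\infty}) \rightarrow E^{2}_{1,-1} \rightarrow 0,
\]
and the lemma asserts this is a direct sum. Your justification --- that the extension splits ``since $E^2_{0,0}$ is the only summand once one checks the filtration quotients'' --- is not a valid general argument, and taken literally it is false: in the cases of Examples \ref{example:c6+} and \ref{example:d''6} one has $E^2_{1,-1}\cong\mathbb{Z}\neq 0$. The paper's resolution is a concrete case check: for every one of the $73$ groups, at least one of $E^2_{0,0}$ and $E^2_{1,-1}$ vanishes (all edges are negligible except in the five cases of Examples \ref{example:gamma5}--\ref{example:d''6}, and in the only two of those with $E^2_{1,-1}\neq 0$ the vertex stabilizers are $\mathbb{Z}/6$ or $D_6$, whose $\widetilde{K}_0$ vanishes), so the extension is trivially split. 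You could alternatively argue that whenever $E^2_{1,-1}\neq 0$ it is free abelian, hence the sequence splits; but either way some appeal to the actual computations is required, and your proposal supplies neither. Everything else, including the final statement about negligible edges, is fine.
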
 

\begin{proof}
The lemma follows easily from the remarks about the Quinn spectral sequence that were given in the introduction of this section.

We note that the possible contributions to $H_{1}^{\Gamma}(E_{\fin}(\Gamma); \mathbb{KZ}^{-\infty})$ must come from the 
$E^{2}_{0,1}$ and $E^{2}_{1,0}$ terms. It follows from Table \ref{table:tableKtheoryEfin} that $E^{2}_{0,1} \cong 0$ for all split crystallographic
groups $\Gamma$. The $E^{2}_{1,0}$ terms come from $\widetilde{K}_{0}(\mathbb{Z}[F_{\sigma}])$, where the $\sigma$s are edges. The latter
 $K$-groups are always trivial, so $E^{2}_{1,0} \cong 0$. This proves that 
$H_{1}^{\Gamma}(E_{\fin}(\Gamma); \mathbb{KZ}^{-\infty}) \cong 0$.

The isomorphism 
$$ H_{0}^{\Gamma}(E_{\fin}(\Gamma); \mathbb{KZ}^{-\infty}) \cong  \bigoplus_{\sigma^0 \in \mathcal{V}_{\Gamma}}^{}\widetilde{K}_{0}( \mathbb ZF_{\sigma^0}) \oplus E^{2}_{1,-1}$$
follows from the fact that one or the other of the factors
$$E^{2}_{0,0}  \cong \bigoplus_{\sigma^0 \in \mathcal{V}_{\Gamma}}^{}\widetilde{K}_{0}( \mathbb ZF_{\sigma^0}) \quad \text{and} \quad E^{2}_{1,-1}$$ 
is always zero. Indeed, all edges have negligible stabilizers (so in particular $E^{2}_{1,-1} \cong 0$), except in the five cases
that are treated in Examples \ref{example:gamma5}, \ref{example:c6+-}, \ref{example:c6+}, \ref{example:d6+}, and
\ref{example:d''6}. We have non-vanishing $E^{2}_{1,-1}$ in only two of those cases, namely Examples \ref{example:c6+} and \ref{example:d''6};
in both of these cases, the $E^{2}_{0,0}$ term vanishes.
\end{proof}

\begin{remark}
Lemma \ref{lemma:computational} makes the computation of
$H_{*}^{\Gamma_{i}}(E_{\fin}(\Gamma_{i}); \mathbb{KZ}^{-\infty})$ relatively straightforward, for $i \neq 5$. Indeed,
 Theorems \ref{theorem:g1}, \ref{theorem:g2}, \ref{theorem:g3}, \ref{theorem:g4}, \ref{theorem:g6}, and \ref{theorem:g7},
imply that all of the edges in $E_{\fin}(\Gamma_{i})$ ($i \neq 5$) are negligible. We can let $\mathcal{V}_{\Gamma_{i}}$
be the set of vertices listed in the appropriate theorem (above), and then determine the isomorphism type of the group
$H_{*}^{\Gamma_{i}}(E_{\fin}(\Gamma); \mathbb{KZ}^{-\infty})$ from Lemma \ref{lemma:computational} and
Table \ref{table:tableKtheoryEfin}. We will sketch this procedure in Examples \ref{example:gamma1fin}, \ref{example:a4+fin}, and \ref{example:d2+fin}.
\end{remark}

\begin{procedure} \label{procedure:finitepart}
Let $\Gamma$ and $\Gamma'$ be split crystallographic groups, where $\Gamma'$ has finite index in $\Gamma$.
We describe a procedure for computing $H_*^{\Gamma'}(E_{\fin}(\Gamma'); \mathbb{KZ}^{-\infty})$,
assuming that the calculation of $H_*^{\Gamma}(E_{\fin}(\Gamma); \mathbb{KZ}^{-\infty})$ has been done. 
\begin{enumerate}
\item Select one cell from each $\Gamma$-orbit of non-negligible (relative to $\Gamma$) cells in $E_{\mathcal{FIN}}(\Gamma)$; call
the resulting set $\mathcal{C}_{\Gamma}$. 
\item Let $\Gamma'$ act on $E_{\mathcal{FIN}}(\Gamma)$ (which is clearly a model for $E_{\fin}(\Gamma')$). Since the class of negligible groups is closed under passage to subgroups (see Definition \ref{definition:negligible}), the only cells
$\sigma \subseteq E_{\mathcal{FIN}}(\Gamma)$ such that $\Gamma'_{\sigma}$ is non-negligible must be in the set $\Gamma \cdot \mathcal{C}_{\Gamma}$. We can
write $\Gamma = \Gamma' T$, where $T$ is a right transversal for $\Gamma'$ in $\Gamma$. Thus, 
$\Gamma \cdot \mathcal{C}_{\Gamma} = \Gamma' \cdot (T \cdot \mathcal{C}_{\Gamma})$. (Note that $T \cdot \mathcal{C}_{\Gamma}$ is finite when $\mathcal{C}_{\Gamma}$
is.) We choose a single cell from each $\Gamma'$-orbit that meets $T \cdot \mathcal{C}_{\Gamma}$. Call the resulting set $\widehat{\mathcal{C}}_{\Gamma'}$. We note that if a cell $\sigma \subseteq E_{\fin}(\Gamma)$
has the property that $\Gamma'_{\sigma}$ is non-negligible, then $\sigma$ is in 
$\Gamma' \cdot \widehat{\mathcal{C}}_{\Gamma'}$.
\item It is still possible that $\widehat{\mathcal{C}}_{\Gamma'}$ contains cells $\sigma$ such that
$\Gamma'_{\sigma}$ is negligible. We therefore recompute the cell stabilizer $\Gamma'_{\sigma}$ for each $\sigma \in \widehat{\mathcal{C}}_{\Gamma'}$, 
removing $\sigma$ from our list if $\Gamma'_{\sigma}$ is negligible. The result is the desired list of cells $\mathcal{C}_{\Gamma'}$
(which contains a single cell from each $\Gamma'$-orbit of cells $\sigma$ such that $\Gamma'_{\sigma}$ is non-negligible, and no cells from any other orbits).
\item If $\mathcal{C}_{\Gamma'}$ consists entirely of vertices (i.e., if all of the edge stabilizers in $E_{\fin}(\Gamma')$
are negligible), then
we can determine $H_{\ast}^{\Gamma'}(E_{\fin}(\Gamma'); \mathbb{KZ}^{-\infty})$ from Lemma 
\ref{lemma:computational} and Table \ref{table:tableKtheoryEfin}. If there are non-negligible edge stabilizers, then we apply the quotient map $p: E_{\mathcal{FIN}}(\Gamma) \rightarrow \Gamma' \backslash E_{\mathcal{FIN}}(\Gamma)$ to the cells in $\mathcal{C}_{\Gamma'}$. We note that all of the cells in $\mathcal{C}_{\Gamma'}$
are either $0$- or $1$-dimensional, so the image is a graph. 
We  can then use the graph $p(\mathcal{C}_{\Gamma'})$ to compute the $E^{2}_{0,-1}$ and $E^{2}_{1,-1}$ terms. (The
remainder of the calculation is straightforward.)
\end{enumerate}
\end{procedure}

In Subsection \ref{subsection:examplesfin}, we will give examples to illustrate Procedure \ref{procedure:finitepart}, and provide
tables listing cell stabilizer information for all of the split crystallographic groups with non-negligible point groups. Table \ref{thefinitepart}
summarizes all of the non-zero isomorphism types of the groups $H_{*}^{\Gamma}(E_{\fin}(\Gamma); \mathbb{KZ}^{-\infty})$.

\subsection{Calculations of $H^{\Gamma}_{*}(E_{\fin}(\Gamma); \mathbb{KZ}^{-\infty})$}
\label{subsection:examplesfin}

Let us compute some of the groups $H^{\Gamma}_{*}(E_{\fin}(\Gamma); \mathbb{KZ}^{-\infty})$, using Procedure \ref{procedure:finitepart}. 

\begin{example} \label{example:gamma1fin}
Consider first the group $\Gamma_{1}$, which is generated by the standard cubical lattice $L$ and the point group $S_{4}^{+} \times (-1)$. Theorem \ref{theorem:g1} showed that all of the edges in $E_{\fin}(\Gamma_{1})$ are negligible relative to the action of $\Gamma_{1}$. The non-negligible vertex
stabilizers are described in Table \ref{finitesubgroupsingamma1}. Note that the columns of Table \ref{finitesubgroupsingamma1} are labelled by point groups, and the rows 
are labelled by split crystallographic groups (see Remark \ref{remark:labelconvention}
for a guide to our labelling convention). If a $3$-tuple $(a,b,c) \in \mathbb{R}^{3}$ appears in a box in the table, then the corresponding column heading is
$\pi( \Gamma_{(a,b,c)})$, where $\Gamma$ is the crystallographic group labelling the row, and $\pi: \Gamma \rightarrow H$ is the usual 
projection. Thus, for instance, the first entry in Table \ref{finitesubgroupsingamma1}
tells us that 
$\pi( \Gamma_{(0,0,0)})$ and $\pi( \Gamma_{(1/2, 1/2, 1/2)})$
are both $S_{4}^{+} \times (-1)$, where we have written $\Gamma$ in place of $\Gamma_{1}$ (the label of that row).

We easily see from Table \ref{finitesubgroupsingamma1} that there are $4$ vertices in $\mathcal{C}_{\Gamma_{1}}$. Two of the vertices have stabilizer groups
isomorphic to $S_{4} \times \mathbb{Z}/2$, and two have stabilizer groups isomorphic to $D_{4} \times \mathbb{Z}/2$. 
By Lemma \ref{lemma:computational} and Table \ref{table:tableKtheoryEfin}, we get
\begin{align*}
H^{\Gamma_{1}}_{-1}(E_{\fin}(\Gamma_{1}); \mathbb{KZ}^{-\infty}) &\cong \mathbb{Z}^{2}; \\
H^{\Gamma_{1}}_{0}(E_{\fin}(\Gamma_{1}); \mathbb{KZ}^{-\infty}) &\cong (\mathbb{Z}/4)^{4}; \\
\end{align*}
(We note that, here and in all of the other cases, $H^{\Gamma}_{1}(E_{\fin}(\Gamma); \mathbb{KZ}^{-\infty}) \cong 0$.) These calculations are recorded in the first row of Table \ref{thefinitepart}.
\end{example}

\begin{table}[!h]
\renewcommand{\arraystretch}{1.3}\begin{equation*}
\scriptsize\begin{array}{ | c | c | c | c | c | c | c |} \hline
& S^{+}_{4} \times (-1)& D^{+}_{4_{1}} \times (-1)& A^{+}_{4} \times (-1)&  D^{+}_{4} \times (-1) &  D^{+}_{2} \times (-1)&  C^{+}_{4} \times (-1)   \\ \hline \hline
\Gamma_{1} & (0,0,0)  &( \frac{1}{2},  0, 0)  & & (\frac{1}{2}, \frac{1}{2}, 0) & & \\ 
& (\frac{1}{2}, \frac{1}{2}, \frac{1}{2}) & & & &   &  \\ \hline    
 (A^{+}_{4} \times (-1))_{1} & && (\frac{1}{2}, \frac{1}{2}, \frac{1}{2}) & &( \frac{1}{2},  0, 0)    & \\ 
&&& (0,0,0) & & (\frac{1}{2}, \frac{1}{2}, 0) &   \\ \hline
(D^{+}_{4} \times (-1))_{1} & &&& (0,0,0) & (\frac{1}{2},0,0) & \\
& &&& (\frac{1}{2}, \frac{1}{2}, \frac{1}{2}) & (0, \frac{1}{2}, \frac{1}{2}) & \\
& &&& (0,0,\frac{1}{2}) & & \\
 & &&& (\frac{1}{2},\frac{1}{2},0) &  & \\ \hline
(D^{+}_{2} \times (-1))_{1}    & &  & & & (8) &   \\ \hline
 (C^{+}_{4} \times (-1))_{1} & &&&&& (0,0,0)\\
& &&&&& (\frac{1}{2}, \frac{1}{2}, \frac{1}{2}) \\
& &&&&& (0,0,\frac{1}{2}) \\
 & &&&&& (\frac{1}{2},\frac{1}{2},0)  \\ \hline
\end{array}
\end{equation*}
\caption{Cell stabilizers in $E_{\fin}(\Gamma_1)$. The ``$(8)$" is an abbreviation for the collection
of all $3$-tuples $(a,b,c)$ such that $a$, $b$, $c \in \{ 0, 1/2 \}$.}
\label{finitesubgroupsingamma1}
\end{table}


\begin{table}
\renewcommand{\arraystretch}{1.3}
\begin{equation*}
\scriptsize\begin{array}{ | c | c | c | c | c | c  |} \hline
& D^{+}_{4} \times (-1)& D^{+}_{3} \times (-1)&  D^{+}_{2} \times (-1) &  C^{+}_{4} \times (-1)&  C^{+}_{3} \times (-1) \\ \hline \hline
\Gamma_{2} & (\frac{1}{2},0,0)^{\dag} &  (\frac{1}{4}, \frac{1}{4}, \frac{1}{4}) 
&&&\\ \hline    
 (A^{+}_{4} \times (-1))_{2}        & && (\frac{1}{2}, 0, 0) & 
&  (\frac{1}{4}, \frac{1}{4}, \frac{1}{4})  \\ \hline
(D^{+}_{4} \times (-1))_{2} & (0,0,0) && (\frac{1}{2},0,0)&  &\\
& (0,0,\frac{1}{2}) &&&&\\ \hline
(D^{+}_{2} \times (-1))_{2}    & &  & (0,0,0) & &     \\ 
& &  & (\frac{1}{2},0,0) & &     \\ 
 & &  & (0,\frac{1}{2},0) & &    \\ 
 & &  & (0,0,\frac{1}{2}) & &   \\ \hline
(C^{+}_{4} \times (-1))_{2}    & &  & &(0,0,0) &      \\ 
 & &  & & (0,0,\frac{1}{2}) &    \\ \hline
\end{array}
\end{equation*}
\caption{Cell stabilizers in $E_{\fin}(\Gamma_2)$. Note that the dagger ($\dag$) indicates a vertex with the stabilizer group $D_{4_{1}} \times (-1)$. We have also omitted two entries 
for formatting reasons. The first row should have the origin $(0,0,0)$ listed with stabilizer group $S_{4}^{+} \times (-1)$, and the second row should list the origin with the stabilizer group
$A^{+}_{4} \times (-1)$.}
\label{finitesubgroupsingamma2}
\end{table}


\begin{table}
\renewcommand{\arraystretch}{1.3}
\begin{equation*}
\begin{array}{ | c | c | c | c | c  |} \hline
& S^{+}_{4} \times (-1)& A^{+}_{4} \times (-1)&  D^{+}_{2} \times (-1) &  D_{2} \times (-1) \\ \hline \hline
\Gamma_{3} & (0,0,0) &  &&(\frac{1}{4}, \frac{1}{4}, 0) \\ 
& (\frac{1}{2},0,0) &&& \\ \hline    
 (A^{+}_{4} \times (-1))_{3} && (0,0,0) &&\\ 
&& (\frac{1}{2},0,0) &&\\ \hline    
(D^{+}_{2} \times (-1))_{3} &&& (0,0,0) & \\ 
&& &(\frac{1}{2},0,0) &\\ \hline  \hline
\Gamma_{4} &&& (0,0,0) & \\ 
 &&& (0, \frac{1}{2}, \frac{1}{2}) & \\ 
 &&& (0,0,\frac{1}{2}) & \\ 
&&& (0,\frac{1}{2},0) & \\ \hline \hline
\end{array}
\end{equation*}
\caption{Cell stabilizers in $E_{\fin}(\Gamma_3)$ and $E_{\fin}(\Gamma_{4})$.}
\label{finitesubgroupsingamma34}
\end{table}

\begin{table}
\renewcommand{\arraystretch}{1.3}
\begin{equation*}
\begin{array}{ | c | c | c | c | c  | c |} \hline
& D^{+}_{3} \times (-1) &  C^{+}_{3} \times (-1)  \\ \hline \hline
\Gamma_{6} & (0,0,0) &  \\
&  (\frac{5}{6}, \frac{-1}{6}, \frac{-1}{6}) &  \\ \hline
(C^{+}_{3} \times (-1))_{6} && (0,0,0)   \\
&& (\frac{5}{6}, \frac{-1}{6}, \frac{-1}{6})   \\ \hline \hline
\Gamma_{7} & (0,0,0) &   \\
 & (\frac{1}{2}, \frac{1}{2}, \frac{1}{2}) &   \\ \hline
\end{array}
\end{equation*}
\caption{Cell stabilizers in  $E_{\fin}(\Gamma_{6})$, and $E_{\fin}(\Gamma_{7})$.} 
\label{finitesubgroupsingamma67}
\end{table}


\begin{table}
\renewcommand{\arraystretch}{1.3}
\begin{equation*}
\begin{array}{  | c | c | c | c  | c |} \hline
 & D'_{6} & D''_{6} & \widehat{D}'_{6} & C^{+}_{6} \times (-1) \\ \hline \hline
(D'_{6})_{5}   & (0,0,0) & & & \\
 & (\frac{1}{2}, \frac{1}{2}, \frac{1}{2}) & & & \\
 & (\frac{1}{3}, -\frac{2}{3}, \frac{1}{3}) & & & \\
 & (\frac{5}{6}, -\frac{1}{6}, \frac{5}{6}) & & & \\
 & (-\frac{1}{3}, \frac{2}{3},-\frac{1}{3}) & & & \\
 & (-\frac{5}{6}, \frac{1}{6}, -\frac{5}{6}) & & & \\ \hline
(D''_{6})_{5}  & &(0,0,0)  & & \\
 & &(\frac{1}{2}, \frac{1}{2}, \frac{1}{2}) & &  \\
 & &(\frac{1}{4}, \frac{1}{4}, \frac{1}{4})^{\ast} &  & \\
 & &(-\frac{1}{4}, -\frac{1}{4}, -\frac{1}{4})^{\ast} &  & \\ \hline
(\widehat{D}'_{6})_{5}  & & & (0,0,0) & \\
 & & & (\frac{1}{2}, \frac{1}{2}, \frac{1}{2}) & \\ \hline
 (C^{+}_{6} \times (-1))_{5}  & & & & (0,0,0) \\
&&&& (\frac{1}{2}, \frac{1}{2}, \frac{1}{2}) \\ \hline
\end{array}
\end{equation*}
\caption{Cell stabilizers in  $E_{\fin}(\Gamma_5)$ (part I). An asterisk denotes the stabilizer of an edge, 
where the coordinates indicate the edge's midpoint.
For the cell stabilizers in $\Gamma_{5}$ itself, we refer the reader to Theorem \ref{theorem:g5}. Note that this table
and Table \ref{finitesubgroupsingamma567} should be considered two halves of the same table (which was split
in two purely for the sake of better formatting).}
\label{finitesubgroupsingamma5}
\end{table}


\begin{table}
\renewcommand{\arraystretch}{1.3}
\begin{equation*}
\begin{array}{ | c | c | c | c | c  | c |} \hline
&  D^{+}_{6}  & D^{+}_{3} \times (-1) & C^{+}_{6} & C'_{6} &  C^{+}_{3} \times (-1)  \\ \hline \hline
(\widehat{D}'_{6})_{5} & & & &  (\frac{1}{3}, -\frac{2}{3}, \frac{1}{3}) &   \\
& & & &  (\frac{5}{6}, \frac{-1}{6}, \frac{5}{6})  & \\ \hline
(C^{+}_{6} \times (-1))_{5} & & & (\frac{1}{4}, \frac{1}{4}, \frac{1}{4})^{\ast} & (\frac{1}{3}, \frac{-2}{3}, \frac{1}{3})  & \\
& &&& (\frac{5}{6}, \frac{-1}{6}, \frac{5}{6}) & \\ \hline
(D^{+}_{6})_{5} & (0,0,0) & & (\frac{1}{4},\frac{1}{4},\frac{1}{4})^{\ast}  & & \\
& (\frac{1}{2},\frac{1}{2},\frac{1}{2})  & & & & \\ \hline
(D^{+}_{3} \times (-1))_{5} & & (0,0,0) &  & & \\
& & (\frac{1}{2}, \frac{1}{2}, \frac{1}{2})  & & & \\ \hline
(C^{+}_{6})_{5} & & &(0,0,0)   & &\\
& & &(\frac{1}{2}, \frac{1}{2}, \frac{1}{2})  & & \\
& & &(\frac{1}{4}, \frac{1}{4}, \frac{1}{4})^{\ast}   && \\
& & &(\frac{-1}{4}, \frac{-1}{4}, \frac{-1}{4})^{\ast}   & &\\ \hline
(C'_{6})_{5} &  &&& (0,0,0)  & \\
& &&& (\frac{1}{2}, \frac{1}{2}, \frac{1}{2})   & \\
& &&& (\frac{1}{3}, \frac{-2}{3}, \frac{1}{3})   & \\
& &&& (\frac{5}{6}, \frac{-1}{6}, \frac{5}{6})   & \\
& &&& (\frac{-1}{3}, \frac{2}{3},\frac{-1}{3})   & \\
& &&& (\frac{-5}{6}, \frac{1}{6}, \frac{-5}{6})  & \\ \hline
(C^{+}_{3} \times (-1))_{5}  & & & &  (0,0,0) & \\
&&&& (\frac{1}{2}, \frac{1}{2}, \frac{1}{2}) & \\ \hline 
\end{array}
\end{equation*}
\caption{Cell stabilizers in $E_{\fin}(\Gamma_5)$ (part II).
An asterisk indicates an edge stabilizer, where the
coordinates are for the midpoint of the edge.} 
\label{finitesubgroupsingamma567}
\end{table}

\begin{example} \label{example:a4+fin}
Next, we follow Procedure \ref{procedure:finitepart} with
$\Gamma' = (A^{+}_{4} \times (-1))_{1}$, and $\Gamma = \Gamma_{1}$. A right transversal $T$ for $\Gamma'$ in
$\Gamma$ is as follows:
$$ \left\{ \left(\begin{smallmatrix} 1 & 0 & 0 \\ 0 & 1 & 0 \\ 0 & 0 & 1 \end{smallmatrix}\right), 
 \left(\begin{smallmatrix} 0 & 1 & 0 \\ 1 & 0 & 0 \\ 0 & 0 & 1 \end{smallmatrix}\right) \right\}.$$
We can let $\mathcal{C}_{\Gamma}$ be the collection of vertices from the first row of Table \ref{finitesubgroupsingamma1}.
We note that $T \cdot \mathcal{C}_{\Gamma}$ contains five vertices: the four vertices from $\mathcal{C}_{\Gamma}$,
and $(0,1/2,0)$. We now choose a cell from each $\Gamma'$-orbit meeting $T \cdot \mathcal{C}_{\Gamma}$;
we can simply choose $\widehat{\mathcal{C}}_{\Gamma'} = \mathcal{C}_{\Gamma}$, 
since the new vertex $(0,1/2,0)$ is in the $\Gamma'$-orbit 
of $(1/2,0,0) \in \mathcal{C}_{\Gamma}$. The next step is to compute the stabilizer groups $\Gamma'_{v}$
of the vertices $v \in \widehat{\mathcal{C}}_{\Gamma'}$. This is straightforward, and amounts to computing
the intersections of 
$$ S_{4}^{+} \times (-1), \quad S_{4}^{+} \times (-1), \quad D_{4_{1}}^{+} \times (-1), 
\quad \text{ and } \quad D_{4}^{+} \times (-1)$$
with $A_{4}^{+} \times (-1)$ (respectively). As a result, we see two vertices $v$ (namely $(0,0,0)$ and $(1/2,1/2,1/2)$)
such that $\pi(\Gamma'_{v}) = A_{4}^{+} \times (-1)$, and two vertices such that $\pi(\Gamma'_{v}) = D_{2}^{+} \times (-1)$.
All of these vertices are non-negligible, so $\mathcal{C}_{\Gamma'} = \widehat{\mathcal{C}}_{\Gamma'}$.
The four vertices are recorded in the second row of Table \ref{finitesubgroupsingamma1}, in the appropriate columns.
It now follows from Lemma \ref{lemma:computational} and Table \ref{table:tableKtheoryEfin} that
\begin{align*}
H^{\Gamma'}_{-1}(E_{\fin}(\Gamma'); \mathbb{KZ}^{-\infty}) &\cong \mathbb{Z}^{2}; \\
H^{\Gamma'}_{0}(E_{\fin}(\Gamma'); \mathbb{KZ}^{-\infty}) &\cong (\mathbb{Z}/2)^{4}.
\end{align*}
These calculations are recorded in Table \ref{thefinitepart}.
\end{example}

\begin{example} \label{example:d2+fin}
Now we follow the same procedure with $\Gamma = (A^{+}_{4} \times (-1))_{1}$ and 
$\Gamma' = (D^{+}_{2} \times (-1))_{1}$. We can let $T = C_{3}^{+}$. (Recall that
$C_{3}^{+}$ is the group of matrices that cyclically permute the coordinates -- see 
Table \ref{orientationpreservingpointgroups}.) We can let $\mathcal{C}_{\Gamma}$
be the same collection of four vertices from the second row of Table \ref{finitesubgroupsingamma1}. Applying
$T$, we find
$$ T \cdot \mathcal{C}_{\Gamma} = \{ (a,b,c) \in \mathbb{R}^{3} \mid a, b, c \in \{0, 1/2 \} \}.$$
Next, we must choose one cell from each $\Gamma'$-orbit that meets $T \cdot \mathcal{C}_{\Gamma}$. In fact,
all of the elements of $T \cdot \mathcal{C}_{\Gamma}$ are easily seen to be in distinct $\Gamma'$-orbits. We can
therefore set $\widehat{\mathcal{C}}_{\Gamma'} = T \cdot \mathcal{C}_{\Gamma}$. The next step is to compute
the groups $\pi(\Gamma'_{v})$ for the vertices in $\widehat{\mathcal{C}}_{\Gamma'}$. The best approach here
may be to use Lemma \ref{pointstab}(1). We note that, for $h \in D_{2}^{+} \times (-1)$,
$(a,b,c) - h \cdot (a,b,c) = (a',b',c')$, where $x'$ is either $0$ or $2x$, for $x \in \{ a, b, c \}$.
It follows from Lemma \ref{pointstab}(1) that $\pi(\Gamma'_{v}) = D_{2}^{+} \times (-1)$ for
each $v \in \widehat{\mathcal{C}}_{\Gamma'}$. This implies that each member of the latter set is non-negligible,
so we can let $\mathcal{C}_{\Gamma'} = \widehat{\mathcal{C}}_{\Gamma'}$. It now follows from   
Lemma \ref{lemma:computational} and Table \ref{table:tableKtheoryEfin} that 
\begin{align*}
H^{\Gamma'}_{-1}(E_{\fin}(\Gamma'); \mathbb{KZ}^{-\infty}) &\cong 0; \\
H^{\Gamma'}_{0}(E_{\fin}(\Gamma'); \mathbb{KZ}^{-\infty}) &\cong (\mathbb{Z}/2)^{8},
\end{align*}
as recorded in Table \ref{thefinitepart}.
\end{example}

Examples \ref{example:gamma1fin}, \ref{example:a4+fin}, and \ref{example:d2+fin} illustrate the general pattern in
``easy" cases -- those in which the edge stabilizers are negligible. Note that, given $\Gamma$ and $\Gamma'$,
a choice of a right transversal for $\Gamma'$ in $\Gamma$ is made during the application
of Procedure \ref{procedure:finitepart}. Of course, in a given
case, several choices of transversal are possible, and these could easily give us vertices (or edges) that are different
from the ones recorded in Tables \ref{finitesubgroupsingamma1}, 
\ref{finitesubgroupsingamma2}, \ref{finitesubgroupsingamma34},  
\ref{finitesubgroupsingamma67}, \ref{finitesubgroupsingamma5}, and \ref{finitesubgroupsingamma567}. (In fact,
we choose orbit representatives, too, and this could also give us different cells.) The calculation
of homology is unaffected, however, since the resulting cells are necessarily in the same $\Gamma'$-orbits no matter which
transversal is selected. In practice, we have always chosen the groups $\Gamma$ and $\Gamma'$ in such a way that
$[\Gamma: \Gamma'] \leq 3$. We also favored certain transversals, such as groups of permutation matrices (as in the
above examples), or the group generated by the antipodal map (when applicable).

Note also that most of the split crystallographic groups in Table \ref{splitcrystallographicgroups} have negligible point
groups, which directly implies that the homology groups in question
are $0$. This observation reduces the number of split crystallographic groups that must be considered.

There are five hard cases. We consider these next.

\begin{example}[The case of $\Gamma_{5}$] \label{example:gamma5} We would like to 
compute the generalized homology groups
$H_{*}^{\Gamma_{5}}(E_{\fin}(\Gamma_{5}); \mathbb{KZ}^{-\infty})$. We work with the vertices and edges from the statement of Theorem \ref{theorem:g5}. There is only one non-negligible
edge, which connects $(0,0,0)$ to $(1/2,1/2,1/2)$.
The only part of the calculation that is not completely straightforward
is the calculation of the $E^{2}_{0,-1}$ and $E^{2}_{1,-1}$
terms; we compute these from the complex described in Figure \ref{graphforgamma5}. 
\begin{figure}[!h]
\vbox{\hfil \hspace{-3.5cm}\beginpicture
\setcoordinatesystem units <.7cm,.7cm> point at -.2 1
\setplotarea x from -.2 to 5, y from -2 to 1
\setlinear
\linethickness=.8pt
\putrule from 0 0 to 4 0
\put {$\bullet$} at  0  0
\put {$\bullet$} at  4 0
\put {$D_6^{+} \times (-1)$} [t] at   0 -.2
\put {$D_6^{''}$} [t] at   2 0.8
\put {$D_6^{+} \times (-1)$} [t] at   4 -.2
\endpicture}\hfil
\vspace{-.8 cm}
\caption{The non-negligible edge in $\Gamma_{5} \backslash E_{\fin}(\Gamma_{5})$. In addition to the cells pictured, there are $2$ isolated vertices with stabilizer subgroup $D_2 \times \mathbb Z/2$ and $2$ other isolated vertices with stabilizer subgroup $D_6$.}
\label{graphforgamma5}
\end{figure}

We compute the homology of the chain complex:
\[
0\rightarrow \bigoplus_{{\sigma}^1 \in \mathcal{C}_{\Gamma_{5}}}^{} K_{-1}(\mathbb{Z}\g_{{\sigma}^1}) \rightarrow
\bigoplus_{{\sigma}^0 \in \mathcal{C}_{\Gamma_{5}}}^{}K_{-1}(\mathbb{Z}\g_{{\sigma}^0}) \rightarrow 0,
\]
By Figure \ref{graphforgamma5} and Table \ref{table:tableKtheoryEfin}, the latter chain complex becomes:
$$0\rightarrow K_{-1}(\mathbb ZD_6) \xrightarrow{\rho} (K_{-1}(\mathbb Z[D_6 \times \mathbb Z/2])  \oplus  K_{-1}(\mathbb Z[D_6 \times \mathbb Z/2])) \oplus 2K_{-1}(\mathbb ZD_6) \rightarrow 0.$$

The morphism $\rho$ is determined by the map $K_{-1}(\z D_6)\rightarrow K_{-1}(\z [D_6\times \z/2]).$ We get $K_{-1}(\z D_6)\cong \z$ and $K_{-1}(\z
[D_6\times \z/2]) \cong \z ^3$ by Table \ref{table:tableKtheoryEfin}.  We claim that the map
induced by the natural inclusion $D_6\hookrightarrow D_6\times \z
/2$ is injective, and the quotient group is isomorphic to $\z ^2$.
In order to see this, we merely note that there is a retraction from
$D_6 \times \z/2$ to the subgroup $D_6$, and hence we must have that
$K_{-1}(\z D_6)\cong \z$ is a summand inside $K_{-1}(\z [D_6\times
\z/2])\cong \z ^3$. This immediately gives  the following isomorphisms:
\begin{align*}
E^{2}_{1,-1} &\cong 0; \\
E^{2}_{0,-1} &\cong \mathbb{Z}^{7}.
\end{align*}
By Lemma \ref{lemma:computational}, it is now enough to record $\widetilde{K}_{0}(\mathbb{Z}[F_{\sigma_{0}}])$, where $F_{\sigma_{0}}$ ranges over
the vertex stabilizers from the statement of Theorem \ref{theorem:g5}. We have $\widetilde{K}_{0}(\mathbb{Z}[D_{2} \times \mathbb{Z}/2]) \cong \mathbb{Z}/2$
and $\widetilde{K}_{0}(\mathbb{Z}[D_{6} \times \mathbb{Z}/2]) \cong (\mathbb{Z}/2)^{2}$ (see Table \ref{table:tableKtheoryEfin}). There are two vertices
having each of the latter groups as stabilizers, contributing (in all) a factor of $(\mathbb{Z}/2)^{6}$ to the reduced $K_{0}$. This calculation is recorded 
 in Table \ref{thefinitepart}.
\end{example}

\begin{example}[The case of $(C_{6}^{+} \times (-1))_{5}$] \label{example:c6+-} We now consider the group $\Gamma = 
(C_{6}^{+} \times (-1))_{5}$. 
We will assume that Procedure \ref{procedure:finitepart} has been followed up to (4). There are a total of $5$ cells
in $C_{\Gamma}$, as recorded in Tables \ref{finitesubgroupsingamma5} and \ref{finitesubgroupsingamma567}. 
If we apply the quotient map $p$ to the cells $\mathcal{C}_{\Gamma}$, we get the complex described in Figure \ref{figure:c6+-}.

\begin{figure}[!h]
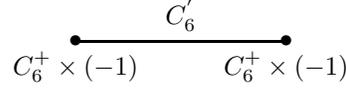

\vbox{\hfil \hspace{-3.5cm} \beginpicture
\setcoordinatesystem units <.7cm,.7cm> point at -.2 1
\setplotarea x from -.2 to 5, y from -2 to 1
\setlinear
\linethickness=.8pt
\putrule from 0 0 to 4 0
\put {$\bullet$} at  0  0
\put {$\bullet$} at  4 0
\put {$C_6^{+} \times (-1)$} [t] at   0 -.2
\put {$C_6^{'}$} [t] at   2 0.8
\put {$C_6^{+} \times (-1)$} [t] at   4 -.2
\endpicture}\hfil
\vspace{-.8 cm}
\caption{The graph $p(\mathcal{C}_{\Gamma})$, for $\Gamma = (C_{6}^{+} \times (-1))_{5}$. There are two
isolated vertices (not pictured), both of which have stabilizer groups isomorphic to $\mathbb{Z}/6$.}
\label{figure:c6+-}
\end{figure}

As before, we want to compute the $E^{2}_{0,-1}$ and $E^{2}_{1,-1}$ terms. The chain complex 
\[
0\rightarrow \bigoplus_{{\sigma}^1 \in \mathcal{C}_{\Gamma}}^{}K_{-1}(\mathbb{Z}\g_{{\sigma}^1}) \rightarrow
\bigoplus_{{\sigma}^0 \in \mathcal{C}_{\Gamma}}^{}K_{-1}( \mathbb{Z}\g_{{\sigma}^0}) \rightarrow 0
\]
becomes
$$0\rightarrow K_{-1}(\mathbb Z[\mathbb Z/6]) \xrightarrow{\rho} 2 K_{-1}(\mathbb Z[\mathbb Z/6 \times \mathbb Z/2]) \oplus 2K_{-1}(\mathbb Z[\mathbb Z/6]) \rightarrow 0.$$
The argument follows the pattern from Example \ref{example:gamma5}.
Note that the morphism $\rho$ is determined by the map $K_{-1}(\z[\mathbb Z/6])\rightarrow K_{-1}(\z [\mathbb Z/6\times \mathbb Z/2]).$ 
We get $K_{-1}(\z[\mathbb Z/6])\cong \z$ and $K_{-1}(\z
[\mathbb Z/6\times \mathbb Z/2]) \cong \z ^3$ by Table \ref{table:tableKtheoryEfin}.  We claim that the map
induced by the natural inclusion $\mathbb Z/6\hookrightarrow \mathbb Z/6\times \mathbb Z/2$ is injective, and the quotient group is isomorphic to $\z^2$.
There is a retraction from
$\mathbb Z/6 \times \mathbb Z/2$ to the subgroup $\mathbb Z/6$, and so
$K_{-1}(\z[\mathbb Z/6])\cong \z$ is a summand inside $K_{-1}(\z [\mathbb Z/6\times
\mathbb Z/2])\cong \z ^3$. It follows easily that
\begin{align*}
E^{2}_{1,-1} &\cong 0; \\
E^{2}_{0,-1} &\cong \mathbb{Z}^{7}.
\end{align*}
By Lemma \ref{lemma:computational}, it is now enough to record $\widetilde{K}_{0}(\mathbb{Z}[F_{\sigma_{0}}])$, where $F_{\sigma_{0}}$ ranges over
the vertex stabilizers. We have 
$\widetilde{K}_{0}(\mathbb{Z}[\mathbb{Z}_{6} \times \mathbb{Z}/2]) \cong (\mathbb{Z}/2)^{2}$ (see Table \ref{table:tableKtheoryEfin}). There are two vertices
having the latter group as the stabilizer, contributing (in all) a factor of $(\mathbb{Z}/2)^{4}$ to the reduced $K_{0}$. (There are no other contributions
to the reduced $K_{0}$ from vertices -- see Table \ref{table:tableKtheoryEfin}.) This calculation is recorded 
 in Table \ref{thefinitepart}.
\end{example}

\begin{example}[The case of $(C_{6}^{+})_{5}$] \label{example:c6+} We
set $\Gamma = (C_{6}^{+})_{5}$. The quotient $p(\mathcal{C}_{\Gamma})$ appears in Figure \ref{figure:c6+}. 

\begin{figure}[!h]
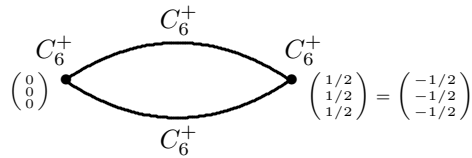
   
\vbox{\hspace{-1cm}\beginpicture
\setcoordinatesystem units <1cm,1cm> point at -1 1.5
\setplotarea x from -2 to 5, y from -1.5 to 1.5
\linethickness=.7pt
\def\myarrow{\arrow <4pt> [.2, 1]} 
\setplotsymbol ({\circle*{.4}})
\plotsymbolspacing=.3pt        
\put {\tiny $\left(\begin{smallmatrix}0 \\ 0 \\ 0\end{smallmatrix}\right)$} [tr] at
-.2 .1
\put {\tiny $\left(\begin{smallmatrix}1/2 \\ 1/2 \\ 1/2\end{smallmatrix}\right)=\left(\begin{smallmatrix}-1/2 \\ -1/2 \\ -1/2\end{smallmatrix}\right)$} [tl]
at 3.2 .1
\put {$\bullet$} at  0 0
\put {$\bullet$} at  3 0 
\put {$C^+_6$} [br] at  .1 .2
\put {$C^+_6$} [bl] at  2.9 .2
\put {$C^+_6$} [b] at  1.5 .6
\put {$C^+_6$} [t] at  1.5 -.6
\setquadratic
\plot  
0 0    1.5 .5  3 0 
/
\plot   
0 0  1.5 -.5  3 0
/
\endpicture} \hfil
\vspace{-.6 cm}
\caption{The graph $p(\mathcal{C}_{\Gamma})$, for $\Gamma = (C_{6}^{+})_{5}$. In this case, there are no isolated vertices.}
\label{figure:c6+}
\end{figure}

The chain complex for computing $E^{2}_{0,-1}$ and $E^{2}_{1,-1}$ is as follows:
$$0\rightarrow K_{-1}(\mathbb Z[\mathbb Z/6]) \oplus  K_{-1}(\mathbb Z[\mathbb Z/6])  \xrightarrow{\rho} K_{-1}(\mathbb Z[\mathbb Z/6])  \oplus  K_{-1}(\mathbb Z[\mathbb Z/6]) \rightarrow 0.$$
Since  $K_{-1}(\mathbb Z[\mathbb Z/6]) \cong \mathbb Z$, the latter complex amounts to the following:
$$0\rightarrow \mathbb Z \oplus \mathbb Z  \xrightarrow{\rho} \mathbb Z  \oplus \mathbb Z \rightarrow 0,$$
where the map $\rho$ sends $(0,1) \mapsto (1,-1)$ and $(1,0) \mapsto (-1, 1)$. It follows that $\ker(\rho) = \langle (1,1) \rangle \cong \mathbb Z$ and  $\im(\rho) =\langle(1, -1) \rangle$. Therefore
\[
E^2_{0,-1} \cong \mathbb Z, \; \text{and} \; \; E^2_{1,-1} \cong \mathbb Z.
\]
Since $\widetilde{K}_{0}(\mathbb{Z}[\mathbb{Z}/6]) \cong 0$, this and Lemma \ref{lemma:computational} directly gives us
the calculation that is recorded in Table \ref{thefinitepart}.
\end{example}

\begin{example}[The case of $(D_{6}^{+})_{5}$]  \label{example:d6+} In this case, the graph $p(\mathcal{C}_{\Gamma})$ is simply an edge --
see Figure \ref{figure:d6+}.

\begin{figure}[!h]   
\vbox{\hfil \hspace{-3.5cm}\beginpicture
\setcoordinatesystem units <1cm,1cm> point at -1 1
\setplotarea x from -2 to 4, y from -1 to 1
\linethickness=.7pt
\def\myarrow{\arrow <4pt> [.2, 1]} 
\setplotsymbol ({\circle*{.4}})
\plotsymbolspacing=.3pt        
\plot 0 0  3 0 /
\put {$\bullet$} at  0 0
\put {$\bullet$} at  3 0 
\put {$D^+_6$} [b] at  0 .2
\put {$D^+_6$} [b] at  3 .2
\put {$C^+_6$} [t] at  1.5 -.2
\put {\tiny $\left(\begin{smallmatrix}0 \\ 0 \\ 0\end{smallmatrix}\right)$} [t] at
0 -.1
\put {\tiny $\left(\begin{smallmatrix}1/2 \\ 1/2 \\ 1/2\end{smallmatrix}\right)$} [t]
at 3 -.1
\endpicture}\hfil
\vspace{-.3cm}
\caption{The graph $p(\mathcal{C}_{\Gamma})$ for $\Gamma = (D_{6}^{+})_{5}$. There are no isolated vertices.}
\label{figure:d6+}
\end{figure}

The chain complex for computing $E^{2}_{p,-1}$ is as follows:
$$0\rightarrow K_{-1}(\mathbb Z[\mathbb Z/6]) \xrightarrow{\rho} K_{-1}(\mathbb ZD_6)  \oplus  K_{-1}(\mathbb ZD_6)\rightarrow 0.$$
Since  $K_{-1}(\mathbb Z[\mathbb Z/6]) \cong K_{-1}(\mathbb Z[D_6]) \cong \mathbb Z$, the complex can be written
$$0\rightarrow \mathbb Z   \xrightarrow{\rho} \mathbb Z  \oplus \mathbb Z \rightarrow 0,$$
where the map $\rho$ sends $1 \mapsto (1,-1)$. It follows that $\ker(\rho)$ is trivial,  and  $\im(\rho)= \langle (1, -1) \rangle  \cong \mathbb Z$. Therefore
\[
E^2_{0,-1} \cong \mathbb Z, \; \text{and} \; \; E^2_{1,-1} \cong 0.
\]
Since $\widetilde{K}_{0}(\mathbb{Z}F) \cong 0$ for $F \in \{ \mathbb{Z}/6, D_{6} \}$ (see Table \ref{table:tableKtheoryEfin}),
we immediately get the calculation that is recorded in Table \ref{thefinitepart}.
\end{example}

\begin{example}[The case of $(D''_{6})_{5}$] \label{example:d''6} This case follows the exact pattern of Example \ref{example:c6+}. 
The quotient $p(\mathcal{C}_{\Gamma})$ is pictured in Figure \ref{figure:d''6}.

\begin{figure}[!h]   
\vbox{\hspace{1cm}\beginpicture
\setcoordinatesystem units <1cm,1cm> point at -1 1.5
\setplotarea x from -2 to 5, y from -1.5 to 1.5
\linethickness=.7pt
\def\myarrow{\arrow <4pt> [.2, 1]} 
\setplotsymbol ({\circle*{.4}})
\plotsymbolspacing=.3pt        
\put {\tiny $\left(\begin{smallmatrix}0 \\ 0 \\ 0\end{smallmatrix}\right)$} [tr] at
-.2 .1
\put {\tiny $\left(\begin{smallmatrix}1/2 \\ 1/2 \\ 1/2\end{smallmatrix}\right)=\left(\begin{smallmatrix}-1/2 \\ -1/2 \\ -1/2\end{smallmatrix}\right)$} [tl]
at 3.2 .1
\put {$\bullet$} at  0 0
\put {$\bullet$} at  3 0 
\put {$D^{''}_6$} [br] at  .1 .2
\put {$D^{''}_6$} [bl] at  2.9 .2
\put {$D^{''}_6$} [b] at  1.5 .6
\put {$D^{''}_6$} [t] at  1.5 -.6
\setquadratic
\plot  
0 0    1.5 .5  3 0 
/
\plot   
0 0  1.5 -.5  3 0
/
\endpicture}\hfil
\vspace{-.5cm}
\caption{The graph $p(\mathcal{C}_{\Gamma})$, for $\Gamma = (D''_{6} )_{5}$. There are no isolated vertices.}
\label{figure:d''6}
\end{figure}

The complex for computing $E^{2}_{p,-1}$ is:
$$0\rightarrow K_{-1}(\mathbb Z[D_6]) \oplus  K_{-1}(\mathbb Z[D_6])  \xrightarrow{\rho} K_{-1}(\mathbb Z[D_6])  \oplus  K_{-1}(\mathbb Z[D_6]) \rightarrow 0.$$
Since  $K_{-1}(\mathbb ZD_6) \cong \mathbb Z$, we get
$$0\rightarrow \mathbb Z \oplus \mathbb Z  \xrightarrow{\rho} \mathbb Z  \oplus \mathbb Z \rightarrow 0,$$
where the map $\rho$ sends $(0,1) \mapsto (1,-1)$ and $(1,0) \mapsto (-1, 1)$. It follows that $\ker(\rho) = \langle (1,1) \rangle \cong \mathbb Z$ and  $\im(\rho) =\langle(1, -1) \rangle \cong \mathbb Z$. Therefore
\[
E^2_{0,-1} \cong \mathbb Z, \; \text{and} \; \; E^2_{1,-1} \cong \mathbb Z.
\]
Since $\widetilde{K}_{0}(\mathbb{Z}D_{6}) \cong 0$, these terms represent the only contributions to $K$-theory. See
Table \ref{thefinitepart}.
\end{example}

\begin{table}[!h]
\footnotesize
\renewcommand{\arraystretch}{1.8}
\[
\begin{array}{|c|c|c|}
\hline \g & K_{-1} \neq 0 & \tilde{K}_0 \neq 0
\\ \hline \hline
\Gamma_1 & \mathbb Z^2 & (\mathbb Z/4)^4
\\ \hline
(A_4^+ \times (-1))_1 & \mathbb Z^2 & (\mathbb Z/2)^4   \\ \hline
(D^{+}_4 \times (-1))_1& & (\mathbb Z/2)^2 \oplus (\mathbb Z/4)^4  \\ \hline
(D_2^{+} \times (-1))_1 &  & (\mathbb Z/2)^8  \\ \hline
(C^{+}_{4} \times (-1))_1 & &(\mathbb Z/2)^4   \\ \hline \hline
\Gamma_2 & \mathbb Z^2 &(\mathbb Z/4)^2  \\ \hline 
(A_4^+ \times (-1))_2 & \mathbb Z^2 & (\mathbb Z/2)^2  \\ \hline
(D^{+}_4 \times (-1))_2 &  & \mathbb Z/2 \oplus (\mathbb Z/4)^2  \\ \hline
(D_2^{+} \times (-1))_2 & & (\mathbb Z/2)^4   \\ \hline
(C^{+}_{4} \times (-1))_2 & & (\mathbb Z/2)^2  \\ \hline \hline
\Gamma_3 & \mathbb Z^2 & \mathbb Z/2 \oplus (\mathbb Z/4)^2  \\ \hline
(A_4^+ \times (-1))_3 & \mathbb Z^2 & (\mathbb Z/2)^2 \\ \hline
(D_2^{+} \times (-1))_3  & &(\mathbb Z/2)^2  \\ \hline \hline
\Gamma_4 & & (\mathbb Z/2)^4   \\ \hline \hline
\Gamma_5 & \mathbb Z^7 & (\mathbb Z/2)^{6} \\ \hline
(D^+_6)_5 & \mathbb Z &   \\ \hline
(C'_6)_5 & \mathbb Z^6 &  \\ \hline
(C^+_6 \times (-1))_5 & \mathbb Z^7 & (\mathbb Z/2)^4  \\ \hline
(D'_6)_5 & \mathbb Z^6&      \\ \hline
(C^+_6)_5 &\mathbb Z & \mathbb Z    \\ \hline
(D_3^{+} \times (-1))_5 & \mathbb Z^2 &  \\ \hline
(\widehat{D}'_{6})_5 & \mathbb Z^4 &  \\ \hline
(C^+_3 \times (-1))_5 & \mathbb Z^2 &  \\ \hline
(D''_6)_5  &\mathbb Z & \mathbb Z   \\ \hline \hline
\Gamma_6 & \mathbb Z^2&  \\ \hline
(C^+_3 \times (-1))_6 & \mathbb Z^2 &  \\ \hline \hline
\Gamma_7 & \mathbb Z^2 &  \\ \hline
\end{array}
\]
\caption{The homology groups $H_{*}^{\Gamma}(E_{\fin}(\Gamma); \mathbb{KZ}^{-\infty})$.}
\label{thefinitepart}
\end{table}


\section{Fundamental domains for the actions of the $\Gamma_{i}$ on the spaces of planes $\base$}\label{section:actionsonplanes}

Theorem \ref{theorem:splitting2} showed that the lower algebraic $K$-theory of any crystallographic group can be computed in two pieces. In Section
\ref{section:contributionoffinites}, we completed the first half of the computation for the $73$ split three-dimensional crystallographic groups. The results
obtained are summarized in Table \ref{thefinitepart}. 

Next, according to Theorem \ref{theorem:splitting2}, we must calculate the equivariant homology groups $H_n^{\Gamma_{\widehat{\ell}}}(E_{\fin}(\Gamma_{\widehat{\ell}}) \rightarrow  \ast;\;  \mathbb{KZ}^{-\infty})$, where $\widehat{\ell} \in \mathcal{T}''$. 
In the current section, we will determine which groups $H_n^{\Gamma_{\widehat{\ell}}}(E_{\fin}(\Gamma_{\widehat{\ell}}) \rightarrow  \ast;\;  \mathbb{KZ}^{-\infty})$ contribute to the lower algebraic $K$-theory of the seven maximal crystallographic groups 
$\Gamma_{i}$, ($i=1, \ldots, 7$). We will describe fundamental domains for the actions of the groups $\Gamma_{i}$ on their associated spaces of
planes $\base$. Our arguments here are generally analogous to the ones from Section \ref{section:fundamentaldomains}, and the organization of this
section is similar.

\subsection{Negligible Line Stabilizer Groups}

As in Section \ref{section:fundamentaldomains}, the notion of a negligible stabilizer group will be very useful. We will want to adapt the old definition
(Definitions \ref{definition:negligible1} and \ref{definition:negligible}) to our needs in the current section. 

The main result of this subsection is Proposition \ref{proposition:finiteT''}, which greatly
simplifies the remainder of the computation of the lower algebraic $K$-groups of the split crystallographic groups.

\begin{definition}
Let $\Gamma$ be a split crystallographic group. Let $\ell \subseteq \mathbb{R}^{3}$
be a line. We let 
$\overline{\Gamma}_{\ell} = \{ \gamma \in \Gamma \mid \gamma_{\mid \ell} = \mathrm{id}_{\ell} \}$.
We will sometimes call this group the \emph{strict stabilizer group of $\ell$}, to distinguish it from the stabilizer group 
$\Gamma_{\ell} = \{ \gamma \in \Gamma \mid \gamma \cdot \ell = \ell \}$.

If $G$ is an infinite virtually cyclic subgroup of $\Gamma$, then $G = \Gamma_{\ell}$ for some line
$\ell \subseteq \mathbb{R}^{3}$. We then let $\overline{G}$ denote $\overline{\Gamma}_{\ell}$.
\end{definition}

\begin{definition} \label{definition:VCnegligible}
An infinite virtually cyclic subgroup $G \leq \Gamma$ is called \emph{negligible}
if the finite subgroup $F$ from Remark \ref{remark:VCgroups} has square-free order.
  We will also say that $\overline{G}$ is negligible,
and that  $\sigma$ is negligible, if $G$ is the stabilizer of $\sigma$.
\end{definition}

\begin{proposition} \label{proposition:squarefreenegligible}
If $G$ is infinite virtually cyclic and $G \cong F \rtimes_{\alpha} \mathbb{Z}$ or $G \cong G_{1} \ast_{F} G_{2}$, where $F$ has square-free order, then
$NK_{q}(\mathbb{Z}F, \alpha)$ or $NK_{q}(\mathbb{Z}F; \mathbb{Z}[G_{1} - F], \mathbb{Z}[G_{2}-F])$ (respectively) is trivial for all $q \leq 1$.
\end{proposition}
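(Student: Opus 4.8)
The statement to prove is Proposition~\ref{proposition:squarefreenegligible}: if $G$ is infinite virtually cyclic with $G \cong F \rtimes_{\alpha} \mathbb{Z}$ or $G \cong G_1 \ast_F G_2$, where $F$ has square-free order, then the relevant Nil groups $NK_q(\mathbb{Z}F, \alpha)$ or $NK_q(\mathbb{Z}F; \mathbb{Z}[G_1 - F], \mathbb{Z}[G_2 - F])$ vanish for all $q \leq 1$.

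The plan is to observe that this proposition is not really new: its content was already extracted and verified inside the proof of Lemma~\ref{lemma:negligibleisomorphismtypes}, so the proof here should simply isolate and cite the two relevant vanishing results. First I would handle the type I case, $G \cong F \rtimes_{\alpha} \mathbb{Z}$. Here the key input is that $NK_q(\mathbb{Z}F, \alpha)$ is trivial for $q \leq 1$ whenever $F$ has square-free order; this follows from the work of Harmon \cite{Ha87} (the case $\alpha = \mathrm{id}$) together with its generalization to arbitrary twisting automorphisms $\alpha$ due to Juan-Pineda et al.\ \cite{J-PR09}. Crucially, these results require only that $|F|$ be square-free, and make no assumption that $F$ embed in $S_4$ — so the present hypotheses, which drop the $S_4$ condition appearing in Definition~\ref{definition:negligible1}, are exactly what those theorems need.

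Next I would treat the type II case, $G \cong G_1 \ast_F G_2$ with $F$ of square-free order and index two in each factor. The strategy is to reduce to the type I case already settled. As in Lemma~\ref{lemma:negligibleisomorphismtypes}, let $F \rtimes_{\alpha} \mathbb{Z}$ be the canonical index-two subgroup of $G$. Since $NK_q(\mathbb{Z}F, \alpha)$ is trivial for $q \leq 1$ by the previous paragraph, the twisted Nil group $NK_q(\mathbb{Z}F; \mathbb{Z}[G_1 - F], \mathbb{Z}[G_2 - F])$ is also trivial for $q \leq 1$ by the results of Lafont and Ortiz \cite{LO08} (see also \cite{DQR11} and \cite{DKR11}), which relate the amalgamated Nil groups of the type II group to the ordinary twisted Nil groups of its canonical infinite cyclic subgroup. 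This chain of implications is precisely the one carried out in the final paragraph of the proof of Lemma~\ref{lemma:negligibleisomorphismtypes}, so the argument transfers verbatim.

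The main point to be careful about — and what I expect to be the only genuine subtlety — is verifying that the hypotheses of Proposition~\ref{proposition:squarefreenegligible} are weaker than those of Definition~\ref{definition:negligible1}, namely that we no longer assume the finite groups involved are isomorphic to subgroups of $S_4$. I would emphasize that the $S_4$ condition was used in Lemma~\ref{lemma:negligibleisomorphismtypes} only to kill the quotient factors $C$ and $X$ (the contributions coming from $Wh_q$ of the finite groups), whereas the Nil-group vanishing invoked here depends solely on the square-free order of $F$. Since the present proposition asserts nothing about Whitehead groups and concerns only the Nil terms, the cited theorems of \cite{Ha87}, \cite{J-PR09}, and \cite{LO08} apply directly under the stated square-free hypothesis alone. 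Thus the proof is essentially a matter of citing the appropriate results and noting that they require nothing beyond square-freeness.
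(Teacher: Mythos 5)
Your proposal is correct and follows essentially the same route as the paper: the type I case is handled by citing \cite{Ha87} and \cite{J-PR09} for the vanishing of $NK_q(\mathbb{Z}F,\alpha)$ when $|F|$ is square-free, and the type II case is reduced to the canonical index-two subgroup $F \rtimes_{\alpha} \mathbb{Z}$ and then settled via \cite{LO08}. Your added remark that only square-freeness (and not the $S_4$ condition from Definition \ref{definition:negligible1}) is needed is a correct observation, though the paper does not spell it out in the proof itself.
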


\begin{proof}
If $G$ has the first form, then results of \cite{Ha87} and \cite{J-PR09} show that $NK_{q}(\mathbb{Z}F, \alpha)$ is trivial for $q \leq 1$.
If $G$ has the second form, then the canonical index two subgroup of $G$ has the form $F \rtimes_{\alpha} \mathbb{Z}$, so the corresponding
group $NK_{q}(\mathbb{Z}F, \alpha)$ is trivial for $q \leq 1$, as above. Results of Lafont and Ortiz \cite{LO08} show that
when $NK_{q}(\mathbb{Z}F, \alpha)$ is trivial, so is $NK_{q}(\mathbb{Z}F; \mathbb{Z}[G_{1} - F], \mathbb{Z}[G_{2}-F])$.
\end{proof}

\begin{remark} \label{remark:clarifynegligible}
If an infinite virtually cyclic group $G$ is \emph{negligible} in the sense of Definition \ref{definition:negligible1}, then it is also
\emph{negligible} in the sense of Definition \ref{definition:VCnegligible}. Note that the two notions are not the same, however. For instance, $\mathbb{Z}/6 \times D_{\infty}$ is \emph{negligible} in the sense of Definition \ref{definition:VCnegligible}, but \emph{non-negligible}
in the sense of Definition \ref{definition:negligible1}. In fact, when we express $\mathbb{Z}/6 \times D_{\infty}$ as an infinite virtually cyclic
group of type II, we have
$$ \mathbb{Z}/6 \times D_{\infty} \cong (\mathbb{Z}/6 \times \mathbb{Z}/2) \ast_{\mathbb{Z}/6} (\mathbb{Z}/6 \times \mathbb{Z}/2),$$
so $\mathbb{Z}/6 \times D_{\infty}$ is negligible in the sense of Definition \ref{definition:VCnegligible} (since $6$ is square-free). But, since 
$\mathbb{Z}/6 \times \mathbb{Z}/2$ is not isomorphic to a subgroup of $S_{4}$, then it follows that 
$\mathbb{Z}/6 \times D_{\infty}$ is not negligible in the sense
of Definition \ref{definition:negligible1}.
\end{remark}

\begin{remark} \label{remark:indexingsetnegligible}
Remark \ref{remark:clarifynegligible} showed that the class of negligible groups in the sense of Definition \ref{definition:VCnegligible}
is strictly wider than the class of negligible groups in the sense of Definition \ref{definition:negligible1}.
Thus, the class of non-negligible groups in the sense of Definition \ref{definition:negligible1} is strictly wider than the class of 
non-negligible groups in the sense of Definition \ref{definition:VCnegligible}. 

Recall that, in 
Theorem \ref{theorem:splitting2}, we considered an indexing set $\mathcal{T}''$, where $\mathcal{T}''$ consisted of a choice of vertex
$v$ from each orbit of non-negligible (in the sense of Definition \ref{definition:negligible1}) vertices in $\base$. In practice, it will be easier to consider
the smaller indexing set $\widehat{\mathcal{T}}''$, where $\widehat{\mathcal{T}}''$ consists of a choice of vertex $v$ from each orbit of non-negligible (in the sense of Definition \ref{definition:VCnegligible}) vertices in $\base$. Note that the indexing set $\widehat{\mathcal{T}}''$ leaves out only vertices
$v \in \base$ with stabilizers such as $\mathbb{Z}/6 \times D_{\infty}$, which make no contribution to Nils by Proposition \ref{proposition:squarefreenegligible}. 
It follows that we can use the 
smaller indexing set $\widehat{\mathcal{T}}''$ in Theorem
\ref{theorem:splitting2}.

In practice, we will use the indexing set $\widehat{\mathcal{T}}''$, but write $\mathcal{T}''$. This means that, from now on,
we will simply read
Theorem \ref{theorem:splitting2} with the current definition (Definition \ref{definition:VCnegligible}) of negligible in mind.

\end{remark}

\begin{proposition} \label{proposition:negligibletest}
Let $\Gamma = \langle H, L \rangle$ be a split crystallographic group. Let $\ell \subseteq \mathbb{R}^{3}$
be a line; let $r(\alpha) = t + \alpha v$ ($t, v \in \mathbb{R}^{3}$; $\alpha \in \mathbb{R}^{3}$) be a
parametrization of $\ell$. If $H_{v} = \{ h \in H \mid h \cdot v = v \}$ has square-free order, then $\Gamma_{\ell} = 
\{ \gamma \in \Gamma \mid \gamma \cdot \ell = \ell \}$ is negligible.
\end{proposition}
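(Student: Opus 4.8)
The plan is to analyze $\Gamma_{\ell}$ through the restriction homomorphism to the isometries of the line $\ell$, and to identify the finite group $F$ appearing in Remark \ref{remark:VCgroups} with the strict stabilizer $\overline{\Gamma}_{\ell}$. Concretely, I would first consider the map $\rho \colon \Gamma_{\ell} \to \mathrm{Isom}(\ell)$ sending each $\gamma$ to its restriction to $\ell$; since $\mathrm{Isom}(\ell) \cong \mathrm{Isom}(\mathbb{R})$, its image is a discrete group of affine isometries of a line, hence (when $\Gamma_{\ell}$ is infinite virtually cyclic, which is the only case in which negligibility is defined) isomorphic either to $\mathbb{Z}$ or to $D_{\infty}$. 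By definition $\ker \rho = \overline{\Gamma}_{\ell}$, so that $\Gamma_{\ell}/\overline{\Gamma}_{\ell}$ is $\mathbb{Z}$ or $D_{\infty}$, exhibiting $\Gamma_{\ell}$ as infinite virtually cyclic of type I or type II in the sense of Remark \ref{remark:VCgroups}.

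The key structural step is to show that the finite group $F$ from Remark \ref{remark:VCgroups} is precisely $\overline{\Gamma}_{\ell}$. For this I would argue that $\overline{\Gamma}_{\ell}$ is the unique maximal finite normal subgroup of $\Gamma_{\ell}$: it is finite (shown below) and normal (as a kernel), and the quotient $\Gamma_{\ell}/\overline{\Gamma}_{\ell}$, being $\mathbb{Z}$ or $D_{\infty}$, has no nontrivial finite normal subgroup. Hence any finite normal subgroup containing $\overline{\Gamma}_{\ell}$ must equal it. In the type I case $F$ is the kernel of the surjection onto $\mathbb{Z}$, which is exactly this maximal finite normal subgroup; in the type II case $F = A \cap B$ is likewise the maximal finite normal subgroup and coincides with the kernel of the surjection onto $D_{\infty}$. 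In either case $F = \overline{\Gamma}_{\ell}$.

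It then remains to bound $|\overline{\Gamma}_{\ell}|$. Each $\gamma = w + A \in \overline{\Gamma}_{\ell}$ fixes $\ell$ pointwise; taking two fixed points $p,q \in \ell$ with $q - p$ a nonzero multiple of $v$ gives $A(q-p) = \gamma(q) - \gamma(p) = q - p$, so $Av = v$ and thus $\pi(\overline{\Gamma}_{\ell}) \subseteq H_{v}$. Moreover $\overline{\Gamma}_{\ell}$ fixes the point $t \in \ell$, so $\overline{\Gamma}_{\ell} \leq \Gamma_{t}$, and by Lemma \ref{pointstab}(2) the projection $\pi$ is injective on $\Gamma_{t}$. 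Consequently $\overline{\Gamma}_{\ell} \cong \pi(\overline{\Gamma}_{\ell})$ is a subgroup of $H_{v}$, so $|\overline{\Gamma}_{\ell}|$ divides $|H_{v}|$; since a divisor of a square-free integer is square-free, $F = \overline{\Gamma}_{\ell}$ has square-free order, and $\Gamma_{\ell}$ is negligible in the sense of Definition \ref{definition:VCnegligible}. The main obstacle will be the type II identification $F = \overline{\Gamma}_{\ell}$: one must be careful that the amalgamating subgroup in $A \ast_{F} B$ is genuinely the kernel of $\rho$ rather than some other finite subgroup, which is why I route the argument through the canonical notion of the maximal finite normal subgroup rather than through a particular choice of surjection onto $D_{\infty}$.
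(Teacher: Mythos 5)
Your proof is correct and takes essentially the same approach as the paper: the paper's argument is exactly your final paragraph, namely that $\pi$ restricted to $\overline{\Gamma}_{\ell}$ is injective (Lemma \ref{pointstab}(2)) with image inside $H_{v}$, so $\overline{\Gamma}_{\ell}$ has square-free order. The only difference is that you spell out the identification of $\overline{\Gamma}_{\ell}$ with the finite group $F$ of Remark \ref{remark:VCgroups} via the maximal-finite-normal-subgroup characterization, a step the paper leaves implicit.
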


\begin{proof}
Let $\gamma \in \overline{\Gamma}_{\ell}$; we write $\gamma = v' + h$, where $h$ is the linear part of the isometry $\gamma$ (i.e., $h = \pi(\gamma)$)
and $v' \in L$. Now if $\gamma \cdot r(\alpha) = r(\alpha)$ for all $\alpha \in \mathbb{R}$, we clearly must have $h \cdot v = v$. Thus, $\pi(\gamma) \in H_{v}$.
Since $\pi: \overline{\Gamma}_{\ell} \rightarrow H$ is injective by Lemma \ref{pointstab}(2) and $\pi(\overline{\Gamma}_{\ell}) \subseteq H_{v}$, it follows that $\overline{\Gamma}_{\ell}$ has square-free order, making $\Gamma_{\ell}$ negligible. 
\end{proof}

\begin{definition}
If $H$ is a point group, then a vector $v \in \mathbb{R}^{3}$ is a \emph{pole vector} if there is some orientation-preserving
$h \in H - \{ 1 \}$ such that $h \cdot v = v$.
\end{definition}

\begin{corollary} \label{corollary:pole}
If the line $\ell \subseteq \mathbb{R}^{3}$ can be parametrized as $r(\alpha) = t + \alpha v$ where $v$ is 
not a pole vector of $H$, then the stabilizer group $\Gamma_{\ell}$ is negligible.
\end{corollary}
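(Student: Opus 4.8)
The plan is to derive Corollary~\ref{corollary:pole} as an immediate consequence of Proposition~\ref{proposition:negligibletest}, using the classification of point groups. First I would observe that the hypothesis of the corollary exactly sets up the hypothesis of the proposition, provided I can show that $H_{v}$ has square-free order whenever $v$ is not a pole vector. So the entire proof reduces to a single claim about the stabilizer subgroups $H_{v} = \{ h \in H \mid h \cdot v = v \}$ for a point group $H \leq O(3)$.

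The key step is to analyze $H_{v}$ for a non-pole vector $v$. By definition, $v$ is not a pole vector precisely when no orientation-preserving $h \in H - \{1\}$ fixes $v$; that is, $H_{v} \cap SO(3) = \{1\}$. This means that the restriction of the determinant homomorphism $\det : H_{v} \rightarrow \{ \pm 1\}$ is injective (its kernel is $H_{v} \cap SO(3) = \{1\}$), so $H_{v}$ is isomorphic to a subgroup of $\mathbb{Z}/2$. Hence $|H_{v}| \in \{1, 2\}$, which is certainly square-free.

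With this claim in hand, I would simply invoke Proposition~\ref{proposition:negligibletest}: given a parametrization $r(\alpha) = t + \alpha v$ of $\ell$ with $v$ not a pole vector, the group $H_{v}$ has square-free order (in fact order at most $2$), and therefore $\Gamma_{\ell}$ is negligible. That completes the argument.

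The main obstacle here is essentially conceptual rather than computational: one must correctly identify that ``not a pole vector'' is equivalent to $H_{v} \cap SO(3) = \{1\}$, and then recognize that an orthogonal stabilizer containing no nontrivial rotations must inject into $\{\pm 1\}$ via the determinant. This uses Lemma~\ref{basics}(2), which guarantees that every nontrivial orientation-preserving element of a point group acts as a rotation fixing a unique line, so a fixed vector of such an element would make $v$ a pole vector by definition. Once the order bound $|H_{v}| \leq 2$ is established, the remaining work is purely a matter of citing the proposition, so I do not anticipate any genuine difficulty beyond stating the determinant argument cleanly.
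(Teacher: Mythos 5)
Your proposal is correct and matches the paper's proof: the paper likewise notes that $v$ not being a pole vector forces $H_{v} \cap SO(3) = \{1\}$, concludes $|H_{v}| \leq 2$ (phrased via the index-two orientation-preserving subgroup rather than injectivity of the determinant, but this is the same observation), and then applies Proposition~\ref{proposition:negligibletest}.
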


\begin{proof}
We note that if $v$ is not a pole vector of $H$, then the only orientation-preserving $h \in H_{v}$ is $h = 1_{H}$.
The orientation-preserving subgroup of $H_{v}$ has index at most two in $H_{v}$, so $|H_{v}| \leq 2$.
Thus, $H_{v}$ has square-free order, and Proposition \ref{proposition:negligibletest} applies.
\end{proof} 

\begin{proposition} \label{proposition:finiteT''}
Let $\Gamma$ be one of the groups $\Gamma_{i}$, ($i=1, \ldots, 7$). We let $H$ denote the point group of $\Gamma$.
\begin{enumerate}
\item If $H = S_{4}^{+} \times (-1)$, then the vertex $\widehat{\ell} \in \base$ is negligible unless $\widehat{\ell}$
admits a parametrization of the form $r(\alpha) =t + \alpha v$ ($t, v \in \mathbb{R}^{3}$, $\alpha \in \mathbb{R}$), where
$v = (1,0,0)$ or $(1,1,0)$ (or some image of the latter vectors under the action of $H$).
\item If $H = D_{2}^{+} \times (-1)$, then the vertex $\widehat{\ell} \in \base$ is negligible unless $\widehat{\ell}$
admits a parametrization of the form $r(\alpha) =t + \alpha v$ ($t, v \in \mathbb{R}^{3}$, $\alpha \in \mathbb{R}$), where
$v = (1,0,0)$, $(0,1,0)$, or $(0,0,1)$.
\item If $H = D_{6}^{+} \times (-1)$, then the vertex $\widehat{\ell} \in \base$ is negligible unless $\widehat{\ell}$
admits a parametrization of the form $r(\alpha) =t + \alpha v$ ($t, v \in \mathbb{R}^{3}$, $\alpha \in \mathbb{R}$), where
$v = (1,1,1)$ , $(1,-1,0)$, or $(1,-2,1)$ (or some image of the latter vectors under the action of $H$).
\item If $H= D_{3}^{+} \times (-1)$, then all of the vertices $\widehat{\ell} \in \base$ are negligible. In particular, the complete lower algebraic
K-groups of $\Gamma_{i}$ for $i=6,7$ appear in Table \ref{thefinitepart}.
\end{enumerate}
\end{proposition}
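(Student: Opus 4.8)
The plan is to prove the four parts of Proposition \ref{proposition:finiteT''} by systematically applying the negligibility criteria developed earlier in the subsection, in particular Corollary \ref{corollary:pole} and Proposition \ref{proposition:negligibletest}. The key observation is that a vertex $\widehat{\ell} \in \base$ corresponds to a line $\ell \subseteq \mathbb{R}^{3}$ parallel to some maximal cyclic subgroup $\langle v \rangle \leq L$, and its stabilizer $\Gamma_{\ell}$ is controlled by $H_{v}$, the stabilizer of the direction vector $v$ inside the point group $H$. By Corollary \ref{corollary:pole}, if $v$ is \emph{not} a pole vector of $H$, then $\Gamma_{\ell}$ is automatically negligible; so the only candidates for non-negligible vertices are those whose direction $v$ is a pole vector. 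The strategy is therefore to enumerate the pole vectors for each of the four point groups $H$, and then for each pole vector compute (or estimate) the order of $H_{v}$ and check whether it is square-free.

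First I would handle parts (1), (2), and (3) by identifying the pole vectors of $S_{4}^{+} \times (-1)$, $D_{2}^{+} \times (-1)$, and $D_{6}^{+} \times (-1)$, respectively. Since a pole vector of $H$ is the same as a pole vector of its orientation-preserving subgroup $H^{+}$, I can work with $S_{4}^{+}$, $D_{2}^{+}$, and $D_{6}^{+}$, whose poles were effectively described in the analysis of orientation-preserving point groups (Proposition \ref{OS} and the descriptions in Subsection \ref{subsubsection:description1}). For each such $H^{+}$, the axes of rotation are precisely the poles, and Lemma \ref{basics}(4) tells us the order of each cyclic rotation subgroup $H^{+}_{\ell}$. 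For a given pole direction $v$, I would compute $H_{v}$ (which may be roughly twice the size of $H^{+}_{v}$, accounting for the inversion and any reflections) and test for square-freeness via Proposition \ref{proposition:negligibletest}. The listed vectors in each case — e.g.\ $(1,0,0)$ and $(1,1,0)$ for $S_{4}^{+} \times (-1)$ — are exactly the pole directions whose stabilizers have non-square-free order (order divisible by $4$, coming from the $4$-fold axes, or order $12$ from the combination of a $3$-fold axis with additional symmetries); the remaining pole directions, such as the $3$-fold body-diagonal axes whose stabilizer $H_{v}$ has square-free order $6$, yield negligible vertices and so are excluded by Definition \ref{definition:VCnegligible}. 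For $D_{6}^{+} \times (-1)$ in part (3), the relevant directions are the $6$-fold axis $(1,1,1)$ and the two inequivalent families of $2$-fold axes in the plane $P(x+y+z=0)$, represented by $(1,-1,0)$ and $(1,-2,1)$.

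For part (4), the claim is that for $H = D_{3}^{+} \times (-1)$ \emph{every} vertex is negligible. Here I would argue that every pole direction of $D_{3}^{+}$ has a stabilizer $H_{v}$ of square-free order: the poles are the single $3$-fold axis $\ell(x=y=z)$ (for which $H^{+}_{v} \cong \mathbb{Z}/3$, so $H_{v}$ has order $6$, which is square-free) and the three $2$-fold axes in $P(x+y+z=0)$ (for which $H^{+}_{v} \cong \mathbb{Z}/2$, so $H_{v}$ has order dividing $4$ but in fact equal to $4$ or less — one must check it is actually $2$ or $6$, not $4$). The crucial point is that $D_{3}^{+}$ contains no $4$-fold axis and its $3$-fold axis is unique and not strengthened to order $9$ or $12$ by the inversion. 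Once all pole-vector stabilizers are seen to have square-free order, Proposition \ref{proposition:negligibletest} gives that every $\Gamma_{\ell}$ is negligible, so $\mathcal{T}''$ (in the sense of $\widehat{\mathcal{T}}''$, per Remark \ref{remark:indexingsetnegligible}) is empty, and the second summand in Theorem \ref{theorem:splitting2} vanishes. Since $\Gamma_{6}$ and $\Gamma_{7}$ both have point group $D_{3}^{+} \times (-1)$, their entire lower algebraic $K$-theory reduces to the finite part already recorded in Table \ref{thefinitepart}.

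The main obstacle I anticipate is the careful determination, for each pole direction $v$, of the \emph{exact} order of the full stabilizer $H_{v}$ inside $H$ (not merely of its orientation-preserving part $H^{+}_{v}$), since negligibility hinges on square-freeness and the borderline cases are precisely those where $|H_{v}|$ equals $4$ versus $2$. In particular, for the $2$-fold axes one must decide whether the inversion and the reflections contribute an extra factor that pushes the order from a square-free value to a multiple of $4$. This requires applying Lemma \ref{pointstab}(1)-style reasoning together with the explicit matrix descriptions of the point groups from Tables \ref{orientationpreservingpointgroups} and \ref{otherpointgroups}, and checking each pole direction individually; the enumeration is finite and elementary but must be done with care to correctly separate the non-negligible directions (those listed) from the negligible ones (all others).
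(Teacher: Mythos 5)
Your proposal is correct and follows essentially the same route as the paper: restrict to pole directions via Corollary \ref{corollary:pole} (using Proposition \ref{OS} to know the three orbits of unit pole vectors exhaust the possibilities), then discard those pole directions whose full stabilizer $H_{v}$ has square-free order via Proposition \ref{proposition:negligibletest} — exactly how the paper eliminates the $(1,1,1)$ axis in case (1) (where $|H_{v}|=6$) and all three pole orbits in case (4) (orders $6$, $2$, $2$). You also correctly flag the one genuinely delicate point, namely computing the exact order of $H_{v}$ rather than $H_{v}^{+}$ for the $2$-fold axes.
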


\begin{proof}
The proofs of all four parts are similar. Proposition \ref{OS} showed that there are only two or three orbits of (unit) pole vectors for any point group $H$.
Proposition \ref{OS} also shows that there are exactly three orbits of unit pole vectors for all four of the groups in the statement of our proposition. For each
of the four groups above, we can find the three distinct orbits of unit pole vectors by inspection. For instance, if $H  = D_{6}^{+} \times (-1)$, we note that
the vectors $(1,1,1)$, $(1,-1,0)$, and $(1,-2,1)$ are all pole vectors of $H$ since the orientation-preserving isometries
$$ \left( \begin{smallmatrix} 0 & 1 & 0 \\ 0 & 0 & 1 \\ 1 & 0 & 0 \end{smallmatrix} \right),\quad 
\left( \begin{smallmatrix} 0 & -1 & 0 \\ -1 & 0 & 0 \\ 0 & 0 & -1 \end{smallmatrix} \right),  \quad
\frac{1}{3}\left( \begin{smallmatrix} -2 & -2 & 1 \\ -2 & 1 & -2 \\ 1 & -2 & -2 \end{smallmatrix} \right)$$
from $H$ fix the given vectors (respectively). Next, it is straightforward to check that the given three vectors are in separate orbits with respect to $H$ (even after normalization),
so the normalized vectors represent the three orbits of unit pole vectors for $H$. The rest of (3) now follows easily from Corollary \ref{corollary:pole}. 
Case (2) is easier.

The proof of (1) begins in the same way. It is straightforward to check that the vectors
$(1,0,0)$, $(1,1,0)$, and $(1,1,1)$  are pole vectors, and that they represent distinct $H$-orbits (even after normalization). 
It follows directly from Corollary \ref{corollary:pole}
that the vertex $\widehat{\ell} \in \base$ is negligible unless it admits a parametrization $r(\alpha) = t + \alpha v$ with one of the latter vectors playing the role of $v$.
We can furthermore argue that $\widehat{\ell}$ is negligible when $v = (1,1,1)$. One checks that the orbit of $(1,1,1)$ contains eight vectors,
which implies that $|H_{v}| = 6$. Thus, $\Gamma_{\widehat{\ell}}$ is negligible by Proposition \ref{proposition:negligibletest}. Statement (1) follows.

If $H = D_{3}^{+} \times (-1)$, then the vectors $(1,1,1)$, $(1,-1,0)$, and $(0,-1,1)$ determine the three distinct orbits of unit pole vectors. In this 
case, one checks that the stabilizer groups $H_{v}$ have orders $6$, $2$, and $2$ (respectively), so the desired conclusion follows
from Proposition \ref{proposition:negligibletest}.
\end{proof}

\subsection{The finiteness of the indexing set $\mathcal{T}''$} \label{subsection:finiteT''}

In this subsection, we will explicitly identify the indexing set $\mathcal{T}''$ from Theorem \ref{theorem:splitting2} for the crystallographic
groups $\Gamma_{i}$ ($i=1, \ldots, 5$). (Proposition \ref{proposition:finiteT''} has already shown that we can take $\mathcal{T}'' = \emptyset$
when $i=6$ or $7$.) It will follow easily that $\mathcal{T}''$ is always finite for any $3$-dimensional crystallographic group.
We also record here the strict line stabilizers for the groups $\Gamma_{i}$ ($i = 1, \ldots, 5$). We will need a lemma.

\begin{lemma} \label{lemma:computess}
Let $\Gamma = \Gamma_{i}$ ($i=1, \ldots, 5$), and let $\ell \subseteq \mathbb{R}^{3}$ be a line. The group $\Gamma(\ell)$ acts isometrically on
the space of lines $\mathbb{R}^{2}_{\ell}$. We let  
$\psi: \Gamma(\ell) \rightarrow \mathrm{Isom}(\mathbb{R}^{2}_{\ell})$ denote the associated homomorphism, and let $G$ denote the image of $\psi$.
\begin{enumerate}
\item If $\ell' \in \mathbb{R}^{2}_{\ell}$, then $\psi_{\mid \overline{\Gamma}_{\ell'}}: \overline{\Gamma}_{\ell'} \rightarrow G_{\ell'}$ is
injective. In particular, $|\overline{\Gamma}_{\ell'}|$ divides $|G_{\ell'}|$.
\item Let $g \in G_{\ell'}$, and let $\gamma \in \psi^{-1}(g)$. We have $g \in \psi(\overline{\Gamma}_{\ell'})$ if and only if there is an element 
$k \in \mathrm{Ker} \, \psi$ such that $\gamma_{\mid \ell'} = k_{\mid \ell'}$.
\end{enumerate}
\end{lemma}

\begin{proof}
The statement that $\Gamma(\ell)$ acts isometrically on $\mathbb{R}^{2}_{\ell}$ is straightforward.
\begin{enumerate}
\item We claim that $\psi_{\mid \overline{\Gamma}_{\ell'}}: \overline{\Gamma}_{\ell'} \rightarrow \mathrm{Isom}(\mathbb{R}^{2}_{\ell})$ is injective. 
Suppose that $\gamma \in \mathrm{Ker}(\psi_{\mid \overline{\Gamma}_{\ell'}})$. Since $\gamma \in \overline{\Gamma}_{\ell'}$, $\gamma$ acts trivially on
the line $\ell' \subseteq \mathbb{R}^{3}$, and, since $\gamma \in \mathrm{Ker}(\psi)$, $\gamma$ leaves every line parallel to $\ell'$ invariant
as a set. Let $\ell''$ be parallel to $\ell'$, and choose distinct points $x_{1}, x_{2} \in \ell'$. There are points $y_{1}, y_{2} \in \ell''$ that are the unique closest points
to $x_{1}$ and $x_{2}$ (respectively) among all points on $\ell''$. Since $\ell''$ is invariant under $\gamma$, and $x_{1}$ and $x_{2}$ are fixed, it must be
that $y_{1}$ and $y_{2}$ are fixed as well. Thus, all of $\ell''$ is fixed. This shows that every line parallel to $\ell'$ is fixed by $\gamma$, so $\gamma = 1$. This
proves the claim.

We can therefore identify $\overline{\Gamma}_{\ell'}$ with a subgroup of $G_{\ell'}$ and apply Lagrange's Theorem.
\item We assume $g \in \psi(\overline{\Gamma}_{\ell'})$ and $\psi(\gamma) = g$. Choose $\widehat{\gamma} \in \overline{\Gamma}_{\ell'}$ such that 
$\psi(\widehat{\gamma}) = g$. We can set $k = \widehat{\gamma}^{-1} \gamma$, which has the required properties. Conversely, if $\gamma_{\mid \ell'} = 
k_{\mid \ell'}$, then $k^{-1} \gamma \in \overline{\Gamma}_{\ell'}$, and $\psi(k^{-1} \gamma) = g$, as required. 
\end{enumerate}
\end{proof}

\begin{theorem} \label{theorem:gamma1T}
For the group $\Gamma = \Gamma_{1}$, we can choose
$$ \mathcal{T}'' = \left\{
\left( \begin{smallmatrix} 0 \\ 0 \\  \alpha \end{smallmatrix} \right), 
\left( \begin{smallmatrix} 1/2 \\ 1/2 \\  \alpha \end{smallmatrix} \right), 
\left( \begin{smallmatrix} 1/2 \\ 0 \\  \alpha \end{smallmatrix} \right), 
\left( \begin{smallmatrix} \alpha \\ \alpha \\  0 \end{smallmatrix} \right), 
\left( \begin{smallmatrix} \alpha \\ \alpha \\  1/2 \end{smallmatrix} \right) \right\},$$
 where we have expressed the vertices (i.e., lines) in $\base$ in parametric form.

For the vertices $v \in \mathcal{T}''$, the strict stabilizer groups $\overline{\Gamma}_{v}$ satisfy
$$ \pi( \overline{\Gamma}_{v}) = D_{4}'', \quad D_{4}'', \quad D_{2}', \quad \langle A, B \rangle, \text { and } \quad \langle A, B \rangle,$$
respectively, where 
$$ A = \left( \begin{smallmatrix} 0 & 1 & 0 \\ 1 & 0 & 0 \\ 0 & 0 & 1 \end{smallmatrix} \right) \text{ and } \quad B = \left( \begin{smallmatrix} 1 & 0 & 0 \\
0 & 1 & 0 \\ 0 & 0 & -1 \end{smallmatrix} \right).$$
Note that $\langle A, B \rangle \cong D_{2}$.
\end{theorem}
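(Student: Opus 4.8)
The plan is to determine the indexing set $\mathcal{T}''$ and the strict stabilizer groups for $\Gamma_1 = \langle \mathbf{x},\mathbf{y},\mathbf{z}\rangle \rtimes (S_4^+ \times (-1))$ by combining the negligibility criteria from Proposition \ref{proposition:finiteT''}(1) with explicit calculations using Lemma \ref{lemma:computess} and Lemma \ref{pointstab}. First I would recall that by Proposition \ref{proposition:finiteT''}(1), a vertex $\widehat{\ell} \in \base$ is negligible unless $\widehat{\ell}$ admits a parametrization $r(\alpha) = t + \alpha v$ with $v = (1,0,0)$ or $v = (1,1,0)$ (up to the action of $H = S_4^+ \times (-1)$). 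This immediately restricts the spaces of lines $\mathbb{R}^2_\ell$ that can contribute: only those with $\ell$ parallel to a coordinate axis or to a face-diagonal of type $(1,1,0)$. So the task reduces to two cases, $v = \mathbf{z}$ and $v = \mathbf{x}+\mathbf{y}$ (choosing convenient orbit representatives), and within each, to identifying orbit representatives of non-negligible vertices in the corresponding plane $\mathbb{R}^2_\ell$ under the action of $\Gamma(\ell)$.

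Next I would analyze each of the two planes separately. For $v = \mathbf{z}$, the group $\Gamma(\ell)$ consists of those $\gamma \in \Gamma_1$ whose linear part preserves $\langle \mathbf{z}\rangle$; its image $G$ under $\psi: \Gamma(\ell) \to \mathrm{Isom}(\mathbb{R}^2_\ell)$ acts on the plane of lines parallel to the $z$-axis, which can be identified with the $xy$-plane. I expect this action to be (up to scaling) the action of a wallpaper group, and I would locate the points of $\mathbb{R}^2_\ell$ with largest (non-negligible) stabilizer — these should be the lines through $(0,0,\alpha)$, $(1/2,1/2,\alpha)$, and $(1/2,0,\alpha)$, corresponding to the center, the diagonal corner, and the edge-midpoint of the fundamental square. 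For each such vertex, Lemma \ref{lemma:computess}(1) bounds $|\overline{\Gamma}_v|$ by $|G_v|$, and Lemma \ref{lemma:computess}(2) gives the exact membership test: $g \in \psi(\overline{\Gamma}_v)$ iff some lift $\gamma$ agrees on $\ell'$ with a kernel element. I would use Lemma \ref{pointstab}(1) (the condition $v - hv \in L$) to compute $\pi(\Gamma_v)$ for the relevant points and then apply the strict-stabilizer refinement to read off $\pi(\overline{\Gamma}_v)$, verifying these equal $D_4''$, $D_4''$, and $D_2'$ respectively by checking which signed permutation matrices fix the line pointwise. The analysis for $v = \mathbf{x}+\mathbf{y}$ is parallel, yielding the lines through $(\alpha,\alpha,0)$ and $(\alpha,\alpha,1/2)$ with strict stabilizer $\langle A, B\rangle \cong D_2$, where $A$ swaps the first two coordinates and $B$ negates the third — these being exactly the linear isometries fixing the line $\{(\alpha,\alpha,c)\}$ pointwise for $c \in \{0,1/2\}$.

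The main obstacle I anticipate is twofold. First, I must ensure that the list of five vertices is genuinely a complete and irredundant set of $\Gamma_1$-orbit representatives of non-negligible vertices: completeness requires confirming that within each plane no other orbit has non-negligible stabilizer (which follows from understanding the full $\Gamma(\ell)$-action and applying Corollary \ref{corollary:pole} and Proposition \ref{proposition:negligibletest} to the remaining orbits), and irredundancy requires checking that no two of the five listed lines lie in the same $\Gamma_1$-orbit (lines with different direction vectors $v$ up to $H$ cannot be identified, and within a single plane the three, resp. two, representatives are distinguished by their stabilizer orders or by explicit orbit computation). Second, the passage from the ordinary stabilizer $\pi(\Gamma_v)$ to the strict stabilizer $\pi(\overline{\Gamma}_v)$ must be done carefully: an element fixing the line $\ell$ setwise need not fix it pointwise, so I would restrict to those $h \in \pi(\Gamma_v)$ for which the corresponding isometry acts as the identity on $\ell$, which amounts to requiring $hv = v$ together with the translational part vanishing along $\ell$. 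Once these containments and order counts are pinned down, the stated equalities $\pi(\overline{\Gamma}_v) = D_4'', D_4'', D_2', \langle A,B\rangle, \langle A,B\rangle$ follow by matching generators, completing the proof.
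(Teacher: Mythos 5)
Your proposal follows the paper's own proof essentially step for step: reduce via Proposition \ref{proposition:finiteT''}(1) to the two tangent directions $(0,0,1)$ and $(1,1,0)$, identify each space of lines $\mathbb{R}^{2}_{\ell}$ with a coordinate plane carrying the action of $\Gamma(\ell)$ through the homomorphism $\psi$, locate the non-negligible vertices as corners of an explicit fundamental polygon, and pin down the strict stabilizers by combining the order bound of Lemma \ref{lemma:computess}(1) with exhibited generators and the kernel criterion of Lemma \ref{lemma:computess}(2).

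The one place where your stated justification would not go through as written is the completeness check in the plane $\mathbb{R}^{2}_{(\alpha,\alpha,0)}$. You say the remaining orbits are disposed of by Corollary \ref{corollary:pole} and Proposition \ref{proposition:negligibletest}, but the two extra corners of the fundamental rectangle, namely $(\alpha+1/4,\alpha-1/4,0)$ and $(\alpha+1/4,\alpha-1/4,1/2)$, have tangent vector $(1,1,0)$, which \emph{is} a pole vector of $S_{4}^{+}\times(-1)$ with point stabilizer of order $4$ (not square-free), so neither of those results applies. The paper instead shows these corners are negligible by the Lemma \ref{lemma:computess}(2) argument: the glide $T_{2}+A$ stabilizes such a line and maps to the reflection across the top edge of the rectangle, but no element of $\mathrm{Ker}\,\psi$ acts on the line as translation by $1/2$, so $\psi(\overline{\Gamma}_{\widehat{\ell}})$ is a proper subgroup of the order-$4$ group $G_{\widehat{\ell}}$ and hence $|\overline{\Gamma}_{\widehat{\ell}}|\leq 2$. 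You do name Lemma \ref{lemma:computess}(2) as your membership test elsewhere in the proposal, so the right tool is in hand; just be aware that it, and not the pole-vector criteria, is what eliminates these two orbits.
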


\begin{proof}
The point group $H$ of $\Gamma_{1}$ is $S_{4}^{+} \times (-1)$, so Proposition \ref{proposition:finiteT''}(1) implies that the only non-negligible point
stabilizers from $\base$ have parametrizations of the form $r(\alpha) = t + \alpha v$, where $v = (1,0,0)$ or $(1,1,0)$ (or the image of one of these vectors
under the action of $H$).

\begin{figure}[!h]
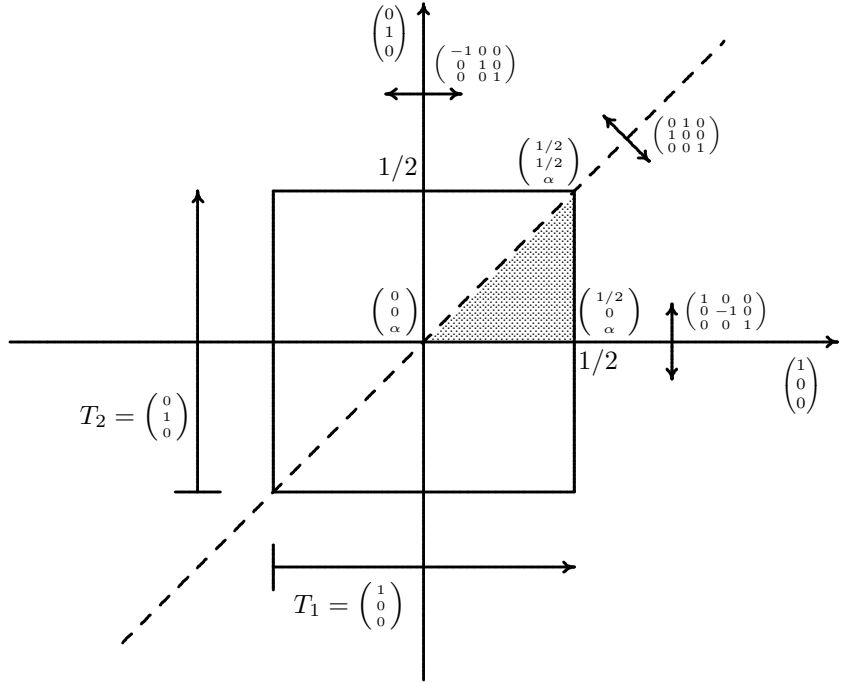

\begin{center}
\hbox{
\vbox{\beginpicture
\setcoordinatesystem units <1cm,1cm> point at -6 4.5
\setplotarea x from -6 to 6, y from -4.5 to 4.5

\linethickness=.7pt
%

 \def\myarrow{\arrow <4pt> [.2, 1]} 
%
\setplotsymbol ({\circle*{.4}})
\plotsymbolspacing=.3pt        
\myarrow from -5.5 0 to 5.5 0     
\myarrow from   0 -4.5  to  0 4.5 

\plot -2 2   2 2   2 -2  -2 -2  -2 2 /  
\myarrow from -3 -2 to -3 2    
\plot -3.3 -2   -2.7 -2 /          
\put {$T_2 = \tiny \left(\begin{smallmatrix} 0 \\ 1 \\ 0 \end{smallmatrix} \right)$} [r] at -3.1 -1       
\myarrow from  -2 -3 to 2 -3      
\plot -2 -3.3  -2 -2.7 /            
\put {$T_1  = \tiny\left(\begin{smallmatrix} 1 \\ 0 \\ 0 \end{smallmatrix} \right)$} [t] at -1 -3.2       
\myarrow from  3.3 0 to 3.3 .5   
\myarrow from  3.3 0 to 3.3 -.5

\myarrow from  0 3.3 to  .5 3.3
\myarrow from  0 3.3 to  -.5 3.3

\myarrow from  2.7 2.7 to 2.4 3
\myarrow from  2.7 2.7 to 3 2.4

\put{$ \tiny \left( \begin{smallmatrix} 0 \\ 0 \\ \alpha \end{smallmatrix} \right)$} [r] at -.05 .4
\put{$ \tiny \left( \begin{smallmatrix} 1/2 \\ 0 \\ \alpha \end{smallmatrix} \right)$} [l] at 2 .4
\put{$ \tiny \left( \begin{smallmatrix} 1/2 \\ 1/2 \\ \alpha \end{smallmatrix} \right)$} [r] at 2.1 2.4

\setdots <2pt>
\setdashes
\plot -4 -4   4 4 /
\setsolid 
\put {\tiny $\left(\begin{matrix} 0 \\ 1 \\ 0\end{matrix}\right)$} [r] at
-.2 4.1
\put {\tiny $\left(\begin{matrix} 1 \\ 0 \\ 0\end{matrix}\right)$} [t] at
5 -.2
\put {$1/2$} [t] at 2.3 -.1 
\put {$1/2$} [r] at -.1  2.3

\put {\tiny $\left(\begin{smallmatrix} -1 & 0 & 0 \\ 0 & 1 & 0 \\ 0 & 0 & 1 \end{smallmatrix} \right)$} [l] at .1  3.7
\put {\tiny $\left(\begin{smallmatrix} 0 & 1 & 0 \\ 1 & 0 & 0 \\ 0 & 0 & 1 \end{smallmatrix} \right)$} [tl] at 3 3  
\put {\tiny $\left(\begin{smallmatrix} 1 & 0 & 0 \\ 0 & -1 & 0 \\ 0 & 0 & 1 \end{smallmatrix} \right)$} [b] at 4 .15

\setshadegrid span <1pt>
\hshade 0 0 2   2 2 2 /  
\endpicture}
}
\end{center}
\caption{The image of $\Gamma(0,0,\alpha)$ under the homomorphism
$\psi: \Gamma(0,0,\alpha) \rightarrow \mathbb{R}^{2}_{(0,0,\alpha)}$. A fundamental domain for the image of $\psi$ is shaded.}
\label{figure:plane1,1} 
\end{figure}

We consider the first case. Let $r(\alpha) = t + \alpha v$, where $v=(1,0,0)$ (or one of its images under the action of $H$). It is clear that any such 
parametrized line $r(\alpha)$ can be moved to the plane $\mathbb{R}^{2}_{(0,0,\alpha)}$ by an element of $\Gamma$. Thus, we can find a complete 
set of $\Gamma$-orbit representatives with the given type (i.e., with tangent vector $v = (0,0,1)$) inside $\mathbb{R}^{2}_{(0,0,\alpha)}$. We consider the action of the group 
$$\Gamma(0,0,\alpha) = \{ \gamma \in \Gamma \mid \gamma (0,0,\alpha) \text{ is parallel to } (0,0,\alpha) \}$$
on the plane $\mathbb{R}^{2}_{(0,0,\alpha)}$. (In words, $\Gamma(0,0,\alpha)$ takes lines parallel to the $z$-axis to other lines parallel to the $z$-axis,
possibly reversing the directions of the lines.)
It is straightforward to check that $\Gamma(0,0,\alpha) = (D_{4}^{+} \times (-1))_{1}$. 
We can identify $\mathbb{R}^{2}_{(0,0,\alpha)}$ with the $xy$-plane $P(z=0)$ 
in $\mathbb{R}^{3}$. With respect to this identification, the point group $D_{4}^{+} \times (-1)$
acts by restricting its usual action on $\mathbb{R}^{3}$ to $P(z=0)$, and the cubical lattice acts on $P(z=0)$ by ignoring the third coordinate (
so the translation $(1,0,0) \in L$ acts as $(1,0)$ on $P(z=0)$, and $(0,0,1)$ acts trivially). This action yields a homomorphism
$\psi: \Gamma(0,0,\alpha) \rightarrow \mathrm{Isom}(\mathbb{R}^{2})$. The kernel of $\psi$ is
$$ \left\langle \left( \begin{smallmatrix} 0 \\ 0 \\ 1 \end{smallmatrix} \right), \left( \begin{smallmatrix} 1 & 0 & 0 \\ 0 & 1 & 0 \\ 0 & 0 & -1 \end{smallmatrix} \right)
\right\rangle.$$
The image $\psi(\Gamma(0,0,\alpha)) = G$ 
is described in Figure \ref{figure:plane1,1}.  The translations $T_{1}$ and $T_{2}$ (pictured) generate the whole group of
translations in the image. Double-tailed arrows in the figure denote reflections, both in 
$\mathbb{R}^{2}$ and in $\mathbb{R}^{3}$. Using Theorem \ref{poincare}, it is straightforward to verify that the shaded triangle $P$ is an
exact convex compact fundamental polyhedron for the image of $\psi$. The side-pairings are all reflections (in fact, the image of $\psi$ is a Coxeter group), 
so in particular all of the vertices in the 
triangle are in separate orbits. 

Now we compute the strict stabilizers of the vertices. (Corollary \ref{corollary:cellulated} 
has
already shown that the edges and $2$-cells have negligible stabilizers.) The stabilizers 
 $G_{(0,0,\alpha)}$ and $G_{(1/2,1/2,\alpha)}$ are easily seen to have order $8$, and 
the stabilizer $G_{(1/2,0,\alpha)}$ has order $4$. It follows from 
Lemma \ref{lemma:computess}(1) that the orders of the 
strict stabilizers
$\overline{\Gamma}_{\widehat{\ell}}$ divide $8$, $8$, and $4$ (respectively), where
$\widehat{\ell}$ ranges over the given lines. Consider the line $(0,0,\alpha)$.
It is easy to see that the isometries
$$\left(\begin{smallmatrix} 0 &1 & 0 \\ 1&0&0 \\ 0&0&1 \end{smallmatrix} \right),
\quad \left(\begin{smallmatrix} 1&0&0 \\ 0&-1&0 \\ 0&0&1\end{smallmatrix}\right)$$
(pictured in Figure \ref{figure:plane1,1}) are in the strict stabilizer 
$\overline{\Gamma}_{(0,0,\alpha)}$, and they clearly generate the group $D_{4}''$.
It follows that $\overline{\Gamma}_{(0,0,\alpha)} = D_{4}''$. The other two strict stabilizers
associated to this plane can similarly be computed by inspection
of Figure \ref{figure:plane1,1}.

\begin{figure}[!h]
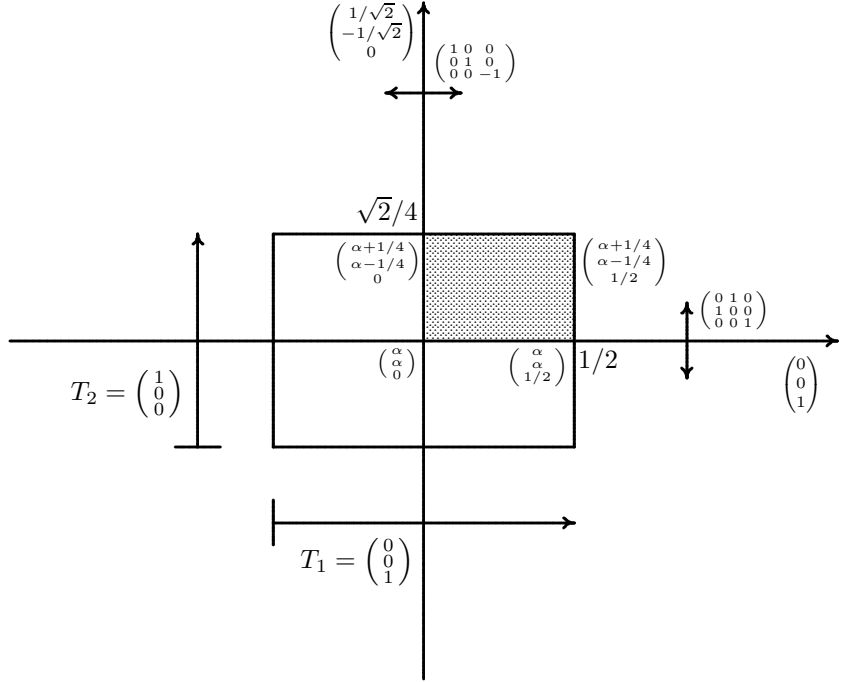

\begin{center}
\hbox{
\vbox{\beginpicture
\setcoordinatesystem units <1cm,1cm> point at -6 4.5
\setplotarea x from -6 to 6, y from -4.5 to 4.5
\linethickness=.7pt

 \def\myarrow{\arrow <4pt> [.2, 1]} 
%
\setplotsymbol ({\circle*{.4}})
\plotsymbolspacing=.3pt        
\myarrow from -5.5 0 to 5.5 0     
\myarrow from   0 -4.5  to  0 4.5 

\plot -2 1.42   2 1.42   2 -1.42  -2 -1.42  -2 1.422 /  
\myarrow from -3 -1.42 to -3 1.42    
\plot -3.3 -1.42   -2.7 -1.42 /          
\put {$T_2 = \left( \begin{smallmatrix} 1 \\ 0 \\ 0 \end{smallmatrix} \right)$} [r] at -3.2 -.7       
\myarrow from  -2 -2.42 to 2 -2.42      
\plot -2 -2.72  -2 -2.12 /            
\put {$T_1 = \left( \begin{smallmatrix} 0 \\ 0 \\ 1 \end{smallmatrix} \right)$} [t] at -.9 -2.62       

\put{\tiny $\left( \begin{smallmatrix} \alpha \\ \alpha \\ 0 \end{smallmatrix} \right)$} [r] at -.1 -.3
\put{\tiny $\left( \begin{smallmatrix} \alpha+ 1/4 \\ \alpha - 1/4 \\ 0 \end{smallmatrix} \right)$} [r] at -.05 1.05
\put{\tiny $\left( \begin{smallmatrix} \alpha \\ \alpha \\ 1/2 \end{smallmatrix} \right)$} [r] at 1.9 -.35
\put{\tiny $\left( \begin{smallmatrix} \alpha+1/4 \\ \alpha-1/4 \\ 1/2 \end{smallmatrix} \right)$} [l] at 2.05 1.05

\myarrow from  3.5 0 to 3.5 .5   
\myarrow from  3.5 0 to 3.5 -.5

\myarrow from  0 3.3 to  .5 3.3
\myarrow from  0 3.3 to  -.5 3.3

\setdots <2pt>
\put {\tiny $\left(\begin{matrix} 1/\sqrt{2} \\ -1/\sqrt{2} \\ 0\end{matrix}\right)$} [r] at
-.1 4.1
\put {\tiny $\left(\begin{matrix} 0 \\ 0 \\ 1\end{matrix}\right)$} [t] at
5 -.2
\put {$1/2$} [t] at 2.3 -.1 
\put {$\sqrt{2}/4$} [r] at -.1  1.72

\put {\tiny $\left(\begin{smallmatrix} 1 & 0 & 0 \\ 0 & 1 & 0 \\ 0 & 0 & -1 \end{smallmatrix} \right)$} [l] at .1  3.7
\put {\tiny $\left(\begin{smallmatrix} 0 & 1 & 0 \\ 1 & 0 & 0 \\ 0 & 0 & 1 \end{smallmatrix} \right)$} [b] at 4.1 .15

\setshadegrid span <1pt>
\vshade 0 0 1.42   2 0 1.42 /  
\endpicture}
}
\end{center}
\caption{A picture of the plane $\mathbb{R}^{2}_{(\alpha,\alpha,0)}$. The action
of $\Gamma(\alpha,\alpha,0)$ is indicated, and a fundamental domain is shaded.}
\label{figure:plane1,2}
\end{figure}

Next we must consider the plane $\mathbb{R}^{2}_{(\alpha,\alpha,0)}$, which 
contains orbit representatives of all of the other relevant vertices, by 
Proposition \ref{proposition:finiteT''}. Note that in Figure \ref{figure:plane1,2} we have identified the plane $\mathbb{R}^{2}_{(\alpha,\alpha,0)}$ with the plane in $\mathbb{R}^{3}$ through
the origin and perpendicular to the vector $(1,1,0)$, namely
$P(x+y=0)$. The group 
$\Gamma(\alpha,\alpha,0)$ is generated by the lattice $\mathbb{Z}^{3} \subseteq
\mathbb{R}^{3}$ and the point group that is generated by the reflections from
Figure \ref{figure:plane1,2} and the antipodal map. The action of $\Gamma(\alpha,\alpha,0)$
  is determined as before: translations act like their 
projections into the plane $P(x+y=0)$, and the point group acts by restriction. We have labelled the axes with a convenient orthogonal basis for $P(x+y=0)$. The shaded rectangle
is a fundamental domain; the image group $G$ is generated by reflections in the
sides of this rectangle.

We compute the strict stabilizers of the lines that form the corners of
the rectangle. It is clear from Lemma \ref{lemma:computess}(1) that the orders
of these strict stabilizer groups divide $4$. One checks directly that
the reflections labelling the double-tailed arrows from Figure \ref{figure:plane1,2}
are both in the strict stabilizer of $(\alpha, \alpha,0)$, and they clearly generate
a group of order $4$. It follows that $\overline{\Gamma}_{(\alpha,\alpha,0)}= \langle A, B \rangle$. One can also check that $\pi(\overline{\Gamma}_{(\alpha,\alpha,1/2)}) =
\langle A, B \rangle$ in roughly the same way: the isometry $A$ is in 
$\overline{\Gamma}_{(\alpha,\alpha,1/2)}$, and the isometry $T_{1} + B$
is also in $\overline{\Gamma}_{(\alpha,\alpha,1/2)}$, and these two isometries generate
a group of order $4$, which must therefore be all of 
$\overline{\Gamma}_{(\alpha,\alpha,1/2)}$.

Now we will show that the top corners of the rectangle are negligible. Consider the 
vertex $(\alpha + 1/4, \alpha - 1/4, 0)$. We claim that the reflection $g$ across
the top edge of the rectangle is not in the image
of $\psi_{\mid}: \overline{\Gamma}_{(\alpha+1/4, \alpha-1/4,0)} \rightarrow 
\mathrm{Isom}(\mathbb{R}^{2})$. The proof uses Lemma \ref{lemma:computess}(2).
We note that $\gamma = T_{2} + A$ is an isometry in $\Gamma(\alpha,\alpha,0)$
that leaves the given line  $(\alpha + 1/4, \alpha - 1/4, 0)$ invariant. Indeed, if we write the line in the form $r(\alpha)$,
then $\gamma \cdot r(\alpha)= r(\alpha + 1/2)$. Note that $g = \psi(\gamma)$. By Lemma \ref{lemma:computess}(2),
$g  \in \psi(\overline{\Gamma}_{(\alpha +1/4, \alpha -1/4,0)})$ if and only
if there is some $k \in \mathrm{Ker} \, \psi$ such that $k$ and $\gamma$ agree on
$r(\alpha)$. However, we can explicitly identify the kernel of
$\psi: \Gamma(\alpha,\alpha,0) \rightarrow \mathrm{Isom}(\mathbb{R}^{2})$ as follows:
$$\mathrm{Ker} \, \psi = \left\langle \left( \begin{smallmatrix} 0&-1&0 \\-1&0&0\\0&0&1
\end{smallmatrix}\right), \left(\begin{smallmatrix} 1\\1\\0 \end{smallmatrix}\right) \right\rangle.$$
Note that the first generator $\gamma_{1}$ acts by 
$\gamma_{1} \cdot r(\alpha) = r(-\alpha)$, and the second generator $\gamma_{2}$
acts by $\gamma_{2} \cdot r(\alpha) = r(\alpha + 1)$. It follows that there is no $k$
in the kernel such that $k \cdot r(\alpha) = r(\alpha +1/2)$. This proves the claim. It follows
that $\psi(\overline{\Gamma}_{(\alpha+1/4,\alpha-1/4,0)})$ is proper subgroup
of $G_{(\alpha+1/4,\alpha-1/4,0)}$, which has order $4$. It follows that $\overline{\Gamma}_{(\alpha+1/4,\alpha-1/4,0)}$ has order at most $2$, and is therefore
negligible. The proof that $\overline{\Gamma}_{(\alpha+1/4,\alpha-1/4,1/2)}$ is negligible
follows a similar pattern.
\end{proof}

\begin{remark} \label{remark:generalplanes}
Theorem \ref{theorem:gamma1T} establishes the general pattern of all five basic cases:
\begin{itemize}
\item It suffices to consider two or three planes $\mathbb{R}^{2}_{\ell}$ (depending on the point group, as from Proposition \ref{proposition:finiteT''});
all of the required $\Gamma$-orbit representatives occur as vertices in these planes.  
\item Given a split crystallographic group $\Gamma = \Gamma_{i}$ ($i=1, \ldots, 5$) and one of the above planes $\mathbb{R}^{2}_{\ell}$, there is a
natural homomorphism $\psi: \Gamma(\ell) \rightarrow \mathrm{Isom}(\mathbb{R}^{2}_{\ell})$ and a natural identification of $\mathbb{R}^{2}_{\ell}$
with a $2$-dimensional subspace of $\mathbb{R}^{3}$. Specifically, choosing $\ell$ to be a $1$-dimensional subspace of $\mathbb{R}^{3}$ (as we may), we can 
identify $\mathbb{R}^{2}_{\ell}$ with the perpendicular $2$-dimensional subspace $S$. The group $\Gamma(\ell)$, which is necessarily a split crystallographic group,
acts on $S$. An element of the point group of $\Gamma(\ell)$ acts by restriction to $S$, and an element of the lattice acts by its projection into $S$.
\item In every case, we will specify generators for the image $G = \psi(\Gamma(\ell))$ in accompanying figures, and label 
the coordinate axes with convenient unit vectors. The group $G$ is often, but not always, a Coxeter group. 
Double-tailed arrows in the figures will always denote reflections
in the plane $S$ that preserve the normal direction (as isometries of $\mathbb{R}^{3}$).
 The action of $G$ on $S$ will always have an
exact convex compact fundamental polyhedron $P$. It is generally straightforward to check that the shaded region in the figures
is an exact convex compact fundamental polyhedron. The proof will be omitted. Having $P$ lets us determine orbit information:
every point (line) in the given plane is in some translate of $P$, and the intersections of $\Gamma(\ell)$-orbits with $P$ have the form
$P \cap [x]$, where $x \in P$ and $[x]$ denotes the cycle of $x$ (see Subsection \ref{specialcase}). By Corollary \ref{corollary:cellulated}, only vertices 
in the cellulation determined by $P$ 
(see Theorem \ref{cells}) can be non-negligible.
\item We compute the strict stabilizer groups using the two parts of Lemma \ref{lemma:computess}.
\end{itemize}
This general outline will be assumed in what follows.
\end{remark}

\begin{theorem} \label{theorem:gamma2T}
For the group $\Gamma = \Gamma_{2}$, we can choose
$$ \mathcal{T}'' = \left\{
\left( \begin{smallmatrix} 0 \\ 0 \\  \alpha \end{smallmatrix} \right), 
\left( \begin{smallmatrix} 1/2 \\ 0 \\  \alpha \end{smallmatrix} \right), 
\left( \begin{smallmatrix} \alpha \\ \alpha \\  0 \end{smallmatrix} \right)
 \right\},$$
 where we have expressed the vertices (i.e., lines) in $\base$ in parametric form.

For the vertices $v \in \mathcal{T}''$, the strict stabilizer groups $\overline{\Gamma}_{v}$ satisfy
$$ \pi( \overline{\Gamma}_{v}) = D_{4}'', \quad D_{2}',\text{ and } \quad \langle A, B \rangle,$$
respectively, where 
$$ A = \left( \begin{smallmatrix} 0 & 1 & 0 \\ 1 & 0 & 0 \\ 0 & 0 & 1 \end{smallmatrix} \right) \text{ and } \quad B = \left( \begin{smallmatrix} 1 & 0 & 0 \\
0 & 1 & 0 \\ 0 & 0 & -1 \end{smallmatrix} \right).$$
\end{theorem}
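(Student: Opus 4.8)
The plan is to follow the general outline of the proof of Theorem \ref{theorem:gamma1T}, as explicitly advertised in Remark \ref{remark:generalplanes}, adapting it to the smaller point group $H = S_{4}^{+} \times (-1)$ paired with the body-centered cubic lattice $L_{2} = \langle \frac{1}{2}(\mathbf{x}+\mathbf{y}+\mathbf{z}), \mathbf{y}, \mathbf{z} \rangle$. Since $\Gamma_{2}$ has the same point group as $\Gamma_{1}$, Proposition \ref{proposition:finiteT''}(1) applies verbatim: the only non-negligible vertices in $\base$ admit parametrizations $r(\alpha) = t + \alpha v$ with $v = (1,0,0)$ or $v = (1,1,0)$ (up to the $H$-action), since the third orbit $v=(1,1,1)$ yields $|H_{v}|=6$ and hence a negligible stabilizer. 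Thus it suffices to analyze the two planes $\mathbb{R}^{2}_{(0,0,\alpha)}$ and $\mathbb{R}^{2}_{(\alpha,\alpha,0)}$, exactly as in the $\Gamma_{1}$ case. The notable difference from $\Gamma_{1}$ is that the lattice is now body-centered rather than simple cubic, which changes the image group $G = \psi(\Gamma(\ell))$ and, consequently, reduces the list of non-negligible orbit representatives from five to three.

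First I would identify $\Gamma(\ell)$ for each of the two tangent directions and compute the associated homomorphism $\psi: \Gamma(\ell) \to \mathrm{Isom}(\mathbb{R}^{2}_{\ell})$, its kernel, and its image $G$, just as was done in Figures \ref{figure:plane1,1} and \ref{figure:plane1,2}. For the plane $\mathbb{R}^{2}_{(0,0,\alpha)}$, identified with $P(z=0)$, the translations now include the projection of $\frac{1}{2}(\mathbf{x}+\mathbf{y}+\mathbf{z})$, namely $\frac{1}{2}(1,1)$, so the lattice of translations in $G$ is the centered square lattice rather than the standard integer lattice. Crucially, the half-integer point $(1/2,1/2,\alpha)$ now lies in the \emph{same} $\Gamma$-orbit as the origin line $(0,0,\alpha)$ (since $\frac{1}{2}(\mathbf{x}+\mathbf{y}+\mathbf{z}) \in L$ translates one to a line parallel to the other), which explains why the $\Gamma_{1}$ representative $(1/2,1/2,\alpha)$ drops off the list. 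I would produce a figure analogous to Figure \ref{figure:plane1,1} showing $G$ as a reflection (Coxeter) group with shaded fundamental domain, then read off that the surviving non-negligible vertices are $(0,0,\alpha)$ with strict stabilizer $\pi(\overline{\Gamma}_{v}) = D_{4}''$ and $(1/2,0,\alpha)$ with $\pi(\overline{\Gamma}_{v}) = D_{2}'$.

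Next I would treat the plane $\mathbb{R}^{2}_{(\alpha,\alpha,0)}$, identified with $P(x+y=0)$, following the template of Figure \ref{figure:plane1,2}. Here the difference from $\Gamma_{1}$ is again the centering translation, whose projection into $P(x+y=0)$ will identify the $\Gamma_{1}$-distinct lines $(\alpha,\alpha,0)$ and $(\alpha,\alpha,1/2)$ into a single $\Gamma_{2}$-orbit; this is why only $(\alpha,\alpha,0)$ appears in the $\Gamma_{2}$ list. I would verify by inspection that the reflections $A = \left(\begin{smallmatrix} 0&1&0\\1&0&0\\0&0&1\end{smallmatrix}\right)$ and $B = \left(\begin{smallmatrix}1&0&0\\0&1&0\\0&0&-1\end{smallmatrix}\right)$ both lie in $\overline{\Gamma}_{(\alpha,\alpha,0)}$, generating a group of order $4$; the upper bound $|\overline{\Gamma}_{(\alpha,\alpha,0)}| \mid 4$ comes from Lemma \ref{lemma:computess}(1) applied to the order-$4$ image stabilizer $G_{(\alpha,\alpha,0)}$, forcing equality $\pi(\overline{\Gamma}_{(\alpha,\alpha,0)}) = \langle A, B \rangle \cong D_{2}$. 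Finally, the top corners of the fundamental rectangle (the $(\alpha+1/4, \alpha-1/4, \cdot)$ lines) must be shown negligible by the same argument as in Theorem \ref{theorem:gamma1T}: using Lemma \ref{lemma:computess}(2), I would exhibit a glide-type element $\gamma$ whose image under $\psi$ is the relevant reflection but which cannot be matched to $\gamma$ on the line by any kernel element, thereby showing $\psi$ restricted to the strict stabilizer is not surjective onto $G_{v}$, so $|\overline{\Gamma}_{v}| \le 2$.

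The main obstacle I anticipate is purely bookkeeping rather than conceptual: correctly recomputing how the body-centering translation $\frac{1}{2}(\mathbf{x}+\mathbf{y}+\mathbf{z})$ projects into each plane and tracking the resulting orbit identifications, since an error here would either miss a non-negligible orbit or spuriously retain one that has collapsed. The negligibility argument for the top corners via Lemma \ref{lemma:computess}(2) requires an explicit identification of $\mathrm{Ker}\,\psi$ and a check that no kernel element produces the required half-period shift along the line; with the centered lattice the kernel will differ from the $\Gamma_{1}$ computation, so I would recompute $\mathrm{Ker}\,\psi$ carefully rather than copy it. Everything else — the dihedral angle and fundamental-domain verifications via Theorem \ref{poincare}, and the order counts — is routine and parallels the already-established $\Gamma_{1}$ case.
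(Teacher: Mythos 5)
Your proposal follows essentially the same route as the paper: the same two planes $\mathbb{R}^{2}_{(0,0,\alpha)}$ and $\mathbb{R}^{2}_{(\alpha,\alpha,0)}$, the same Coxeter-style fundamental domains, and the same use of Lemma \ref{lemma:computess} to pin down strict stabilizers, with the centering translation $\frac{1}{2}(\mathbf{x}+\mathbf{y}+\mathbf{z})$ correctly identified as the source of all the orbit collapses. One caveat: at the vertex $(1/2,0,\alpha)$ the image stabilizer $G_{(1/2,0,\alpha)}$ now has order $8$ rather than $4$, so the equality $\pi(\overline{\Gamma}_{(1/2,0,\alpha)}) = D_{2}'$ cannot simply be ``read off'' from the figure; you must run the same Lemma \ref{lemma:computess}(2) properness argument you reserve for the second plane, using the glide $\gamma = \frac{1}{2}(\mathbf{x}+\mathbf{y}+\mathbf{z}) + h$ where $h$ interchanges and negates the first two coordinates. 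Likewise the shrunken fundamental domains produce the new vertices $(1/4,1/4,\alpha)$ and $(\alpha,\alpha,1/4)$, which should be explicitly listed among the corners to be shown negligible.
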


\begin{proof}
We will consider the same planes $\mathbb{R}^{2}_{\ell}$ as in the proof of Theorem \ref{theorem:gamma1T}, 
although they will have different actions.
First, we consider the plane $\mathbb{R}^{2}_{(0,0,\alpha)}$, pictured in Figure \ref{figure:plane2,1}.

\begin{figure}[!h]
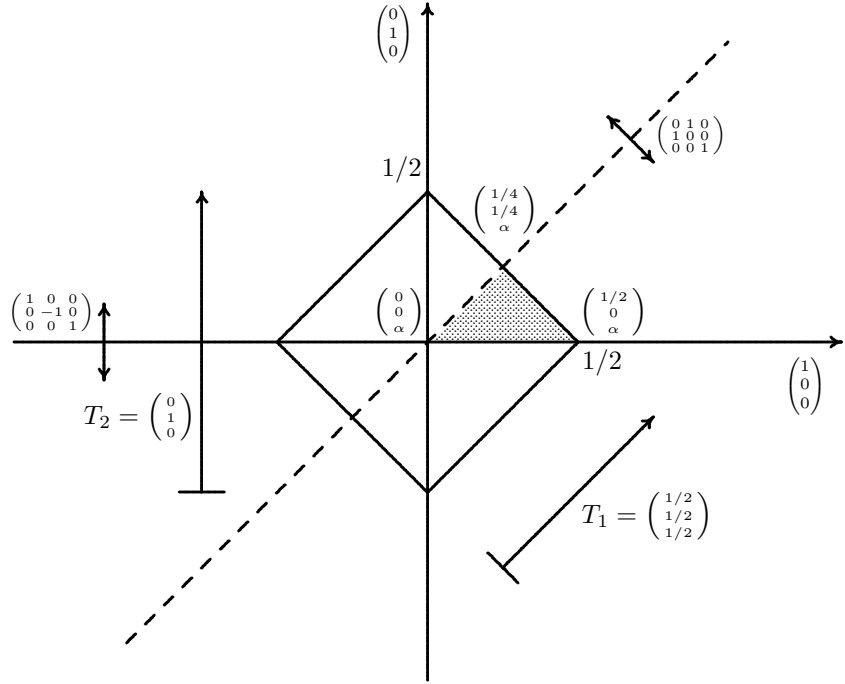

\begin{center}
\hbox{
\vbox{\beginpicture
\setcoordinatesystem units <1cm,1cm> point at -6 4.5
\setplotarea x from -6 to 6, y from -4.5 to 4.5
\linethickness=.7pt
%

 \def\myarrow{\arrow <4pt> [.2, 1]} 
%
\setplotsymbol ({\circle*{.4}})
\plotsymbolspacing=.3pt        
\myarrow from -5.5 0 to 5.5 0     
\myarrow from   0 -4.5  to  0 4.5 

\plot -2 0   0 2   2 0  0 -2  -2 0 /  
\myarrow from -3 -2 to -3 2    
\plot -3.3 -2   -2.7 -2 /          
\put {$T_2 = \tiny \left(\begin{smallmatrix} 0 \\ 1 \\ 0 \end{smallmatrix} \right)$} [r] at -3.1 -1       
\myarrow from  1 -3 to 3 -1 
\plot .8 -2.8  1.2 -3.2 /            
\put {$T_1  = \tiny\left(\begin{smallmatrix} 1/2 \\ 1/2 \\ 1/2 \end{smallmatrix} \right)$} [t] at 2.9 -2       
\myarrow from  -4.3 0 to -4.3 .5   
\myarrow from  -4.3 0 to -4.3 -.5


\myarrow from  2.7 2.7 to 2.4 3
\myarrow from  2.7 2.7 to 3 2.4

\put{$ \tiny \left( \begin{smallmatrix} 0 \\ 0 \\ \alpha \end{smallmatrix} \right)$} [r] at -.05 .4
\put{$ \tiny \left( \begin{smallmatrix} 1/2 \\ 0 \\ \alpha \end{smallmatrix} \right)$} [l] at 2 .4
\put{$ \tiny \left( \begin{smallmatrix} 1/4 \\ 1/4 \\ \alpha \end{smallmatrix} \right)$} [b] at 1 1.45

\setdots <2pt>
\setdashes
\plot -4 -4   4 4 /
\setsolid 
\put {\tiny $\left(\begin{matrix} 0 \\ 1 \\ 0\end{matrix}\right)$} [r] at
-.2 4.1
\put {\tiny $\left(\begin{matrix} 1 \\ 0 \\ 0\end{matrix}\right)$} [t] at
5 -.2
\put {$1/2$} [t] at 2.3 -.1 
\put {$1/2$} [r] at -.1  2.3

\put {\tiny $\left(\begin{smallmatrix} 0 & 1 & 0 \\ 1 & 0 & 0 \\ 0 & 0 & 1 \end{smallmatrix} \right)$} [tl] at 3 3  
\put {\tiny $\left(\begin{smallmatrix} 1 & 0 & 0 \\ 0 & -1 & 0 \\ 0 & 0 & 1 \end{smallmatrix} \right)$} [r] at -4.45 .4

\setshadegrid span <1pt>
\hshade 0 0 2   1 1 1 /  
\endpicture}
}
\end{center}
\caption{This picture describes the action of $\Gamma(0,0,\alpha)$ on $\mathbb{R}^{2}_{(0,0,\alpha)}$,
where $\Gamma = \Gamma_{2}$. The point group of $\Gamma(0,0,\alpha)$ is $D_{4}^{+} \times (-1)$.}
\label{figure:plane2,1}
\end{figure}

It is not difficult to check that the shaded triangle is an exact convex compact fundamental polyhedron for
the action of $G$, which is generated by reflections in the sides of the triangle. It is straightforward
to check that the stabilizers $G_{(0,0,\alpha)}$, $G_{(1/2,0,\alpha)}$, and $G_{(1/4,1/4,\alpha)}$ have orders
$8$, $8$, and $4$, respectively. (This is because the images of the fundamental polyhedron cannot overlap 
in their interiors -- see Definition \ref{definition:fundamentaldomain}.) By Lemma \ref{lemma:computess}(1), the map 
$\psi_{\mid}: \overline{\Gamma}_{\ell'} \rightarrow G_{\ell'}$ is injective for each of the lines in question. It 
is straightforward to check, exactly as in the proof of Theorem \ref{theorem:gamma1T}, that
$\psi_{\mid}: \overline{\Gamma}_{(0,0,\alpha)} \rightarrow G_{(0,0,\alpha)}$ is in fact an isomorphism, and
that $\overline{\Gamma}_{(0,0,\alpha)} = D_{4}''$. 

Now we consider the strict stabilizer group $\overline{\Gamma}_{(1/2,0,\alpha)}$. It is straightforward to check
that 
$$ \left( \begin{smallmatrix} 1 & 0 & 0 \\ 0 & -1 & 0 \\ 0 & 0 & 1 \end{smallmatrix}\right), \quad
\left( \begin{smallmatrix} 1 \\ 0 \\ 0 \end{smallmatrix} \right) +
\left( \begin{smallmatrix} -1 & 0 & 0 \\ 0 & 1 & 0 \\ 0 & 0 & 1 \end{smallmatrix}\right),$$
are both in $\overline{\Gamma}_{(1/2,0,\alpha)}$. It follows directly that
$D_{2}' \subseteq \pi(\overline{\Gamma}_{(1/2,0,\alpha)})$. We claim that $\psi(\overline{\Gamma}_{(1/2,0,\alpha)})$
is a proper subgroup of $G_{(1/2,0,\alpha)}$, so that $D_{2}' = \pi(\overline{\Gamma}_{(1/2,0,\alpha)})$.
We prove the claim. Consider the isometry
$$ \gamma = \left( \begin{smallmatrix} 1/2 \\ 1/2 \\ 1/2 \end{smallmatrix} \right) + 
\left( \begin{smallmatrix} 0 & -1 & 0 \\ -1 & 0 & 0 \\ 0 & 0 & 1 \end{smallmatrix}\right).$$
(Note that the linear part is reflection in the line $y=-x$ in the plane from Figure \ref{figure:plane2,1}.)
It is easy to check that $g = \psi(\gamma)  \in G_{(1/2,0,\alpha)}$. By Lemma \ref{lemma:computess}(2),
$g \in \psi(\overline{\Gamma}_{(1/2,0,\alpha)})$ if and only if the action of $\gamma$ on $(1/2,0,\alpha)$
agrees with the action of some $k$ from the kernel of 
$\psi: \Gamma(0,0,\alpha) \rightarrow \mathrm{Isom}(\mathbb{R}^{2})$. The latter kernel is
$$ \left\langle \left( \begin{smallmatrix} 1 & 0 & 0 \\ 0 & 1 & 0 \\ 0 & 0 & -1 \end{smallmatrix} \right),
\left( \begin{smallmatrix} 0 \\ 0 \\ 1 \end{smallmatrix} \right) \right\rangle.$$
We have $\gamma \cdot r(\alpha) = r(\alpha + 1/2)$ (where $r(\alpha) = (1/2,0,\alpha)$), however, 
and there is no $k \in \mathrm{Ker} \, \psi$
that acts this way on $r(\alpha)$. This proves the claim.

One can show that $\psi(\overline{\Gamma}_{(1/4,1/4,\alpha)})$ is a proper subgroup of $G_{(1/4,1/4,\alpha)}$
in an analogous way, using the same $\gamma$ as above. One then concludes that 
$|\overline{\Gamma}_{(1/4,1/4,\alpha)}| \leq 2$, so $\overline{\Gamma}_{(1/4,1/4,\alpha)}$ is negligible.

\begin{figure}[!h]
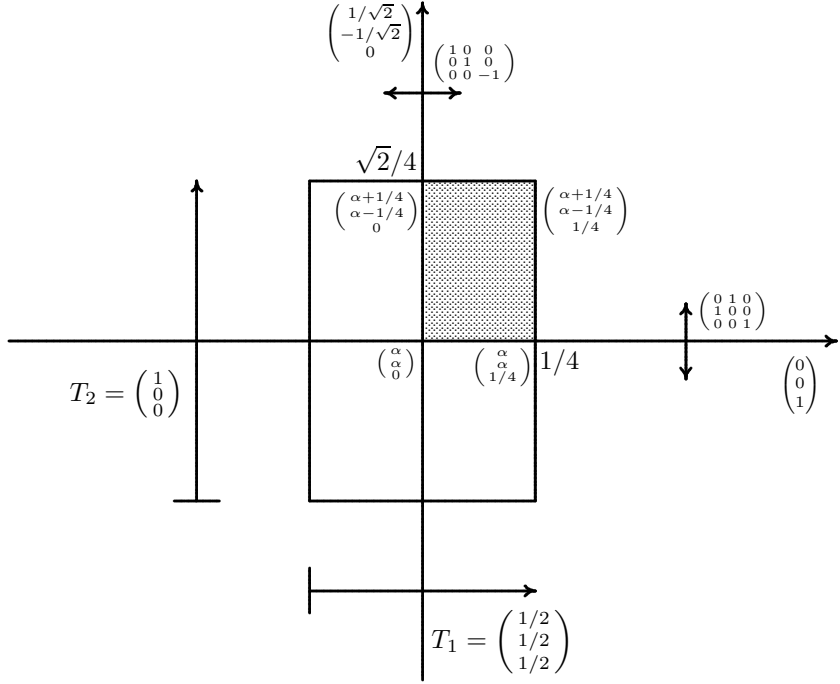

\begin{center}
\hbox{
\vbox{\beginpicture
\setcoordinatesystem units <1cm,1cm> point at -6 4.5
\setplotarea x from -6 to 6, y from -4.5 to 4.5
\linethickness=.7pt

 \def\myarrow{\arrow <4pt> [.2, 1]} 
%
\setplotsymbol ({\circle*{.4}})
\plotsymbolspacing=.3pt        
\myarrow from -5.5 0 to 5.5 0     
\myarrow from   0 -4.5  to  0 4.5 

\plot -1.5 2.13   1.5 2.13  1.5 -2.13  -1.5 -2.13  -1.5 2.13 /  
\myarrow from -3 -2.13 to -3 2.13    
\plot -3.3 -2.13   -2.7 -2.13 /          
\put {$T_2 = \left( \begin{smallmatrix} 1 \\ 0 \\ 0 \end{smallmatrix} \right)$} [r] at -3.2 -.7       
\myarrow from  -1.5 -3.32 to 1.5 -3.32      
\plot -1.5 -3.02  -1.5 -3.62 /            
\put {$T_1 = \left( \begin{smallmatrix} 1/2 \\ 1/2 \\ 1/2 \end{smallmatrix} \right)$} [l] at .1 -4      

\put{\tiny $\left( \begin{smallmatrix} \alpha \\ \alpha \\ 0 \end{smallmatrix} \right)$} [r] at -.1 -.3
\put{\tiny $\left( \begin{smallmatrix} \alpha+ 1/4 \\ \alpha - 1/4 \\ 0 \end{smallmatrix} \right)$} [r] at -.05 1.73
\put{\tiny $\left( \begin{smallmatrix} \alpha \\ \alpha \\ 1/4 \end{smallmatrix} \right)$} [r] at 1.45 -.35
\put{\tiny $\left( \begin{smallmatrix} \alpha+1/4 \\ \alpha-1/4 \\ 1/4 \end{smallmatrix} \right)$} [l] at 1.55 1.73

\myarrow from  3.5 0 to 3.5 .5   
\myarrow from  3.5 0 to 3.5 -.5

\myarrow from  0 3.3 to  .5 3.3
\myarrow from  0 3.3 to  -.5 3.3

\setdots <2pt>
\put {\tiny $\left(\begin{matrix} 1/\sqrt{2} \\ -1/\sqrt{2} \\ 0\end{matrix}\right)$} [r] at
-.1 4.1
\put {\tiny $\left(\begin{matrix} 0 \\ 0 \\ 1\end{matrix}\right)$} [t] at
5 -.2
\put {$1/4$} [t] at 1.8 -.1 
\put {$\sqrt{2}/4$} [r] at -.1  2.4

\put {\tiny $\left(\begin{smallmatrix} 1 & 0 & 0 \\ 0 & 1 & 0 \\ 0 & 0 & -1 \end{smallmatrix} \right)$} [l] at .1  3.7
\put {\tiny $\left(\begin{smallmatrix} 0 & 1 & 0 \\ 1 & 0 & 0 \\ 0 & 0 & 1 \end{smallmatrix} \right)$} [b] at 4.1 .15

\setshadegrid span <1pt>
\vshade 0 0 2.13   1.5 0 2.13 /  
\endpicture}
}
\end{center}
\caption{This picture describes the action of $\Gamma(\alpha,\alpha,0)$ on $\mathbb{R}^{2}_{(\alpha,\alpha,0)}$.
The point group is generated by the given reflections and the antipodal map.}
\label{figure:plane2,2}
\end{figure}

Next, we consider the action of $\Gamma(\alpha,\alpha,0)$ on $\mathbb{R}^{2}_{(\alpha,\alpha,0)}$, as
described in Figure \ref{figure:plane2,2}. The image group $G$ acts by reflections on the sides of the 
shaded rectangle, which is an exact convex compact fundamental polyhedron. (In fact, $G$ is again a Coxeter group.)
It is easy to see that the four vertices forming the corners of the rectangle have stabilizers of order $4$ relative
to $G$. 

The reflections in Figure \ref{figure:plane2,2} are both easily seen to be elements of the group
$\overline{\Gamma}_{(\alpha,\alpha,0)}$. Since these reflections generate a group of order $4$, it follows
from Lemma \ref{lemma:computess}(1) 
that this is the entire strict stabilizer of $(\alpha,\alpha,0)$ (as stated in the theorem). 

All of the other corners have negligible stabilizers. We will prove this for the corner $(\alpha,\alpha,1/4)$, the
other cases being similar. Consider the isometry 
$\gamma = T_{1} + R_{z}$, where $T_{1}$ is pictured in Figure \ref{figure:plane2,2} and $R_{z}$
is multiplication by $-1$ in the $z$-coordinate; i.e., $R_{z}$ labels the horizontal double-tailed arrow from
Figure \ref{figure:plane2,2}. Setting $r(\alpha) = (\alpha, \alpha, 1/4)$, one easily checks that 
$\gamma \cdot r(\alpha) = r(\alpha + 1/2)$. By Lemma \ref{lemma:computess}(2), 
$g = \psi(\gamma) \in \psi(\overline{\Gamma}_{(\alpha,\alpha,1/4)})$ if and only if there is some
$k$ in the kernel 
$$ \left\langle \left( \begin{smallmatrix} 1 \\ 1 \\ 0 \end{smallmatrix} \right), 
\left(\begin{smallmatrix} 0& -1 & 0 \\ -1 & 0 & 0 \\ 0 & 0 & 1 \end{smallmatrix}\right) \right\rangle$$
such that $k \cdot r(\alpha) = r(\alpha + 1/2)$. The generators of the kernel send $r(\alpha)$ to
$r(\alpha + 1)$ and $r(-\alpha)$ (respectively). It follows that the required $k$ does not exist. Thus
$g$ is in $G_{(\alpha,\alpha,1/4)}$ but not in $\psi(\overline{\Gamma}_{(\alpha,\alpha,1/4)})$,
so the latter group is a proper subgroup of the former. It follows that
$|\overline{\Gamma}_{(\alpha,\alpha,1/4)}| \leq 2$, proving that
$\overline{\Gamma}_{(\alpha,\alpha,1/4)}$ is negligible.
 \end{proof}

\begin{theorem} \label{theorem:gamma3T}
For the group $\Gamma = \Gamma_{3}$, we can choose
$$ \mathcal{T}'' = \left\{
\left( \begin{smallmatrix} 0 \\ 0 \\  \alpha \end{smallmatrix} \right), 
\left( \begin{smallmatrix} 1/4 \\ 1/4 \\  \alpha \end{smallmatrix} \right),  
\left( \begin{smallmatrix} \alpha \\ \alpha \\  0 \end{smallmatrix} \right),  
\left( \begin{smallmatrix} \alpha \\ \alpha \\  1/2 \end{smallmatrix} \right) \right\},$$
 where we have expressed the vertices (i.e., lines) in $\base$ in parametric form.

For the vertices $v \in \mathcal{T}''$, the strict stabilizer groups $\overline{\Gamma}_{v}$ satisfy
$$ \pi( \overline{\Gamma}_{v}) = D_{4}'', \quad
\langle A, C \rangle, \quad \langle A, B \rangle, \text { and } \quad \langle A, B \rangle,$$
respectively, where 
$$ A = \left( \begin{smallmatrix} 0 & 1 & 0 \\ 1 & 0 & 0 \\ 0 & 0 & 1 \end{smallmatrix} \right),  \quad B = \left( \begin{smallmatrix} 1 & 0 & 0 \\
0 & 1 & 0 \\ 0 & 0 & -1 \end{smallmatrix} \right), \text{ and } \quad 
C = \left( \begin{smallmatrix} -1 & 0 & 0 \\ 0 & -1 & 0 \\ 0 & 0 & 1 \end{smallmatrix} \right).$$
\end{theorem}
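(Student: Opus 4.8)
The plan is to follow the pattern established in the proof of Theorem \ref{theorem:gamma1T}, as explicitly outlined in Remark \ref{remark:generalplanes}, since $\Gamma_{3}$ shares the point group $S_{4}^{+} \times (-1)$ with $\Gamma_{1}$ and $\Gamma_{2}$. By Proposition \ref{proposition:finiteT''}(1), the only non-negligible vertices in $\base$ admit parametrizations $r(\alpha) = t + \alpha v$ with $v = (1,0,0)$ or $v = (1,1,0)$ (or images under $H$). Thus it suffices to examine exactly two planes, $\mathbb{R}^{2}_{(0,0,\alpha)}$ and $\mathbb{R}^{2}_{(\alpha,\alpha,0)}$, which we identify with the perpendicular $2$-planes $P(z=0)$ and $P(x+y=0)$ respectively. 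For each plane I would first compute $\Gamma(\ell)$: since $\Gamma_{3}$ is built from the face-centered cubic lattice $\frac{1}{2}\langle (\mathbf{x}+\mathbf{y}), (\mathbf{y}+\mathbf{z}), (\mathbf{x}+\mathbf{z}) \rangle$, the subgroups $\Gamma(0,0,\alpha)$ and $\Gamma(\alpha,\alpha,0)$ will differ from the $\Gamma_{1}$ case by the lattice, which changes how translations project into the plane.

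The key computational steps, for each of the two planes, are: (i) determine the homomorphism $\psi: \Gamma(\ell) \rightarrow \mathrm{Isom}(\mathbb{R}^{2}_{\ell})$ and identify its kernel and image $G$; (ii) exhibit an exact convex compact fundamental polyhedron $P$ for $G$ (a triangle in the first plane, a rectangle in the second, as in the analogous figures), using Theorem \ref{poincare}; (iii) read off the vertices of the resulting cellulation, which by Corollary \ref{corollary:cellulated} are the only possibly non-negligible lines; and (iv) compute each strict stabilizer $\overline{\Gamma}_{\widehat{\ell}}$. For the stabilizer computations I would use both parts of Lemma \ref{lemma:computess}: part (1) bounds $|\overline{\Gamma}_{\widehat{\ell}}|$ by $|G_{\widehat{\ell}}|$ via injectivity of $\psi_{\mid}$, and then I exhibit explicit generators (the reflections $A$, $B$, $C$ and appropriate glide isometries) lying in $\overline{\Gamma}_{\widehat{\ell}}$ to realize the claimed groups. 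For the vertices that must be shown \emph{negligible}, I would apply Lemma \ref{lemma:computess}(2): find an isometry $\gamma \in \Gamma(\ell)$ stabilizing the line $\widehat{\ell}$ as a set with $\gamma \cdot r(\alpha) = r(\alpha + 1/2)$, whose image $\psi(\gamma)$ lies in $G_{\widehat{\ell}}$, and then verify that no element $k$ of $\mathrm{Ker}\,\psi$ agrees with $\gamma$ on $\widehat{\ell}$, forcing $\psi(\overline{\Gamma}_{\widehat{\ell}})$ to be a proper subgroup and hence $\overline{\Gamma}_{\widehat{\ell}}$ to have order at most $2$.

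The main point of difference from Theorem \ref{theorem:gamma1T} is the appearance of the group $\langle A, C \rangle$ as the strict stabilizer of the new vertex $(1/4,1/4,\alpha)$ in the plane $\mathbb{R}^{2}_{(0,0,\alpha)}$. In the $\Gamma_{1}$ case this vertex was negligible, but because the $\Gamma_{3}$ lattice contains $\frac{1}{2}(\mathbf{x}+\mathbf{y})$, the relevant glide obstruction disappears and the half-integer point $(1/4,1/4)$ in the plane now carries a genuine order-four strict stabilizer. I would verify that both $A$ (the swap of the first two coordinates) and $C$ (the $180$-degree rotation about the $z$-axis) lie in $\overline{\Gamma}_{(1/4,1/4,\alpha)}$ by checking that the corresponding isometries fix the line $(1/4,1/4,\alpha)$ pointwise, using Lemma \ref{pointstab}(1)-style reasoning on the lattice membership of $v - hv$. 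Since $A$ and $C$ generate a group of order $4$ isomorphic to $D_{2}$, and $|G_{(1/4,1/4,\alpha)}| = 4$ by the fundamental-domain count, Lemma \ref{lemma:computess}(1) forces equality.

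The hardest part will be correctly computing $\Gamma(\ell)$ and the kernel of $\psi$ for each plane under the face-centered cubic lattice, and in particular keeping track of which coordinate half-integer translations survive the projection into each $2$-plane. Getting the kernel exactly right is essential, since the negligibility arguments hinge on showing that no kernel element produces a half-period glide along $\widehat{\ell}$; an error in the kernel would invalidate the distinction between the non-negligible corner $(\alpha,\alpha,0)$ (stabilizer $\langle A, B\rangle$) and whichever corners must vanish. Once the fundamental domains and kernels are pinned down, the realization of the stated groups $D_{4}''$, $\langle A, C\rangle$, and $\langle A, B\rangle$ by explicit generators, together with the order bounds from Lemma \ref{lemma:computess}, completes the argument; the verification that the shaded regions are exact fundamental polyhedra I would omit, exactly as permitted by Remark \ref{remark:generalplanes}.
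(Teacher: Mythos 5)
Your overall strategy is the paper's own (two planes, the homomorphism $\psi$, fundamental polyhedra, Lemma \ref{lemma:computess}), but two of the concrete commitments you make would derail the execution. First, in the plane $\mathbb{R}^{2}_{(0,0,\alpha)}$ you assert that $|G_{(1/4,1/4,\alpha)}| = 4$ ``by the fundamental-domain count'' and let Lemma \ref{lemma:computess}(1) force equality with $\langle A, C\rangle$. That count is wrong: the fundamental triangle has interior angle $\pi/4$ at the vertex $(1/4,1/4,\alpha)$, so $G_{(1/4,1/4,\alpha)}$ is dihedral of order $8$, and Lemma \ref{lemma:computess}(1) alone only gives $|\overline{\Gamma}_{(1/4,1/4,\alpha)}| \leq 8$. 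Exhibiting the order-$4$ subgroup generated by $A$ and the glide $\frac{1}{2}(\mathbf{x}+\mathbf{y}) + C$ then does not force equality. You must also run the Lemma \ref{lemma:computess}(2) properness argument at this \emph{non-negligible} vertex (the paper uses $\gamma = T_{1} + \mathrm{diag}(-1,1,1)$, which acts on the line by a half-period shift not realized by any kernel element) to cut the bound down to $4$; you reserve that mechanism only for vertices you intend to kill.

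Second, your announced geometry for $\mathbb{R}^{2}_{(\alpha,\alpha,0)}$ --- ``a rectangle in the second, as in the analogous figures'' --- is incorrect for the face-centered lattice. The projections of $\frac{1}{2}(\mathbf{x}+\mathbf{z})$ and $\frac{1}{2}(\mathbf{y}+\mathbf{z})$ into $P(x+y=0)$ generate a centered (diagonal) lattice, so the exact fundamental polyhedron is a triangle whose slanted side is paired with itself by a $180$-degree rotation; the image group is not a Coxeter group. Consequently the would-be extra corners $(\alpha+1/4,\alpha-1/4,0)$ and $(\alpha+1/4,\alpha-1/4,1/2)$ are not negligible vertices to be eliminated by a kernel argument --- they lie in the same $\Gamma$-orbits as $(\alpha,\alpha,1/2)$ and $(\alpha,\alpha,0)$ respectively, so your planned negligibility proofs for them would (correctly) fail. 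Moreover, pinning down $\overline{\Gamma}_{(\alpha,\alpha,1/2)} $ requires the paper's separate counting argument that $|G_{(\alpha,\alpha,1/2)}| < 8$: eight translates of the triangle meet that vertex, but the rotational side-pairing accounts for one of them without contributing a stabilizer element, which is what forces the order down to $4$ once the explicit subgroup $\langle A,\ (0,0,1)+B\rangle$ is exhibited. Without recognizing the triangle-with-rotation structure, neither the orbit count in $\mathcal{T}''$ nor the stabilizer of $(\alpha,\alpha,1/2)$ comes out correctly.
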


\begin{proof}
We first consider the action on $\mathbb{R}^{2}_{(0,0,\alpha)}$. The image group $G$ is a Coxeter group, generated
by reflections in the sides of the triangle pictured in Figure \ref{figure:plane3,1}. We note first that it is straightforward
to verify that $\overline{\Gamma}_{(0,0,\alpha)} = D_{4}''$. The argument follows the same lines as in the proofs
of the previous
theorems. 

We concentrate on the remaining two corners of the triangle. First consider the line $(1/4,0,\alpha)$. It
is clear that $G_{(1/4,0,\alpha)}$ has order $4$. We claim that $\overline{\Gamma}_{(1/4,0,\alpha)}$ is
negligible. This will follow from Lemma \ref{lemma:computess}(1) and Proposition \ref{proposition:squarefreenegligible}
once we show that $\psi(\overline{\Gamma}_{(1/4,0,\alpha)})$ is a proper subgroup of $G_{(1/4,0,\alpha)}$. Consider
the element 
$$ \gamma = T_{1} + \left( \begin{smallmatrix} -1 & 0 & 0 \\ 0 & 1 & 0 \\ 0 & 0 & 1 \end{smallmatrix} \right).$$
It is easy to check that $\psi(\gamma) \in G_{(1/4,0,\alpha)}$. By Lemma \ref{lemma:computess}(2),
$g = \psi(\gamma)$ is in $\psi(\overline{\Gamma}_{(1/4,0,\alpha)})$ if and only if there is some element $k$
of the kernel of $\psi: \Gamma(0,0,\alpha) \rightarrow \mathrm{Isom}(\mathbb{R}^{2})$ that agrees with
$\gamma$ on $(1/4,0,\alpha)$. We note that the kernel is 
$$ \left\langle \left( \begin{smallmatrix} 0 \\ 0 \\ 1 \end{smallmatrix} \right), 
\left( \begin{smallmatrix} 1 & 0 & 0 \\ 0 & 1 & 0 \\ 0 & 0 & -1 \end{smallmatrix} \right) \right\rangle.$$
The above generators act on $r(\alpha) = (1/4,0,\alpha)$ by sending $r(\alpha)$ to 
$r(\alpha + 1)$ and $r(-\alpha)$, respectively. On the other hand, $\gamma \cdot r(\alpha) = r(\alpha + 1/2)$.
It follows that $g = \psi(\gamma) \notin \psi(\overline{\Gamma}_{(1/4,0,\alpha)})$. This proves the claim.

\begin{figure}[!h]
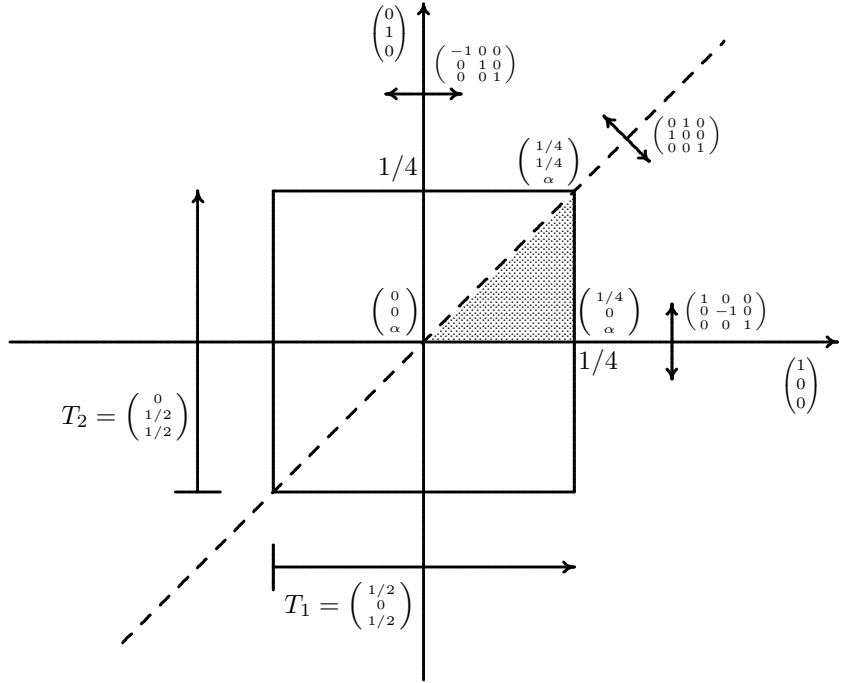

\begin{center}
\hbox{
\vbox{\beginpicture
\setcoordinatesystem units <1cm,1cm> point at -6 4.5
\setplotarea x from -6 to 6, y from -4.5 to 4.5
\linethickness=.7pt
%

 \def\myarrow{\arrow <4pt> [.2, 1]} 
%
\setplotsymbol ({\circle*{.4}})
\plotsymbolspacing=.3pt        
\myarrow from -5.5 0 to 5.5 0     
\myarrow from   0 -4.5  to  0 4.5 

\plot -2 2   2 2   2 -2  -2 -2  -2 2 /  
\myarrow from -3 -2 to -3 2    
\plot -3.3 -2   -2.7 -2 /          
\put {$T_2 = \tiny \left(\begin{smallmatrix} 0 \\ 1/2 \\ 1/2 \end{smallmatrix} \right)$} [r] at -3.1 -1       
\myarrow from  -2 -3 to 2 -3      
\plot -2 -3.3  -2 -2.7 /            
\put {$T_1  = \tiny\left(\begin{smallmatrix} 1/2 \\ 0 \\ 1/2 \end{smallmatrix} \right)$} [t] at -1 -3.2       
\myarrow from  3.3 0 to 3.3 .5   
\myarrow from  3.3 0 to 3.3 -.5

\myarrow from  0 3.3 to  .5 3.3
\myarrow from  0 3.3 to  -.5 3.3

\myarrow from  2.7 2.7 to 2.4 3
\myarrow from  2.7 2.7 to 3 2.4

\put{$ \tiny \left( \begin{smallmatrix} 0 \\ 0 \\ \alpha \end{smallmatrix} \right)$} [r] at -.05 .4
\put{$ \tiny \left( \begin{smallmatrix} 1/4 \\ 0 \\ \alpha \end{smallmatrix} \right)$} [l] at 2 .4
\put{$ \tiny \left( \begin{smallmatrix} 1/4 \\ 1/4 \\ \alpha \end{smallmatrix} \right)$} [r] at 2.1 2.4

\setdots <2pt>
\setdashes
\plot -4 -4   4 4 /
\setsolid 
\put {\tiny $\left(\begin{matrix} 0 \\ 1 \\ 0\end{matrix}\right)$} [r] at
-.2 4.1
\put {\tiny $\left(\begin{matrix} 1 \\ 0 \\ 0\end{matrix}\right)$} [t] at
5 -.2
\put {$1/4$} [t] at 2.3 -.1 
\put {$1/4$} [r] at -.1  2.3

\put {\tiny $\left(\begin{smallmatrix} -1 & 0 & 0 \\ 0 & 1 & 0 \\ 0 & 0 & 1 \end{smallmatrix} \right)$} [l] at .1  3.7
\put {\tiny $\left(\begin{smallmatrix} 0 & 1 & 0 \\ 1 & 0 & 0 \\ 0 & 0 & 1 \end{smallmatrix} \right)$} [tl] at 3 3  
\put {\tiny $\left(\begin{smallmatrix} 1 & 0 & 0 \\ 0 & -1 & 0 \\ 0 & 0 & 1 \end{smallmatrix} \right)$} [b] at 4 .15

\setshadegrid span <1pt>
\hshade 0 0 2   2 2 2 /  
\endpicture}
}
\end{center}
\caption{This picture describes the action of $\Gamma(0,0,\alpha)$ on the plane $\mathbb{R}^{2}_{(0,0,\alpha)}$,
where $\Gamma = \Gamma_{3}$.}
\label{figure:plane3,1}
\end{figure}

Next, we consider the line $(1/4,1/4,\alpha)$. Essentially the same reasoning as above, using the same $\gamma$,
shows that $\psi(\overline{\Gamma}_{(1/4,1/4,\alpha)})$ is a proper subgroup of $G_{(1/4,1/4,\alpha)}$. 
We note that $G_{(1/4,1/4,\alpha)}$ has order $8$, so that $\overline{\Gamma}_{(1/4,1/4,\alpha)}$
has order at most $4$. It is straightforward to check that
$$ \left\langle \left( \begin{smallmatrix} 0 & 1 & 0 \\ 1 & 0 & 0 \\ 0 & 0 & 1 \end{smallmatrix} \right),
\left( \begin{smallmatrix} 1/2 \\ 1/2 \\ 0 \end{smallmatrix} \right) + 
\left( \begin{smallmatrix} -1 & 0 & 0 \\ 0 & -1 & 0 \\ 0 & 0 & 1 \end{smallmatrix} \right) \right\rangle$$
is a group of order $4$ that is a subgroup of $\overline{\Gamma}_{(1/4,1/4,\alpha)}$, so equality must hold.
It easily follows that $\pi(\overline{\Gamma}_{(1/4,1/4,\alpha)}) = \langle A, C \rangle$, as in the statement
of the theorem.

\begin{figure}[!h]
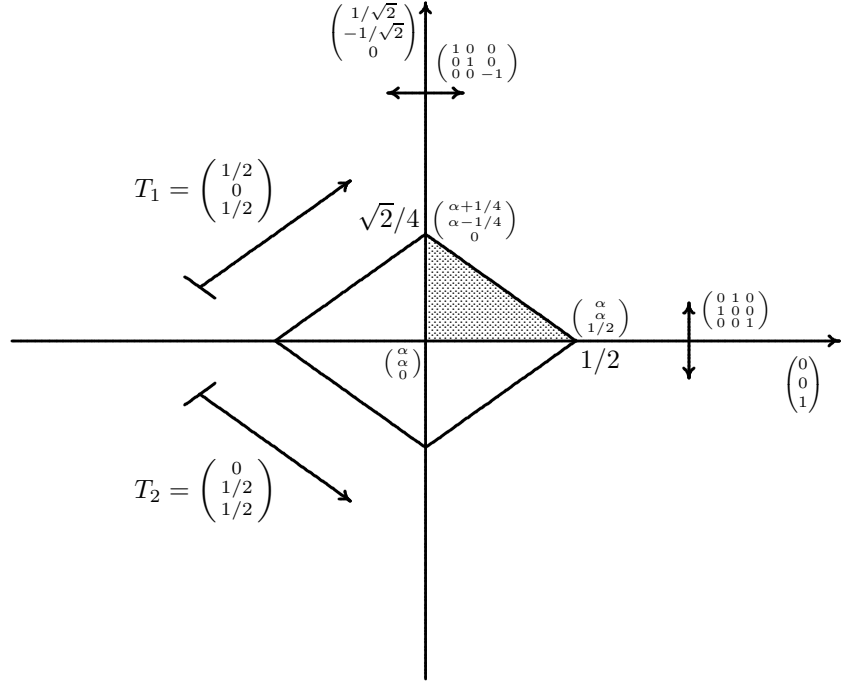

\begin{center}
\hbox{
\vbox{\beginpicture
\setcoordinatesystem units <1cm,1cm> point at -6 4.5
\setplotarea x from -6 to 6, y from -4.5 to 4.5
\linethickness=.7pt

 \def\myarrow{\arrow <4pt> [.2, 1]} 
%
\setplotsymbol ({\circle*{.4}})
\plotsymbolspacing=.3pt        
\myarrow from -5.5 0 to 5.5 0     
\myarrow from   0 -4.5  to  0 4.5 

\plot -2 0   0 1.42   2 0  0 -1.42  -2 0 /  
\myarrow from -3 .71 to -1 2.13 
\plot -3.2 .852   -2.8 .568 /          
\put {$T_2 = \left( \begin{smallmatrix} 0 \\ 1/2 \\ 1/2 \end{smallmatrix} \right)$} [r] at -2 -2       
\myarrow from  -3 -.71 to -1 -2.13     
\plot -3.2 -.852  -2.8 -.568 /            
\put {$T_1 = \left( \begin{smallmatrix} 1/2 \\ 0 \\ 1/2 \end{smallmatrix} \right)$} [r] at -2 2      

\put{\tiny $\left( \begin{smallmatrix} \alpha \\ \alpha \\ 0 \end{smallmatrix} \right)$} [r] at -.05 -.3
\put{\tiny $\left( \begin{smallmatrix} \alpha+ 1/4 \\ \alpha - 1/4 \\ 0 \end{smallmatrix} \right)$} [l] at .05 1.6
\put{\tiny $\left( \begin{smallmatrix} \alpha \\ \alpha \\ 1/2 \end{smallmatrix} \right)$} [l] at 1.9 .3

\myarrow from  3.5 0 to 3.5 .5   
\myarrow from  3.5 0 to 3.5 -.5

\myarrow from  0 3.3 to  .5 3.3
\myarrow from  0 3.3 to  -.5 3.3

\setdots <2pt>
\put {\tiny $\left(\begin{matrix} 1/\sqrt{2} \\ -1/\sqrt{2} \\ 0\end{matrix}\right)$} [r] at
-.1 4.1
\put {\tiny $\left(\begin{matrix} 0 \\ 0 \\ 1\end{matrix}\right)$} [t] at
5 -.2
\put {$1/2$} [t] at 2.3 -.1 
\put {$\sqrt{2}/4$} [r] at -.1  1.63

\put {\tiny $\left(\begin{smallmatrix} 1 & 0 & 0 \\ 0 & 1 & 0 \\ 0 & 0 & -1 \end{smallmatrix} \right)$} [l] at .1  3.7
\put {\tiny $\left(\begin{smallmatrix} 0 & 1 & 0 \\ 1 & 0 & 0 \\ 0 & 0 & 1 \end{smallmatrix} \right)$} [b] at 4.1 .15

\setshadegrid span <1pt>
\vshade 0 0 1.42   2 0 0 /  
\endpicture}
}
\end{center}
\caption{This picture describes the action of $\Gamma(\alpha,\alpha,0)$ on $\mathbb{R}^{2}_{(\alpha,\alpha,0)}$,
where $\Gamma = \Gamma_{3}$.}
\label{figure:plane3,2}
\end{figure}

Next we must consider the plane $\mathbb{R}^{2}_{(\alpha,\alpha,0)}$. The action is described in 
Figure \ref{figure:plane3,2}. We note that the image group $G$ does not act as a Coxeter group in this case:
the side-pairing of the slanted side of the triangle $P$ with itself is a rotation of $180$ degrees about the midpoint
of that side. It follows that the top vertex and the right-hand vertex are in the same orbit under the
action of $\Gamma_{(\alpha,\alpha,0)}$. As a result, we only need to consider the vertices $(\alpha,\alpha,0)$
and $(\alpha,\alpha,1/2)$. It is easy to check that $\overline{\Gamma}_{(\alpha,\alpha,0)} = \langle A, B \rangle$,
so we will concentrate on the vertex $(\alpha,\alpha,1/2)$.

We claim that $|G_{(\alpha,\alpha,1/2)}| < 8$. To prove the claim, we note that $|G_{(\alpha,\alpha,1/2)}|$
must be less than or equal to the number of translates of the fundamental domain 
that touch $(\alpha,\alpha,1/2)$. There are $8$ such translates, which proves that $|G_{(\alpha,\alpha,1/2)}| \leq 8$.
Next, we consider the side-pairing $\phi$ 
of the slanted side of $P$ with itself, which is rotation by $180$ degrees
about the midpoint. Note that $\phi(P)$ touches $(\alpha,\alpha,1/2)$, although $\phi \notin G_{(\alpha,\alpha,1/2)}$.
There cannot be $g \in G_{(\alpha,\alpha,1/2)}$ such that $g(P) = \phi(P)$, for, by Definition \ref{definition:fundamentaldomain}, this would imply that $g = \phi$, a contradiction. Thus, the
elements of $G_{(\alpha,\alpha,1/2)}$ must move $P$ to some subset of the remaining $7$ translates of $P$
that touch $(\alpha,\alpha,1/2)$. Since different elements of $G_{(\alpha,\alpha,1/2)}$ must move $P$ to different
elements of the tesselation $\{ gP \mid g \in G \}$ (again by Definition \ref{definition:fundamentaldomain}), the
claim follows. 

Next, we directly check that 
$$ \left\langle  \left( \begin{smallmatrix} 0 & 1 & 0 \\ 1 & 0 & 0 \\ 0 & 0 & 1 \end{smallmatrix} \right),
\left( \begin{smallmatrix} 0 \\ 0 \\ 1 \end{smallmatrix} \right) +  
\left( \begin{smallmatrix} 1 & 0 & 0 \\ 0 & 1 & 0 \\ 0 & 0 & -1 \end{smallmatrix} \right) \right\rangle
\leq \overline{\Gamma}_{(\alpha,\alpha,1/2)}.$$
It is easy to see that the above isometries generate a group of order $4$. In view of the inequality
$|G_{(\alpha,\alpha,1/2)}| < 8$ and Lemma \ref{lemma:computess}(1), the above containment must be
an equality.

(Technically, Theorem \ref{cells} tells us that the midpoint of the slanted side in Figure \ref{figure:plane3,2}
should be a vertex $v$ in the cellulation of the plane, so we should consider its strict stabilizer as well. However,
it is clear that $|G_{v}| = 2$, so the strict stabilizer is necessarily negligible, by Lemma \ref{lemma:computess}(1).) 
\end{proof}

\begin{figure}[!h]
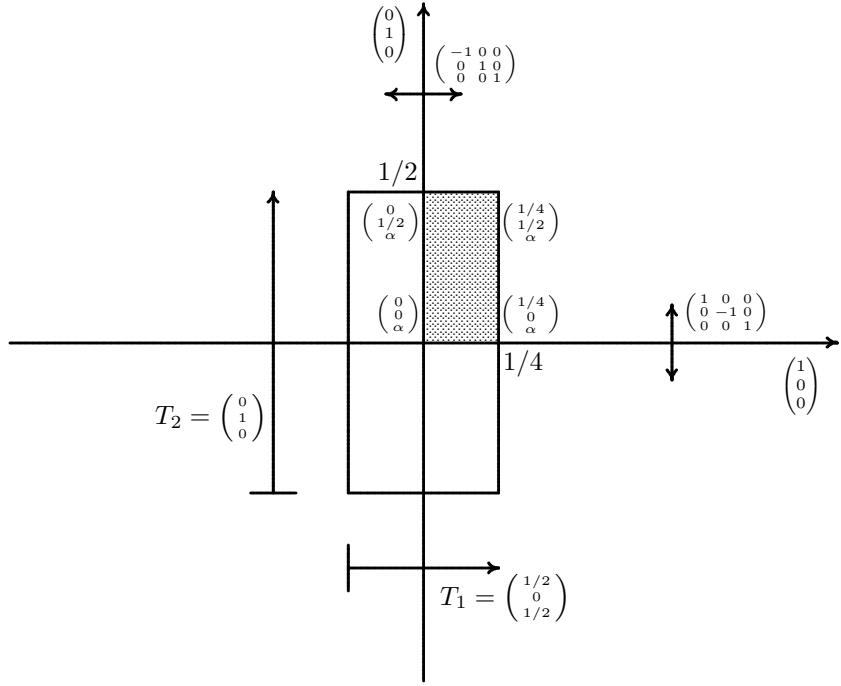

\begin{center}
\hbox{
\vbox{\beginpicture
\setcoordinatesystem units <1cm,1cm> point at -6 4.5
\setplotarea x from -6 to 6, y from -4.5 to 4.5
\linethickness=.7pt
%

 \def\myarrow{\arrow <4pt> [.2, 1]} 
%
\setplotsymbol ({\circle*{.4}})
\plotsymbolspacing=.3pt        
\myarrow from -5.5 0 to 5.5 0     
\myarrow from   0 -4.5  to  0 4.5 

\plot -1 2   1 2   1 -2  -1 -2  -1 2 /  
\myarrow from -2 -2 to -2 2    
\plot -2.3 -2   -1.7 -2 /          
\put {$T_2 = \tiny \left(\begin{smallmatrix} 0 \\ 1 \\ 0 \end{smallmatrix} \right)$} [r] at -2.1 -1       
\myarrow from  -1 -3 to 1 -3      
\plot -1 -3.3  -1 -2.7 /            
\put {$T_1  = \tiny \left(\begin{smallmatrix} 1/2 \\ 0 \\ 1/2 \end{smallmatrix} \right)$} [l] at .2 -3.4       
\myarrow from  3.3 0 to 3.3 .5   
\myarrow from  3.3 0 to 3.3 -.5

\myarrow from  0 3.3 to  .5 3.3
\myarrow from  0 3.3 to  -.5 3.3


\put {\tiny $\left( \begin{smallmatrix} 0 \\ 0 \\ \alpha \end{smallmatrix} \right)$} [r] at -.05 .37
\put {\tiny $\left( \begin{smallmatrix} 1/4 \\ 0 \\ \alpha \end{smallmatrix} \right)$} [l] at 1 .37
\put {\tiny $\left( \begin{smallmatrix} 1/4 \\ 1/2 \\ \alpha \end{smallmatrix} \right)$} [l] at 1 1.6
\put {\tiny $\left(\begin{smallmatrix} 0 \\ 1/2 \\ \alpha \end{smallmatrix} \right)$} [r] at -.05 1.6

\put {\tiny $\left(\begin{matrix} 0 \\ 1 \\ 0 \end{matrix}\right)$} [r] at -.2 4.1
\put {\tiny $\left(\begin{matrix} 1 \\ 0 \\ 0 \end{matrix}\right)$} [t] at 5 -.2
\put {$1/4$} [t] at 1.3 -.1 
\put {$1/2$} [r] at -.1  2.25

\put {\tiny $\left(\begin{smallmatrix} -1 & 0 & 0 \\ 0 & 1 & 0 \\ 0 & 0 & 1 \end{smallmatrix} \right)$} [l] at .1  3.7
\put {\tiny $\left(\begin{smallmatrix} 1 & 0 & 0 \\ 0 & -1 & 0 \\ 0 & 0 & 1 \end{smallmatrix} \right)$} [b] at 4 .15

\setshadegrid span <1pt>
\hshade 0 0 1   2 0 1 /  
\endpicture}
}
\end{center}
\caption{This picture describes the action of $\Gamma(0,0,\alpha)$ on the plane $\mathbb{R}^{2}_{(0,0,\alpha)}$,
where $\Gamma = \Gamma_{4}$.}
\label{figure:plane4,1}
\end{figure}

\begin{theorem} \label{theorem:gamma4T}
For the group $\Gamma = \Gamma_{4}$, we can choose
$$ \mathcal{T}'' = \left\{
\left( \begin{smallmatrix} 0 \\ 0 \\  \alpha \end{smallmatrix} \right), 
\left( \begin{smallmatrix} 0 \\ 1/2 \\  \alpha \end{smallmatrix} \right),
\left( \begin{smallmatrix} \alpha \\ 0 \\  0 \end{smallmatrix} \right), 
\left( \begin{smallmatrix} \alpha \\ 1/2 \\  0 \end{smallmatrix} \right), 
\left( \begin{smallmatrix} 0 \\ \alpha \\ 0 \end{smallmatrix} \right), 
\left( \begin{smallmatrix} 0 \\ \alpha \\ 1/2 \end{smallmatrix} \right) \right\},$$
 where we have expressed the vertices (i.e., lines) in $\base$ in parametric form.

For the vertices $v \in \mathcal{T}''$, the strict stabilizer groups $\overline{\Gamma}_{v}$ are
$$ \pi( \overline{\Gamma}_{v}) = D_{2}', \quad  D_{2}', \quad D_{2_{1}}', \quad  D_{2_{1}}', \quad
D_{2_{2}}',\text{ and }  \quad D_{2_{2}}'.$$
\end{theorem}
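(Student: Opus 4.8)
The plan is to follow the general outline of Remark \ref{remark:generalplanes}, exactly as in the proofs of Theorems \ref{theorem:gamma1T}, \ref{theorem:gamma2T}, and \ref{theorem:gamma3T}, now with point group $H = D_{2}^{+} \times (-1)$, the group of all sign matrices. Since $H$ fixes each coordinate direction (up to sign) and permutes no axis, Proposition \ref{proposition:finiteT''}(2) shows that the only non-negligible vertices $\widehat{\ell} \in \base$ are those admitting a parametrization $r(\alpha) = t + \alpha v$ with $v \in \{(1,0,0),(0,1,0),(0,0,1)\}$. It therefore suffices to analyze the three planes $\mathbb{R}^{2}_{(0,0,\alpha)}$, $\mathbb{R}^{2}_{(\alpha,0,0)}$, and $\mathbb{R}^{2}_{(0,\alpha,0)}$ in turn. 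Note that because every sign matrix preserves each axis-direction, we have $\Gamma(\ell) = \Gamma$ for each of the three axis directions $\ell$.

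First I would treat the plane $\mathbb{R}^{2}_{(0,0,\alpha)}$, whose action is already recorded in Figure \ref{figure:plane4,1}. Identifying this plane with $P(z=0)$, the homomorphism $\psi \colon \Gamma \to \mathrm{Isom}(\mathbb{R}^{2}_{(0,0,\alpha)})$ has image the Coxeter group $G$ generated by the four reflections in the sides of the shaded rectangle, and kernel $\langle (0,0,1), \mathrm{diag}(1,1,-1) \rangle \cong D_{\infty}$, which acts on each $z$-parallel line by integer translations and reflection. Using Theorem \ref{poincare} one verifies that the rectangle is an exact convex compact fundamental polyhedron, so its four corners lie in distinct $\Gamma$-orbits, each with $G$-stabilizer of order $4$. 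For the corners $(0,0,\alpha)$ and $(0,1/2,\alpha)$ I would exhibit two sign matrices (namely $\mathrm{diag}(-1,1,1)$ and a suitably translated $\mathrm{diag}(1,-1,1)$) lying in the strict stabilizer, showing $D_{2}' \leq \pi(\overline{\Gamma}_{v})$; Lemma \ref{lemma:computess}(1) then forces equality. For the two corners at $x = 1/4$ I would apply the half-shift argument: the element $\gamma = \frac{1}{2}(\mathbf{x}+\mathbf{z}) + \mathrm{diag}(-1,1,1)$ leaves such a line invariant but acts on it by $r(\alpha) \mapsto r(\alpha + 1/2)$, and since no kernel element produces a half-integer translation, Lemma \ref{lemma:computess}(2) shows $\psi(\overline{\Gamma}_{v})$ is proper in $G_{v}$; hence $|\overline{\Gamma}_{v}| \leq 2$ and these corners are negligible.

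Next I would carry out the analogous analysis for $\mathbb{R}^{2}_{(\alpha,0,0)}$ and $\mathbb{R}^{2}_{(0,\alpha,0)}$, where the distinguished axis is the $x$-axis and the $y$-axis, respectively. Identifying each plane with the perpendicular coordinate plane and computing $\psi$, its kernel, and a fundamental rectangle as above, I expect to find in each case exactly two non-negligible corners, the other two being eliminated by the same half-shift argument using $\frac{1}{2}(\mathbf{x}+\mathbf{z})$. Since the strict stabilizer of a line parallel to a coordinate axis consists precisely of the sign matrices fixing that axis, the groups obtained are $D_{2_{1}}'$ for the two $x$-parallel lines $(\alpha,0,0)$, $(\alpha,1/2,0)$ and $D_{2_{2}}'$ for the two $y$-parallel lines $(0,\alpha,0)$, $(0,\alpha,1/2)$, matching the statement; the full group $D_{2_{i}}'$ is realized by exhibiting explicit generators exactly as in the first plane.

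The main obstacle will be the bookkeeping of $\ker \psi$ and the image $G$ in each plane. Because the lattice $L = \langle \frac{1}{2}(\mathbf{x}+\mathbf{z}), \mathbf{y}, \mathbf{z} \rangle$ is invariant under the coordinate swap $\mathbf{x} \leftrightarrow \mathbf{z}$ (which normalizes $H$) but under no swap involving $\mathbf{y}$, the $z$- and $x$-parallel planes are related by this symmetry, whereas the $y$-parallel plane requires an independent computation. In every case the crucial half-shift is produced by the half-lattice vector $\frac{1}{2}(\mathbf{x}+\mathbf{z})$, and once $\ker \psi$ has been pinned down, the verification via Lemma \ref{lemma:computess}(2) that this half-period translation admits no lift in the kernel --- and hence that the offending corner is negligible --- is routine.
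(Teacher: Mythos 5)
Your proposal is correct and follows essentially the same route as the paper: restrict to the three coordinate-axis planes via Proposition \ref{proposition:finiteT''}(2), work out the Coxeter fundamental rectangle and the kernel of $\psi$ for $\mathbb{R}^{2}_{(0,0,\alpha)}$, establish $D_{2}'$ as the strict stabilizer of the two non-negligible corners by exhibiting explicit generators and invoking Lemma \ref{lemma:computess}(1), and eliminate the corners at $x=1/4$ with the half-shift element $\frac{1}{2}(\mathbf{x}+\mathbf{z}) + \mathrm{diag}(-1,1,1)$ together with Lemma \ref{lemma:computess}(2). The paper likewise treats only the first plane explicitly and leaves the other two to the reader as analogous.
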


\begin{figure}[!p]
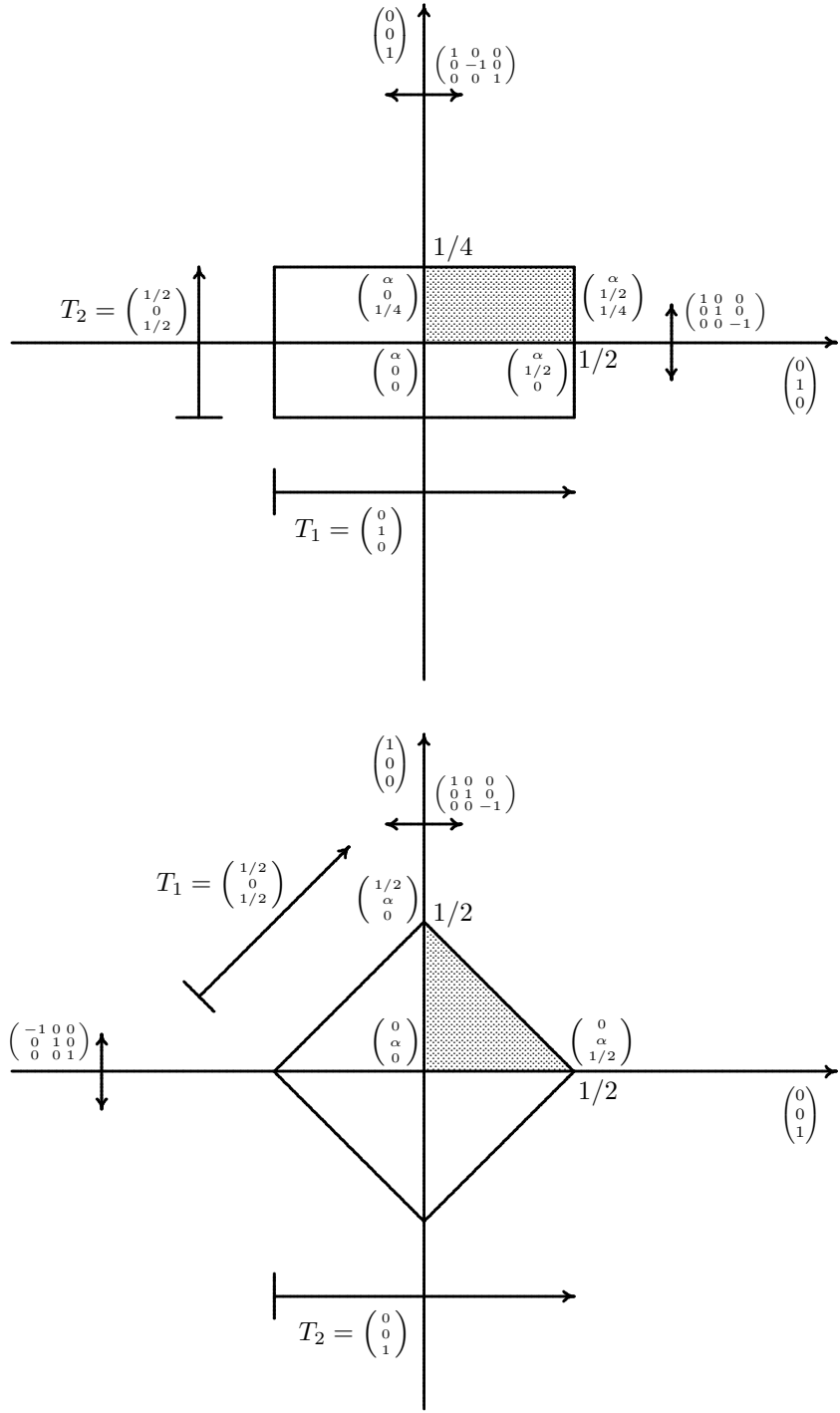

\begin{center}
\hbox{
\vbox{\beginpicture
\setcoordinatesystem units <1cm,1cm> point at -6 4.5
\setplotarea x from -6 to 6, y from -4.5 to 4.5
\linethickness=.7pt
%

 \def\myarrow{\arrow <4pt> [.2, 1]} 
%
\setplotsymbol ({\circle*{.4}})
\plotsymbolspacing=.3pt        
\myarrow from -5.5 0 to 5.5 0     
\myarrow from   0 -4.5  to  0 4.5 

\plot -2 1   2 1   2 -1  -2 -1  -2 1 /  
\myarrow from -3 -1 to -3 1    
\plot -3.3 -1   -2.7 -1 /          
\put {$T_2 = \tiny \left(\begin{smallmatrix} 1/2 \\ 0 \\  1/2 \end{smallmatrix} \right)$} [b] at -4 .1       
\myarrow from  -2 -2 to 2 -2      
\plot -2 -2.3  -2 -1.7 /            
\put {$T_1  = \tiny\left(\begin{smallmatrix} 0 \\ 1 \\ 0 \end{smallmatrix} \right)$} [t] at -1 -2.2       
\myarrow from  3.3 0 to 3.3 .5   
\myarrow from  3.3 0 to 3.3 -.5

\myarrow from  0 3.3 to  .5 3.3
\myarrow from  0 3.3 to  -.5 3.3

\put{$ \tiny \left( \begin{smallmatrix} \alpha \\ 0 \\ 0 \end{smallmatrix} \right)$} [r] at -.05 -.4
\put{$ \tiny \left( \begin{smallmatrix} \alpha \\ 1/2 \\ 0 \end{smallmatrix} \right)$} [r] at 1.95 -.4
\put{$ \tiny \left( \begin{smallmatrix} \alpha \\ 1/2 \\ 1/4 \end{smallmatrix} \right)$} [l] at 2.05 .6
\put{$ \tiny \left( \begin{smallmatrix} \alpha \\ 0 \\ 1/4 \end{smallmatrix} \right)$} [r] at -.05 .6

\put {\tiny $\left(\begin{matrix} 0 \\ 0 \\ 1\end{matrix}\right)$} [r] at
-.2 4.1
\put {\tiny $\left(\begin{matrix} 0 \\ 1 \\ 0\end{matrix}\right)$} [t] at
5 -.2
\put {$1/2$} [t] at 2.3 -.08 
\put {$1/4$} [l] at .1  1.25

\put {\tiny $\left(\begin{smallmatrix} 1 & 0 & 0 \\ 0 & -1 & 0 \\ 0 & 0 & 1 \end{smallmatrix} \right)$} [l] at .1  3.7

\put {\tiny $\left(\begin{smallmatrix} 1 & 0 & 0 \\ 0 & 1 & 0 \\ 0 & 0 & -1 \end{smallmatrix} \right)$} [b] at 4 .15

\setshadegrid span <1pt>
\hshade 0 0 2   1 0 2 /  
\endpicture}
}

\vspace{20pt}

\hbox{
\vbox{\beginpicture
\setcoordinatesystem units <1cm,1cm> point at -6 4.5
\setplotarea x from -6 to 6, y from -4.5 to 4.5
\linethickness=.7pt
%

 \def\myarrow{\arrow <4pt> [.2, 1]} 
%
\setplotsymbol ({\circle*{.4}})
\plotsymbolspacing=.3pt        
\myarrow from -5.5 0 to 5.5 0     
\myarrow from   0 -4.5  to  0 4.5 

\plot -2 0   0 2   2 0  0 -2  -2 0 /  
\myarrow from -2 -3 to 2 -3    
\plot -2 -2.7  -2 -3.3    /          
\put {$T_2 = \tiny \left(\begin{smallmatrix} 0 \\ 0 \\ 1 \end{smallmatrix} \right)$} [r] at -.2 -3.5      
\myarrow from  -3 1 to -1 3  
\plot -3.2 1.2  -2.8 .8  /            
\put {$T_1  = \tiny\left(\begin{smallmatrix} 1/2 \\ 0 \\ 1/2 \end{smallmatrix} \right)$} [r] at -1.85 2.5       
\myarrow from  -4.3 0 to -4.3 .5   
\myarrow from  -4.3 0 to -4.3 -.5

\myarrow from  0 3.3 to  .5 3.3
\myarrow from  0 3.3 to  -.5 3.3


\put{$ \tiny \left( \begin{smallmatrix} 0 \\ \alpha \\ 0 \end{smallmatrix} \right)$} [r] at -.05 .4
\put{$ \tiny \left( \begin{smallmatrix} 0 \\ \alpha \\ 1/2 \end{smallmatrix} \right)$} [l] at 1.9 .4
\put{$ \tiny \left( \begin{smallmatrix} 1/2 \\ \alpha \\ 0 \end{smallmatrix} \right)$} [r] at -.05 2.3

\put {\tiny $\left(\begin{matrix} 1 \\ 0 \\ 0\end{matrix}\right)$} [r] at
-.2 4.1
\put {\tiny $\left(\begin{matrix} 0 \\ 0 \\ 1 \end{matrix}\right)$} [t] at
5 -.2
\put {$1/2$} [t] at 2.3 -.1 
\put {$1/2$} [l] at .1  2.1

\put {\tiny $\left(\begin{smallmatrix} 1 & 0 & 0 \\ 0 & 1 & 0 \\ 0 & 0 & -1 \end{smallmatrix} \right)$} [l] at .1  3.7
\put {\tiny $\left(\begin{smallmatrix} -1 & 0 & 0 \\ 0 & 1 & 0 \\ 0 & 0 & 1 \end{smallmatrix} \right)$} [r] at -4.45 .4

\setshadegrid span <1pt>
\hshade 0 0 2   2 0 0 /  
\endpicture}
}
\end{center}
\caption{The planes $\mathbb{R}^{2}_{(\alpha,0,0)}$ and $\mathbb{R}^{2}_{(0,\alpha,0)}$ appear on the
top and bottom (respectively). The acting group is $\Gamma(\ell)$ (for appropriate $\ell$) and 
$\Gamma = \Gamma_{4}$.}
\label{figure:plane4,23}
\end{figure}

\begin{proof}
We must consider the planes $\mathbb{R}^{2}_{(0,0,\alpha)}$,
$\mathbb{R}^{2}_{(\alpha,0,0)}$, and $\mathbb{R}^{2}_{(0,\alpha,0)}$. These three cases pose no
problems that we haven't encountered before, so we will give an explicit argument for the plane
$\mathbb{R}^{2}_{(0,0,\alpha)}$, and leave the remaining cases to the reader.

We note  that the image group $G$ is generated by reflections in the sides of the shaded rectangle, and is in
particular a Coxeter group. It follows that the four vertices are all in separate orbits modulo the action
of $\Gamma(0,0,\alpha)$. It is also easy to see that $|G_{\ell}| = 4$ for the lines $\ell$ in question. It is
completely straightforward to verify that $D_{2}' \leq \overline{\Gamma}_{(0,0,\alpha)}$, so equality must
hold by Lemma \ref{lemma:computess}(1). 

Next we consider the line $(0,1/2,\alpha)$. We easily check that 
$$ \left\langle \left( \begin{smallmatrix} -1 & 0 & 0 \\ 0 & 1 & 0 \\ 0 & 0 & 1 \end{smallmatrix} \right),
\left( \begin{smallmatrix} 0 \\ 1 \\ 0 \end{smallmatrix} \right) +
\left( \begin{smallmatrix} 1 & 0 & 0 \\ 0 & -1 & 0 \\ 0 & 0 & 1 \end{smallmatrix} \right) \right\rangle
\leq \overline{\Gamma}_{(0,1/2,\alpha)}.$$
Since the isometries above generate a group of order $4$, and $|\overline{\Gamma}_{(0,1/2,\alpha)}| \leq
|G_{(0,1/2,\alpha)}| = 4$ by Lemma \ref{lemma:computess}(1), equality is forced. 
The equality $\pi(\overline{\Gamma}_{(0,1/2,\alpha)}) = D_{2}'$ follows directly.

We claim that the remaining two vertices in this plane are negligible. We prove this fact for the vertex (line)
$(1/4,0,\alpha)$, the other case being similar.
Consider the isometry
$$ \gamma = \left( \begin{smallmatrix} 1/2 \\ 0 \\ 1/2 \end{smallmatrix} \right) +
\left( \begin{smallmatrix} -1 & 0 & 0 \\ 0 & 1 & 0 \\ 0 & 0 & 1 \end{smallmatrix} \right).$$
Clearly $g = \psi(\gamma) \in G_{(1/4,0,\alpha)}$. It is sufficient to show that
$g \notin \psi(\overline{\Gamma}_{(1/4,0,\alpha)})$; we will establish this by proving that there is no 
$k \in \mathrm{Ker} \, \psi$ such that $k_{\mid (1/4,0,\alpha)} = \gamma_{\mid (1/4,0,\alpha)}$. The
kernel is equal to 
$$ \left\langle \left( \begin{smallmatrix} 0 \\ 0 \\ 1 \end{smallmatrix} \right),
\left( \begin{smallmatrix} 1 & 0 & 0 \\ 0 & 1 & 0 \\ 0 & 0 & -1 \end{smallmatrix} \right) \right\rangle.$$
These generators act on $r(\alpha) = (1/4,0,\alpha)$ by sending $r(\alpha)$ to
$r(\alpha + 1)$ and $r(-\alpha)$ (respectively). The element $\gamma$ acts by the rule
$\gamma \cdot r(\alpha) = r(\alpha + 1/2)$. It follows that the required $k$ does not exist, so 
$\overline{\Gamma}_{(1/4,0,\alpha)}$ is negligible.
\end{proof}

\begin{theorem} \label{theorem:gamma5T}
For the group $\Gamma = \Gamma_{5}$, we can choose
$$ \mathcal{T}'' = \left\{
\left( \begin{smallmatrix} \alpha \\ \alpha \\ \alpha \end{smallmatrix} \right),
\left( \begin{smallmatrix} \alpha + 1/2 \\ \alpha - 1/2 \\  \alpha \end{smallmatrix} \right),
\left( \begin{smallmatrix} \alpha \\ -\alpha \\  0 \end{smallmatrix} \right), 
\left( \begin{smallmatrix} \alpha + 1/2 \\ -\alpha + 1/2 \\  1/2 \end{smallmatrix} \right),
\left( \begin{smallmatrix} \alpha \\ -2\alpha \\ \alpha \end{smallmatrix} \right), 
\left( \begin{smallmatrix} \alpha + 1/2 \\ -2\alpha + 1/2 \\ \alpha + 1/2 \end{smallmatrix} \right) \right\},$$
 where we have expressed the vertices (i.e., lines) in $\base$ in parametric form.

For the vertices $v \in \mathcal{T}''$, the strict stabilizer groups $\overline{\Gamma}_{v}$ satisfy
$$ \pi( \overline{\Gamma}_{v}) = D_{6}'', \quad  \langle A, D \rangle, \quad \langle D, E \rangle, 
\quad  \langle D, E \rangle, \quad \langle E, F \rangle,\text{ and }  \quad \langle E, F \rangle,$$
respectively, where 
$$ A = \left( \begin{smallmatrix} 0 & 1 & 0 \\ 1 & 0 & 0 \\ 0 & 0 & 1 \end{smallmatrix} \right),  \quad 
D = \frac{1}{3}\left( \begin{smallmatrix} 2 & -1 & 2 \\ -1 & 2 & 2 \\ 2 & 2 & -1 \end{smallmatrix} \right),  \quad 
E = \frac{1}{3}\left( \begin{smallmatrix} 1 & -2 & -2 \\ -2 & 1 & -2 \\ -2 & -2 & 1 \end{smallmatrix}\right), 
\text{ and } \quad
F = \left( \begin{smallmatrix} 0 & 0 & 1 \\ 0 & 1 & 0 \\ 1 & 0 & 0 \end{smallmatrix} \right)
 .$$
 The groups $\langle A, D \rangle$, $\langle D, E \rangle$, and $\langle E, F \rangle$ are isomorphic to $D_{2}$.
\end{theorem}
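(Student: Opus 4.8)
The plan is to follow the template established by the proofs of Theorems \ref{theorem:gamma1T}--\ref{theorem:gamma4T} and summarized in Remark \ref{remark:generalplanes}. First I would invoke Proposition \ref{proposition:finiteT''}(3): since the point group of $\Gamma_5$ is $D_6^+ \times (-1)$, every vertex $\widehat{\ell} \in \base$ is negligible unless it admits a parametrization $r(\alpha) = t + \alpha v$ with $v$ in the $H$-orbit of $(1,1,1)$, $(1,-1,0)$, or $(1,-2,1)$. This reduces the problem to analyzing the three planes $\mathbb{R}^2_\ell$ whose lines have these tangent directions, since every $\Gamma$-orbit of a non-negligible vertex meets one of them. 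In each case I identify $\mathbb{R}^2_\ell$ isometrically with the $2$-dimensional subspace $S \subseteq \mathbb{R}^3$ perpendicular to $\ell$ and study the action of $\Gamma(\ell)$ through the homomorphism $\psi \colon \Gamma(\ell) \to \mathrm{Isom}(\mathbb{R}^2_\ell)$.

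Next, for each of the three planes I would determine $\Gamma(\ell)$, the image $G = \psi(\Gamma(\ell))$, and the kernel $\ker\psi$ (recording the latter explicitly, since it drives the negligibility arguments). For the principal axis $\ell = \langle(1,1,1)\rangle = \langle \mathbf{v}_1 \rangle$, the whole point group preserves $\langle\ell\rangle$, so $\Gamma(\ell) = \Gamma_5$ and $S = P(x+y+z=0)$; here $G$ acts as the symmetry group of a hexagonal lattice, with a wedge of angle $\pi/6$ as fundamental domain. For the directions $(1,-1,0)$ and $(1,-2,1)$ the group $\Gamma(\ell)$ is a proper subgroup and $G$ acts on $S$ with a rectangular or triangular fundamental domain. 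In every case I produce the exact convex compact fundamental polyhedron $P$ (displayed in the accompanying figures) and verify subproperness via Theorem \ref{poincare}; by Corollary \ref{corollary:cellulated} only the vertices of the resulting cellulation can have non-negligible strict stabilizers, and the cycles $P \cap [x]$ record which vertices lie in the same $\Gamma(\ell)$-orbit. This yields the six orbit representatives listed in $\mathcal{T}''$.

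The core computation is then the determination of $\overline{\Gamma}_v$ for each vertex, carried out with Lemma \ref{lemma:computess}. For the non-negligible vertices I would exhibit explicit isometries fixing the line pointwise whose linear parts are the claimed generators ($A, D, E, F$ and the generators of $D_6''$), check by inspection that they generate a group of the stated order, and then invoke Lemma \ref{lemma:computess}(1)---which bounds $|\overline{\Gamma}_v|$ by $|G_v|$---to force the exhibited subgroup to be everything. In particular, the central vertex $(\alpha,\alpha,\alpha)$ has $\pi(\overline{\Gamma}_v) = D_6''$: since $D_6''$ acts trivially on the $\ell(x=y=z)$ factor in the splitting $\mathbb{R}^3 = P(x+y+z=0) \times \ell(x=y=z)$ (see the description in \ref{subsubsection:pointgroupsmixed}), it fixes $(\alpha,\alpha,\alpha)$ pointwise, and $|D_6''| = 12 = |G_{(\alpha,\alpha,\alpha)}|$ forces equality. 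The isomorphisms $\langle A,D\rangle \cong \langle D,E\rangle \cong \langle E,F\rangle \cong D_2$ follow from a direct check that each pair consists of two commuting reflections (one verifies, e.g., $\det D = \det E = -1$ with trace $1$, so each is an involution, and that $AD=DA$).

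The main obstacle will be the negligibility arguments that trim the candidate vertices down to the six appearing in $\mathcal{T}''$. As in the earlier theorems, for each discarded vertex I must produce an element $\gamma \in \Gamma(\ell)$ that preserves the candidate line $r(\alpha)$ as a set but acts on it by a genuine half-period translation, $\gamma \cdot r(\alpha) = r(\alpha + \tfrac12)$, and then use the explicit description of $\ker\psi$ together with Lemma \ref{lemma:computess}(2) to verify that no kernel element reproduces this action on the line. This shows $\psi(\overline{\Gamma}_{r(\alpha)})$ is a proper subgroup of $G_{r(\alpha)}$, so $\overline{\Gamma}_{r(\alpha)}$ has order at most $2$, hence square-free order, making the vertex negligible in the sense of Definition \ref{definition:VCnegligible}. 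The bookkeeping is heaviest in the hexagonal plane $\mathbb{R}^2_{(\alpha,\alpha,\alpha)}$, where several vertices of the $\pi/6$-wedge must be shown negligible or identified with one another under $G$; keeping the identification of $\mathbb{R}^2_\ell$ with $S$ consistent with the $D_6^+$-action, and computing the six-fold-symmetric kernel correctly, is where the care is required.
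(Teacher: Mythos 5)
Your proposal is correct and follows essentially the same route as the paper's proof: reduce to the planes singled out by Proposition \ref{proposition:finiteT''}(3), exhibit the fundamental polyhedra and orbit structure for the image groups $G = \psi(\Gamma(\ell))$, pin down the strict stabilizers by exhibiting explicit generators and invoking Lemma \ref{lemma:computess}(1), and kill the remaining vertices via the half-period-translation argument with Lemma \ref{lemma:computess}(2). The only cosmetic difference is that the paper works out just the planes $\mathbb{R}^{2}_{(\alpha,\alpha,\alpha)}$ and $\mathbb{R}^{2}_{(\alpha,-\alpha,0)}$ in detail and leaves $\mathbb{R}^{2}_{(\alpha,-2\alpha,\alpha)}$ to the reader, whereas you propose treating all three.
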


\begin{proof}
We will consider only the planes $\mathbb{R}^{2}_{(\alpha,\alpha,\alpha)}$ and 
$\mathbb{R}^{2}_{(\alpha,-\alpha,0)}$, leaving $\mathbb{R}^{2}_{(\alpha,-2\alpha,\alpha)}$ 
to the reader.

\begin{figure}[!p]
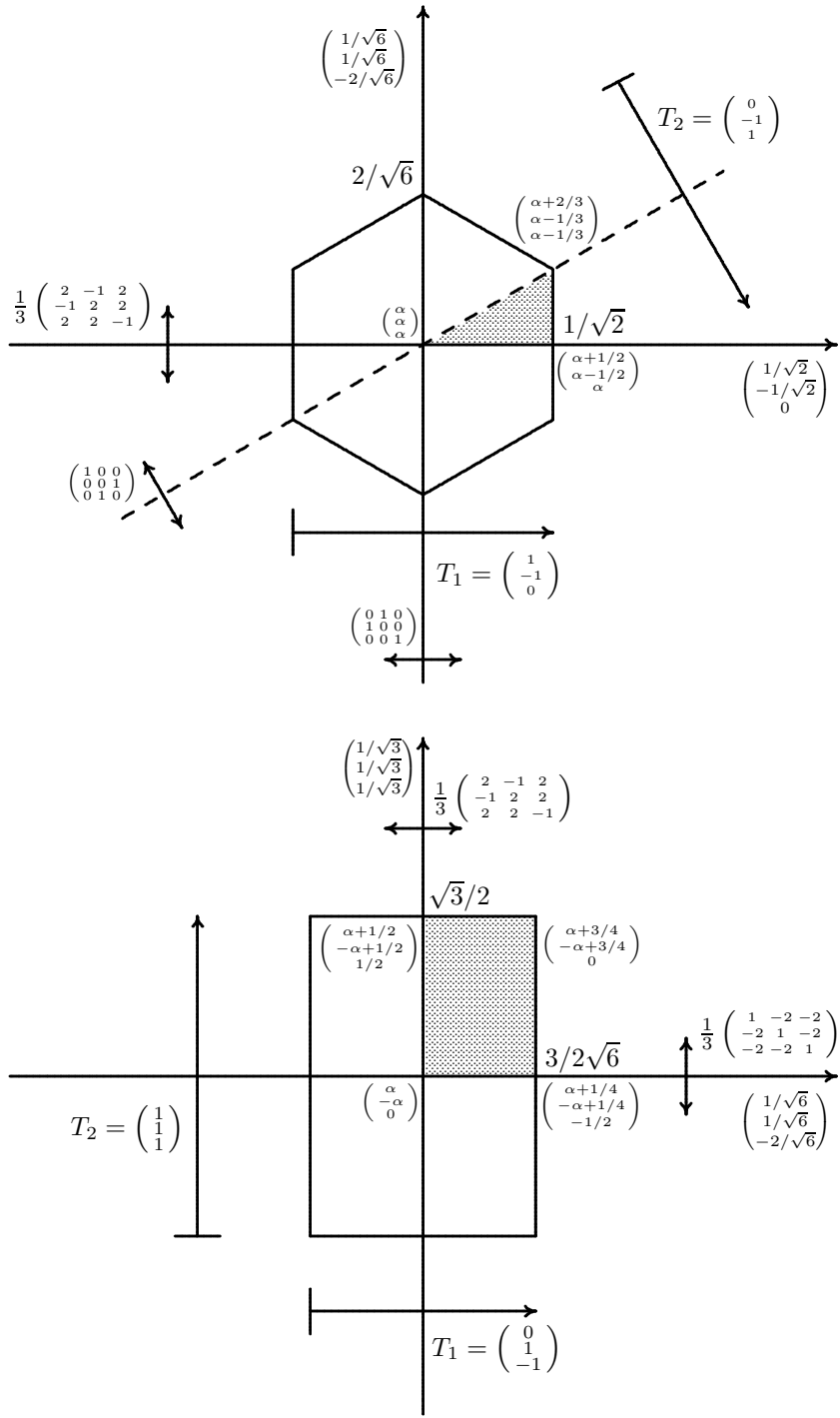

\begin{center}
\hbox{
\vbox{\beginpicture
\setcoordinatesystem units <1cm,1cm> point at -6 4.5
\setplotarea x from -6 to 6, y from -4.5 to 4.5
\linethickness=.7pt
%

 \def\myarrow{\arrow <4pt> [.2, 1]} 
\setplotsymbol ({\circle*{.4}})
\plotsymbolspacing=.3pt        
\myarrow from -5.5 0 to 5.5 0     
\myarrow from  0 -4.5  to  0 4.5 

\plot -1.732 1  0 2   1.732 1   1.732 -1   0 -2  -1.732 -1  -1.732 1 /  
\myarrow from -1.732 -2.5 to 1.732 -2.5
\plot -1.732 -2.2   -1.732 -2.8 /          
\put {$T_1 =  \tiny \left( \begin{smallmatrix} 1 \\ -1 \\ 0 \end{smallmatrix} \right)$} [t] at 1 -2.75      
\myarrow from  2.598 3.5   to  4.330 .5     
\plot 2.398 3.3946 2.798 3.6054 / 
\put {$T_2 = \tiny \left( \begin{smallmatrix} 0 \\ -1 \\ 1 \end{smallmatrix} \right)$} [l] at 3.1 3      
\myarrow from  -3.4 0 to -3.4 .5   
\myarrow from  -3.4 0 to -3.4 -.5

\myarrow from -3.464 -2 to -3.714 -1.567
\myarrow from -3.464 -2 to -3.214 -2.433

\myarrow from 0 -4.2 to .5 -4.2
\myarrow from 0 -4.2 to -.5 -4.2

\setdots <2pt>
\setdashes
\plot  -4 -2.31  4 2.31 /
\setsolid 
\put {\tiny $\left(\begin{matrix} 1/\sqrt{6} \\ 1/\sqrt{6} \\ -2/\sqrt{6}\end{matrix}\right)$} [r] at
-.2 3.8
\put {\tiny $\left(\begin{matrix} 1/\sqrt{2}  \\ -1/\sqrt{2} \\ 0 \end{matrix}\right)$} [t] at
4.8 -.2
\put {$1/\sqrt{2}$} [b] at 2.25 .07
\put {$2/\sqrt{6}$} [r] at -.15  2.25

\put { $\frac{1}{3} \tiny \left( \begin{smallmatrix}  2 & -1 & 2 \\ -1 & 2 & 2 \\ 2 & 2 & -1 \end{smallmatrix} \right)$} [r] at -3.6 .5
\put { \tiny $\left( \begin{smallmatrix} 1 & 0 & 0 \\ 0 & 0 & 1 \\ 0 & 1 & 0 \end{smallmatrix} \right)$} [b] at -4.35 -2.1
\put { \tiny $\left( \begin{smallmatrix} 0 & 1 & 0 \\ 1 & 0 & 0 \\ 0 & 0 & 1 \end{smallmatrix} \right)$} [t] at -.6 -3.5

\put { \tiny $\left( \begin{smallmatrix} \alpha \\ \alpha \\ \alpha \end{smallmatrix} \right)$} [r] at -.05 .3
\put { \tiny $\left( \begin{smallmatrix} \alpha + 1/2 \\ \alpha - 1/2 \\ \alpha \end{smallmatrix} \right)$} [t] at 2.25 -.1
\put { \tiny $\left( \begin{smallmatrix} \alpha + 2/3 \\ \alpha - 1/3 \\ \alpha - 1/3 \end{smallmatrix} \right)$} [b] at 1.7 1.35

\setshadegrid span <1pt>
\hshade 0 0 1.732   1 1.732 1.732 /  
\endpicture}
}

\vspace{20pt}


\hbox{
\vbox{\beginpicture
\setcoordinatesystem units <1cm,1cm> point at -6 4.5
\setplotarea x from -6 to 6, y from -4.5 to 4.5
\linethickness=.7pt

 \def\myarrow{\arrow <4pt> [.2, 1]} 
%
\setplotsymbol ({\circle*{.4}})
\plotsymbolspacing=.3pt        
\myarrow from -5.5 0 to 5.5 0     
\myarrow from   0 -4.5  to  0 4.5 

\plot -1.5 2.13   1.5 2.13  1.5 -2.13  -1.5 -2.13  -1.5 2.13 /  
\myarrow from -3 -2.13 to -3 2.13  
\plot -3.3 -2.13   -2.7 -2.13 /          
\put {$T_2 = \left( \begin{smallmatrix} 1 \\ 1 \\ 1 \end{smallmatrix} \right)$} [r] at -3.2 -.7       
\myarrow from  -1.5 -3.13 to 1.5 -3.13     
\plot -1.5 -2.83  -1.5 -3.43 /            
\put {$T_1 = \left( \begin{smallmatrix} 0 \\ 1 \\ -1 \end{smallmatrix} \right)$} [l] at .1 -3.63     

\put{\tiny $\left( \begin{smallmatrix}\alpha \\ -\alpha \\ 0 \end{smallmatrix} \right)$} [r] at -.05 -.35
\put{\tiny $\left( \begin{smallmatrix}\alpha+ 1/2 \\ -\alpha + 1/2 \\ 1/2 \end{smallmatrix} \right)$} [r] at -.05 1.7
\put{\tiny $\left( \begin{smallmatrix}\alpha + 1/4 \\ -\alpha + 1/4 \\ -1/2 \end{smallmatrix} \right)$} [r] at 2.9 -.4
\put{\tiny $\left( \begin{smallmatrix}\alpha+3/4 \\ -\alpha+3/4 \\ 0 \end{smallmatrix} \right)$} [r] at 2.9 1.75

\myarrow from  3.5 0 to 3.5 .5   
\myarrow from  3.5 0 to 3.5 -.5

\myarrow from  0 3.3 to  .5 3.3
\myarrow from  0 3.3 to  -.5 3.3

\setdots <2pt>
\put {\tiny $\left(\begin{matrix} 1/\sqrt{3} \\ 1/\sqrt{3} \\ 1/\sqrt{3} \end{matrix}\right)$} [r] at
-.1 4.1
\put {\tiny $\left(\begin{matrix} 1/\sqrt{6} \\ 1/\sqrt{6} \\ -2/\sqrt{6} \end{matrix}\right)$} [t] at
4.8 -.2
\put {$3/2\sqrt{6}$} [b] at 2.1 .05
\put {$\sqrt{3}/2$} [l] at .05 2.4

\put {$\frac{1}{3} \tiny \left(\begin{smallmatrix} 2 & -1 & 2 \\ -1 & 2 & 2 \\ 2 & 2 & -1 \end{smallmatrix} \right)$} [l] at .1  3.7
\put {$\frac{1}{3} \tiny \left(\begin{smallmatrix} 1 & -2 & -2 \\ -2 & 1 & -2 \\ -2 & -2 & 1 \end{smallmatrix} \right)$} [b] at 4.6 .25

\setshadegrid span <1pt>
\vshade 0 0 2.13   1.5 0 2.13 /  
\endpicture}
}
\end{center}
\caption{These are the planes $\mathbb{R}^{2}_{(\alpha,\alpha,\alpha)}$ and $\mathbb{R}^{2}_{(\alpha,-\alpha,0)}$
(respectively, from the top), with the associated actions from $\Gamma(\ell)$, where $\Gamma = \Gamma_{5}$.}
\label{figure:plane5,12} 
\end{figure}

\begin{figure}[!t]
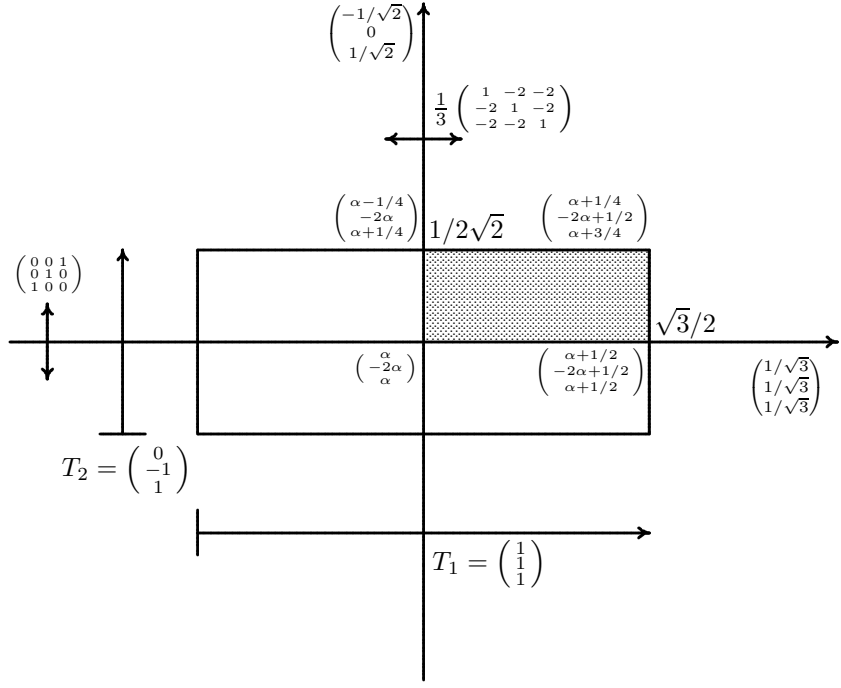

\begin{center}
\hbox{
\vbox{\beginpicture
\setcoordinatesystem units <1cm,1cm> point at -6 4.5
\setplotarea x from -6 to 6, y from -4.5 to 4.5
\linethickness=.7pt

 \def\myarrow{\arrow <4pt> [.2, 1]} 
%
\setplotsymbol ({\circle*{.4}})
\plotsymbolspacing=.3pt        
\myarrow from -5.5 0 to 5.5 0     
\myarrow from   0 -4.5  to  0 4.5 

\plot -3 1.224   3 1.224  3 -1.224  -3 -1.224  -3 1.224 /  
\myarrow from -4 -1.224 to -4 1.224  
\plot -4.3 -1.224  -3.7 -1.224 /          
\put {$T_2 = \left( \begin{smallmatrix} 0 \\ -1 \\ 1 \end{smallmatrix} \right)$} [r] at -3.1 -1.7      
\myarrow from  -3 -2.54 to 3 -2.54     
\plot -3 -2.84  -3 -2.24 /            
\put {$T_1 = \left( \begin{smallmatrix} 1 \\ 1 \\ 1 \end{smallmatrix} \right)$} [l] at .1 -2.95    

\put{\tiny $\left( \begin{smallmatrix}\alpha \\ -2\alpha \\ \alpha \end{smallmatrix} \right)$} [r] at -.1 -.35
\put{\tiny $\left( \begin{smallmatrix}\alpha- 1/4 \\ -2\alpha \\ \alpha + 1/4 \end{smallmatrix} \right)$} [r] at -.05 1.65
\put{\tiny $\left(\begin{smallmatrix}\alpha + 1/2 \\ -2\alpha + 1/2 \\ \alpha + 1/2 \end{smallmatrix}\right)$} [r] at 2.95 -.4
\put{\tiny $\left( \begin{smallmatrix}\alpha+1/4 \\ -2\alpha+1/2 \\ \alpha + 3/4 \end{smallmatrix} \right)$} [r] at 3 1.65

\myarrow from  -5 0 to -5 .5   
\myarrow from  -5 0 to -5 -.5

\myarrow from  0 2.7 to  .5 2.7
\myarrow from  0 2.7 to  -.5 2.7

\setdots <2pt>
\put {\tiny $\left(\begin{matrix} -1/\sqrt{2} \\ 0 \\ 1/\sqrt{2} \end{matrix}\right)$} [r] at
-.1 4.1
\put {\tiny $\left(\begin{matrix} 1/\sqrt{3} \\ 1/\sqrt{3} \\ 1/\sqrt{3} \end{matrix}\right)$} [t] at
4.8 -.2
\put {$\sqrt{3}/2$} [l] at 3.05 .25
\put {$1/2\sqrt{2}$} [l] at .05 1.5

\put {\tiny $\left(\begin{smallmatrix} 0 & 0 & 1 \\ 0 & 1 & 0 \\ 1 & 0 & 0 \end{smallmatrix} \right)$} [t] at -5 1.15
\put {$\frac{1}{3} \tiny \left(\begin{smallmatrix} 1 & -2 & -2 \\ -2 & 1 & -2 \\ -2 & -2 & 1 \end{smallmatrix} \right)$} [l] at .1 3.1

\setshadegrid span <1pt>
\vshade 0 0 1.224   3 0 1.224 /  
\endpicture}
}
\end{center}
\caption{This is the plane $\mathbb{R}^{2}_{(\alpha,-2\alpha,\alpha)}$ with its associated action
by $\Gamma(\alpha,-2\alpha,\alpha)$, where $\Gamma = \Gamma_{5}$.}
\label{figure:plane5,3}
\end{figure}

We note that $\mathbb{R}^{2}_{(\alpha,\alpha,\alpha)}$ is pictured at the top of Figure
\ref{figure:plane5,12} with its associated action by $\Gamma(\alpha,\alpha,\alpha)$. The image group $G$
is generated by reflections in the sides of the shaded triangle (whose interior angles are $\pi/6$, $\pi/3$, and $\pi/2$,
reading clockwise from the origin). In particular, $G$ is a Coxeter group, and all three vertices are distinct in the
quotient. It also follows that the stabilizers $G_{\ell}$ of the vertices have orders $12$, $6$, and $4$ (respectively, reading clockwise
from the origin). It follows from Lemma \ref{lemma:computess}(1) that the top corner of the fundamental domain 
has a negligible strict stabilizer (since the order divides $6$). It is also straightforward to check that 
$D_{6}'' \leq \overline{\Gamma}_{(\alpha,\alpha,\alpha)}$, and an application of Lemma \ref{lemma:computess}(1)
forces this inclusion to be an equality by counting. Next, we consider the vertex $(\alpha+1/2,\alpha-1/2,\alpha)$.
It is straightforward to check that $D$ and $T_{1}  + A$ are both in 
$\overline{\Gamma}_{(\alpha+1/2,\alpha-1/2,\alpha)}$, and that these isometries generate a group of order $4$.
It follows that  $\overline{\Gamma}_{(\alpha+1/2,\alpha-1/2,\alpha)} = \langle D, T_{1} + A \rangle$, so
that $\pi(\overline{\Gamma}_{(\alpha+1/2,\alpha-1/2,\alpha)}) = \langle D, A \rangle$, as claimed.

Finally, we consider the plane $\mathbb{R}^{2}_{(\alpha,-\alpha,0)}$, which is pictured at the bottom of
Figure \ref{figure:plane5,12}. We note that the image group $G$ is a Coxeter group, generated by reflections
in the sides of the shaded rectangle. It follows that all four corners are distinct in the quotient, and that the
stabilizers $G_{\ell}$ have order $4$, for each corner $\ell$ of the rectangle.

We first consider the two vertices along the vertical axis. Note first that $\langle D, E \rangle \leq 
\overline{\Gamma}_{(\alpha,-\alpha,0)}$, and equality is forced by Lemma \ref{lemma:computess}(1) since
the former group has order $4$. Similarly, one can show that
$\langle D, T_{2} + E \rangle \leq \overline{\Gamma}_{(\alpha+1/2,-\alpha+1/2,1/2)}$, and equality is again forced
by the same argument.

We consider the vertex $r(\alpha) = (\alpha + 1/4, -\alpha + 1/4, -1/2)$. We set $\gamma = T_{1} + D$, and note
that $\gamma \cdot r(\alpha) = r(\alpha - 1/2)$. It will follow that $g = \psi(\gamma) \notin 
\psi(\overline{\Gamma}_{(\alpha +1/4, -\alpha + 1/4, -1/2)})$ if there is no element of the kernel that acts
on $r(\alpha)$ in the same way. The kernel is the group
$$ \left\langle \left( \begin{smallmatrix} 0 & 1 & 0 \\ 1 & 0 & 0 \\ 0 & 0 & 1 \end{smallmatrix} \right),
\left( \begin{smallmatrix} 1 \\ -1 \\ 0 \end{smallmatrix} \right) \right\rangle;$$
these generators send $r(\alpha)$ to $r(-\alpha)$ and $r(\alpha + 1)$ (respectively). Therefore,
$$\overline{\Gamma}_{(\alpha+1/4, -\alpha +1/4, -1/2)}$$ is negligible. One argues that the remaining vertex is
negligible in the same way. 
\end{proof}

 \section{Maximal infinite virtually cyclic groups and cokernels of the relative assembly maps for $\vc_{\infty}$}\label{section:cokernels}
  
In this section, we will compute the contribution of the infinite virtually cyclic groups to the lower algebraic $K$-theory of the split $3$-dimensional crystallographic
groups. There are three steps. First, in Subsection \ref{subsection:passingtosubgroups}, 
we must determine the (non-negligible) strict stabilizers of lines $\ell$ relative to all $73$ split $3$-dimensional crystallographic groups. 
This amounts to finding the indexing set $\mathcal{T}''$ from the statement of Theorem \ref{theorem:splitting2}. In Section 
\ref{section:actionsonplanes}, we determined the indexing sets $\mathcal{T}''$ for
 the groups $\Gamma_{i}$ ($i = 1, \ldots, 5$). We will give
a procedure  (Procedure \ref{procedure:nilpart1}) that describes, for given split crystallographic groups $\Gamma$, $\Gamma'$ 
such that
$[\Gamma : \Gamma'] < \infty$, a suitable choice of $\mathcal{T}''$ for the group $\Gamma'$,
assuming that a choice of $\mathcal{T}''$ for $\Gamma$ is known to us. The procedure
is analogous to the one that we have already used in 
Section \ref{section:contributionoffinites} (see Procedure \ref{procedure:finitepart}), 
when we were determining vertex stabilizers in $E_{\fin}(\Gamma)$. We will also identify the strict stabilizer groups for all of the lines in 
question.

The next step is to rebuild the line stabilizer groups from their strict stabilizers. This is done in Subsection \ref{subsection:vcsubgroups} using 
Procedure \ref{procedure:computethestabilizer}. The final step is to determine the remaining factor from Theorem \ref{theorem:splitting2}:
$$\bigoplus_{ \widehat{\ell}  \in \mathcal{T}''}  H_n^{\Gamma_{\widehat{\ell}}}(E_{\fin}(\Gamma_{\widehat{\ell}}) \rightarrow  \ast;\;  \mathbb{KZ}^{-\infty}).$$
This will be done in Subsection \ref{subsection:cokernels}. The nontrivial cokernels are summarized in Table \ref{table:cokernels}.

\subsection{Passing to Subgroups} \label{subsection:passingtosubgroups}

In this subsection, we will list, for each split crystallographic group $\Gamma$,
a complete set $\mathcal{T}''$ of orbit representatives  of lines $\ell$ with non-negligible strict stabilizer 
groups. We will also describe the isomorphism types of the latter groups.
In Subsection \ref{subsection:finiteT''}, we saw such
lists for the groups $\Gamma_{i}$ ($i=1, \ldots, 5$). Our method of computing the strict 
stabilizer groups of lines for the remaining groups 
(Procedure \ref{procedure:nilpart1} below) involves passing to finite index
subgroups and generally follows the pattern of
Procedure \ref{procedure:finitepart}. In Subsection \ref{subsection:vcsubgroups}, we
will describe how to compute the stabilizer group of a line from its strict stabilizer.

\begin{procedure} \label{procedure:nilpart1}
Let $\Gamma = \langle L, H \rangle$ and $\Gamma' = \langle L', H' \rangle$ be split crystallographic groups, where $\Gamma'$ has finite index in $\Gamma$. Let $\mathcal{L}(\Gamma)$ denote a set of lines
$\ell \subseteq \mathbb{R}^{3}$ that contains exactly one line $\ell$ from each $\Gamma$-orbit
of lines $\widehat{\ell}$ with non-negligible strict stabilizer $\overline{\Gamma}_{\widehat{\ell}}$. 
We describe a procedure that computes an analogous set $\mathcal{L}(\Gamma')$ for
$\Gamma'$.
\begin{enumerate}
\item Let $T$ be a finite right transversal for $\Gamma'$ in $\Gamma$. First choose
one line from each $\Gamma'$-orbit of (unparametrized) lines in $T \cdot \mathcal{L}(\Gamma)$. Denote the
resulting set $\mathcal{L}'(\Gamma')$.
\item For each $\ell \in \mathcal{L}'(\Gamma')$, compute the strict stabilizer 
$\overline{\Gamma}'_{\ell}$, which can be done as follows. Assume that $\ell$ admits
the parametrization $r(\alpha) = t + \alpha v$. We have the equality $\pi(\overline{\Gamma}'_{\ell}) =
\{ h' \in H'_{v} \mid  t - h'(t) \in L' \}$, where $\pi: \Gamma' \rightarrow H'$ is the usual
projection to the point group. We recall that $\pi: \overline{\Gamma}'_{\ell} \rightarrow \pi(\overline{\Gamma}'_{\ell})$ is an isomorphism. 
\item We eliminate a line $\ell$ from the list $\mathcal{L}'(\Gamma')$ if its strict stabilizer
is \emph{negligible}. The resulting list is $\mathcal{L}(\Gamma')$.
\end{enumerate}
\end{procedure}

Next, we illustrate this procedure in a few representative examples, and summarize the results in Tables \ref{table:ssgamma1}, \ref{table:ssgamma2}, \ref{table:ssgamma3}, \ref{table:ssgamma4}, and \ref{table:ssgamma5}. We note that the matrices $A$, $B$, and $C$ from those tables are the same as the ones from Theorem \ref{theorem:gamma2T},
and $D$, $E$, and $F$ are the same as the ones from Theorem \ref{theorem:gamma5T}.

\begin{example} \label{sspass1}
We first let $\Gamma = \Gamma_{1}$ and $\Gamma' = 
\langle L, D_{4}^{+} \times (-1) \rangle$, where
$L$ is the standard cubical lattice. It is not difficult to see that $T=C_{3}^{+}$ is a 
right transversal for $\Gamma'$ in $\Gamma$. Recall that a set 
$\mathcal{L}(\Gamma)$ was computed in Theorem \ref{theorem:gamma1T};
the results of that computation are summarized in the top row of Table \ref{table:ssgamma1}. Following
Procedure \ref{procedure:nilpart1}, we apply $T$ to the five parametrized lines
in the top row of Table \ref{table:ssgamma1}. This results in a list of $15$ lines (which are obtained by cyclically permuting the entries of the original $5$, by the description of
$C_{3}^{+}$ in Theorem \ref{class+}). Since the permutation matrix which swaps the
first two coordinates is in $D_{4}^{+} \times (-1)$, it is easy to check that we can eliminate
$4$ lines from our list, since these $4$ are in the same $\Gamma'$-orbits as some 
of the other $11$. One possible choice of the remaining $11$ lines is as follows:
$$ \left(\begin{smallmatrix} \ast \\ \ast \\ \alpha \end{smallmatrix} \right), 
\left(\begin{smallmatrix} \alpha \\ \ast \\ \ast \end{smallmatrix} \right),
\left(\begin{smallmatrix} \alpha \\ \alpha \\ \ast \end{smallmatrix} \right),
\left(\begin{smallmatrix} \ast \\ \alpha \\ \alpha \end{smallmatrix} \right),
\left(\begin{smallmatrix} \alpha \\ 1/2 \\ 0 \end{smallmatrix} \right),
\left(\begin{smallmatrix}  1/2 \\ 0 \\ \alpha \end{smallmatrix} \right),
\left(\begin{smallmatrix}  0 \\ \alpha \\ 1/2 \end{smallmatrix} \right),
$$
where $\ast \in \{ 0, 1/2 \}$ (and must be one or the other, not both within one vector).
It is now straightforward to check that no two of the above lines are in the same 
$\Gamma'$-orbit. We may therefore let the above collection be $\mathcal{L}'(\Gamma')$.

Next, we compute the strict stabilizers of each of the new lines. Set 
$H = D_{4}^{+} \times (-1)$. For the vectors $v = \mathbf{x}, \mathbf{y}, \mathbf{z},
\mathbf{x} + \mathbf{y}, \mathbf{y} + \mathbf{z}$, we have 
$$H_{v} = D'_{2_{1}}, \quad D'_{2_{2}}, \quad D''_{4}, \quad \langle A, B \rangle, 
\quad \left\langle
\left(\begin{smallmatrix} -1 & 0 & 0 \\ 0 & 1 & 0 \\ 0 & 0 & 1 \end{smallmatrix}\right) 
\right\rangle,$$
respectively. The last group has order $2$, and it follows that each line
of the form $(\ast, \alpha, \alpha)$ is negligible. We note that the strict stabilizer $\overline{\Gamma}'_{(1/2,0,\alpha)}$ has order at most $4$, since
$|\overline{\Gamma}'_{(1/2,0,\alpha)}|$ divides $|H_{v}| = 8$, and $A \not \in \pi(\overline{\Gamma}'_{(1/2,0,\alpha)})$. One can check that
$D'_{2} \leq \pi(\overline{\Gamma}'_{(1/2,0,\alpha)})$, so equality must hold. It is straightforward to check that
each of the remaining parametrized lines $r(\alpha) = t + \alpha v$ satisfies 
$\pi(\overline{\Gamma}'_{\ell}) = H_{v}$. 

It now follows that the remaining $9$ lines form $\mathcal{L}(\Gamma')$. The strict 
stabilizer groups of these lines (as computed above) are described in the relevant line
of Table \ref{table:ssgamma1}.
\end{example}

\begin{table}
\renewcommand{\arraystretch}{1.3}
\begin{equation*}
\footnotesize\begin{array}{ | c | c | c | c | c | c | c |c |} \hline
H & D^{''}_4 & D^{'}_2 & D^{'}_{2_1} &  D^{'}_{2_2} & C^{+}_4 & \langle A, B \rangle & \langle A, C \rangle \\ \hline \hline
S^{+}_{4} \times (-1) & (0,  0, \alpha)  &( \frac{1}{2},  0, \alpha)  & & & & (\alpha, \alpha, 0) &  \\ 
& (\frac{1}{2}, \frac{1}{2}, \alpha) & & & &  & (\alpha, \alpha, \frac{1}{2}) &  \\ \hline
S^{+}_{4} & & & & &  (0, 0,\alpha)& & \\ 
& & & & & (\frac{1}{2}, \frac{1}{2}, \alpha) & &  \\ \hline
S'_{4} & & & & & & &(0,0, \alpha)  \\
& & & & & & &  (\frac{1}{2}, \frac{1}{2}, \alpha)  \\ \hline
A^{+}_{4} \times (-1)& & (0, 0,\alpha) & & & & &  \\
 & & (\frac{1}{2}, 0, \alpha) & & & & &  \\ 
 & & (0, \frac{1}{2}, \alpha) & & & & &  \\ 
 & & (\frac{1}{2},  \frac{1}{2}, \alpha) & & & & &  \\  \hline
D^{+}_{4} \times (-1) & (0, 0, \alpha) &  (\frac{1}{2},  0, \alpha)&  (\alpha, 0, 0)&   (0, \alpha, \frac{1}{2}) & & (\alpha, \alpha, 0) &\\
& (\frac{1}{2},  \frac{1}{2}, \alpha) & & (\alpha,  \frac{1}{2}, 0)& & &(\alpha, \alpha,  \frac{1}{2}) &\\
& & & (\alpha,  \frac{1}{2},  \frac{1}{2}) & & & &\\ \hline
D^{+}_{4} & & & & & (0, 0, \alpha) & &  \\
& & & & & (\frac{1}{2},  \frac{1}{2}, \alpha) & &  \\ \hline
C^{+}_{4} \times (-1) & & & & & (0, 0, \alpha) & &  \\
& & & & & (\frac{1}{2},  \frac{1}{2}, \alpha) & &   \\ \hline
C^{+}_{4}  & & & & & (0, 0, \alpha) & &  \\
& & & & & (\frac{1}{2},  \frac{1}{2}, \alpha) & & \\ \hline
D^{+}_{2} \times (-1) & &  (0, 0, \alpha)& (\alpha, 0,0) &(0, \alpha, 0) & & & \\ 
& &  (0,  \frac{1}{2}, \alpha)& (\alpha,  \frac{1}{2}, 0) &(0, \alpha,  \frac{1}{2}) & & & \\ 
& &  ( \frac{1}{2},  \frac{1}{2}, \alpha)& (\alpha, 0,  \frac{1}{2}) &( \frac{1}{2}, \alpha, 0) & & & \\ 
& &  ( \frac{1}{2},  0, \alpha)& (\alpha, \frac{1}{2} ,  \frac{1}{2}) &( \frac{1}{2}, \alpha, \frac{1}{2}) & & & \\  \hline
D'_{2} & &  (0, 0, \alpha)& & & & &  \\ 
& &  (\frac{1}{2}, 0, \alpha)& & & & &  \\ 
& &  (0, \frac{1}{2}, \alpha)& & & & &  \\ 
& &  (\frac{1}{2}, \frac{1}{2}, \alpha)& & & & &  \\ \hline
D'_{4}& & & & & & & (0, 0, \alpha)\\
& & & & & & & (\frac{1}{2}, \frac{1}{2}, \alpha) \\ \hline
\widehat{D}'_{4} & & (0, 0, \alpha)& & & & & \\ 
& &(\frac{1}{2}, \frac{1}{2}, \alpha)  & & & & & \\
& &(\frac{1}{2}, 0, \alpha) & & & & & \\ \hline
D''_{4} & (0,0,\alpha)& ( \frac{1}{2}, 0, \alpha)  & & & & & \\ 
& (\frac{1}{2}, \frac{1}{2}, \alpha) & & & & & & \\  \hline
 \end{array}
 \end{equation*}
\caption{The entries in the table are parametrized lines; if a line $\ell$
appears in the row and column labelled (respectively) $H_{1}$ and $H_{2}$, then $\pi(\overline{\Gamma}_{\ell}) = H_{2}$, where $\Gamma = \langle L, H_{1} \rangle$ and $L$
is the standard cubical lattice.}
\label{table:ssgamma1}
\end{table}

\begin{table}
\footnotesize
\renewcommand{\arraystretch}{1.3}
\begin{equation*}
\begin{array}{ | c | c | c | c | c | c | c |c |} \hline
H & D^{''}_4 & D^{'}_2 & D^{'}_{2_1} &  D^{'}_{2_2} & C^{+}_4 & \langle A, B \rangle & \langle A, C \rangle\\ \hline \hline
S^{+}_{4} \times (-1) & (0,  0, \alpha)  &( \frac{1}{2},  0, \alpha)  & & & & (\alpha, \alpha, 0) & \\ \hline
S'_{4} & & & & & & &(0,0, \alpha)  \\ \hline
A^{+}_{4} \times (-1)& & (0, 0,\alpha) & & &  &  & \\
& & (\frac{1}{2}, 0, \alpha) & & & & &  \\ \hline
D''_{4} & (0,0,\alpha)& ( \frac{1}{2}, 0, \alpha)  & & & & & \\  \hline
S^{+}_{4} & & & & & (0, 0, \alpha) & &  \\ \hline
D^{+}_{4} \times (-1) & (0, 0, \alpha) &  (\frac{1}{2},  0, \alpha)& &   (0, \alpha, \frac{1}{2}) & & (\alpha, \alpha, 0) & \\
&& & & (0, \alpha, 0) & & & \\ \hline
D^{+}_{4} & & & & & (0, 0, \alpha) & &  \\ \hline
C^{+}_{4} \times (-1) & & & & & (0, 0, \alpha) & &  \\ \hline
C^{+}_{4}  & & & & & (0, 0, \alpha) & &  \\ \hline
D^{+}_{2} \times (-1) & &  (0, 0, \alpha)& (\alpha, 0,0) &(0, \alpha, 0) & & & \\ 
& &  ( \frac{1}{2}, 0, \alpha)& (\alpha,  \frac{1}{2}, 0) &(0, \alpha,  \frac{1}{2}) & & &\\ \hline
D'_{4}& & & & & & & (0, 0, \alpha) \\ \hline
\widehat{D}'_{4} & & (0, 0, \alpha)& & & & & \\ 
& &(\frac{1}{2}, 0, \alpha)  & & & & & \\ \hline
D'_{2} & &  (0, 0, \alpha)& & & & & \\ 
& &  (\frac{1}{2}, 0, \alpha)& & & & &\\ \hline
\end{array}
\end{equation*}
\caption{The entries in the table are parametrized lines; if a line $\ell$
appears in the row and column labelled (respectively) $H_{1}$ and $H_{2}$, then $\pi(\overline{\Gamma}_{\ell}) = H_{2}$, where $\Gamma = \langle L, H_{1} \rangle$ and $L$
is the lattice $\langle \frac{1}{2}(\mathbf{x} + \mathbf{y} + \mathbf{z}), \mathbf{y},
\mathbf{z} \rangle$.}
\label{table:ssgamma2}
\end{table}

\begin{table}
\footnotesize
\renewcommand{\arraystretch}{1.3}
\begin{equation*}
\begin{array}{ | c | c | c | c | c | c | c |c |} \hline
H & D^{''}_4 & D^{'}_2 & D^{'}_{2_1} &  D^{'}_{2_2} & C^{+}_4 & \langle A, B \rangle & \langle A, C \rangle \\ \hline \hline
S^{+}_{4} \times (-1) & (0,  0, \alpha)  &  & & & & (\alpha, \alpha, 0) & (\frac{1}{4}, \frac{1}{4}, \alpha) \\ 
& & & & &  & (\alpha, \alpha, \frac{1}{2}) &  \\ \hline
S^{+}_{4} & & & & &  (0, 0,\alpha)& & \\ \hline
S'_{4} & & & & & & &(0,0, \alpha)  \\
& & & & & & &  (\frac{1}{4}, \frac{1}{4}, \alpha)  \\ \hline
A^{+}_{4} \times (-1)& & (0, 0,\alpha) & & & & &  \\ \hline
D'_{2} & &  (0, 0, \alpha)& & & & &  \\  \hline
D^{+}_{2} \times (-1) & &  (0, 0, \alpha)& (\alpha, 0,0) &(0, \alpha, 0) & & & \\ \hline
\end{array}
\end{equation*}
\caption{The entries in the table are parametrized lines; if a line $\ell$
appears in the row and column labelled (respectively) $H_{1}$ and $H_{2}$, then $\pi(\overline{\Gamma}_{\ell}) = H_{2}$, where $\Gamma = \langle L, H_{1} \rangle$ and $L$
is the lattice $\langle \frac{1}{2}(\mathbf{x} + \mathbf{y}),  \frac{1}{2}(\mathbf{x} + \mathbf{z}),
\frac{1}{2}(\mathbf{y} + \mathbf{z}) \rangle$.}
\label{table:ssgamma3}
\end{table}

\begin{table}
\footnotesize
\renewcommand{\arraystretch}{1.3}
\begin{equation*}
\begin{array}{ | c | c | c | c |} \hline
H & D^{'}_2 & D^{'}_{2_1} &  D^{'}_{2_2}  \\ \hline \hline
D^{+}_{2} \times (-1) &   (0, 0, \alpha)& (\alpha, 0,0) &(0, \alpha, 0)  \\ 
&    (0, \frac{1}{2}, \alpha)& (\alpha, \frac{1}{2},0) &(0, \alpha, \frac{1}{2})  \\ \hline
D'_{2} &   (0, 0, \alpha)& &  \\  
& (0,\frac{1}{2},\alpha) &  & \\ \hline 
D'_{2_{2}}  & & & (0, \alpha, 0)  \\
& & &  (0, \alpha, \frac{1}{2})  \\ \hline 
\end{array}
\end{equation*}
\caption{The entries in the table are parametrized lines; if a line $\ell$
appears in the row and column labelled (respectively) $H_{1}$ and $H_{2}$, then $\pi(\overline{\Gamma}_{\ell}) = H_{2}$, where $\Gamma = \langle L, H_{1} \rangle$ and $L$
is the lattice $\langle \frac{1}{2}(\mathbf{x} + \mathbf{z}),  \mathbf{y},
 \mathbf{z} \rangle$.}
\label{table:ssgamma4}
\end{table}

\begin{table}
\footnotesize
\renewcommand{\arraystretch}{1.3}
\[
\begin{array}{ | c | c | c | c | c |} \hline
H & D^{''}_6 & \langle A, D \rangle & \langle D, E \rangle & \langle E, F \rangle\\ \hline \hline
D^{+}_{6} \times (-1) & (\alpha, \alpha, \alpha)&  (\alpha +\frac{1}{2}, \alpha -\frac{1}{2}, \alpha) & (\alpha, -\alpha, 0) &(\alpha, -2\alpha, \alpha)\\ 
& & & (\alpha+\frac{1}{2}, -\alpha+\frac{1}{2}, \frac{1}{2}) &(\alpha+\frac{1}{2}, -2\alpha+\frac{1}{2}, \alpha+ \frac{1}{2}) \\ \hline
D^{''}_6& (\alpha, \alpha, \alpha) & (\alpha+ \frac{1}{2}, \alpha-\frac{1}{2}, \alpha) & & \\ \hline
D^{'}_6&  & & & (\alpha, -2\alpha, \alpha) \\ 
& &  & & (\alpha +\frac{1}{2}, -2\alpha +\frac{1}{2}, \alpha+\frac{1}{2}) \\ \hline
\widehat{D}^{'}_6&  & &  (\alpha, -\alpha, 0) &\\ 
& &  &  (\alpha +\frac{1}{2}, -\alpha +\frac{1}{2}, \frac{1}{2}) & \\ \hline
\end{array}
\]
\caption{The entries in the table are parametrized lines; if a line $\ell$
appears in the row and column labelled (respectively) $H_{1}$ and $H_{2}$, then $\pi(\overline{\Gamma}_{\ell}) = H_{2}$, where $\Gamma = \langle L, H_{1} \rangle$ and $L$
is the lattice $\langle \mathbf{v}_{1}, \mathbf{v}_{2}, \mathbf{v}_{3} \rangle$.}
\label{table:ssgamma5}
\end{table}

\begin{example} \label{sspass2}
Now we let $\Gamma = \langle L, D_{4}^{+} \times (-1)\rangle$ and 
$\Gamma' = \langle L, D'_{4} \rangle$, where $L$ is still the standard cubical lattice. 
We can take the group generated by the antipodal
map as $T$. Applying the antipodal map to the nine lines of $\mathcal{L}(\Gamma)$
from Example \ref{sspass1}, 
we get six new ones (three of the lines are invariant under the antipodal map). It is
straightforward to check that these six new lines are all in $\Gamma'$-orbits of the
original nine. For instance, $(-\alpha, -1/2, 0)$ (one of the new lines) is clearly
in the $\Gamma'$-orbit of $(-\alpha, 1/2, 0) = (\alpha, 1/2, 0) \in \mathcal{L}(\Gamma)$.
The five remaining lines are redundant by similar (easy) calculations. It follows that we can 
take $\mathcal{L}'(\Gamma') = \mathcal{L}(\Gamma)$.

Next, we compute the strict stabilizer $\overline{\Gamma}'_{\ell}$ of each 
$\ell \in \mathcal{L}'(\Gamma')$. Recall that each element $M \in D'_{4}$ can be factored
as $M= SP$, where $P$ is a permutation matrix that fixes the last coordinate and $S$
is a diagonal matrix with an even number of $-1$s on the diagonal, the remaining diagonal
entries being $1$. We let $H = D'_{4}$. We need to find the stabilizer groups $H_{v}$
for $v = \mathbf{x}, \mathbf{y}, \mathbf{z}, \mathbf{x} + \mathbf{y}$; by the above description
of $D'_{4}$, these groups are
$$\left\langle \left(\begin{smallmatrix} 1 & 0 & 0 \\ 0 & -1 & 0 \\ 0 & 0 & -1 \end{smallmatrix}
\right) \right\rangle, \quad 
\left\langle \left(\begin{smallmatrix} -1 & 0 & 0 \\ 0 & 1 & 0 \\ 0 & 0 & -1 \end{smallmatrix}
\right) \right\rangle, \quad
\left\langle \left(\begin{smallmatrix} 0 & 1 & 0 \\ 1 & 0 & 0 \\ 0 & 0 & 1 \end{smallmatrix}
\right),  \left(\begin{smallmatrix} -1 & 0 & 0 \\ 0 & -1 & 0 \\ 0 & 0 & 1 \end{smallmatrix}
\right)\right\rangle, \quad
\left\langle \left(\begin{smallmatrix} 0 & 1 & 0 \\ 1 & 0 & 0 \\ 0 & 0 & 1 \end{smallmatrix}
\right) \right\rangle,$$
respectively. It follows directly that all of the lines except possibly
$$(0, 0, \alpha), \quad (1/2, 1/2, \alpha), \quad (1/2, 0, \alpha)$$
are negligible. It is straightforward to check that the first two lines $\ell$
satisfy $\pi(\overline{\Gamma}'_{\ell}) = H_{\mathbf{z}}$. For the final line $\ell$,
we have $|\pi(\overline{\Gamma}'_{\ell})| = 2$, so the strict stabilizer of $\ell$ is 
negligible.

This implies that $\mathcal{L}(\Gamma') = \{(0,0,\alpha), (1/2,1/2,\alpha) \}$. The strict
stabilizer groups of the latter lines are recorded in Table \ref{table:ssgamma1}.
\end{example}

\begin{example} \label{sspass3}
Now we let $\Gamma = \langle L, D_{4}^{+} \times (-1)\rangle$ and 
$\Gamma' = \langle L, \widehat{D}'_{4} \rangle$, where $L$ is the standard cubical lattice. 
We can again choose the group generated by the antipodal map as $T$. Exactly the
same calculation as in Example \ref{sspass2} shows that we arrive at the same
set of nine lines $\mathcal{L'}(\Gamma')$.

Set $H = \widehat{D}'_{4}$.  Recall that
$$\widehat{D}'_{4} = \left\langle \left(\begin{smallmatrix} 0&1&0 \\1&0&0\\0&0&-1 \end{smallmatrix}\right), \left(\begin{smallmatrix} 1&0&0 \\0&-1&0\\0&0&1 \end{smallmatrix}\right) \right\rangle.$$
One can check (by listing all of the matrices in $\widehat{D}'_{4}$), that $D'_{2} \leq \widehat{D}'_{4}$, and that the remaining matrices can all be expressed in the form
$M = SP$, where $P$ is the permutation matrix that interchanges the first two coordinates,
and $S$ is a diagonal matrix with $1$s and $-1$s down the diagonal, but with a $-1$
in the bottom corner. With this description, it is easy to check that
 the stabilizer groups $H_{v}$
for $v = \mathbf{x}, \mathbf{y}, \mathbf{z}, \mathbf{x} + \mathbf{y}$ satisfy
$$H_{v} = \left\langle \left(\begin{smallmatrix} 1&0&0\\0&-1&0\\0&0&1 \end{smallmatrix}
\right) \right\rangle, \quad 
\left\langle \left(\begin{smallmatrix} -1&0&0\\0&1&0\\0&0&1 \end{smallmatrix}
\right) \right\rangle, \quad 
D'_{2}, \quad
\left\langle \left(\begin{smallmatrix} 0&1&0\\1&0&0\\0&0&-1 \end{smallmatrix}
\right) \right\rangle,
$$
respectively. It follows directly that a line $\ell \in \mathcal{L}'(\Gamma')$ has 
a negligible strict stabilizer group unless it has $\mathbf{z}$ as a tangent vector.
Thus, 
$$\mathcal{L}(\Gamma') = \{ (0,0,\alpha), (1/2,1/2,\alpha), (1/2,0,\alpha)\}.$$
Finally, an easy check shows that $\pi(\overline{\Gamma}'_{\ell}) = D'_{2}$ for each 
of the latter three lines, confirming the entries in the relevant row of
Table \ref{table:ssgamma1}.
\end{example}

\subsection{Reconstructing $\Gamma_{\ell}$ from $\overline{\Gamma}_{\ell}$} \label{subsection:vcsubgroups}

Let $\Gamma = \langle L, H \rangle$.
Assume that $\ell$ is one of the parametrized lines from Table \ref{table:ssgamma1},
\ref{table:ssgamma2}, \ref{table:ssgamma3}, \ref{table:ssgamma4}, or \ref{table:ssgamma5}.
We now describe a procedure for computing the
stabilizer group $\Gamma_{\ell}$ of $\ell$ from its strict stabilizer 
$\overline{\Gamma}_{\ell}$ (which we computed in 
Subsection \ref{subsection:passingtosubgroups}). 

\begin{procedure} \label{procedure:computethestabilizer}
We may assume that $\ell$ has a
parametrization of the form $r(\alpha) = \frac{1}{2}t + \alpha v$, where $t, v \in L$,
$t \perp v$, and $\beta v \notin L$ for $\beta \in (0,1)$, since
all of the parametrized lines in our tables have this form. 

\begin{enumerate}
\item Find the smallest positive 
constant $C$ such that there is $\gamma_{T} \in \Gamma$ with
the property that $\gamma_{T} \cdot r(\alpha) = r(\alpha + C)$. (The isometry $\gamma_{T}$ is the translation
that acts on $\ell$ with minimal translation length.) 
A smallest $C$ always exists, and we can find both $C$ and 
$\gamma_{T}$ as follows:
\begin{enumerate}
\item Compute the set
$$ A_{v, L} = \left\{ \frac {v_{1} \cdot v} {v \cdot v} \mid v_{1} \in L \right\}.$$
This set is a finitely generated subgroup of the rational numbers, and
therefore cyclic. Let $q > 0$ be a generator. Note that $1 \in A_{v, L}$, so $q$ is the 
reciprocal of a positive integer.
\item Determine the smallest number $C$ 
in the sequence $q$, $2q$, $3q$, $\ldots$, such that there is $h \in H$ satisfying: $h(v) = v$ and $\frac{1}{2}(t-h(t)) + Cv \in L$. 
\item We set 
$$\gamma_{T} = \frac{1}{2}(t-h(t)) + Cv + h.$$
\end{enumerate}
We note that this entire step becomes trivial to perform
 if $t=0$ or if $A_{v, L}$ is the set of integers,
for then $C=1$ is forced and we can take $\gamma_{T} = v$ (i.e., the translation by $v$). In either of these cases, we will
call this step \emph{easy}.
\item Determine whether some isometry $\gamma_{R} \in \Gamma$ acts as a reflection
on the parametrized line $r(\alpha)$; that is, determine whether there is some 
isometry $\gamma_{R} \in \Gamma$ and $D \in \mathbb{R}$ such that
$\gamma_{R} \cdot r(\alpha) = r(D - \alpha)$. 
 This amounts to doing the following. Fix a basis $b_{1}, b_{2}, b_{3}$ for $L$ as a free
abelian group. For each $h \in H$ such that $h(v) = -v$, determine whether the
equation
$$ c_{1}b_{1} + c_{2}b_{2} + c_{3}b_{3} = \frac{1}{2}(t - h(t)) + Dv$$
has a solution for integers $c_{i}$ ($i =1, 2, 3$) and $D \in \mathbb{R}$. If the solution exists
for some $h \in H$,  then we
set 
$$ \gamma_{R} = \left( \frac{1}{2}(t -h(t)) + Dv \right) + h.$$

We note that this step becomes trivial to perform if there is no $h \in H$ such that 
$h(v) = -v$ (in which case there can exist no $\gamma_{R}$), or if there is $h \in H$
such that $h(v) = -v$ and $h(t) = \pm t$ (in which case we can set $D=0$ and let
$\gamma_{R} = h$ or $t + h$, respectively). Note, in particular, that the latter conditions are always satisfied if
$(-1) \in H$. We call all of these cases \emph{easy}.

\item There are two possible outcomes:
\begin{enumerate}
\item If there is no reflection $\gamma_{R}$ as above, then 
$\Gamma_{\ell} = \langle \overline{\Gamma}_{\ell}, \gamma_{T} \rangle$, and we have the
isomorphism
$$ \Gamma_{\ell} \cong \pi(\overline{\Gamma}_{\ell}) \rtimes_{\phi} \mathbb{Z},$$
where the action is conjugation by $\pi(\gamma_{T})$ and $\pi$ is the usual projection
into the point group $H$.
\item If there is a reflection $\gamma_{R}$, then we get a free product with amalgamation:
$$\Gamma_{\ell} = \langle \overline{\Gamma}_{\ell}, \gamma_{R} \rangle 
\ast_{\overline{\Gamma}_{\ell}} \langle \overline{\Gamma}_{\ell}, 
\gamma_{T}\gamma_{R} \rangle. $$
\end{enumerate}
We can determine the abstract isomorphism type of $\Gamma_{\ell}$ by applying 
the projection $\pi$ to the factors and the amalgamated subgroup.

\end{enumerate}
\end{procedure}

\begin{lemma} \label{lemma:valid}
Procedure \ref{procedure:computethestabilizer} is valid, and its steps can
be performed algorithmically.
\end{lemma}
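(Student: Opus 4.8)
The goal is to prove Lemma \ref{lemma:valid}: that Procedure \ref{procedure:computethestabilizer} is valid (it correctly produces $\Gamma_{\ell}$ and identifies its isomorphism type) and that each step is effectively computable. Let me sketch how I would organize the argument.

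\medskip

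The plan is to verify correctness step-by-step, in the same order as the procedure, and to check algorithmic feasibility alongside each step. First I would address Step (1). The key structural fact is that $\Gamma_{\ell}$ maps to $\mathrm{Isom}(\ell) \cong \mathrm{Isom}(\mathbb{R})$ by restriction, with kernel exactly $\overline{\Gamma}_{\ell}$; the image consists of translations and reflections of the line. I would show the translation subgroup of the image is infinite cyclic: it is discrete (since $\Gamma$ acts properly discontinuously and $\overline{\Gamma}_{\ell}$ is finite by Lemma \ref{pointstab}(2), so $\Gamma_{\ell}$ is virtually cyclic) and nontrivial (the translation by $v$, or an appropriate power, lies in $L \leq \Gamma$ and acts as a translation along $\ell$). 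For the effective computation of the minimal translation length $C$, I would justify that $A_{v,L}$ is a finitely generated subgroup of $\mathbb{Q}$: each generator $b_i$ of $L$ contributes $\frac{b_i \cdot v}{v \cdot v} \in \mathbb{Q}$ (rational because $v \in L$ has rational, indeed the relevant quotients are rational), so $A_{v,L}$ is cyclic with a computable positive generator $q$. Then I would confirm that $\gamma_T$ exists with translation length a positive integer multiple of $q$: an isometry $\gamma = w + h \in \Gamma$ translates $\ell$ by $C$ exactly when $h(v) = v$ and $w = \frac{1}{2}(t - h(t)) + Cv$ lies in $L$, and since there are finitely many $h \in H$ to test and the condition on $w$ reduces to a lattice-membership check for each candidate $C = q, 2q, 3q, \dots$, the search terminates (bounded by the finiteness of $H$ and the structure of $A_{v,L}$).

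\medskip

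Next I would handle Step (2), the detection of a reflection $\gamma_R$. The point is that an element $w + h \in \Gamma$ restricts to a reflection of $\ell$ precisely when $h(v) = -v$ and the translational part places the fixed point appropriately, i.e. $w = \frac{1}{2}(t - h(t)) + Dv$ for some $D \in \mathbb{R}$ with $w \in L$. Testing this for each of the finitely many $h \in H$ with $h(v) = -v$ amounts to solving a system of linear equations over the integers (expressing $w$ in the basis $b_1, b_2, b_3$), which is decidable. I would record the ``easy'' special cases — when no $h$ reverses $v$, or when some $h$ reverses $v$ and fixes $t$ up to sign (automatic when $(-1) \in H$) — as shortcuts whose validity is immediate.

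\medskip

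For Step (3), the decomposition of $\Gamma_{\ell}$, I would argue from the standard theory of infinite virtually cyclic groups (Remark \ref{remark:VCgroups}) together with the exact sequence $1 \to \overline{\Gamma}_{\ell} \to \Gamma_{\ell} \to Q \to 1$, where $Q$ is the image in $\mathrm{Isom}(\mathbb{R})$. If no reflection exists, $Q \cong \mathbb{Z}$ and the sequence splits via $\gamma_T$, giving the semidirect product $\pi(\overline{\Gamma}_{\ell}) \rtimes_\phi \mathbb{Z}$ with $\phi$ conjugation by $\pi(\gamma_T)$ (using the injectivity of $\pi$ on $\overline{\Gamma}_{\ell}$ from Lemma \ref{pointstab}(2)). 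If a reflection exists, $Q \cong D_\infty$, and I would identify $\Gamma_{\ell}$ as the amalgamated free product over $\overline{\Gamma}_{\ell}$ of the two dihedral-type stabilizers of the two ``ends,'' realized concretely by the reflections $\gamma_R$ and $\gamma_T \gamma_R$; this is the type II case of Remark \ref{remark:VCgroups}. The main obstacle I anticipate is Step (1): precisely establishing that $A_{v,L}$ is cyclic with computable generator and that the minimal $C$ producing an honest isometry $\gamma_T \in \Gamma$ (not merely a translation of the ambient line) is attained, since one must simultaneously satisfy the point-group condition $h(v) = v$ and the lattice condition on the translational part. The remaining steps are comparatively routine, relying on the finiteness of $H$ and the splitting structure already catalogued in Remark \ref{remark:VCgroups}.
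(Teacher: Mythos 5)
Your proposal is correct and follows essentially the same route as the paper's proof: establish that $A_{v,L}$ is a finitely generated (hence cyclic) subgroup of $\mathbb{Q}$ with computable generator, characterize translations along $\ell$ by the condition $h(v)=v$ and $\frac{1}{2}(t-h(t))+Cv\in L$ (which, after dotting with $v$ and using $t,h(t)\perp v$, forces $C\in A_{v,L}$ so the search over multiples of $q$ finds the minimum), reduce Step 2 to integer linear algebra in the basis $b_1,b_2,b_3$, and invoke Bass--Serre theory for Step 3. The only detail left implicit is the dot-product computation showing every realized translation length lies in $A_{v,L}$, but your ``exactly when'' characterization already contains it.
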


\begin{proof}
We assume that $\Gamma = \langle L, H \rangle$, $\ell \subseteq \mathbb{R}^{3}$, and that $\ell$
admits a parametrization $r(\alpha) = \frac{1}{2}t + \alpha v$, where $t,v \in L$, $t \perp v$, and $\beta v \notin L$
for $\beta \in (0,1)$. 

Consider step (1) from Procedure \ref{procedure:computethestabilizer}. The set $A_{v,L}$ is an additive subgroup
of $\mathbb{R}$ because of the bilinearity of the dot product. It is a set of rational numbers since each possible
lattice $L$ is contained in $\mathbb{Q}^{3}$, and so each member of $A_{v,L}$ is a quotient of two rational numbers.
It also follows from bilinearity that $A_{v,L}$ is generated by 
$$ \frac{b_{1} \cdot v}{v \cdot v}, \quad \frac{b_{2} \cdot v}{v \cdot v}, \quad \frac{b_{3} \cdot v}{v \cdot v},$$
if $\{ b_{1}, b_{2}, b_{3} \}$ generates $L$ as an abelian group. It follows that $A_{v,L}$ is a finitely generated 
additive subgroup of $\mathbb{Q}$, and therefore cyclic. It is clear that $1 \in A_{v,L}$, so the generator is the
reciprocal of some positive integer, as claimed. We note that it is straightforward to find the generator $q$
algorithmically, given 
the above generating set. 

It is already clear that Step 1(b) can be performed algorithmically; the procedure is guaranteed to stop since 
$1 \in A_{v,L}$ and the given equations can always be satisfied with $h=1$ and $C=1$.
Step 1(c) poses no problems.

We need to show that if $\gamma \in \Gamma$ acts on $\ell$ by the rule $\gamma \cdot r(\alpha) = r(\alpha + C')$,
then $C' \in A_{v,L}$. (It is easy to show that $\gamma_{T}$, as defined in Procedure \ref{procedure:computethestabilizer}, satisfies $\gamma_{T} \cdot r(\alpha) = r(\alpha + C)$.) Assume that
$\gamma \cdot r(\alpha) = r(\alpha + C')$; we let $\gamma = \widehat{t} + h$, where $\widehat{t} \in L$ and
$h \in H$. Note that we must have $h(v) = v$ if the line $\ell$ is to be acted on by a translation.
\begin{align*}
\gamma \cdot r(\alpha) &= (\widehat{t} + h) (\frac{1}{2}t + \alpha v) \\
&= \widehat{t} + \frac{1}{2} h(t) + \alpha v.
\end{align*}
Setting the latter expression equal to $r(\alpha + C')$, we get
\begin{align*}
\widehat{t} + \frac{1}{2} h(t) + \alpha v = \frac{1}{2}t + C'v + \alpha v  \quad &\Rightarrow \quad 
\widehat{t} = \frac{1}{2}( t - h(t)) + C'v.
\end{align*}
Taking the dot product with $v$ on both sides gives the equality $\widehat{t} \cdot v = C' (v \cdot v)$, since
both $t$ and $h(t)$ are perpendicular to $v$. It follows that $C' \in A_{v,L}$, as claimed. Therefore, Step 1 is valid,
and can be performed algorithmically.

Next we consider Step 2. It is straightforward to check that the isometry $\gamma_{R}$ (as defined in
Procedure \ref{procedure:computethestabilizer}) acts on $\ell$ by the rule
$\gamma_{R} \cdot r(\alpha) = r(D - \alpha)$, and that $\gamma_{R} \in \Gamma$ if 
$\frac{1}{2}(t - h(t)) + Dv \in L$. It is now enough to solve
the equation
$$ c_{1}b_{1} + c_{2}b_{2} + c_{3}b_{3} = \frac{1}{2}(t - h(t)) + Dv,$$
where the $c_{i}$ ($i=1,2,3$) are integers and $D \in \mathbb{R}$, by an algorithm (or to show, by the same algorithm, that no solution exists). This is also straightforward:
we represent the right side of the equation as an $\mathbb{R}$-linear combination of the basis elements $b_{1}$,
$b_{2}$, $b_{3}$, treating $D$ as an independent variable. That is,
$$ \frac{1}{2}(t - h(t)) + Dv = C_{1}(D) b_{1} + C_{2}(D) b_{2} + C_{3}(D) b_{3},$$
where $C_{i}(D)$ is a linear function of $D$, for $i=1,2,3$. The problem of solving the original equation amounts
to the problem of solving the system of congruences $C_{i}(D) = 0$ mod $\mathbb{Z}$, which can clearly be done
algorithmically. This shows that Step 2 is valid, and can be done algorithmically.

Finally, we note that the validity of Step 3 follows from the Bass-Serre theory of groups acting on trees \cite{Se80}.
\end{proof}

We will now show how to apply Procedure \ref{procedure:computethestabilizer} in a few representative cases.
The results of all of the calculations are summarized in Tables \ref{table:vc1}, \ref{table:vc2}, \ref{table:vc3},
\ref{table:vc4}, and \ref{table:vc5}. 

\begin{example}
Let us first suppose that $L$ is the standard integral lattice, and $H \leq S^{+}_{4} \times (-1)$. Thus, 
$\Gamma = \langle L, H \rangle$
is one of the subgroups $\Gamma \leq \Gamma_{1}$, whose strict line stabilizers are recorded in 
Table \ref{table:ssgamma1}. Note that almost all of the parametrized lines in Table \ref{table:ssgamma1} 
have the property that $v \cdot v = 1$. (The four exceptions occur in the column headed $\langle A, B \rangle$.) In
all such cases, $A_{v,L}$ must be $\mathbb{Z}$, since the dot product $v_{1} \cdot v$ is an integer when $v_{1},
v \in \mathbb{Z}^{3} = L$. It follows that Step 1 of Procedure \ref{procedure:computethestabilizer} is easy
in all of these cases, and we can take $\gamma_{T} = v$.  

This leaves four more cases to be considered. The remaining lines have the
form $(\alpha, \alpha, 0)$ or $(\alpha, \alpha, 1/2)$. We note that Step 1 is still easy for the line $(\alpha, \alpha, 0)$
(with respect to both crystallographic groups, since $t = 0$), so we can again take $\gamma_{T} = v = (1,1,0)$. 

We consider the remaining lines. Suppose first that $H = S_{4}^{+} \times (-1)$ and 
$\ell$ has the parametrization $r(\alpha) = (\alpha, \alpha, 1/2)$. 
It follows that $t = (0,0,1)$ and $v=(1,1,0)$. An easy check
shows that $A_{v,L} = \frac{1}{2}\mathbb{Z}$, so that $q = 1/2$ is a generator. 
We must determine whether there is $h \in H$ such that
$h(v) = v$ and $\frac{1}{2}(t - h(t)) + \frac{1}{2}v \in L$. There are four signed permutation matrices
$h \in H$ such that $h(v) = v$:
$$ \left( \begin{smallmatrix} 1 & 0 & 0 \\ 0 & 1 & 0 \\ 0 & 0 & 1 \end{smallmatrix} \right), \quad
\left( \begin{smallmatrix} 0 & 1 & 0 \\ 1 & 0 & 0 \\ 0 & 0 & 1 \end{smallmatrix} \right), \quad
 \left( \begin{smallmatrix} 1 & 0 & 0 \\ 0 & 1 & 0 \\ 0 & 0 & -1 \end{smallmatrix} \right), \quad
\left( \begin{smallmatrix} 0 & 1 & 0 \\ 1 & 0 & 0 \\ 0 & 0 & -1 \end{smallmatrix} \right).$$
All of these matrices $h$ have the property that $\frac{1}{2} (t - h(t)) \in L$. In particular, we can have
$\frac{1}{2}(t - h(t)) + \frac{1}{2}v \in L$ if and only if $\frac{1}{2}v \in L$. Since the latter inclusion is 
false, there is no $h$ with the required properties.

\begin{table} 
\renewcommand{\arraystretch}{1.5}
\[
\begin{array}{|c|c|}
\hline H &  \vc_{\infty}  \\ \hline \hline
S^{+}_{4} \times (-1)  & D_4\times D_\infty  \;(\text{twice}),\;  D_2\times D_\infty
 \; (\text{three times}) \\  \hline
S^{+}_{4} &   D_4 *_{C_4} D_4 \; (\text{twice}) \\ \hline
S'_{4} & D_4 *_{D_2}D_4 \;(\text{twice}) \\ \hline
A^{+}_{4} \times (-1)& D_2\times D_\infty (\text{four times})\\ \hline
D''_{4} & D_4 \times \mathbb Z  \;(\text{twice}), \;  D_2 \times \mathbb Z  \\  \hline
D^{+}_{4} \times (-1) &  D_4\times D_\infty \;(\text{twice}),  D_2\times D_\infty \;(\text{seven times}) \\ \hline
D^{+}_{4} &  D_4 *_{C_4} D_4  \; (\text{twice}) \\ \hline
D^{+}_{2} \times (-1)& D_2\times D_\infty \;(\text{twelve times}) \\ \hline
C^{+}_{4} \times (-1) &  C_4 \times D_\infty \; (\text{twice})\\ \hline
C^{+}_{4} & C_{4} \times \mathbb{Z} \; (\text{twice}) \\ \hline
D'_{2} &  D_2 \times \mathbb Z \;(\text{four times}) \\ \hline
D'_{4}&   D_4 *_{D_2} D_4  \;(\text{twice}) \\ \hline
\widehat{D}'_{4}  & D_4 *_{D_2} D_4  \;(\text{twice}), \; D_2 \times \mathbb Z    \\ \hline
\end{array}
\]
\caption{The structure of $\vc_{\infty}$ subgroups of $\left\langle \langle \mathbf{x}, \mathbf{y}, \mathbf{z} \rangle,  H \right\rangle$.}
\label{table:vc1}
\end{table}

\begin{table}
\renewcommand{\arraystretch}{1.5}
\[
\begin{array}{|c|c|}
\hline H &  \vc_{\infty}  \\ \hline \hline
S^{+}_{4} \times (-1)  & D_4\times D_\infty ,\;  D_2\times D_\infty, (D_2 \times \mathbb Z/2) *_{D_2} D_4 \\  \hline
S^{+}_{4} &   D_4 *_{C_4} D_4 \\ \hline
S^{'}_{4} &   D_{4} \ast_{D_{2}} D_{4} \\ \hline
A^{+}_{4} \times (-1)& D_2\times D_\infty (\text{twice})\\ \hline
D''_{4} & D_4 \times \mathbb Z, \;  D_2 \rtimes \mathbb Z  \\  \hline
D^{+}_{4} \times (-1)  & D_4\times D_\infty ,\;  D_2\times D_\infty \;(\text{three times}), \; (D_2 \times \mathbb Z/2) *_{D_2} D_4 \\  \hline
D^{+}_{4} &  D_{4} \ast_{C_{4}} D_{4} \\ \hline
D^{+}_{2} \times (-1)& D_2\times D_\infty \;(\text{six times}) \\ \hline
C^{+}_{4} \times (-1) &  C_4 \times D_\infty \\ \hline
C^{+}_{4} & C_4 \times \mathbb Z \\ \hline
D'_{2} &  D_2 \times \mathbb Z \;(\text{twice}) \\ \hline
D'_{4}&   D_4 *_{D_2} D_4  \\ \hline
\widehat{D}'_{4}  & D_4 *_{D_2} D_4  \;(\text{twice})   \\ \hline
\end{array}
\]
\caption{The structure of $\vc_{\infty}$ subgroups of $\left\langle \langle \frac{1}{2}(\mathbf{x}+ \mathbf{y}+ \mathbf{z}), \mathbf{y}, \mathbf{z} \rangle, H \right\rangle$.}
\label{table:vc2}
\end{table}

\begin{table}
\renewcommand{\arraystretch}{1.5}
\[
\begin{array}{|c|c|}
\hline H &  \vc_{\infty}  \\ \hline
S^{+}_{4} \times (-1)  & D_4\times D_\infty ,\;  D_2\times D_\infty (\text{two times}), (D_2 \times \mathbb Z/2) *_{D_2} D_4 \\  \hline
S^{+}_{4} &   D_4 *_{C_4} D_4 \\ \hline
S^{'}_{4} & D_4 *_{D_2} D_4 \;(\text{twice}) \\ \hline
A^{+}_{4} \times (-1)& D_2\times D_\infty \\ \hline
D'_{2} &  D_2 \times \mathbb Z  \\ \hline
D^{+}_{2} \times (-1)& D_2\times D_\infty \;(\text{three times}) \\ \hline
\end{array}
\]
\caption{The structure of $\vc_{\infty}$ subgroups 
of $\left\langle \langle \frac{1}{2}(\mathbf{x}+ \mathbf{y}), (\mathbf{x} +\mathbf{z}), (\mathbf{y}+ \mathbf{z}) \rangle, H \right\rangle$.}
\label{table:vc3}
 \end{table}

\begin{table}
\renewcommand{\arraystretch}{1.5}
\[
\begin{array}{|c|c|}
\hline H &  \vc_{\infty}  \\ \hline \hline
D^{+}_{2} \times (-1)& D_2\times D_\infty \;(\text{six times}) \\ \hline
D'_{2} &  D_2 \times \mathbb Z  \;(\text{twice}) \\ \hline
D'_{2_{2}} &  D_2 \times \mathbb Z  \;(\text{twice}) \\ \hline
\end{array}
\]
\caption{The structure of $\vc_{\infty}$ subgroups of $\left\langle \langle \frac{1}{2}( \mathbf{x}+ \mathbf{z}), \mathbf{y}, \mathbf{z}\rangle, H \right\rangle$.}
\label{table:vc4}
\end{table}

\begin{table}
\renewcommand{\arraystretch}{1.5}
\[
\begin{array}{|c|c|}
\hline H &  \vc_{\infty}  \\ \hline \hline
D^{+}_{6} \times (-1)& D_2\times D_\infty \;(\text{five times}), \;\; D_6 \times D_\infty\\ \hline
D'_6 & D_2 \times \mathbb Z \;(\text{twice}) \\ \hline
\widehat{D}'_{6} &  D_2 \times \mathbb Z  \;(\text{twice}) \\ \hline
D^{''}_{6} &  D_2 \times \mathbb Z, \;\; D_6 \times \mathbb Z \\ \hline
\end{array}
\]
\caption{The structure of $\vc_{\infty}$ subgroups of $\left\langle \langle \mathbf{v}_{1}, \mathbf{v}_{2}, \mathbf{v}_{3} \rangle, H \right\rangle$.}
\label{table:vc5}
\end{table}

Now we must consider the next largest element of $A_{v,L}$, namely $1$. We ask for an $h \in H$ that satisfies the
conditions $h(v) = v$ and $\frac{1}{2}(t - h(t)) + v \in L$. It is clear that we can set $h = 1$, and we then 
set $\gamma_{T} = v$. This is the minimal translation to act on the line $(\alpha,\alpha, 1/2)$.

The case in which $H = D_{4}^{+} \times (-1)$ and $r(\alpha) = (\alpha, \alpha, 1/2)$ is similar, and we again
have $\gamma_{T} = v$. It follows that the minimal translation to act on $\ell$ is always $\gamma_{T} = v$, for 
all of the lines in Table \ref{table:ssgamma1}. 

Next we consider Step 2. Recall that this step is always easy if $(-1) \in H$ (see Procedure \ref{procedure:computethestabilizer}), and
we can set $\gamma_{R} = (-1)$ or $t + (-1)$. For instance, suppose
that $H = S_{4}^{+} \times (-1)$. Using the fact that $\pi(\gamma_{T}) = 1$ and $\pi(\gamma_{R}) = (-1)$ for
each of the five parametrized lines $r(\alpha)$ in the corresponding row of Table \ref{table:ssgamma1},
Step 3 of Procedure \ref{procedure:computethestabilizer} shows that the line stabilizers are
$$ (D_{4} \times \mathbb{Z}/2) \ast_{D_{4}} (D_{4} \times \mathbb{Z}/2)  \quad
\text{ and } \quad (D_{2} \times \mathbb{Z}/2) \ast_{D_{2}} (D_{2} \times \mathbb{Z}/2),$$
where the first group appears twice, and the second appears three times. 
(Here we record only the isomorphism types of the stabilizer groups.) The latter groups are isomorphic to
$D_{4} \times D_{\infty}$ and $D_{2} \times D_{\infty}$, respectively. This calculation is recorded in
the first line of Table \ref{table:vc1}.

In fact, Step 2 is often quite easy in practice when $L = \mathbb{Z}^{3}$, even if $(-1) \notin H$. 
For instance, assume that $H = S_{4}^{+}$. We note that for 
$$ h = \left( \begin{smallmatrix} 1 & 0 & 0 \\ 0 & -1 & 0 \\ 0 & 0 & -1 \end{smallmatrix} \right),$$
the isometries $\gamma_{1} = h$ and $\gamma_{2} = (0, 1, 0) + h$ act on the lines
$(0,0,\alpha)$ and $(1/2, 1/2, \alpha)$ (respectively) as reflections $\gamma_{R}$ with $D=0$. By our results 
from Step 1, we therefore have $\pi(\gamma_{T}) = 1$ and $\pi(\gamma_{R}) = h$ for both of these lines. It follows
from Step 3 and Table \ref{table:ssgamma1} that both line stabilizers have the form
$$ \langle C_{4}^{+}, h \rangle \ast_{C_{4}^{+}} \langle C_{4}^{+}, h \rangle.$$
Since $\langle C_{4}^{+}, h \rangle = D_{4}^{+}$, the above amalgam reduces to the one listed in the
second row of Table \ref{table:vc1}. (Note that this case is not ``easy" in the sense described in Procedure \ref{procedure:computethestabilizer}.)

In a few cases, Step 2 is ``easy'' because there is no $h \in H$ that reverses the
direction of $v$. This is the case for $H = D'_{2}$ and $H = D''_{4}$ in Table \ref{table:ssgamma1}, for example. 
\end{example}

\begin{example} \label{example:funny}
We will now run
Procedure \ref{procedure:computethestabilizer} with the group 
$\Gamma_{2}$ and the line $(1/2, 0, \alpha)$ as input (see Table \ref{table:ssgamma2}). (Thus,
$t = (1,0,0)$ and $v=(0,0,1)$.)

An easy calculation establishes that $A_{v,L} = \frac{1}{2} \mathbb{Z}$. We must determine whether
there is $h \in H$ such that $h(v) = v$ and $\frac{1}{2}(t - h(t)) + \frac{1}{2}v \in L$. One can check that these
conditions are satisfied by
$$ h = \left( \begin{smallmatrix} 0 & 1 & 0 \\ 1 & 0 & 0 \\ 0 & 0 & 1 \end{smallmatrix} \right).$$
It follows that a minimal translation $\gamma_{T}$ satisfies $\gamma_{T} \cdot r(\alpha) = r(\alpha + 1/2)$,
and that $\gamma_{T}$ can be chosen so that $\pi(\gamma_{T}) = h$. 

We turn to Step 2. This step is easy because $(-1) \in H$. We can take $\gamma_{R} = t + (-1)$ (and $D=0$). 

It follows that the stabilizer is isomorphic to 
$$ \left\langle D'_{2}, (-1) \right\rangle \ast_{D'_{2}} \left\langle D'_{2}, -h \right\rangle \cong
(D_{2} \times \mathbb{Z}/2) \ast_{D_{2}} D_{4}.$$
This stabilizer is recorded in the first line of Table \ref{table:vc2}.
\end{example}

\begin{example} \label{example:last}
Consider $\Gamma_{3}$ and the parametrized line $(1/4,1/4,\alpha)$ (see Table \ref{table:ssgamma3}).
Thus $t = (1/2,1/2,0)$ and $v=(0,0,1)$.  

It is straightforward to check that $A_{v,L} = \frac{1}{2} \mathbb{Z}$. We therefore begin by checking for
$h \in H$ that satisfy $h(v) = v$ and $\frac{1}{2}(t - h(t)) + \frac{1}{2}v \in L$. Note that
$D''_{4}$
is the stabilizer group $H_{v}$. Setting 
$$h = \left( \begin{smallmatrix} 1 & 0 & 0 \\ 0 & -1 & 0 \\ 0 & 0 & 1 \end{smallmatrix} \right),$$
we get a solution. It follows that there is a minimal translation 
$\gamma_{T}$ such that $\pi(\gamma_{T}) = h$. 

Step 2 is easy -- we simply set $\gamma_{R} = t + (-1)$ (and $D=0$). 

It follows that the stabilizer group has the form
$$ \left\langle A, C, (-1) \right\rangle \ast_{\langle A, C \rangle} \left\langle A, C, -h \right\rangle
\cong (D_{2} \times \mathbb{Z}/2) \ast_{D_{2}} D_{4}.$$
This calculation contributes the asymmetric amalgam in the first line of Table \ref{table:vc3}.
\end{example}

\subsection{Cokernels of relative assembly maps} \label{subsection:cokernels}

In view of Theorem \ref{theorem:splitting2}, we will need for our computations the cokernels of the relative assembly maps for the various maximal
infinite virtually cyclic subgroups  that were listed in Section \ref{subsection:vcsubgroups}. The following list contains
\emph{all} of the \emph{non-negligible}  maximal infinite virtually cyclic groups that
appear as subgroups of the 73 split $3$-dimensional crystallographic groups:

\vspace{10pt}

\noindent {\bf Maximal infinite virtually cyclic subgroups:} $C_4 \times \mathbb Z$, $C_4 \rtimes \mathbb Z$, $D_2 \times \mathbb Z$, $D_2 \rtimes \mathbb Z$,  $D_4 \times \mathbb Z$, $D_{6} \times \mathbb{Z}$, $D_4 \ast_{C_4} D_4 $, $D_4 \ast_{D_2} D_4$,   $(D_2 \times
\mathbb Z/2) \ast_{D_2} D_4$, $C_4 \times D_\infty$,  and $D_n \times D_{\infty}$ for $n=2, 4$, and $6.$

\vspace{10pt}

We first note that for the  groups $D_2 \times \mathbb Z$,  $(D_2 \times \mathbb Z/2) \ast_{D_2} D_4$,  $D_4 \ast_{D_2} D_4$,  and $D_n \times D_{\infty}$ for $n=2, 4$, the cokernels  have already  been computed  by Lafont and the second author in   \cite[Section 4]{LO07} and 
\cite[Sections 6.2, 6.3, 6.4]{LO09}.  The remaining  groups in our list will be discussed in
the following subsections. 

Observe that by a result of Farrell and Jones \cite{FJ95},
the cokernels of the relative assembly maps $ H_n^{\Gamma_{\widehat{\ell}}}(E_{\fin}(\Gamma_{\widehat{\ell}}) \rightarrow  \ast)$ 
are automatically trivial for $n< -1$ (in fact, both the source and target groups
vanish in this case).  In the same paper, they establish that, for the case
$n=-1$, these cokernels are finitely generated, which, by results of Farrell
\cite{F77}, Ramos \cite{Ra}, and Grunewald \cite{G07}, implies that the
cokernel is actually trivial.  In particular, we only need to
focus on the cases $n=0$ and $n=1$.  These cokernels are precisely
the elusive Bass, Farrell, and Waldhausen Nil-groups.  

It follows from additional results of Farrell and Jones \cite{FJ95} that the groups $K_{-1}(\mathbb{Z}\Gamma_{\widehat{\ell}})$ are
generated by the images of the groups $K_{-1}(\mathbb{Z}F)$, where $F$ runs over finite subgroups of $\Gamma_{\widehat{\ell}}$, and
the maps in question are induced by inclusion.

We are able to identify all of these cokernels explicitly, with the exceptions of $NK_{1}(\mathbb{Z}[D_{4}])$ and $NK_{1}(\mathbb{Z}[D_{6}])$.
 (see Subsections 
\ref{subsection:KtheoryC4Z} and \ref{loweralgebriacktheoryofD6timesDinfty}). The first group is known to be an infinite torsion group of exponent $2$ or $4$ \cite{We09}. 
We summarize the known 
non-trivial cokernels in Table \ref{table:cokernels}.

\renewcommand{\arraystretch}{1.6}
\begin{table}[!h]
\[
\begin{array}{|c|c|c|}
\hline V \in \vc_{\infty} &  H_0^{V}(E_{\fin}(V)\rightarrow
*) \neq 0 & H_1^{V}(E_{\fin}(V)\rightarrow
*)\neq 0 \\ \hline
C_4 \times \mathbb Z &\bigoplus_{\infty} \mathbb Z/2 &\bigoplus_{\infty}\mathbb Z/2 \\
\hline D_2 \times \mathbb Z & \bigoplus_{\infty} \mathbb Z/2 & \bigoplus_{\infty} \mathbb Z/2 \\
\hline D_2 \rtimes \mathbb Z &  \bigoplus_{\infty} \mathbb Z/2& \bigoplus_{\infty} \mathbb Z/2 \\
\hline D_4 \times \mathbb Z & \bigoplus_{\infty} \mathbb Z/2 \oplus \bigoplus_{\infty} \mathbb Z/4 & 2NK_1(\mathbb {Z}D_4) \\
\hline D_6 \times \mathbb{Z} & \bigoplus_{\infty} \mathbb{Z}/2 & 2NK_1(\mathbb ZD_6) \\
\hline C_4 \times D_{\infty} &  \bigoplus_{\infty} \mathbb Z/2 &  \bigoplus_{\infty} \mathbb Z/2 \\
\hline D_2 \times D_{\infty} &\bigoplus_{\infty} \mathbb Z/2 &\bigoplus_{\infty}\mathbb Z/2 \\
\hline D_4 \ast_{D_2} D_4 &\bigoplus_{\infty} \mathbb Z/2 &\bigoplus_{\infty} \mathbb Z/2 \\
\hline D_4 \ast_{C_4} D_4 &\bigoplus_{\infty} \mathbb Z/2 &\bigoplus_{\infty} \mathbb Z/2 \\
\hline  (D_2\times \mathbb Z/2)*_{D_2}D_4 & \bigoplus_{\infty} \mathbb Z/2 & \bigoplus_{\infty} \mathbb Z/2 \\
\hline D_4 \times D_{\infty} & \bigoplus_{\infty} \mathbb Z/2 \oplus \bigoplus_{\infty} \mathbb Z/4 &  NK_1(\mathbb{Z}D_4)   \\
\hline D_6 \times D_{\infty} & \bigoplus_{\infty} \mathbb{Z}/2 & NK_1(\mathbb ZD_6)\\ \hline
\end{array}
\]
\caption{ Cokernels of relative assembly maps for maximal infinite $V \in \vc_{\infty}$.}
\label{table:cokernels}
\end{table}

\subsubsection{{\bf The Lower algebraic $K$-theory of $C_4 \times \mathbb Z$, $D_4 \times \mathbb Z$, and $D_{6} \times \mathbb{Z}$}} \label{subsection:KtheoryC4Z}
The  Bass-Heller-Swan decomposition yields the following isomorphism for any group $F$ and $q \leq 1$:
\[
Wh_q(F \times \mathbb Z) \cong Wh_{q-1}(F) \oplus Wh_{q}(F) \oplus NK_q(\mathbb ZF) \oplus NK_q(\mathbb ZF).
\]
Also, as mentioned earlier, $K_{q}(\mathbb{Z} V)$ is zero for $q <-1$ (see
\cite{FJ95}). 

Let us consider first the cases of $Wh_{q}(V)$ for $V=C_4 \times \mathbb Z$ and  $D_4 \times \mathbb Z$.
Since $Wh_q(C_4) \cong 0$ for $q \leq 1$ (Section \ref{section:contributionoffinites}, 
Table \ref{table:tableKtheoryEfin}) and $NK_q(\mathbb ZC_4) \cong \bigoplus_{\infty} \mathbb Z/2$ for  $q= 0, 1$ (see \cite{We09}), it follows that
\[
Wh_q(C_4 \times \mathbb Z) \cong
\begin{cases}
\bigoplus_{\infty} \mathbb Z/2 & q=1 \\
\bigoplus_{\infty} \mathbb Z/2 & q=0 \\
0 & q \leq -1,
\end{cases} 
\]
and the cokernels of the relative assembly maps  are both isomorphic to $\bigoplus_{\infty} \mathbb Z/2.$ 

Since $Wh_q(D_4) \cong 0$ for $q \leq 1$, it follows that
\[
Wh_q(D_4 \times \mathbb Z) \cong 
\begin{cases}
2NK_1(\mathbb ZD_4) & q=1 \\
\bigoplus_{\infty} \mathbb Z/2  \oplus \bigoplus_{\infty} \mathbb Z/4 & q=0 \\
0 & q \leq -1,
\end{cases}
\]
since the Bass Nil-group  $NK_0(\mathbb ZD_4) \cong \, \bigoplus_{\infty} \mathbb Z/2  \oplus \bigoplus_{\infty} \mathbb Z/4$ was computed by Weibel in \cite{We09}. He also showed that $NK_1(\mathbb ZD_4)$ is a countably infinite torsion group of exponent 2 or 4.

Since $K_{-1}(\mathbb ZD_6)\cong \mathbb Z$ and $Wh_q(D_6) \cong 0$ for $q=0, 1$, it follows that 
\[
Wh_q(D_6 \times \mathbb Z) \cong
\begin{cases}
2NK_1(\mathbb ZD_6) & q=1 \\
2NK_0(\mathbb ZD_6)  & q=0 \\
\mathbb Z & q= -1\\
0 & q <-1.
\end{cases}
\]

\begin{proposition}
$NK_0(\mathbb{Z}D_{6}) \cong \bigoplus_{\infty} \mathbb{Z}/2$.
\end{proposition}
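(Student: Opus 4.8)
The plan is to reduce this Nil computation to one over a finite field by means of a Milnor (conductor) square, and then to identify the resulting group with the $NK_{1}$ of the dual numbers over $\mathbb{F}_{2}$. First I would use the isomorphism $D_{6} \cong D_{3} \times \mathbb{Z}/2$, which lets me write $\mathbb{Z}D_{6} = (\mathbb{Z}D_{3})[t]/(t^{2}-1)$. Base-changing the standard conductor square for $\mathbb{Z}[\mathbb{Z}/2]$ along the flat ring $\mathbb{Z}D_{3}$ produces a Cartesian square whose upper-left corner is $\mathbb{Z}D_{6}$, whose two middle terms are copies of $\mathbb{Z}D_{3}$ (the reductions $t \mapsto \pm 1$), and whose lower-right corner is $\mathbb{F}_{2}D_{3}$. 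Since $\mathbb{Z}D_{3} \to \mathbb{F}_{2}D_{3}$ is surjective, this is a Milnor square, so it admits a Mayer--Vietoris sequence in $K_{0}$ and $K_{1}$.

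Base-changing the whole square by $-[s]$ and using the natural Bass--Heller--Swan splitting $K_{*}(R[s]) \cong K_{*}(R) \oplus NK_{*}(R)$, I would extract the corresponding six-term exact sequence for the functors $NK_{*}$. Because $|D_{3}| = 6$ is square-free, the results of Harmon \cite{Ha87} and \cite{J-PR09} (applied with $\alpha = \mathrm{id}$, exactly as elsewhere in the paper) give $NK_{0}(\mathbb{Z}D_{3}) = NK_{1}(\mathbb{Z}D_{3}) = 0$. The sequence therefore collapses to an isomorphism $NK_{0}(\mathbb{Z}D_{6}) \cong NK_{1}(\mathbb{F}_{2}D_{3})$.

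Next I would compute $NK_{1}(\mathbb{F}_{2}D_{3})$ by decomposing the ring $\mathbb{F}_{2}D_{3}$. Writing $D_{3} = \mathbb{Z}/3 \rtimes \mathbb{Z}/2$ and using that $\mathbb{F}_{2}[\mathbb{Z}/3] \cong \mathbb{F}_{2} \times \mathbb{F}_{4}$ equivariantly for the inversion action (which fixes the $\mathbb{F}_{2}$ factor and acts as the Frobenius on $\mathbb{F}_{4}$), I obtain $\mathbb{F}_{2}D_{3} \cong \mathbb{F}_{2}[\mathbb{Z}/2] \times (\mathbb{F}_{4} \rtimes_{\mathrm{Frob}} \mathbb{Z}/2)$. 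The crossed product $\mathbb{F}_{4} \rtimes_{\mathrm{Frob}} \mathbb{Z}/2$ has trivial cocycle (since $\tau^{2} = 1$ in $D_{3}$), so by the normal basis theorem it is isomorphic to $\mathrm{End}_{\mathbb{F}_{2}}(\mathbb{F}_{4}) \cong M_{2}(\mathbb{F}_{2})$. As $NK_{*}$ is additive on finite products and Morita-invariant, and $M_{2}(\mathbb{F}_{2})$ is Morita-equivalent to the regular ring $\mathbb{F}_{2}$, the second factor contributes nothing, giving $NK_{1}(\mathbb{F}_{2}D_{3}) \cong NK_{1}(\mathbb{F}_{2}[\mathbb{Z}/2]) = NK_{1}(\mathbb{F}_{2}[\epsilon]/(\epsilon^{2}))$.

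The hard part will be the final step: computing $NK_{1}$ of the dual numbers $\mathbb{F}_{2}[\epsilon]/(\epsilon^{2})$ and showing it equals $\bigoplus_{\infty} \mathbb{Z}/2$. The heuristic I would start from is the explicit unit group of $\mathbb{F}_{2}[\epsilon]/(\epsilon^{2})[s]$: a unit must reduce mod $\epsilon$ to a unit of $\mathbb{F}_{2}[s]$, hence has the form $1 + \epsilon f(s)$ with $f \in \mathbb{F}_{2}[s]$, and modulo the image of the units of the base ring $\mathbb{F}_{2}[\epsilon]/(\epsilon^{2})$ the subgroup $1 + \epsilon s\mathbb{F}_{2}[s]$ survives; since $(1 + \epsilon sh)^{2} = 1$, this is a countable elementary abelian $2$-group. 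The delicate point is to show that $K_{1}$ is captured by units for these (non-regular) rings, i.e. that the $SK_{1}$ (equivalently $NSK_{1}$) contributions vanish so that no extra summands appear and the exponent is exactly $2$; for this I would invoke the known computation of $NK_{1}$ of truncated polynomial algebras over a perfect field of positive characteristic, in the spirit of Weibel \cite{We09}. Combining this with the two preceding isomorphisms yields $NK_{0}(\mathbb{Z}D_{6}) \cong NK_{1}(\mathbb{F}_{2}[\epsilon]/(\epsilon^{2})) \cong \bigoplus_{\infty} \mathbb{Z}/2$.
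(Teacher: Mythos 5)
Your proposal follows essentially the same route as the paper: the same Milnor square $\mathbb{Z}D_{3}[\mathbb{Z}/2] \to \mathbb{Z}D_{3} \times \mathbb{Z}D_{3} \to \mathbb{F}_{2}D_{3}$, the same use of Harmon's vanishing of $NK_{i}(\mathbb{Z}D_{3})$ to collapse the Mayer--Vietoris sequence to $NK_{0}(\mathbb{Z}D_{6}) \cong NK_{1}(\mathbb{F}_{2}D_{3})$, and the same splitting $\mathbb{F}_{2}D_{3} \cong \mathbb{F}_{2}[C_{2}] \times M_{2}(\mathbb{F}_{2})$ (which you derive via Galois descent where the paper cites Magurn) to reduce to $NK_{1}(\mathbb{F}_{2}[C_{2}]) \cong \bigoplus_{\infty}\mathbb{Z}/2$, a fact the paper simply cites from the literature. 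The argument is correct; only the last step needs the external input you acknowledge.
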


\begin{proof}
We first show that $NK_0(\mathbb Z[D_6]) \cong NK_1(\mathbb F_2[D_3])$. Write $\mathbb Z[D_6]=\mathbb Z[D_3 \times \mathbb Z/2]$ as $\mathbb Z[D_3][\mathbb Z/2]$, and let $\mathbb Z/2=\langle t \rangle$.   Consider the following Cartesian square:
\[
\xymatrix@C=20pt@R=30pt{
\mathbb Z[D_3][\mathbb Z/2] \ar[d]_{t \mapsto -1} \ar[r]^{\hspace{.2cm}t \mapsto 1} & \mathbb Z[D_3]
     \ar[d]^{\text{mod}\; 2} \\
\mathbb Z[D_3] \ar[r]^{\text{mod}\; 2} & \mathbb F_2[D_3]}
\]      
Applying the $NK$-functor to this Cartesian square yields the Mayer-Vietoris sequence (see \cite[Theorem 6.4]{Mi71})
\begin{equation*}
\begin{split}
NK_2(\mathbb F_2[D_3]) &\rightarrow NK_1(\mathbb Z[D_6]) \rightarrow NK_1(\mathbb Z[D_3]) \oplus NK_1(\mathbb Z[D_3]) \rightarrow NK_1(\mathbb F_2[D_3])\\ &\rightarrow NK_0(\mathbb Z[D_6]) \rightarrow 
 NK_0(\mathbb Z[D_3]) \oplus NK_0(\mathbb Z[D_3]) \rightarrow \cdots.
\end{split}
\end{equation*}
Since $NK_i(\mathbb Z[D_3])=0$, for $i \leq 1$ (see \cite{Ha87}), we obtain the desired isomorphism 
$NK_0(\mathbb Z[D_6])\cong NK_{1}(\mathbb F_2[D_3])$, as claimed. 

Next, we claim $NK_1(\mathbb F_2[D_3])\cong \bigoplus_{\infty} \mathbb Z/2$. Note that the ring $\mathbb F_2[D_3] \cong M_{2}(\mathbb F_2) \times \mathbb F_2[C_2]$ (see \cite[Example 2]{Ma07}). Consider the following cartesian square
\[
\xymatrix@C=20pt@R=30pt{
\mathbb F_2[D_3] \ar[d] \ar[r] & \mathbb F_2[C_2]
     \ar[d] \\
M_2(\mathbb F_2) \ar[r] & 0}
\]      
Applying the $NK$-functor to this Cartesian square yields the following isomorphism
\[
0 \rightarrow  NK_1(\mathbb F_2[D_3])  \rightarrow  NK_1(M_2(\mathbb F_2)) \oplus NK_1(\mathbb F_2[C_2]) \rightarrow 0. 
\]
Since $M_2(\mathbb F_2)$ is a regular ring (see \cite[Proposition 2.3]{We09}), $NK_i(M_2(\mathbb F_2)) \cong 0$ for all $i \in \mathbb{Z}$. It follows that  $NK_1(\mathbb F_2[D_3]) \cong  NK_1(\mathbb F_2[C_2]) \cong \bigoplus_{\infty} \mathbb Z/2$. For the latter isomorphism we refer the reader to \cite{LO09}.

This shows that $NK_0(\mathbb ZD_6) \cong NK_1(\mathbb F_2[D_3]) \cong \bigoplus_{\infty} \mathbb Z/2$.
\end{proof}

We summarize our calculations as follows:

\[
Wh_q(D_6 \times \mathbb Z) \cong
\begin{cases}
2NK_1(\mathbb ZD_6) & q=1 \\
\bigoplus_{\infty} \mathbb{Z}/2 & q=0 \\
\mathbb Z & q= -1\\
0 & q <-1.
\end{cases}
\]

\begin{remark}
Consider the first Cartesian square from the above proof. The head of the associated Mayer-Vietoris sequence gives the following epimorphism,
 since $NK_1(\mathbb ZD_3)$ vanishes:
$$NK_2(\mathbb F_2[D_3])  \rightarrow NK_1(\mathbb ZD_6)\rightarrow  0.$$ 
The second Cartesian square gives us an epimorphism
$$ NK_{2}(\mathbb{F}_{2}[D_{3}]) \rightarrow NK_{2}(\mathbb{F}_{2}[\mathbb{C}_{2}]) \oplus NK_{2}(M_{2}(\mathbb{F}_{2})) \rightarrow 0.$$
As mentioned earlier, $NK_{2}(M_{2}(\mathbb{F}_{2}))$ is trivial. It follows that we have an epimorphism
$$ NK_{2}(\mathbb{F}_{2}[D_{3}]) \rightarrow NK_{2}(\mathbb{F}_{2}[\mathbb{C}_{2}]) \rightarrow 0.$$
This implies that $NK_{2}(\mathbb{F}_{2}[D_{3}])$ is non-trivial, and therefore must be an infinitely generated torsion group (see \cite{F77}).
It remains to study the first epimorphism above. We would like to show that this map is an isomorphism, but we have no proof.
\end{remark}

\subsubsection{{\bf The Lower algebraic $K$-theory of $D_2 \rtimes \mathbb Z$}} 

Let $V= D_2 \rtimes \mathbb Z$, where a generator of $\mathbb Z$ acts by an automorphism $\alpha$ of order 2.
Since $K_{-1}(\mathbb ZF) \cong0$ for all $F \leq D_{2}$ (Section \ref{section:contributionoffinites}, 
Table \ref{table:tableKtheoryEfin}), we have $K_{-1}(\mathbb
ZV)\cong 0$.

Farrell and Hsiang in \cite{FH68} show that the group $Wh_q(F \rtimes_{\alpha} \mathbb Z)$ can be expressed in the following form:
\[
Wh_q(F \rtimes_{\alpha} \mathbb Z) \cong C  \oplus NK_q(\mathbb ZF, \alpha) \oplus NK_q(\mathbb ZF, \alpha^{-1}),
\]
where $C$ is a suitable quotient (determined by the automorphism $\alpha$) of   the  $K$-groups $Wh_{q-1}(F) \oplus Wh_{q}(F)$.  Since $Wh_q(D_2) \cong 0$ for $q \leq 1$,  it follows that  $Wh_q(D_2 \rtimes \mathbb Z) \cong NK_q(\mathbb ZF, \alpha) \oplus NK_q(\mathbb ZF, \alpha^{-1})$ for $q=0,1$. Farrell and Hsiang also show that  $NK_q(\mathbb ZF, \alpha) \cong NK_q(\mathbb ZF, \alpha^{-1})$, therefore   $Wh_q(D_2 \rtimes \mathbb Z) \cong NK_q(\mathbb ZF, \alpha) \oplus NK_q(\mathbb ZF, \alpha).$

The group $D_2 \rtimes \mathbb Z$ is the canonical index two subgroup of the group $(D_2\times \mathbb Z/2)*_{D_2}D_4$, so by  \cite{DKR11} (see also \cite{DQR11}), we know that the Waldhausen  Nil-groups $NK_q(\mathbb ZD_2; \mathbb Z[(D_2 \times \mathbb Z/2)-D_2], \mathbb Z[D_4-D_2])$ are isomorphic to the corresponding Farrell
Nil-groups for the canonical index two subgroup $D_2 \rtimes \mathbb Z \trianglelefteq  (D_2 \times \mathbb Z/2) \ast_{D_2} D_4.$ In \cite[Section 6.2]{LO09}, Lafont and the second author showed that $NK_q(\mathbb ZD_2; \mathbb Z[(D_2 \times \mathbb Z/2)-D_2], \mathbb Z[D_4-D_2]) \cong \bigoplus_{\infty} \mathbb Z/2$;  it follows that
\[
NK_q(\mathbb ZF, \alpha) \cong \bigoplus_{\infty} \mathbb Z/2, \hspace{.5cm} q=0,1.
\]
Therefore, we have that
\[
Wh_q(D_2 \rtimes \mathbb Z) \cong
\begin{cases}
\bigoplus_{\infty} \mathbb Z/2  & q=1 \\
\bigoplus_{\infty} \mathbb Z/2   & q=0 \\
0 & q \leq -1,
\end{cases}
\]
 and the cokernels of the relative assembly maps are both isomorphic to $\bigoplus_{\infty} \mathbb Z/2.$ 

\subsubsection{{\bf The Lower algebraic $K$-theory of $D_4 \ast_{C_4} D_4$}}
Since  $K_{-1}(\mathbb ZF) \cong 0$ for all $F \leq D_{4}$ (Section \ref{section:contributionoffinites}, 
Table \ref{table:tableKtheoryEfin}), we see that, for $V=D_4 \ast_{C_4} D_4$, we have that $K_{-1}(\mathbb
ZV)\cong 0$.

 For the remaining $K$-groups, using \cite[Lemma 3.8]{CP02}, we have that $\widetilde {K}_0(\mathbb ZV) \cong 
NK_0(\mathbb ZC_4; A_1, A_2)$, where $A_i=\mathbb Z[(D_4-C_4]$ is the $\mathbb ZC_4$-bimodule generated by $D_4 - C_4$, for $i=1,2$.  Similarly, we
have that $Wh(V) \cong NK_1(\mathbb ZC_4; A_1, A_2)$, where $A_1$,
$A_2$ are the bi-modules defined above.
 
Now by \cite{DKR11} (see also \cite{DQR11}), we know that the Waldhausen  Nil-groups $NK_q(\mathbb ZC_4; A_1, A_2)$ are isomorphic to the corresponding Farrell
Nil-group for the canonical index two subgroup $C_4 \times
\mathbb Z \, \trianglelefteq \, D_4 \ast_{C_4} D_4$.  Note that, in this
case, the Farrell Nil-group is untwisted, and hence is just the Bass
Nil-group $NK_q(\mathbb ZC_4) \cong \bigoplus_{\infty} \mathbb Z/2$ ($q=0,1$), so the  lower algebraic $K$-theory of $D_4  \ast_{C_4} D_4$ is given by:

\[
Wh_q(D_4 \ast_{C_4} D_4) \cong
\begin{cases}
\bigoplus_{\infty} \mathbb Z/2 & q=1 \\
\bigoplus_{\infty} \mathbb Z/2 & q=0 \\
0 & q \leq -1.
\end{cases}
\]
The cokernels of the relative assembly maps are both
isomorphic to $\bigoplus _{\infty} \mathbb Z/2$.

\subsubsection{{\bf The Lower algebraic $K$-theory of $C_4 \times D_{\infty}$}}

First, note that $C_4 \times D_{\infty} \cong (C_4 \times \mathbb Z/2) \ast_{C_4}
(C_4 \times \mathbb Z/2)$.   Since $K_{-1}(\mathbb ZF) \cong 0$ for all $F \leq C_{4} \times \mathbb{Z}/2$ (Section \ref{section:contributionoffinites}, 
Table \ref{table:tableKtheoryEfin}), we see that
for $V=(C_4 \times \mathbb Z/2) \ast_{C_4} (C_4 \times \mathbb Z/2)$, we have $K_{-1}(\mathbb
ZV) \cong 0$.

For the remaining $K$-groups, we use \cite[Lemma 3.8]{CP02}.   Since  $\widetilde{K}_0(\mathbb
ZC_4) \cong 0$ and $\widetilde{K}_0(\mathbb Z[C_4 \times \mathbb Z/2])\cong \mathbb Z/2$ (Section \ref{section:contributionoffinites}, 
Table \ref{table:tableKtheoryEfin}), it follows that $\widetilde {K}_0(\mathbb
ZV) \cong (\mathbb Z/2)^2 \oplus NK_0(\mathbb ZC_4; B_1, B_2)$, where $B_i=\mathbb Z[(C_4 \times \mathbb Z/2)
- C_4]$ is the $\mathbb ZC_4$-bimodule generated by $(C_4 \times \mathbb Z/2) - C_4$
for $i=1,2$. Since $Wh(C_4) \cong  Wh(C_4 \times \mathbb Z/2) \cong 0$ (see Section 7, Table 4), it follows that $Wh(V) \cong  NK_1(\mathbb ZC_4; B_1, B_2)$, with $B_1$ and $B_2$ as before. The Nil-groups appearing in these computations are the Waldhausen Nil-groups.

Now by \cite{DKR11} (see also \cite{DQR11}), we know that the Waldhausen  Nil-groups $NK_q(\mathbb ZC_4; B_1, B_2)$ are isomorphic to the corresponding Farrell
Nil-group for the canonical index two subgroup $C_4 \times \mathbb Z \trianglelefteq  C_4 \times D_\infty$.  Note that, in this
case, the Farrell Nil-group is untwisted, and hence is just the Bass
Nil-group $NK_q(\mathbb ZC_4) \cong \bigoplus_{\infty} \mathbb Z/2$ ($q=0,1$). We summarize
our computations as follows:
\[
Wh_q(C_4\times D_\infty) \cong 
\begin{cases}
\bigoplus_{\infty} \mathbb Z/2 & q=1 \\
 (\mathbb Z/2)^2 \oplus \bigoplus_{\infty} \mathbb Z/2& q=0 \\
0 & q \leq -1.
\end{cases}
\]
The cokernels of the relative assembly maps are both
isomorphic to $\bigoplus _{\infty} \mathbb Z/2$.

\subsubsection{{\bf The Lower algebraic $K$-theory of $D_6 \times D_{\infty}$}\label{loweralgebriacktheoryofD6timesDinfty}}

First, note that $D_6\times D_{\infty} \cong (D_{6} \times \mathbb Z/2) \ast_{D_6}
(D_{6} \times \mathbb Z/2)$.   Since $K_{-1}(\mathbb ZD_6) \cong \mathbb Z$, and
$K_{-1}(\mathbb ZD_6 \times \mathbb Z/2) \cong \mathbb Z^3$ (Section \ref{section:contributionoffinites}, 
Table \ref{table:tableKtheoryEfin}), we see that
for $V=(D_6 \times \mathbb Z/2) \ast_{D_6} (D_6 \times \mathbb Z/2)$, we have $K_{-1}(\mathbb
ZV) \cong \mathbb Z^5$ (see also Example \ref{example:gamma5}).

For the remaining $K$-groups, we  use \cite[Lemma 3.8]{CP02}.  Since  $\widetilde{K}_0(\mathbb
ZD_6) \cong 0$   and $\widetilde{K}_0(\mathbb Z[D_6 \times \mathbb Z/2])  \cong (\mathbb Z/2)^2$, we have that $\widetilde {K}_0(\mathbb
ZV) \cong (\mathbb Z/2)^4 \oplus NK_0(\mathbb ZD_6; C_1, C_2)$, where $C_i=\mathbb Z[(D_6 \times \mathbb Z/2)
- D_6]$ is the $\mathbb ZD_6$-bimodule generated by $(D_6 \times \mathbb Z/2) - D_6$
for $i=1,2$.  Since $Wh(D_6)$ and $Wh(D_6 \times \mathbb Z/2)$ are both trivial, it follows that $Wh(V) \cong  NK_1(\mathbb ZD_6; C_1, C_2)$, with $C_1$ and $C_2$ as before.  The Nil-groups
appearing in these computations are the Waldhausen Nil-groups.

Now by \cite{DKR11} (see also \cite{DQR11}), we know that the Waldhausen  Nil- groups $NK_q(\mathbb ZD_6; C_1, C_2)$ are isomorphic to the corresponding Farrell
Nil-group for the canonical index two subgroup $D_6 \times
\mathbb Z \trianglelefteq D_6 \times D_\infty$.  Note that, in this
case, the Farrell Nil-group is untwisted, and hence is just the Bass
Nil-group $NK_q(\mathbb ZD_6)$. We summarize
our computations as follows:
\[
Wh_q(D_6\times D_\infty)=
\begin{cases}
NK_1(\mathbb ZD_6) & q=1 \\
(\mathbb{Z}/2)^{4} \oplus \bigoplus_{\infty} \mathbb{Z}/2 & q=0 \\
\mathbb Z^5 & q= -1\\
0 & q \leq -2.
\end{cases}
\]
Here the cokernels are simply the direct sums of two copies of the Bass Nil-groups of $D_{6}$. We do not know the isomorphism types of
these groups.  

\section{Summary} \label{section:summary}

We can now compute the lower algebraic $K$-theory of the $73$ split crystallographic groups. 
Recall that Theorem \ref{theorem:splitting2} tells us that, for all such groups $\Gamma$, we have an isomorphism
\[
K_n(\mathbb Z\Gamma) \cong 
H_{\ast}^{\g}(E_{\fin}(\g); \mathbb{KZ}^{-\infty}) \oplus  \bigoplus_{ \widehat{\ell} \in \mathcal{T}''}  H_n^{\Gamma_{\widehat{\ell}}}(E_{\fin}(\Gamma_{\widehat{\ell}}) \rightarrow  \ast;\;  \mathbb{KZ}^{-\infty}).
\]
 For all 73 of our groups, we have
 
 \begin{itemize}
 \item explicitly computed in Section \ref{section:contributionoffinites} the homology groups
 \[
 H_{\ast}^{\g}(E_{\fin}(\g); \mathbb{KZ}^{-\infty}),
 \]
 and summarized the results in Table \ref{thefinitepart};
 
 \item described in Subsection \ref{subsection:passingtosubgroups} a suitable indexing set $\mathcal{T}''$ (as summarized in Tables
 \ref{table:ssgamma1}, \ref{table:ssgamma2}, \ref{table:ssgamma3}, \ref{table:ssgamma4}, and \ref{table:ssgamma5});   

\item explicitly computed in Subsection \ref{subsection:vcsubgroups} the isomorphism types of the groups associated to the indexing set $\mathcal{T}''$ (as 
summarized in Tables \ref{table:vc1}, \ref{table:vc2}, \ref{table:vc3}, \ref{table:vc4}, and \ref{table:vc5});

\item explicitly computed in Subsection \ref{subsection:cokernels} (see
Table \ref{table:cokernels}) the cokernels 
$$H_n^{\Gamma_{\widehat{\ell}}}(E_{\fin}(\Gamma_{\widehat{\ell}}) \rightarrow  \ast;\;  \mathbb{KZ}^{-\infty})$$
for all of the infinite virtually cyclic subgroups that
occur as stabilizers of lines $\widehat{\ell}$ from $\mathcal{T}''$, except for the groups $NK_{1}(\mathbb{Z}D_{n})$ ($n=4,6$). (We note
that Weibel \cite{We09} proved that $NK_{1}(\mathbb{Z}D_{4})$ is a countably infinite  torsion  group of exponent $2$ or $4$.)
\end{itemize}

This information makes it straightforward to apply Theorem \ref{theorem:splitting2} to a given split $3$-dimensional crystallographic group $\Gamma$, 
yielding an explicit calculation of $K_{-1}(\mathbb{Z} \Gamma)$, $\widetilde K_0(\mathbb{Z} \Gamma)$, and $Wh (\Gamma)$. We have summarized these calculations
in Tables \ref{LoweralgebraicKtheoryofsplitthreedimensionalcrystallographicgroups1} and \ref{LoweralgebraicKtheoryofsplitthreedimensionalcrystallographicgroups2}.
We have entered only 
the non-zero terms in these Tables; all of the blank squares represent
entries where the corresponding group vanishes, and if $\Gamma$ does not appear in either table, then $Wh_n(\mathbb Z\Gamma)=0$ for all $n \leq 1$.
 
We conclude with a pair of examples that illustrate how the pieces of the calculation fit together.

  
 \begin{table}
\renewcommand{\arraystretch}{1.8}
\[
\footnotesize \begin{array}{|c|c|c|c|}
\hline \g & K_{-1} \neq 0 & \tilde{K}_0 \neq 0 & Wh \neq 0 
\\ \hline \hline
\Gamma_1 & \mathbb Z^2 & (\mathbb Z/4)^4 \oplus \bigoplus_{\infty} \mathbb Z/2 \oplus  \bigoplus_{\infty} \mathbb Z/4&  \bigoplus_{\infty} \mathbb Z/2 \oplus 2NK_1(\mathbb ZD_4)
\\ \hline
(S_4^+)_1 & &\bigoplus_{\infty} \mathbb Z/2 & \bigoplus_{\infty} \mathbb Z/2 \\ \hline
(S'_4)_1 & &\bigoplus_{\infty} \mathbb Z/2 &\bigoplus_{\infty} \mathbb Z/2 \\ \hline
(A_4^+ \times (-1))_1 & \mathbb Z^2 & (\mathbb Z/2)^4 \oplus\bigoplus_{\infty} \mathbb Z/2 &\bigoplus_{\infty} \mathbb Z/2 \\ \hline
(D''_4)_1 & & \bigoplus_{\infty} \mathbb Z/2 \oplus  \bigoplus_{\infty} \mathbb Z/4 & \bigoplus_{\infty} \mathbb Z/2  \oplus 4NK_1(\mathbb ZD_4) \\ \hline
(D^{+}_4 \times (-1))_1& & (\mathbb Z/2)^2 \oplus (\mathbb Z/4)^4 \oplus \bigoplus_{\infty} \mathbb Z/2 \oplus  \bigoplus_{\infty} \mathbb Z/4 &   \bigoplus_{\infty} \mathbb Z/2 \oplus 2NK_1(\mathbb ZD_4) \\ \hline
(D_4^+)_1 &  &  \bigoplus_{\infty} \mathbb Z/2 &  \bigoplus_{\infty} \mathbb Z/2 \\ \hline
(D_2^{+} \times (-1))_1 &  & (\mathbb Z/2)^8 \oplus \bigoplus_{\infty} \mathbb Z/2 & \bigoplus_{\infty} \mathbb Z/2 \\ \hline
(C^{+}_{4} \times (-1))_1 & &(\mathbb Z/2)^4 \oplus   \bigoplus_{\infty} \mathbb Z/2  &   \bigoplus_{\infty} \mathbb Z/2 \\ \hline
(C^{+}_{4})_1 &  & \bigoplus_{\infty} \mathbb{Z}/2 & \bigoplus_{\infty} \mathbb{Z}/2 \\ \hline
(D'_2)_1 &  &   \bigoplus_{\infty} \mathbb Z/2  &   \bigoplus_{\infty} \mathbb Z/2  \\ \hline
(D'_{4} )_1&  &    \bigoplus_{\infty} \mathbb Z/2&   \bigoplus_{\infty} \mathbb Z/2 \\ \hline
(\widehat{D}'_{4})_1&  &   \bigoplus_{\infty} \mathbb Z/2 &   \bigoplus_{\infty} \mathbb Z/2 \\ \hline \hline
\Gamma_2 & \mathbb Z^2 &(\mathbb Z/4)^2 \oplus \bigoplus_{\infty} \mathbb Z/2  \oplus \bigoplus_{\infty} \mathbb Z/4 & \bigoplus_{\infty} \mathbb Z/2   \oplus NK_1(\mathbb ZD_4) \\ \hline
(S_4^+)_2 & & \bigoplus_{\infty} \mathbb Z/2  &  \bigoplus_{\infty} \mathbb Z/2  \\ \hline
(S'_4)_2& & \bigoplus_{\infty} \mathbb Z/2  &  \bigoplus_{\infty} \mathbb Z/2  \\ \hline
(A_4^+ \times (-1))_2 & \mathbb Z^2 & (\mathbb Z/2)^2 \oplus  \bigoplus_{\infty} \mathbb Z/2 &  \bigoplus_{\infty} \mathbb Z/2 \\ \hline
(D''_4)_2 &  &  \bigoplus_{\infty} \mathbb Z/2  \oplus \bigoplus_{\infty} \mathbb Z/4 &  \bigoplus_{\infty} \mathbb Z/2   \oplus 2NK_1(\mathbb ZD_4)  \\ \hline
(D^{+}_4 \times (-1))_2 &  & (\mathbb Z/2) \oplus (\mathbb Z/4)^2 \oplus \bigoplus_{\infty} \mathbb Z/2  \oplus \bigoplus_{\infty} \mathbb Z/4 & \bigoplus_{\infty} \mathbb Z/2  \oplus  NK_1(\mathbb ZD_4)\\ \hline
(D^{+}_4)_2  &&  \bigoplus_{\infty} \mathbb Z/2  &  \bigoplus_{\infty} \mathbb Z/2  \\ \hline
(D_2^{+} \times (-1))_2 & & (\mathbb Z/2)^4 \oplus \bigoplus_{\infty} \mathbb Z/2 &   \bigoplus_{\infty} \mathbb Z/2 \\ \hline
(C^{+}_{4} \times (-1))_2 & & (\mathbb Z/2)^2 \oplus  \bigoplus_{\infty} \mathbb Z/2 &  \bigoplus_{\infty} \mathbb Z/2  \\ \hline
(C^{+}_{4})_2 & &  \bigoplus_{\infty} \mathbb Z/2  &   \bigoplus_{\infty} \mathbb Z/2 \\ \hline
(D'_2)_2 &  &   \bigoplus_{\infty} \mathbb Z/2  &   \bigoplus_{\infty} \mathbb Z/2  \\ \hline
(D'_{4} )_2&  &    \bigoplus_{\infty} \mathbb Z/2&   \bigoplus_{\infty} \mathbb Z/2 \\ \hline
(\widehat{D}'_{4})_2&  &   \bigoplus_{\infty} \mathbb Z/2 &   \bigoplus_{\infty} \mathbb Z/2 \\ \hline 
\end{array} \]
\caption{Lower algebraic $K$-theory of split three-dimensional crystallographic groups (part I).}
\label{LoweralgebraicKtheoryofsplitthreedimensionalcrystallographicgroups1}
\end{table}

\begin{table}
\renewcommand{\arraystretch}{1.8}
\[
\footnotesize \begin{array}{|c|c|c|c|}
\hline \g & K_{-1} \neq 0 & \tilde{K}_0 \neq 0 & Wh \neq 0 
\\ \hline \hline
\Gamma_3 & \mathbb Z^2 & (\mathbb Z/2) \oplus (\mathbb Z/4)^2 \oplus \bigoplus_{\infty} \mathbb Z/2 \oplus  \bigoplus_{\infty} \mathbb Z/4&  \bigoplus_{\infty} \mathbb Z/2 \oplus NK_1(\mathbb ZD_4) \\ \hline
(S_4^+)_3 & &\bigoplus_{\infty} \mathbb Z/2 & \bigoplus_{\infty} \mathbb Z/2 \\ \hline
(S'_4)_3 & &\bigoplus_{\infty} \mathbb Z/2 &\bigoplus_{\infty} \mathbb Z/2 \\ \hline
(A_4^+ \times (-1))_3 & \mathbb Z^2 & (\mathbb Z/2)^2 \oplus\bigoplus_{\infty} \mathbb Z/2 & \bigoplus_{\infty} \mathbb Z/2 \\ \hline
(D'_2)_3 &  & \bigoplus_{\infty} \mathbb Z/2 &   \bigoplus_{\infty} \mathbb Z/2  \\ \hline
(D_2^{+} \times (-1))_3  & &(\mathbb Z/2)^2 \bigoplus_{\infty} \mathbb Z/2 &   \bigoplus_{\infty} \mathbb Z/2 \\ \hline \hline
\Gamma_4 & & (\mathbb Z/2)^4 \oplus  \bigoplus_{\infty} \mathbb Z/2 &  \bigoplus_{\infty} \mathbb Z/2 \\ \hline
(D'_2)_4 &  & \bigoplus_{\infty} \mathbb Z/2 &   \bigoplus_{\infty} \mathbb Z/2  \\ \hline
(D'_{2_{2}})_4&  &   \bigoplus_{\infty} \mathbb Z/2 &   \bigoplus_{\infty} \mathbb Z/2  \\ \hline \hline
\Gamma_5 & \mathbb Z^7 & (\mathbb Z/2)^6 \oplus   \bigoplus_{\infty} \mathbb{Z}/2 &  \bigoplus_{\infty} \mathbb Z/2 \oplus NK_1(\mathbb ZD_6) \\ \hline
(D^+_6)_5 & \mathbb Z & &  \\ \hline
(C'_6)_5 & \mathbb Z^6 & & \\ \hline
(C^+_6 \times (-1))_5 & \mathbb Z^7 & (\mathbb Z/2)^4 & \\ \hline
(D'_6)_5 & \mathbb Z^6&    \bigoplus_{\infty} \mathbb Z/2 &   \bigoplus_{\infty} \mathbb Z/2  \\ \hline
(C^+_6)_5 &\mathbb Z & \mathbb Z &   \\ \hline
(D_3^{+} \times (-1))_5 & \mathbb Z^2 & & \\ \hline
(\widehat{D}'_{6})_5 & \mathbb Z^4 & \bigoplus_{\infty} \mathbb{Z}/2 & \bigoplus_{\infty} \mathbb{Z}/2 \\ \hline
(C^+_3 \times (-1))_5 & \mathbb Z^2 & & \\ \hline
(D''_6)_5  &\mathbb Z & \mathbb Z  \oplus   \bigoplus_{\infty} \mathbb{Z}/2 &  \bigoplus_{\infty} \mathbb Z/2 \oplus 2NK_1(\mathbb ZD_6) \\ \hline \hline
\Gamma_6 & \mathbb Z^2& & \\ \hline
(C^+_3 \times (-1))_6 & \mathbb Z^2 & & \\ \hline \hline
\Gamma_7 & \mathbb Z^2 & & \\ \hline
\end{array}
\]
\caption{Lower algebraic $K$-theory of split three-dimensional crystallographic groups (part II).}
\label{LoweralgebraicKtheoryofsplitthreedimensionalcrystallographicgroups2}
\end{table}


\begin{example} We compute the lower algebraic $K$-theory of $(A_{4}^{+} \times (-1))_{1} = \langle L, A_{4}^{+} \times (-1) \rangle$, where $L$ is the standard cubical lattice. We will
write $\Gamma$ in place of $(A_{4}^{+} \times (-1))_{1}$ and $H$ in place of $A_{4}^{+} \times (-1)$.
Note that 
\begin{align*}
H_{-1}^{\g}(E_{\fin}(\g); \mathbb{KZ}^{-\infty}) &\cong \mathbb{Z}^{2}; \\
H_{0}^{\g}(E_{\fin}(\g); \mathbb{KZ}^{-\infty}) &\cong (\mathbb{Z}/2)^{4}; \\
H_{1}^{\g}(E_{\fin}(\g); \mathbb{KZ}^{-\infty}) &\cong 0,
\end{align*}
by Table \ref{thefinitepart}. (We recall that the group $Wh(G)$ is always trivial if $G$ is a finite subgroup of a $3$-dimensional
crystallographic group -- see Table \ref{table:tableKtheoryEfin}.) The next step is to compute the second summand from the
formula in Theorem \ref{theorem:splitting2} (as reproduced at the beginning of this section).  We found in Subsection \ref{subsection:passingtosubgroups} (see Table \ref{table:ssgamma1}) that the parametrized lines
$$ \left( \begin{smallmatrix} 0 \\ 0 \\ \alpha \end{smallmatrix} \right), \left( \begin{smallmatrix} 1/2 \\ 0 \\ \alpha \end{smallmatrix} \right), \left( \begin{smallmatrix} 0 \\ 1/2 \\ \alpha \end{smallmatrix} \right), \left( \begin{smallmatrix} 1/2 \\ 1/2 \\ \alpha \end{smallmatrix} \right)$$
represent a choice of indexing set $\mathcal{T}''$ for the latter summand. The strict stabilizers $\overline{\Gamma}_{\widehat{\ell}}$
 of these lines $\widehat{\ell}$ (i.e., the subgroups that
fix the lines pointwise) satisfy $\pi(\overline{\Gamma}_{\widehat{\ell}}) = D'_{2}$, where $\pi: \Gamma \rightarrow H$ is the
canonical projection into the point group. We can apply Procedure \ref{procedure:computethestabilizer} to conclude that
the stabilizers $\Gamma_{\widehat{\ell}}$ of these lines are all isomorphic to $D_{2} \times D_{\infty}$ (see Table \ref{table:vc1}). 
It follows that 
$$\bigoplus_{ \widehat{\ell} \in \mathcal{T}''}  H_n^{\Gamma_{\widehat{\ell}}}(E_{\fin}(\Gamma_{\widehat{\ell}}) \rightarrow  \ast;\;  \mathbb{KZ}^{-\infty}) \cong \left( \bigoplus_{\infty} \mathbb{Z}/2 \right)^{4} \cong \bigoplus_{\infty} \mathbb{Z}/2 $$
for $n=0$ and $1$. (We recall that \cite{FJ95} prove that the term in question is always trivial when $n=-1$.) See Table \ref{table:cokernels}. It now follows directly from Theorem \ref{theorem:splitting2} that the lower algebraic $K$-groups of
$\Gamma$ are as described in Table \ref{LoweralgebraicKtheoryofsplitthreedimensionalcrystallographicgroups1}.
\end{example}

\begin{example} We compute the lower algebraic $K$-theory of $\Gamma_{5} = \langle L, D_{6}^{+} \times (-1) \rangle$, 
where $L$ is the lattice $\langle \mathbf{v}_{1}, \mathbf{v}_{2}, \mathbf{v}_{3} \rangle$. We will
write $\Gamma$ in place of $\Gamma_{5}$ and $H$ in place of $D_{6}^{+} \times (-1)$.
Note that 
\begin{align*}
H_{-1}^{\g}(E_{\fin}(\g); \mathbb{KZ}^{-\infty}) &\cong \mathbb{Z}^{7}; \\
H_{0}^{\g}(E_{\fin}(\g); \mathbb{KZ}^{-\infty}) &\cong (\mathbb{Z}/2)^{6}; \\
H_{1}^{\g}(E_{\fin}(\g); \mathbb{KZ}^{-\infty}) &\cong 0,
\end{align*}
by Table \ref{thefinitepart}. Now we compute the second summand from the
formula in Theorem \ref{theorem:splitting2}. Theorem \ref{theorem:gamma5T} showed that the parametrized lines
$$\left( \begin{smallmatrix} \alpha \\ \alpha \\ \alpha \end{smallmatrix} \right),
\left( \begin{smallmatrix} \alpha + 1/2 \\ \alpha - 1/2 \\  \alpha \end{smallmatrix} \right),
\left( \begin{smallmatrix} \alpha \\ -\alpha \\  0 \end{smallmatrix} \right), 
\left( \begin{smallmatrix} \alpha + 1/2 \\ -\alpha + 1/2 \\  1/2 \end{smallmatrix} \right),
\left( \begin{smallmatrix} \alpha \\ -2\alpha \\ \alpha \end{smallmatrix} \right), 
\left( \begin{smallmatrix} \alpha + 1/2 \\ -2\alpha + 1/2 \\ \alpha + 1/2 \end{smallmatrix} \right)$$
represent a choice of indexing set $\mathcal{T}''$ for the latter summand (the same fact is recorded in Table \ref{table:ssgamma5}). 
The strict stabilizers $\overline{\Gamma}_{\widehat{\ell}}$
 of these lines $\widehat{\ell}$  satisfy 
$$\pi(\overline{\Gamma}_{\widehat{\ell}}) = D''_{6}, \, \, \langle A, D \rangle, \, \, \langle D, E \rangle, \, \, \langle D, E \rangle, 
\, \, \langle E, F \rangle, \, \, \langle E, F \rangle,$$ 
respectively, where $\pi: \Gamma \rightarrow H$ is the
canonical projection into the point group, and the matrices $A$, $D$, $E$, and $F$ are identified in Theorem \ref{theorem:gamma5T}. We can apply Procedure \ref{procedure:computethestabilizer} to conclude that
the stabilizers $\Gamma_{\widehat{\ell}}$ of these lines are either $D_{6} \times D_{\infty}$ or $D_{2} \times D_{\infty}$, where the
first group occurs once and the latter group occurs five times (see Table \ref{table:vc5}). 
It follows that 
\begin{align*}
\bigoplus_{ \widehat{\ell} \in \mathcal{T}''}  H_n^{\Gamma_{\widehat{\ell}}}(E_{\fin}(\Gamma_{\widehat{\ell}}) \rightarrow  \ast;\;  \mathbb{KZ}^{-\infty}) &\cong NK_{n}(\mathbb{Z}D_{6}) \oplus \left( \bigoplus_{\infty} \mathbb{Z}/2 \right)^{5}   \\
&\cong NK_{n}(\mathbb{Z}D_{6}) \oplus \bigoplus_{\infty} \mathbb{Z}/2  
\end{align*}
for $n=0$ and $1$. (The given term is trivial when $n=-1$.) See Table \ref{table:cokernels}. It now follows directly from Theorem \ref{theorem:splitting2} that the lower algebraic $K$-groups of
$\Gamma$ are as described in Table \ref{LoweralgebraicKtheoryofsplitthreedimensionalcrystallographicgroups2}.
\end{example}

\end{document}